\newtheorem{thm}{Theorem}[section]
\newtheorem{prop}[thm]{Proposition}
\newtheorem{lem}[thm]{Lemma}
\newtheorem{cor}[thm]{Corollary}
\theoremstyle{definition}
\newtheorem{defn}[thm]{Definition}
\theoremstyle{remark}
\newtheorem{remk}[thm]{Remark}
\newtheorem{remks}[thm]{Remarks}
\newtheorem{exm}[thm]{Example}
\newtheorem{exms}[thm]{Examples}
\newtheorem{notat}[thm]{Notation}
\numberwithin{equation}{section}
\newcommand{\thmref}{Theorem~\ref}
\newcommand{\propref}{Proposition~\ref}
\newcommand{\corref}{Corollary~\ref}
\newcommand{\lemref}{Lemma~\ref}
\newcommand{\sA}{{\mathcal A}}
\newcommand{\sB}{{\mathcal B}}
\newcommand{\sC}{{\mathcal C}}
\newcommand{\sD}{{\mathcal D}}
\newcommand{\sE}{{\mathcal E}}
\newcommand{\sF}{{\mathcal F}}
\newcommand{\sG}{{\mathcal G}}
\newcommand{\sH}{{\mathcal H}}
\newcommand{\sI}{{\mathcal I}}
\newcommand{\sL}{{\mathcal L}}
\newcommand{\sM}{{\mathcal M}}
\newcommand{\sO}{{\mathcal O}}
\newcommand{\sP}{{\mathcal P}}
\newcommand{\sS}{{\mathcal S}}
\newcommand{\sU}{{\mathcal U}}
\newcommand{\sV}{{\mathcal V}}
\newcommand{\sW}{{\mathcal W}}
\newcommand{\sX}{{\mathcal X}}
\newcommand{\sY}{{\mathcal Y}}
\newcommand{\sZ}{{\mathcal Z}}
\newcommand{\A}{{\mathbb A}}
\newcommand{\C}{{\mathbb C}}
\newcommand{\G}{{\mathbb G}}
\renewcommand{\H}{{\mathbb H}}
\newcommand{\N}{{\mathbb N}}
\renewcommand{\P}{{\mathbb P}}
\newcommand{\Q}{{\mathbb Q}}
\newcommand{\T}{{\mathbb T}}
\newcommand{\Z}{{\mathbb Z}}
\renewcommand{\1}{{\mathbb{S}}}
\newcommand{\fm}{{\mathfrak m}}
\newcommand{\fp}{{\mathfrak p}}
\newcommand{\Ker}{{\rm Ker}}
\newcommand{\an}{{\rm an}}
\newcommand{\surj}{\twoheadrightarrow}
\newcommand{\inj}{\hookrightarrow}
\newcommand{\red}{{\rm red}}
\newcommand{\codim}{{\rm codim}}
\newcommand{\Hom}{{\rm Hom}}
\newcommand{\Spec}{{\rm Spec \,}}
\newcommand{\sing}{{\rm sing}}
\newcommand{\Char}{{\rm char}}
\newcommand{\id}{{\operatorname{id}}}
\newcommand{\Sch}{{\operatorname{\mathbf{Sch}}}}
\newcommand{\holim}{\mathop{{\rm holim}}}
\renewcommand{\>}{\rangle}
\newcommand{\Spc}{{\mathbf{Spc}}}
\newcommand{\Sm}{{\mathbf{Sm}}}
\newcommand{\hocolim}{\mathop{{\rm hocolim}}}
\newcommand{\can}{{\operatorname{\rm can}}}
\newcommand{\Sym}{{\operatorname{\rm Sym}}}
\newcommand{\SH}{{\operatorname{\sS\sH}}}
\newcommand{\End}{{\operatorname{\text{End}}}}
\newcommand{\Nis}{{\operatorname{Nis}}}
\newcommand{\et}{{\text{\'et}}}
\newcommand{\ds}{{/\kern-3pt/}}
\newcommand{\compl}{\, \, {\widehat {}}}
\newcommand{\stX}{\operatorname{[X/G]}}
\newcommand{\Proj}{{\operatorname{Proj}}}
\newcommand{\colim}{\mathop{\text{\rm colim}}}
\newcommand{\Mod}{{\mathbf{Mod}}}
\newcommand{\tr}{{\operatorname{tr}}}
\newcommand{\ov}{\overline}
\renewcommand{\dim}{\text{\rm dim}}
\newcommand{\tuborg}{\left\{\begin{array}{ll}}
\newcommand{\sluttuborg}{\end{array}\right.}
\newcommand{\pr}{{\rm pr}}
\newcommand{\reg}{{\rm reg}}
\newcommand{\Et}{{\rm {\bf{Et}}}}
\newcommand{\fr}{{\rm free}}
\newcommand{\Msp}{{\rm Msp}}
\newcommand{\Maps}{{\rm Maps}}
\newcommand{\dst}{{\rm DSt}}
\newcommand{\hkr}{{\rm HKR}}
\newcommand{\pt}{{\rm pt}}
\newcommand{\Qcoh}{{\rm Qcoh}}
\newcommand{\wt}{\widetilde}
\newcommand{\wh}{\widehat}
\newcommand{\Fil}{{\rm fil}}
\newcommand{\Coh}{{\rm Coh}}
\newcommand{\Perf}{{\rm Perf}}
\newcommand{\Map}{{\rm Map}}
\newcommand{\Grp}{{\mathbf{Grp}}}
\newcounter{elno}
\newcounter{elno-abc}   
\newcounter{elno-abc-prime}
\begin{document}
\title[$K$-theory and cyclic homology of quotient stacks]{Thomason's completion for
  $K$-theory and cyclic homology of quotient stacks}
%\title[On $K$-theory and cyclic homology of quotient stacks]{On the homotopy $K$-theory and cyclic homology of quotient stacks}
%\title[$KH$-theory of quotient stacks]{}
%\end{document}
\author{Amalendu Krishna, Ritankar Nath}
\address{Department of Mathematics, South Hall, Room 6607, University of California
  Santa Barbara, CA, 93106-3080, USA.}
\email{amalenduk@math.ucsb.edu}
\address{School of Mathematics, Tata Institute of Fundamental Research,
  Mumbai, 400005, India.} 
\email{ritankar@math.tifr.res.in}

%\thanks{}

\keywords{Equivariant $K$-theory, Equivariant cyclic homology}        

\subjclass[2020]{Primary 14F43, Secondary 19D55}

\maketitle
\vskip .4cm

\begin{quote}\emph{Abstract.}
  We prove several completion theorems for equivariant $K$-theory
  and cyclic homology of schemes with group action over a field. One of these
  shows that for an algebraic space over a field acted upon by a linear algebraic
  group, the derived completion of equivariant $K'$-theory at the
  augmentation ideal of the representation ring of the group coincides with the
  ordinary $K'$-theory of the bar construction associated to the group action.
  This provides a solution to Thomason's completion problem. For action with finite
  stabilizers, we show that the equivariant $K$-theory and cyclic homology have
  non-equivariant descriptions even without passing to their completions. As an
  application, we describe all equivariant Hochschild and
  other homology groups for such actions.
\end{quote}
%\end{abstract}
\setcounter{tocdepth}{1}
%\maketitle
\tableofcontents

\vskip .4cm

\section{Introduction}\label{sec:Intro}
Equivariant $K$-theory of algebraic varieties equipped with actions of
linear algebraic groups was introduced by Thomason \cite{Thomason-Orange}.
Its significance is premised on two key observations: (1) it is an important
tool for describing the ordinary $K$-theory of varieties with group action (cf.
\cite{Krishna-LMS}, \cite{Merkurjev}, \cite{VV}),  and (2) it is often a key
ingredient in the study of representation theory of algebraic groups
(cf. \cite{Chriss-Ginzburg}). A key advantage of equivariant $K$-theory 
is that it does not depend on the underlying variety and the group but
only on the quotient stack resulting from the action. This provides a great deal of
flexibility in computing equivariant $K$-theory.

Despite the above advantages, equivariant $K$-theory suffers from the drawback
that there aren't many cohomological tools to compute them.
In order to apply the tools (e.g., motivic cohomology) for computing ordinary
$K$-theory, a fundamental problem in equivariant $K$-theory is to find out
how far it is from the ordinary $K$-theory and if there is a way to approximate it
by ordinary $K$-theory. A solution to this problem in topology
is via the celebrated Atiyah-Segal completion theorem \cite{Atiyah-Segal}.
The latter says roughly that if a compact Lie group $G$ acts on a compact Hausdorff
space $X$, then the (topological) equivariant $K$-theory $K^G(X)$ of $X$
becomes the ordinary (topological) $K$-theory $K(EG \stackrel{G}{\times} X)$ of the
homotopy orbit space $EG \stackrel{G}{\times} X$ after its completion with respect to
the augmentation ideal $I_G$ of the representation ring $R(G)$. 
The objective of this paper is the study of algebraic analogues of this theorem.
There are two approaches to this problem and we shall discuss
them separately. 

\subsection{Equivariant $K$-theory via ordinary $K$-theory of bar construction}
\label{sec:Bar-prob}
Let $k$ be a field and $G$ a linear algebraic group over $k$ acting on a
smooth $k$-scheme $X$. Let $\pi \colon X \to [X/G]$ be the stack quotient map for the
$G$-action. Given this datum, Thomason (cf. \cite[\S~1.3]{Thomason-Orange}) constructed
a simplicial scheme
\begin{equation}\label{eqn:Bar-I}
X_G^{\bullet}  := \left(\cdots 
\stackrel{\rightarrow}{\underset{\rightarrow}\rightrightarrows}
G \times G \times X 
\stackrel{\rightarrow}{\underset{\rightarrow}\to} G \times X
\rightrightarrows X \right)
\end{equation}
whose face maps are built by iterating the action and projection maps
$G \times X \to X$ and which is functorial in $G$ and $X$.
This is known as the bar construction associated to $G$-action on $X$. 
For the action of a compact Lie group $G$ on a compact Hausdorff
space $X$, the geometric realization of $X^\bullet_G$ provides a model for
the homotopy orbit space $EG \stackrel{G}{\times} X$.

In his 1986 paper (cf. \cite[\S~0]{Thomason-Duke-1}),
Thomason asked if the pull-back by
the projection map $\wt{\pi} \colon X^\bullet_G \to [X/G]$ (induced by $\pi$)
would produce a weak equivalence of spectra
\begin{equation}\label{eqn:Bar-I-0}
  \wt{\pi}^* \colon K_G(X)^{\compl}_{I_G}  \xrightarrow{\simeq} K(X^\bullet_G),
\end{equation}
where $K(X^\bullet_G) = \holim_n K(G^n \times X)$ is the ordinary $K$-theory spectrum
of the simplicial scheme $X^\bullet_G$,
$K_G(X)$ is the equivariant $K$-theory spectrum (cf. \S~\ref{sec:Recall}) and 
$K_G(X)^{\compl}_{I_G}$ is a homotopy theoretic completion of  $K_G(X)$
at the augmentation ideal $I_G$ of $R(G)$.
Thomason predicted the existence of the spectrum $K_G(X)^{\compl}_{I_G}$ but did not
construct it. 

In the same paper, Thomason showed that if $k$ is a separably
closed field (and some other fields), then a version of the above
problem has a positive answer if one replaces $K_G(X)$ by
the Bott-inverted $K$-theory with finite coefficients
$K_G(X, {\Z}/{n})[\beta^{-1}]$ (with $n$ prime to $\Char(k)$)
and the conjectured homotopy theoretic $I_G$-completion of the spectrum
$K_G(X, {\Z}/{n})[\beta^{-1}]$
is replaced by algebraic $I_G$-completion of the $R(G)$-modules
$\pi_*(K_G(X, {\Z}/{n})[\beta^{-1}])$.  It was shown later in
\cite{Krishna-Crelle} that using finite coefficients and Bott-inversion are both
necessary in Thomason's theorem. 
In this paper, we shall refer to the problem of showing ~\eqref{eqn:Bar-I-0} is a
weak equivalence as {\sl{Thomason's completion problem}}.

Thomason's completion problem is a very powerful statement as a positive
answer to it would allow one to compute
the equivariant $K$-theory up to completion in terms of the 
ordinary $K$-theory of the \u{C}ech nerve of the group action.
In particular, one would be able to apply the machinery of ordinary
motivic cohomology to compute equivariant $K$-theory just as one does for
ordinary $K$-theory.

One of the main results of this paper provides a solution to Thomason's completion
problem.
In order to state it, let $K'_G(X)$ denote the $K$-theory spectrum of
the $G$-equivariant pseudo-coherent complexes on $X$. In \cite[Chap.~7.3]{Lurie-SAG},
Lurie introduced a simple procedure of derived completion of spectra
which implies in particular that
the derived $I_G$-completion of $K'_G(X)$ postulated by Thomason
indeed exists (as its homotopy groups are compatible with the classical
algebraic $I_G$-completions on the homotopy groups of $K'_G(X)$). We let
$K'_G(X)^{\compl}_{I_G}$ denote this spectrum level completion. Let $K'(-)$ denote the
ordinary $K'$-theory functor and let $K'(-, \Q)$ (resp. $K'_G(-, \Q)$) denote the
rationalized ordinary (resp. equivariant) $K'$-theory functor. All algebraic spaces
will be separated and finite type over the given base field.

We prove the following.

\begin{thm}\label{thm:Main-0}
  Let $k$ be a field and $G$ a linear algebraic group over $k$ acting on an algebraic
  space $X$. Let $m \in k^\times$ be an integer. We now have the following.
  \begin{enumerate}
  \item
    If $G$ is special, then the pull-back map $\wt{\pi}^*$
    induces a weak equivalence of spectra
    \[
    \wt{\pi}^* \colon K'_G(X)^{\compl}_{I_G}  \xrightarrow{\simeq} K'(X^\bullet_G).
\]
\item
  If $G$ is arbitrary, then the pull-back map $\wt{\pi}^*$ induces a weak
  equivalence of spectra
  \[
  \wt{\pi}^* \colon K'_G(X, \Q)^{\compl}_{I_G} \xrightarrow{\simeq} K'(X^\bullet_G, \Q).
  \]
 \item
   If  $G$ is arbitrary and $k$ contains all roots of unity, then
   the pull-back map $\wt{\pi}^*$ induces a weak equivalence of spectra
  \[
  \wt{\pi}^* \colon \left(K'_G(X, {\Z}/{m})[\beta^{-1}]\right)^{\compl}_{I_G}
  \xrightarrow{\simeq} K'(X^\bullet_G, {\Z}/m)[\beta^{-1}].
  \]
\end{enumerate}
\end{thm}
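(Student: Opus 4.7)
The plan is to interpret $X^\bullet_G$ as the \v{C}ech nerve of the atlas $\pi\colon X \to [X/G]$, so that $K'(X^\bullet_G) = \holim_m K'(G^m \times X)$ is the descent totalization of ordinary $K'$-theory along $\pi$; the theorem then amounts to showing that the failure of fppf descent for $K'$ along $\pi$ is exactly captured by the $I_G$-adic derived completion. The central device is to introduce an intermediate simplicial object built from Totaro-style algebraic approximations of $EG$: choose $G$-representations $V_n$ together with open $G$-free loci $U_n \subset V_n$ such that $\codim(V_n\setminus U_n, V_n) \to \infty$. The scheme $(U_n \times X)/G$ is a genuine algebraic space giving a finite-level approximation to the hypothetical $X \times^G EG$, and as $n$ grows these approximations become $I_G$-adically close to $K'_G(X)$.

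\textbf{Part (1), $G$ special.} Since every $G$-torsor is Zariski-locally trivial, the map $U_n \times X \to (U_n \times X)/G$ is a Nisnevich-local $G$-torsor, so Nisnevich descent for $K'$-theory of algebraic spaces yields a weak equivalence
\begin{equation*}
K'\bigl((U_n \times X)/G\bigr) \xrightarrow{\simeq} \holim_m K'\bigl(G^m \times U_n \times X\bigr).
\end{equation*}
Taking $\holim_n$ on the left, an algebraic Atiyah--Segal completion theorem in the tradition of \cite{Thomason-Duke-1}, lifted to the spectrum level using Lurie's formalism \cite[Ch.~7]{Lurie-SAG}, identifies the result with $K'_G(X)^{\compl}_{I_G}$. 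On the right, for each fixed $m$ the projection $U_n \to \mathrm{pt}$ becomes a $K'$-equivalence in the limit (the zero section of $V_n$ is a $K'$-equivalence and the bad locus $V_n \setminus U_n$ has codimension tending to infinity), so $\holim_n \holim_m K'(G^m \times U_n \times X) \simeq \holim_m K'(G^m \times X) = K'(X^\bullet_G)$. A Fubini-type interchange of the two homotopy limits, justified by a connectivity bound on the bisimplicial tower, finishes the case.

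\textbf{Parts (2) and (3).} For arbitrary $G$, embed $G$ into a special group $H$ (e.g.\ $H=\GL_N$) and set $Y = H \times^G X$, so that $[X/G]\simeq [Y/H]$ and the two bar constructions are related by an explicit $H/G$-fibration. For Part (2), after rationalization the restriction $R(H)_\Q \to R(G)_\Q$ is finite (via reduction to Weyl invariants on a maximal torus) and admits a transfer splitting compatible with $I_G$- and $I_H$-adic derived completion; this transports the equivalence of Part (1) applied to $(Y, H)$ back to $(X, G)$. For Part (3), the assumption that $k$ contains all roots of unity brings us into the regime of Thomason's theorem from \cite{Thomason-Duke-1}, which establishes the corresponding equivalence of Bott-inverted mod-$m$ homotopy groups after classical $I_G$-adic completion; the upgrade from classical to derived completion at the spectrum level is then formal, since after Bott inversion $K'_G(X,\Z/m)[\beta^{-1}]$ is finitely presented over $R(G)[\beta^{-1}]$, so the two completions agree.

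\textbf{Main obstacle.} The principal difficulty is the spectrum-level algebraic Atiyah--Segal step in Part (1): classical completion results typically provide only an equivalence on associated graded modules or of pro-objects, whereas Lurie's derived completion demands a genuine equivalence of spectra. Overcoming this requires controlling the higher $\Tor$-terms arising in the passage from classical to derived completion of $K'_G(X)$, which hinges on a pro-coherence property of $K'_G(X)$ as an $R(G)$-module; this is routine for $X$ smooth projective but delicate for arbitrary algebraic spaces where $K'_G(X)$ need not be finitely generated. A secondary, more technical obstacle is rigorously justifying the interchange of the two homotopy limits indexed by $n$ and $m$, which will demand a uniform bound on the $I_G$-adic convergence rate of the Totaro tower.
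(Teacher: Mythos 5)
Your bisimplicial strategy is genuinely different from the paper's, which proves Part~(1) by working inside Morel--Voevodsky's $\A^1$-homotopy category: one shows that the motivic Borel space $X_G(\rho) = \colim_i (X\stackrel{G}{\times}U_i)$ is isomorphic in $\sH(k)$ to the simplicial bar construction $X^\bullet_G$ (via the universal torsor $\sE_G \to \sB_G$, which is where speciality of $G$ is used), and then invokes representability of $K$-theory by $KGL \in \sS\sH(k)$ together with the Borel completion theorem. Your iterated-limit argument over the bitower $\{K'(G^m\times U_n\times X)\}_{m,n}$ is an honest alternative skeleton, and two of your three steps are sound: $\holim_n K'((U_n\times X)/G)\simeq K'_G(X)^{\compl}_{I_G}$ is exactly Theorem~\ref{thm:Main-2}, and the descent claim $K'((U_n\times X)/G)\simeq\holim_m K'(G^m\times U_n\times X)$ does follow for special $G$ by refining along a Zariski cover that trivializes the torsor and using the extra-degeneracy argument (cf.\ Lemma~\ref{lem:K-simplex-21}) plus a Fubini swap.

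The real gap is in your third step, and your stated justification is wrong. You claim $\holim_n K'(G^m\times U_n\times X)\simeq K'(G^m\times X)$ because ``the bad locus $V_n\setminus U_n$ has codimension tending to infinity.'' But high codimension of the complement does \emph{not} kill its $K'$-theory: the localization sequence identifies the cofiber of $K'(G^m\times V_n\times X)\to K'(G^m\times U_n\times X)$ with a shift of $K'(G^m\times(V_n\setminus U_n)\times X)$, which has nonvanishing $\pi_0, \pi_1, \ldots$ no matter how small $\dim(V_n\setminus U_n)$ is. There is no ``connectivity bound on the bisimplicial tower'' of the kind you gesture at. What actually makes the inner limit collapse is a different mechanism: the regular closed immersions $U_n\hookrightarrow U_{n+1}$ are $\A^1$-nullhomotopic (e.g.\ for the standard gadget $U_n=\A^n\setminus 0$ one has the explicit homotopy $H(x,t)=(tx,(1-t)v_0)$), so the bonding maps on the reduced summand $\widetilde{K'}(G^m\times U_n\times X)$ are zero and the limit degenerates to the constant subtower $K'(G^m\times X)$. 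This is a homotopy-invariance phenomenon, not a codimension one, and it is neither stated nor proved in your proposal; establishing it for a general admissible gadget, and checking it is compatible with the Fubini swap and with the identification of the resulting map with $\wt{\pi}^*$, is precisely where the real content of Part~(1) lies.

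Your Parts~(2) and~(3) also fall short of the stated generality. For Part~(2) you posit a transfer splitting $R(H)_\Q\to R(G)_\Q$ compatible with derived completion and with the bar-construction pullbacks, but the transfer at the bar level is pushforward along the closed immersions $G^m\times X\hookrightarrow H^m\times(H\stackrel{G}{\times}X)$, and there is no reason for the composite $\mathrm{pullback}\circ\mathrm{transfer}$ to be an equivalence (it is multiplication by a non-unit Euler-type class); the paper instead derives Part~(2) from Part~(1) via \'etale descent for rationalized $K'$-theory along an explicit Morita bisimplicial space (Lemmas~\ref{lem:K-simplex-22} and~\ref{lem:Bar-Morita}), which requires no such splitting. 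For Part~(3), Thomason's theorem plus \cite[Cor.~7.3.6.6]{Lurie-SAG} only recovers the case of a \emph{separably closed} field (as Remark~\ref{remk:Main-rem-0}(2) already points out); the paper obtains the statement for every $k$ containing all roots of unity by running the $\A^1$-homotopy argument of Part~(1) for Bott-inverted $K$-theory, which is again representable in $\sS\sH(k)$, and then the same Morita/\'etale-descent reduction as in Part~(2). Your Part~(3) would need a separate argument to descend from $k_s$ to $k$, which you do not supply.
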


\begin{remk}\label{remk:Main-rem-0}
\begin{enumerate}
  \item
    The failure of Galois descent for algebraic $K$-theory of fields
  shows that one should not expect \thmref{thm:Main-0} (2) to be true with integral
  coefficients if $G$ is non-special.
\item
  If $k$ is separably closed, then 
the homotopy groups of $K'_G(X,{\Z}/{m})[\beta^{-1}]$ are
finitely generated $R(G)$-modules, and in particular, commute with Lurie's completion
(cf. \cite[Cor.7.3.6.6]{Lurie-SAG}).
Using this observation, item (3) of the
theorem also follows from Thomason's theorem (the main result of
\cite{Thomason-Duke-1}) if $k$ is separably closed.
Item (3) can therefore be interpreted as an extension of
Thomason's theorem to other fields.
\item
We also remark that one can replace $K'$-theory by $K$-theory (of perfect complexes)
in the theorem if $X$ is regular.
\end{enumerate}
\end{remk}

Recall that given a $G$-equivariant morphism $f \colon X \to Y$ between smooth
$k$-schemes with $G$-actions, it is a hard problem to determine
what kind of weak equivalence between $X$ and $Y$ would imply a weak equivalence
between their equivariant $K$-theory.
This makes the following application of \thmref{thm:Main-0} relevant.

\begin{cor}\label{cor:Main-1}
  Let $k$ be a field and $G$ a linear algebraic group over $k$. Let $f \colon
  X \to Y$ be a $G$-equivariant morphism between smooth $k$-schemes with $G$-actions.
  Assume that $f$ is an $\A^1$-weak equivalence (after ignoring the $G$-action) in the
  sense of Morel-Voevodsky. We then have the following.
  \begin{enumerate}
    \item
    The map $f^* \colon K_G(Y, \Q)^{\compl}_{I_G} \to
    K_G(X, \Q)^{\compl}_{I_G}$ is a weak equivalence.
  \item
    If $G$ is special, the map $f^* \colon K_G(Y)^{\compl}_{I_G} \to
    K_G(X)^{\compl}_{I_G}$ is a weak equivalence.
  \item
    If  $k$ contains all roots of unity and $m \in k^\times$, the map
    $f^* \colon  \left(K_G(Y, {\Z}/{m})[\beta^{-1}]\right)^{\compl}_{I_G} \to
    \left(K_G(X, {\Z}/{m})[\beta^{-1}]\right)^{\compl}_{I_G}$
    is a weak equivalence.
  \end{enumerate}
\end{cor}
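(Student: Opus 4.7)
The plan is to deduce the corollary from \thmref{thm:Main-0} together with the $\A^1$-invariance of ordinary $K$-theory applied levelwise to the bar construction. Since \thmref{thm:Main-0} identifies the completed equivariant $K$-theory of a smooth $G$-scheme with the ordinary $K$-theory of its bar construction, and the level-$n$ piece of this bar construction is just the product $G^n\times X$, the assumption that the (non-equivariant) map $f$ is an $\A^1$-weak equivalence should propagate through these products and give the desired conclusion.

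To execute this, I would first invoke the hypothesis that $X,Y$ are smooth, so that $K_G(X)\simeq K'_G(X)$, $K_G(Y)\simeq K'_G(Y)$, and $K(X^\bullet_G)\simeq K'(X^\bullet_G)$ levelwise (cf.\ \remref{remk:Main-rem-0}(3)). The $G$-equivariant morphism $f$ induces a map of simplicial schemes $f^\bullet_G \colon X^\bullet_G \to Y^\bullet_G$ whose component at level $n$ is $\id_{G^n}\times f$. Applying \thmref{thm:Main-0} to $X$ and $Y$ in each of the three coefficient settings would produce a homotopy commutative square
\begin{center}
\begin{tikzcd}
K_G(Y)^{\compl}_{I_G} \arrow[r, "\simeq"] \arrow[d, "f^*"'] & K(Y^\bullet_G) \arrow[d, "(f^\bullet_G)^*"] \\
K_G(X)^{\compl}_{I_G} \arrow[r, "\simeq"'] & K(X^\bullet_G)
\end{tikzcd}
\end{center}
whose horizontal maps are weak equivalences. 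It therefore suffices to prove that $(f^\bullet_G)^*$ is a weak equivalence of spectra, and similarly with $\Q$-coefficients or $(\Z/m)[\beta^{-1}]$-coefficients.

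Since $K(-^\bullet_G) = \holim_n K(G^n\times -)$ and $\holim$ preserves levelwise weak equivalences, this reduces to verifying that $\id_{G^n}\times f \colon G^n\times X \to G^n\times Y$ induces a weak equivalence on ordinary $K$-theory for every $n\geq 0$. The functor $(-)\times G^n$ preserves $\A^1$-weak equivalences between smooth $k$-schemes, because the Cartesian product is a symmetric-monoidal Quillen bifunctor on the Morel--Voevodsky motivic model category and every smooth $k$-scheme is cofibrant. Hence $\id_{G^n}\times f$ is itself an $\A^1$-weak equivalence of smooth $k$-schemes, and ordinary algebraic $K$-theory --- being representable on smooth schemes in $SH(k)$ by the motivic spectrum $KGL$ --- sends it to a weak equivalence of spectra. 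The rational and Bott-inverted mod-$m$ variants would follow by the same representability argument applied to $KGL_{\Q}$ and to $(KGL/m)[\beta^{-1}]$ in place of $KGL$. The main technical point I would need to pin down is the stability of $\A^1$-weak equivalences under product with the smooth scheme $G^n$; everything else in the argument is formal given \thmref{thm:Main-0}.
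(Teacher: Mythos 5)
Your proposal is correct and follows essentially the same route as the paper: reduce via \thmref{thm:Main-0} to showing $(f^\bullet_G)^*$ is a weak equivalence, reduce by the homotopy limit to the level-$n$ maps $\id_{G^n}\times f$, and observe these are $\A^1$-weak equivalences to which representability of $K$-theory applies. The paper simply cites \cite[Lem.~3.2.13]{Morel-Voevodsky} for the stability of $\A^1$-weak equivalences under product with a smooth scheme, which resolves the one technical point you flagged.
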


\subsection{Equivariant $K$-theory via ordinary $K$-theory of Borel construction}
\label{sec:Borel-prob}
The second approach to algebraic Atiyah-Segal theorem
is via the Borel construction. The Borel equivariant cohomology theory is a
classical concept in topology. Using their $\A^1$-homotopy theory of
schemes, Morel-Voevodsky \cite{Morel-Voevodsky} introduced Borel construction
for linear algebraic groups in algebraic geometry. Totaro \cite{Totaro1}
and Edidin-Graham \cite{EG-Inv} used this construction to define equivariant
motivic cohomology while it was used in \cite{Krishna-Crelle} to
define Borel equivariant $K$-theory.

Let $k$ be a field and $G$ a linear algebraic group over $k$. A Borel construction
for a $G$-action on an algebraic space $X$ is an ind-space
$X_G := \{X \stackrel{G}{\times} U_n\}_{n \ge 1}$, where $\{(V_n, U_n)\}_{n \ge 1}$
(known as an admissible gadget)
is an increasing (in terms of inclusion) sequence of pairs of $k$-schemes
such that $V_i$ is a finite-dimensional rational representation of $G$ and
$U_i \subset V_i$ is $G$-invariant open which is a $G$-torsor over a scheme.
These pairs have the
advantage that the quotients $X \stackrel{G}{\times} U_n$ always exist as
algebraic spaces. 

By its definition, the Borel construction $X_G$ depends
on the choice of the admissible gadget. But its $K'$-theory
$K'(X_G) := \holim_n K'(X \stackrel{G}{\times} U_n)$ does not. We refer to
\S~\ref{sec:Borel-K} for more details.
The $G$-equivariant projection maps $p_n \colon X \times U_n \to X$ define  
a map of ind-stacks $\wt{p} \colon X_G \to [X/G]$ and hence a map
$\wt{p}^* \colon K'_G(X) \to K'(X_G)$. The Borel construction approach to
algebraic Atiyah-Segal theorem asks if Lurie's $I_G$-completion of
$K'_G(X)$ could be described by the ordinary $K'$-theory of $X_G$.
We answer this as follows. 

\begin{thm}\label{thm:Main-2}
Let $k$ be a field and $G$ a linear algebraic group over $k$ acting on an algebraic
  space $X$. Then the pull-back map $\wt{p}^*$ induces a weak equivalence of spectra
    \[
    \wt{p}^* \colon K'_G(X)^{\compl}_{I_G}  \xrightarrow{\simeq} K'(X_G).
    \]
   \end{thm}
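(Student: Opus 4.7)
The plan is to prove \thmref{thm:Main-2} by a direct localization analysis of the Borel construction, which reduces the statement to Lurie's characterization of the derived $I_G$-completion. Begin by observing that since $G$ acts freely on each $U_n$ appearing in an admissible gadget, one has $K'_G(X \times U_n) \simeq K'(X \stackrel{G}{\times} U_n)$, so $K'(X_G) = \holim_n K'_G(X \times U_n)$. The $G$-equivariant projections $X \times U_n \to X$ induce a canonical map $K'_G(X) \to \holim_n K'_G(X \times U_n)$, and the task is to show that this map becomes a weak equivalence after $I_G$-completing the source.

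To analyse the pro-system $\{K'_G(X \times U_n)\}$, combine homotopy invariance $K'_G(X) \simeq K'_G(X \times V_n)$ with the equivariant localization sequence for the closed immersion $V_n \setminus U_n \inj V_n$ to produce the cofiber sequence
\[
K'_G(X \times (V_n \setminus U_n)) \to K'_G(X) \to K'_G(X \times U_n).
\]
The technical core is that the pushforward $K'_G(X \times (V_n \setminus U_n)) \to K'_G(X)$ factors through $I_G^{c_n} \cdot K'_G(X)$ for some $c_n \to \infty$. The reason is that this pushforward is, via the self-intersection/excess-intersection formula, controlled by the $K$-theoretic Euler class $\lambda_{-1}(V_n^*) \in R(G)$ of the representation $V_n$, and Euler classes of representations without trivial summands lie in $I_G$. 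The prototypical case $G = \G_m$, $V_n = \A^n$, $U_n = \A^n \setminus 0$ is elementary: the pushforward becomes multiplication by $(1-t)^n$, giving $K_0(\P^{n-1}) = R(\G_m)/(1-t)^n$. The general case follows by restriction to a split maximal torus together with Weyl-group descent (for reductive $G$) and Levi decomposition (for arbitrary linear algebraic $G$). This yields a pro-equivalence of pro-spectra of $R(G)$-modules
\[
\{K'_G(X \times U_n)\}_n \simeq \{K'_G(X) \otimes^L_{R(G)} R(G)/I_G^{c_n}\}_n.
\]

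Passing to the homotopy limit and invoking Lurie's formula $M^{\compl}_I \simeq \holim_n M \otimes^L_R R/I^n$ from \cite[Ch.~7.3]{Lurie-SAG} then yields the desired equivalence $\holim_n K'_G(X \times U_n) \simeq K'_G(X)^{\compl}_{I_G}$. The main obstacle will be establishing the Euler-class/$I_G$-power bound in the stated generality: for tori it is essentially an elementary computation, but the passage to arbitrary linear algebraic groups requires a restriction-to-torus argument with uniform codimension control, together with a verification that the restriction functor on $R(G)$-modules is sufficiently compatible with the $I_G$-filtration. A secondary subtlety is that the argument produces only a pro-system equivalence at first; Lurie's framework is what converts this into an honest weak equivalence of spectra, since the derived (rather than naive) completion is essential for killing potential $\lim^{\!1}$-obstructions in the homotopy groups.
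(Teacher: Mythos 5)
Your high-level strategy --- homotopy invariance plus the equivariant localization sequence to identify the cofiber of $K'_G(X\times(V_n\setminus U_n))\to K'_G(X)\simeq K'_G(X\times V_n)$ with $K'_G(X\times U_n)$, bound the pushforward by Euler classes in powers of $I_G$, and then invoke Lurie's description of the derived $I$-completion as a homotopy limit of quotients --- is exactly the engine of the paper's proof. For split tori this is precisely Lemma~\ref{lem:SIF} and Lemma~\ref{lem:Compln-thm-2}, and your $(1-t)^n$ model computation is the paper's as well.

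The gap is in your sentence ``the general case follows by restriction to a split maximal torus together with Weyl-group descent (for reductive $G$) and Levi decomposition (for arbitrary linear algebraic $G$).'' This is a genuine missing piece, for two compounding reasons. First, the direct Euler-class bound for a general admissible gadget of a non-torus $G$ does not go through as stated, because the closed complement $V_n\setminus U_n$ is typically singular and stratified rather than a smooth $G$-invariant subscheme, so the self-intersection formula cannot be applied naively; the paper evades this by using a very specific admissible gadget for a torus in which the complement is built from origins $\{0\}\subset L_{\chi_j}^{\oplus i}$, and then never directly computes the tower for a non-torus. Second, and more importantly, the passage from $T$ to $G$ cannot be effected by abstract Weyl descent and Levi decomposition alone: over a general base field $k$ one must contend with non-split tori, non-special groups, and disconnected groups. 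The paper handles this in two stages: for $G$ special split reductive it exhibits $\Phi_{[X/G]}$ as a \emph{retract} of $\Phi_{[X/T]}$ via the flag bundle $G/B$ (Thomason's equivalence $K'_B\simeq K'_T$ together with the projection-formula retraction for $X\times G/B\to X$, see the diagrams \eqref{eqn:AS-Main-1} and \eqref{eqn:AS-Main-2}), and for general $G$ it embeds $G\inj GL_n$ and transfers to the Morita space $Y=X\stackrel{G}{\times}GL_n$. That last step requires verifying that the Morita isomorphism of stacks is compatible with the Borel construction on both sides, which is not automatic and is the content of Lemma~\ref{lem:Mixed-iso}. You identify the Euler-class bound ``in stated generality'' as the main obstacle, and you are right that it is the obstacle; but the proof does not establish that bound in generality --- it sidesteps it by the retract and Morita reductions, and those are exactly what is missing from your argument.
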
 

\begin{remk}\label{remk:Main-2-0}
  Some remarks are in order.
  \begin{enumerate}
  \item
    We can replace $K'$-theory by $K$-theory in \thmref{thm:Main-2} if $X$ is
    regular.
  \item
    One may note that our description of the equivariant $K$-theory via
    Borel construction
    holds integrally for all groups in contrast to the description via the bar
    construction. The reason for this is that \thmref{thm:Main-0} is much more
    subtle than \thmref{thm:Main-2} because the admissible gadgets are easier
    to deal with and one has flexibility while choosing them. This isn't true
    with the bar construction. \thmref{thm:Main-2} is in fact a key step in
    the proof of more challenging \thmref{thm:Main-0}.
\item
  Using the $T$-filtrability of smooth projective schemes with torus action
  and \cite[Cor.7.3.6.6]{Lurie-SAG},
  the completion theorems of \cite{Krishna-Crelle} can be recovered
  from \thmref{thm:Main-2}.
\item
  When $k$ is an infinite perfect field, a version of \thmref{thm:Main-2} was
  proven by Carlsson-Joshua \cite{Carlsson-Joshua-Adv}. There are two main differences
  between their result and ours. First, in the completion theorem of
  \cite{Carlsson-Joshua-Adv}, the authors use Carlsson's derived completion
  \cite{Carlsson} whose definition is more involved and 
  which is {\sl{a priori}} different from Lurie's completion.
  In particular, it is unclear if it coincides with the $I_G$-completion on the
  integral homotopy groups of the $K$-theory spectrum 
  (except when the underlying group is a split torus in which case the two
  completions coincide). 

Second, the authors of \cite{Carlsson-Joshua-Adv} fix an
  ambient group (which is a product of general linear groups) and assume other
  groups (including $G$) to be subgroups of this fixed
  ambient group in the construction of Borel $K$-theory and in their main
  theorems. As a result, their right hand side of $\wt{p}^*$ depends on
  the fixed choice of this ambient group. It is as such not functorial in
  $G$. In contrast, \thmref{thm:Main-2} does not rely on any such assumption.
\end{enumerate}
  \end{remk}

\subsection{Completion of equivariant $KH$-theory}\label{sec:KH-Intro}
Our third result extends Theorems~\ref{thm:Main-0} and ~\ref{thm:Main-2}
to the equivariant homotopy $K$-theory under some restrictions.
Before we state it, recall that
 a linear algebraic group $G$ over a field $k$ is called $nice$ 
if it is an extension of a finite linearly reductive
group scheme over $k$ by a group scheme over $k$ of multiplicative type (cf.
\cite[\S~2]{Hoyois-Krishna}). The category of nice group schemes includes
diagonalizable groups and finite constant groups whose orders are invertible
in $k$. For an algebraic space $X$ over $k$
with an action of a linear algebraic group $G$,
let $KH_G(X)$ denote the non-connective equivariant homotopy $K$-theory spectrum
for $X$ (cf. \cite[\S~5]{Krishna-Ravi}).
We prove the following.

\begin{thm}\label{thm:Main-3}
  Let $k$ be a field and $G$ a linear algebraic group over $k$ acting on a $k$-scheme
  $X$. Assume that either \\
  \hspace*{.5cm} $({\rm A})$  $X$ is a toric variety over $k$ on which $G$ acts as the
  dense torus, or \\
  \hspace*{.5cm} $({\rm B})$ $\Char(k) = 0$ and $G$ acts on $X$ with
  nice stabilizers. \\
Then the following hold.
  \begin{enumerate}
  \item
    The maps of spectra 
  \[
  (i) \ \wt{\pi}^* \colon KH_G(X, \Q)^{\compl}_{I_G} \to KH(X^\bullet_G, \Q) \ \ 
  \mbox{and} \ \ (ii) \ \wt{p}^* \colon
  KH_G(X)^{\compl}_{I_G} \to KH(X_G)
  \]
  are weak equivalences.
  Moreover, $(i)$ holds integrally if $G$ is special.
\item
  If $k$ contains all roots of unity and $m \in k^\times$, then the maps of spectra
  \[
  (i) \ \wt{\pi}^* \colon (KH_G(X, {\Z}/m)[\beta^{-1}])^{\compl}_{I_G} \to
    KH(X^\bullet_G, {\Z}/m)[\beta^{-1}] \ \ 
    \mbox{and}
    \]
    \[
    \ \ (ii) \ \wt{p}^* \colon
  (KH_G(X, {\Z}/m)[\beta^{-1}])^{\compl}_{I_G} \to KH(X_G, {\Z}/m)[\beta^{-1}]
  \]
  are weak equivalences.
  \end{enumerate}
\end{thm}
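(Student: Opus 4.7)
The plan is to reduce \thmref{thm:Main-3} to \thmref{thm:Main-0} and \thmref{thm:Main-2} by exploiting the definition of equivariant homotopy $K$-theory as a geometric realization, $KH_G(X) \simeq |K_G(X \times \Delta^\bullet)|$, where $\Delta^\bullet$ is the standard cosimplicial affine $k$-scheme on which $G$ acts trivially. Because $G$ acts only on the $X$-factor, we have $(X \times \Delta^n) \stackrel{G}{\times} U_m = (X \stackrel{G}{\times} U_m) \times \Delta^n$ and $(X \times \Delta^n)^\bullet_G = X^\bullet_G \times \Delta^n$, so both the Borel and the bar constructions for $X \times \Delta^n$ are obtained from the corresponding ones for $X$ by taking products with $\Delta^n$.

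The first step is to verify that Lurie's derived $I_G$-completion commutes with geometric realization of simplicial spectra. This follows from the fact that completion is a localization in the presentable stable $\infty$-category of $R(G)$-module spectra and hence preserves all colimits when computed inside the $I_G$-complete subcategory. With this in hand, I would apply \thmref{thm:Main-2} termwise to obtain
\[
K_G(X \times \Delta^n)^{\compl}_{I_G} \simeq \holim_m K\bigl(X \stackrel{G}{\times} U_m \times \Delta^n\bigr),
\]
and then take the geometric realization in $n$. A Fubini-style interchange of $|\cdot|$ with $\holim_m$, combined with the identity $|K(Y \times \Delta^\bullet)| \simeq KH(Y)$, then yields
\[
KH_G(X)^{\compl}_{I_G} \simeq \holim_m KH\bigl(X \stackrel{G}{\times} U_m\bigr) = KH(X_G),
\]
which is (1)(ii). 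Part (1)(i) follows by the same argument using \thmref{thm:Main-0} in place of \thmref{thm:Main-2}: here the termwise output is the $K$-theory of the bisimplicial scheme $X^\bullet_G \times \Delta^\bullet$, whose diagonal realization is $KH(X^\bullet_G)$. Part (2) is a direct variant, since inverting the Bott element and reducing modulo $m$ each commute with both completion and realization.

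Hypotheses (A) and (B) enter the argument at two places. First, they allow us to pass between $K$-theory (used in the definition of $KH$) and $K'$-theory (in which \thmref{thm:Main-0} and \thmref{thm:Main-2} are formulated) on the cosimplicial diagrams we need: in case (A) this is achieved via a smooth toric refinement of the fan of $X$, and in case (B) via equivariant resolution of singularities for Deligne--Mumford stacks with nice stabilizers in characteristic zero combined with equivariant $cdh$-descent for $KH_G$. Second, they provide the uniform cohomological bounds required to interchange $|\cdot|$ with $\holim_m$ in the Borel construction. This interchange, which is not automatic for completions of spectra, is the main technical obstacle of the proof; the remainder is a formal consequence of \thmref{thm:Main-0} and \thmref{thm:Main-2} together with the functoriality of Lurie's derived completion.
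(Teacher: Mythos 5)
Your proposal takes a genuinely different route from the paper, but it has gaps that I don't believe can be patched as written.

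The central step of your argument is the termwise application of \thmref{thm:Main-2} to the cosimplicial diagram $n \mapsto K_G(X \times \Delta^n)$, followed by geometric realization. But \thmref{thm:Main-2} is a statement about $K'$-theory of pseudo-coherent complexes, not $K$-theory of perfect complexes (the paper explicitly notes in Remark~\ref{remk:Main-2-0}(1) that the $K$-theory version only holds when $X$ is regular, and $X \times \Delta^n$ is singular whenever $X$ is). If you replace $K_G$ by $K'_G$ in your display so that \thmref{thm:Main-2} really applies, homotopy invariance of $K'$-theory makes the cosimplicial spectrum $K'_G(X \times \Delta^\bullet)^{\compl}_{I_G}$ essentially constant, and its realization just returns $K'_G(X)^{\compl}_{I_G}$ --- you do not obtain $KH_G(X)^{\compl}_{I_G}$. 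So the realization scaffolding does not actually manufacture the $KH$ statement from the $K'$ statement.

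There are two further issues you flag but do not resolve. First, the assertion that $I_G$-completion ``commutes with geometric realization'' needs care: completion is a left adjoint onto $\Mod^{I_G\text{-comp}}_{K(BG)}$, so it preserves colimits taken \emph{inside the complete subcategory}, which means the realization on the left of your display is a completed realization, not a plain one. Showing these agree is exactly a colimit/limit interchange problem. Second, the interchange of $|\cdot|$ with $\holim_m$ on the Borel side is acknowledged as ``the main technical obstacle'' but no argument is offered; the vague reference to ``uniform cohomological bounds'' coming from (A) or (B) is not something the paper establishes or uses, and I do not see how to extract it from those hypotheses.

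The paper's own proof (\thmref{thm:AS-KH} and \corref{cor:KH-Thom*}) sidesteps all of this: it never commutes completion past a colimit and never works with the $\Delta^\bullet$ diagram directly. Instead, the role of hypotheses (A) and (B) is to supply $G$-equivariant resolutions of singularities (with the stabilizer condition preserved, via \lemref{lem:Nice-sub}), which yields $G$-equivariant abstract blow-up squares; equivariant cdh descent for $KH_G$ (\cite[Thm.~6.2]{Hoyois-Krishna}) then produces homotopy-Cartesian squares of completed $KH$-spectra, and one inducts on $\dim X$, with the base case being the smooth case where $KH_G \simeq K'_G$ so that \thmref{thm:AS-Main} applies directly. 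You mention cdh descent and resolution only parenthetically, as a tool for ``passing between $K$ and $K'$,'' but in the actual proof it is the entire inductive structure; if you reorganize your argument around it, the realization machinery becomes unnecessary and the interchange problems disappear.
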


Since the Quillen-Thomason $K$-theory agrees with its $\A^1$-invariant version with
finite coefficients prime to the characteristic, we get the following completion
theorem for the equivariant $K$-theory (of perfect complexes).

\begin{cor}\label{cor:Main-4}
Assume that the hypotheses of \thmref{thm:Main-3} are satisfied and $m \in k^\times$.  
 Then the following hold.
 \begin{enumerate}
   \item
    The map $\wt{p}^* \colon K_G(X, {\Z}/m)^{\compl}_{I_G} \to K(X_G, {\Z}/m)$
    is a weak equivalence.
  \item
    If $G$ is special, 
 $\wt{\pi}^* \colon K_G(X, {\Z}/m)^{\compl}_{I_G} \to K(X^\bullet_G, {\Z}/m)$
    is a weak equivalence.
 \item
    Item (2) of \thmref{thm:Main-3} is true also $K$-theory.
   \end{enumerate}
  \end{cor}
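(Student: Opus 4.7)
The plan is to deduce the corollary from \thmref{thm:Main-3} via the standard agreement between $K$-theory and homotopy $K$-theory with mod $m$ coefficients when $m \in k^\times$. More precisely, the comparison map $K(-,\Z/m) \to KH(-,\Z/m)$ is a weak equivalence on every quasi-projective $k$-scheme and every algebraic space of finite type over $k$: this follows from the combination of $\A^1$-invariance of $K(-,\Z/m)$ on regular schemes and cdh descent, a result tracing back to Weibel and Suslin--Voevodsky. Applying this term-by-term to the smooth pieces $X \stackrel{G}{\times} U_n$ of the Borel construction and to the terms $G^n \times X$ of the bar construction, then passing to the homotopy limit, yields
\[
K(X_G,\Z/m) \simeq KH(X_G,\Z/m) \quad \text{and} \quad K(X^\bullet_G,\Z/m) \simeq KH(X^\bullet_G,\Z/m),
\]
together with their Bott-inverted analogues.

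For the equivariant side, the task is to prove $K_G(X,\Z/m) \simeq KH_G(X,\Z/m)$ under the hypotheses of \thmref{thm:Main-3}. Using the construction of equivariant $KH$ from \cite{Krishna-Ravi} as the geometric realization $|K_G(X \times \Delta^\bullet)|$, this reduces to the $\A^1$-homotopy invariance of $K_G(-,\Z/m)$ on $X$. In case $({\rm A})$, where $X$ is a toric variety and $G$ acts as the dense torus, this follows by descending along the torus orbit stratification to $T$-invariant affine opens, on which the statement is classical. In case $({\rm B})$, nice stabilizers in characteristic zero, the $\A^1$-invariance with $\Z/m$ coefficients ($m \in k^\times$) is part of the Krishna--Ravi framework used to set up equivariant $KH$.

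With these two comparisons in place, the corollary is obtained by reducing \thmref{thm:Main-3} modulo $m$. Since Lurie's derived $I_G$-completion commutes with smashing against the mod-$m$ Moore spectrum --- which is a perfect module over the sphere spectrum --- one has $(KH_G(X))^{\compl}_{I_G}$ reduced mod $m$ equivalent to $(KH_G(X,\Z/m))^{\compl}_{I_G}$, and analogously for the Borel and bar targets. Part (1) of the corollary then follows from \thmref{thm:Main-3}(1)(ii), part (2) from the integral statement of \thmref{thm:Main-3}(1)(i) that is available when $G$ is special, and part (3) from \thmref{thm:Main-3}(2). The main obstacle is the equivariant comparison $K_G(X,\Z/m) \simeq KH_G(X,\Z/m)$, because equivariant $K$-theory is in general not $\A^1$-invariant, and one must genuinely use the toric or nice-stabilizer hypothesis to access this agreement.
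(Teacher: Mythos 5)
Your overall strategy matches the paper's: reduce the corollary to the $KH$-theory statements via the agreement of $K$ and $KH$ with $\Z/m$-coefficients, together with the fact that Lurie's derived $I_G$-completion commutes with smashing against the compact spectrum $\1/m$. Your observation about the Moore spectrum is correct and worth noting explicitly (the paper leaves it implicit), as is your use of the Suslin--Voevodsky/Weibel comparison on the non-equivariant side.

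Where the proposal falls short is the equivariant comparison $K_G(X,\Z/m) \to KH_G(X,\Z/m)$, which you correctly flag as the main obstacle but do not actually establish. In case~(A) your proposed reduction ``along the torus orbit stratification to $T$-invariant affine opens'' does not obviously give the result: even on an affine toric variety with dense-torus action, the claim that $K_G(-,\Z/m) \to KH_G(-,\Z/m)$ is a weak equivalence is not classical and still requires an argument about the nilinvariance and $\A^1$-invariance of $K_G(-,\Z/m)$ for the tame quotient stack. In case~(B) you simply assert that the relevant invariance ``is part of the Krishna--Ravi framework''; this defers the real content rather than proving it. The paper disposes of both at once by invoking the precise structural results \cite[Cor.~3.17, Prop.~7.15]{Hoyois-Krishna}, which give exactly that $K(\sX;\Z/m) \to KH(\sX;\Z/m)$ is a weak equivalence when $\sX$ is a stack with nice stabilizers and $m$ is invertible --- and this covers the toric case because a toric variety with dense-torus action automatically has nice (diagonalizable) stabilizers. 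Replacing your two heuristic paragraphs with this single citation closes the gap and brings the argument in line with the paper's.
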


\vskip .2cm

\begin{remk}\label{remk:Main-4-0}
  A version  (where the $I_G$-completion is replaced by Carlsson's completion)
  of the Borel part (equivalence of $ \wt{p}^*$ in (1)(ii)) in \thmref{thm:Main-3}
  is claimed in \cite{CJP} for action
  of any linear algebraic group $G$ over an infinite perfect field $k$ on any
  normal quasi-projective $k$-scheme $X$. However, the key steps 
  of \cite{CJP} are its Lemma~2.8 and
  Proposition~2.9, whose correct proofs are not yet known.   
\end{remk}

\subsection{Completions at other ideals of $R(G)$}\label{sec:Max-other}
The Atiyah-Segal completion theorem describes the completion of equivariant
$K$-theory at the augmentation ideal. In order to fully recover equivariant
$K$-theory from the ordinary $K$-theory, this completion alone is however not 
enough and one instead needs to describe the completions of equivariant
$K$-theory at all maximal ideals of $R(G)$ in terms of ordinary $K$-theory.
This approach of describing equivariant algebraic $K$-theory was first studied by
Edidin-Graham \cite{EG-Duke}.

Let $k$ be an algebraically closed field of characteristic zero and
$G$ a linear algebraic group acting on
an algebraic space $X$. Let $g \in G(k)$ be a semisimple element and let
$\Psi \subset G(k)$ be the conjugacy class of $g$. We let
$R_k(G) = R(G) \otimes_{\Z} k$ and let $\fm_\Psi \subset R_k(G)$
denote the maximal ideal defined by $\Psi$. Let $X^g$ denote the fixed point
subspace of $X$ for the action of $g$ and let $Z_g \subset G$ denote the centralizer
of $g$. Let $K'_G(-, k)$ (resp. $K'(-, k)$) denote the equivariant (resp.
ordinary) $K$-theory functor with $k$-coefficients. 
The following result addresses the completion problem at an arbitrary  maximal ideal of
$R_k(G)$.

\begin{thm}\label{thm:Main-5}
  There are natural weak equivalences of spectra
  \[
  \wt{\pi}^* \colon K'_G(X,k)^{\compl}_{\fm_{\Psi}} \xrightarrow{\simeq}
   K'((X^g)^\bullet_{Z_g}, k); \ \ \ 
\wt{p}^* \colon K'_G(X,k)^{\compl}_{\fm_{\Psi}} \xrightarrow{\simeq} K'((X^g)_{Z_g}, k).
  \]
 \end{thm}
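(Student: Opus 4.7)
The plan is to reduce \thmref{thm:Main-5} to the augmentation-ideal completion theorems \thmref{thm:Main-0}(2) and \thmref{thm:Main-2} via (i) a Thomason--Vistoli style concentration onto the fixed locus $X^g$ under the action of $H := Z_g$, and (ii) a translation by the central element $g$ converting the completion at $\fm_g \subset R_k(H)$ into the completion at the augmentation ideal $I_H$. Since $k$ has characteristic zero, the rationality hypothesis in \thmref{thm:Main-0}(2) is automatic after tensoring with $k$.

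\textbf{Step 1: Concentration at $\fm_\Psi$.} Because $g$ is semisimple and $\Char(k)=0$, the subgroup $\langle g \rangle$ is diagonalizable and the fixed locus $X^g \subset X$ exists as a closed algebraic subspace preserved by $H$. Combined with the Morita identification $K'_G(G \times^H Y, k) \simeq K'_H(Y, k)$, the inclusion $X^g \hookrightarrow X$ fits into a localization triangle whose cofibre is $K'_G(X \setminus G\cdot X^g, k)$. A Thomason--Vistoli concentration argument, carried out via d\'evissage along the stratification of $X \setminus G\cdot X^g$ by stabilizer conjugacy classes, shows that no stratum has a stabilizer $G$-conjugate to a subgroup containing $g$; hence the support of $K'_G(X \setminus G\cdot X^g, k)$ in $\Spec R_k(G)$ avoids $\fm_\Psi$ and this term is annihilated by Lurie's derived $\fm_\Psi$-completion. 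The triangle therefore yields
$$K'_G(X, k)^{\compl}_{\fm_\Psi} \xleftarrow{\simeq} K'_H(X^g, k)^{\compl}_{\fm_g},$$
where $\fm_g \subset R_k(H)$ is the maximal ideal at $g$, now central in $H$.

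\textbf{Step 2: Twisting to the augmentation ideal.} Since $g$ is central in $H$ and Weyl-invariant in any maximal torus, translation by $g$ induces a ring automorphism $\tau_g$ of $R_k(H)$ carrying $\fm_g$ onto $I_H$. Because $g$ acts trivially on $X^g$, the $g$-eigendecomposition of $H$-equivariant perfect complexes on $X^g$, together with the corresponding decomposition of $R_k(H)$ under $\tau_g$, provides a natural $R_k(H)$-module isomorphism $\phi \colon K'_H(X^g, k) \xrightarrow{\simeq} \tau_g^* K'_H(X^g, k)$. Completing both sides at $\fm_g$ and using the general identity $(\tau_g^* M)^{\compl}_{\fm_g} \simeq \tau_g^*(M^{\compl}_{I_H})$ gives
$$K'_H(X^g, k)^{\compl}_{\fm_g} \simeq K'_H(X^g, k)^{\compl}_{I_H}$$
as spectra.

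\textbf{Step 3: Invoking \thmref{thm:Main-0} and \thmref{thm:Main-2}.} Applied to $H$ acting on $X^g$ with $k$-coefficients, \thmref{thm:Main-0}(2) gives the equivalence
$$K'_H(X^g, k)^{\compl}_{I_H} \xrightarrow{\simeq} K'((X^g)^\bullet_H, k),$$
and \thmref{thm:Main-2} gives the analogous equivalence $K'_H(X^g, k)^{\compl}_{I_H} \xrightarrow{\simeq} K'((X^g)_H, k)$ for the Borel construction. Composing with the equivalences of Steps~1 and~2 defines the maps $\wt{\pi}^*$ and $\wt{p}^*$ of \thmref{thm:Main-5} and exhibits them as weak equivalences.

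\textbf{Main obstacle.} The heart of the argument is Step~1: extending Thomason--Vistoli concentration from smooth schemes with torus actions (where it is classical) to $K'$-theory of arbitrary algebraic spaces with arbitrary linear algebraic group actions, and working at the spectrum (not merely $\pi_*$) level. One must stratify $X$ by stabilizer conjugacy class, reduce each stratum via Morita to a smaller quotient stack in which no $G$-conjugate of $g$ appears as a stabilizer, and verify that the resulting $R_k(G)$-support avoids $\fm_\Psi$. A secondary subtlety lies in Step~2: making the twist $\phi$ natural in $X^g$ and compatible with the simplicial face maps of the bar construction and the ind-structure of the Borel construction, so that the composite identifies with the pullback maps $\wt{\pi}^*$ and $\wt{p}^*$ as stated.
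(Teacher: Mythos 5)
Your three-step outline (concentrate at $\fm_\Psi$, twist from $\fm_g$ to the augmentation ideal, then invoke the $I_G$-completion theorems) correctly reproduces the overall architecture of the paper's proof, which is carried out in \thmref{thm:Gen-AS-max} and \corref{cor:Thom-maximal} via the chain
$K'_G(X,k)^{\compl}_{\fm_\Psi} \xrightarrow{\Psi^g_X} K'_{Z_g}(X^g,k)^{\compl}_{\fm_g} \xrightarrow{t_{g^{-1}}} K'_{Z_g}(X^g,k)^{\compl}_{I_{Z_g}} \xrightarrow{} K'((X^g)^\bullet_{Z_g},k)$.
Your Steps~2 and~3 match the paper's twisting operator (\propref{prop:Twist-local-compl}) and the applications of \thmref{thm:AS-Main}, \thmref{thm:Bar-K-rational}. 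The problem is Step~1, which is where the real difficulty of the theorem lies, and your proposed mechanism does not work.

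Two concrete issues. First, the equivalence $K'_G(X,k)^{\compl}_{\fm_\Psi} \simeq K'_{Z_g}(X^g,k)^{\compl}_{\fm_g}$ is \emph{not} a concentration argument in the Thomason--Vistoli style for a single group. It involves a genuine change of groups from $G$ to $Z_g$, which no localization triangle in $G$-equivariant $K'$-theory can deliver by itself. Your proposed Morita identification $K'_G(G\times^{Z_g}X^g,k)\simeq K'_{Z_g}(X^g,k)$ would only help if the natural $G$-map $G\times^{Z_g}X^g\to X$ were a closed immersion onto $G\cdot X^g$; it is not, because $x_1=hx_2$ with $x_1,x_2\in X^g$ only forces $x_2\in X^{h^{-1}gh}$, not $h\in Z_g$. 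The paper instead factors $\Psi^g_X$ as (change of groups $\alpha_X$) $\circ$ (further completion $\beta_X$) $\circ$ ($\lambda_X^{-1}$, the inverse of pushforward from $X^g$), and the heart is proving $\beta_X\circ\alpha_X$ is an equivalence. This is done by \lemref{lem:GLn-EG-2}, which requires \lemref{lem:G-T-map} (a derived homotopy fixed point identification $K'_G(X,k)\simeq K'_T(X,k)^{hW}$), \lemref{lem:Torus-EG} (a spectral Chinese remainder theorem requiring \lemref{lem:Derived-CRT}), plus Vistoli's transitivity lemma for the $W$-action on $\Psi\cap T$, all under a hypothesis on $G$ and $Z_g$ (connected split reductive, simply connected commutator, same rank) that one must then remove by embedding $G$ into a bigger group and running Noetherian induction. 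None of this is implied by a stratification of $X\setminus G\cdot X^g$ by stabilizer types.

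Second, you assert that the $K'$-theory of $X\setminus G\cdot X^g$ is "annihilated by Lurie's derived $\fm_\Psi$-completion" because its $R_k(G)$-support avoids $\fm_\Psi$. This type of reasoning at the level of $\pi_*$ does not automatically pass to the spectrum level precisely because derived completion does not commute with homotopy groups; indeed, the paper explicitly flags this as the obstruction to deducing \thmref{thm:Max-Main} from the Edidin--Graham result at the level of $K'_*$-groups. What the paper actually proves (\lemref{lem:Null-Compln}, \corref{cor:Localization-0}) is only the $g$-central case, $g\in Z(G)$ acting with $X^g=\emptyset$, which suffices for the $\lambda_X$ step but not for the change of groups. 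In short, your main obstacle is genuine, and the sketch you give for overcoming it does not close the gap.
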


We also prove the analogous statement for the homotopy $K$-theory when $G$ acts
on $X$ with nice stabilizers.

\subsection{Thomason's completion of equivariant homology theories}
\label{sec:Hom**}
We now turn our attention to equivariant homology theory
(i.e., Hochschild, negative cyclic, cyclic and periodic cyclic homology)
of schemes over a field $k$ with group action.
Recall that these homology theories for commutative $k$-algebras can be computed
using explicit Hochschild complexes. Using their
{\'e}tale descent property for $k$-schemes,
one can also compute them for arbitrary schemes. However, the
equivariant Hochschild (and other) homology of a scheme with a group action is
defined abstractly as the Hochschild (and other)
homology theory of the dg category of equivariant
perfect complexes. As such, they are very difficult to compute in general,
just like equivariant $K$-theory.
It is therefore natural to ask if Thomason's completion
theorem holds for these equivariant
homology theories so that they could be computed using ordinary
homology theory. We settle this question as well.

For a $k$-scheme $X$, let $HH(X)$ (resp. $HC^{-}(X), HC(X), HP(X)$)
denote the Hochschild (resp. negative cyclic, cyclic, periodic cyclic) homology
of $X$ relative to $k$. For the action of a group $G$ on a $k$-scheme $X$, we let
$HH_G(X)$ (resp. $HC^{-}_G(X), HC_G(X), HP_G(X)$) denote the equivariant Hochschild
(resp. negative cyclic, cyclic, periodic cyclic) homology of $X$ relative to $k$.
These are objects of the derived category of $k$-vector spaces. We can therefore
consider them as spectra via the Eilenberg-MacLane functor.
Our result is the following.

\begin{thm}\label{thm:Main-6}
  Let $k$ be an algebraically closed field of characteristic zero.
  Let $G$ be a reductive group over $k$ acting on a smooth $k$-scheme $X$.
  Let $g \in G(k)$ be a semisimple element and let
  $\Psi \subset G(k)$ be the conjugacy class of $g$.
  Then for any smooth surjective map $p \colon U \to [{X^g}/{Z_g}]$ with
  $U \in \Sch_k$, one has a natural weak equivalence of spectra
  \begin{equation}\label{eqn:Homology-main-0*}
 \wt{\pi}^* \colon HH_G(X)^{\compl}_{\fm_\Psi} \xrightarrow{\simeq}
    \holim_{n} HH(U_n).
  \end{equation}
  The same is also true for negative cyclic, cyclic and periodic cyclic homology. 
\end{thm}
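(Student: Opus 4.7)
My plan is to mimic the two-step strategy behind Theorem 1.5 in the Hochschild setting, and then upgrade to the cyclic, negative-cyclic and periodic cyclic variants by applying the $S^1$-equivariant constructions $(-)_{hS^1}$, $(-)^{hS^1}$, $(-)^{tS^1}$. The two core pieces are a Thomason-type concentration that reduces $HH_G(X)^{\compl}_{\fm_\Psi}$ to the fixed-point stack $[X^g/Z_g]$, and a Borel-type identification of the resulting completion with the smooth-descent homotopy limit $\holim_n HH(U_n)$. The cover-independence of the right-hand side is built into the descent statement, which matches the ``any smooth surjective map'' flexibility of the theorem.

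For the concentration step, I would apply equivariant excision to the closed $Z_g$-equivariant embedding $X^g\hookrightarrow X$. This reduces the target equivalence
\[
\iota^*\colon HH_G(X)^{\compl}_{\fm_\Psi}\xrightarrow{\simeq} HH_{Z_g}(X^g)^{\compl}_{\fm_g}
\]
(with $\fm_g$ the image of $\fm_\Psi$ under the restriction $R_k(G)\to R_k(Z_g)$) to the vanishing $HH_G(X\setminus X^g)^{\compl}_{\fm_\Psi}\simeq 0$. In characteristic zero with $G$ reductive and $X$ smooth, the HKR theorem for quotient stacks identifies $HH_G(Y)$ with global sections of the polyvector algebra on the inertia stack $I[Y/G]=\{(y,h)\mid h\cdot y=y\}/G$, thereby placing the $R_k(G)$-module $HH_G(Y)$ over the image of $I[Y/G]\to \Spec R_k(G)$, which consists of conjugacy classes of elements with nonempty fixed locus in $Y$. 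For $Y=X\setminus X^g$ no point is fixed by $g$, so this image misses $\Psi$ and the $\fm_\Psi$-completion vanishes.

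For the Borel step, I would prove a Hochschild analog of Theorem 1.2: for any action of a linear algebraic $k$-group $H$ on a smooth $k$-scheme $Y$, the natural map
\[
HH_H(Y)^{\compl}_{I_H}\xrightarrow{\simeq}\holim_n HH(U_n)
\]
is a weak equivalence, where $U_\bullet$ is the \v{C}ech nerve of a smooth surjection $U\to [Y/H]$. Applied with $(H,Y)=(Z_g,X^g)$ and combined with the standard translation-by-$g$ device (available since $g$ is central in $Z_g$ and acts trivially on $X^g$) that identifies completion at $\fm_g$ with completion at $I_{Z_g}$, this identifies $HH_{Z_g}(X^g)^{\compl}_{\fm_g}$ with $\holim_n HH(U_n)$; composing with the concentration equivalence yields the theorem for $HH$. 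The cases of $HC^-$, $HC$ and $HP$ then follow by applying the $S^1$-equivariant functors levelwise and using that these commute both with derived completion (owing to the finiteness built into the HKR/inertia-stack picture) and with the relevant homotopy limits.

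The main obstacle is the Borel-type completion theorem for $HH$, i.e., the Hochschild analog of Theorem 1.2. While the $K$-theoretic proof uses inputs specific to $K$-theory (projective bundle formula, Thomason's localization), the HH version must be accessed either through the HKR identification with polyvector fields on the inertia stack of $[Y/H]$ or by a direct d\'evissage along the admissible-gadget filtration underlying the Borel construction. Once this is in place, the concentration step is comparatively routine via HKR, and the passage to the cyclic variants is formal through the SBI machinery.
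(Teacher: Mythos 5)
The overall two-step architecture you propose --- a concentration-at-$\fm_\Psi$ step reducing to $[X^g/Z_g]$ followed by an augmentation-ideal completion theorem for the smooth cover, with the twist $t_{g^{-1}}$ converting $\fm_g$-completion to $I_{Z_g}$-completion --- matches the paper's proof of \thmref{thm:Homology-main} exactly, including your use of the loop-space/HKR picture to see the vanishing of $HH_G(X\setminus X^g)^{\compl}_{\fm_\Psi}$ (the paper cites Chen's results for this). Your identification of the Hochschild analog of \thmref{thm:Main-2} as the ``main obstacle'' is also accurate, and the paper resolves it via Ben-Zvi--Nadler's formal HKR theorem and smooth descent for the completed odd tangent bundle $\wh{\T}_{\sX}[-1]$ (\propref{prop:H-Compln-8}), rather than via admissible gadgets.

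However, your final paragraph contains a genuine gap: the claim that ``the cases of $HC^-$, $HC$ and $HP$ then follow by applying the $S^1$-equivariant functors levelwise and \dots\ these commute both with derived completion \dots\ and with the relevant homotopy limits'' is false for $(-)_{hS^1}$. The homotopy fixed points $(-)^{hS^1}$ are a homotopy limit, so $HC^-$ does come out formally (\corref{cor:NH-functor}). But the homotopy orbits $(-)_{hS^1}$ are a homotopy \emph{colimit}, and you need to interchange this colimit with the homotopy limit $\holim_n$ over the \v{C}ech nerve and with the inverse limit defining the derived completion. Neither interchange is automatic, and the paper treats this as the central difficulty of the whole homology-theory part of the project (see \S\ref{sec:Layout}): it introduces the HKR filtration $\Fil^m_{\hkr}HH(U_n)$, shows the graded pieces have trivial $S^1$-action, and runs a careful induction on the filtration together with uniform homological boundedness of the relevant towers (Lemmas~\ref{lem:Chen-4}--\ref{lem:Chen-2}, and \lemref{lem:Chen-0}) in order to establish that $HH(U_\bullet)_{hS^1}\to HC(U_\bullet)$ is an equivalence (\thmref{thm:Chen-3}). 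The SBI sequence alone will not give you $HC$ or $HP$; you must first resolve this limit--colimit interchange, and your appeal to ``the finiteness built into the HKR/inertia-stack picture'' is too vague to serve as a substitute --- what is actually needed is boundedness below of $\sO(\sL\sX)$ (\lemref{lem:Chen-7}) plus a degreewise product-vs-sum comparison after truncation.
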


Note that $\holim_{n} HH(U_n)$ coincides with $HH((X^g)^\bullet_{Z_g})$
(the Hochschild homology of the bar construction associated to $Z_g$-action on
$X^g$) if we let $U = X^g$.  \thmref{thm:Main-6} therefore is a more general
result than \thmref{thm:Main-5}.
Note also that the special case of  \thmref{thm:Main-6} when $g$ is the identity
element of $G$ is an analogue of part (2) of \thmref{thm:Main-0} for
equivariant homology theories.

Recall that equivariant homology theories do not satisfy equivariant {\'e}tale
descent. Using that they do satisfy {\'e}tale descent for schemes, a direct
application of \thmref{thm:Main-6} however is the following.

\begin{cor}\label{cor:Main-10}
  Let $k$ be an algebraically closed field of characteristic zero.
  Let $G$ be a reductive group over $k$ acting on a smooth $k$-scheme $X$.
  Let $f \colon X' \to X$ be a $G$-equivariant {\'e}tale cover. Then the
  pull-back map
  \[
  f^* \colon HH_G(X)^{\compl}_{I_G} \to \holim_n HH_G(X'_n)^{\compl}_{I_G}  
  \]
  is a weak equivalence, where $\{X'_n\}_{n \ge 1}$ is the \u{C}ech nerve of $f$.
  In other words, $HH_G(X)^{\compl}_{I_G}$ satisfies equivariant {\'e}tale descent.
  The same is also true for negative cyclic, cyclic and periodic cyclic homology.
\end{cor}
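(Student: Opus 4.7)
The plan is to reduce the corollary to ordinary \'etale descent for Hochschild homology of schemes via \thmref{thm:Main-6}. Applied with $g$ equal to the identity element of $G$, that theorem specializes to $X^g = X$, $Z_g = G$ and $\fm_\Psi = I_G$; choosing the tautological atlas $X \to [X/G]$, whose \v{C}ech nerve is the bar construction $\{G^n \times X\}_n$, this yields a natural weak equivalence
\[
HH_G(X)^{\compl}_{I_G} \xrightarrow{\simeq} \holim_n HH(G^n \times X).
\]
Since $f$ is \'etale and $X$ is smooth, every term $X'_m$ of the \v{C}ech nerve of $f$ is again a smooth $k$-scheme carrying a $G$-action, and applying \thmref{thm:Main-6} to each $X'_m$ with the atlas $X'_m \to [X'_m/G]$ produces an analogous equivalence
\[
HH_G(X'_m)^{\compl}_{I_G} \xrightarrow{\simeq} \holim_n HH(G^n \times X'_m).
\]

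Next I would invoke ordinary \'etale descent at the level of schemes. For each fixed $n$, the morphism $G^n \times X' \to G^n \times X$ is an \'etale cover whose \v{C}ech nerve is $\{G^n \times X'_m\}_{m \ge 1}$. Because Hochschild homology of schemes satisfies \'etale descent, we obtain
\[
HH(G^n \times X) \xrightarrow{\simeq} \holim_m HH(G^n \times X'_m).
\]
Taking the homotopy limit over $n$ and interchanging the two homotopy limits via Fubini, we assemble
\[
HH_G(X)^{\compl}_{I_G} \simeq \holim_n \holim_m HH(G^n \times X'_m) \simeq \holim_m \holim_n HH(G^n \times X'_m) \simeq \holim_m HH_G(X'_m)^{\compl}_{I_G},
\]
which is precisely the equivalence asserted by the corollary.

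The same argument goes through verbatim for negative cyclic, cyclic and periodic cyclic homology, since \thmref{thm:Main-6} is stated uniformly for all four theories and each of $HC^-$, $HC$ and $HP$ is known to satisfy \'etale descent on $\Sch_k$. The only step that requires a moment's care is the Fubini interchange of the two homotopy limits, which is legitimate as soon as the scheme-level objects $G^n \times X'_m$ organize into a genuine bisimplicial $k$-scheme; this is automatic because the tautological atlas $X'_\bullet \to [X'_\bullet/G]$ is strictly functorial in $m$, and the bar construction is strictly functorial in $n$. Beyond this bookkeeping there is no substantive obstacle, which is the sense in which the corollary is a \emph{direct} application of \thmref{thm:Main-6}.
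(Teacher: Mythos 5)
Your proposal is correct and is essentially the paper's intended argument. The authors themselves describe Corollary~\ref{cor:Main-10} as a ``direct application of Theorem~\ref{thm:Main-6}'' combined with ordinary \'etale descent of Hochschild homology on schemes, which is exactly the route you take: apply Theorem~\ref{thm:Main-6} at $g = 1$ once to $X$ with the tautological atlas $X \to [X/G]$, and again to each $X'_m$ with the atlas $X'_m \to [X'_m/G]$ (these really are smooth $k$-schemes since $X'_m$ is \'etale over the smooth $X$, and they carry the induced $G$-action since $f$ is $G$-equivariant), then use \'etale descent of $HH$ for the cover $G^n \times X' \to G^n \times X$ and Fubini for the bicosimplicial limit $(n,m) \mapsto HH(G^n \times X'_m)$. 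The one thing worth making explicit, which you gesture at but could nail down, is that the equivalences of Theorem~\ref{thm:Main-6} are $p^*_\bullet$-pullbacks (at $g = 1$ the twist $t_{g^{-1}}$ is the identity, cf.~\eqref{eqn:Homology-main-1}), so they strictly commute with the Čech face and degeneracy maps of $\{X'_m\}_m$; this is what lets you pass to the homotopy limit over $m$ and also identifies the resulting composite equivalence with the stated map $f^*$.
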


\subsection{Results for action with finite stabilizers}
\label{sec:Hom***}
In the final set of results, we consider the special case of group action in which
the group $G$ acts on a scheme $X$ with finite stabilizers.
Equivalently, $[X/G]$ is a Deligne-Mumford stack.
In this case, we prove a stronger result, namely, that the
equivariant $KH$-theory of $X$ coincides with the ordinary $KH$-theory of the Borel
and bar constructions associated to the $G$-action on the inertia scheme $I_X$
even without passing to the completion. We prove similar results for the equivariant
homology theories. More precisely, our result is the following.

\begin{thm}\label{thm:Main-7}
  Let $k$ be an algebraically closed field of characteristic zero. 
  Let $G$ be a linear algebraic group over $k$ acting on a $k$-scheme $X$ with
  finite stabilizers. Let $I_X$ denote the inertia scheme. We now have the following.
  \begin{enumerate}
  \item
    There are natural weak equivalences of spectra
    \[
\wt{\Upsilon}^G_X \colon KH_G(X,k) \xrightarrow{\simeq} KH((I_X)^\bullet_G,k); \ \ \
\Upsilon^G_X \colon KH_G(X,k) \xrightarrow{\simeq} KH((I_X)_G,k).
    \]
 \item
    If $G$ is reductive and $X$ is smooth over $k$, then  for any smooth surjective
    map $p \colon U \to [{I_X}/{G}]$ with $U \in \Sch_k$,
 there is a natural weak equivalence of spectra
    \[
    \wt{\Upsilon}^G_X \colon HH_G(X) \xrightarrow{\simeq} \holim_n
    HH(U_n).
    \]
    The same also holds for negative cyclic, cyclic and periodic cyclic homology.
  \end{enumerate}
  \end{thm}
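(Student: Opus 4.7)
The idea is to upgrade the maximal-ideal completion theorems (Theorem~\ref{thm:Main-5} for $K$-theory, Theorem~\ref{thm:Main-6} for Hochschild homology and its variants) into genuine weak equivalences that require no completion at all, by exploiting the fact that on a Deligne--Mumford quotient stack the relevant invariants already decompose as finite direct sums indexed by conjugacy classes of semisimple elements appearing as stabilizers.

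\emph{Step 1 (Support).} I would first check that under the hypotheses of the theorem, $KH^G_*(X,k)$ (resp.\ $HH^G_*(X)$ together with its $HC^-$, $HC$, $HP$ variants) is supported, as an $R_k(G)$-module, on only finitely many maximal ideals $\fm_{\Psi}$, namely those corresponding to conjugacy classes $\Psi \subset G(k)$ of semisimple elements whose conjugacy class meets one of the (finite) stabilizers. Since $\Char(k)=0$, the stabilizers are linearly reductive, so by the local structure theorem for DM stacks, $[X/G]$ is \'etale-locally of the form $[V/H]$ with $H \subset G$ a finite subgroup and $V$ an affine scheme. In this local model $KH_G(-,k) \simeq KH_H(-,k)$ is an $R_k(H)$-module, and $\Spec R_k(H)$ is finite; \v{C}ech-type descent then upgrades the statement to the global one. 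In particular localization at $\fm_\Psi$ coincides with $\fm_\Psi$-completion, and one obtains a canonical splitting
\[
KH_G(X,k) \;\xrightarrow{\simeq}\; \bigoplus_{[g]} KH_G(X,k)^{\compl}_{\fm_\Psi},
\]
the sum ranging over the finite set of relevant conjugacy classes $[g]$.

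\emph{Step 2 (Inertia decomposition and assembly).} In parallel, I would use the classical decomposition of the inertia stack of a DM quotient in characteristic zero,
\[
[I_X/G] \;\simeq\; \coprod_{[g]}\, [X^g/Z_g],
\]
indexed by the same finite set of conjugacy classes. Applying the bar and Borel constructions termwise yields $(I_X)^\bullet_G \simeq \coprod_{[g]} (X^g)^\bullet_{Z_g}$ and $(I_X)_G \simeq \coprod_{[g]} (X^g)_{Z_g}$, so $KH((I_X)^\bullet_G,k)$ and $KH((I_X)_G,k)$ split as the corresponding direct sums. Now the $KH$-analogue of Theorem~\ref{thm:Main-5} (asserted after that theorem for actions with nice stabilizers, a hypothesis satisfied here since finite char-zero groups are linearly reductive) identifies each completed piece: $KH_G(X,k)^{\compl}_{\fm_\Psi} \xrightarrow{\simeq} KH((X^g)^\bullet_{Z_g},k)$, and similarly for the Borel construction. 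Summing over $[g]$ and combining with Step~1 produces the equivalences $\wt{\Upsilon}^G_X$ and $\Upsilon^G_X$; naturality is automatic because every step is functorial in $(X,G)$.

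For part (2) I would run the same machine with Theorem~\ref{thm:Main-6} in place of Theorem~\ref{thm:Main-5}: combine the support statement for $HH_G(X)$ on the DM quotient with the inertia decomposition to reduce, for each $[g]$, to an equivalence $HH_G(X)^{\compl}_{\fm_\Psi} \simeq \holim_n HH(U^{[g]}_n)$ where $U^{[g]} \to [X^g/Z_g]$ is the pullback of $U$ along the corresponding component of $[I_X/G]$. Independence of the choice of smooth cover on the right-hand side follows from \'etale (in fact Nisnevich) descent of ordinary $HH$ for schemes, which implies that any two smooth covers give canonically weakly equivalent homotopy limits. The same mechanism handles $HC^-$, $HC$ and $HP$.

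The principal obstacle, in my view, is securing Step~1 in the form needed, particularly the support statement for equivariant Hochschild and cyclic homology on a DM quotient, which is not as widely recorded in the literature as its $K$-theoretic analogue. One natural route is to apply a global HKR decomposition combined with the Luna-type slice used above to reduce to the case of a finite group acting on a smooth affine scheme; an alternative is to transport the support statement from $K$-theory via the cyclotomic trace, exploiting rational splittings in characteristic zero. Once Step~1 is in place, the proof is essentially reassembly from Theorems~\ref{thm:Main-5} and~\ref{thm:Main-6}.
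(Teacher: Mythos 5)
Your overall architecture — decompose by maximal ideals of $R_k(G)$, use the inertia decomposition $[I_X/G]\simeq\coprod_{[g]}[X^g/Z_g]$, and assemble via the maximal-ideal completion theorems — is exactly the paper's strategy. But the step you yourself flag as the principal obstacle (Step~1) is precisely where your proposed techniques diverge from the paper's and, as sketched, do not close the gap.

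For the $K$-theoretic support statement, the paper does not use the local structure theorem for DM stacks. The finite set $\Sigma^G_X$ of semisimple conjugacy classes with $X^g\neq\emptyset$ and the annihilating ideal $J\subset R_k(G)$ come from a direct argument of Krishna--Sreedhar, and the vanishing $J\cdot KH_*(\sX,k)=0$ is proved by reducing to the smooth case via equivariant resolution of singularities and cdh descent (\propref{prop:Fin-support}). Your route via the slice theorem has a real problem: in the local model $[V/H]$ the group $H$ is a different group from $G$, so the $R_k(G)$-module structure (which is what the support statement is about) does not simply restrict along the cover, and ``\v{C}ech-type descent'' is not available for a statement about the $R_k(G)$-module structure — support on finitely many maximal ideals of $R_k(G)$ is not a local-on-$\sX$ condition in any obvious Grothendieck topology. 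You would need an auxiliary argument relating $\Spec R_k(G)$ to the stabilizer groups, and that auxiliary argument is essentially the content of \propref{prop:Fin-support}.

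For the homology theories, neither of your two proposed routes matches the paper's argument, and both are underspecified. The paper's device (\propref{prop:Homology-decom}) is cleaner than either: the splitting $K(\sX)\xrightarrow{\simeq}\coprod_\Psi K(\sX)_{\fm_\Psi}$ is a decomposition of ring spectra, and each $F(\sX)$ for $F\in\{HH,HC^-,HC,HP\}$ is a $K(\sX)$-algebra via the multiplicative trace (\thmref{thm:H-functor}); smashing $F(\sX)\wedge_{K(\sX)}(-)$ along the $K$-theory decomposition produces $F(\sX)\simeq\coprod_\Psi F(\sX)_{\fm_\Psi}$ directly, and the nilpotence of $\fm_\Psi$ on $K_0(\sX)_{\fm_\Psi}$ (which carries over because $F(\sX)_{\fm_\Psi}$ is a $K(\sX)_{\fm_\Psi}$-module) upgrades localization to completion. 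A ``global HKR decomposition'' does not by itself control the $R_k(G)$-module structure, and ``transport via the cyclotomic trace'' conflates the Dennis trace with the cyclotomic trace and does not explain how the support is actually transferred. The multiplicative base-change trick is the missing idea; once you have it, the remainder of your reassembly argument is correct and matches the paper.
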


For the periodic cyclic homology, we prove the above result even if $X$ is
a singular scheme (cf. \thmref{thm:HP-sing}).

\vskip .2cm

Recall that a morphism of stacks $f \colon \sY \to \sX$ is called isovariant
{\'e}tale if it is an {\'e}tale map of stacks such that the canonical map
$I_\sY \to I_\sX \times_\sX \sY$ is an isomorphism (cf. \cite[\S~2.5]{Thomason-Duke-2}).
Recall also that a presheaf of spectra $F$ on the category of stacks is
said to satisfy isovariant {\'e}tale descent if given any isovariant {\'e}tale
cover $f \colon \sY \to \sX$, the canonical map $F(\sX) \to \holim_n F(\sY_n)$ is a
weak equivalence, where $\{\sY_n\}_{n \ge 1}$ is the \u{C}ech nerve of $f$.
Using the fact that homology theories satisfy {\'e}tale descent for schemes,
\thmref{thm:Main-7} implies the following.

\begin{cor}\label{thm:Main-8}
  Let $k$ be an algebraically closed field of characteristic zero and let
  $G$ be a reductive group over $k$. Let $f \colon X' \to X$ be a $G$-equivariant
  map between schemes over $k$ with $G$-action with finite stabilizers.
  Assume that the induced
  map of quotient stacks $\sX' \to \sX$ is an isovariant {\'e}tale cover.\
  Then the pull-back map $HH_G(X) \to \holim_n HH_G(X'_n)$ is a weak equivalence,
  where $\{X'_n\}_{n \ge 1}$ is the \u{C}ech nerve of $f$.
  The same is also true for negative cyclic, cyclic and periodic cyclic homology.
\end{cor}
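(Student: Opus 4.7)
The plan is to leverage Theorem~\ref{thm:Main-7}(2) to convert the equivariant {\'e}tale descent statement into an ordinary {\'e}tale descent statement for the non-equivariant homology theory of schemes, which is classical. The bridge is the observation that an isovariant {\'e}tale morphism of stacks is automatically representable {\'e}tale and commutes with formation of the inertia stack.

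First I would verify that since $\sX' \to \sX$ is isovariant {\'e}tale, the same property propagates to each term of the \u{C}ech nerve: writing $\sX'_n := \sX' \times_\sX \cdots \times_\sX \sX'$ with $n+1$ factors, one has that $\sX'_n \to \sX$ is isovariant {\'e}tale and $I_{\sX'_n} \cong I_\sX \times_\sX \sX'_n$. This follows by a direct diagram chase from the defining property $I_{\sX'} \cong I_\sX \times_\sX \sX'$ together with the fact that inertia commutes with fiber products of stacks. In particular, when $X$ is smooth (required for HH, $HC^-$, and $HC$), each $X'_n$ is smooth and inherits finite stabilizers for the induced $G$-action. Next, choose a smooth surjective map $p \colon V \to I_\sX = [I_X/G]$ with $V \in \Sch_k$, and set $V^{(n)} := V \times_{I_\sX} I_{\sX'_n} \cong V \times_\sX \sX'_n$. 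Since $I_{\sX'_n} \to I_\sX$ is representable {\'e}tale, each $V^{(n)}$ is a scheme and the induced map $p_n \colon V^{(n)} \to I_{\sX'_n}$ is smooth surjective. Crucially, the simplicial scheme $V^{(\bullet)}$ is nothing but the \u{C}ech nerve of the plain {\'e}tale cover $V^{(0)} \to V$ of schemes.

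Applying Theorem~\ref{thm:Main-7}(2) to $X$ and to each $X'_n$ yields natural weak equivalences
\[
HH_G(X) \simeq \holim_m HH(V_m), \qquad HH_G(X'_n) \simeq \holim_m HH(V^{(n)}_m),
\]
where $V_\bullet$ (resp.\ $V^{(n)}_\bullet$) is the \u{C}ech nerve of $p$ (resp.\ $p_n$). Since Hochschild homology of schemes satisfies {\'e}tale descent (and analogously for $HC^-$, $HC$, and $HP$), for each fixed $m$ the pull-back map $HH(V_m) \to \holim_n HH(V^{(n)}_m)$ is a weak equivalence, because $V^{(\bullet)}_m$ is the \u{C}ech nerve of an {\'e}tale cover of schemes. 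Commuting the two homotopy limits then gives the desired equivariant {\'e}tale descent statement. The main obstacle in the entire argument is of course Theorem~\ref{thm:Main-7}(2) itself; granting that, the present corollary is a formal manipulation combining the inertia-compatibility of isovariant {\'e}tale covers with classical {\'e}tale descent for the ordinary theories on schemes.
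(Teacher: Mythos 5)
Your proof is correct and fills in the details of the paper's one-line argument, which simply invokes Theorem~\ref{thm:Main-7} together with {\'e}tale descent for Hochschild (and cyclic) homology of schemes; the reduction via the bisimplicial scheme $V^{(\bullet)}_\bullet$ and commuting the two homotopy limits is exactly the intended route. The only minor imprecision is the phrase ``inertia commutes with fiber products of stacks'': the actual statement is $I_{\sX_1 \times_\sX \sX_2} \cong p_1^* I_{\sX_1} \times_{q^* I_\sX} p_2^* I_{\sX_2}$ (where $p_i$, $q$ denote the projections from the fiber product), which together with isovariance of both factors over $\sX$ yields your formula $I_{\sX'_n} \cong I_\sX \times_\sX \sX'_n$.
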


\vskip .2cm

From \thmref{thm:Main-7}, we also obtain the following
computation of all equivariant homology groups. To state it, we let
$~_{s_{\le m}}\Omega^\bullet_{I_\sX}$ and $~_{s_{\ge m}}\Omega^\bullet_{I_\sX}$ denote the
stupid truncations of the de Rham complex $\Omega^\bullet_{I_\sX}$ of
$I_{\sX} = [{I_X}/G]$. Let $H^*_{dR}(I_\sX) = \H^*_\et(I_\sX, \Omega^\bullet_{I_\sX})$. 

\begin{thm}\label{thm:Main-9}
Let $k$ be an algebraically closed field of characteristic zero.
Let $G$ be a reductive group over $k$ acting on a smooth $k$-scheme $X$ with
finite stabilizers. For any integer
$n \ge 0$, we then have natural isomorphisms of $k$-vector spaces
  \[
  \pi_n(HH_G(X)) \xrightarrow{\cong} {\underset{0 \le i \le d}\bigoplus}
  H^{i-n}_\et(I_\sX, \Omega^i_{{I_\sX}/k});
  \]
  \[
  \pi_n(HC^{-}_G(X)) \xrightarrow{\cong} {\underset{0 \le i \le d-n}\bigoplus}
  H^{n+2i}_\et(I_\sX, ~_{s_{\ge i+n}}\Omega^\bullet_{I_\sX}).
  \]
  If the $G$-action on $X$ is proper, then we also have
  \[
  \pi_n(HC_G(X))  \xrightarrow{\cong} {\underset{-d \le i \le n}\bigoplus}
  H^{n-2i}_\et(I_\sX, ~_{s_{\le n-i}}\Omega^\bullet_{I_\sX});
  \]
  \[
  \pi_n(HP_G(X)) \xrightarrow{\cong} {\underset{i \in \Z}\prod} H^{n+2i}_{dR}(I_{\sX}).
  \]
 \end{thm}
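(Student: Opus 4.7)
The plan is to deduce the stated identifications by combining \thmref{thm:Main-7}(2) with the Hochschild--Kostant--Rosenberg (HKR) decomposition for the smooth Deligne--Mumford stack $I_\sX = [I_X/G]$ in characteristic zero. Since $X$ is smooth and the stabilizers are finite, $I_\sX$ is a smooth DM stack of dimension at most $d = \dim \sX$, and so admits an \'etale atlas $p\colon U\to I_\sX$ with $U \in \Sch_k$; \thmref{thm:Main-7}(2) identifies $HH_G(X)$ with $\holim_n HH(U_n)$ along the \u{C}ech nerve of $p$, and analogously for $HC^-$, $HC$, and $HP$. Since the cyclic homology functors satisfy \'etale hyperdescent on schemes over a field of characteristic zero, this homotopy limit is precisely the hypercohomological Hochschild (resp.\ negative, ordinary, periodic cyclic) homology of the DM stack $I_\sX$.

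The HKR theorem for smooth DM stacks in characteristic zero then gives a natural equivalence
\[
HH(I_\sX) \simeq \bigoplus_{i=0}^{d} R\Gamma_\et(I_\sX, \Omega^i_{I_\sX/k})[i],
\]
whose $\pi_n$ is the asserted formula for $\pi_n(HH_G(X))$. Under this equivalence the Connes $B$-operator corresponds to the de Rham differential, so the stupid Hodge filtrations $~_{s_{\ge *}}\Omega^\bullet_{I_\sX}$ and $~_{s_{\le *}}\Omega^\bullet_{I_\sX}$ compute $HC^-(I_\sX)$ and $HC(I_\sX)$ respectively, while the $2$-periodic totalization of $\Omega^\bullet_{I_\sX}$ computes $HP(I_\sX)$. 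Combined with the degeneration of the Hodge-to-de Rham spectral sequence for smooth DM stacks in characteristic zero, this reads off the formulas for $\pi_n(HC^-_G(X))$ and $\pi_n(HP_G(X))$ directly from the respective hypercohomology spectral sequences; in particular, the product indexed by $i\in\Z$ for $HP$ arises from the $2$-periodicity.

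The formula for $HC$ is the most delicate. As stated it is a finite direct sum over $-d\le i\le n$, rather than an infinite product, and one needs to collapse the product shape indicated by $HP$. Here the properness hypothesis on the $G$-action enters: it forces $I_\sX$ to be a proper DM stack, hence the Hodge cohomology groups $H^q_\et(I_\sX, \Omega^p_{I_\sX/k})$ are finite-dimensional and concentrated in a bounded range, which collapses the formal product to the finite direct sum with the indicated bounds. The principal obstacle I foresee is verifying HKR and its cyclic refinements, together with the compatibility of $B$ with the de Rham differential, for the (possibly non-separated) smooth DM stack $I_\sX$ in a form that is actually compatible with the homotopy-limit model produced by \thmref{thm:Main-7}(2); in characteristic zero this is available via derived loop space techniques for smooth DM stacks, and once that compatibility is in place the theorem reduces to standard Hodge-theoretic bookkeeping on $I_\sX$.
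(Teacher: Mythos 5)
Your approach is genuinely different from the paper's: you feed an \'etale atlas into Theorem~\ref{thm:Main-7}(2), interpret the resulting \u{C}ech homotopy limit as \'etale hypercohomology on $I_\sX$, and then apply the classical HKR decomposition pointwise on the schemes $U_n$. The paper (\thmref{thm:Homology-ET}, whose corollary is the stated result) instead first identifies $F(\sX)$ with the $I_G$-completion $F(I_\sX)^{\compl}_{I_G}$ via the decomposition of \propref{prop:Homology-decom} and the twisting operator, then invokes the formal HKR theorem of Ben-Zvi--Nadler to identify this completion with $\sO(\wh{\T}_{I_\sX}[-1]) = {\bf R}\Gamma(I_\sX, \sH\sH_{I_\sX})$, using that the odd tangent bundle of a smooth Deligne--Mumford stack is bounded so the formal completion map $\wh{\T}_{I_\sX}[-1] \to \T_{I_\sX}[-1]$ is already an equivalence. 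Your route avoids the derived-loop-space machinery and is closer to ``hyperdescent plus scheme-level HKR'', which is perhaps more elementary; but it then needs the full strength of \'etale hyperdescent for $HC$ and $HP$ on schemes (which the paper does assume elsewhere), whereas the paper's proof derives the $HC$ statement purely formally from the $HC^-$ and $HP$ statements via the $N$--$T$ fiber sequence.

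There are, however, two real problems. First, the appeal to Hodge-to-de Rham degeneration is beside the point: the asserted isomorphisms for $HC^-_n$, $HC_n$, and $HP_n$ are just $\pi_n$ of ${\bf R}\Gamma(I_\sX,\sH\sC^-_{I_\sX})$, etc., using the direct-sum decompositions $\sH\sC^-_{I_\sX} = \bigoplus_i \, {}_{s_{\ge i}}\Omega^\bullet[2i]$ and its analogues for $\sH\sC$ and $\sH\sP$. No degeneration of a spectral sequence is invoked or needed, and invoking one suggests an intent to split $\pi_n$ further into Hodge pieces, which is not what the statement claims.

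Second, and more seriously, your use of the properness hypothesis is incorrect. A proper $G$-action does \emph{not} force $I_\sX$ to be a proper Deligne--Mumford stack. For instance, a finite group acting trivially on $\A^1$ acts properly, yet $I_\sX$ is a disjoint union of copies of $[\A^1/G]$, whose Hodge cohomology groups are infinite-dimensional. Consequently, your claim that the Hodge cohomology groups are finite-dimensional and bounded is false, and the ensuing argument that collapses the product to a finite sum does not go through. What properness actually provides is finite inertia, i.e.\ a finite diagonal and hence \emph{separatedness} of $I_\sX$ and $\sX$; the paper uses this (via Lemma~2.15 of \cite{HLP}) to commute ${\bf R}\Gamma(I_\sX,-)$ with the filtered homotopy colimit appearing in the Tate construction $(-)^{tS^1}$, which yields the $HP$ formula as a product, and then deduces the $HC$ formula from the fiber sequence $HC[1]\to HC^-\to HP$. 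You would need to replace the proper-stack argument by a comparable separatedness/commutation argument, or follow the paper's fiber-sequence route, to close this gap.
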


When $G$ is finite, \thmref{thm:Main-9} for Hochschild homology can be easily derived
from \cite[Prop.~3.1]{Baranovsky} which uses a different method. It can also be derived
from \cite[Thm.~4.9]{ACH}. For separated Deligne-Mumford stacks, \thmref{thm:Main-9}
for Hochschild homology is asserted in \cite[Prop.~2.13]{HLP}
(but see Remark~\ref{remk:Homology-ET-6}).

\subsection{Future direction}\label{sec:Future}
We indicate a few directions in which the results of this paper may be extended.
First, we would like to know if the second part of \thmref{thm:Main-3} could be
extended to positive characteristic and to more general group actions.
Our proof uses equivariant resolution of singularities which is a hindrance
in positive characteristic. If we work with finite (but prime to characteristic)
coefficients, this problem
can be resolved if a version of alteration could be found for stacks but we are not
aware of its existence.

Second, we would like to appropriately extend our results on homology theories
in characteristic zero to similar results for their topological counterparts
(i.e., topological Hochschild, cyclic and periodic cyclic homology with
$\Z_p$-coefficients) in characteristic $p > 0$. In particular, we would like to
know if these topological  homology theories satisfy the equivariant (isovariant)
{\'e}tale descent for Deligne-Mumford stacks.
These questions will be taken up in a separate project.

\subsection{Layout of the paper}\label{sec:Layout}
The most difficult among our results on equivariant $K$-theory is \thmref{thm:Main-0}.
This should not be surprising to a reader who is familiar with
Thomason's proof of his completion
theorem in \cite{Thomason-Duke-1} (which is a very special case of  
\thmref{thm:Main-0}) which spreads into too many ingenious but very intricate steps.
We explain our strategy for proving \thmref{thm:Main-0} which is
completely different from Thomason's proof (and the latter does not work in
the general case we consider).

First of all, it is not hard to realize that there is simply no direct way to compare
the equivariant $K$-theory of $X$ with the ordinary $K$-theory of its bar
construction $X^\bullet_G$.
This is in sharp contrast to the Borel construction (which is an inductive
limit of open subschemes of equivariant vector bundles over $X$ and hence allows
one to use homotopy invariance and localization to directly
relate its ordinary $K$-theory with the equivariant $K$-theory of $X$).
We use the machinery of $\A^1$-homotopy theory to overcome this problem.

Our idea is to use \thmref{thm:Main-2} to derive \thmref{thm:Main-0}.
The first step do so is to show that there is a uniquely defined
motivic space $X_G$ (called the motivic Borel space) in the 
$\A^1$-homotopy category of Morel-Voevodsky \cite{Morel-Voevodsky}
which does not depend on the choice of an admissible gadget we make in the  
definition of Borel equivariant $K$-theory (cf. \propref{prop:Bar-ind-Gen}).
This allows us to get rid of the dependency problem (on admissible gadgets)
which is essential in proof of \thmref{thm:Main-0}.

We would now like to construct a map between
$X^\bullet_G$ and $X_G$. Unfortunately, no such map exists at the level of
simplicial schemes. To get around this issue, we introduce another
motivic space, namely, $\wt{X}_G :=X \stackrel{G}{\times} \sE_G$ in the 
$\A^1$-homotopy category, where $\sE_G$ is the universal $G$-torsor over
the classifying motivic space $\sB_G$ for the group $G$, constructed by
Morel-Voevodsky. To be able to work with $\sE_G$, we need $G$ to be special
because maps to $\sB_G$ in the $\A^1$-homotopy category do not represent
Nisnevich local $G$-torsors unless $G$ is special. This explains our dependency on
$G$ being special.

Having constructed $\wt{X}_G$, the next step is to construct a direct map
from $X_G$ to it.  
Using the representability of $K$-theory by an object of
Voevodsky's stable homotopy category $\sS\sH_k$ (cf. \cite{Voev-ICM}),
we then show that this map induces a weak equivalence between their $K$-theory
spectra. Next, we show that the projection $X {\times} \sE_G \to X$ induces a canonical
map from the $I_G$-completion of the equivariant $K$-theory spectrum of $X$ to the
ordinary $K$-theory spectrum of $\wt{X}_G$. At this stage, we invoke
\thmref{thm:Main-2} to conclude that this map is a weak equivalence.

We are now left to show that the $K$-theory of $\wt{X}_G$ is canonically
equivalent to that of the bar construction $X^\bullet_G$.
For this, we construct a simplicial scheme $E^\bullet_G(X)$ which is a $G$-torsor
over $X^\bullet_G$. In particular, the $K$-theory of the latter
can be realized as the equivariant $K$-theory of $E^\bullet_G(X)$.
To conclude, we show that there is a direct $G$-equivariant map
$E^\bullet_G(X) \to X {\times} \sE_G$ which induces a weak equivalence between
the ordinary $K$-theory of $\wt{X}_G$ and the equivariant $K$-theory of
$E^\bullet_G(X)$.

To prove \thmref{thm:Main-5}, our major ingredient is \thmref{thm:Max-Main}.
This is a derived version of the nonabelian completion theorem of Edidin-Graham
\cite{EG-Duke} and is a result of independent interest in equivariant $K$-theory.
We expect this result to have other future applications.
Even if \thmref{thm:Max-Main} is directly inspired by the 
nonabelian completion theorem of Edidin-Graham, its proof can not be derived from that
of the latter. One reason for this is that the homotopy groups in general behave
very poorly when one passes to the completion. In fact, this was the main
difficulty Thomason encountered in extending Atiyah-Segal theorem to algebraic
$K$-theory. This forces us go to through several reduction steps to
prove \thmref{thm:Max-Main}.

We prove \thmref{thm:Main-5} by combining Theorems~\ref{thm:Main-0} and
~\ref{thm:Main-2} with the derived nonabelian completion theorem. We 
prove these results for the homotopy $K$-theory by reduction to the $K'$-theory
by means of an equivariant version of cdh descent, shown in \cite{Hoyois-Krishna}.

To prove the completion theorem for the equivariant homology theories, we
first deal with the Hochschild homology. We prove this case by writing the
equivariant Hochschild homology as the derived global section of To{\"e}n's derived
loop space construction \cite{Toen-DAG}. 
We then use the formal HKR theorem for Hochschild homology by Ben-Zvi and
Nadler \cite{Ben-Nadler} to complete the proof. The proof for
negative cyclic homology is easily reduced to that of Hochschild homology.

The major difficulty lies in proving the completion theorems for cyclic and periodic
cyclic homology because it involves commuting homotopy limits and colimits.
In order to handle this issue, we use a derived HKR-filtration of the
equivariant Hochschild
homology and study the resulting graded pieces using arguments of
Bhatt-Morrow-Scholze \cite[\S~3]{BMS}. We then show that the homotopy orbit spaces
for the $S^1$-action (which is a homotopy colimit) of these graded pieces commute
with latter's homotopy limit. We then use an inductive argument and boundedness of the
derived odd tangent bundle of $[X/G]$ to finish the proof of the completion theorem for
cyclic homology. The result for periodic cyclic homology then follows by a formal
argument.

We prove our results on the equivariant $K$-theory and homology theories for 
action with finite stabilizers by combining Theorems~\ref{thm:Max-Main},
~\ref{thm:Gen-AS-max} and ~\ref{thm:Main-7} with \propref{prop:Fin-support}. 

In \S~\ref{sec:Rep-G}, we establish some important results of independent interest
about the representation ring
and in \S~\ref{sec:Recall*}, we review basic results on equivariant $K$-theory and
homology theories. In \S~\ref{sec:Twisting*} and \S~\ref{sec:Twist-op}, we
complete two key technical steps, namely, the decomposition theorem and
the construction of twisting operator. We complete the proofs of
Theorems~\ref{thm:Main-2} and ~\ref{thm:Main-5} in \S~\ref{sec:Compln*} and 
\S~\ref{sec:Max-com}. We finish the proof of \thmref{thm:Main-0} in
\S~\ref{sec:Thom-Gen**}. We prove \thmref{thm:Main-6} in \S~\ref{sec:Cyc**}.
In \S~\ref{sec:DM-Homology}, we prove \thmref{thm:Main-7} 
and its applications.

\subsection{Notations}\label{sec:Notn}
Throughout the paper, $k$ will denote an arbitrary base field. 
 We shall use the notation $\Sch_k$ for the category of  separated and finite type
 $k$-schemes. An object of $\Sch_k$ will be called a $k$-scheme. The product of two
 schemes $X$ and $Y$ in $\Sch_k$ will be denoted by $X \times Y$.
 We let $\Sm_k$ denote the subcategory of $\Sch_k$ consisting of smooth $k$-schemes.
For any ring extension $k \inj R$ and $X \in \Sch_k$, we let
$X_R = X \times \Spec(R)$. If $X$ is affine, we let $k[X]$ denote its
coordinate ring.

A $k$-group $G$ will mean a linear algebraic group (i.e., a smooth closed subgroup
scheme of a general linear group $GL_n$) over $k$.
We let $\Sch^G_{k}$ (resp. $\Sm^G_{k}$) denote the
subcategory of $\Sch_k$ (resp. $\Sm_k$) consisting of schemes with $G$-action and
$G$-equivariant morphisms. An object $X \in \Sch^G_k$ will be called
$G$-quasi-projective if it is a quasi-projective $k$-scheme with linear $G$-action.
We let $\Spc_k$ denote the category of finite type separated algebraic spaces
(termed as $k$-spaces) over $k$ and let
$\Spc^G_k$ denote the subcategory of $\Spc_k$ consisting of
algebraic spaces with $G$-action and $G$-equivariant morphisms.

We shall let $\sS\sH$ denote the stable homotopy category of
topological $S^1$-spectra and $\1 \in \sS\sH$ denote the sphere spectrum.
If $E$ is a presheaf on a subcategory of $\Sch_k$ with values in $\sS\sH$ and
$L$ is any field of characteristic
zero, we shall let $E(X, L)$ be the unique spectrum having the
property that $\pi_i(E(X, L)) = \pi_i(E(X)) \otimes L$ for every $i \in \Z$. For a
positive integer $n$, $E(X, {\Z}/n)$ will denote the smash product spectrum
$E(X) \wedge {\1}/n$, where ${\1}/n$ is the mod-$n$ Moore spectrum.
Its homotopy groups are connected to those of $E(X)$ via the universal coefficient
theorem. 

All rings in this paper will be unital and commutative. Given a ring $R$, we let
$H_R \in \sS\sH$ denote the Eilenberg-MacLane ring spectrum associated to $R$.
The tensor product of two abelian groups $A$ and $B$ will be written as
$A \otimes B$.

\section{Geometry of representation ring}\label{sec:Rep-G}
Let $k$ be a field. We let ${\Grp}_k$ denote the category of $k$-groups and
group homomorphisms. For $G \in \Grp_k$, we let $R(G)$ denote the
representation ring of $G$. We shall assume all representations of $G$ to be
$k$-rational and finite-dimensional. For any ring $\Lambda$, we let
$R_\Lambda(G) = R(G) {\otimes} \Lambda$. In this section, we collect some
results about the ring $R_k(G)$ and its spectrum $\Spec(R_k(G))$.
When $k$ is the field of complex
numbers, these results were proven in \cite[\S~2]{EG-Adv}
using the representation theory of compact complex Lie groups. 
The results of this section will be used throughout this paper.

\subsection{Finite generation}\label{sec:Fin-gen}
For $G \in \Grp_k$ and $X \in \Sch^G_k$, we let $[X/G]$ denote the stack quotient and
let $K_0([X/G])$ denote the Grothendieck group of 
locally free sheaves on $[X/G]$. We write $\pt = \Spec(k)$ and $BG = [{\pt}/G]$.
We shall let $\ov{k}$ denote an algebraic closure of $k$.
We begin with the following result which should be known to experts.

\begin{lem}\label{lem:RG-inj}
  Let $G \in \Grp_k$ and ${k'}/k$ an algebraic field extension. Then the pull-back
  maps $R(G_l) \to R(G_{k'})$ for finite subextensions $l \subseteq k'$ of $k$ induce
  an isomorphism of rings
  \begin{equation}\label{eqn:RG-inj-0}
    \gamma^G_k \colon {\underset{l \subseteq k', [l:k] < \infty}\colim} \
    R(G_l) \xrightarrow{\cong} R(G_{k'}).
    \end{equation}
\end{lem}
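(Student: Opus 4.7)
The plan is to prove this isomorphism via a descent argument: I will show that every finite-dimensional rational representation of $G_{k'}$, every morphism between such, and every short exact sequence of such, is determined by finitely many elements of $k'$ and hence descends to some finite subextension $l \subseteq k'$ over $k$. Since $K_0$ of an exact category commutes with filtered colimits of full exact subcategories of this form, this will yield the desired isomorphism of rings.

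For surjectivity of $\gamma^G_k$, let $V$ be a finite-dimensional $G_{k'}$-representation of dimension $n$. A choice of $k'$-basis identifies $V$ with a comodule structure $\mu \colon (k')^n \to (k')^n \otimes_{k'} k'[G_{k'}]$. Using the identification $k'[G_{k'}] = k[G] \otimes_k k'$, the matrix coefficients of $\mu$ constitute finitely many elements of $k[G] \otimes_k k'$, and hence all lie in $k[G] \otimes_k l$ for some finite subextension $l$ of $k'$ over $k$. The coassociativity and counitality axioms are polynomial identities in these matrix entries; since they hold over $k'$, they already hold in $k[G] \otimes_k l$. Therefore $V$ is the base change to $k'$ of a $G_l$-representation $V_l$, so $[V] = \gamma^G_k([V_l])$.

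For injectivity, suppose $V_l$ and $W_l$ are finite-dimensional $G_l$-representations (for some finite $l/k$) with $[V_l \otimes_l k'] = [W_l \otimes_l k']$ in $R(G_{k'})$. By the standard presentation of $K_0$ of an exact category, this equality is witnessed by a finite collection of $G_{k'}$-representations, morphisms, and short exact sequences relating the two classes. Each datum in this configuration is specified by finitely many matrix entries over $k'$, so the entire witnessing configuration descends to some finite subextension $m$ with $l \subseteq m \subseteq k'$; exactness is preserved after descent because the exactness of a sequence of $m$-vector spaces is a rank condition that is invariant under the faithfully flat base change $m \hookrightarrow k'$. We conclude $[V_l \otimes_l m] = [W_l \otimes_l m]$ in $R(G_m)$, so the class already vanishes in the colimit.

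The ring homomorphism property of $\gamma^G_k$ is automatic since tensor product of representations commutes with base change. The only nontrivial step is the injectivity argument, where one has to carry out the finite bookkeeping of descending the witnessing data; however, this presents no genuine obstacle and is formal consequence of the principle that finitely presented algebraic data over a filtered colimit of rings descends to a finite stage.
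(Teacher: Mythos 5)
Your proposal is correct and follows essentially the same strategy as the paper's proof: present $R(G_{k'})$ by generators (finite-dimensional representations) and relations (short exact sequences), and show that any finite collection of representations, morphisms, and exact sequences over $k'$ descends to a finite subextension, which yields both surjectivity and injectivity of the colimit map. Your spelling out of the descent via comodule matrix coefficients and the remark that exactness is a rank condition invariant under faithfully flat base change are more explicit than the paper's terse invocation of "faithfully flat descent," but the underlying argument is the same.
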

\begin{proof}
  It is clear that $\gamma^G_k$ is a ring homomorphism.
  To prove that this is an isomorphism, we first observe
  that $R(G_l) = {F(G_l)}/{E(G_l)}$ for every $l \subset k'$ if we let $F(G_l)$ denote
    the free abelian group
    on the set of isomorphism classes of representations of $G_l$ and $E(G_l)$ denote
    the subgroup of $F(G_l)$ generated by virtual representations
  $[V'] + [V''] - [V]$ associated to short exact sequences of representations
  $0 \to V' \to V \to V'' \to 0$.
    We next observe using the faithfully flat descent that given any finite collection
    of $G_{k'}$-representations and a finite collection of morphisms (or isomorphisms)
  between them, we can find a finite subextension
  $l \subseteq k'$ such that these two collections are defined over $l$.
  Lastly, we observe that any finite collection of short exact sequences of
  $G_{k'}$-representations is also defined (and is short exact) over a finite
  subextension $l \subseteq k'$. 
  It is easy to check that these observations imply the claim that $\gamma^G_k$ is
  bijective.
\end{proof}

Recall that $G \in \Grp_k$ is called split reductive if it is a connected reductive
$k$-group and admits a maximal torus which splits over $k$.

\begin{lem}\label{lem:Noether-Rep-3}
  Let $G \in \Grp_k$ and ${k'}/k$ a field extension. Then the pull-back map
  $R(G) \to R(G_{k'})$ is injective. This map is an isomorphism if $G$ is
  split reductive.
\end{lem}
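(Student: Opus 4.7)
The plan is to handle the two assertions separately. For injectivity, I will reduce in stages to the case where $k'/k$ is finite, and close that case with a restriction-of-scalars argument; for the isomorphism in the split reductive case, I will appeal to Steinberg's restriction theorem together with the fact that character lattices of split tori are absolute.

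First, to reduce injectivity to the case of finite extensions, suppose $x \in R(G)$ satisfies $x_{k'} = 0$. The vanishing of $x_{k'}$ in $R(G_{k'})$ is witnessed by only finitely many finite-dimensional $G_{k'}$-representations and finitely many short exact sequences among them, all of which descend to some finitely generated subextension $L \subseteq k'$ of $k$, giving $x_L = 0$ in $R(G_L)$. Writing $L$ as the function field of an integral affine $k$-variety $Y$ of finite type, this vanishing data spreads out (after shrinking $Y$ if necessary) to a family of $G$-equivariant vector bundles and short exact sequences over an open dense $U \subseteq Y$. Specializing at any closed point $u \in U$, whose residue field $k(u)/k$ is finite by the Nullstellensatz, and using flatness to preserve exactness, yields $x_{k(u)} = 0$ in $R(G_{k(u)})$ with $[k(u):k]$ finite.

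For $k'/k$ finite, I would use the restriction-of-scalars functor $\mathrm{Res}_{k'/k} \colon \mathrm{Rep}_{k'}(G_{k'}) \to \mathrm{Rep}_k(G)$, which sends a $G_{k'}$-representation $W$ to its underlying finite-dimensional $k$-vector space endowed with the $G$-action induced via $G(R) \hookrightarrow G(R \otimes_k k')$ for every $k$-algebra $R$. This functor is exact, so it induces a map $\mathrm{Res}_{k'/k} \colon R(G_{k'}) \to R(G)$. The computation $V \otimes_k k' \cong V^{\oplus [k':k]}$ as a $k$-vector space with diagonal $G$-action shows that the composite $\mathrm{Res}_{k'/k} \circ (-)_{k'}$ is multiplication by $[k':k]$ on $R(G)$. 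Applied to $x_{k'} = 0$, this gives $[k':k] \cdot x = 0$ in $R(G)$. Since $R(G)$ is the free abelian group on isomorphism classes of simple $G$-representations (by Jordan--H\"older in the finite-length abelian category $\mathrm{Rep}_k(G)$), it is torsion-free, forcing $x = 0$ and completing injectivity. For the isomorphism assertion, when $G$ is split reductive with split maximal torus $T \subseteq G$ and Weyl group $W$, Steinberg's restriction theorem gives $R(G) \cong R(T)^W$ over any base field, compatibly with base change. Since $T$ is split, $R(T) = \mathbb{Z}[X^*(T)]$ where the character lattice $X^*(T)$ is absolute, and $W$ depends only on the root datum; hence $R(G) = \mathbb{Z}[X^*(T)]^W = R(T_{k'})^W = R(G_{k'})$ under the pullback.

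The main obstacle is the spreading-out step: one needs to verify that the equivariant representations over $L$ and the identities among them admit models over an open $U$ of finite type, and that specialization at a closed point preserves the vanishing identity in the Grothendieck group. Flatness and standard spreading-out arguments handle this, but it deserves attention, especially the equivariance of the models. The remaining ingredients--torsion-freeness of $R(G)$ combined with the multiplication-by-degree identity from restriction of scalars, and the Steinberg isomorphism for split reductive groups--are essentially formal.
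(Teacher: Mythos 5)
Your proof is correct, and the reduction step is a genuinely different route from the paper's. For the finite-extension case, your restriction-of-scalars argument is, up to language, identical to the paper's: the paper uses the push-forward $\iota_* \colon K_0([\Spec(k')/G]) \to R(G)$ along $\Spec(k') \to \Spec(k)$ and the projection formula to get $\iota_* \circ \iota^* = [k':k]\cdot\mathrm{id}$, which is precisely $\mathrm{Res}_{k'/k}$ together with $V\otimes_k k' \cong V^{\oplus[k':k]}$; and the split-reductive case is the same appeal to $R(G)\cong R(T)^W$ (the paper cites Serre rather than Steinberg, but the statement is identical). Where you diverge is the reduction of an arbitrary extension to a finite one. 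The paper factors $k\subset l\subset k'$ with $l/k$ purely transcendental and $k'/l$ algebraic, handles the algebraic layer by its colimit Lemma~\ref{lem:RG-inj}, and handles the transcendental layer (for $k$ infinite) by writing $R(G_l)$ as a filtered colimit of $K_0([U/G])$ for opens $U\subset\A^n_k$, then using homotopy invariance, surjectivity from the localization sequence, and a section coming from a $k$-rational point of $U$; finite base fields are then handled by a further base change to $\ov k$. You instead pass to a finitely generated subextension $L$ witnessing the vanishing, spread the equivariant data out over an open dense $U$ in a finite-type model $Y$ of $L$, and specialize at a closed point, using flatness of vector bundles to preserve exactness and the Nullstellensatz to keep the residue field finite over $k$. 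This is sound: the spreading out of the comodule structures, maps, isomorphisms for cancellation, and short exact sequences is all finite data, and the axioms and exactness hold over opens since they hold at the generic point. Your route avoids both the algebraic/transcendental factorization and the $K_0$-of-stacks machinery, and it does not need a separate case for finite $k$; the paper's route, on the other hand, establishes as a byproduct that $R(G)\to R(G_l)$ is an \emph{isomorphism} for purely transcendental $l/k$ with $k$ infinite, which is a stronger intermediate statement than you obtain. Either is a complete proof of the stated lemma.
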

\begin{proof}
We first assume that ${k'}/k$ is an algebraic extension. 
By \lemref{lem:RG-inj}, it suffices in this case to prove the lemma when
${k'}/k$ is a finite extension. In the latter case, we have the pull-back and
push-forward maps
  $R(G) \xrightarrow{\iota^*} R(G_{k'})$ and $R(G_{k'}) \cong K_0([{\Spec(k')}/{G}])
\xrightarrow{\iota_*} R(G)$ such that $\iota_* \circ \iota^*$ is
multiplication by $[k':k]$.
%It follows that $\iota^* \colon R_\Q(G) \to R_\Q(G_{k'})$ is injective.
Since $R(G)$ is the free abelian group on the set of isomorphism classes of
  irreducible representations of $G$ (cf. \cite[App.~C]{Karpenko-Merkurjev}),
  it follows that $R(G) \to R(G_{k'})$ is injective.

In the general case, we can find field extensions $k \subset l \subset k'$, where
  $l/k$ is a purely transcendental extension and ${k'}/l$ is an algebraic extension.
  If $k$ is infinite, we can write $R(G_{l})$ as the filtered colimit of $K_0([U/G])$,
  where $U \subset \A^n_k$ is an open subscheme for some $n \ge 0$ with the trivial
  $G$-action (cf. \cite[Thm.~2.5]{Khan-JJM}).
  The pull-back maps $R(G) = K_0(BG) \to K_0([{\A^n_k}/G])$ and
  $K_0([{\A^n_k}/G]) \to K_0([U/G])$ are bijective (by the homotopy invariance) and
  surjective, respectively.
  On the other hand, our hypothesis implies that every such $U$ contains $k$-rational
  points. This implies that the pull-back $R(G) \to K_0([U/G])$ admits a section.
We deduce that this map is actually an isomorphism.
  Passing to the limit, we conclude that $R(G) \to R(G_l)$ is an isomorphism.

  If $k$ is not necessarily infinite, we look at the commutative diagram
  \begin{equation}\label{eqn:Noether-Rep-3-0}
    \xymatrix@C1pc{
      R(G) \ar[r] \ar[d] & R(G_{\ov{k}}) \ar[d] \\
      R(G_l) \ar[r] & R(G_{l'}),}
  \end{equation}
  where $l' = l \otimes_k \ov{k}$. It is easy to check that $l'$ is a field which is
  a purely transcendental extension of $\ov{k}$. In particular, the right vertical
  arrow is bijective. We showed above that the horizontal arrows are injective.
  It follows that the left vertical arrow is injective.
  
  If $G$ is split reductive and $T$ is a split maximal torus of $G$ with the
  associated Weyl group $W$, then  \cite[Thm.~4]{Serre} says that
  the canonical map of representation rings induces a ring isomorphism 
\begin{equation}\label{eqn:inv}
R(G) \xrightarrow{\cong} {R(T)}^W.
\end{equation}
Since the map $R(T) \to R(T_{k'})$ is clearly an isomorphism, a similar assertion
also holds for $G$.
\end{proof}

\begin{prop}\label{prop:finiteR}
  Let $G \in \Grp_k$ and let $H$ be a closed subgroup of $G$. Assume that either $H$
  is connected or $\Char(k) = 0$. Then the restriction map of integral 
representation rings $R(G) \to R(H)$ is finite. In particular, $R(H)$ is
Noetherian.
\end{prop}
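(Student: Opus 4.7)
The plan is to prove finiteness of the restriction map $R(G) \to R(H)$ by reducing, through a sequence of standard steps, to a Noetherian-submodule argument over the algebraic closure $\ov{k}$. The Noetherian conclusion will then follow at once: $R(G)$ is itself a finitely generated $\Z$-algebra (the reduction below embeds it in $R(GL_{N,\ov{k}}) = \Z[e_1,\dots,e_{N-1},e_N^{\pm 1}]$), so finite modules over $R(G)$ are Noetherian.

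First I would embed $G$ as a closed subgroup of some $GL_N$, which is possible since $G$ is linear algebraic. For the tower of restrictions $R(GL_N) \to R(G) \to R(H)$, finiteness of the composition implies finiteness of the second arrow (use the same module generators), so we are reduced to the case $G = GL_N$, a split reductive group. By \lemref{lem:Noether-Rep-3}, the pull-back $R(GL_{N,k}) \xrightarrow{\cong} R(GL_{N,\ov{k}})$ is an isomorphism and $R(H) \hookrightarrow R(H_{\ov{k}})$ is injective. Thus, if we can show that $R(H_{\ov{k}})$ is a finite $R(GL_{N,\ov{k}})$-module, then $R(H)$ is a submodule of a finite module over the Noetherian ring $R(GL_{N,\ov{k}}) = R(G)$, hence finitely generated.

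To prove the finiteness over $\ov{k}$, choose a maximal torus $T \subseteq H_{\ov{k}}$. After conjugating $H_{\ov{k}}$ inside $GL_{N,\ov{k}}$ by an element of $GL_N(\ov{k})$, we may assume $T$ lies inside the standard diagonal torus $T_{GL_N}$. The closed embedding of split tori $T \hookrightarrow T_{GL_N}$ induces a surjection on character lattices, hence a surjection $R(T_{GL_N}) \twoheadrightarrow R(T)$. Since $R(T_{GL_N}) = \Z[x_1^{\pm},\dots,x_N^{\pm}]$ is free of rank $N!$ over $R(GL_{N,\ov{k}}) = R(T_{GL_N})^{S_N}$, the quotient $R(T)$ is a finite, and in particular Noetherian, $R(GL_{N,\ov{k}})$-module. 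Using our hypothesis, the unipotent radical $H_{\ov{k},u}$ of $H_{\ov{k}}$ is a normal subgroup (in the disconnected characteristic-zero case we take the unipotent radical of $H_{\ov{k}}^0$), and every irreducible $H_{\ov{k}}$-representation has trivial $H_{\ov{k},u}$-action, because the nonzero subspace of $H_{\ov{k},u}$-fixed vectors is $H_{\ov{k}}$-stable by normality and so equals the whole representation by irreducibility. Consequently $R(H_{\ov{k}}) \cong R(H_{\ov{k}}/H_{\ov{k},u})$, the representation ring of a split reductive group over $\ov{k}$, and the restriction to the maximal torus $R(H_{\ov{k}}) \hookrightarrow R(T)$ is injective and $R(GL_{N,\ov{k}})$-linear. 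Hence $R(H_{\ov{k}})$ embeds as a submodule of the Noetherian $R(GL_{N,\ov{k}})$-module $R(T)$ and is finitely generated.

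The main obstacle I anticipate is verifying the $R(GL_{N,\ov{k}})$-linearity of the embedding $R(H_{\ov{k}}) \hookrightarrow R(T)$ after the identification with the reductive quotient: since $H_{\ov{k}}/H_{\ov{k},u}$ does not sit naturally inside $GL_N$ as a subgroup, the $R(GL_{N,\ov{k}})$-action on $R(H_{\ov{k}}/H_{\ov{k},u})$ is defined only through the identification with $R(H_{\ov{k}})$, and one must check that this action matches the action on $R(T)$ obtained by restriction along $T \hookrightarrow T_{GL_N}$. This follows formally from the fact that restriction of representations is a ring homomorphism and that every irreducible $H_{\ov{k}}$-representation factors through the reductive quotient, so the various $R(GL_{N,\ov{k}})$-actions in question are determined by their values on these irreducibles and thereby agree.
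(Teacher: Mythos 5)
Your argument works for the case where $H$ is connected, and there it runs parallel to the paper's: reduce to $G = GL_N$, pass to $\ov{k}$ via \lemref{lem:Noether-Rep-3}, kill the unipotent radical, and embed $R(H_{\ov{k}})$ into the Noetherian $R(GL_{N,\ov{k}})$-module $R(T)$ via restriction to a maximal torus. (One cosmetic slip: $R(G)$ is a quotient target, not a subring, of $R(GL_N)$; what makes $R(G)$ Noetherian is that it is \emph{finite over} $R(GL_N)$, not that it embeds in it.)

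However, the proof has a genuine gap in the case $\Char(k) = 0$ with $H$ disconnected. After dividing by the unipotent radical of $H^0_{\ov{k}}$, the quotient $H_{\ov{k}}/H_{\ov{k},u}$ is a possibly disconnected group with reductive identity component, and your key claim --- that restriction to a maximal torus $T$ of the identity component gives an injection $R(H_{\ov{k}}) \hookrightarrow R(T)$ --- is simply false in this generality. For example, take $H$ to be a nontrivial finite group: then $T$ is trivial, $R(T) = \Z$, and the rank map $R(H) \to \Z$ is manifestly not injective. So the final embedding step fails and the finiteness of $R(H_{\ov{k}})$ over $R(GL_{N,\ov{k}})$ is not established. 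This is precisely the point where the paper's proof invokes Segal's theorem \cite[Prop.~3.2]{Segal} on representation rings of compact Lie groups (reducing first to $k \subset \C$ and then to finitely generated subfields), and no elementary torus-restriction argument of the kind you use is known to replace it. To fix your proof you would need either to cite a finiteness statement covering disconnected reductive groups over $\ov{k}$ directly, or to route the disconnected case through the compact Lie group theory as the paper does.
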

\begin{proof}
  It is enough to prove the proposition when $G$ is a general linear group over $k$.
  We divide the proof of the latter case into two cases.

  {\bf Case~1:} $k$ is algebraically closed.
    
  If $H$ is a maximal torus $T$ of $G$ and $W$ is the associated Weyl 
 group, then the natural map ${\Z}[N] \to R(T)$ is
  an isomorphism of rings (where $N$ is the character group of $T$) which implies
  that $R(T)$ is a finite type algebra over $\Z$. In particular, it is finite over 
${R(T)}^W$ which is then also a finite type $\Z$-algebra 
  (cf. proof of \cite[Lem.~3.9]{Krishna-Adv}) and hence Noetherian. It follows from
  ~\eqref{eqn:inv} that $R(G)$ is Noetherian.

 Suppose now that $H$ is any closed subgroup of $G$. If $V$ is an irreducible 
representation of $H$ and if the unipotent radical $R_uH$ of $H$ acts 
non-trivially on $V$, then the invariant subspace of $V$ for this action 
is a non-zero subspace (cf. \cite[Cor.~10.5]{Borel}). Since $R_uH$ is normal in $H$,
such a subspace must be a subrepresentation of $G$ inside $V$.
But this contradicts the irreducibility of $V$. Hence
$R_uH$ must act trivially on $V$. Since $R(H)$ is a free abelian group
on the set of irreducible representations of $H$
(cf. \cite[App.~C]{Karpenko-Merkurjev}), this implies that the
map $R(H) \to R(H/{R_uH)}$ is an isomorphism. Hence we can assume that 
$H$ is reductive. 

If $H$ is connected, we choose a maximal
torus $S$ of $H$. Since $k$ is algebraically closed, there is a maximal
torus $T$ of $G$ containing $S$. We thus get a commutative diagram
of representation rings
\[
\xymatrix{
R(G) \ar[r] \ar[d] & R(T) \ar[d] \\
R(H) \ar[r] & R(S).}
\]
We showed above that the horizontal arrows are finite and injective 
maps. Since $S$ is a subtorus of $T$, there is a decomposition $T = S 
\times S'$ and hence the map $R(T) \to R(S)$ is surjective. In particular,
$R(S)$ is finite over $R(G)$. Since $R(G)$ is Noetherian, we conclude that $R(H)$ is
finite over $R(G)$.

Suppose now that $\Char(k) = 0$ and $H$ is not necessarily connected.
We first assume that $k \subset \C$ and look at the commutative diagram
  of rings
  \begin{equation}\label{eqn:Noether-Rep-0}
    \xymatrix@C.8pc{
      R(G) \ar[r] \ar[d] & R(G_{\C}) \ar[d] \\
      R(H) \ar[r] & R(H_{\C}),}
  \end{equation}
  where the horizontal arrows are induced by the change of fields.
  The top horizontal arrow is an isomorphism of Noetherian rings and the
  right vertical arrow is a finite ring homomorphism by
  \cite[Prop.~3.2]{Segal} (see also \cite[Prop.~2.3]{EG-Adv}).
  It follows that $R(H_{\C})$ is finite over $R(G)$. It suffices therefore to
  show that the base change map $R(H) \to R(H_{\C})$ is injective. But this
  follows from \lemref{lem:Noether-Rep-3}.

If $k$ is not necessarily a subfield of $\C$, we choose a finitely generated
  subfield $k' \subset k$ such that $G$ is defined over $k'$. We can write
  $k = \varinjlim_\lambda k_\lambda$,
  where $\{k_\lambda\}$ is the filtered family of all finitely generated subfields of
  $k$ which contain $k'$. We now look at the commutative diagram 
\begin{equation}\label{eqn:Noether-Rep-1}
    \xymatrix@C.8pc{
      R(G_{k_\lambda}) \ar[r] \ar[d] & R(G) \ar[d] \\
      R(H_{k_\lambda}) \ar[r] & R(H).}
\end{equation}

The lower horizontal arrow in ~\eqref{eqn:Noether-Rep-1} is injective for each
$\lambda$ by \lemref{lem:Noether-Rep-3}. It now follows easily that the canonical map
$\varinjlim_\lambda R(H_{k_\lambda}) \to R(H)$ is an isomorphism. Since $R(H)$ is a
finitely generated ring by \cite[Prop.~4.1]{Thomason-Duke-2}, the lower horizontal
arrow in ~\eqref{eqn:Noether-Rep-1} must actually be bijective for some $\lambda$.
On the other hand, the left vertical arrow is a finite ring homomorphism
since each $k_\lambda$ admits an embedding inside $\C$. Since the top horizontal
arrow is a bijection, we deduce that the right vertical arrow is a finite ring
homomorphism.

{\bf Case~2:} $k$ is not algebraically closed.

We consider the commutative diagram of representation rings
\[
\xymatrix{
R(G) \ar[r] \ar[d] & R(G_{\ov{k}}) \ar[d] \\
R(H) \ar[r] & R(H_{\ov k})}
\]
Since the top horizontal arrow is an isomorphism and the right vertical arrow is
a finite ring homomorphism, it follows that $R(H_{\ov k})$ is finite over $R(G)$.
We now use \lemref{lem:Noether-Rep-3} to conclude that $R(H)$ is finite over
$R(G)$.
\end{proof}

\begin{remk}\label{remk:RG-disconnected}
  If $\Char(k) > 0$ and $H^0_{\ov{k}}$ is the identity component of $H_{\ov{k}}$, then
  Brauer's theorem says that $R(H)$ is a finitely generated abelian group.
  Combining this fact with \propref{prop:finiteR} for $H^0_{\ov{k}}$, it should be
  possible to deduce that $R(H)$ is finite over $R(G)$ even if $H$ is not connected.
  But we do not know how to complete the proof.
  \end{remk}

Recall that the augmentation ideal $I_G \subset R(G)$ is the kernel of the rank map
$R(G) \surj \Z$. If $\Lambda$ is any torsion-free ring, the kernel of the augmentation
map $R(G)_\Lambda \surj \Lambda$ will also be called the augmentation ideal.
We shall need the following result about $I_G$.

\begin{lem}\label{lem:Noether-Rep}
  The augmentation ideal is the radical of a finitely generated ideal in $R(G)$.
 \end{lem}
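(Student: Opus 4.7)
The plan is to reduce the statement to the Noetherianity of $R(G)$. The key input is the theorem of Thomason \cite[Prop.~4.1]{Thomason-Duke-2}, already invoked in the proof of \propref{prop:finiteR}, asserting that for every linear algebraic group $G$ over a field, the representation ring $R(G)$ is a finitely generated $\Z$-algebra. Granting this, the Hilbert basis theorem forces $R(G)$ to be Noetherian, so in particular $I_G$ itself is a finitely generated ideal, and the lemma then holds trivially with $\sqrt{I_G} = I_G$.

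If one instead wishes to avoid appealing to Thomason's finiteness theorem and exhibit an explicit, small generating set, the plan would be as follows. First, use \lemref{lem:RG-inj} and \lemref{lem:Noether-Rep-3} to reduce to the case where $k$ is algebraically closed, using that $R(G) \hookrightarrow R(G_{\ov{k}})$ is injective and compatible with augmentation ideals so that generators found over $\ov{k}$ descend to a finite subextension of $\ov{k}/k$ and then, after a Galois-averaging step, to $k$. Over an algebraically closed field, pick a maximal torus $T \subseteq G^0$; the group ring $R(T) = \Z[M]$ of the character lattice $M$ has augmentation ideal $I_T$ manifestly generated by the finite set $\{e^{\lambda_i} - 1\}$ for any $\Z$-basis $\{\lambda_i\}$ of $M$. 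Since $R(T)$ is finite over $R(G^0)$ by \propref{prop:finiteR}, and $I_T \cap R(G^0) = I_{G^0}$, a finite generating set for an ideal $J \subseteq R(G^0)$ with $\sqrt{J} = I_{G^0}$ may be extracted by ordinary Noetherian descent through this finite ring extension; a further finite step through the component group $G/G^0$ then yields the same for $G$.

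The main obstacle to the second route is precisely the case of disconnected $G$ in positive characteristic, where \propref{prop:finiteR} is not directly available (cf.\ \remref{remk:RG-disconnected}). For this reason I would take the first path via Thomason's finite generation result, which handles all linear algebraic groups $G$ uniformly.
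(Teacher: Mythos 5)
Your proposal takes a genuinely different route from the paper's proof, and the comparison is instructive. The paper fixes a closed embedding $G \inj GL_n$, lets $I'_G \subset R(G)$ be the extension of $I_{GL_n}$ under the restriction map (which is automatically finitely generated because $R(GL_n)$, being the ring of symmetric Laurent polynomials in $n$ variables, is Noetherian), and then invokes Edidin--Graham's result \cite[Cor.~6.1]{EG-RR} that the $I_G$-adic and $I'_G$-adic topologies on $R(G)$ coincide; this gives $\sqrt{I'_G} = \sqrt{I_G} = I_G$, the last equality because $I_G$ is the kernel of the rank map to $\Z$ and hence prime. That argument is unconditional in $G$ and in the characteristic, and it deliberately sidesteps the question of whether $R(G)$ \emph{itself} is Noetherian — it only needs Noetherianity of $R(GL_n)$.

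Your first path would make the lemma vacuous, provided the cited result really does assert that $R(G)$ is a finitely generated $\Z$-algebra for \emph{every} linear algebraic group over \emph{every} field. But that is exactly what you need to verify, and the internal evidence of the paper counsels caution: \cite[Prop.~4.1]{Thomason-Duke-2} is invoked only inside the subcase $\Char(k) = 0$ of the proof of \propref{prop:finiteR}, the hypothesis of \propref{prop:finiteR} is "$H$ connected or $\Char(k) = 0$," and \remref{remk:RG-disconnected} explicitly flags the disconnected, positive-characteristic case as one the authors cannot handle. If Thomason's proposition could be quoted as an unconditional Noetherianity statement, the authors would have had no reason to restrict \propref{prop:finiteR} or to route the present lemma through Edidin--Graham. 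Before relying on it, pin down the exact hypotheses of Thomason's Prop.~4.1; if there is a connectedness or characteristic-zero caveat, your first path collapses on precisely the cases the lemma is meant to cover.

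Your second path is not a proof as stated, and you acknowledge this yourself. Two steps need real content: first, "ordinary Noetherian descent through this finite ring extension" is not a standard device for producing a finitely generated ideal $J \subset R(G^0)$ with $\sqrt{J} = I_{G^0}$ (though note that once you have $R(T)$ Noetherian and finite over $R(G^0)$, Eakin--Nagata already gives $R(G^0)$ Noetherian outright, so this part can be fixed); second, and more seriously, the "further finite step through $G/G^0$" is exactly where \propref{prop:finiteR} fails to apply in positive characteristic, and you supply no substitute. So both of your paths hinge on facts about disconnected groups in positive characteristic that the paper's authors treat as open, whereas the Edidin--Graham route avoids them entirely.
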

\begin{proof}
We choose a closed embedding $G \inj GL_{n}$ and let
  $R(GL_{n}) \to R(G)$ denote the induced map. Let $I'_G \subset I_G$ denote the
  extension of $I_{GL_{n}}$ under this ring homomorphism. One knows from
  \cite[Cor.~6.1]{EG-RR} that the $I_G$-adic topology on $R(G)$ coincides with its
  $I'_G$-adic topology. This implies that $I_G = \sqrt{I_G} = \sqrt{I'_G}$.
  On the other hand, one knows that $R(GL_{n})$ is Noetherian
 which implies that $I'_G \subset R(G)$ is finitely generated.
\end{proof}

\subsection{Relation with class functions}\label{sec:Class-F}
Let $G \in \Grp_k$.
Recall from \cite[\S~2]{Krishna-Ravi-1} (see also \cite[Chap.~1, \S~1]{GIT}) that an
action of $G$ on an affine $k$-scheme $X$ is equivalent to the map of $k$-algebras
\begin{equation}\label{eqn:action}
\wh{\alpha} : k[X] \to  k[G] {\otimes}_k k[X]
\end{equation}
which satisfies the commutativity of certain diagrams of $k$-algebras.

\begin{defn}\label{defn:inv}
We shall say that an element $f$ of $k[X]$ is $G$-invariant for the above action 
if ${\wh{\alpha}}(f) = 1 {\otimes} f$. 
\end{defn}
It is easy to check that the set of $G$-invariant elements in $k[X]$ is
a $k$-subalgebra. We denote it by ${k[X]}^G$. This is the
coordinate ring of the universal geometric quotient (assuming it exists)
of $X$ for the action of 
$G$ on $X$. In particular, if $k$ is algebraically closed, then ${k[X]}^G$
is the set of invariant elements in $k[X] = \Hom_{\Sch_k}(X, {\A}^1_k)$ for the 
action of $G(k)$ on $k[X]$ given by 
\begin{equation}\label{eqn:action1}
G(k) \times k[X] \to k[X] 
\end{equation}
\[
(g, f) \mapsto f^g,
\]
where $f^g(x) = f(gx)$.

We now specialize to the adjoint (conjugation) action of $G$ on itself.
In terms of the dual action of Hopf algebras, we represent this action by
$k[G] \xrightarrow{\wh{ad}} k[G] {\otimes}_k k[G]$ and write ${k[G]}^G$ as $C[G]$.
If $k$ is algebraically closed, it follows from ~\eqref{eqn:action1} that
$C[G]$ is the ring of class functions on $G$, i.e., those regular functions on $G$ which
take a constant value on a conjugacy class.
In order to understand the geometry of $\Spec(R_k(G))$, we need to connect $R_k(G)$  
with $C[G]$. We do it as follows.

Let $(V, \rho)$ be an $n$-dimensional representation of $G$ given
by the morphism of $k$-groups $G \xrightarrow{\rho} GL(V)$.
Let ${\chi} \colon GL(V) \subset {\rm End}(V) \to {\A}^1_k$ denote the
character morphism described algebraically by the composite map
\begin{equation}\label{eqn:character-0}
k[t] \xrightarrow{\wh{\chi}} k[X_{ij}] \inj k[X_{ij}, 1/{\rm det}],
\end{equation}
\[
{\wh{\chi}}(t) = \stackrel{n}{\underset {i=1}{\Sigma}}{X_{ii}}.
\]
Composing this with $\rho \colon G \to GL(V)$, we get the character map
\begin{equation}\label{eqn:character} 
k[t] \xrightarrow{{\wh{\chi}}_{\rho}} k[G].
\end{equation}

It is easy to check using the standard properties of trace of matrices that the
assignment $(V, \rho) \mapsto {\wh{\chi}}_{{\rho}}(t)$
induces a $k$-algebra morphism $R_k(G) \xrightarrow{{\phi}_G} k[G]$.

\begin{prop}\label{prop:character1}
Let $G$ be a split reductive $k$-group. Then the map $\phi_G$ induces 
an injective homomorphism of $k$-algebras
\begin{equation}\label{eqn:char}
{\phi}_G \colon R_k(G) \to C[G].
\end{equation}
It is an isomorphism if $\Char(k) =0$.
\end{prop}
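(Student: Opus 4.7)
The plan is to verify three things in turn: (a) the image of $\phi_G$ sits inside $C[G]$; (b) $\phi_G$ is injective for split reductive $G$; (c) in characteristic zero, the image equals $C[G]$.

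Part (a) is the elementary statement that characters are class functions. If $\rho\colon G \to GL(V)$ is a representation and $\chi_\rho = \phi_G([V,\rho])$, then for the adjoint coaction $\wh{ad}\colon k[G] \to k[G] \otimes k[G]$ representing $(h,g) \mapsto hgh^{-1}$, one checks
\[
\chi_\rho(hgh^{-1}) = \tr(\rho(h)\rho(g)\rho(h)^{-1}) = \tr(\rho(g)) = \chi_\rho(g),
\]
which translates to $\wh{ad}(\chi_\rho) = 1 \otimes \chi_\rho$. Hence $\chi_\rho \in C[G]$ by \defref{defn:inv}, and $\phi_G$ takes values in $C[G]$.

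For (b), I would reduce to the torus case using ~\eqref{eqn:inv}. Let $T$ be a split maximal torus with Weyl group $W$. Tensoring ~\eqref{eqn:inv} by $k$ and combining it with functoriality of $\phi$ along the inclusion $T \inj G$ gives a commutative square
\[
\begin{tikzcd}
R_k(G) \ar[r, "\phi_G"] \ar[d, "\cong"'] & k[G] \ar[d, "\res"] \\
R_k(T)^W \ar[r, "\phi_T^W"'] & k[T].
\end{tikzcd}
\]
For a split torus $T$ with character lattice $N$, both $R_k(T)$ and $k[T]$ canonically equal the group algebra $k[N]$, and under these identifications $\phi_T$ is the identity; in particular $\phi_T^W$ is injective. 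Commuting around the square and using that the left vertical arrow is an isomorphism forces $\phi_G$ to be injective.

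For (c), in characteristic zero the Chevalley restriction theorem gives an isomorphism $C[G] = k[G]^G \xrightarrow{\cong} k[T]^W$ by restriction. (One first treats the case $k = \ov{k}$, where this is the classical statement that semisimple conjugacy classes of $G$ meet $T$, are transitive under $W$, and are Zariski dense; the general case follows by faithfully flat descent, since formation of $G$-invariants, restriction to $T$, and formation of $W$-invariants all commute with the base change $k \to \ov{k}$, as does $R_k(G) \cong R_k(T)^W$ by \lemref{lem:Noether-Rep-3} and ~\eqref{eqn:inv}.) Combining this isomorphism with $\phi_T^W\colon R_k(T)^W \xrightarrow{\cong} k[T]^W$ and the diagram above, the composite
\[
R_k(G) \xrightarrow{\phi_G} C[G] \xrightarrow{\res} k[T]^W
\]
equals the isomorphism given by the left vertical and bottom arrows, and hence $\phi_G\colon R_k(G) \to C[G]$ is surjective.

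The main obstacle is part (c): one must invoke Chevalley's restriction theorem and verify that both $C[G]$ and the restriction map behave well under base change from $k$ to $\ov{k}$, so that the classical form of the theorem suffices. The injectivity in (b) is a formal consequence of the torus identifications together with \lemref{lem:Noether-Rep-3}, and (a) is immediate.
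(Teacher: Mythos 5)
Your proof follows the same route as the paper: restrict to a split maximal torus, use that $\phi_T$ is the identity on $k[N]$, and invoke Chevalley restriction in characteristic zero. Two small points deserve attention, neither fatal. First, in part (b) the left vertical arrow $R_k(G)\to R_k(T)^W$ is \emph{not} an isomorphism in positive characteristic: tensoring $R(G)\cong R(T)^W$ with $k$ gives $R_k(G)\cong R(T)^W\otimes_\Z k$, and the natural map $R(T)^W\otimes_\Z k\to R_k(T)^W=(R(T)\otimes_\Z k)^W$ is injective (because $R(T)^W\subset R(T)$ is a saturated sublattice) but need not be surjective when $|W|$ is not invertible in $k$; the paper deliberately only asserts that $R_k(G)\to R_k(T)$ is split injective with image inside $R_k(T)^W$, reserving the bijection for $\Char k=0$. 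Your argument survives this correction, since injectivity of the left vertical plus injectivity of $\phi_T^W$ is all that is needed to get injectivity of $\phi_G$. Second, for part (c) you descend from $\bar k$ by faithfully flat descent, whereas the paper writes $k$ as a colimit of its finitely generated subfields, embeds each into $\C$, and uses the $\C$-case from Edidin--Graham; both are valid, and your version is a bit cleaner provided one is willing to cite Chevalley restriction over an arbitrary algebraically closed field of characteristic zero (rather than just over $\C$). Note also that the paper uses only the \emph{injectivity} direction of $C[G]\to C[T]^W$ (its \lemref{lem:character2}) and then recovers surjectivity of $\phi_G$ by a diagram chase against the known bijection $R_k(G)\xrightarrow{\sim}R_k(T)^W\xrightarrow{\sim}C[T]^W$; you instead feed in the full restriction isomorphism, which is a slightly stronger input but certainly available.
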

\begin{proof}
We first have to show that the image of ${\phi}_G$ is contained in $C[G]$. 
For this, it suffices to show for any representation $(V, \rho)$ that
$\wh{ad} \circ {\wh{\chi}}_{\rho} = 1 \otimes 
{\wh{\chi}}_{\rho}$. Since the map $k[G] {\otimes}_k k[G] \to
{\ov k}[G_{\ov k}] {\otimes}_{\ov k} {\ov k}[G_{\ov k}]$ is injective,
it suffices to show that $\wh{ad}_{\ov k}\circ{\wh{\chi}}_{{\rho}_{\ov k}}
= 1 \otimes {\wh{\chi}}_{{\rho}_{\ov k}}$. Thus we can assume that
$k$ is algebraically closed. In this case, we can use ~\ref{eqn:action1} 
to reduce to showing that ${\chi}_{\rho}(ghg^{-1}) = {\chi}_{\rho}(h)$, where
$\chi_\rho \colon G \to \A^1_k$ is induced by
${\wh{\chi}}_{\rho}$. But this is standard.

To show that $\phi_G$ is injective in general and bijective in characteristic zero,
we fix a split maximal torus $T \subset G$ and a Borel $B$ containing $T$. Then $G$
 is uniquely described by a root system over $k$, hence its Weyl
group $W$ is a constant finite group which does not depend on the
base change of $G$ by any field extension of $k$. Moreover, $W$ acts on
$T$ and hence on $R_k(T)$ and $C[T] \cong k[T]$ such that the map ${\phi}_T$
is $W$-equivariant. Thus we have a commutative diagram
\begin{equation}\label{eqn:char1}
\xymatrix{
  R_k(G) \ar[r]^{{\phi}_G} \ar[d]_{{\eta}_R} & C[G] \ar@{.>}[d] \ar@{^{(}->}[r] &
  k[G] \ar[d]^{{\eta}_C} \\
{R_k(T)}^W \ar[r]_{{\phi}^W_T} & {C[T]}^W \ar@{^{(}->}[r] & k[T],}
\end{equation}
where $\eta_R$ and $\eta_C$ are the restriction maps.

One knows from \cite[Thm.~1.13]{Thomason-Duke-2} that the restriction map
$R(G) \to R(T)$ is split injective and \cite[Thm.~4]{Serre} says that
its image is $R(T)^W$. This implies that
$R_k(G) \to R_k(T)$ is split injective whose image lies in $R_k(T)^W$.
If $\Char(k) = 0$, it follows that $R_k(G) \to R_k(T)^W$ is a bijection
(cf. \cite[Lem.~3.1]{Krishna-Adv}). We have thus shown that the left vertical arrow
in ~\eqref{eqn:char1} is injective in general and bijective if $\Char(k) = 0$.
Since $T$ is a split torus, ${\phi}^W_T$ is an isomorphism. 
This already shows that $\phi_G$ is injective. 
The remaining claim that it is bijective if $\Char(k) =0$ follows from
\lemref{lem:character2}.
\end{proof}

\begin{lem}\label{lem:character2}
  Under the assumptions of \propref{prop:character1} and notations of its proof,
  the canonical map $C[G] \to C[T]$ is injective and its image lies in ${C[T]}^W$ if
  $\Char(k) = 0$.
\end{lem}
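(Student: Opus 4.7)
The plan is to handle the two claims separately. The $W$-invariance of the image is essentially formal and does not require any hypothesis on the characteristic: for any $n \in N_G(T)(k)$ and $t \in T$, the conjugate $ntn^{-1}$ lies in $T$, and if $f \in C[G]$ then $f(ntn^{-1}) = f(t)$ by the class function property. Passing to $\ov{k}$-points and using density of $k$-points (or working directly at the level of coordinate rings via the Hopf-algebra definition of invariants) shows that $f|_T$ is $W$-invariant.

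For the injectivity of $C[G] \to C[T]$, I would first reduce to the case where $k$ is algebraically closed. This is justified by the fact that the restriction-from-$G$-to-$T$ map fits into a commutative square
\[
\xymatrix{
C[G] \ar[r] \ar[d] & C[T] \ar[d] \\
C[G_{\ov{k}}] \ar[r] & C[T_{\ov{k}}]
}
\]
in which both vertical arrows are injective (since $k[G] \hookrightarrow \ov{k}[G_{\ov{k}}]$ is faithfully flat and the adjoint coaction is compatible with base change, so class functions over $k$ inject into class functions over $\ov{k}$; similarly for $T$). Hence injectivity over $\ov{k}$ will imply injectivity over $k$.

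Assume then $k$ is algebraically closed. The key geometric input is the conjugation morphism
\[
\phi \colon G \times T \to G, \qquad (g,t) \mapsto gtg^{-1}.
\]
I claim $\phi$ is dominant. Every regular semisimple element of $G$ is $G$-conjugate to an element of $T$ (standard for split reductive groups, via the fact that a regular semisimple element lies in a unique maximal torus and all split maximal tori are conjugate). Since the regular semisimple locus of $G$ is open and, in characteristic zero, dense (it is non-empty as $T$ contains regular semisimple elements, and its complement is cut out by the vanishing of a nonzero character-like polynomial), the image of $\phi$ is dense. Therefore $\phi^* \colon k[G] \to k[G \times T]$ is injective. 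Now given $f \in C[G]$ with $f|_T = 0$, the class function identity yields $(\phi^* f)(g,t) = f(gtg^{-1}) = f(t) = 0$ for all $(g,t)$, whence $\phi^*f = 0$ and finally $f = 0$.

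The main technical point I expect to require a little care is the density of the regular semisimple locus together with the assertion that its preimage under $\phi$ surjects onto it; this is where the characteristic zero hypothesis enters (to avoid issues with bad primes and with the derivative of $\phi$ degenerating). Once this is in place, the proof is essentially formal and does not require invoking the full Chevalley restriction theorem $k[G]^G \cong k[T]^W$, although that theorem would give the result immediately in characteristic zero.
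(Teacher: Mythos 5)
Your proof is correct, but it takes a genuinely different route from the paper's. The paper does \emph{not} argue directly with the conjugation morphism; instead it reduces the statement to the complex case: for $k$ finitely generated over $\Q$, one embeds $k$ into $\C$ and uses the injections $C[G] \hookrightarrow C[G_{\C}]$ and $C[T] \hookrightarrow C[T_{\C}]$ to reduce to $k = \C$, where the claim is taken as known from Edidin--Graham's work; for general $k$, the paper observes that $G,T$ descend to $\Q$ and then identifies $C[G]$ with the filtered colimit $\varinjlim_\lambda C[G_\lambda]$ over finitely generated subfields $k_\lambda$, reducing to the previous case. Your argument via the dominance of $\phi \colon G \times T \to G$ (through density of the regular semisimple locus and conjugacy of maximal tori over $\ov{k}$) is more direct and self-contained, avoiding both the appeal to the complex (compact Lie group) case and the colimit bookkeeping. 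It also avoids the restriction to $\Char(k)=0$ that the paper's reduction to $\C$ imposes: the density of the regular semisimple locus and the dominance of $\phi$ are actually characteristic-free facts over an algebraically closed field, so your parenthetical comment that characteristic zero is needed ``to avoid issues with bad primes and with the derivative of $\phi$ degenerating'' is not quite accurate --- dominance of $\phi$ does not require smoothness or nondegeneracy of its differential, only that the image be dense, and that is fine in any characteristic. (The paper restricts to $\Char(k)=0$ only because that is the case in which \propref{prop:character1} needs the lemma, and because its chosen reduction route to $\C$ genuinely requires it.) Your observation that the full Chevalley restriction isomorphism $k[G]^G \cong k[T]^W$ is not needed is also correct; indeed what \propref{prop:character1} establishes is in effect a form of that theorem, so a circularity-free argument such as yours is the right sort of thing to use here.
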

\begin{proof}
  If $k$ is finitely generated as a field, then it has an embedding into $\C$.
  In this case, we can use
  the inclusion $C[G] \inj C[G_{\C}]$ and the $W$-equivariant inclusion
  $C[T] \inj C[T_{\C}]$ to reduce the proof of the lemma to the case when $k = \C$.
  But this case is well known (cf. \cite[\S~2]{EG-Adv}).
  In general, we observe using the root systems that $G$ and $T$ are defined over 
  $\Q$. We let $G'$ (resp. $T'$) be a form of $G$ (resp. $T$) over
  $\Q$. We write $k = \varinjlim_\lambda k_\lambda$,
  where $\{k_\lambda\}$ is the filtered family of all finitely generated subfields of
  $k$. We let $G_\lambda$ (resp. $T_\lambda$) denote the base change of $G'$ (resp. $T'$)
  via the inclusion $\Q \inj k_\lambda$. 

  Since $\varinjlim_{\lambda} k_\lambda[G_\lambda] \to k[G]$
  is an isomorphism and each $k_\lambda[G_\lambda] \otimes_{k_\lambda} k_\lambda[G_\lambda]
  \to k[G] \otimes_{k} k[G]$ is injective, we see using ~\eqref{eqn:action}
  and Definition~\ref{defn:inv} that $\varinjlim_{\lambda} C[G_\lambda] \to C[G]$
  is also an isomorphism. The same also holds for $T$. In particular,
  $\varinjlim_{\lambda} C[T_\lambda]^W \to C[T]^W$ is an isomorphism. This reduces the
  proof of the lemma to the case of finitely generated fields.
  \end{proof}

\subsection{Results of Edidin-Graham}\label{sec:EG-*}
If $k$ is an algebraically closed field of characteristic zero,
one can prove \propref{prop:character1} for any reductive $k$-group
using a straightforward generalization of \cite[Prop.~2.2]{EG-Adv}.

\begin{prop}\label{prop:character1*}
  Assume that $k$ is algebraically closed of characteristic zero
  and $G$ is a reductive $k$-group. Then ${\phi}_G \colon R_k(G) \to C[G]$
  is an isomorphism.
\end{prop}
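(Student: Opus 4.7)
The plan is to reduce to the complex case treated in \cite[Prop.~2.2]{EG-Adv} by a Lefschetz-principle argument. The key inputs I would use are the base-change behaviour of the representation ring (\lemref{lem:RG-inj}, \lemref{lem:Noether-Rep-3}) and of class functions (via the argument of \lemref{lem:character2}), together with the case of split reductive groups already settled in \propref{prop:character1}.

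First, since $G$ is of finite type, it descends to some finitely generated subfield of $k$; enlarging this subfield inside $k$, I would choose a countable algebraically closed intermediate field $k_1 \subseteq k$ over which $G$ is defined. A transcendence-basis/cardinality argument then produces an embedding $\iota \colon k_1 \hookrightarrow \C$. Next, I would verify the base-change compatibility
\[
R_{k_1}(G_{k_1}) \otimes_{k_1} L \xrightarrow{\cong} R_L(G_L), \qquad C[G_{k_1}] \otimes_{k_1} L \xrightarrow{\cong} C[G_L]
\]
for $L \in \{k, \C\}$, and check that $\phi$ is natural with respect to these identifications. For class functions the compatibility follows by the argument of \lemref{lem:character2}: being a class function is equivalent to being fixed by the coaction $\wh{ad}$, a condition preserved under faithfully flat base change. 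For the representation ring with $G$ connected (and hence split, as $k_1$ and $k$ are algebraically closed), the compatibility is exactly \lemref{lem:Noether-Rep-3}. For the general disconnected case, I would appeal to Clifford theory in characteristic zero: every irreducible representation of $G$ is obtained by induction from an irreducible of $G^0$ extended to its stabilizer inside the finite component group $\Gamma := G/G^0$ and twisted by an irreducible of that stabilizer, and this combinatorial data descends to $k_1$ since both $G^0$ and $\Gamma$ do.

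With the base-change compatibilities in hand, $\phi_G$ is an isomorphism if and only if $\phi_{G_\C}$ is, and the latter is precisely \cite[Prop.~2.2]{EG-Adv}. The main obstacle is the Clifford-theoretic base-change argument for $R_k(G)$ in the disconnected case, which is not covered directly by \lemref{lem:Noether-Rep-3}. An alternative, more self-contained route would be to give a purely algebraic proof mimicking Edidin-Graham that works verbatim over any algebraically closed field of characteristic zero, using that $\Spec(C[G])$ identifies with the GIT quotient of $G$ by conjugation, whose closed points parameterize semisimple conjugacy classes, and analyzing characters of irreducible representations directly against this quotient.
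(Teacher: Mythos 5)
Your primary Lefschetz-principle plan has the gap you yourself flag, and it is a real one: for a \emph{disconnected} reductive group over an algebraically closed base, the base-change isomorphism $R_{k_1}(G_{k_1}) \otimes_{k_1} L \cong R_L(G_L)$ is not a formal consequence of \lemref{lem:RG-inj} or \lemref{lem:Noether-Rep-3}; the latter treats only the split (hence connected) reductive case, and the filtered-colimit statement of \lemref{lem:RG-inj} is for finite subextensions, not for passage from one algebraically closed field to another. Filling that hole via Clifford theory is plausible but is genuinely extra work (and one must also check, not just assert, that the chosen embedding $k_1 \hookrightarrow \C$ identifies the $\phi$'s), so the plan as stated does not close.

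The paper instead takes exactly the ``self-contained, mimic Edidin--Graham'' route you mention at the end, but the content is simpler than what you describe: there is no need to pass through the GIT quotient or to parameterize semisimple conjugacy classes. One considers the map
\[
\phi'_G \colon \bigoplus_{V \in \wh{G}} V^* \otimes_k V \longrightarrow k[G], \qquad \lambda \otimes v \mapsto \bigl(g \mapsto \lambda(gv)\bigr),
\]
which is an isomorphism of $G$-representations over any algebraically closed field of characteristic zero (with the diagonal $G$-action on the left, conjugation on the right) by \cite[Thm.~27.3.9]{Tauvel}. Taking $G$-invariants and applying Schur's lemma, $(V^*\otimes V)^G \cong \Hom_G(V,V) \cong k$, gives $\bigoplus_{V} k \xrightarrow{\cong} C[G]$, and identifying the left side with $R_k(G)$ (the free $\Z$-module on $\wh G$, tensored with $k$) one checks this is $\phi_G$. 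This Peter--Weyl/matrix-coefficient argument works verbatim over $k$, sidesteps base change entirely, and never invokes $\Spec(C[G]) \cong G /\!\!/ G$ or the description of its closed points (those come up only later, in the corollaries \corref{cor:SSC}--\corref{cor:maximal2}). If you want to write up the proof, I would drop the Lefschetz reduction and develop this last paragraph.
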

\begin{proof}
  Mimicking the proof of \cite[Prop.~2.2]{EG-Adv}, we let $\wh{G}$ denote the set of
  irreducible representations of $G$ and consider the map
  \begin{equation}\label{eqn:character1*-0}
    \phi'_G \colon {\underset{V \in \wh{G}}\bigoplus} V^* \otimes_k V \to k[G],
  \end{equation}
  where on each $V \in \wh{G}$, we have
  $\phi'_G(\lambda \otimes v) = (g \mapsto \lambda(gv))$ for
  $\lambda \in V^*$ and $v \in V$. Then $\phi'_G$ is an isomorphism of
  $G$-representations by \cite[Thm.~27.3.9]{Tauvel}
  if we let $G$ act diagonally on $V^* \otimes V$ and by conjugation on $k[G]$.
  Taking the $G$-invariants on both sides and noting that
  $(V^* \otimes_k V)^G \cong \Hom_G(V,V) \cong k_V \subset R_k(G)$,
  we get an isomorphism
  $\phi'_G \colon {\underset{V \in \wh{G}}\bigoplus} k_V \to C[G]$,
  where $k_V$ denotes the 1-dimensional $k$-vector space spanned by the
  class of $V$ in $R_k(G)$. But one easily checks that $\phi'_G$ coincides with
  $\phi_G$ if we identify ${\underset{V \in \wh{G}}\bigoplus} k_V$ with $R_k(G)$
  (cf. \cite[App.~C]{Karpenko-Merkurjev}).
  \end{proof}

For $G \in \Grp_k$, we let $G_s(k)$ denote the set of $k$-rational points $g \in G$
such that $g$ is a semisimple closed point. For $g \in G_s(k)$ and a
representation $(V, \rho)$ of $G$, we let $\tr_g([V]) = \chi_\rho(g) =
(\chi \circ \rho)(g) \in k$ (cf. ~\eqref{eqn:character-0}). This extends linearly
to a $k$-algebra homomorphism $\tr_g \colon R_k(G) \to k$ (cf. \cite[\S~2.2]{EG-Adv}).
Since the trace of $g$ on the 1-dimensional trivial
representation is $1 \in k^\times$, it follows that $\tr_g$ is surjective. In
particular, $\Ker(\tr_g)$ is a maximal ideal of $R_k(G)$. We denote this by $\fm_g$.
Letting $\Omega(R_k(G))$ denote the max-spectrum of $R_k(G)$, we thus get a function
$\tr_G \colon G(k) \to \Omega(R_k(G))$ given by $\tr_G(g) = \fm_g$.
It is easy to see from the
above discussion that $\tr_G$ is a class function on $G_s(k)$.
If $\Psi$ is the conjugacy class of $g$, we shall use the notations $\fm_g$
and $\fm_\Psi$ interchangeably throughout this paper.

The following are the straightforward consequences of \propref{prop:character1*}.
These were proven in \cite[\S~2]{EG-Adv} when $k = \C$.

\begin{cor}\label{cor:SSC}
  Assume that $k$ is algebraically closed of characteristic zero
  and $G$ is a reductive $k$-group.
  Then a conjugacy class $\Psi$ in $G(k)$ is closed if and only if it is semisimple.
\end{cor}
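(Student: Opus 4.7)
The plan is to reinterpret the corollary through the affine GIT quotient $G /\!/ G$ of $G$ by its adjoint action, using \propref{prop:character1*} to identify this quotient with $\Spec R_k(G) = \Spec C[G]$. Once this identification is made, the corollary follows from standard facts about closed orbits of reductive group actions on affine varieties: each fiber of $\pi : G \to G /\!/ G$ contains a unique closed orbit, and by the Matsushima--Onishchik criterion, an orbit $\Psi_g = G/Z_G(g)$ is closed in $G$ if and only if its stabilizer $Z_G(g)$ is reductive.

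For the converse direction (semisimple $\Rightarrow$ closed), I would invoke the classical fact that for a reductive $k$-group $G$ in characteristic zero and any semisimple element $g \in G(k)$, the centralizer $Z_G(g)$ is reductive; for instance, $g$ lies in a maximal torus $T$ and $Z_G(g)$ is generated by $T$ together with the root subgroups for roots on which $g$ acts trivially, hence is reductive. Applying the Matsushima--Onishchik criterion to the adjoint action of $G$ on the affine variety $G$ then shows that $\Psi_g \cong G/Z_G(g)$ is closed in $G$.

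For the forward direction (closed $\Rightarrow$ semisimple), I would take $g \in \Psi$ and use the Jordan decomposition $g = g_s g_u = g_u g_s$ in $G(k)$, together with the classical degeneration fact that $g_s$ lies in the closure $\overline{\Psi_g}$. Concretely, after conjugating so that $g_s \in T$ for a maximal torus $T$ and $g_u$ lies in the unipotent radical $U$ of a Borel $B \supseteq T$ containing $g$, one can choose a cocharacter $\lambda : \G_m \to T$ pairing strictly positively with every root occurring in $\mathrm{Lie}(U)$; since $\lambda(t)$ commutes with $g_s$ and conjugates $g_u$ toward $1$ as $t \to 0$, one obtains $\lim_{t \to 0} \lambda(t) g \lambda(t)^{-1} = g_s$. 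Closedness of $\Psi_g$ then forces $\Psi_{g_s} \subseteq \Psi_g$, and since distinct conjugacy classes are disjoint, $g$ is conjugate to $g_s$ and so is semisimple.

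The substantive inputs are therefore two classical facts from the structure theory of reductive groups in characteristic zero, namely the reductivity of $Z_G(g)$ for semisimple $g$, and the degeneration of $g$ to $g_s$ via a suitable one-parameter subgroup. The main conceptual obstacle is simply organizing these via the GIT picture; the role of \propref{prop:character1*} is to legitimize this picture by identifying $\Spec R_k(G)$ with $\Spec C[G]$, which in turn makes the maximal ideals $\fm_\Psi$ of \S\ref{sec:Class-F} the geometric points of the quotient parameterizing the closed (hence semisimple) conjugacy classes.
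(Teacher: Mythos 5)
Your forward direction (closed $\Rightarrow$ semisimple) is correct: the degeneration $\lim_{t\to 0}\lambda(t)\,g\,\lambda(t)^{-1}=g_s$ along a cocharacter of $T$ pairing strictly positively with the roots of $U$ is sound, and is in substance the argument of \cite[Prop.~2.4]{EG-Adv}, to which the paper's one-line proof reduces by citation; the role of \propref{prop:character1*} and of \cite[Chap.~1, Cor.~1.2]{GIT} (which the paper notes holds over any field) is precisely to legitimize the identification $G/\!/ G\cong\Spec(R_k(G))$ that you also invoke.

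The backward direction (semisimple $\Rightarrow$ closed) has a genuine gap. The ``criterion'' you invoke, that a $G$-orbit in an affine $G$-variety is closed \emph{if and only if} its stabilizer is reductive, is only valid in one direction. Matsushima's theorem (in characteristic zero, cf.\ \cite{Richardson}) gives: closed orbit $\Rightarrow$ $G/G_x$ affine $\Rightarrow$ $G_x$ reductive. The converse fails in general; for example, let $\G_m$ act on $\A^2$ by $t\cdot(x,y)=(tx,t^{-1}y)$: the orbit of $(1,0)$ has trivial stabilizer yet is not closed, since $(0,0)$ lies in its closure. For the adjoint action the biconditional does happen to be true, but that is exactly because in characteristic zero ``$Z_G(g)$ reductive'' is \emph{equivalent} to ``$g$ semisimple,'' so invoking the biconditional amounts to citing the corollary to prove itself. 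The argument you actually need is the GIT one you mention in passing but do not deploy: each fiber of $\pi\colon G\to G/\!/G$ contains a unique closed orbit (\cite[Chap.~1, Cor.~1.2]{GIT}); given semisimple $g$, the closure $\overline{\Psi_g}$ contains a unique closed orbit $\Psi_h$ and then $\pi(h)=\pi(g)$; by your (correct) forward direction $h$ is semisimple; and two semisimple elements with the same image in $G/\!/G\cong\Spec(R_k(T)^W)$ (cf.\ ~\eqref{eqn:inv}) are conjugate, since semisimple classes are parameterized by $T/W$. Hence $\Psi_g=\Psi_h$ is closed, which is the route taken in \cite[Prop.~2.4]{EG-Adv}.
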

\begin{proof}
The proof is identical to that of \cite[Prop.~2.4]{EG-Adv} in view of
\propref{prop:character1*} once we observe that \cite[Chap.~1, Cor.~1.2]{GIT} holds
over any algebraically closed field (in fact, over any field).
\end{proof}

\begin{cor}\label{cor:maximal1}
 Assume that $k$ is algebraically closed of characteristic zero
  and $G$ is any $k$-group. Then the correspondence
${\Psi} \mapsto {\fm}_{\Psi}$ gives a bijection between the set of
conjugacy classes in $G_s(k)$ and $\Omega(R_k(G))$.
\end{cor}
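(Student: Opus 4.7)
The plan is to reduce to the reductive case established in Proposition~\ref{prop:character1*} by passing to the quotient $G/R_uG$, and then apply geometric invariant theory to the conjugation action of $G$ on itself.

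First I would exploit the fact, already recorded in the proof of Proposition~\ref{prop:finiteR}, that every irreducible representation of $G$ has trivial $R_uG$-action. This gives an isomorphism $R_k(G/R_uG) \xrightarrow{\cong} R_k(G)$, and moreover shows that for any $g \in G_s(k)$ with image $\bar g \in (G/R_uG)(k)$, the character map factors as $\tr_g = \tr_{\bar g}$ under this identification; in particular $\fm_g$ corresponds to $\fm_{\bar g}$. Since $\pi(g)$ is semisimple whenever $g$ is, the statement reduces to two sub-claims: (a) the quotient map $\pi \colon G \to G/R_uG$ induces a bijection between semisimple conjugacy classes of $G(k)$ and of $(G/R_uG)(k)$, and (b) the corollary holds for the reductive group $G/R_uG$.

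For (a), surjectivity is straightforward: lift a semisimple $\bar g$ to some $h \in G(k)$ (possible since $\pi$ is surjective on $k$-points because $k$ is algebraically closed), and observe that $\pi(h_s) = \pi(h)_s = \bar g$ for the Jordan decomposition $h = h_s h_u$. For injectivity, I would invoke the Mostow--Levi decomposition $G^0 = L \ltimes R_uG$ available in characteristic zero, fix a Levi subgroup $L \cong G/R_uG$, and show that every semisimple element of $G(k)$ is $R_uG$-conjugate to a unique element of $L(k)$ (this uses smoothness of centralizers of semisimple elements in characteristic zero, so that the orbit map $R_uG \to R_uG \cdot g$ is separable with the correct dimension on the locus of semisimple elements). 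Conjugacy in $G$ of two semisimple elements with the same image in $G/R_uG$ then reduces to conjugacy of their Levi representatives, which is precisely the assertion for $G/R_uG$.

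For (b), assume $G$ is reductive. By Proposition~\ref{prop:character1*}, $\phi_G$ identifies $R_k(G)$ with $C[G] = k[G]^G$, so $\Omega(R_k(G)) = \Omega(k[G]^G)$. By Mumford's geometric invariant theory applied to the conjugation action of the reductive group $G$ on the affine scheme $G$, the closed points of $\Spec(k[G]^G)$ are in bijection with closed conjugation orbits in $G(k)$; by Corollary~\ref{cor:SSC} these are exactly the semisimple conjugacy classes. Under $\phi_G$ the ideal $\fm_g = \ker(\tr_g)$ corresponds to the maximal ideal of class functions vanishing at $g$, which is precisely the one defined by the closed orbit through $g$, yielding the bijection. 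The main obstacle I anticipate is the injectivity portion of (a), which genuinely requires the characteristic-zero hypothesis and a careful application of the Levi decomposition; the reductive step (b) is a formal combination of Proposition~\ref{prop:character1*}, Corollary~\ref{cor:SSC}, and standard GIT.
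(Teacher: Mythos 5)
Your overall structure is correct and is essentially the paper's proof: the paper simply cites \cite[Prop.~2.5]{EG-Adv}, whose argument runs exactly through the two reductions you describe --- passing to the reductive quotient $G/R_uG$ via $R_k(G) \cong R_k(G/R_uG)$, and then applying the character isomorphism of Proposition~\ref{prop:character1*} together with GIT and Corollary~\ref{cor:SSC} for the conjugation action --- with Proposition~\ref{prop:character1*} replacing the corresponding $\C$-statement of Edidin--Graham.

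The one rough spot is your justification of the injectivity in step (a). The fact you need is that every semisimple $g\in G(k)$ is $R_uG$-conjugate to an element of a fixed Levi subgroup $L$; the clean route is to observe that the Zariski closure of $\langle g\rangle$ is diagonalizable, hence contained in some maximal reductive subgroup of $G$, and then invoke Mostow's theorem (in characteristic zero, for $G$ possibly disconnected) that all Levi subgroups are $R_uG$-conjugate. The ``smoothness of centralizers / separability of the orbit map'' mechanism you sketch does not by itself deliver this statement and would need substantial elaboration. Also, the ``uniqueness'' of the Levi representative is a red herring: you only need existence, after which the conjugation-equivariant identification $G/R_uG\cong L$ does the rest, exactly as your final sentence of (a) indicates. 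Finally, note you wrote the Levi decomposition for $G^0$ but then used a Levi for all of $G$; Mostow's result does apply to disconnected $G$ in characteristic zero, so this is only a notational slip.
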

\begin{proof}
  The proof is identical to that of \cite[Prop.~2.5]{EG-Adv} in view of 
  \propref{prop:character1*}.
\end{proof}

\begin{cor}\label{cor:maximal2}
Assume the conditions of \corref{cor:maximal1}. 
Let $G \inj H$ be a closed embedding into a $k$-group $H$ and let
${\Psi} = C_H(h)$ be a conjugacy class in $H_s(k)$. Then 
${R_k(G)}_{{\fm}_{\Psi}}$ is a semilocal ring with maximal ideals
$\left \{{\fm}_{{\Psi}_1}, \cdots, {\fm}_{{\Psi}_r}\right \}$ where
${\Psi}_1 \amalg \cdots \amalg {\Psi}_r = {\Psi} \cap G$ is the disjoint
union of some conjugacy classes in $G$. 
\end{cor}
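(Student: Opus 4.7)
The plan is to reduce the statement to an application of \corref{cor:maximal1} combined with the finiteness of the restriction homomorphism $\phi \colon R_k(H) \to R_k(G)$. Since we are in characteristic zero, \propref{prop:finiteR} gives that $R_k(G)$ is a finite $R_k(H)$-module. Localizing at the multiplicative set $R_k(H) \setminus \fm_\Psi$ yields a finite ring extension $R_k(H)_{\fm_\Psi} \to R_k(G)_{\fm_\Psi}$ over a local base. Standard commutative algebra (lying-over together with the finiteness of the fiber of a finite map over a closed point) then forces $R_k(G)_{\fm_\Psi}$ to be semilocal, with maximal ideals in bijection with the finite set of maximal ideals $\fn \subset R_k(G)$ whose contraction to $R_k(H)$ is exactly $\fm_\Psi$.

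Next, I would invoke \corref{cor:maximal1} to translate this collection into conjugacy classes. Every maximal ideal of $R_k(G)$ has the form $\fm_{\Psi'}$ for a unique semisimple conjugacy class $\Psi' \subset G_s(k)$, and by definition $\fm_{\Psi'} = \ker(\tr_{g'} \colon R_k(G) \to k)$ for any $g' \in \Psi'$. The contraction of $\fm_{\Psi'}$ to $R_k(H)$ is therefore the kernel of the composite $\tr_{g'} \circ \phi$, which is simply the trace map on $R_k(H)$ evaluated at $g'$ viewed as an element of $H$. This kernel equals $\fm_\Psi = \ker(\tr_h)$ precisely when $g'$ and $h$ have identical characters on every $H$-representation, and by \corref{cor:maximal1} applied to $H$ this happens exactly when $g'$ is $H$-conjugate to $h$, i.e., $g' \in \Psi$.

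Finally, I would verify the set-theoretic identity $\Psi_1 \amalg \cdots \amalg \Psi_r = \Psi \cap G$. The inclusion $\Psi_1 \cup \cdots \cup \Psi_r \subseteq \Psi \cap G$ is immediate from the characterization above. For the reverse inclusion, take $g \in \Psi \cap G$; since Jordan decomposition is preserved by the closed embedding $G \inj H$, the element $g$ remains semisimple in $G$ and hence defines a maximal ideal $\fm_{C_G(g)} \subset R_k(G)$. Its contraction to $R_k(H)$ is $\fm_\Psi$ by the same trace computation, so $C_G(g)$ occurs among the $\Psi_i$. Disjointness of the $\Psi_i$ is automatic, since distinct $G$-conjugacy classes are disjoint. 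The only mildly delicate ingredient is the preservation of semisimplicity under the closed embedding of algebraic groups, which I would handle by appealing to the functoriality of the Jordan decomposition; everything else is a formal manipulation of the trace-character correspondence encoded in \propref{prop:character1*}.
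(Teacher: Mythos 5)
Your proof is correct and takes essentially the same route as the paper's: both reduce via the finiteness in \propref{prop:finiteR} and \corref{cor:maximal1} to identifying which maximal ideals $\fm_{\Psi'}$ of $R_k(G)$ contract to $\fm_\Psi$, and both settle that identification by invoking \corref{cor:maximal1} for $H$. The paper phrases that last step as constructing a virtual character in $R_k(H)$ separating $\Psi$ from the $H$-conjugacy class of a $\Psi'$ not in $\Psi\cap G$, while you phrase it as directly computing the contraction $\phi^{-1}(\fm_{\Psi'})=\ker(\tr^H_{g'})$ and comparing maximal ideals of $R_k(H)$ — two equivalent ways of exploiting the injectivity of the bijection $\Psi\mapsto\fm_\Psi$.
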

\begin{proof}
  Using Proposition~\ref{prop:finiteR} and Corollary~\ref{cor:maximal1},
  we only need to show that if $\Psi'$ is a
semisimple conjugacy class in $G(k)$ not contained in ${\Psi} \cap G$, then
$\fm_{\Psi}R_k(G) \not\subset \fm_{\Psi'}$. The proof of this (over $\C$) given in
\cite[Prop.~2.6]{EG-Adv} is slightly erroneous. We therefore write out the correct
proof.
We let $\Psi_0$ be the conjugacy class of $\Psi'$ in $H_s(k)$. Then
our assumption implies that $\Psi \neq \Psi_0$. Hence, we can find a virtual
character $f \in R_k(H)$ which vanishes on $\Psi$ but not on $\Psi_0$. The
restriction of such a character on $G$ can not vanish on $\Psi'$.
Equivalently, $f \in \fm_{\Psi}R_k(G) \setminus \fm_{\Psi'}$.
\end{proof}

\section{Equivariant $K$-theory and Hochschild homology}
\label{sec:Recall*}
In this section, we shall review the definitions and basic properties
of equivariant $K$-theory. We shall also recall equivariant Hochschild homology and
other equivariant homology theories derived from the Hochschild homology.

Throughout this paper, the word $stack$ over a field $k$ (equivalently, a $k$-stack)
will mean the quotient stack $[X/G]$ associated to the action of a $k$-group $G$ on a
$k$-space $X$ (cf. \cite[\S~2.4.2]{Laumon}). In particular, all stacks in this
paper will be Noetherian Artin quotient stacks with affine (hence separated)
diagonal. As a consequence, all maps from schemes to stacks will be representable
by schemes. We shall let $c_{[X/G]} \colon [X/G] \to X/G$ denote the coarse moduli space
map whenever it exists. If $[X/G]$ is an algebraic space (equivalently, $c_{[X/G]}$ is an
isomorphism), we shall identify $[X/G]$ with $X/G$ via $c_{[X/G]}$  and use the latter
notation for both.

Recall (cf. \cite[Tag 06WS]{SP})
that if $\pi \colon X \to [X/G]$ is the quotient map, then
the pull-back $\pi^*$  induces a natural equivalence between the
categories of quasi-coherent
sheaves of $\sO_{[X/G]}$-modules (on the Lisse-{\'e}tale site
of $[X/G]$) and $G$-equivariant quasi-coherent sheaves of $\sO_X$-modules on $X$
(on the Zariski site). This equivalence is maintained when we restrict to the
subcategories of coherent sheaves. In this paper, we shall make no distinction between
quasi-coherent sheaves on $[X/G]$ and $G$-equivariant  quasi-coherent sheaves on $X$
via these equivalences.

\subsection{Review of equivariant $K$-theory}\label{sec:Recall}
We fix a field $k$ and a $k$-group $G$ acting on a $k$-space $X$. We let
$\sX = [X/G]$ with the quotient map $\pi \colon X \to \sX$. 
%We recall the following from \cite[\S~3,4]{Hoyois-Krishna}.
We shall consider the following categories.
\begin{enumerate}
\item
${\Qcoh}_{[X/G]} \ = $ \ abelian category of quasi-coherent sheaves on $[X/G]$. 
\item
  ${\Coh}^0_{[X/G]} \ = $ \ abelian category of coherent sheaves on $[X/G]$.
\item
  ${\Qcoh}^G_{X} \ = $ \ abelian category of $G$-equivariant
  quasi-coherent sheaves on $X$. 
\item
 $\Coh_{[X/G]}  \ = $ \ complicial
bi-Waldhausen category of strictly bounded complexes of coherent $\sO_{[X/G]}$-modules. 
\item
 $\Perf_{[X/G]}  \ = $ \ complicial
bi-Waldhausen category of perfect complexes of $\sO_{[X/G]}$-modules. 
\end{enumerate}

The bi-Waldhausen structure of the last two  categories is given with
respect to the degree wise split monomorphisms as cofibrations and quasi-isomorphisms
as weak equivalences. 

We let $K'([X/G])$ denote the (Thomason-Trobaugh)
$K$-theory spectrum of the bi-Waldhausen category of the cohomologically bounded
complexes of sheaves of $\sO_{[X/G]}$-modules with coherent cohomology 
(cf. \cite[Defn.~3.3]{TT}, \cite[\S~3.1]{Khan-JJM}). We let $K^+([X/G])$ denote  the
$K$-theory spectrum of the bi-Waldhausen category $\Perf_{[X/G]}$.
We let $K([X/G])$ denote the non-connective $K$-theory spectrum
of perfect complexes on $[X/G]$
obtained by applying Schlichting's delooping construction \cite{Schlichting} to
$K^+([X/G])$. There is a natural map of ring spectra $K^+([X/G]) \to K([X/G])$ such
that the induced map $K^+_i([X/G]) \to K_i([X/G])$ is an isomorphism for $i \ge 0$ and
$K^+_i([X/G]) = 0$ for $i < 0$.

For $n \in \N$, let 
\begin{equation}\label{eqn:simplex}
\Delta^n = \Spec\left(
\frac{k[t_0, \cdots , t_n]}{(\sum_i t_i - 1)}\right).
\end{equation}
Recall that $\Delta^{\bullet}$ is a cosimplicial $k$-scheme.
The {\sl homotopy $K$-theory} ($KH$-theory in short) of $[X/G]$ is defined as
\begin{equation}\label{eqn:KH-defn}
KH([X/G]) = \hocolim_{\Delta^n} K([X/G] \times \Delta^n).
\end{equation}
There are canonical maps $K([X/G]) \to KH([X/G]) \to K'([X/G])$, where the first map is
induced by $\Delta^0\in \Delta^{\bullet}$ and the second by the homotopy invariance of
$K'([X/G])$ (cf. \cite[Cor.~3.19]{Khan-JJM}.
These maps are weak equivalences if $[X/G]$ is regular. We refer to
\cite[\S~4C]{Hoyois-Krishna} for basic properties of $KH$-theory of stacks.
Recall (e.g., see the proof of \cite[Lem.~3.1]{Krishna-Ravi})
that for the $G$-action on $X$ as above, the equivariant $K'$-theory spectrum
$K'_G(X)$ is canonically equivalent to the spectrum $K'([X/G])$ via $\pi^*$.
Similarly, we have $\pi^* \colon K^+([X/G]) \xrightarrow{\simeq}  K^+_G(X)$,
$\pi^* \colon K([X/G]) \xrightarrow{\simeq}  K_G(X)$ and $\pi^* \colon KH([X/G])
\xrightarrow{\simeq}  KH_G(X)$.

For a commutative $\Q$-algebra $R$, we let $K([X/G],R) = K([X/G]) \wedge_{\1} H_R$,
where $H_R$ is the Eilenberg-MacLane spectrum of $R$. It
is easy to check that $\pi_i(K([X/G],R)) \cong \pi_i(K([X/G])) \otimes R$.
For an integer $n \ge 1$, we let $K([X/G],{\Z}/n) =  K([X/G]) \wedge_{\1} {\1}/n$,
where ${\1}/n$ is the mod-$n$ Moore spectrum.
We define $K'([X/G],R)$ and $K'([X/G],{\Z}/n)$ (resp. $KH([X/G],R)$ and
$KH([X/G],{\Z}/n)$ in a similar way.

\subsection{Review of equivariant Hochschild homology}\label{sec:Recall-0}
Recall that if $k$ is a field, then a $k$-group $T$
is called diagonalizable if it is of the form $D_k(M) = \Spec(k[M])$, where $M$ is a
finitely generated abelian group and $k[M]$ is its group algebra.
$T$ is said to be of multiplicative type  if
$T_{k_s}$ is diagonalizable, where $k_s$ is a separable closure of $k$.
%We say that$T$ is almost multiplicative if it is an extension of a finite group scheme by a $k$-group which is of multiplicative type.
Recall from \S~\ref{sec:Intro} that a $k$-group $G$ is $nice$
if it is an extension of
a finite linearly reductive group scheme over $k$ by a group scheme over $k$ of
multiplicative type (cf. \cite[\S~2]{Hoyois-Krishna}). The category of nice group
schemes includes diagonalizable groups and finite constant groups whose orders are
invertible in $k$.  We shall need the following result later on in this paper.

\begin{lem}\label{lem:Nice-sub}
  Let $H \inj G$ be a closed embedding of $k$-groups such that $G$ is nice.
  Then so is $H$.
\end{lem}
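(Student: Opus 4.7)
The plan is to realize $H$ as the middle term of an extension of the required type by pulling back the defining extension of $G$ along $H \hookrightarrow G$. Choose an extension $1 \to T \to G \to F \to 1$ witnessing niceness of $G$, with $T$ of multiplicative type and $F$ finite linearly reductive. Set $T_H := H \times_G T$; this is a closed subgroup scheme of both $H$ and $T$. The induced map $H/T_H \to G/T = F$ is a closed immersion by the standard ``second isomorphism'' argument for group schemes, so writing $F_H := H/T_H$ we obtain a closed immersion $F_H \hookrightarrow F$ together with a short exact sequence
\[
1 \to T_H \to H \to F_H \to 1.
\]
It therefore suffices to check the following two stability properties: (i) closed subgroup schemes of a group of multiplicative type are of multiplicative type, and (ii) closed subgroup schemes of a finite linearly reductive group are finite linearly reductive.

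Claim (i) is classical: by Cartier duality (after passing to a separable closure and using Galois descent), closed subgroup schemes of $D_k(M)$ correspond to quotients of the character group $M$, and hence are themselves of multiplicative type. For claim (ii) the plan is to invoke the structure theorem for finite linearly reductive group schemes over a field, due to Nagata in characteristic zero and to Abramovich--Olsson--Vistoli in positive characteristic: such a group is an extension of a finite \'etale $k$-group of order prime to $\Char(k)$ by a diagonalizable finite $k$-group. Applying the same pullback-and-quotient construction to $F_H \hookrightarrow F$ then reduces (ii) to two sub-claims, namely that closed subgroup schemes of a diagonalizable group are diagonalizable (a special case of (i)) and that closed subgroup schemes of a finite \'etale $k$-group of order prime to $\Char(k)$ retain the same property. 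The latter is immediate since such subgroups inherit \'etaleness and have order dividing the ambient order.

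The only mild obstacle is the use of the classification of finite linearly reductive group schemes in positive characteristic; this is where the niceness hypothesis genuinely matters, since in characteristic zero a finite linearly reductive group is just a finite \'etale group and inheritance of the property by closed subgroups is then formal. Once (i) and (ii) are in place, the short exact sequence $1 \to T_H \to H \to F_H \to 1$ exhibits $H$ as an extension of a finite linearly reductive group by a group of multiplicative type, which is exactly the assertion that $H$ is nice.
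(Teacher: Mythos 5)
Your decomposition of $H$ is identical to the paper's: both pull back the defining extension $1 \to T \to G \to F \to 1$ along $H \hookrightarrow G$ to get $1 \to T_H \to H \to F_H \to 1$, and both handle the multiplicative-type piece $T_H$ by the standard fact that closed subgroups of multiplicative-type groups are again of multiplicative type. Where you genuinely diverge from the paper is in proving $F_H$ linearly reductive. You invoke the structure theorem of Abramovich--Olsson--Vistoli (in positive characteristic): finite linearly reductive $\Longleftrightarrow$ extension of a finite \'etale group of order prime to $p$ by a finite group of multiplicative type. You then run the pullback-and-quotient trick a second time inside $F$ and close by using the converse direction of AOV to conclude that $F_H$ is linearly reductive. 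The paper avoids this classification entirely: it observes that $BF_H \to BF$ is a torsor for $F/F_H$, hence finite, so pushforward along it is exact on quasi-coherent sheaves; composing with the exact functor $\Qcoh(BF) \to \Qcoh(\pt)$ (which is the definition of linear reductivity of $F$) gives that $\Qcoh(BF_H) \to \Qcoh(\pt)$ is exact, i.e.\ $F_H$ is linearly reductive. The paper's route is more elementary and self-contained and stays entirely at the level of the defining property; your route is heavier but perhaps more concrete, since it reduces everything to the two explicitly checkable classes of building blocks. One small point of precision in your write-up: over a non-separably-closed $k$ the AOV statement yields an extension by a finite group of multiplicative type (a twisted diagonalizable group), not a diagonalizable one, and the Galois-descent step that you flag for claim (i) is also needed to apply the classification in (ii); but this is cosmetic, and either approach proves the lemma.
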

\begin{proof}
  We consider the short exact sequence of $k$-groups
  \begin{equation}\label{eqn:Nice-sub-0}
    1 \to G' \to G \to G'' \to 1,
  \end{equation}
  where $G'$ is of multiplicative type and $G''$ is a finite linearly reductive
  group scheme over $k$. We let $H'' = {\rm Image}(H \to G'')$ and 
  $H' = \Ker(H \to G'')$. Then $H''$ is a closed subgroup of $G''$ by
  \cite[Chap.~1, Cor.~1.4]{Borel} and we have a short exact sequence of $k$-groups
  \[
  1 \to H' \to H \to H'' \to 1.
  \]
 It suffices therefore to show that
  $H'$ is of multiplicative type and $H''$ is finite linearly reductive.
 That claim that $H'$ is of multiplicative type is elementary because
 it is  a closed subgroup of $G'$. So is the claim that $H''$ is finite.

 To show that $H''$ is linearly reductive,
  we note that the map $BH'' \to BG''$ is a torsor for the homogeneous space
  ${G''}/{H''}$ (cf. \cite[Chap.~2, Thm.~6.8]{Borel}). In particular,
  $BH'' \to BG''$ is {\'e}tale locally finite, and hence finite by faithfully flat
  descent. It follows that the functor $\Qcoh(BH'') \to \Qcoh(BG'')$ is exact.
  Since $\Qcoh(BG'') \to \Qcoh(\pt)$ is exact (by definition of linear
  reductivity), it follows that the functor $\Qcoh(BH'') \to \Qcoh(\pt)$ is exact.
\end{proof}

We shall say that $G \in \Grp_k$ acts on $X \in \Spc_k$ with nice stabilizers
if all stabilizer groups (the fibers of the group scheme $I_X \to X$
in ~\eqref{eqn:inertia}) are nice. 
For a (not necessarily quotient) stack $\sX$, we let $D_{\rm qc}(\sX)$ denote the
unbounded derived category of the complexes of $\sO_{\sX}$-modules with quasi-coherent
cohomology. Let $D(\sX)$ denote the unbounded derived category of quasi-coherent
sheaves on $\sX$. Recall from \cite[\S~1.1]{BZFN} that a stack $\sX$ is called perfect
if it has affine diagonal, $D(\sX)$ is compactly generated, and an object of $D(\sX)$
is compact if and only if it is perfect. We shall use the following result about
perfectness of stacks.

\begin{prop}\label{prop:Perf-stack}
  Let $k$ be a field and let $G$ be a $k$-group. Let 
  $X \in \Spc^G_k$ and let $\sX = [X/G]$ be the stack quotient. Then $\sX$ is perfect
  if either of the following two conditions is satisfied.
\begin{enumerate}
\item
  $\Char(k) = 0$ and $X$ is normal.
\item
  $\sX$ has nice stabilizers.
\end{enumerate}
\end{prop}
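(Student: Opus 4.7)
The plan is to verify the three defining conditions of a perfect stack from \cite{BZFN}: affine diagonal, compact generation of $D(\sX)$, and agreement of compact objects with perfect complexes in $D(\sX)$. The affine diagonal is immediate in both cases from affineness of $G$: the morphism $\sX \to BG$ is a $G$-torsor and $BG$ has affine diagonal, so $\sX$ does as well (using that $X$ has affine diagonal as an algebraic space).

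For case (2), the nice stabilizer hypothesis is precisely what is needed to invoke the structural theory of nicely concentrated Noetherian stacks. Two inputs are key. First, exactness of the invariants functor $\Qcoh(BH) \to \Qcoh(\pt)$ for nice $H$---the property used in the proof of \lemref{lem:Nice-sub}---propagates, via étale-local trivializations of the stabilizer gerbes, to bounded cohomological amplitude of the structure morphism $\sX \to \Spec k$ on quasi-coherent sheaves. Second, the existence of enough $G$-equivariant vector bundles étale-locally, established in \cite{Hoyois-Krishna} for nice stabilizers, gives the resolution property on $\sX$. Together these produce a set of perfect compact generators of $D(\sX)$, from which compact $=$ perfect follows by standard arguments.

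For case (1), I would use equivariant resolution of singularities in characteristic zero to choose a proper $G$-equivariant birational morphism $p \colon \wt{X} \to X$ with $\wt{X}$ smooth. Combining this with the Levi decomposition $G = L \ltimes R_u(G)$ available in characteristic zero, one reduces the perfectness of $[\wt{X}/G]$ to that of a quotient by the reductive group $L$, which is linearly reductive in char zero and hence within the scope of the argument for case (2). To transfer perfectness from $[\wt{X}/G]$ down to $\sX$, normality of $X$ gives $p_{*}\sO_{\wt{X}} = \sO_X$ via Stein factorization, and $Rp_{*}$ has bounded cohomological amplitude; together with the projection formula these let one descend a set of perfect compact generators.

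The principal obstacle I anticipate is precisely this last descent step. Even granted perfectness of the smooth resolution $[\wt{X}/G]$, passing it across the proper birational morphism $p$ requires careful control of $Rp_{*}$ on perfect complexes (in particular preservation of compactness and conservativity on a generating family). This is where normality of $X$ and the characteristic zero hypothesis enter essentially, and any weakening of either would demand a substantially different argument.
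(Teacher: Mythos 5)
There are genuine gaps in your proposed argument for case (1), and a smaller overstatement in case (2).

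For case (2), the paper simply invokes \cite[Prop.~3.5, Cor.~3.17]{Hoyois-Krishna} to get compact generation of $D_{\rm qc}(\sX)$ with compact $=$ perfect, and then \cite[Thm.~1.2]{HNR} to identify $D(\sX) \simeq D_{\rm qc}(\sX)$. Your sketch of the mechanism behind those results is roughly the right shape, but you claim more than the references give: nice stabilizers yield \emph{compact generation}, not the \emph{resolution property}. Having equivariant vector bundles étale-locally does not globalize to the statement that every coherent sheaf on $\sX$ is a quotient of a locally free one. The resolution property is sufficient for compact generation but strictly stronger, and it is not what \cite{Hoyois-Krishna} establish in this generality; so the intermediate claim should be weakened, even though the endpoint is correct.

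For case (1), your route is entirely different from the paper's (which cites \cite[Thm.~3.14]{Hoyois-Krishna}, \cite[Prop.~2.7]{Krishna-Ravi} and \cite[Thm.~B]{Hall-Rydh-Comp}), and it contains two real gaps. First, the Levi reduction does not land you in case (2): a reductive (hence linearly reductive in characteristic zero) group $L$ is not \emph{nice} in the sense of the paper unless it is an extension of a finite linearly reductive group by a group of multiplicative type. For example, if a point of $\wt{X}$ has stabilizer $\SL_2$, the stack does not have nice stabilizers, so case (2) is simply not available. Linear reductivity and niceness are genuinely different hypotheses and you cannot pass from one to the other. Second, the descent step from $[\wt{X}/G]$ to $[X/G]$ along the proper birational morphism $p$ does not work as stated. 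Even with $X$ normal, Stein factorization only gives $p_*\sO_{\wt{X}} = \sO_X$ on the level of $H^0$; the derived pushforward $Rp_*\sO_{\wt{X}}$ can have higher cohomology, and more fundamentally $Rp_*$ does not preserve perfection for a proper birational morphism from a smooth scheme to a normal singular one (it would need finite Tor-dimension, which fails in general). So you cannot hope to descend a set of perfect compact generators this way; the role of normality in the cited results is rather to guarantee covers by $G$-invariant quasi-projective opens (via Brion/Sumihiro, cf.\ \lemref{lem:Open-quasi-proj}) on which resolution-property-type arguments apply.
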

\begin{proof}
  If (2) holds, then \cite[Prop.~3.5, Cor.~3.17]{Hoyois-Krishna} together say that
  $D_{\rm qc}(\sX)$ is compactly generated (in particular, $\sO_{\sX}$ is compact) and
  an object of
  $D_{\rm qc}(\sX)$ is compact if and only if it is perfect. It follows now from
  \cite[Thm.~1.2]{HNR} that the canonical functor $D(\sX) \to D_{\rm qc}(\sX)$ is an
  equivalence. It follows that $\sX$ is perfect.
If (1) holds, then we apply \cite[Thm.~3.14]{Hoyois-Krishna},
  \cite[Prop.~2.7]{Krishna-Ravi} and \cite[Thm.~B]{Hall-Rydh-Comp} (see its corollary)
  to derive the same conclusion.
  \end{proof}

\subsubsection{Homology theory of mixed complexes}\label{sec:Mixed*}
Let $k$ be a commutative ring. Recall (cf. \cite{Kassel}) that a mixed complex of
$k$-modules is a datum $\sA = (A_\bullet, d, B)$, where $A_\bullet$ is
a chain complex of $k$-modules,
$d \colon A_n \to A_{n-1}$ ($n \ge 0$) is the differential of $A_\bullet$ and
$B \colon A \to A[-1]$ is a map of complexes (called the Connes operator)
such that $dB + Bd = B^2 = 0$. The structure of a mixed complex on
$(A_\bullet,d)$ is the same thing as the structure of a left module on it over
the dg algebra $\Lambda = k[B]/{(B^2)} = (kB \xrightarrow{0} k)$,
where $B$ is of chain degree $1$ and $d(B) = 0$.

We let ${\sM}ix_k$ denote the category of mixed complexes of $k$-modules and
${\sD\sM}ix_k$ denote the localization of ${\sM}ix_k$ with respect to
quasi-isomorphisms (equivalently, the derived category of the dg algebra $\Lambda$),
where a morphism of mixed complexes is said to be a quasi-isomorphism if it is a
quasi-isomorphism of ordinary chain complexes after forgetting the Connes operator.
In this paper, we shall use homological notations for chain complexes.
For a chain complex $M_\bullet$ in an abelian category, $M_\bullet[n]$ will denote the
chain complex whose $i$-th term is $M_{i-n}$.

Let ${\rm Ch}_k$ denote the dg category of unbounded chain complexes of $k$-modules
and $\sD_k$ its derived category.
We recall (cf. \cite{Kassel}) that for ${\sA} = (A_\bullet, d, B) \in {\sD\sM}ix_k$, the
Hochschild, cyclic, negative cyclic and periodic cyclic homology are
defined to be the following complexes in $\sD_k$.
\begin{enumerate}
  \item
    The Hochschild homology is the complex $HH({\sA}) = (A_\bullet, d)$, the cyclic
    homology is the complex $HC({\sA}) = k \otimes^{\mathbb{L}}_\Lambda {\sA}$ and
    the negative cyclic homology is the complex
    $HC^{-}({\sA}) = {\bf{R}}\Hom_\Lambda(k, {\sA})$.
  \item
    The inclusion $kB \inj \Lambda$ is $\Lambda$-linear which induces a `norm' map
    $N \colon HC({\sA})[1] \to HC^{-}({\sA})$. The homotopy cofiber of this map is the
    periodic cyclic homology $HP(\sA)$. We let
    $T \colon  HC^{-}({\sA}) \to HP(\sA)$ denote the induced map.
\end{enumerate}
These homology theories of mixed complexes
and the canonical maps between them define functors
from ${\sD\sM}ix_k$ to $\sD_k$ and natural transformations between these functors.

Assume now that $k$ is a field and $\sC$ is a $k$-linear symmetric
monoidal dg category. Recall from \cite[\S~1.3]{Keller-JPAA}
(see also \cite[Defn.~2.2.18, Rem.~2.2.19]{Chen}) that the cyclic bar complex $H(\sC)$ 
is the sum-total complex of the bicomplex whose $n$th column (for $n \ge 0$) is the
direct sum of complexes
\begin{equation}\label{eqn:DG-complex-0}
  \Hom_{\sC}(X_n , X_0 ) \otimes \Hom_{\sC}(X_{n-1} , X_{n} ) \otimes
  \Hom_{\sC}(X_{n-2} , X_{n-1}) \otimes \cdots \otimes \Hom_{\sC}(X_0 , X_1),
    \end{equation}
where $X_0, \ldots , X_n$ range through the objects of $\sC$ and the tensor product
is taken in the category of chain complexes of $k$-modules. The differential
of this complex is induced by maps of complexes
\[
d(f_n \otimes \cdots \otimes f_0) = f_{n-1} \otimes \cdots \cdots \otimes f_0f_n +
\stackrel{n}{\underset{i =1}\sum} (-1)^i f_n \otimes \cdots \otimes f_i f_{i-1} \otimes
\cdots \otimes f_0.
\]

The cyclic permutations 
\begin{equation}\label{eqn:DG-complex-1}
  t_n(f_{n-1} \otimes \cdots \otimes f_0) = (-1)^n f_0 \otimes f_{n-1} \otimes \cdots
  \otimes f_1
  \end{equation}
endow $H(\sC)$ with the structure of a cyclic complex in the category of $k$-vector
spaces (cf. \cite[\S~6.1]{Loday}) which in turn makes it a mixed complex
$(H(\sC), d, B)$ (cf. \cite[\S~5.3]{Keller-ICM}). The Hochschild
(resp. cyclic, negative cyclic, periodic cyclic) homology of $\sC$ is defined to be the
corresponding homology of the mixed complex $(H(\sC), d, B)$. The monoidal
structure on $\sC$ uniquely defines a monoidal structure on $HH(\sC)$. Since the
other homology theories are obtained from $HH(\sC)$ using monoidal maps,
we see that all these homology theories of $\sC$ are canonically endowed with
monoidal structures.

Suppose that $\sC$ is compactly generated 
monoidal dg category with the subcategory of
compact objects $\sC_c \subset \sC$ such that $\sC$ is the ind-completion of $\sC_c$.
Under this hypothesis, $\sC$ is a dualizable object in the category ${\rm DGCat}_k$,
i.e., there are canonical dg functors
\begin{equation}\label{eqn:DG-complex-2}
  {\rm Ch}_k \xrightarrow{{\rm coev}} \sC \otimes \sC^{\vee} \xrightarrow{{\rm ev}}
  {\rm Ch}_k
\end{equation}
which satisfy the standard properties of 1-dimensional cobordisms.
Furthermore, there is a canonical weak equivalence (cf. \cite[\S~2.2]{Chen})
\begin{equation}\label{eqn:DG-complex-3}
  \alpha_{\sC} \colon HH({\sC}) \xrightarrow{\simeq} ({\rm ev} \circ {\rm coev})(k).
\end{equation}

We let ${\rm {\bf{dgm}}}_k$ denote the category of all 
$k$-linear symmetric monoidal dg-categories (together with dg-functors) modulo Morita
equivalence (cf. \cite[\S~3.8]{Keller-ICM}). As shown in  \S~5.2 of op. cit. and
\cite[\S~7,9]{BGT-1}, the Waldhausen-Bass construction defines a functor
$K \colon {\rm {\bf{dgm}}}_k \to \sS\sH$ such that $K(\Perf_{\sX})$
(cf. \S~\ref{sec:Ho-st}) coincides with
$K(\sX)$. The cyclic bar complex defines a functor
${\rm DGCat}_k \to {\sD\sM}ix_k$. By \cite[Thm.~5.2]{Keller-ICM}, this functor
factors through ${\rm {\bf{dgm}}}_k$. Composing this with the Eilenberg-MacLane
construction associated to any of the homology functors
defined above on ${\rm DGCat}_k$,
we get the following result due to Blumberg-Gepner-Tabuada \cite{BGT}.
We let $\sS(k)$ denote the set $\{K, KH, HH, HC^{-}, HC, HP\}$ of functors on
${\rm {\bf{dgm}}}_k$.

\begin{thm}\label{thm:H-functor}
For any $F \in \sS(k)$, we have the following.
\begin{enumerate}
\item
  The functor
  \[
  F \colon {\rm {\bf{dgm}}}_k \to \sS\sH
  \]
  has the property that $F(\sC)$ is a ring spectrum for $\sC \in {\rm {\bf{dgm}}}_k$.
\item
  There are natural transformations ${\rm tr} \colon K \to HH$ and ${\rm tr_{cyc}}
  \colon K \to HC^{-}$ such that ${\rm tr} = \iota \circ {\rm tr_{cyc}}$, where
  $\iota \colon HC^{-} \to HH$ is the canonical natural transformation.
\item
  For every $\sC \in {\rm {\bf{dgm}}}_k$, all arrows in the commutative diagram
  \[
  \xymatrix@C1pc{
    K(\sC) \ar[r]^-{\rm tr_{cyc}} \ar[dr]_-{{\rm tr}} & HC^{-}(\sC) \ar[d]^-{\iota} \\
    & HH(\sC)}
  \]
 are morphisms of ring spectra.
\item
  For every $\sC \in {\rm {\bf{dgm}}}_k$, the canonical maps
 $HH(\sC) \to HC(\sC)$ and $ HC^{-}(\sC) \xrightarrow{T} HP(\sC)$ 
 are morphisms of ring spectra.
\end{enumerate}
\end{thm}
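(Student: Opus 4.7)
The plan is to deduce the theorem from the multiplicative refinement of the universal property of localizing invariants developed by Blumberg--Gepner--Tabuada. The organizing principle is to work in the symmetric monoidal $\infty$-category ${\rm {\bf{dgm}}}_k$ (or an equivalent presentation via small stable idempotent-complete $k$-linear $\infty$-categories) and to show that each $F \in \sS(k)$ factors as a lax symmetric monoidal functor to $\sS\sH$, so that ring objects (i.e., symmetric monoidal dg categories) are sent to ring spectra and symmetric monoidal natural transformations between such $F$'s are sent to maps of ring spectra.

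For (1), I would first check that each $F$ is well-defined on ${\rm {\bf{dgm}}}_k$: for $K$ and $KH$ this is Morita invariance of Waldhausen $K$-theory (and its $\A^1$-localization), while for $HH$, $HC$, $HC^{-}$, $HP$ this is Keller's theorem that the cyclic bar complex construction $\sC \mapsto H(\sC)$ of~\eqref{eqn:DG-complex-0} descends to Morita-equivalence classes. Next I would upgrade each $F$ to a lax symmetric monoidal functor. For the homology theories, the K\"unneth-type pairing $H(\sC)\otimes H(\sD)\to H(\sC\otimes\sD)$, defined by interleaving of Hochschild simplices, is strictly associative and unital and is compatible with the $\Lambda$-action, hence descends to the derived categories ${\sD\sM}ix_k$ and ${\sD}_k$; combined with the Eilenberg--MacLane functor (which is itself lax symmetric monoidal on chain complexes), this equips each homology functor with the required lax monoidal structure. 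For $K$ and $KH$, the lax monoidal enhancement is Blumberg--Gepner--Tabuada's multiplicative universal property. Consequently, if $\sC$ is an $\E_\infty$-algebra in ${\rm {\bf{dgm}}}_k$, then $F(\sC)$ is an $\E_\infty$-algebra in $\sS\sH$, i.e., a ring spectrum.

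For (2) and (3), I would construct ${\rm tr}$ and ${\rm tr_{cyc}}$ as symmetric monoidal natural transformations. The Dennis trace is most cleanly obtained from the dualizability equivalence $\alpha_\sC$ of~\eqref{eqn:DG-complex-3}: the unit of the dualizable structure, evaluated at the identity of the $\infty$-groupoid of compact objects, produces a natural map $K(\sC)\to HH(\sC)$ which is symmetric monoidal because the dualizability data~\eqref{eqn:DG-complex-2} is symmetric monoidal in $\sC$. The cyclotomic trace to $HC^{-}$ refines this by recording the natural $S^1$-equivariance on $({\rm ev}\circ{\rm coev})(k)$ coming from rotation of the circle of composable morphisms, i.e.\ from the $B$-operator; its symmetric monoidality is again built into the construction, and the identity ${\rm tr}=\iota\circ{\rm tr_{cyc}}$ is tautological since $\iota$ forgets the $S^1$-fixed point data. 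Because all three arrows ${\rm tr}$, ${\rm tr_{cyc}}$, $\iota$ are symmetric monoidal, applying $F(\sC)$ to a commutative algebra object $\sC$ yields morphisms of ring spectra as claimed.

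For (4), the map $HH\to HC$ is the unit of the adjunction $(k\otimes^{\mathbb L}_\Lambda -,\,{\rm forget})$ and the map $T\colon HC^{-}\to HP$ is induced by inverting $B$; both are natural transformations of lax symmetric monoidal functors ${\sD\sM}ix_k\to\sD_k$, so they produce ring maps on $F(\sC)$. The main obstacle in this program, and what makes this a theorem rather than a definition, is verifying the multiplicative (symmetric monoidal) enhancement of the cyclotomic trace and of the $K$-theory functor simultaneously on ${\rm {\bf{dgm}}}_k$; this is precisely the content of the Blumberg--Gepner--Tabuada universal property, which bypasses strictification issues by working at the $\infty$-categorical level. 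Once this enhancement is in hand, (1)--(4) are formal consequences.
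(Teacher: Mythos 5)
Your proposal is broadly sound, but it's worth noting that the paper does not actually supply a proof of this theorem at all: it is stated with the words ``we get the following result due to Blumberg--Gepner--Tabuada'' and then followed by Remark~3.4 (\ref{remk:H-functor-0}), which flags one point you gloss over. The cited reference \cite{BGT} establishes multiplicativity of the trace maps from $K$-theory into the \emph{topological} invariants ($THH$, $TC^-$, etc.), not directly into the $k$-linear Hochschild, negative cyclic, cyclic and periodic cyclic homologies that the paper works with. The paper's implicit argument is therefore a two-step one: take the multiplicative traces of \cite{BGT} landing in the topological theories, and then compose with the canonical multiplicative linearization maps $THH \to HH$, $TC^- \to HC^-$, and so on, to land in the $k$-linear theories.

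Your proposal instead constructs the Dennis and cyclotomic traces directly at the $k$-linear level, via the coevaluation map of the dualizability datum~\eqref{eqn:DG-complex-2} and its $S^1$-equivariant refinement. This is a legitimate alternative route and is arguably more economical in that it avoids passing through $THH$, but it is not quite what the paper appeals to. Two smaller caveats: the shuffle/K\"unneth pairing on cyclic bar complexes is strictly associative as a map of chain complexes, but its compatibility with the Connes operator $B$ (hence the $\Lambda$-module structure, hence the passage to $HC^-$, $HC$, $HP$) only holds up to coherent homotopy, so the phrase ``strictly\ldots compatible with the $\Lambda$-action'' should be weakened. Also, to land ${\rm tr_{cyc}}$ in $HC^-(\sC) = HH(\sC)^{hS^1}$ you need the source $K(\sC)$ equipped with the \emph{trivial} $S^1$-action to map $S^1$-equivariantly into $HH(\sC)$; this is true but is precisely the nontrivial part of the cyclotomic/$B$-equivariant refinement, and your sentence treats it as ``tautological.'' With these two points tightened, your approach reproduces the theorem by a route parallel to, but more explicit than, the one the paper invokes by citation.
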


\begin{remk}\label{remk:H-functor-0}
  The authors of \cite{BGT} study the multiplicativity of the trace maps from
  $K$-theory to the topological counterparts of our homology theories (i.e.,
  topological Hochschild homology et al.). However,
  this does not cause a problem because the homology theories we defined above receive
  canonical multipticative maps from their topological counterparts.
  \end{remk}

\subsubsection{Homology theory of stacks}\label{sec:Ho-st}
We now specialize to dg categories arising out of quasi-coherent sheaves on
quotient stacks. For this, we assume that $k$ is a field of characteristic zero. 
Let $G$ be a $k$-group and let $X \in \Spc^G_k$. We
let $p \colon X \to \sX = [X/G]$ denote the stack quotient map.
Recall (cf. \cite[\S~3A]{Hoyois-Krishna}, \cite[\S~3.2]{Keller-Doc}) that
$\Perf_{\sX}$ has the canonical structure of a dg-category
whose homotopy category is the derived category $\sD_\Perf(\sX)$ of $\Perf_{\sX}$.
Let $\Perf^{\rm ac}_{\sX}$ denote the full subcategory of $\Perf_{\sX}$
consisting of acyclic perfect complexes on ${\sX}$. Then
$(\Perf_{\sX}, \Perf^{\rm ac}_{\sX})$ is a localizing pair of $k$-linear dg categories
(cf. \cite[\S~2.2, 4.4, 4.6]{Keller-ICM}). We let $\Perf_{dg}(\sX)$ denote the
dg quotient of the inclusion $\Perf^{\rm ac}_{\sX} \inj \Perf_{\sX}$.

We assume that $\sX$ is a perfect stack so that $D(\sX)$ is compactly generated and
its compact objects are same as the perfect complexes.
The Hochschild (resp. cyclic, negative cyclic, periodic cyclic) homology of $\sX$
is defined to be the Hochschild (resp. cyclic, negative cyclic, periodic cyclic)
homology of the $k$-linear dg category $\Perf_{dg}(\sX)$ and is denoted by $HH(\sX)$
(resp. $HC(\sX)$, $HC^{-}(\sX)$, $HP(\sX)$). We shall also write
$HH(\sX)$ (resp. $HC(\sX)$, $HC^{-}(\sX)$, $HP(\sX)$) as $HH_G(X)$ 
(resp. $HC_G(X)$, $HC^{-}_G(X)$, $HP_G(X)$). We shall call these 
`the homology theories' of $\sX$ (equivalently, equivariant homology theories of $X$).

Since the pull-back $f^* \colon H(\Perf_{dg}(\sX)) \to H(\Perf_{dg}(\sX'))$ of
a homology theory,
induced by a morphism of quotient stacks $f \colon \sX' \to \sX$, clearly commutes
with the cyclic permutations, we see that $f^*$ is $\Lambda$-linear
(i.e., equivariant for the $B$-action). In particular, all homology theories are
contravariant functorial on the category of perfect stacks.
By a homology theory of stacks, we shall mean any of the functors
$HH, HC, HC^{-}$ and $HP$ on the category of stacks. 
We shall assume these functors to be taking values in $\sS\sH$ via the
Eilenberg-MacLane functor.

We recall that a derived prestack over $k$ is an $\infty$-functor from
${\bf{Aff}}_k$ to the $\infty$-category of simplicial sets
(equivalently, weakly compact Hausdorff topological spaces) ${\bf{S}}$, where
${\bf{Aff}}_k$ is the opposite category of the $\infty$-category of derived commutative
$k$-algebras (equivalently, simplicial commutative $k$-algebras or commutative
differential non-negatively graded $k$-algebras).
A derived stack over $k$ is a derived prestack which
is a sheaf for the derived {\'e}tale topology on ${\bf{Aff}}_k$
(cf. \cite{Toen-DAG}). We let $\dst_k$ denote the $\infty$-category of derived
stacks over $k$.

An ordinary stack over $k$ is canonically an object of $\dst_k$ and we
shall treat it in this way. We shall consider a simplicial set as a locally constant
sheaf on ${\bf{Aff}}_k$ (in particular, an object of $\dst_k$). One example of this
which we shall use is the standard circle $S^1$, the homotopy push-out of the diagram
$\pt \leftarrow \pt \coprod \pt \to \pt$. All functors between derived stacks and
between $\infty$-categories of sheaves on them will be derived. A commutative
diagram in an  $\infty$-category will mean a diagram which commutes up to a
coherent homotopy.

For the quotient stack $\sX = [X/G]$ as above, the loop space $\sL\sX$ is defined to
be the derived mapping stack $\sL\sX = {{\sM}ap}_{\dst_k}(S^1, \sX)$. Using that
$S^1 = \pt {\underset{(\pt \coprod \pt)}{\coprod^h}} \pt$ and
${{\sM}ap}_{\dst_k}(-, \sX)$ takes
homotopy colimits into homotopy limits in $\dst_k$, we get that
\begin{equation}\label{eqn:Loop-space}
  \xymatrix@C1pc{
    \sL\sX \ar[r]^-{\pi} \ar[d]_-{\pi'} & \sX \ar[d]^-{\Delta} \\
    \sX \ar[r]^-{\Delta} & \sX \times \sX}
\end{equation}
is a homotopy Cartesian diagram in $\dst_k$, where $\Delta$ is the diagonal map.
The constant loop on $\sX$ defines a unique morphism
$s \colon \sX \to \sL\sX$ such that $\pi \circ s = \id_{\sX}$. This is the identity
element of $\sL\sX$ when the latter is considered as a derived group stack over $\sX$
via $\pi$.

It follows from the definition of $\sL\sX$ that
$\pi_0(\sL\sX) = \sI_{\sX} = [{I_X}/{G}]$ is the ordinary inertia stack
of $\sX$. Recall here that $I_X$ is the ordinary fiber product
\begin{equation}\label{eqn:inertia}
  \xymatrix@C1pc{
    I_X \ar[r]^-{s_0} \ar[d]_-{t} & X \ar[d]^-{\Delta_X} \\
    G \times X \ar[r]^-{\phi_X} & X \times X,}
\end{equation}
where $\phi_X (g,x) = (gx,x)$ is the action map and $\Delta_X$ is the diagonal.
All maps in this diagram are $G$-equivariant if we let
$G$ act on $G \times X$ by $g'(g,x) = (g'gg'^{-1}, g'x)$ and on $X \times X$
diagonally. The map $t$ gives a $G$-equivariant closed embedding of $I_X$ into
$G \times X$. On geometric points, this embedding is given by
\begin{equation}\label{eqn:inerta-0}
  I_X = \{(g,x)| gx = x\} \ \ \mbox{and} \ \  g' \cdot (g,x) = (g'gg'^{-1}, g'x).
  \end{equation}

We now recall that $S^1$ is a group stack under the operation of loop rotations
and it acts on ${\sM}ap_{\dst_k}(S^1, \sX)$ by left multiplication on the source of
all maps.
It follows that $\sL\sX$ is equipped with a canonical $S^1$-action which
is natural with respect to maps of stacks.
This implies that $\sO(\sL\sX)$ is canonically a comodule over the coalgebra
$\sO(S^1)$. Since the topological group $S^1$ is the geometric realization of the dual
algebra of $\sO(S^1)$, it follows that $\sO(\sL\sX)$ is endowed with a canonical
$S^1$-action (cf. \cite[\S~3.4]{Ben-Nadler}).  We also note that
\begin{equation}\label{eqn:Loop-space-0}
  \sO(\sL\sX) = \Gamma(\sL\sX, \sO_{\sL\sX}) \simeq \Gamma(\sX, \pi'_*(\sO_{\sL\sX}))
  \simeq \Gamma(\sX, \pi'_* \pi^*(\sO_{\sX}))
\end{equation}
\[
\simeq \Gamma(\sX, \Delta^*
  \Delta_*(\sO_{\sX})) \simeq \pr_* \Delta^* \Delta_* \pr^*(k),
\]
where $\pr \colon \sX \to \pt$ is the projection (cf. \cite[Rem.~1.11]{Khan-JJM}).

On the other hand, letting $\sC$ be the dg category $\sC(\sX)$
of unbounded chain complexes
of quasi-coherent sheaves in ~\eqref{eqn:DG-complex-3} so that
$\sC_c = \Perf_{dg}(\sX)$, one knows that $\sC \simeq \sC^\vee$ and
$\sC \otimes \sC^\vee \simeq \sC(\sX \times \sX)$
(cf. \cite[Thm.~4.7, Cor.~4.8]{BZFN}).
Moreover,
one has ${\rm ev} = \pr_* \Delta^*$ and ${\rm coev} = \Delta_* \pr^*$ so that
\eqref{eqn:DG-complex-3} yields a canonical weak equivalence
(cf. \cite[\S~2.2, Exm.~2.2.20]{Chen}, \cite[Prop.~4.1]{HLP})
\begin{equation}\label{eqn:HH-Loop}
  \alpha_{\sX} \colon HH(\sX) \xrightarrow{\simeq} \sO(\sL\sX).
\end{equation}
This map is natural in $\sX$ and
is equivariant with respect to the action of the Connes operator $B$
(equivalently, the $\Lambda$-module structure) on $HH(\sX)$ 
and the $S^1$-action on $\sO(\sL\sX)$ (cf. \cite[Rem.~4.2]{Ben-Nadler-DAG}).

We remark that the $B$-action on $HH(\sX)$ is in fact equivalent to the action of the
simplicial group $B\Z$. Taking the geometric realizations, one finds that the
$B$-action yields a canonical $S^1$-action on $HH(\sX)$ and $\alpha_\sX$ is then
an $S^1$-equivariant map between (the Eilenberg-MacLane) spectra
(cf. \cite{Hoyois-notes}). In the sequel, we shall interpret the
$S^1$-action on $HH(\sX)$ in this term. Under this identification, we get 
\begin{equation}\label{eqn:Loop-space-1}
  HC(\sX) = HH(\sX)_{hS^1}, \ HC^{-}(\sX) =  HH(\sX)^{hS^1} \ \ \mbox{and} \ \
  HP(\sX) = HH(\sX)^{h\T},
\end{equation}
which are respectively, the homotopy orbit, homotopy fixed point and homotopy
Tate construction spectra for the $S^1$-action on $HH(\sX)$.

\section{A decomposition theorem}\label{sec:Twisting*}
The goal of this section is to prove a decomposition theorem for the
equivariant $K$-theory and equivariant homology theories.
Apart from being of independent interest, this theorem is the first technical step in
the proof of the completion theorems. 

We fix a field $k$. We also fix $G \in \Grp_k$ and $X \in \Spc^G_k$.
Let $\pi \colon X \to [X/G]$ be the quotient map. Recall that 
the center of $G$ is the unique closed subgroup $Z(G)$ having the property that
$Z(G)(A)$ is the center of the group $G(A)$ for all $k$-algebras $A$
(cf. \cite[Prop.~I.1.3, I.1.7]{Borel}, \cite[Tag~0BF6, Lem.~39.8.8]{SP}). Let
$P \subset Z(G)$ be a diagonalizable closed subgroup and let $\wh{P}$ be its
character group so that $P = \Spec(k[\wh{P}])$. 
We shall write the group operation of $\wh{P}$ additively, i.e.,
$(\chi + \chi')(x) = \chi(x) \chi'(x) \in \G_m$, where the latter is the
rank one split torus over $k$. We assume that $P$ acts (via its inclusion in $G$)
trivially on $X$.

Let $\sO_X[\wh{P}]$ be the quasi-coherent sheaf of Hopf algebras over $\sO_X$.
Since $P$ acts trivially on $X$, a $P$-equivariant quasi-coherent sheaf $\sF$ on $X$
is the same thing as a quasi-coherent sheaf $\sF$ on $X$ together with a 
homomorphism $\alpha^P_{\sF} \colon P \to {\rm Aut}_X(\sF)$ between group stacks
over $k$ (cf. \cite[Tag~08JS, Lem.~99.3.2]{SP}).
Equivalently, a $P$-equivariant quasi-coherent sheaf $\sF$ on $X$
is a quasi-coherent sheaf $\sF$ on $X$ endowed with a unique structure of a comodule
over the sheaf of Hopf algebras $\sO_X[\wh{P}]$ which
is compatible with the $\sO_X$-module structure of $\sF$.
This structure of a sheaf of comodules over the sheaf $\sO_X[\wh{P}]$
of Hopf algebras is equivalent
to a map of $\sO_X$-modules $\rho \colon \sF \to \sO_X[\wh{P}] \otimes_{\sO_X} \sF$
satisfying the usual conditions (cf. \cite[\S~2A1]{Krishna-Ravi-1}).
For an element $\chi \in \wh{P}$, we let $\rho_\chi \colon
\sF \to \sO_X[\wh{P}] \otimes_{\sO_X} \sF$ be defined by $\rho_\chi(f) =
\chi \otimes f$, where $U \to X$ is {\'e}tale and $f \in \sF(U)$.
We let $\sF_\chi = \Ker(\rho - \rho_\chi)$.

We next note that if we let $\G_m$ act trivially on $X$, then the scalar multiplication
induced by the $\sO_X$-module structure on $\sF$ defines a unique $\G_m$-action
on $\sF$ given by the homomorphism of group stacks
$\can_{\sF} \colon \G_m \to {\rm Aut}_X(\sF)$ over $k$.
For each character $\chi \colon P \to \G_m$, we let $\sC^{\chi}_X$ be the 
full subcategory of ${\Qcoh}^G_X$ whose objects are given by
\begin{equation}\label{eqn:Decom-0}
  {\rm Obj}(\mathcal{C}^{\chi}_X) =\{\mathcal{F} \in {\Qcoh}^G_X| 
  \alpha^P_{\sF} = \can_{\sF} \circ \chi\} = \{\mathcal{F} \in {\Qcoh}^G_X| 
  \sF = \sF_\chi\}.
\end{equation}
We let $\sD^{\chi}_X$ be the full subcategory of $\sC^{\chi}_X$ consisting of objects
which are $G$-equivariant coherent sheaves on $X$. 
Recall that the coproduct $\coprod_{\lambda \in I} \sC_\lambda$ of a family
$\{\sC_\lambda\}_{\lambda \in I}$ of
categories is the filtering inductive limit of products of finite subfamilies of $I$
(cf. \cite[\S~1, p.~84]{Quillen}).

\begin{lem}\label{lem:definitionoftwist}
We have the following.
\begin{enumerate}
\item
For each $\sF \in {\Qcoh}^G_X$ and $\chi \in \wh{P}$, 
the subsheaf $\sF_{\chi}$ is quasi-coherent and $G$-equivariant. 
\item
The natural map ${\underset{\chi \in \wh{P}}\bigoplus} \sF_{\chi} \to \sF$
is an isomorphism in ${\Qcoh}^G_X$.
\item
  For each $\sF \in {\Qcoh}^G_X$ and $\chi \in \wh{P}$, the subsheaf $\sF_\chi$
  is coherent (resp. locally free) if $\sF$ is so.
\item
$\mathcal{C}^{\chi}_X$ is a full abelian subcategory of ${\Qcoh}^G_X$.
\item
The natural inclusions $\sC^{\chi}_X \inj {\Qcoh}^G_X$
induce equivalences of categories 
\[
{\underset{\chi \in \wh{P}}\prod} 
\sC^{\chi}_X \xrightarrow{\simeq} {\Qcoh}_{[X/G]}; \ \
   {\underset{\chi \in \wh{P}}\coprod} 
\sD^{\chi}_X \xrightarrow{\simeq} {\Coh}^0_{[X/G]}.
\]
\item
  If $f \colon X' \to X$ is a morphism in $\Spc^G_k$ with $P$ acting trivially on $X'$
  and $\chi \in \wh{P}$, then
$f^*(\sC^{\chi}_X) \subset \sC^\chi_{X'}$ and $f_*(\sC^\chi_{X'}) \subset \sC^\chi_X$.
\item
  For each pair $\chi, \chi' \in \wh{P}$, the tensor product in ${\Qcoh}^G_X$ defines a
  functor
  \[
  \sC^{\chi}_X \times \sC^{\chi'}_X \to \sC^{\chi + \chi'}_X.
  \]
\end{enumerate}
\end{lem}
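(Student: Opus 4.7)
The plan is to reduce the entire statement to the local, affine situation, where everything becomes a question about comodules over the group Hopf algebra $A[\wh{P}]$ for $A = k[U]$ with $U \subset X$ affine open. The basic input is the standard fact that any $A[\wh{P}]$-comodule $M$ (for $\wh{P}$ a finitely generated abelian group) decomposes canonically as $M = \bigoplus_{\chi \in \wh{P}} M_\chi$, where $M_\chi = \{m \in M : \rho(m) = \chi \otimes m\}$; this follows from the fact that the characters $\chi$ form a basis of $A[\wh{P}]$ as an $A$-module together with the counit axiom applied to $\rho(m)$. Because this decomposition is canonical, it glues across an affine cover and gives a direct sum decomposition of any quasi-coherent $P$-equivariant sheaf on $X$, proving parts (1) (quasi-coherence) and (2) simultaneously. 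The coherent and locally free cases in (3) follow by an affine-local argument: coherence is obvious since $\sF_\chi$ is a subquotient of $\sF$ and only finitely many $\sF_\chi$ are nonzero locally (as $\sF$ is finitely generated and the $\chi$-weight maps $\sF \to \sF$ exhaust $\sF$); for local freeness one uses that a finite-dimensional $P$-representation splits as a direct sum of character eigenspaces, which allows one to refine a local trivialization of $\sF$ into weight components.

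Next, I would handle $G$-equivariance in (1) and the functoriality (6). The key point is that $P \subset Z(G)$, so the $G$-action on $\sF$ commutes with the $P$-coaction; consequently $G$ carries $\sF_\chi$ into itself, giving a canonical $G$-equivariant structure. For the pullback statement of (6), flat base change shows $f^*(\sF)_\chi = f^*(\sF_\chi)$ because the formation of the character eigenspace commutes with $\otimes_{\sO_X} \sO_{X'}$ (it is computed as a kernel of a map of $A$-modules and is preserved by base change since the decomposition is a direct sum). For the pushforward statement, exactness of taking the $\chi$-component (as a direct summand) together with the projection formula gives $(f_*\sG)_\chi = f_*(\sG_\chi)$. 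Statement (7) is immediate from the coalgebra identity $\rho(f \otimes g) = (\chi + \chi') \otimes (f \otimes g)$ whenever $f \in \sF_\chi$ and $g \in \sG_{\chi'}$.

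Part (4) then follows since $\sC^\chi_X$ is a full subcategory of $\Qcoh^G_X$ closed under kernels and cokernels (because formation of $\sF_\chi$ is exact) and extensions (if $0 \to \sF' \to \sF \to \sF'' \to 0$ has $\sF', \sF'' \in \sC^\chi_X$, then decomposing the middle term via (2) and comparing with the short exact sequence forces $\sF = \sF_\chi$). For (5), the decomposition of (2) gives a canonical functor $\Qcoh^G_X \to \prod_\chi \sC^\chi_X$ sending $\sF \mapsto (\sF_\chi)_{\chi \in \wh{P}}$, which by (2) is quasi-inverse to the sum functor; in the coherent case, the support $\{\chi : \sD^\chi_X \ni \sF_\chi \neq 0\}$ is finite (argued affine-locally using finite generation), so the product collapses to the coproduct $\coprod_\chi \sD^\chi_X$.

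The step most in need of care is (5) in the coherent case, where one must verify the finiteness of the support of $\chi \mapsto \sF_\chi$ for $\sF$ coherent; I would carry this out by covering $X$ by finitely many affine $G$-stable opens (using that $G$-equivariance plus a finite affine cover of $X/P = X$ suffices since $P$ acts trivially), and on each affine open noting that a finitely generated $A[\wh{P}]$-comodule has only finitely many nontrivial weight spaces because it is generated by finitely many weight vectors (after decomposing any finite generating set into its weight components via (2)). Everything else amounts to assembling the local statements using the canonicity of the weight decomposition.
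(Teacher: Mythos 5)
Your approach is essentially the one the paper uses (implicitly, via the citation to \cite[Lem.~4.1]{Krishna-Sreedhar} together with \cite[Exp.~I, Prop.~4.7.3]{SGA3}): reduce to the affine case and invoke the canonical weight decomposition of $A[\wh{P}]$-comodules for $P$ diagonalizable. Your argument is correct in substance, and the verification of the decomposition via the basis of group-likes, the counit axiom, and coassociativity is exactly the content of the SGA3 reference. A few small inaccuracies worth tightening: in (3) it is cleaner to observe that $\sF_\chi$ is a direct summand of $\sF$ by (2), and direct summands of coherent (resp. finite locally free) modules over Noetherian rings are again coherent (resp. finite locally free) — which is also precisely how the paper phrases this ("item (3) follows from (2)"); in (6) the reference to "flat base change" and the "projection formula" is misleading, since $f$ need not be flat and no projection formula is required — the compatibility of the weight decomposition with $f^*$ and $f_*$ is immediate from the definition of the pullback/pushforward of $P$-equivariant structures (as the paper notes, this is "obvious" after reducing to the affine case, resp. "clear from the definitions"); and in (5) you do not need $G$-stable affine opens, only a finite affine \'etale cover, since every open is automatically $P$-stable as $P$ acts trivially. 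None of these affects the validity of the proof.
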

\begin{proof}
Except for (3), the proof of items (1) to (5) is a trivial extension of that of
  \cite[Lem.~4.1]{Krishna-Sreedhar} to quasi-coherent sheaves, where one has to use
  \cite[Exp.~I, Prop.~4.7.3]{SGA3}
  instead of \cite[Lem.~5.6]{Thomason-Inv} in the proofs of (2) and (5).
  The item (3) follows from (2). To prove the first part of item (6), we can
  reduce to the case when both $X$ and $X'$ are affine in which case the claim
  is obvious. The second part of (6) is clear from the definitions.
The item (7) is a direct verification.
\end{proof}

By \lemref{lem:definitionoftwist}(5), we get projection functors
$\pr_\chi \colon {\Qcoh}^G_X \to \sC^\chi_X$ such that the composition
$\sC^\chi_X \inj {\Qcoh}^G_X \xrightarrow{\pr_\chi} \sC^\chi_X$ is isomorphic to the
identity functor. These functors commute with $f^*$ and $f_*$.

We let $\Perf^c_{[X/G]}$ be the full subcategory of $\Perf_{[X/G]}$ whose objects
are strictly bounded perfect complexes $\sF_\bullet$ such that $\sF_n$ is a
$G$-equivariant coherent sheaf on $X$ for each $n$. For $\chi \in \wh{P}$,
we let $\Perf^{c,\chi}_{[X/G]}$ be the full subcategory of $\Perf^c_{[X/G]}$ whose objects
are the chain complexes $\sF_\bullet$ such that $\sF_n \in \mathcal{C}^{\chi}_X$ for
every $n$. We let $\Coh^{\chi}_{[X/G]}$ be the full subcategory of $\Coh_{[X/G]}$
whose objects are the strictly bounded chain complexes $\sF_\bullet$ such that
$\sF_n \in \mathcal{C}^{\chi}_X$ for every $n$.
It is clear that $\Perf^c_{[X/G]}$ and $\Perf^{c,\chi}_{[X/G]}$ are $k$-linear
differential graded full subcategories of $\Perf_{[X/G]}$ with respect to the latter's
cofibrations and weak equivalences.
Similarly, $\Coh^{\chi}_{[X/G]}$ is a differential graded
full subcategory of $\Coh_{[X/G]}$. We let $K^{\chi}([X/G])$ (resp. $K'^{\chi}([X/G])$)
denote the non-connective $K$-theory spectrum of $\Perf^{c,\chi}_{[X/G]}$
(resp. $\Coh^{\chi}_{[X/G]}$) as defined in \cite{BGT-1} and \cite{Schlichting}.
We similarly define $F^{\chi}([X/G])$ for $F \in \{HH, HC^{-}, HC, HP\}$ following
\cite{Keller-ICM}.

\lemref{lem:definitionoftwist} implies that for 
$\sF \in \sC^{\chi}_X$ and $\sG \in \sC^{\chi'}_X$, one has
$\Hom_{[X/G]}(\sF, \sG) = 0$ unless $\chi = \chi'$. It follows that every
map $\alpha \colon \sF \to \sG$ in ${\Qcoh}^G_X$ decomposes
into direct sum of maps $\alpha_\chi \colon \sF_\chi \to \sG_\chi$ where $\chi$ runs
through $\wh{P}$. We deduce from \lemref{lem:definitionoftwist}(5) that
${\underset{\chi \in \wh{P}}\coprod} \Perf^{c,\chi}_{[X/G]} \to \Perf^c_{[X/G]}$
and ${\underset{\chi \in \wh{P}}\coprod} \Coh^{\chi}_{[X/G]} \to \Coh_{[X/G]} $ are
equivalences of dg categories. We denote the induced maps between their
non-connective $K$-theory spectra by $\Phi^{P}_{[X/G]}$ and $\Psi^{P}_{[X/G]}$,
respectively. Recall that a stack $\sY$ is said to have the
resolution property
if every coherent sheaf on $\sY$ is a quotient of a locally free coherent sheaf.
Let $f \colon X' \to X$ be a morphism in $\Spc^G_k$ such that
$P$ acts trivially on $X'$.

We let $\sS'(k) =  \sS(k) \cup \{K'\}$, considered as functors on the
category of quotient stacks over $k$. For $F \in \sS'(k)$ and $\chi \in \wh{P}$,
we let $F^{\chi}([X/G]) = F(\Perf^{\chi}_{[X/G]})$.
For $i \in \Z$ and $F \in \sS'(k)$, we let $F_i(-) = (\pi_i \circ F)(-)$ on
${\rm DGCat}_k$.

\begin{prop}\label{prop:Twist-2}
  We have the following.
  \begin{enumerate}
    \item
      The canonical maps of spectra
      \[
      \hspace*{1cm}
      \Phi^{P}_{[X/G]} \colon \coprod_{\chi \in \wh{P}} K^{\chi}([X/G]) \to K([X/G]) \ \
      \mbox{and} \ \
      \Psi^{P}_{[X/G]} \colon \coprod_{\chi \in \wh{P}} K'^{\chi}([X/G]) \to K'([X/G])
      \]
      are weak equivalences.
    \item
      If $f$ is proper, then the push-forward map on equivariant $K'$-theory
      induces a map $f_* \colon  K'^{\chi}([X'/G])  \to K'^{\chi}([X/G])$ for each $\chi
      \in \wh{P}$ such that $f_* \circ \Psi^{P}_{[X'/G]}$ is homotopic to
      $\Psi^P_{[X/G]} \circ f_*$.
    \item
     If $f$ is flat, then the pull-back map on equivariant $K'$-theory
      induces a map $f^* \colon  K'^{\chi}([X/G])  \to K'^{\chi}([X'/G])$ for each $\chi
      \in \wh{P}$ such that $f^* \circ \Psi^{P}_{[X/G]}$ is homotopic to
      $\Psi^{P}_{[X'/G]} \circ f^*$. 
      \item
        The pull-back
        map on equivariant $K$-theory induces $f^* \colon  K^{\chi}([X/G])  \to
        K^{\chi}([X'/G])$ for each $\chi \in \wh{P}$ such that $f^* \circ
        \Phi^{P}_{[X/G]}$ is homotopic to $\Phi^{P}_{[X'/G]} \circ f^*$.
      \item
        If $f$ is a proper lci (local complete intersection) morphism, then the
        push-forward map on equivariant
        $K$-theory induces $f_* \colon K^{\chi}([X'/G])  \to K^{\chi}([X/G])$ for each
        $\chi \in \wh{P}$ such that $f_* \circ \Phi^{P}_{[X'/G]}$ is homotopic to
      $\Phi^P_{[X/G]} \circ f_*$.
     \item
     For any $\chi, \chi' \in \wh{P}$, the tensor
     product of equivariant quasi-coherent sheaves and the pull-back
     $f^* \colon K^{\chi}([X/G]) \to K^{\chi}([{X'}/G])$ together induce maps of spectra
      \[
      \hspace*{1cm}
      K^{\chi}([X/G]) \wedge_{\1} K^{\chi'}([{X'}/G]) \to K^{\chi + \chi'}([{X'}/G]) \ \
      \mbox{and}
      \]
      \[
      K^{\chi}([X/G]) \wedge_{\1} K'^{\chi'}([{X'}/G]) \to K'^{\chi + \chi'}([{X'}/G]).
      \]
      \end{enumerate}
\end{prop}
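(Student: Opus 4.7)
The strategy is to derive all six items from \lemref{lem:definitionoftwist} by observing that the $\wh{P}$-grading on $\Qcoh^G_X$ (and its coherent subcategory) lifts termwise to a $\wh{P}$-grading on the relevant dg categories of complexes, which is preserved by all the natural operations (pullback, pushforward, tensor product); the spectrum-level statements then follow because $K$-theory is an additive invariant of dg categories. For item (1), I will promote the equivalence of abelian categories in \lemref{lem:definitionoftwist}(5) to equivalences of dg categories $\coprod_{\chi \in \wh{P}} \Perf^{c,\chi}_{[X/G]} \xrightarrow{\simeq} \Perf^c_{[X/G]}$ and $\coprod_{\chi \in \wh{P}} \Coh^{\chi}_{[X/G]} \xrightarrow{\simeq} \Coh_{[X/G]}$. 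Full faithfulness follows because the $\Hom$-vanishing between distinct character components (a consequence of \lemref{lem:definitionoftwist}) kills every chain map between complexes of distinct grading; essential surjectivity is termwise from \lemref{lem:definitionoftwist}(2), using that each strictly bounded complex has only finitely many non-vanishing character components. Since non-connective $K$-theory sends finite products of dg categories to finite products (= wedges) of spectra and commutes with filtered colimits, it sends the coproduct $\coprod_\chi$ of dg categories to the coproduct of spectra. Applying $K$ then yields item (1) after identifying $K(\Perf^c_{[X/G]})$ with $K([X/G])$ and $K(\Coh_{[X/G]})$ with $K'([X/G])$.

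For items (2)--(5), \lemref{lem:definitionoftwist}(6) (that $f^*$ and $f_*$ preserve each $\sC^\chi$) promotes termwise to the assertion that these functors preserve the dg-subcategories $\Perf^{c,\chi}$ and $\Coh^\chi$. For (2) and (3), proper pushforward and flat pullback on $\Coh^\chi$ thus induce the required maps on $K'^\chi$, and the compatibility of these with $\Psi^P_{[X/G]}$ is then the functoriality of $K$-theory applied to the underlying commutative square of dg equivalences from (1). Item (4) is obtained identically for the pullback on perfect complexes (which requires no flatness hypothesis since $Lf^*$ preserves $\Perf^{c,\chi}$). For item (5), the proper lci hypothesis ensures $Rf_*$ preserves perfectness, while the $\sO_X[\wh{P}]$-coaction is preserved by $Rf_*$ because $f$ is $P$-equivariant with $P$ acting trivially on both $X$ and $X'$; hence $Rf_*$ respects the $\wh{P}$-grading and the required $f_*$ is well-defined at the spectrum level. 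Item (6) follows similarly from \lemref{lem:definitionoftwist}(7): the tensor product pairing $\sC^\chi_X \times \sC^{\chi'}_{X'} \to \sC^{\chi+\chi'}_{X'}$ (after pulling back the first factor by $f^*$) extends to derived tensor products of complexes and yields the two spectrum-level pairings via the symmetric monoidal structure on non-connective $K$-theory.

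The main obstacle I anticipate is the identification $K(\Perf^c_{[X/G]}) \simeq K([X/G])$, which silently requires every perfect complex on $[X/G]$ to be globally quasi-isomorphic to a strictly bounded complex of $G$-equivariant coherent sheaves on $X$. This is essentially a form of the equivariant resolution property; it is available in the present setting of linear algebraic group actions on algebraic spaces, but must be invoked carefully (e.g., via an equivariant variant of the Thomason-Trobaugh approximation argument or via an embedding of $G$ into $GL_n$ and pullback of resolutions). The analogous identification $K(\Coh_{[X/G]}) \simeq K'([X/G])$ is a Waldhausen approximation statement (replacing cohomologically bounded complexes with coherent cohomology by strictly bounded complexes of coherent sheaves) and is standard. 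A secondary subtlety is the behavior of the infinite coproduct over $\wh{P}$, but this is routine since $\wh{P}$ is finitely generated, so that the coproduct reduces, via its definition as the filtered colimit of products over finite subfamilies, to the commutation of $K$ with both finite products and filtered colimits.
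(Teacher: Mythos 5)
Your overall strategy matches the paper's: lift the $\wh{P}$-grading of \lemref{lem:definitionoftwist} to the relevant dg categories, apply non-connective $K$-theory, and use that $K$ sends coproducts of dg categories to coproducts of spectra. Two points, however, need attention.

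First, you identify the obstacle in item (1) as ``essentially a form of the equivariant resolution property.'' This overstates what is needed. The resolution property in its usual sense asks for resolutions by \emph{locally free} coherent sheaves, and is a genuine constraint on the stack. What is actually required here is only that every perfect complex on $[X/G]$ be quasi-isomorphic to a strictly bounded complex of $G$-equivariant \emph{coherent} sheaves, which is much weaker and holds for all Noetherian quotient stacks. The paper obtains this through a Thomason--Trobaugh approximation argument (\cite[Lem.~1.9.5, Thm.~1.9.8]{TT} together with \cite[Lem.~1.4]{Thomason-Adv}) via the intermediate dg categories $\Perf^{qc}_{[X/G]}$ and $\Perf^{c'}_{[X/G]}$. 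You do name Thomason--Trobaugh approximation as the remedy, so the gap is recoverable, but the framing misidentifies which hypothesis on $[X/G]$ is in play.

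Second, and more seriously, for items (4) and (5) you claim that ``$Lf^*$ preserves $\Perf^{c,\chi}$'' and that ``$Rf_*$ respects the $\wh{P}$-grading,'' and from this conclude the existence of the required spectrum-level maps. But $Lf^*$ and $Rf_*$ are functors of triangulated (or derived $\infty$-) categories, not dg functors on the specific dg categories $\Perf^{c,\chi}_{[X/G]}$; a complex of coherent sheaves pulled back by a non-flat $f$ does not stay a complex of coherent sheaves in the underived sense, so there is simply no naive termwise map of dg categories. To produce a map of $K$-theory spectra one needs an honest zigzag of dg functors, not a derived functor on homotopy categories. The paper supplies this zigzag by introducing the dg categories $\Perf^{f,\chi}_{[X/G]}$ of bounded-above perfect complexes of flat sheaves (for $f^*$ in item (4)) and $\Perf^{i,\chi}_{[X/G]}$ of bounded-below perfect complexes of injective sheaves (for $f_*$ in item (5)), showing these inclusions induce weak equivalences of $K$-theory spectra, and then observing that ordinary (underived) $f^*$ or $f_*$ restricts to these subcategories and preserves the $\chi$-decomposition by \lemref{lem:definitionoftwist}(6). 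Without this resolution device your argument for items (4), (5) (and hence (6), which piggybacks on the flat model) does not actually define the maps $f^*\colon K^\chi([X/G])\to K^\chi([X'/G])$ and $f_*\colon K^\chi([X'/G])\to K^\chi([X/G])$.
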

\begin{proof}
 Using the decomposition ${\underset{\chi \in \wh{P}}\coprod}
  \Perf^{c,\chi}_{[X/G]} \xrightarrow{\simeq} \Perf^c_{[X/G]}$ and
  \cite[\S~1, (4), \S~2, (8), (9)]{Quillen},
  it follows that the canonical map of spectra
  $\coprod_{\chi \in \wh{P}} K^{\chi}([X/G]) \to K(\Perf^c_{[X/G]})$ is a weak equivalence.
  To prove that $\Phi^P_{[X/G]}$ is a  weak equivalence, it remains therefore to show
  that the inclusion functor $\Perf^c_{[X/G]} \to \Perf_{[X/G]}$ induces a weak
  equivalence between their $K$-theory spectra.
  To show the latter claim, we let $\Perf^{qc}_{[X/G]}$ be the full
  subcategory of $\Perf_{[X/G]}$ whose objects
  are perfect complexes $\sF_\bullet$ such that $\sF_n$ is a
  $G$-equivariant quasi-coherent sheaf on $X$ for each $n$.
We let $\Perf^{c'}_{[X/G]}$ be the full
  subcategory of $\Perf_{[X/G]}$ whose objects
  are perfect complexes $\sF_\bullet$ such that $\sF_n$ is a
  $G$-equivariant coherent sheaf on $X$ for each $n$.

  We look at the inclusions of dg categories
  \begin{equation}\label{eqn:Thomason-lemma*}
   \Perf^c_{[X/G]} \inj \Perf^{c'}_{[X/G]} \inj \Perf^{qc}_{[X/G]} \inj \Perf_{[X/G]}.
    \end{equation}
It is straightforward to check that the first inclusion induces an equivalence
  of derived categories. In particular, it induces a weak equivalence of 
  $K$-theory spectra (cf. \cite[\S~12.3, Prop.~3]{Schlichting}).
  It follows from \cite[Lem.~3.1]{Krishna-Ravi}
  that the map of spectra $K(\Perf^{qc}_{[X/G]}) \to K(\Perf_{[X/G]}) = K([X/G])$ is a weak
  equivalence.

To show that $K(\Perf^{c'}_{[X/G]}) \to K(\Perf^{qc}_{[X/G]})$ is
  a weak equivalence, we let $\phi \colon \sF \surj \sG$ be a surjective map of
  $G$-equivariant quasi-coherent sheaves on $X$ such that $\sG$ is coherent. By
  \cite[Lem.~1.4]{Thomason-Adv}, $\sF$ is the filtered colimit of its $G$-equivariant
  coherent subsheaves. Since $X$ is Noetherian, one of these coherent subsheaves
  (say, $\sF'$) will surject onto $\sG$. We thus get a $G$-equivariant coherent
  subsheaf $\sF' \subset \sF$ such that $\phi$ restricts to a surjection
  $\phi \colon \sF' \surj \sG$. We now apply \cite[Lem.~1.9.5]{TT}
  with $\sA = {\Qcoh}_{[X/G]}$, $\sD = \Coh^0_{[X/G]}$,
  $\sC =$ the category of cohomologically bounded complexes
  in $\sA$ and $D = 0$. It follows that the inclusion
  $\Perf^{c'}_{[X/G]} \inj \Perf^{qc}_{[X/G]}$
  induces an equivalence of their derived categories. We conclude as above
  (cf. \cite[Thm.~1.9.8]{TT}) that this inclusion induces a weak equivalence
  \begin{equation}\label{eqn:Twist-2-0}
    K(\Perf^{c'}_{[X/G]}) \xrightarrow{\simeq} K(\Perf^{qc}_{[X/G]}).
    \end{equation}
    This concludes the proof of the claim that $\Phi^P_{[X/G]}$ is a  weak equivalence.

 An identical argument shows that the natural map of spectra
 \begin{equation}\label{eqn:Twist-2-1}   
   K(\Coh_{[X/G]}) \to K'([X/G])
 \end{equation}
 is a weak equivalence and this easily implies as above
  that $\Psi^P_{[X/G]}$ is a  weak equivalence. This proves item (1).
  The item (3) follows directly from \lemref{lem:definitionoftwist}(6) using
  ~\eqref{eqn:Twist-2-1}.

To prove (4), we let $\Perf^f_{[X/G]} \subset \Perf^{qc}_{[X/G]}$ be the full subcategory 
  consisting of bounded above perfect complexes $\sF_\bullet$ such that each $\sF_n$
  is a quasi-coherent flat $\sO_{[X/G]}$-module. We let $\Perf^{f,\chi}_{[X/G]} \subset
  \Perf^f_{[X/G]}$ be the full subcategory whose objects $\sF_\bullet$ have the property
  that each $\sF_n$ lies in $\sC^\chi_X$. We let $\Perf^{{qc}, \chi}_{[X/G]} \subset
\Perf^{qc}_{[X/G]}$ be the full subcategory whose objects $\sF_\bullet$ have the property
that each $\sF_n$ lies in $\sC^\chi_X$. We then have a commutative diagram
of inclusions of dg categories
\begin{equation}\label{eqn:Twist-2-2} 
  \xymatrix@C.8pc{
    \Perf^{c,\chi}_{[X/G]} \ar[r] \ar[d] & \Perf^{{qc},\chi}_{[X/G]} \ar[d] &
    \Perf^{f,\chi}_{[X/G]} \ar[l] \ar[d] \\
    \Perf^c_{[X/G]} \ar[r] & \Perf^{qc}_{[X/G]} & \Perf^f_{[X/G]}. \ar[l]}
\end{equation}

We showed in the proof of item (1) that the left horizontal arrow on the bottom row
induces a weak equivalence between the $K$-theory spectra. Using this and
\lemref{lem:definitionoftwist}, it follows that the left horizontal arrow on the
top row also induces a weak equivalence between the $K$-theory spectra.
It follows from \cite[Thm.~3.5.5]{Gross-thesis} that 
the right horizontal arrow on the bottom row induces an equivalence of the derived
categories and hence a weak equivalence of the $K$-theory spectra.
An application of \lemref{lem:definitionoftwist} shows again that 
right horizontal arrow on the top row also induces a weak equivalence of the
$K$-theory spectra. It suffices therefore to show that $f^*$ takes
$\Perf^f_{[X/G]}$ to $\Perf^{qc}_{[X'/G]}$ such that $f^*(\Perf^{f,\chi}_{[X/G]}) \subset
\Perf^{\chi}_{[{X'}/G]}$. But this is immediate from  \lemref{lem:definitionoftwist}(6).
This proves item (4).

To prove (5), we let $\Perf^i_{[X/G]} \subset \Perf^{qc}_{[X/G]}$ be the full subcategory 
  consisting of bounded below perfect complexes $\sF_\bullet$ such that each $\sF_n$
  is a quasi-coherent injective $\sO_{[X/G]}$-module. We let
  $\Perf^{i,\chi}_{[X/G]} \subset
  \Perf^i_{[X/G]}$ be the full subcategory whose objects $\sF_\bullet$ have the property
  that each $\sF_n$ lies in $\sC^\chi_X$.  We then have a commutative diagram
of inclusions of dg categories
\begin{equation}\label{eqn:Twist-2-3} 
  \xymatrix@C.8pc{
    \Perf^{c,\chi}_{[X'/G]} \ar[r] \ar[d] & \Perf^{{qc},\chi}_{[X'/G]} \ar[d] &
    \Perf^{i,\chi}_{[X'/G]} \ar[l] \ar[d] \\
    \Perf^c_{[X'/G]} \ar[r] & \Perf^{qc}_{[X'/G]} & \Perf^i_{[X'/G]}. \ar[l]}
\end{equation}

Since ${\Qcoh}^G_{X'}$ has enough injectives, we see that the right horizontal arrow
on the bottom row induces an equivalence of the derived categories and hence a
weak equivalence of the $K$-theory spectra.
It follows as before that the right horizontal arrow
on the top row also induces a weak equivalence of the $K$-theory spectra.
We observed above that
the left horizontal arrows on both rows induce weak equivalences of $K$-theory
spectra. On the other hand, for any $\sF_\bullet$ in $\Perf^i_{[X'/G]}$,
the complex $f_*(\sF_\bullet)$ lies in
$\Perf^i_{[X/G]}$ since $f$ is a proper and lci morphism. Furthermore,
\lemref{lem:definitionoftwist}(6) implies that $f_*(\sF_\bullet)$ lies in
$\Perf^{i, \chi}_{[X/G]}$ if $\sF_\bullet$ lies in $\Perf^{i,\chi}_{[X'/G]}$.
This implies item (5). The proof of item (2) is similar to that of (5).
%using the observation that a direct summand of an injective quasi-coherent sheaf on $[X/G]$ is also injective quasi-coherent.

Finally, to prove (6), we can work with $\Perf^{f,\chi}_{[X/G]}$ instead of
$ \Perf^{\chi}_{[X/G]}$ as shown in the proof of item (4).
The desired claim then follows immediately by combining items
(6) and (7) of \lemref{lem:definitionoftwist} (cf. \cite[\S~3.15]{TT}).
\end{proof}

\begin{cor}\label{cor:Twist-2-4}
For $F \in \{HH, HC\}$, there is a natural weak equivalence of spectra
  \[
  \kappa^P_{[X/G]} \colon \coprod_{\chi \in \wh{P}} F^{\chi}([X/G]) \xrightarrow{\simeq}
  F([X/G])
  \]
  such that item (4) of \propref{prop:Twist-2} holds after substituting
  $F$ for $K$ and the first part of item (6) holds after substituting $F$ for $K$.
  \end{cor}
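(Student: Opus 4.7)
The plan is to adapt the proof of \propref{prop:Twist-2} to $HH$ and $HC$ by invoking the Morita invariance of these homology theories. By \thmref{thm:H-functor} and \cite[Thm.~5.2]{Keller-ICM}, both $HH$ and $HC$ factor through the category $\mathbf{dgm}_k$ of $k$-linear symmetric monoidal dg categories modulo Morita equivalence. Hence any dg-functor which induces an equivalence of underlying derived categories induces a weak equivalence on $F \in \{HH, HC\}$.

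First I would verify that $F$ commutes with the coproduct $\coprod_{\chi \in \wh{P}} \Perf^{c, \chi}_{[X/G]}$. Inspecting the cyclic bar complex \eqref{eqn:DG-complex-0}, one sees that for a coproduct of dg categories any nonzero summand forces all objects $X_0, \ldots, X_n$ to lie in a single component $\sC^{\chi}_X$, since $\Hom_{[X/G]}(\sF, \sG) = 0$ for $\sF \in \sC^\chi_X$ and $\sG \in \sC^{\chi'}_X$ with $\chi \ne \chi'$ by \lemref{lem:definitionoftwist}. The cyclic permutation \eqref{eqn:DG-complex-1} preserves the component of each tuple, so the mixed complex $(H(\sC), d, B)$ associated to $\sC = \coprod_\chi \Perf^{c,\chi}_{[X/G]}$ decomposes as the direct sum of the mixed complexes associated to the individual $\Perf^{c,\chi}_{[X/G]}$. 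Since $HC$ is computed from the mixed complex via the exact functor $k \otimes^{\mathbb{L}}_\Lambda (-)$, it decomposes accordingly, yielding a natural equivalence $F(\coprod_\chi \Perf^{c, \chi}_{[X/G]}) \simeq \coprod_\chi F^\chi([X/G])$ in $\sS\sH$.

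Next I would combine this with the equivalence of dg categories $\coprod_\chi \Perf^{c,\chi}_{[X/G]} \xrightarrow{\simeq} \Perf^c_{[X/G]}$ (established preceding \propref{prop:Twist-2}) together with the chain of inclusions \eqref{eqn:Thomason-lemma*}. Each of these inclusions was shown in the proof of \propref{prop:Twist-2} to induce an equivalence of underlying derived categories (via \cite[Lem.~1.9.5, Thm.~1.9.8]{TT} and \cite[Lem.~3.1]{Krishna-Ravi}). By Morita invariance, they induce weak equivalences on $F$, and stringing these together yields $\kappa^P_{[X/G]}$. For the compatibility with flat pullback $f^*$, I would mimic the argument using the diagram \eqref{eqn:Twist-2-2} of flat resolutions, applying \lemref{lem:definitionoftwist}(6) to see that $f^*$ preserves the $\chi$-component on $\Perf^{f,\chi}_{[X/G]}$. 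For the tensor pairing in the analog of \propref{prop:Twist-2}(6), I would use the monoidal structure on $F$ from \thmref{thm:H-functor}(1) combined with \lemref{lem:definitionoftwist}(7), which shows that tensoring sends $\sC^\chi_X \times \sC^{\chi'}_{X'}$ into $\sC^{\chi+\chi'}_{X'}$, to refine the multiplicative pairing $F(\Perf^c_{[X/G]}) \wedge_{\1} F(\Perf^c_{[X'/G]}) \to F(\Perf^c_{[X'/G]})$ into the desired $\wh{P}$-graded form.

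The main obstacle I anticipate is ensuring that the decomposition of the cyclic bar complex respects the Connes operator $B$ and (via its geometric realization) the $S^1$-action described in \eqref{eqn:Loop-space-1}, so that the decomposition genuinely descends to the spectrum $HC$ and not merely to its underlying chain complex. This should follow formally once one checks that both $d$ and $B$ (hence the entire $\Lambda$-module structure, and hence the $B\Z$-action used to produce the $S^1$-equivariance) preserve the component grading on $H(\sC)$, which is immediate from the fact that all the structure maps of the cyclic object are built from the component-preserving operations on tuples.
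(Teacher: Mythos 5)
Your proposal takes essentially the same route as the paper: invoke Morita invariance of $HH$ and $HC$ (via \thmref{thm:H-functor} and Keller) to handle the chain of dg category inclusions $\Perf^c_{[X/G]} \hookrightarrow \Perf^{qc}_{[X/G]} \hookrightarrow \Perf_{[X/G]}$, and establish the decomposition of $F$ along the coproduct. The only difference is how you handle the coproduct step: the paper simply cites the \emph{continuity} of $HH$ and $HC$ (preservation of filtered homotopy colimits, which combined with additivity under finite products gives the infinite coproduct decomposition), whereas you inspect the cyclic bar complex directly.

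One imprecision in your direct argument worth flagging: for Quillen's coproduct $\coprod_\chi \Perf^{c,\chi}_{[X/G]}$ as the paper defines it (the filtered colimit of products of finite subfamilies), objects are finitely-supported tuples, so the objects $X_0,\ldots,X_n$ entering a term of the cyclic bar complex are \emph{not} confined to a single component, and the bar complex of the Quillen coproduct does not literally split as a direct sum over $\chi$ (the ``mixed'' tuples contribute). Your claim is valid as stated only for the disjoint-union model of $\coprod_\chi \sC^\chi_X$ (each object lies in one component, no $\Hom$'s between distinct components). That model is Morita-equivalent to the Quillen coproduct, so since you already invoke \thmref{thm:H-functor} for Morita invariance, this is an easy fix — but you should say it, or instead go through the paper's continuity argument. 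Your final paragraph on the $\Lambda$-module / $S^1$-equivariance of the decomposition is a correct and appropriate sanity check (necessary for the $HC$ case), and the treatment of flat pullback and the tensor pairing via \lemref{lem:definitionoftwist}(6),(7) matches the paper.
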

\begin{proof}
  The proof is identical to that of \propref{prop:Twist-2}. Using that the
  functors $HH$ and $HC$ are continuous (i.e., preserve filtered homotopy colimits),
  the only thing one needs to
 observe is that the canonical functors $\Perf^c_{[X/G]} \inj  \Perf^{qc}_{[X/G]}  \inj
  \Perf_{[X/G]}$ induce weak equivalences between the homology functors. This is
  because we showed in the proof of \propref{prop:Twist-2} that
  they have equivalent derived categories. We can therefore apply
  \thmref{thm:H-functor} which says that each of the homology functors
  factors through Morita equivalence, and hence through derived equivalence
  (cf. \cite[\S~4.2]{Keller-ICM}).
  \end{proof}

By \propref{prop:Twist-2}(4), the assignment $X \mapsto K^{\chi}([X/G])$ is
a functor from $(\Spc^G_k)^{\rm op}$ to $\sS\sH$ for every $\chi \in \wh{P}$. 
Letting $G$ act trivially on each $\Delta^n$ (cf. ~\eqref{eqn:simplex})
and noting that $[X/G] \times \Delta^n \cong [(X \times \Delta^n)/G]$,
it follows that $n \mapsto K^{\chi}([(X \times \Delta^n)/G])$ is a simplicial
diagram in $\sS\sH$. Taking the homotopy colimit and using 
\propref{prop:Twist-2}(1), we get a weak equivalence
$\theta^P_{[X/G]} \colon \hocolim_{\Delta^n} (\coprod_{\chi \in \wh{P}}
K^{\chi}([(X \times \Delta^n)/G])) \xrightarrow{\simeq} 
\hocolim_{\Delta^n} K([X/G] \times \Delta^n)$.
We let $KH^{\chi}([X/G]) = \hocolim_{\Delta^n} K^{\chi}([(X \times \Delta^n)/G])$.
Since homotopy colimits commute with coproducts, we get the following.

\begin{cor}\label{cor:Twist-KH}
  There is a natural weak equivalence of spectra
  \[
  \theta^P_{[X/G]} \colon \coprod_{\chi \in \wh{P}} KH^{\chi}([X/G]) \xrightarrow{\simeq}
  KH([X/G])
  \]
  such that items (4), (5) and (6) of \propref{prop:Twist-2} hold.
\end{cor}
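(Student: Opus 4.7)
The plan is to deduce the corollary from Proposition \ref{prop:Twist-2} by a straightforward homotopy colimit argument, essentially as sketched in the paragraph preceding the statement. First I would observe that since $G$ acts trivially on each $\Delta^n$ and $P \subset Z(G)$ acts trivially on $X$, the subgroup $P$ also acts trivially on $X \times \Delta^n$ for every $n \ge 0$. Hence Proposition \ref{prop:Twist-2}(1) applies to the stack $[(X \times \Delta^n)/G]$ and yields, for each $n$, a natural weak equivalence of spectra
\[
\Phi^{P}_{[(X \times \Delta^n)/G]} \colon \coprod_{\chi \in \wh{P}} K^{\chi}([(X \times \Delta^n)/G]) \xrightarrow{\simeq} K([(X \times \Delta^n)/G]) \simeq K([X/G] \times \Delta^n).
\]

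Next I would assemble these into a simplicial map. By Proposition \ref{prop:Twist-2}(4), each face and degeneracy map $\Delta^m \to \Delta^n$ induces a pull-back on the twisted $K$-theory spectra that is compatible with the direct sum decomposition, so $n \mapsto \coprod_{\chi \in \wh{P}} K^{\chi}([(X \times \Delta^n)/G])$ and $n \mapsto K([X/G] \times \Delta^n)$ are simplicial diagrams in $\sS\sH$, and the maps $\Phi^{P}_{[(X \times \Delta^n)/G]}$ assemble into a map of such diagrams. Taking the homotopy colimit yields a weak equivalence since a levelwise weak equivalence of simplicial diagrams induces a weak equivalence on $\hocolim$. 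Finally, the functor $\hocolim_{\Delta^n}$ commutes with the coproduct indexed by $\wh{P}$, so we can interchange $\coprod$ and $\hocolim$ on the left-hand side. This gives the required weak equivalence $\theta^P_{[X/G]}$.

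For the functoriality claims, I would proceed componentwise. For (4), given a morphism $f \colon X' \to X$ in $\Spc^G_k$ with $P$ acting trivially on $X'$, Proposition \ref{prop:Twist-2}(4) provides compatible pull-backs $f^* \colon K^\chi([(X \times \Delta^n)/G]) \to K^\chi([(X' \times \Delta^n)/G])$; passing to $\hocolim_{\Delta^n}$ defines $f^* \colon KH^\chi([X/G]) \to KH^\chi([X'/G])$, and the homotopy commuting square for $\Phi^P$ at each simplicial level passes to a homotopy commuting square for $\theta^P$ after $\hocolim$. For (5), the push-forward along a proper lci morphism behaves well levelwise by Proposition \ref{prop:Twist-2}(5) (since base change along $\Delta^n \to \pt$ preserves proper lci morphisms), and again passing to $\hocolim$ gives the corresponding map of $KH^\chi$-spectra with the compatibility. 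For (6), the tensor product pairings of Proposition \ref{prop:Twist-2}(6) are natural in the stack, hence define simplicial pairings which, after applying $\hocolim_{\Delta^n}$, yield the required pairings on $KH^\chi$ spectra.

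No step here is truly hard; the only points that require care are the simplicial compatibility of the decomposition maps $\Phi^P_{[(X\times\Delta^n)/G]}$ (which follows from naturality of $\pr_\chi$ under flat pull-back, cf.\ \lemref{lem:definitionoftwist}(6)) and the fact that coproducts in $\sS\sH$ commute with geometric realization of simplicial spectra — both standard. The most delicate bookkeeping is just to verify that for push-forward along a proper lci map $f \colon X' \to X$, the induced map $f \times \id \colon X' \times \Delta^n \to X \times \Delta^n$ is still proper and lci so that Proposition \ref{prop:Twist-2}(5) applies at each simplicial level.
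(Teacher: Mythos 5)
Your proof is correct and follows essentially the same approach as the paper: the paragraph preceding the corollary in the paper already constructs $\theta^P_{[X/G]}$ by applying Proposition~\ref{prop:Twist-2}(1) levelwise to the simplicial diagram $n \mapsto [(X\times\Delta^n)/G]$, taking homotopy colimits, and using that coproducts commute with $\hocolim$. Your additional elaboration of the functoriality claims in items (4), (5), (6) — including the observations that $P$ acts trivially on $X \times \Delta^n$ and that proper lci maps are stable under base change along $\Delta^n \to \pt$ — is accurate, though note that Lemma~\ref{lem:definitionoftwist}(6) gives naturality of $\pr_\chi$ under \emph{arbitrary} pull-back (not just flat), which is what is actually needed to handle the face maps of $\Delta^\bullet$ (closed immersions) in the $K$-theory setting.
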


\vskip .3cm

As a special case of \propref{prop:Twist-2}, assume that $G$ is a diagonalizable
$k$-group and it acts trivially on $X$. Then we have the canonical map of
spectra $K(X) \to K([X/G])$, induced by the functor $\Perf(X) \to \Perf([X/G])$
which endows a sheaf of $\sO_X$-modules with the trivial action of $G$.
In particular, we get a natural map
$K_i(X) \otimes R(G) \to K_i([X/G])$. A similar map exists between the $K'$-groups.
The second isomorphism in the following corollary was proven by
Thomason \cite[Lem.~5.6]{Thomason-Inv} when $X$ is affine.

\begin{cor}\label{cor:Twist-3}
 The canonical maps
  \[
K_i(X) \otimes \Z[\wh{P}] \to K_i([X/G]); \ K'_i(X) \otimes \Z[\wh{P}] \to K'_i([X/G]);
\]
\[
KH_i(X) \otimes \Z[\wh{P}] \to KH_i([X/G])
\]
are isomorphisms for all $i \in \Z$.
\end{cor}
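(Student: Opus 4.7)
The plan is to combine \propref{prop:Twist-2}(1) (together with \corref{cor:Twist-KH} for the $KH$ case) with an explicit identification of each summand $K^{\chi}([X/G]) \simeq K(X)$. Since $G$ itself is diagonalizable and acts trivially on $X$, I specialize the setup of the section by taking $P = G$, so that $\wh{P} = \wh{G}$ and $\Z[\wh{P}]$ is, as an abelian group, the free abelian group on $\wh{G}$.

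The key step is the following identification. Under the hypotheses, the defining condition $\alpha^G_{\sF} = \can_{\sF} \circ \chi$ (cf.~\eqref{eqn:Decom-0}) for an object $\sF \in \sC^{\chi}_X$ forces the $G$-action on $\sF$ to be uniquely determined by the underlying $\sO_X$-module structure together with the character $\chi$. Consequently, the forgetful functor $\sC^{\chi}_X \to \Qcoh(X)$ is an equivalence of categories, with inverse sending a quasi-coherent $\sO_X$-module to itself equipped with the $G$-action through $\chi$. This restricts to an equivalence on coherent sheaves and extends term-wise to equivalences of the associated dg categories $\Perf^{c,\chi}_{[X/G]}$ and $\Coh^{\chi}_{[X/G]}$ with the corresponding non-equivariant categories over $X$. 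Morita invariance of the functors $K$, $K'$ and $KH$ then gives weak equivalences $K^{\chi}([X/G]) \simeq K(X)$, $K'^{\chi}([X/G]) \simeq K'(X)$ and $KH^{\chi}([X/G]) \simeq KH(X)$ for every $\chi \in \wh{P}$.

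Combining these with \propref{prop:Twist-2}(1) and \corref{cor:Twist-KH}, I obtain weak equivalences of spectra
\[
\coprod_{\chi \in \wh{P}} K(X) \xrightarrow{\simeq} K([X/G]), \quad
\coprod_{\chi \in \wh{P}} K'(X) \xrightarrow{\simeq} K'([X/G]), \quad
\coprod_{\chi \in \wh{P}} KH(X) \xrightarrow{\simeq} KH([X/G]).
\]
Taking $\pi_i$ turns each coproduct of spectra into a direct sum of abelian groups, which gathered by character is exactly $K_i(X) \otimes \Z[\wh{P}]$, and similarly for $K'$ and $KH$. To check that the resulting isomorphism coincides with the canonical map in the statement, I observe that the latter sends $[E] \otimes \chi$ to the class of $E$ endowed with the $G$-action through $\chi$; under our equivalences, this is precisely the image of $[E]$ under the inclusion of the $\chi$-summand of the coproduct.

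The main obstacle is the identification $\sC^{\chi}_X \simeq \Qcoh(X)$ when $G$ acts trivially, along with its extension to the relevant subcategories of perfect and coherent complexes. This is essentially formal once one recognizes that trivial $G$-action together with the character $\chi$ uniquely determines the equivariant structure; after this, the argument is a routine application of the decomposition results established in this section.
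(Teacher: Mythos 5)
Your proof is correct and is essentially the paper's own argument: both rest on the equivalence of categories between $\sC^{\chi}_X$ and $\Qcoh(X)$ (you phrase it via the forgetful functor, the paper via its quasi-inverse $\phi_\chi$) together with the coproduct decomposition of \propref{prop:Twist-2}(1) and \corref{cor:Twist-KH}, followed by passage to homotopy groups. Your identification of the composite isomorphism with the canonical map is also exactly what the paper records.
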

\begin{proof}
  We fix $i \in \Z$. By \propref{prop:Twist-2}(1), the map
  $(\Phi^{P}_{[X/G]})_* \colon \bigoplus_{\chi \in \wh{P}} \pi_i(K^{\chi}([X/G])) \to
  K_i([X/G])$ is an isomorphism. To prove the first isomorphism, it
  suffices therefore to show that there is a
  canonical isomorphism $K_i(X) \otimes \Z[\wh{P}] \cong
  \bigoplus_{\chi \in \wh{P}} \pi_i(K^{\chi}([X/G]))$. To that end, we consider the functor
$\phi_\chi \colon {\Qcoh}_X \to \sC^\chi_X$ given by $\phi_\chi(\sF) =
\sF'$, where $\sF'$ is the sheaf $\sF$ on which $G$ acts via
the character $\chi$. Thus for any {\'e}tale
open $U$ over $X$ and $x \in \sF(U)$, we have
$g \cdot x = \chi(g)x \in \sF(U)$. It is clear that $\phi_\chi$ is an equivalence
of categories. In particular, $\phi_\chi$ defines an equivalence of the corresponding
categories of chain complexes which preserves perfect complexes.
Considering the induced map on the $K$-theory spectra, we get that
the map $\phi^*_\chi \colon K(X) \to K^{\chi}({[X/G]})$ is a weak equivalence.
Under this equivalence, the induced map
$\phi^*_\chi \colon K_i(X) \xrightarrow{\cong} K_i(X)$ on the homotopy groups is
given by $\phi^*_\chi(x) = \chi  \otimes x$. The arguments for the second and third
isomorphisms are identical.
\end{proof}

An identical argument implies the following.

\begin{cor}\label{cor:Twist-4}
For $F \in  \{HH, HC\}$, the canonical map $F_i(X) \otimes \Z[\wh{P}] \to F_i([X/G])$
is an isomorphism for every $i \in \Z$.
\end{cor}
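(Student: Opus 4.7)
The plan is to mimic the proof of \corref{cor:Twist-3}, replacing the $K$-theory decomposition of \propref{prop:Twist-2}(1) with the homology decomposition $\kappa^P_{[X/G]}$ of \corref{cor:Twist-2-4}. Fix $i \in \Z$ and $F \in \{HH, HC\}$. By \corref{cor:Twist-2-4}, the induced map on homotopy groups
\[
(\kappa^P_{[X/G]})_* \colon \bigoplus_{\chi \in \wh{P}} \pi_i\bigl(F^{\chi}([X/G])\bigr) \xrightarrow{\cong} F_i([X/G])
\]
is an isomorphism. It therefore suffices to exhibit, for each $\chi \in \wh{P}$, a canonical isomorphism $F_i(X) \xrightarrow{\cong} \pi_i(F^{\chi}([X/G]))$ under which the abelian group $F_i(X) \otimes \Z[\wh{P}]$ is matched with the sum on the left.

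First I would construct, exactly as in the proof of \corref{cor:Twist-3}, the twisting functor $\phi_{\chi} \colon {\Qcoh}_X \to \sC^{\chi}_X$ which sends a quasi-coherent $\sO_X$-module $\sF$ to the same sheaf equipped with the $G$-action in which $g \in G$ acts on local sections as multiplication by $\chi(g)$. Since $G$ acts trivially on $X$, this is well defined, is $\sO_X$-linear, and is an equivalence of abelian categories (with inverse given by forgetting the twist). Applied termwise to chain complexes, $\phi_\chi$ restricts to an equivalence of the $k$-linear dg categories of (strictly bounded) complexes of coherent sheaves on $X$ and the corresponding category $\Perf^{c,\chi}_{[X/G]}$ (and likewise at the level of perfect complexes), and it preserves all cofibrations and quasi-isomorphisms.

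Next I would invoke \thmref{thm:H-functor}, which asserts that $HH$ and $HC$ factor through ${\rm {\bf{dgm}}}_k$, i.e., through Morita equivalence, and hence through any equivalence of the associated derived categories. The equivalence $\phi_\chi$ therefore induces a weak equivalence of spectra
\[
\phi_{\chi}^* \colon F(X) \xrightarrow{\simeq} F^{\chi}([X/G]),
\]
and in particular an isomorphism $F_i(X) \xrightarrow{\cong} \pi_i(F^{\chi}([X/G]))$. Combining this isomorphism over all $\chi \in \wh{P}$ with the decomposition above yields the desired isomorphism $F_i(X) \otimes \Z[\wh{P}] \xrightarrow{\cong} F_i([X/G])$, where an element $x \in F_i(X)$ on the $\chi$-summand is sent to $\chi \otimes x$, and one checks directly that this map coincides with the canonical one induced by the ring map $\Z[\wh{P}] = R(G) \to F_0([X/G])$ and the pull-back $F_i(X) \to F_i([X/G])$.

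The only genuinely nontrivial input is the functoriality of $HH$ and $HC$ under derived/Morita equivalences of dg categories, which is precisely what \thmref{thm:H-functor} (together with the discussion of $\Perf_{dg}(\sX)$ in \S\ref{sec:Ho-st}) provides; no additional obstacle arises beyond what was already handled in \corref{cor:Twist-2-4}.
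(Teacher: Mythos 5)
Your argument is exactly the one the paper intends: it transports the proof of \corref{cor:Twist-3} to homology by substituting the decomposition $\kappa^P_{[X/G]}$ of \corref{cor:Twist-2-4} for \propref{prop:Twist-2}(1) and using \thmref{thm:H-functor} to ensure the twisting equivalence $\phi_\chi$ induces a weak equivalence on $HH$ and $HC$. This matches the paper's (terse) proof, which simply says the argument of \corref{cor:Twist-3} applies verbatim.
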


\section{The twisting operator}\label{sec:Twist-op}
In this section, we shall define the twisting operator on equivariant $K$-theory
and homology theories and describe its effect when we pass to the derived localization
and completion (in the sense of Lurie) of the equivariant $K$-theory spectra.
The construction of this operator is the second technical step in the proof of the
completion theorems.
We begin with a review of Lurie's derived localization and completion. 

\subsection{Lurie's localization and completion}\label{sec:DLC}
We briefly recall Lurie's construction of derived localization and
completion of modules over a commutative ring spectrum
(also called an $E_\infty$ ring)
and list some basic properties that are relevant to this paper.
We refer the reader to \cite[\S~7.2.3]{Lurie-HA} (for localization) and
\cite[Chap.~7.3]{Lurie-SAG} (for completion) for full details.
Let $R$ be a connective $E_\infty$ ring and let $S \subset \pi_0(R)$ be
a multiplicatively closed subset which contains the unit.
Let $\Mod_R$ denote the (derived) category of $R$-module spectra.
Note that $\Mod_R$ closed under homotopy limits and colimits.

As $\pi_0(R)$ acts on $R$, it acts any object $M \in \Mod_R$.
One says that $M$ is $S$-local if the map
$M \xrightarrow{x} M$ (defining the action of $x$ on $M$)
is a weak equivalence for every $x \in S$.
We let ${\Mod}^{S {\rm -loc}}_R$ be the full subcategory of $S$-local objects in
$\Mod_R$. The $S$-localization functor $S^{-1}(-) \colon \Mod_R \to
{\Mod}^{S {\rm -loc}}_R$
is the left adjoint of the inclusion functor ${\Mod}^{S {\rm -loc}}_R \inj \Mod_R$. For
every $M \in \Mod_R$, the canonical map $\delta_M \colon M \to S^{-1}M$ between
$R$-modules induces an isomorphism $S^{-1}\pi_i(M) \xrightarrow{\cong} \pi_i(S^{-1}M)$
between $\pi_0(R)$-modules for every $i$ (cf. \cite[Prop.~7.2.3.20]{Lurie-HA}).
In particular, $S^{-1}M$ is connective if
$M$ is so. If $\fp \subset \pi_0(R)$ is a prime ideal and $S = \pi_0(R) \setminus
\fp$, we shall write $S^{-1}M$ as $M_{\fp}$.

For $x \in \pi_0(R)$ and $M \in \Mod_R$, we let $M[x^{-1}]$ be the homotopy colimit of
the diagram $(M \xrightarrow{x} M \xrightarrow{x} M \xrightarrow{x} \cdots)$. We then
have the natural map $\delta'_M \colon M \to M[x^{-1}]$. If we let
$M[x^{-1}] \xrightarrow{x} M[x^{-1}]$ be the multiplication map, then it induces the
multiplication map $\pi_*(M[x^{-1}]) \xrightarrow{x} \pi_*(M[x^{-1}])$. Using the
isomorphism $\pi_*(M[x^{-1}]) \cong \pi_*(M)[x^{-1}]$ (since the homotopy groups
commute with filtered colimits), we see that the latter map is an
isomorphism. It follows that the map $M[x^{-1}] \xrightarrow{x} M[x^{-1}]$ is a weak
equivalence. In other words, $M[x^{-1}]$ is $S$-local if we
let $S = \{x^n|n \ge 0\}$.
In particular, $\delta'_M$ uniquely factors through $\delta_M \colon M \to S^{-1}M$.
Let $\phi_M \colon S^{-1}M \to M[x^{-1}]$ be the induced map. By looking at
the induced map between the homotopy groups, we observe the following.

\begin{lem}\label{lem:D-loc-0}
  There is a natural weak equivalence $\phi_M \colon S^{-1}M \xrightarrow{\simeq}
  M[x^{-1}]$.
\end{lem}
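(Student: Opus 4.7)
The plan is to verify that $\phi_M$ induces an isomorphism on all homotopy groups, from which the weak equivalence follows since both $S^{-1}M$ and $M[x^{-1}]$ are objects of $\Mod_R$. All the necessary pieces are already gathered in the paragraph preceding the lemma statement, so the argument is essentially a matter of assembling them and checking compatibility.

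First I would recall the two computations of homotopy groups that are already noted in the setup. By \cite[Prop.~7.2.3.20]{Lurie-HA}, the unit map $\delta_M \colon M \to S^{-1}M$ induces, for every $i \in \Z$, an isomorphism
\[
(\delta_M)_* \colon S^{-1}\pi_i(M) \xrightarrow{\cong} \pi_i(S^{-1}M)
\]
of $\pi_0(R)$-modules. On the other hand, since the functor $\pi_i$ commutes with filtered homotopy colimits in $\Mod_R$, the canonical map $\delta'_M \colon M \to M[x^{-1}]$ induces
\[
(\delta'_M)_* \colon \pi_i(M)[x^{-1}] \xrightarrow{\cong} \pi_i(M[x^{-1}]).
\]
Here we use that, for $S = \{x^n \mid n \ge 0\}$, the localization $S^{-1}\pi_i(M)$ coincides with $\pi_i(M)[x^{-1}]$ as $\pi_0(R)$-modules.

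Next I would use the universal property. By construction, $\phi_M$ is the unique map in $\Mod_R$ satisfying $\phi_M \circ \delta_M \simeq \delta'_M$. Applying $\pi_i$ yields the commutative triangle
\[
\xymatrix@C1.2pc{
\pi_i(M) \ar[r]^-{(\delta_M)_*} \ar[dr]_-{(\delta'_M)_*} & \pi_i(S^{-1}M) \ar[d]^-{(\phi_M)_*} \\
& \pi_i(M[x^{-1}])}
\]
in which the horizontal and diagonal arrows are the two localization isomorphisms above, once we identify both targets with $\pi_i(M)[x^{-1}]$. Therefore $(\phi_M)_*$ is an isomorphism for every $i \in \Z$, so $\phi_M$ is a weak equivalence of $R$-modules. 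Naturality of $\phi_M$ in $M$ is immediate from the naturality of the two universal properties involved (the adjunction defining $S^{-1}$ and the homotopy colimit defining $(-)[x^{-1}]$).

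There is no genuine obstacle here; the only thing to be slightly careful about is making sure the identification $S^{-1}\pi_i(M) = \pi_i(M)[x^{-1}]$ is the one induced by the two canonical maps so that the triangle above really commutes. This is guaranteed by the universal property of $\phi_M$, so the verification is purely formal.
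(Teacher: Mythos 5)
Your proposal is correct and follows exactly the argument the paper intends: the paper gives no separate proof environment for this lemma and instead says immediately beforehand ``By looking at the induced map between the homotopy groups, we observe the following,'' which is precisely the computation you carry out. Your filling-in is accurate: both $(\delta_M)_*$ and $(\delta'_M)_*$ realize the ordinary localization $\pi_i(M)\to S^{-1}\pi_i(M)$, so by the universal property of localization of modules the induced map $(\phi_M)_*$ between the two $S$-local $\pi_0(R)$-modules $\pi_i(S^{-1}M)$ and $\pi_i(M[x^{-1}])$ must be an isomorphism. The only minor imprecision is notational: in your triangle the arrows out of $\pi_i(M)$ are the localization maps, not isomorphisms; it is after passing to the induced maps on localizations (or equivalently, after applying $S^{-1}(-)$ to the whole triangle) that everything becomes an isomorphism, but this is exactly the content of your concluding sentence about compatibility being ``purely formal,'' so the argument is sound.
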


If $y \in \pi_0(R)$ is another element and we let $T = \{(xy)^n|n \ge 0\}$, then
one checks by passing to the homotopy groups that $T^{-1}M$ is $S$-local. Hence,
\lemref{lem:D-loc-0} implies that there is a canonical map $M[x^{-1}] \to M[(xy)^{-1}]$.
It follows that $\{M[x^{-1}]| x \in \pi_0(R)\}$ is a filtered family of localizations.
Given a multiplicatively closed subset $S \subset \pi_0(R)$ containing the unit, we
let $M[S^{-1}] = \hocolim_{x \in S} M[x^{-1}]$. The following result generalizes
\lemref{lem:D-loc-0} and provides a description of Lurie's derived localization.

\begin{prop}\label{prop:D-loc-main}
  There is a natural weak equivalence $S^{-1}M \xrightarrow{\simeq} M[S^{-1}]$.
\end{prop}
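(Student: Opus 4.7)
My plan is to extend Lemma \ref{lem:D-loc-0} from the single-element multiplicative system $\{x^n\}$ to the general $S$, by exhibiting $M[S^{-1}]$ as an $S$-local object receiving a comparison map from $S^{-1}M$, and then comparing homotopy groups.

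First I would observe that the assignment $x \mapsto M[x^{-1}]$ is functorial for divisibility in $S$: the discussion preceding the proposition already produces, for each factorization $xy$ of elements of $S$, a canonical map $M[x^{-1}] \to M[(xy)^{-1}]$, and it is routine to upgrade these into a filtered diagram indexed by the poset $(S, \mid)$. Thus $M[S^{-1}] = \hocolim_{x \in S} M[x^{-1}]$ is well-defined in $\Mod_R$ and there is a canonical map $M \to M[S^{-1}]$ induced by the structure maps $M \to M[x^{-1}]$.

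Next I would check that $M[S^{-1}]$ is $S$-local. Since homotopy groups commute with filtered homotopy colimits in $\Mod_R$, for every $i \in \Z$ we have
\[
\pi_i(M[S^{-1}]) \;\cong\; \colim_{x \in S} \pi_i(M[x^{-1}]) \;\cong\; \colim_{x \in S} \pi_i(M)[x^{-1}] \;\cong\; S^{-1}\pi_i(M),
\]
where the middle isomorphism uses $\pi_i(M[x^{-1}]) \cong \pi_i(M)[x^{-1}]$ (filtered colimits of abelian groups) and the last is the standard description of ordinary localization as a filtered colimit. For any $y \in S$, multiplication by $y$ acts as an isomorphism on $S^{-1}\pi_i(M)$, so it is a weak equivalence on $M[S^{-1}]$. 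Hence $M[S^{-1}] \in \Mod_R^{S\text{-loc}}$.

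By the universal property of the derived localization, the canonical map $M \to M[S^{-1}]$ then factors uniquely (up to homotopy) through $\delta_M$, yielding a map
\[
\Phi_M \colon S^{-1}M \longrightarrow M[S^{-1}]
\]
in $\Mod_R^{S\text{-loc}}$, which is natural in $M$. Finally, to see that $\Phi_M$ is a weak equivalence, I would compare homotopy groups: by \cite[Prop.~7.2.3.20]{Lurie-HA} cited before the proposition, $\pi_i(S^{-1}M) \cong S^{-1}\pi_i(M)$, and by the computation above $\pi_i(M[S^{-1}]) \cong S^{-1}\pi_i(M)$. Both identifications are compatible with the map induced by $M \to (\text{-})$ in each variable, since they agree on $M$ itself and are natural, so $\pi_i(\Phi_M)$ is an isomorphism for every $i$. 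The only mildly subtle point—which I would treat carefully—is the naturality of this comparison: one needs to observe that on each finite stage $M[x^{-1}]$ the map $S^{-1}M \to M[x^{-1}]$ provided by the universal property of $S^{-1}M$ (using that $M[x^{-1}]$ is $S$-local, again by the colimit computation applied to a cofinal subsystem) agrees with the structure map $M[x^{-1}] \to M[S^{-1}]$ precomposed with $\Phi_M$, and this follows from the uniqueness clause in the universal property together with compatibility of the canonical maps $M \to M[x^{-1}] \to M[S^{-1}]$.
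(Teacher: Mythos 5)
Your proof takes essentially the same route as the paper's one-sentence argument: produce the canonical comparison map $\Phi_M\colon S^{-1}M\to M[S^{-1}]$ and check it is a weak equivalence on homotopy groups. The first three paragraphs are correct and complete for that: $\pi_i(M[S^{-1}])\cong\colim_{x\in S}\pi_i(M)[x^{-1}]\cong S^{-1}\pi_i(M)$ shows $M[S^{-1}]$ is $S$-local, and the universal property of $\delta_M$ then yields $\Phi_M$.

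However, the parenthetical in your final paragraph is wrong: for a single $x\in S$, the object $M[x^{-1}]$ is $\{x^n\}$-local but \emph{not} $S$-local in general, so there is no map $S^{-1}M\to M[x^{-1}]$ coming from the universal property, and "the colimit computation applied to a cofinal subsystem" does not produce one. Fortunately that paragraph is unnecessary. To see that $\pi_i(\Phi_M)$ is an isomorphism once you know both sides compute $S^{-1}\pi_i(M)$, argue directly at the limit rather than stage-by-stage: the composite $\pi_i(M)\to\pi_i(S^{-1}M)\xrightarrow{\pi_i(\Phi_M)}\pi_i(M[S^{-1}])$ equals the map induced by $M\to M[S^{-1}]$, which under your identification is the localization map $\pi_i(M)\to S^{-1}\pi_i(M)$; the map $\pi_i(M)\to\pi_i(S^{-1}M)\cong S^{-1}\pi_i(M)$ is also the localization map; every element of $S$ acts invertibly on both $\pi_i(S^{-1}M)$ and $\pi_i(M[S^{-1}])$, so $\pi_i(\Phi_M)$ is a morphism of $S^{-1}\pi_0(R)$-modules; and the uniqueness clause in the universal property of ordinary module localization then forces $\pi_i(\Phi_M)$ to be the canonical identification, hence an isomorphism.
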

\begin{proof}
  This follows because the canonical map
  $S^{-1}M \xrightarrow{\simeq} M[S^{-1}]$ induces isomorphism between the
  homotopy groups.
 \end{proof}

We next consider Lurie's derived completion functor.
We let $x \in \pi_0(R)$ be an element and let $M \in \Mod_R$. We say that $M$ is
$x$-nilpotent if the action of $x$ on $\pi_*(M)$ is locally nilpotent:
that is, if for each $i \in \Z$ and $y \in \pi_i(M)$, there exists an integer
$n \ge 0$ such that $x^ny = 0$ in the abelian group $\pi_i(M)$.
If $I \subset \pi_0(R)$ is an ideal, we say that $M$ is $I$-nilpotent if it is
$x$-nilpotent for each $x \in I$. We say that $M$ is $I$-local if for every
$I$-nilpotent $R$-module $N$, the mapping space ${\rm Map}_{\Mod_R}(N,M)$ is
contractible. We say that $M$ is $I$-complete if for every
$I$-local $R$-module $N$, the mapping space ${\rm Map}_{\Mod_R}(N,M)$ is
contractible. We let $\Mod^{I {\rm -comp}}_R$ denote the full subcategory of $\Mod_R$
consisting of $I$-complete $R$-modules.

Suppose there is a finitely generated ideal $J \subset \pi_0(R)$ such that
$\sqrt{J} = \sqrt{I}$. Then the inclusion $\Mod^{I {\rm -comp}}_R \inj \Mod_R$ admits
a left adjoint $(-)^{\compl}_I \colon \Mod_R \to \Mod^{I {\rm -comp}}_R$
(cf. \cite[Rem.~7.3.1.2, Prop.~7.3.1.4]{Lurie-SAG}. We
shall call this the $I$-completion functor.
This functor is described as follows. We let $M \in \Mod_R$. 

For $x \in \pi_0(R)$, there is a canonical weak
equivalence $M^{\compl}_{(x)} \simeq \holim_n {M}/{x^nM}$, where ${M}/{xM}$ is 
the homotopy cofiber of the map $M \xrightarrow{x} M$. Equivalently, $M^{\compl}_{(x)}$
is the homotopy cofiber of the augmentation map $T(M) \to M$, where
$T(M)$ is the homotopy limit of the sequence $(\cdots \xrightarrow{x} M
 \xrightarrow{x} M \xrightarrow{x} M)$ (cf. \cite[Prop.~7.3.2.1]{Lurie-SAG}).
Suppose now that $\sqrt{I} = \sqrt{(x_1, \ldots , x_r)} \subset \pi_0(R)$.
For $i = 0, 1, \ldots , r$, we let $M_0 = M$ and let $M_i$ denote the
$(x_i)$-completion of $M_{i-1}$ for $i \ge 1$.
We then have a canonical weak equivalence (cf. \cite[Prop.~7.3.3.2]{Lurie-SAG})
\begin{equation}\label{eqn:I-comp-0} 
  M^{\compl}_I \simeq M_r \simeq
  \holim_{(i_1, \ldots , i_r) \in \N^r} \frac{M}{(x^{i_1}_1, \ldots , x^{i_r}_r)M},
\end{equation}
where each term inside the homotopy limit is an iteration of homotopy cofibers.
Furthermore, $M$ is $I$-complete if and only if the canonical map
$M \to M^{\compl}_I$ is a weak equivalence.

\begin{prop}\label{prop:Compln_D}
  Assuming $\sqrt{I} = \sqrt{(x_1, \ldots , x_r)}$, the $I$-completion functor has the
  following properties.
\begin{enumerate}
\item
  If $M$ is connective, then so is $M^{\compl}_I$.
\item
  If $\fm \subset \pi_0(R)$ is a maximal ideal which is the radical of a
  finitely generated ideal, then the map
  $M \to M^{\compl}_{\fm}$ factors through the localization $M \to M_{\fm} \to
  M^{\compl}_{\fm}$. Furthermore, the map $M^{\compl}_{\fm} \to ( M_{\fm})^{\compl}_{\fm}$
  is a weak equivalence.
\item
  If $\fp \subset \pi_0(R)$ is a prime ideal, then $\pi_i(M_{\fp}) \cong
  (\pi_i(M))_\fp$ for every $i \in \Z$.
\item
  $M$ is $I$-complete if and only if it is
  $(x_i)$-complete for every $i$.
\item
  If $\sqrt{I}$ is the radical of the extension of a finitely generated ideal
  $I' \subset \pi_0(R')$ under a morphism $R' \to R$ of connective $E_\infty$-rings,
  then for every $R$-module $M$,
  the canonical map $M \to M^{\compl}_{I}$ exhibits $M^{\compl}_{I}$ as the
  $I'$-completion of $M$ (when regarded as an $R'$-module). In particular, $M$ is
  $I'$-complete (when regarded as an $R'$-module) if and only if it is
  $I$-complete.
\item
  $M^{\compl}_I \simeq (M^{\compl}_J)^{\compl}_{(x)}$ if $I = J + (x)$.
\item
  If $\{M_\alpha\}_{\alpha \in J}$ is a diagram in $\Mod_R$ and each $M_\alpha$
  is $I$-complete, then $\holim_{\alpha \in J} M_\alpha$ is also $I$-complete.
  \end{enumerate}
\end{prop}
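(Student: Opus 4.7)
The plan is to deduce each item from the explicit iterated homotopy limit formula \eqref{eqn:I-comp-0}, combined with the characterization of $I$-completeness through mapping spaces against $I$-local modules.

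Items (3) and (7) are essentially formal. For (3), the property already recorded that $\pi_i(S^{-1}M) \cong S^{-1}\pi_i(M)$, applied with $S = \pi_0(R) \setminus \fp$, gives the claim immediately. For (7), the functor $(-)^{\compl}_I$ is left adjoint to the inclusion $\Mod^{I\text{-comp}}_R \hookrightarrow \Mod_R$, so the reflective subcategory $\Mod^{I\text{-comp}}_R$ is closed under arbitrary homotopy limits.

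Items (1), (4), (5), and (6) all flow from \eqref{eqn:I-comp-0}. Item (6) is immediate: completing first at $J$ and then at $(x)$ produces the same multi-indexed homotopy limit of iterated cofibers as completing at $I = J + (x)$ in one stroke. Item (4) follows from (6) by induction on $r$, once one notes that $I$-completeness implies $(x_i)$-completeness for each $i$; this in turn is a formal consequence of the containment $(x_i) \subset I$, which makes $I$-nilpotence a stronger condition than $(x_i)$-nilpotence, hence $(x_i)$-locality stronger than $I$-locality, hence $I$-completeness stronger than $(x_i)$-completeness. For item (5), the iterated cofiber formula \eqref{eqn:I-comp-0} for $M^{\compl}_I$ depends only on the action of the elements $x_i \in \pi_0(R)$ on $M$ and the formation of cofibers and homotopy limits in spectra, not on the ambient ring structure; since these elements can be chosen as images of generators of $I'$, viewing $M$ as an $R'$-module or as an $R$-module produces the same spectrum-level construction. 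For item (1), I would invoke the Milnor short exact sequence on the homotopy limit and observe that the tower of $\pi_0$-groups is the system of classical quotients $\pi_0(M)/(x_1^{i_1},\ldots,x_r^{i_r})\pi_0(M)$, which is surjective and therefore Mittag-Leffler; this kills the $\lim^1$ contribution to $\pi_{-1}(M^{\compl}_I)$, and the analogous argument in each degree handles higher $\pi_n$.

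The subtler item is (2). My strategy is to prove directly that $M^{\compl}_\fm$ is $S$-local for $S = \pi_0(R) \setminus \fm$; the universal property of $S$-localization then forces the unique factorization $M \to M_\fm \to M^{\compl}_\fm$. By cofinality, the homotopy limit in \eqref{eqn:I-comp-0} agrees with $\holim_n M/\fm^n M$, and on each term of the latter tower multiplication by any $y \in S$ is an equivalence, because $\pi_0(R)/\fm^n$ is a local ring with maximal ideal $\fm/\fm^n$ in which $y$ is therefore a unit. Hence multiplication by $y$ on $M^{\compl}_\fm$ is an equivalence, proving $S$-locality. The weak equivalence $M^{\compl}_\fm \simeq (M_\fm)^{\compl}_\fm$ then follows formally: the map $M \to M_\fm$ becomes an equivalence after $\fm$-completion by the universal property, since both $M^{\compl}_\fm$ and $(M_\fm)^{\compl}_\fm$ are $\fm$-complete objects receiving compatible maps from $M$ through the factorization. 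I expect the main technical obstacle to be item (1), where ensuring the $\lim^1$ terms truly vanish for a multi-indexed tower requires a careful Mittag-Leffler check across all coordinates; all other parts reduce quickly to \eqref{eqn:I-comp-0} and standard facts about reflective subcategories.
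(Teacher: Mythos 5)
Your proposal is correct, and it takes a genuinely different route from what the paper does. The paper simply cites Lurie \cite[\S 7.3.2--7.3.4]{Lurie-SAG} for items (1)--(6) and only gives an explicit argument for (7), manipulating the iterated-cofiber formula \eqref{eqn:I-comp-0} and commuting the two homotopy limits by Fubini. You instead re-derive (1)--(6) from the cofiber formula directly, and for (7) you give the slicker formal argument that the $I$-complete objects (being defined by a vanishing-mapping-space condition) are closed under homotopy limits: $\Map(N,\holim M_\alpha) \simeq \holim \Map(N,M_\alpha) \simeq *$ for any $I$-local $N$. That bypasses the paper's explicit cofiber/Fubini computation entirely; it is shorter and more conceptual, and both approaches are valid.

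Two places in your sketch deserve a little more care, though neither is a genuine gap. For (1), the Mittag--Leffler vanishing of $\varprojlim^1$ over the multi-indexed poset $\N^r$ is not something you can invoke off the shelf; the clean fix (which you essentially anticipate) is to use the paper's identification $M^{\compl}_I \simeq M_r$, i.e.\ iterate the completion one element at a time, so that at each step you only face an $\N$-indexed tower of surjections on $\pi_0$, where Mittag--Leffler applies verbatim; connectivity then propagates through the $r$ stages by induction. For (2), the phrase ``$\holim_n M/\fm^n M$'' is imprecise when $\fm$ itself is not finitely generated; you should stick with the $\N^r$-indexed system $M/(x_1^{i_1},\ldots,x_r^{i_r})M$, which is a module over the connective $E_\infty$-ring $R/(x_1^{i_1},\ldots,x_r^{i_r})$ on whose $\pi_0$ (a local ring with maximal ideal the image of $\fm$, since $\sqrt{(x_1^{i_1},\ldots,x_r^{i_r})}=\fm$) every $y\in S$ becomes a unit; units in connective $E_\infty$-rings are detected on $\pi_0$, so multiplication by $y$ is an equivalence term by term, whence $M^{\compl}_\fm$ is $S$-local as you claim. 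With these adjustments your argument is a complete alternative to the paper's citations.
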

\begin{proof}
All items except (7) are proven in \cite[\S~7.3.2, 7.3.3, 7.3.4]{Lurie-SAG}. To prove 
(7), we let $M = \holim_{\alpha \in J} M_\alpha$.
  By assumption, the completion map
  $M_\alpha \to \holim_{(i_1, \ldots , i_r) \in \N^r}
  \frac{M_\alpha}{(x^{i_1}_1, \ldots , x^{i_r}_r)M_\alpha}$ is a weak equivalence
  for every $\alpha$ and this map is natural in $\alpha \in J$. By passing to the
  limit, we therefore get 
  \[
  \begin{array}{lll}
  M & \xrightarrow{\simeq} & 
  \holim_{\alpha \in J} \holim_{(i_1, \ldots , i_r) \in \N^r}
  \frac{M_\alpha}{(x^{i_1}_1, \ldots , x^{i_r}_r)M_\alpha} \\
  & \xrightarrow{\simeq} & \holim_{(i_1, \ldots , i_r) \in \N^r} \holim_{\alpha \in J} 
  \frac{M_\alpha}{(x^{i_1}_1, \ldots , x^{i_r}_r)M_\alpha} \\
  & \xrightarrow{\simeq} & \holim_{(i_1, \ldots , i_r) \in \N^r}
  \frac{M}{(x^{i_1}_1, \ldots , x^{i_r}_r)M} \\
  & \xrightarrow{\simeq} & M^{\compl}_I,
  \end{array}
  \]
  where the second weak equivalence follows from the fact that two homotopy limits
  commute with each other (cf. \cite[Thm.~24.9 and \S~31.5]{Chacholski-Scherer})
  and the third weak equivalence follows from the fact that
  a homotopy limit commutes a homotopy cofiber (which coincides with homotopy
  fiber up to a shift). This finishes the proof.
\end{proof}

\subsection{Definition and properties of twisting operator}\label{sec:Twist-K-0}
We fix a field $k$. We let $G \in \Grp_k$ and $X \in \Spc^G_k$.
Let $g \in Z(G)(k)$ be a semisimple element which acts trivially
on $X$. We let $P$ denote the closure in $G$ of the cyclic subgroup
$\<g\> \subset G(k)$ generated by $g$.
Since the stabilizers of closed points are closed in $G$, it
follows that $P$ acts trivially on $X$.
The reader may keep in mind that throughout this section,
the notation $F([X/G],k)$ will simply mean
$F([X/G])$ (as defined in \S~\ref{sec:Ho-st}) for $F \in \{HH, HC^{-}, HC, HP\}$.
But that is not the case with $K$-theory.

\begin{lem}\label{lem:Twist-1}
  $P$ is a closed diagonalizable subgroup of $G$ contained in its center.
\end{lem}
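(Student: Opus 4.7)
The plan is to establish three properties in turn: centrality, closure under the group operations, and diagonalizability.

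Centrality is immediate: since $g \in Z(G)(k)$, the abstract subgroup $\langle g \rangle$ lies in $Z(G)(k)$, and since $Z(G) \subset G$ is closed, its Zariski closure $P$ lies in $Z(G)$; in particular $P$ is commutative. To see that $P$ is a closed subgroup, use that multiplication and inversion on $G$ are continuous: if $m$ and $i$ denote these morphisms, then the inclusions $m(\langle g \rangle \times \langle g \rangle) \subset \langle g \rangle$ and $i(\langle g \rangle) \subset \langle g \rangle$ propagate to the closures, giving $m(P \times P) \subset P$ and $i(P) \subset P$. Equipping $P$ with its reduced induced subscheme structure realizes it as a closed $k$-subgroup scheme of $G$.

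For diagonalizability, the strategy is to choose a faithful $k$-representation $\rho \colon G \hookrightarrow GL(V)$. Since $g$ is semisimple, $\rho(g)$ is diagonalizable over $\bar{k}$, so all its powers lie simultaneously in a split torus $T_0 \subset GL(V)_{\bar{k}}$. Consequently, the Zariski closure of $\langle \rho(g) \rangle$ in $GL(V)_{\bar{k}}$ sits inside $T_0$, and being a closed subgroup of a split torus is diagonalizable over $\bar{k}$; since $\rho$ is a closed embedding, this identifies $P_{\bar{k}}$ with that closure, so $P$ is of multiplicative type over $k$. The main obstacle --- and the only step that is not formal --- is upgrading ``multiplicative type over $k$'' to ``diagonalizable over $k$''. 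This rests on the algebraic-closedness hypothesis under which the twisting-operator construction is ultimately deployed (cf.~\corref{cor:maximal1} and \corref{cor:maximal2}): over an algebraically closed field the Galois action on the character lattice of $P_{\bar k}$ is trivial, so $P$ is split. Combining the three claims yields the lemma.
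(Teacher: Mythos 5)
Your proof passes through $\bar k$: pick a faithful representation $\rho$, note $\rho(g)$ lies in a split torus over $\bar k$, conclude $P_{\bar k}$ is diagonalizable so $P$ is of multiplicative type over $k$, and then flag that the upgrade to ``diagonalizable over $k$'' is where an extra hypothesis is needed. The paper's proof is more direct and never leaves $k$: it asserts that the semisimple element $g \in GL_n(k)$ is contained in a \emph{split} maximal torus $T$ of $GL_n$ over $k$, so that $T \cap G$ is a diagonalizable closed $k$-subgroup of $G$ containing $\langle g\rangle$, and then $P \subset T \cap G$ is diagonalizable. That split-torus-over-$k$ step is exactly what you did not reproduce, and it is what makes the lemma as stated a claim over an arbitrary base field rather than only over $\bar k$.

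That said, your hesitation at the end is warranted, not a weakness of your own argument. A $k$-rational semisimple element of $GL_n$ lies in a split $k$-torus precisely when all of its eigenvalues lie in $k$, which is not automatic from semisimplicity. Over $k=\mathbb{R}$, the order-four cyclic permutation matrix in $GL_4(\mathbb{R})$ is semisimple with eigenvalues $\pm 1, \pm i$ and lies in no split $\mathbb{R}$-torus; the Zariski closure of the cyclic group it generates is the constant group $\underline{\mathbb{Z}/4}$, which is of multiplicative type but not diagonalizable over $\mathbb{R}$ (its group of $\mathbb{R}$-characters is only $\mathbb{Z}/2$). So the ``upgrading obstacle'' you identified is a real obstruction to the split-torus claim as written over a general field, and the paper's proof tacitly requires something like $k$ separably closed or all eigenvalues of $g$ rational over $k$. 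The lemma is eventually used under hypotheses --- $\Char(k)=0$ for the twisting operator and $k = \bar k$ from \secref{sec:Max-com} onward --- where the difficulty evaporates, so your reading of what actually drives the conclusion is essentially correct; but you should register that the paper's own proof text does not pass to $\bar k$ or impose such a condition, and the step it takes instead is the one you correctly declined to assert over a general $k$.
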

\begin{proof}
  The assertion that $P$ is a closed subgroup of $G$ contained in its center follows
  from \cite[Prop.~I.1.3, I.1.7]{Borel}. To see that $P$ is diagonalizable, consider
  $G$ as a closed subgroup of some $GL_{n}$. As $g$ is a semisimple element of
  $G(k)$, it is so as an element of $GL_{n}(k)$ too. In particular, it is contained
  in a (split) maximal torus $T$ of $GL_{n}$. Intersecting $T$ with $G$, we get
  that $\<g\>$ lies in a diagonalizable closed subgroup of $G$ and therefore so does
  its closure. It follows that $P$ is diagonalizable.
\end{proof}

To define the twisting operator on $G$-equivariant $K$-theory of $X$, we assume
that $\Char(k) = 0$. Recall
that the structure map of $X$ induces a morphism of ring spectra
$K(BG) \to K([X/G])$. This yields a natural map of ring spectra
$\gamma_X \colon K(BG) \wedge_{\1} K([X/G]) \to  K([X/G])$ satisfying the standard
properties of modules over ring spectra, where $BG = [{\pt}/G]$ is the
classifying stack of $G$ (cf. \S~\ref{sec:Fin-gen}). In particular,
we get a natural map of spectra $K(BG) \to \Map(K([X/G]), K([X/G]))$, where
$\Map(A, B)$ is the mapping spectrum (the internal
hom in the category of spectra). Applying the $\pi_0$ functor, we get a natural
homomorphism $\lambda_X \colon R(G) \to [K([X/G]), K([X/G])]: = \End_{\SH}(K([X/G]))
= \Hom_{\SH}(K([X/G]), K([X/G]))$. Giving an element $x \in R(G)$ is equivalent to
giving a map $\1 \xrightarrow{x} K(BG)$ which takes the unit to $x$ at the level of
$\pi_0$. The map $\lambda_X(x) \colon K([X/G]) \to K([X/G])$ is then the composition

\begin{equation}\label{eqn:Multip}
  K([X/G]) \simeq \1 \wedge_{\1} K([X/G]) \xrightarrow{x \wedge \id} K(BG)
  \wedge_{\1} K([X/G]) \xrightarrow{\gamma_X} K([X/G]).
  \end{equation}
By the same token, we get a natural $k$-linear map $\lambda_X \colon R_k(G) \to
\End_{\SH}(K([X/G],k))$.

Since $KH([X/G])$ and $K'([X/G])$ are also modules over the spectrum $K(BG)$, we get
similar $k$-linear maps $\lambda^h_X \colon R_k(G) \to \End_{\SH}(KH([X/G],k))$
and $\lambda'_X \colon R_k(G) \to \End_{\SH}(K'([X/G],k))$ such that
$\lambda_X, \lambda^h_X$ and $\lambda'_X$ are compatible with each other under the 
canonical maps of $K(BG)$-modules $K([X/G],k) \to KH([X/G],k) \to K'([X/G],k)$.
By \thmref{thm:H-functor}, we get similar maps for the homology theories of $[X/G]$.
The ring spectrum structure of $H_k$ defines a homomorphism $\lambda_0 \colon k
\cong \pi_0(H_k) \to \End_{\SH}(H_k)$.

Given $\alpha \in k$ and $\chi \in \wh{P}$, there is a map of spectra
$\wt{\alpha} \colon  K^{\chi}([X/G])\wedge_\1 H_k \to K^{\chi}([X/G])\wedge_\1 H_k$, given
by $\wt{\alpha} = \id_{K^{\chi}([X/G])} \wedge \lambda_0(\alpha)$. As
$K^{\chi}([X/G], k) = K^{\chi}([X/G])\wedge_\1 H_k$ (note that $\Char(k) = 0$),
this yields an element $\wt{\alpha} \in \End_{\SH}(K^\chi([X/G],k))$.
Since $g, g^{-1} \in G(k)$, we have that $\chi(g^{-1}) \in k^\times = \G_m(k)$.
We define the twisting operator $t_g$ on $K^{\chi}([X/G],k)$ by
\begin{equation}\label{eqn:Twist-3-0}
t_g := \wt{\chi(g^{-1})} \colon  K^{\chi}([X/G],k) \to K^{\chi}([X/G],k).
\end{equation}

The twisting operator $t_g$ on
$HH^{\chi}([X/G])$ and $HC^{\chi}([X/G])$ is defined in an analogous way using
\corref{cor:Twist-2-4}.
Using the weak equivalences $\Phi^P_{[X/G]}, \ \Psi^P_{[X/G]}, \kappa^P_{[X/G]}$ and
$\theta^P_{[X/G]}$ given by \propref{prop:Twist-2}, \corref{cor:Twist-2-4} and
\corref{cor:Twist-KH}, we define for $F \in \{K, KH, K', HH, HC\}$, 
the operator $t_g$ on $F([X/G],k)$
%(resp. $KH([X/G],k), \ K'([X/G],k)$)
to be the coproduct of its restrictions on $F^{\chi}([X/G],k)$
%(resp. $KH^{\chi}([X/G],k), \ K'^{\chi}([X/G],k)$)
as $\chi$ runs through $\wh{P}$. Note that $F([X/G]) =
F([X/G],k)$ for $F \in \{HH,HC^{-}, HC, HP\}$.

To define $t_g$ on $HC^{-}$ and $HP$, we argue slightly differently because we can
not use \corref{cor:Twist-2-4} directly. 
To define $t_g$ on $HC^{-}([X/G])$, we first note that for
every $\chi \in \wh{P}$, the twisting operator $t_g$ on $HH^{\chi}([X/G])$
is $\Lambda$-linear (cf. \S~\ref{sec:Mixed*}) since it is simply multiplication by
$\chi(g^{-1}) \in \Lambda^\times$. Equivalently, the $t_g$-action on each
 $HH^{\chi}([X/G])$ is $S^1$-equivariant.
We next note that for
every $\chi \in \wh{P}$, the inclusion $HH^{\chi}([X/G]) \to HH([X/G])$ is
clearly $S^1$-equivariant because it is induced by the inclusion of dg categories.
This shows that the decomposition of $HH([X/G])$ given in \corref{cor:Twist-2-4} 
is $S^1$-equivariant. It follows from these two observations that the $t_g$-action
on $HH([X/G])$ is $S^1$-equivariant.
We now apply the $(-)^{hS^1}$ functor to get the operator
$t_g \colon HC^{-}([X/G]) \to HC^{-}([X/G])$. 

To define $t_g$ on $HP([X/G])$, we note that the map
$N \colon  HC([X/G])[1] \to HC^{-}([X/G])$ commutes with $t_g$.
To see this,  we use the commutative diagram
\begin{equation}\label{eqn:Twist-Extra}
  \xymatrix@C1pc{
    \coprod_{\chi \in \wh{P}} HC^{\chi}([X/G])[1] \ar[d]_-{N} \ar[r]^-{\simeq} &
     HC([X/G])[1] \ar[d]^-{N} \\
    \coprod_{\chi \in \wh{P}} (HC^{-})^{\chi}([X/G]) \ar[r] &
    HC^{-}([X/G]),}
\end{equation}
where the left vertical arrow is obtained by taking the coproduct of the norm map on
each $HC^{\chi}([X/G])[1]$.

Since the horizontal arrows commute with the $t_g$-actions by latter's construction
and the one on the top row is a weak equivalence, it suffices to
check that $N \colon HC^{\chi}([X/G])[1] \to (HC^{-})^{\chi}([X/G])$ commutes with
$t_g$. But this is clear since $N$ is $\Lambda$-linear (cf. \S~\ref{sec:Mixed*})
and $t_g$ is multiplication by $\chi(g^{-1}) \in \Lambda^\times$.
It follows that $t_g$ induces unique $k$-linear maps
$t_g \colon HP^{\chi}([X/G]) \to HP^{\chi}([X/G])$ and
$t_g \colon HP([X/G]) \to HP([X/G])$ which are commute with the canonical
maps $HP^{\chi}([X/G]) \to HP([X/G])$.

We let $t^*_g \colon F_*([X/G],k) \to F_*([X/G],k)$ denote the induced maps on the
homotopy groups.

\vskip .2cm

The following results summarize the basic properties of $t_g$.

\begin{prop}\label{prop:Twist-BP}
  Let $F \in \sS'(k)$.
  Let $f \colon X' \to X$ be a morphism in $\Spc^G_k$ such that $P$ acts trivially on
  $X'$. Then the twisting operator $t_g$ satisfies the following.
  \begin{enumerate}
  \item
    $t_g$ commutes with the pull-back maps $f^* \colon F([X/G],k) \to F([{X'}/G],k)$
    except when $F = K'$. If $f$ is flat, the same is also true for $F = K'$.
  \item
    If $f$ is proper, then  $t_g$ commutes with the push-forward map
    $f_* \colon K'([{X'}/G],k) \to K'([X/G], k)$. If $f$ is moreover an lci map,
    the same is also true for $K$-theory and $KH$-theory. 
 \item
   $t_g$ is an automorphism of $F([X/G],k)$ in $\sS\sH$.
 \item
   $t^*_g \colon R_k(G) \to R_k(G)$ is a ring homomorphism.
 \item
   If $k$ is algebraically closed and $G$ is reductive, then the trace map
   $R_k(G) \to HH_0(BG)$ is an isomorphism of $k$-algebras and $HH_i(BG) = 0$ for
   $i \neq 0$.
  \end{enumerate}
\end{prop}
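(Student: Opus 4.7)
My approach is that parts (1)--(4) are essentially formal consequences of the way $t_g$ was constructed (diagonally with respect to the $\wh{P}$-decomposition), while part (5) requires the loop-space description of Hochschild homology.

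For parts (1) and (2), recall that $t_g$ was defined on each summand $F^{\chi}([X/G],k)$ as multiplication by the scalar $\chi(g^{-1}) \in k^{\times}$, and the full operator on $F([X/G],k)$ is the coproduct of these over $\chi \in \wh{P}$ via the weak equivalences $\Phi^{P}_{[X/G]},\Psi^{P}_{[X/G]},\kappa^{P}_{[X/G]},\theta^{P}_{[X/G]}$. I would first invoke Proposition~\ref{prop:Twist-2}(2)--(5) and Corollaries~\ref{cor:Twist-2-4}, \ref{cor:Twist-KH} to observe that each of the maps $f^{*}$ and $f_{*}$ (under the stated hypotheses on $f$) preserves the $\wh{P}$-decomposition, carrying the $\chi$-summand to the $\chi$-summand. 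Since multiplication by a scalar $\chi(g^{-1}) \in k^{\times}$ commutes with every $k$-linear (equivalently, $H_{k}$-module) map between $\chi$-summands, the compatibility with $t_g$ follows summand by summand. For $F \in \{HC^{-},HP\}$, the same argument works once we remember that the decomposition of $HH([X/G])$ is $S^1$-equivariant (so induces a decomposition of $HC^{-}$ and, via the diagram \eqref{eqn:Twist-Extra}, of $HP$), and the $t_g$-action on each summand is again multiplication by the scalar $\chi(g^{-1})$.

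For part (3), on the summand indexed by $\chi$ the operator $t_g$ is multiplication by $\chi(g^{-1}) \in k^{\times}$, whose homotopy inverse is multiplication by $\chi(g)$. Taking the coproduct, $t_g$ is an automorphism of $F([X/G],k)$ in $\sS\sH$. For part (4), I specialize to $X = \pt$ with trivial $G$-action, giving $R_k(G) = \pi_{0}(K(BG,k))$, and apply Lemma~\ref{lem:definitionoftwist}(5) to get the decomposition $R_k(G) = \bigoplus_{\chi \in \wh{P}} R_k(G)_\chi$. Proposition~\ref{prop:Twist-2}(6) (or rather Lemma~\ref{lem:definitionoftwist}(7)) tells us the tensor product maps $R_k(G)_\chi \otimes R_k(G)_{\chi'}$ into $R_k(G)_{\chi+\chi'}$. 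For $[V] \in R_k(G)_\chi$ and $[W] \in R_k(G)_{\chi'}$, the definition of $t_g^{*}$ then gives
\[
t_g^{*}([V]\cdot[W]) = (\chi+\chi')(g^{-1})\,[V\otimes W] = \chi(g^{-1})\chi'(g^{-1})\,[V\otimes W] = t_g^{*}([V])\cdot t_g^{*}([W]),
\]
while $t_g^{*}(1) = 1$ since the trivial representation lies in $R_k(G)_{0}$ and $0(g^{-1}) = 1$.

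Part (5) is the least formal step and I expect it to be the main obstacle. My plan is to use the loop-space identification $\alpha_{\sX}\colon HH(\sX)\xrightarrow{\simeq}\sO(\sL\sX)$ from \eqref{eqn:HH-Loop} with $\sX = BG$. Since $BG$ is smooth, the loop stack $\sL BG$ is the ordinary (non-derived) inertia stack $[G/G]$ for the conjugation action, so $HH(BG)\simeq R\Gamma([G/G],\sO_{[G/G]})$. The quotient map $\pi\colon G \to [G/G]$ is a $G$-torsor; since $G$ is affine and reductive (so $G$-invariants is exact on quasi-coherent sheaves in characteristic zero by linear reductivity), we get
\[
R\Gamma([G/G],\sO_{[G/G]}) \simeq R\Gamma(G,\sO_G)^{G} \simeq k[G]^{G} = C[G],
\]
concentrated in degree $0$. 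This gives $HH_0(BG)\cong C[G]$ and $HH_i(BG) = 0$ for $i\neq 0$. To identify the induced trace map $R_k(G) \to HH_0(BG) = C[G]$ with the character map $\phi_G$ of Proposition~\ref{prop:character1*}, I would argue that the Dennis trace of a class $[V,\rho]\in R_k(G)$ is by construction (unwinding the cyclic bar complex of $\Perf_{dg}(BG)$) the function $g \mapsto \mathrm{tr}(\rho(g))$, which is precisely $\phi_G([V,\rho])$; this identification is a tautology once the HKR isomorphism has been fixed. Combined with Proposition~\ref{prop:character1*}, which asserts that $\phi_G$ is an isomorphism of $k$-algebras under these hypotheses, this completes part~(5).
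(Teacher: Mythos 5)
Your proposal is correct and follows essentially the same route as the paper: parts (1)--(3) are read off from the summand-wise definition of $t_g$ as multiplication by $\chi(g^{-1})$ together with the compatibility of $f^*, f_*$ (and the $S^1$-action) with the $\wh{P}$-decomposition from \propref{prop:Twist-2} and \corref{cor:Twist-2-4}, and part (5) is the loop-space/affinization argument $HH(BG)\simeq\sO([G/G])\simeq k[G]^G$ combined with \propref{prop:character1}. The one place you do more work than the paper is part (4), where the paper simply defers to \cite[(4.5)]{Krishna-Sreedhar} while you give a clean self-contained computation from \lemref{lem:definitionoftwist}(7); and in part (5) you usefully make explicit the identification of the Dennis trace with $\phi_G$, which the paper leaves implicit. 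One small imprecision worth flagging: $\sL(BG)$ is classical not ``because $BG$ is smooth'' but because the diagonal $BG\to BG\times BG$ is flat (indeed smooth, since $G$ is smooth), so the derived self-intersection carries no derived structure.
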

\begin{proof}
The items (1) and (2) are immediate from the construction of $t_g$,
  \propref{prop:Twist-2}, \corref{cor:Twist-2-4},
\corref{cor:Twist-KH} and the fact that $f^*$ and $f_*$ (whenever defined)
are maps of $K(BG)$-modules.
%Using the isomorphism $\pi_i(F^{\chi}([X/G],k)) \cong \pi_i(F^{\chi}([X/G])) \otimes k$,
It is clear from the construction of $t_g$ that 
  $t^*_g \colon \pi_i(F^{\chi}([X/G],k)) \to \pi_i(F^{\chi}([X/G],k))$ is
  multiplication
  by the element $\chi(g^{-1}) \in k^\times$ for every $i \in \Z$.
  In particular, $t^*_g$ is an isomorphism. It follows that $t_g \colon
  F^{\chi}([X/G],k) \to F^{\chi}([X/G],k)$ is a weak equivalence. Taking the sum over
  $\chi \in \wh{P}$, we get that $t_g$ is an automorphism of $F([X/G],k)$.
  This proves (3), and (4) is shown in \cite[(4.5)]{Krishna-Sreedhar}.

  To prove (5), note that $HH(BG) \simeq \sO(\sL_{BG}) \simeq \sO([G/G])$ for the
  adjoint action by ~\eqref{eqn:HH-Loop} (cf. \cite[Exm.~3.1.7]{Chen}).
  Since $\sO([G/G])$ coincides with the
  ring of regular functions on the affinization $\Spec(k[G]^{G})$ of $[G/G]$,
  we get that the canonical map $k[G]^{G} \to HH(BG)$ is a weak equivalence
  (in particular, $HH(BG)$ is a discrete $K(BG)$-module).
  But the canonical map $R_k(G) \to k[G]^{G}$ is an isomorphism under our assumption
by \propref{prop:character1}.
\end{proof}

\begin{prop}\label{prop:Twist-BP-0}
  For $F \in \sS'(k)$ and $x \in R_k(G)$, one has a commutative diagram
  \begin{equation}\label{eqn:Twist-BP-1}
    \xymatrix@C1.5pc{
      F([X/G],k) \ar[r]^-{\lambda_X(x)} \ar[d]_-{t_g} & F([X/G],k) \ar[d]^-{t_g} \\
      F([X/G],k) \ar[r]^-{\lambda_X(t^*_g(x))} & F([X/G],k).}
  \end{equation}
  \end{prop}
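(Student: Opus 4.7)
The plan is to prove commutativity by decomposing everything along the characters of $P$ and reducing to scalar multiplications on each graded piece. The tensor product multiplicativity from Proposition~\ref{prop:Twist-2}(6) and the definition of $t_g$ as multiplication by $\chi(g^{-1})$ on each $F^{\chi}$-piece make the verification essentially a bookkeeping of scalars.

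First I would apply the decomposition theorems (\propref{prop:Twist-2}(1), \corref{cor:Twist-2-4}, \corref{cor:Twist-KH}) to both $[X/G]$ and $BG=[\pt/G]$, the latter being legitimate since $P$ acts trivially on $\pt$. This yields $F([X/G],k)\simeq \coprod_{\chi\in\wh P} F^{\chi}([X/G],k)$ and, taking $F=K$ applied to $BG$ and $\pi_0$, the decomposition $R_k(G)=\bigoplus_{\chi\in\wh P} R_k(G)^{\chi}$, where $R_k(G)^{\chi}$ is the $k$-span of classes of $G$-representations whose restriction to $P$ is isotypic of type $\chi$. By construction, $t_g$ (resp.\ $t^*_g$) acts on $F^{\chi}([X/G],k)$ (resp.\ $R_k(G)^{\chi}$) as multiplication by the scalar $\chi(g^{-1})\in k^\times$.

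Next I would argue that the module action $\gamma_X$ is graded: its restriction fits into
\[
\gamma_X\colon K^{\chi'}(BG)\wedge_{\1} F^{\chi}([X/G],k)\longrightarrow F^{\chi+\chi'}([X/G],k).
\]
This follows from \propref{prop:Twist-2}(6) applied to the projection $X\to \pt$ (with $P$ acting trivially on $\pt$), together with the fact that the $K(BG)$-module structure on $F([X/G],k)$ is by construction the one induced from the tensor product of equivariant complexes; the analogous multiplicativity for $K'$, $KH$, $HH$, $HC$ is covered by \corref{cor:Twist-KH} and the proof of \corref{cor:Twist-2-4}. Granting this compatibility, for $x\in R_k(G)^{\chi'}$ and $y\in F^{\chi}([X/G],k)$ one computes in $F^{\chi+\chi'}([X/G],k)$ that
\[
t_g(\lambda_X(x)(y))=(\chi+\chi')(g^{-1})\,\lambda_X(x)(y)=\chi(g^{-1})\chi'(g^{-1})\,\lambda_X(x)(y),
\]
while $\lambda_X(t^*_g(x))(t_g(y))=\lambda_X(\chi'(g^{-1})x)(\chi(g^{-1})y)$ equals the same element. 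Summing over pairs $(\chi,\chi')\in\wh P\times\wh P$ and invoking the decompositions gives the commutative square in $\sS\sH$.

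Finally, for $F\in\{HC^{-},HP\}$ the decomposition statements of \corref{cor:Twist-2-4} do not directly apply, so I would deduce the result from the $HH$ case by functoriality of $(-)^{hS^1}$ and $(-)^{h\T}$. Concretely, the construction of $t_g$ on $HC^{-}([X/G])$ and $HP([X/G])$ given in \secref{sec:Twist-K-0} realises $t_g$ as $(-)^{hS^1}$ (resp.\ $(-)^{h\T}$) applied to the $S^1$-equivariant map $t_g$ on $HH([X/G])$; likewise the $\lambda_X(x)$ action on $HC^{-}$ and $HP$ is induced from the $S^1$-equivariant action on $HH$ obtained via the trace map $K(BG)\to HH(BG)$ and the $HH(BG)$-module structure. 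Applying $(-)^{hS^1}$ (resp.\ $(-)^{h\T}$) to the already established commutative diagram for $HH$ yields \eqref{eqn:Twist-BP-1} for $HC^{-}$ and $HP$.

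The main obstacle is Step~2: verifying that $\gamma_X$ respects the character grading at the level of spectra, rather than just on homotopy groups. This requires identifying the abstract module structure from the ring-spectrum map $K(BG)\to F([X/G],k)$ with the concrete tensor-product pairing used in \propref{prop:Twist-2}(6), and then tracking this identification through the decomposition equivalences $\Phi^{P}$, $\Psi^{P}$, $\theta^{P}$, and $\kappa^{P}$. Once this compatibility is in hand, the remainder of the argument is the scalar computation above.
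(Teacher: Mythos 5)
Your proposal is correct and follows essentially the same route as the paper: decompose $F([X/G],k)$ and $R_k(G)$ along characters of $\wh P$, use \propref{prop:Twist-2}(6) to see that the $K(BG)$-module action is graded, reduce to the scalar identity $(\chi+\chi')(g^{-1})=\chi(g^{-1})\cdot\chi'(g^{-1})$, and handle $HC^{-}$, $HP$ by applying $(-)^{hS^1}$ and the Tate construction to the $S^1$-equivariant square for $HH$. One small precision worth adding to your Step~3: the scalar identity must also be promoted to a commutative square of spectra, not just checked on elements of homotopy groups; but this is automatic once the grading is in hand, since $t_g$ on each graded piece is $\id\wedge\lambda_0(\chi(g^{-1}))$ and the required commutativity reduces (exactly as in the paper's diagrams ~\eqref{eqn:Twist-BP-4}--\eqref{eqn:Twist-BP-5}) to a commutative square of commutative rings under the Eilenberg--MacLane functor.
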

\begin{proof}
  We shall prove this for $K$-theory as the proof is same for
  $F \in \{HH, HC, K', KH\}$, and
  for $HC^{-}$ and $HP$, it follows directly from
  that of $HH$ and $HC$ since the maps in ~\eqref{eqn:Twist-BP-1} are
  $S^1$-equivariant.

  Assuming $F = K$, we see by 
  ~\eqref{eqn:Multip} that ~\eqref{eqn:Twist-BP-1} is the
  outer square in the diagram
  \begin{equation}\label{eqn:Twist-BP-2}
    \xymatrix@C1.5pc{
    \1 \wedge_{\1} K([X/G],k) \ar[r]^-{x \wedge \id} \ar[d]_-{\id \wedge t_g} &
    K(BG,k) \wedge_{\1} K([X/G],k) \ar[r]^-{\gamma_X} \ar[d]^-{t_g \wedge t_g} &
    K([X/G],k) \ar[d]^-{t_g} \\
    \1 \wedge_{\1} K([X/G],k) \ar[r]^-{t^*_g(x) \wedge \id} &
    K(BG,k) \wedge_{\1} K([X/G],k) \ar[r]^-{\gamma_X} & K([X/G],k).} 
\end{equation}
  It suffices therefore to show that ~\eqref{eqn:Twist-BP-2} is commutative. Since
  its left square commutes merely by the definitions of $t_g$ and $t^*_g$, we only
  need to show that the right square is commutative.

 We now look at the diagram
   \begin{equation}\label{eqn:Twist-BP-3}
    \xymatrix@C1.5pc{
      K^{\chi}(BG,k) \wedge_{\1} K^{\chi'}([X/G],k) \ar[r]^-{\gamma_X}
      \ar[d]_-{t_g \wedge t_g} & K^{\chi + \chi'}([X/G],k) \ar[d]^-{t_g} \\
     K^{\chi}(BG,k) \wedge_{\1} K^{\chi'}([X/G],k) \ar[r]^-{\gamma_X}
     & K^{\chi + \chi'}([X/G],k).}
   \end{equation}
   \propref{prop:Twist-2}(6) implies that the horizontal arrows in this diagram are
   defined. Using \propref{prop:Twist-2}(1), it suffices to check that this is a
   commutative diagram for every $\chi, \chi' \in \wh{P}$. By the construction of
   $t_g$, this reduces to showing that the diagram
   \begin{equation}\label{eqn:Twist-BP-4}
    \xymatrix@C1.5pc{
      H_k \wedge_{\1} H_k \ar[r]^-{\cup} \ar[d]_-{\lambda_0(\chi(g^{-1})) \wedge
        \lambda_0(\chi'(g^{-1}))} & H_k \ar[d]^-{\lambda_0((\chi + \chi')(g^{-1}))} \\
      H_k \wedge_{\1} H_k \ar[r]^-{\cup} & H_k}
   \end{equation}
   is commutative, where each of the horizontal arrows is the product operation of
   $H_k$. But this follows easily because ~\eqref{eqn:Twist-BP-4} is induced by the
   diagram of commutative rings (where the horizontal arrows are the multiplication
   maps)
   \begin{equation}\label{eqn:Twist-BP-5}
    \xymatrix@C1.5pc{
      k \otimes k \ar[r] \ar[d]_-{\chi(g^{-1}) \otimes
        \chi'(g^{-1})} & k \ar[d]^-{(\chi + \chi')(g^{-1})} \\
      k \otimes k \ar[r] & k,}
   \end{equation}
   whose commutativity is clear using the identity
   $(\chi + \chi')(g^{-1}) = \chi(g^{-1}) \cdot \chi'(g^{-1})$.
\end{proof}

Let $I_G = \Ker(R(G) \xrightarrow{{\rm rank}} \Z)$ be the augmentation ideal
(cf. \S~\ref{sec:Fin-gen}). In this paper, we shall write 
$\Ker(R_k(G) \xrightarrow{{\rm rank}} k)$ also as $I_G$ while working with
$K$-theory with $k$-coefficients and homology theories.
Note that $I_G \subset R_k(G)$ is the maximal
ideal $\fm_1$ corresponding to the conjugacy class of the identity element of $G$.

 \begin{prop}\label{prop:Twist-local-compl}
    For $F \in \sS'(k)$, the operator
  $t_{g^{-1}} \colon F([X/G],k) \to F([X/G],k)$ induces weak equivalences
  \begin{equation}\label{eqn:Twist-local-compl-0}
  t_{g^{-1}} \colon  F([X/G],k)_{\fm_g} \xrightarrow{\simeq}  F([X/G],k)_{I_G}; \
  t_{g^{-1}} \colon  F([X/G],k)^{\compl}_{\fm_g} \xrightarrow{\simeq}
  F([X/G],k)^{\compl}_{I_G}.
  \end{equation}
 \end{prop}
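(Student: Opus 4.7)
My plan is to show that the ring automorphism $t^*_{g^{-1}}$ of $R_k(G)$ (given by \propref{prop:Twist-BP}(4)) carries the maximal ideal $\fm_g$ onto the augmentation ideal $I_G$, and then to promote this ring-theoretic identity to a spectrum-level weak equivalence using the intertwining relation of \propref{prop:Twist-BP-0}.

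First I would establish the ideal identity $t^*_{g^{-1}}(\fm_g) = I_G$. Applying \propref{prop:Twist-2}(1) with $X = \pt$ produces a decomposition $R_k(G) = \bigoplus_{\chi \in \wh{P}} R_k(G)^\chi$ on which, by construction of the twisting operator, $t^*_{g^{-1}}$ acts as multiplication by the scalar $\chi(g)$ on the $\chi$-component. The decomposition is meaningful because $P \subset Z(G)$ is diagonalizable, so Schur's lemma forces $P$ to act by a single character on any irreducible $G$-representation, while $R_k(G)$ is freely spanned by such classes. A direct trace computation on any $[V] \in R_k(G)^\chi$ with $V$ irreducible gives $\tr_g([V]) = \chi(g)\dim V = \tr_1(t^*_{g^{-1}}([V]))$; extending by linearity yields the key identity $\tr_g = \tr_1 \circ t^*_{g^{-1}}$, from which $\fm_g = (t^*_{g^{-1}})^{-1}(I_G)$ follows at once, since $t^*_{g^{-1}}$ is an automorphism.

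With the ideal identity in hand, I would set $M = F([X/G],k)$, regarded as a module over $R_k(G)$ via $\lambda_X$, and invoke \propref{prop:Twist-BP-0} (applied to $g^{-1}$ in place of $g$) to obtain, for every $x \in R_k(G)$, a commutative square whose horizontals are $\lambda_X(x)$ and $\lambda_X(t^*_{g^{-1}}(x))$ respectively, and whose verticals are the weak equivalence $t_{g^{-1}}$ (a weak equivalence by \propref{prop:Twist-BP}(3)). For the localization statement, \propref{prop:D-loc-main} presents $M_{\fm_g} \simeq \hocolim_{x \in S_g} M[x^{-1}]$ with $S_g = R_k(G)\setminus\fm_g$, and similarly $M_{I_G} \simeq \hocolim_{y \in S_1} M[y^{-1}]$ with $S_1 = R_k(G)\setminus I_G$. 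The square identifies the telescope computing $M[x^{-1}]$ with the telescope computing $M[t^*_{g^{-1}}(x)^{-1}]$ via termwise weak equivalences; since $t^*_{g^{-1}}$ bijects $S_g$ onto $S_1$ by the previous paragraph, passing to the filtered colimit yields the first weak equivalence.

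For the completion statement, I would use that $R_k(G)$ is Noetherian (by \propref{prop:finiteR}) to write $\fm_g = (x_1, \ldots, x_r)$, so that $I_G = (y_1, \ldots, y_r)$ with $y_j := t^*_{g^{-1}}(x_j)$. Iterating the commutative square over the generators produces weak equivalences of iterated homotopy cofibers $M/(x_1^{i_1}, \ldots, x_r^{i_r})M \xrightarrow{\simeq} M/(y_1^{i_1}, \ldots, y_r^{i_r})M$, natural in $(i_1, \ldots, i_r) \in \N^r$; the homotopy limit over this index system, combined with \eqref{eqn:I-comp-0}, yields the second weak equivalence. Inverse equivalences in both cases are induced by $t_g$, which satisfies $t_g \circ t_{g^{-1}} \simeq \id$ on each $F^\chi([X/G],k)$ and hence on $F([X/G],k)$. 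The one genuine piece of content is the trace identity $\tr_g = \tr_1 \circ t^*_{g^{-1}}$; once that is secured, everything else is formal manipulation of Lurie's localization and completion constructions.
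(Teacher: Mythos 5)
Your proposal is correct and follows essentially the same route as the paper: establish that $t^*_{g^{-1}}$ carries $\fm_g$ isomorphically onto $I_G$, then promote this to the spectrum level via the intertwining relation of \propref{prop:Twist-BP-0} together with \propref{prop:D-loc-main} and \eqref{eqn:I-comp-0}. The only difference is that you spell out the trace identity $\tr_g = \tr_1 \circ t^*_{g^{-1}}$ underlying the ideal identity (using centrality of $P$ and the eigenspace decomposition of irreducibles to place each $[V]$ in a single $R_k(G)^\chi$), whereas the paper cites this step to \cite[\S~4.2]{Krishna-Sreedhar} and \cite[\S~6.2]{EG-Adv} rather than reproving it.
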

 \begin{proof}
    We shall show this for $K$-theory as the proof is the same for all functors.
    We showed in the proof of \propref{prop:Twist-BP} that $t^*_g$ is multiplication
    by $\chi(g^{-1})$ on $K^\chi_0(BG,k)$ for each $\chi \in \wh{P}$.
  One checks using this that $t^*_{g^{-1}} \colon R_k(G)
  \to R_k(G)$ maps $\fm_g$ isomorphically onto $I_G$ and it maps
  $R_k(G) \setminus \fm_g$ isomorphically onto $R_k(G) \setminus I_G$
  (cf. \cite[\S~4.2]{Krishna-Sreedhar} or \cite[\S~6.2]{EG-Adv}).
  We conclude from \propref{prop:Twist-BP-0} that $t_{g^{-1}}$
  induces a weak equivalence
  \begin{equation}\label{eqn:Twist-local-compl-1}
   t_{g^{-1}} \colon \hocolim_{x \in R_k(G) \setminus \fm_g} K([X/G],k)[x^{-1}]
   \xrightarrow{\simeq} \hocolim_{y \in R_k(G) \setminus I_G} K([X/G],k)[y^{-1}].
   \end{equation}
   The first weak equivalence of ~\eqref{eqn:Twist-local-compl-0} now follows
   by \propref{prop:D-loc-main} which says that the left hand side of
   ~\eqref{eqn:Twist-local-compl-1} is $K([X/G],k)_{\fm_g}$ while the right hand side
   is $K([X/G],k)_{I_G}$.

 To prove the weak equivalence of the completions, we first note by
   \propref{prop:finiteR} that $R_k(G)$ is a Noetherian ring. We can thus
   write $\fm_g = (x_1, \ldots , x_r)$. If we let $y_i = t^*_{g^{-1}}(x_i)$, it follows
   that $I_G = (y_1, \ldots , y_r)$. Since $t^*_{g^{-1}}$ is a ring homomorphism on
   $R_k(G)$ by \propref{prop:Twist-BP}, we conclude from \propref{prop:Twist-BP-0}
   that $t_{g^{-1}}$ induces a weak equivalence
   \begin{equation}\label{eqn:Twist-local-compl-2}
     t_{g^{-1}} \colon \holim_{(i_1, \ldots , i_r) \in \N^r}
     \frac{K([X/G],k)}{(x^{i_1}_1, \ldots , x^{i_r}_r)K([X/G],k)}
     \xrightarrow{\simeq} \holim_{(i_1, \ldots , i_r) \in \N^r}
     \frac{K([X/G],k)}{(y^{i_1}_1, \ldots , y^{i_r}_r)K([X/G],k)}.
   \end{equation}
   We now apply ~\eqref{eqn:I-comp-0} to finish the proof.
\end{proof}

 \vskip .2cm

\subsection{Group action with finite stabilizers}\label{sec:Proper-action}
Let $k$ be a field and $G$ a $k$-group acting on 
$X \in \Sch_k$. Let $\phi_X \colon G \times X \to X \times X$ denote the action map
(cf. ~\eqref{eqn:inertia}).
Recall that the $G$-action on $X$ is said to have finite
stabilizers if the stabilizers of geometric points are finite.
Equivalently, the projection map $I_X \to X$ is quasi-finite, where
$I_X \subset G \times X$ is the inertia scheme (cf. ~\eqref{eqn:inertia}).
In other words,  $[X/G]$ is a quasi-Deligne-Mumford (equivalently, Deligne-Mumford
if $\Char(k) = 0$) stack (cf. \cite[Tag 06MC]{SP}). 
Recall as well that the $G$-action is said to have finite
inertia if the projection map $I_X \to X$ is finite and it is called 
proper if $\phi_X$ is a proper map. It is clear that proper action implies that it
has finite inertia, which in turn implies that it has finite stabilizers.

\begin{prop}\label{prop:Fin-support}
  Let $k$ be algebraically closed of characteristic zero. Assume that the
  $G$-action on $X$ has finite stabilizers. Then the following hold.
  \begin{enumerate}
  \item
    There exists a finite set of semisimple conjugacy classes
    $\Sigma^G_X = \{\Psi_1, \ldots , \Psi_n\}$ in $G(k)$ such that for any
    $g \in G(k)$, one has that $X^g \neq \emptyset$ if and only if $g \in \Psi_i$
    for some $1 \le i \le n$.
  \item
   There exists an ideal
    $J \subset R_k(G)$ such that ${R_k(G)}/J$ has finite support containing
    $\Sigma^G_X$ and
    \[
    J(K'_i([X/G],k))  = 0 = J(KH_i([X/G],k)) \ \ \forall \ \ i \in \Z.
    \]
  \end{enumerate}
  \end{prop}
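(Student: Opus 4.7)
\textbf{Proof plan for \propref{prop:Fin-support}.}

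For part (1), the plan is to exploit finiteness of stabilizers to bound their orders uniformly. The finite-stabilizer hypothesis makes the inertia map $I_X \to X$ a quasi-finite morphism of finite-type $k$-schemes, so by Noetherian induction on $X$ (using upper-semicontinuity of fiber cardinality on strata of constant geometric fiber size), there exists an integer $N$ with $|G_x(k)| \le N$ for every $x \in X$. Hence any $g \in G(k)$ with $X^g \neq \emptyset$ lies in some stabilizer $G_x$ and therefore satisfies $g^N = e$. In characteristic zero, such a torsion element is automatically semisimple, since the only torsion unipotent element of a linear algebraic group in characteristic zero is the identity. To count conjugacy classes of semisimple elements of order dividing $N$, I would apply Mostow's Levi decomposition $G = R_u(G) \rtimes L$ to reduce to the reductive group $L$; fixing a maximal torus $T \subset L$, every semisimple element of $L$ of order dividing $N$ is conjugate into $T$, and the corresponding conjugacy classes are indexed by Weyl orbits on the finite group $T[N](k)$.

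For part (2), let $\fm_1, \dots, \fm_n \in \Omega(R_k(G))$ be the maximal ideals corresponding to $\Psi_1, \dots, \Psi_n$ via \corref{cor:maximal1}. The first step is a support statement. For any semisimple $g \in G(k)$ with $X^g = \emptyset$, I would combine \propref{prop:Twist-local-compl} (which converts $\fm_g$-localization over $R_k(G)$ into $I_H$-localization over $R_k(H)$ of the restricted equivariant $K$-theory, for a suitable closed subgroup $H \supset \<g\>$) with the classical Thomason-type localization theorem in equivariant $K$-theory to identify $F_i([X/G], k)_{\fm_g}$ with a localization of $F_i([X^g/Z_g], k)$, which vanishes because $X^g = \emptyset$. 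Hence the $R_k(G)$-module support of $K'_i([X/G],k)$ and $KH_i([X/G],k)$ at maximal ideals is contained in $\{\fm_1, \ldots, \fm_n\}$.

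The hard part will be upgrading this support statement into an honest annihilating ideal $J$ that works uniformly in the homotopy degree $i$. Since the relevant $K$-theoretic groups are in general not finitely generated over $R_k(G)$, this step cannot be purely formal commutative algebra. The plan is to combine the equivariant cdh-descent of Hoyois--Krishna with a stratification of $[X/G]$ by orbit type (a Luna-type slice stratification, available for Deligne--Mumford stacks in characteristic zero) to reduce to the model case where $[X/G]$ is a gerbe banded by a finite diagonalizable group. In the model case, \propref{prop:Twist-2}, \corref{cor:Twist-2-4} and \corref{cor:Twist-KH} produce an explicit character-eigenspace decomposition of the spectrum $F([X/G], k)$; one then reads off a finite-rank $k$-subalgebra of $R_k(G)$ through which the $R_k(G)$-action must factor, and its complementary ideal furnishes the required $J$ directly. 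For the general case, the stratification yields a finite filtration of the spectrum $F([X/G], k)$ whose graded pieces are of the model form, and the local annihilators then assemble (by taking a suitable product of ideals) into a single global one. The crucial subtlety is that the argument has to be run at the spectrum level rather than degree by degree, since only then does one obtain a single ideal $J$ valid simultaneously for all $i \in \Z$.
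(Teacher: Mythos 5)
Part (1) of your plan is in the right spirit (the paper proves it by citing Krishna--Sreedhar, so an independent sketch is welcome), but one step needs a small fix: if $|G_x(k)| \le N$ you only know the order of $g$ divides $|G_x(k)|$, so you get $g^{M}=e$ for $M = N!$ (or $\mathrm{lcm}(1,\dots,N)$), not $g^N = e$. With that adjustment, the argument (torsion $\Rightarrow$ semisimple in characteristic zero, Levi decomposition, Weyl orbits on $T[M]$) does give the desired finite set of conjugacy classes.

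Part (2) has a genuine gap. Your reduction to "$[X/G]$ a gerbe banded by a finite diagonalizable group" does not work. First, the orbit-type strata of a finite-stabilizer quotient stack are not gerbes: they are gerbes only {\'e}tale-locally on their coarse spaces, and the band can twist globally. Second, the stabilizers are arbitrary finite groups, so the would-be band need not be diagonalizable. Third, and most seriously, the decomposition results you invoke --- \propref{prop:Twist-2}, \corref{cor:Twist-2-4}, \corref{cor:Twist-KH} --- require a diagonalizable subgroup $P \subset Z(G)$ that is \emph{central in the ambient group $G$} and acts \emph{trivially on all of $X$}, not merely on a slice; a stabilizer subgroup of a stratum gives you neither. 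So the character-eigenspace decomposition you want to "read off a finite-rank $k$-subalgebra" from is simply not available in your model case.

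You are also missing the mechanism the paper actually uses to pass from $K'$ to $KH$, which is the heart of the new content in item (2). The paper takes a $G$-equivariant resolution of singularities $f \colon X' \to X$, forms the abstract blow-up square with $Y = X_{\sing}$ and $Y' = f^{-1}(Y)$ (noting that $X'$, $Y$, $Y'$ inherit the finite-stabilizer property), and applies the Hoyois--Krishna cdh-descent theorem to obtain a homotopy Cartesian square of $KH$-spectra. For smooth $X'$ the map $KH \to K'$ is an equivalence, so the $K'$-result (cited from Krishna--Sreedhar, generalized to arbitrary algebraically closed $k$ via \propref{prop:finiteR} and \corref{cor:maximal1}, and valid assuming only finite stabilizers) applies; for $Y$ and $Y'$ one inducts on dimension; and the annihilating ideal for $X$ is then read off from the Mayer--Vietoris long exact sequence as a product of the three ideals. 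This is done degree by degree, yet produces a single ideal $J$ (depending only on $X$, not on $i$) since the number of factors is bounded by $\dim X$; your concern that the argument "has to be run at the spectrum level" is therefore unfounded. Your plan relies on cdh-descent plus a stratification, but without the resolution step there is no way to bring the $K'$-annihilation result to bear on the $KH$-groups of the singular strata.
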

\begin{proof}
  Except for the vanishing of $J(KH_*([X/G],k))$, everything else in the
  proposition is proven in \cite[Prop.~3.6, Lem.~3.9]{Krishna-Sreedhar} when
    $k = \C$ and the $G$-action is proper. But we leave it for the reader to check
    that the proof of op. cit. does not require properness of $G$-action. The only
    condition it uses is that this action has finite stabilizers.
    One also does not require $k = \C$ anymore in view of \propref{prop:finiteR} and
    \corref{cor:maximal1}.

We shall prove the vanishing of $J(KH_*([X/G],k))$ in item (2) by 
induction on the dimension of $X$.
If $X \in \Sm^G_k$, then
    we are already home by the vanishing of $J(K'_*([X/G],k))$
    since the canonical map $KH([X/G],k) \to K'([X/G],k)$ is
    a weak equivalence. If $\dim(X) = 0$, then $X_\red$ must be regular.
    In particular, the claim holds in this case because of the 
    nil-invariance property of equivariant $K'$-theory (well known) and
    equivariant $KH$-theory (follows by \cite[Thm.~6.2]{Hoyois-Krishna}).

To prove the claim when $X$ is arbitrary, we apply
    \cite[Thm.~6.2]{Hoyois-Krishna} once again to assume that $X$ is reduced. We next
    choose a $G$-equivariant resolution of singularities
    (which is well known to exist) $f \colon X' \to X$. We let $Y = X_\sing$ and
    $Y' = f^{-1}(Y)$, each having the reduced closed subscheme structure. It follows
    again from \cite[Lem.~2.5]{Thomason-Orange} that there is a commutative diagram
\begin{equation}\label{eqn:Fin-support-0}
  \xymatrix@C1pc{
    Y' \ar[r]^-{\iota'} \ar[d]_-{f'} & X' \ar[d]^-{f} \\
    Y \ar[r]^-{\iota} & X,}
  \end{equation}
  where all schemes lie in $\Sch^G_k$ and all maps are $G$-equivariant.

  Since $G$ acts on $X$ with finite stabilizers, it must act on $X', Y$ and
  $Y'$ also with finite stabilizers.
 We now apply \cite[Thm.~6.2]{Hoyois-Krishna} one more time to conclude that
  ~\eqref{eqn:Fin-support-0} induces a commutative diagram of spectra
  \begin{equation}\label{eqn:Fin-support-2}
  \xymatrix@C1pc{
    KH([X/G]) \ar[d]_-{f^*} \ar[r]^-{\iota^*} & KH([Y/G]) \ar[d]^-{f'^*} \\
    KH([{X'}/G]) \ar[r]^-{\iota'^*} & KH([{Y'}/G])}
  \end{equation}
  which is homotopy Cartesian.
  The same is true for $KH$-theory with coefficients in $k$
  as well.
  The desired claim is true for $X'$ because it is smooth
  over $k$. The claim is true for $Y$ and $Y'$ by induction on the dimension.
  Using the long exact sequence of homotopy groups resulting from
  ~\eqref{eqn:Fin-support-2}, we deduce the claim for $X$.
  We note here that the ideal $J$ is not independent of $X$.
  \end{proof}

\begin{cor}\label{cor:Fin-support-4}
  Let $k$ be of characteristic zero. Let $G$ act on a
    $k$-scheme $X$ with finite stabilizers. Then we have the following.
    \begin{enumerate}
      \item
    The canonical maps $K'([X/G], L)_{I_G} \to
    K'([X/G],L)^{\compl}_{I_G}$ and $KH([X/G], L)_{I_G} \to KH([X/G],L)^{\compl}_{I_G}$
    are weak equivalences for any field $L$ of characteristic zero.
  \item
    The canonical maps
    $K'([X/G], k)_{\fm_\Psi} \to K'([X/G],k)^{\compl}_{\fm_\Psi}$ and
    $KH([X/G], k)_{\fm_\Psi} \to
    KH([X/G],k)^{\compl}_{\fm_\Psi}$ are weak equivalences for any semisimple
    conjugacy class $\Psi \subset G(k)$.
    \end{enumerate}
  \end{cor}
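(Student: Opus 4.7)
My proof rests on a basic algebraic observation about Lurie's completion together with Proposition~\ref{prop:Fin-support}. The observation is: if $N$ is a module spectrum over a Noetherian ring $R$ whose homotopy groups are uniformly annihilated by some power $\fm^a$ of a finitely generated maximal ideal $\fm = (x_1, \ldots, x_r)$, then $N$ is $\fm$-complete. By Proposition~\ref{prop:Compln_D}(4) it suffices to check $(x)$-completeness for each generator $x$. For such $x$, the inverse system $\cdots \xrightarrow{x} \pi_j(N) \xrightarrow{x} \pi_j(N)$ has trivial $\varprojlim$ (any compatible sequence $(a_0, a_1, \ldots)$ satisfies $a_0 = x^a a_a = 0$) and trivial $\varprojlim^1$ (Mittag--Leffler holds since $x^a$ kills the image). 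Thus the derived limit $T(N) = \holim(\cdots \xrightarrow{x} N \xrightarrow{x} N)$ has vanishing homotopy, and the cofiber description of $(x)$-completion preceding Proposition~\ref{prop:Compln_D} yields $N \simeq N^{\compl}_{(x)}$.

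Assume first that $k$ is algebraically closed and $L = k$. Let $M = F([X/G], k)$ for $F \in \{K', KH\}$, and let $\fm$ be either $I_G$ or $\fm_\Psi$. By Proposition~\ref{prop:Fin-support}(2) there is an ideal $J \subset R_k(G)$ with $V(J) = \{\fm_1, \ldots, \fm_n\}$ a finite set of maximal ideals and $J \cdot \pi_*(M) = 0$. If $\fm \notin V(J)$, any $x \in J \setminus \fm$ annihilates $\pi_*(M)$ while becoming invertible after localization, so $M_\fm \simeq 0$, and Proposition~\ref{prop:Compln_D}(2) gives $M^{\compl}_\fm \simeq 0$. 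If $\fm = \fm_j \in V(J)$, the Noetherianity of $R_k(G)$ (Proposition~\ref{prop:finiteR}) yields $\sqrt{J}^a \subset J$ for some $a$, so $(\fm_1 \cap \cdots \cap \fm_n)^a \subset J$. Localizing at $\fm$ renders the other $\fm_i$ units, whence $J \cdot R_k(G)_\fm \supset (\fm \cdot R_k(G)_\fm)^a$ and $\pi_*(M_\fm)$ is uniformly $(\fm \cdot R_k(G)_\fm)^a$-torsion. The algebraic fact above applied to $N = M_\fm$ shows that $M_\fm$ is $\fm$-complete, and combining this with the weak equivalence $M^{\compl}_\fm \simeq (M_\fm)^{\compl}_\fm$ from Proposition~\ref{prop:Compln_D}(2) produces the desired $M_\fm \xrightarrow{\simeq} M^{\compl}_\fm$.

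It remains to reduce the general case to the algebraically closed one. For part (1) with coefficient $L$ possibly distinct from $k$, Proposition~\ref{prop:Compln_D}(5) identifies completion at $I_G \subset R_L(G)$ with completion at $I_G$ viewed inside the smaller ring $R(G)$ (since the former ideal is the extension of the latter), so it suffices to transport the annihilation statement of Proposition~\ref{prop:Fin-support} along the injection $R_k(G) \hookrightarrow R_{\ov{k}}(G_{\ov{k}})$ supplied by Lemma~\ref{lem:Noether-Rep-3}. For part (2), the conjugacy class $\Psi \subset G(k)$ base-changes to a finite disjoint union of semisimple classes in $G(\ov{k})$, so $\fm_\Psi \cdot R_{\ov{k}}(G_{\ov{k}})$ lies in a finite intersection of maximal ideals and the same argument applies. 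The main technical obstacle is this base-change bookkeeping, but the algebraic heart of the corollary is captured by the $\varprojlim$ computation of the first paragraph.
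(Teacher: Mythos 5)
Your core algebraic observation --- that uniform $\fm^a$-torsion of the homotopy groups of a module spectrum implies $\fm$-completeness, via vanishing of both $\varprojlim$ and $\varprojlim^1$ of the multiplication-by-$x$ tower and the cofiber description of $(x)$-completion --- is correct and is in fact a slightly tidier packaging of the Milnor-sequence argument the paper runs. The application of this to the algebraically closed case via Proposition~\ref{prop:Fin-support}(2), including the dichotomy $\fm\in V(J)$ versus $\fm\notin V(J)$ and the use of Proposition~\ref{prop:Compln_D}(2) to pass between $M^{\compl}_\fm$ and $(M_\fm)^{\compl}_\fm$, is also sound.

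The gap is in your final paragraph. You write that for part~(1) it ``suffices to transport the annihilation statement of Proposition~\ref{prop:Fin-support} along the injection $R_k(G)\hookrightarrow R_{\ov{k}}(G_{\ov{k}})$,'' and that for part~(2) ``the same argument applies.'' But an injection of representation rings does not by itself transport an annihilation statement from $\pi_*\bigl(K'([X_{\ov{k}}/G_{\ov{k}}],\ov{k})\bigr)$ down to $\pi_*\bigl(K'([X/G],L)\bigr)$: you need the pullback map on $K$-theory groups (or $KH$-theory groups) along the base change $k\to\ov{k}$ to be injective, compatibly with the $R_k(G)$-module structures. The paper establishes exactly this in the proof of part~(2) by a transfer argument --- the composition pullback-then-pushforward along a finite subextension is multiplication by the degree, and one passes to the limit over finite subextensions of $\ov{k}$ --- and then uses $\lambda\bigl((\fm_\Psi)^m KH_n([X/G],k)\bigr)\subset(\fm'_\Psi)^m KH_n([X'/G'],\ov{k})$ together with injectivity of $\lambda$. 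Your proposal never produces this injectivity, so ``the same argument applies'' is not substantiated. There is also a second, smaller issue on the coefficient side: Proposition~\ref{prop:Fin-support} gives an ideal $J$ annihilating $K'_*([X/G],k)$ and $KH_*([X/G],k)$ with $k$-coefficients over an algebraically closed $k$; to deduce the corresponding statement with an arbitrary coefficient field $L$ of characteristic zero you need one more word about why the annihilator does not depend on the choice of $L$ (or a separate base-change in coefficients). Worth noting: for part~(1) the paper sidesteps the algebraic closure reduction entirely by citing the proof of \cite[Lem.~7.3]{Krishna-Adv} for the $K'$-torsion over any characteristic-zero field and then running the resolution-of-singularities induction for $KH$; your route through $\ov{k}$ is more uniform between parts~(1) and~(2) but is precisely what makes the missing injectivity load-bearing.
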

\begin{proof}
  As $R_L(G)$ is Noetherian by \propref{prop:finiteR}, we can write
   $I_G = (x_1, \ldots , x_r)$. To prove (1), we let
    $M \in \{K'([X/G], L)_{I_G}, KH([X/G], L)_{I_G}\}$. By \propref{prop:Compln_D}(2),
    it suffices to show that $M$ is $(x_i)$-complete for every $i$.
    By \cite[Cor.~7.3.2.2]{Lurie-SAG}, this is equivalent to showing that the
    homotopy limit of the tower $T^\bullet_i := (\cdots \xrightarrow{x_i} M
    \xrightarrow{x_i} M \xrightarrow{x_i} M)$ is weakly contractible.
    By the Milnor exact sequence
    \begin{equation}\label{eqn:Milnor-seq}
      0 \to {\varprojlim}^1 \pi_{n+1}(T^\bullet_i) \to \pi_n(\holim T^\bullet_i) \to 
      \varprojlim \pi_{n}(T^\bullet_i) \to 0,
    \end{equation}
    it suffices to show that the sequence $\pi_n(T^\bullet_i) :=
    (\cdots \xrightarrow{x_i} \pi_n(M)
    \xrightarrow{x_i} \pi_n(M) \xrightarrow{x_i} \pi_n(M))$
    is pro-zero for every $n \in \Z$ and $1 \le i \le r$.
    Since a homotopy group functor commutes with localizations (cf.
    \propref{prop:Compln_D}(3)), we are therefore
    reduced to showing that for every $n \in \Z$, there exists $m \gg 0$
    such that 
    \begin{equation}\label{eqn:Fin-support-3-0}
      I^m_G (K'_n([X/G],L)_{I_G}) = 0 = I^m_G (KH_n([X/G],L)_{I_G}).
      \end{equation}

 For $K'_n([X/G],L)_{I_G}$, ~\eqref{eqn:Fin-support-3-0} is shown in the proof of
    \cite[Lem.~7.3]{Krishna-Adv}. In particular,  ~\eqref{eqn:Fin-support-3-0} holds
    if $X \in \Sm^G_k$. This conclusion for $KH_n([X/G],L)_{I_G}$ in the general case
    is now deduced by simply repeating the proof of \propref{prop:Fin-support}.

To prove (2), we repeat the argument of (1) which reduces us to showing
    that for every $n \in \Z$, there exists $m \gg 0$
    such that 
    \begin{equation}\label{eqn:Fin-support-3-1}
      (\fm_\Psi)^m (K'_n([X/G],k)_{\fm_\Psi}) = 0 = (\fm_\Psi)^m (KH_n([X/G],k)_{\fm_\Psi}).
      \end{equation}
    We shall prove this for $KH_n([X/G],k)$ as the other case is identical.
    We fix an algebraic closure $\ov{k}$ of $k$.
    We let ${X'} = X_{\ov{k}}$ and ${G'} = G_{\ov{k}}$.  If ${k'}/k$ is
    a finite field extension, then the composition of the pull-back and
    push-forward maps $KH_n([X/G]) \to KH_n([{X_{k'}}/{G_{k'}}]) \to
    KH_n([X/G])$ is multiplication by $[k':k]$.
    It follows by passing to the limit (cf. \cite[Thm.~2.22]{Khan-JJM}) that
    the pull-back map $KH_n([X/G], k) \to KH_n([{{X'}}/{{G'}}], k)$ is injective.
    In particular, the pull-back map $\lambda \colon KH_n([X/G], k) \to
    KH_n([{{X'}}/{{G'}}], \ov{k})$ is also injective.

 We let $\fm'_\Psi \subset R_{\ov{k}}(G')$ denote the maximal ideal corresponding
    to $g \in G'_s(\ov{k})$.
    By the definition of $\fm_\Psi$, it is clear that
    $\lambda((\fm_\Psi)^m KH_n([X/G], k))
    \subset (\fm'_\Psi)^m  KH_n([{{X'}}/{{G'}}], \ov{k})$. It suffices therefore to
    show that $(\fm'_\Psi)^m  KH_n([{{X'}}/{{G'}}], \ov{k}) = 0$ for $m \gg 0$.
    We can thus assume that $k$ is algebraically closed. In the latter case,
    the claim follows directly from \propref{prop:Fin-support}(2).
\end{proof}

\section{Borel equivariant $K$-theory}\label{sec:Borel-K}
The goal of this section is to define Borel equivariant $K$-theory of algebraic
spaces with a group action. This will be the target of the derived Atiyah-Segal
completion theorem for equivariant $K$-theory (also called Bredon
equivariant $K$-theory). We begin with some basic results related to group action.

\subsection{Some results on group quotients}\label{sec:Prelim-grp}
We fix a field $k$ and a $k$-group $G$.
We let $\Spc^G_{\fr}/k$ be the full subcategory of $\Spc^G_k$ whose objects are those
algebraic spaces over $k$ on which $G$ acts freely.
We let $\Sch^G_{\fr}/k$ be the full subcategory
of $\Spc^G_{\fr}/k$ consisting of those objects which are schemes.
For $X, Y \in \Spc^G_k$, we consider $X \times Y$ as an object of $\Spc^G_k$
via the diagonal action of $G$. It is clear that if either of $X$ and $Y$ lies in
$\Spc^G_{\fr}/k$, then $X \times Y \in \Spc^G_{\fr}/k$. In this case, the quotient
$(X \times Y)/G$ lies in $\Spc_k$ by \cite[Prop.~22]{EG-Inv}. We shall denote this
quotient space by $X \stackrel{G}{\times} Y$.  We shall use the following special
case of this construction very frequently in this text.

Suppose $G \subset G'$ is a closed embedding of
$k$-groups and $X \in \Spc^G_k$. We let
$X \stackrel{G}{\times} G'$ denote the quotient by the diagonal
action of $G$, where the latter acts on $G'$ by the right multiplication
($g \cdot g' = g'g^{-1}$). If we let $G'$ act trivially on $X$ and
by left multiplication on itself, then one checks that its diagonal action on
$X \times G'$ descends to its action on $X \stackrel{G}{\times} G'$.
That is, $X \stackrel{G}{\times} G' \in \Spc^{G'}_k$. We shall call it the
{\sl Morita space} associated to the $G$-action on $X$ and the
embedding $G \inj G'$.
Letting $X'$ denote this Morita space, one checks that the composition
\begin{equation}\label{eqn:Open-quasi-proj-0}
  \delta_X \colon  X \xrightarrow{\iota} X \times G' \xrightarrow{p} X'
    \end{equation}
  (where $\iota(x) = (x, e)$ and $p$ is the quotient map) is $G$-equivariant if
  we let $G$ act on $X'$ via the inclusion $G \inj G'$.
  Passing to the stack quotients by the $G$-actions, we get a map
  $\wt{\delta}_X \colon [X/G] \to [{X'}/{G}]$.
  Precomposing this with the canonical quotient $[{X'}/G] \to [{X'}/{G'}]$, we get
  ${\rm morita}_X \colon [X/G] \to [{X'}/{G'}]$. We now have the following.

  \begin{lem}\label{lem:Morita*}
    The map ${\rm morita}_X \colon [X/G] \to [{X'}/{G'}]$ is an isomorphism of
    stacks.
  \end{lem}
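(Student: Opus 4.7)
The plan is to identify both $[X/G]$ and $[X'/G']$ with a single common quotient stack and then match the resulting canonical isomorphism with ${\rm morita}_X$. Consider $X \times G'$ equipped with the action of $G \times G'$ given by
\begin{equation*}
(g, h) \cdot (x, g') = (gx,\ h g' g^{-1}).
\end{equation*}
Restricted to $\{e\} \times G'$, this is a free action by left translation on the second factor, whose quotient is the first projection $X \times G' \to X$, and the residual $G$-action recovers the given $G$-action on $X$. Restricted to $G \times \{e\}$, it is the diagonal action, which is free since the right-translation action $g \cdot g' = g' g^{-1}$ of $G$ on $G'$ is free; its quotient is precisely the defining map $X \times G' \to X'$, and the residual $G'$-action on $X'$ is $h \cdot [x, g'] = [x, h g']$, matching the $G'$-action on $X'$ defined in the paper. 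Since both $\{e\} \times G'$ and $G \times \{e\}$ are normal in $G \times G'$, descent in two stages produces a canonical diagram of equivalences
\begin{equation*}
[X/G]\ \xleftarrow{\ \simeq\ }\ [(X \times G')/(G \times G')]\ \xrightarrow{\ \simeq\ }\ [X'/G'].
\end{equation*}

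It then remains to verify that the resulting composite $[X/G] \xrightarrow{\simeq} [X'/G']$ agrees with ${\rm morita}_X = \can \circ \wt{\delta}_X$, which is a direct unwinding on $T$-valued points. A pair $(P \to T,\ \psi \colon P \to X)$ --- with $P$ a $G$-torsor and $\psi$ a $G$-equivariant map --- is sent by the inverse of the left equivalence to the $(G \times G')$-torsor $P \times G' \to T$ together with the equivariant map $(\psi, \mathrm{id}) \colon P \times G' \to X \times G'$, and then by the right equivalence (quotienting by $G \times \{e\}$) to the pair $\bigl(P \stackrel{G}{\times} G',\ [p, g'] \mapsto [\psi(p), g']\bigr)$. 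On the other hand, $\wt{\delta}_X$ keeps the torsor $P$ and post-composes $\psi$ with $\delta_X \colon x \mapsto [x, e]$, after which the induction $[X'/G] \to [X'/G']$ replaces $P$ by $P \stackrel{G}{\times} G'$ and produces the map $[p, g'] \mapsto g' \cdot \delta_X(\psi(p)) = [\psi(p), g']$; the two recipes coincide.

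The only point requiring more than routine bookkeeping is the assertion that $X \times G' \to X'$ is indeed a $G$-torsor in the fppf topology, so that the stack-quotient identification on the right above is justified. This is immediate from freeness of the right-translation $G$-action on $G'$ together with the existence of the quotient as an algebraic space, already invoked in the paper via \cite[Prop.~22]{EG-Inv}. I do not foresee any serious obstacle: the lemma is the classical Morita equivalence for group actions, and the content lies in tracking conventions carefully.
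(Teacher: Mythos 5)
The paper gives no in-text proof here, delegating entirely to \cite[Lem.~2.4]{Krishna-Sreedhar}; your argument is a correct, self-contained proof. The two-stage descent through the intermediate quotient $[(X\times G')/(G\times G')]$ is the standard realization of Morita equivalence of quotient stacks, and your $T$-point unwinding correctly identifies the resulting composite equivalence with ${\rm morita}_X$, i.e.\ the induction-of-torsors map $[X'/G]\to[X'/G']$ post-composed onto $\wt{\delta}_X$. One minor step would benefit from being spelled out: inverting the left-hand equivalence sends a $T$-point $(P,\psi)$ of $[X/G]$ to a $(G\times G')$-torsor $Q\to T$ together with an equivariant map $\phi\colon Q\to X\times G'$; your identification of $Q$ with $P\times G'$ (with the advertised action and map $(\psi,\mathrm{id})$) uses that the $G'$-component of $\phi$, being $(\{e\}\times G')$-equivariant, trivializes the $(\{e\}\times G')$-torsor $Q\to Q/(\{e\}\times G')\cong P$. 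This is automatic but is the hinge that makes the dictionary between groupoid presentations airtight, so it is worth a sentence.
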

  \begin{proof}
    See \cite[Lem.~2.4]{Krishna-Sreedhar}.
  \end{proof}

  We shall need the following result
  (see \S~\ref{sec:Notn} for the definition of $G$-quasi-projective schemes).

\begin{lem}\label{lem:Open-quasi-proj}
Let $X \in \Spc^G_k$ be a reduced algebraic space.
 If $G$ is connected and $X$ is a normal $k$-scheme, then it is covered by
  $G$-invariant open subschemes which are $G$-quasi-projective. In general,
  $X$ contains a $G$-invariant dense open subscheme
  $U \subset X$ such that $U$ is a regular and $G$-quasi-projective $k$-scheme.
\end{lem}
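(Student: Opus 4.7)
Proof plan:

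The first statement is the theorem of Sumihiro extended to arbitrary connected linear algebraic groups (over arbitrary fields) by Thomason in ``Equivariant resolution, linearization, and Hilbert's fourteenth problem over arbitrary base schemes,'' so I would simply invoke that result: for $G$ connected linear algebraic acting on a normal $k$-scheme $X$, every point of $X$ has a $G$-invariant $G$-quasi-projective open neighborhood (with $G$-linearized ample line bundle).

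For the second statement I would proceed by a sequence of reductions. \emph{Step 1 (reduce to a scheme).} Any reduced Noetherian algebraic space contains a dense open subspace which is a scheme, and the maximal such open $X_{\rm sch}\subset X$ is automatically $G$-invariant because the property of being a scheme is preserved by the $G$-action, which consists of automorphisms of $X$. So I may replace $X$ by $X_{\rm sch}$ and assume $X$ is a reduced $k$-scheme. \emph{Step 2 (reduce to regular).} For the reduced Noetherian scheme $X$, the regular locus $X_{\reg}$ is open, dense, and again $G$-invariant. Replace $X$ by $X_{\reg}$, so $X$ is a regular (in particular normal) $k$-scheme with $G$-action.

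\emph{Step 3 (apply the connected case).} Let $G^0\subset G$ denote the identity component. By the first assertion applied to $G^0$ acting on the normal scheme $X$, there is a cover of $X$ by $G^0$-invariant $G^0$-quasi-projective open subschemes $\{V_\alpha\}$. Pick one such $V=V_\alpha$. \emph{Step 4 (descend to $G$-invariance).} Since $G/G^0$ is a finite $k$-group scheme, the set of $(G/G^0)(\ov k)$-translates of $V_{\ov k}$ in $X_{\ov k}$ is finite; let $U_{\ov k}$ be their intersection, which is a non-empty $G(\ov k)$-invariant open subscheme of $V_{\ov k}$, and which is defined over $k$ by Galois descent (the translates are permuted by $\Gal(\ov k/k)$). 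This gives a $G$-invariant dense open subscheme $U\subset V$, which is quasi-projective as an open subscheme of the quasi-projective $V$. Finally, \emph{Step 5 (upgrade the linearization).} The $G^0$-linearized ample line bundle $L$ on $V$ restricts to a $G^0$-linearized ample line bundle on $U$; one now averages (equivalently, forms $\bigotimes_{\sigma\in G/G^0}\sigma^*L$) over the finite quotient $G/G^0$ acting on $U$ to obtain a $G$-linearized ample line bundle on $U$, making $U$ $G$-quasi-projective.

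The main technical obstacle is Step~5, since passing from a $G^0$-linearization to a $G$-linearization is the only non-formal ingredient; however, the averaging construction over the finite group scheme $G/G^0$ works because $G/G^0$ is finite and all the relevant translates are defined on the same $U$ (by construction in Step~4). As a back-up approach, instead of averaging directly one can apply the Morita construction $U\stackrel{G^0}{\times}G$ of \lemref{lem:Morita*} to replace the $G^0$-action on $U$ with the $G$-action on a Morita-equivalent scheme; since $G/G^0$ is finite and $U$ is quasi-projective, the result is again quasi-projective with a natural $G$-linearization inherited from $U$.
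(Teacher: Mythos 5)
Your Steps 1 and 2 (pass to the schematic locus, then to the regular locus) are essentially the paper's own reduction. From there, however, you take a genuinely different route from the paper. You pass to the identity component $G^0$, apply the connected case, and try to upgrade from $G^0$-invariance to $G$-invariance by translating by $G/G^0$. The paper instead fixes an embedding $G\hookrightarrow GL_n$, forms the Morita space $Y=X\stackrel{G}{\times}GL_n$ (which is again regular since $X\times GL_n\to Y$ is a $G$-torsor), applies the schematic-locus reduction and the connected case directly to $Y$ with its $GL_n$-action, and pulls the resulting $GL_n$-invariant quasi-projective open $W\subset Y$ back along $\delta_X$ using \lemref{lem:Morita*} to obtain $U\subset X$. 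The Morita route avoids any comparison between $G$- and $G^0$-invariance, which is exactly where your argument has gaps.

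\emph{Gap in Step 4.} A single $V=V_\alpha$ from the $G^0$-cover need not be dense: if $X$ (after passing to its regular locus) is disconnected, $V$ may miss entire components, and the intersection of its translates can then be empty. This is repairable -- glue one $V_{\alpha_i}\cap X_i$ per irreducible component $X_i$ to get a dense $G^0$-invariant quasi-projective open before taking translates -- but as written the step fails.

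\emph{Gap in Step 5.} This is the more serious problem. The quotient group $G/G^0$ does \emph{not} act on $U$ (only $G$ does, and $G^0$ does not act trivially), so the expression $\bigotimes_{\sigma\in G/G^0}\sigma^*L$ is undefined. One can pick coset representatives $g_\sigma\in G(\overline k)$, but then $g_\sigma^*L$ depends on the representative (up to the $G^0$-equivariant structure on $L$), and showing the resulting tensor product is canonical, descends to $k$, and carries a genuine $G$-linearization amounts to re-proving the result you are trying to quote. Your Morita backup $U\stackrel{G^0}{\times}G$ also does not close the gap: it produces a \emph{different} scheme that is Morita-equivalent to $U$ over $[U/G^0]$, not a $G$-linearized ample line bundle on $U\subset X$ itself. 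The fix the paper (implicitly) relies on is simpler: once $U$ is a normal, quasi-projective $k$-scheme with $G$-action, it is automatically $G$-quasi-projective by \cite[\S~5.7]{Thomason-Duke-2}, a statement that does not require $G$ connected and that the paper also invokes in \corref{cor:EG-quotient-1}. Citing that result makes Step 5 redundant, and with the density fix in Step 4 your route through $G^0$ would then go through.

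A minor remark on the first assertion: the paper cites \cite[Thm.~1]{Brion-Moscow} for the cover of a normal $X$ by $G$-invariant quasi-projective opens when $G$ is connected, then upgrades to $G$-quasi-projective via \cite[\S~5.7]{Thomason-Duke-2}. You may wish to compare that reference with the Thomason equivariant-resolution paper you cite, to make sure the precise covering statement you need is there.
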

\begin{proof}
  Under the assumption of the first part, \cite[Thm.~1]{Brion-Moscow}
  says that $X$ is covered by $G$-invariant open subschemes which are
  quasi-projective $k$-schemes.  It follows from \cite[\S~5.7]{Thomason-Duke-2} that
  these open subschemes must be $G$-quasi-projective.
  
  We now prove the second part.
  By \cite[Prop.~6.6]{Knutson}, $X$ contains a unique dense
  open subspace $X' \subset X$
(called the schematic locus of $X$) having the property that $X'$ is a scheme
and any point $x \in X$ is schematic (i.e., has an affine neighborhood in $X$) if
and only if $x \in X'$ (cf. \cite[Tag 03JG]{SP}).
Since $gX' \subset X$ is again a scheme for every $g \in G$, it follows
that $X'$ must be $G$-invariant. Replacing $X$ by $X'$, we can thus assume that
$X \in \Sch^G_k$. Since $X$ is reduced, its regular locus $X_\reg$ is $G$-invariant
dense open. We can thus assume that $X$ is also regular.

If $G$ is connected, then every connected component of $X$ is $G$-invariant
and the result follows by \cite[Thm.~1]{Brion-Moscow} and
\cite[\S~5.7]{Thomason-Duke-2}. In general,
 we choose an embedding $G \inj GL_{n}$ and let
  $Y = X \stackrel{G}{\times} GL_{n}$ be the associated Morita space with
  $GL_{n}$-action. Since $X$ is regular
  and $GL_{n}$ is smooth over $k$, we see that $X \times GL_{n}$ is regular.
  Since the quotient map $X \times GL_{n} \to Y$ is a $G$-torsor and $G \in \Sm_k$,
  it follows that $Y$ is a regular algebraic space with $GL_n$-action.
  Since $GL_n$ is connected, every connected component of
  $Y$ is a regular $GL_n$-invariant open algebraic subspace.
  In particular, its schematic locus is $GL_n$-invariant and hence contains
  a $GL_n$-invariant dense open subspace which is
  quasi-projective over $k$. We let $W \subset Y$ be the disjoint union of
  these open subschemes. Then $W$ is a $GL_n$-invariant dense open subspace of
  $Y$ which is quasi-projective over $k$.

We now let $p \colon X \times GL_n \to Y$ be the quotient map and let
  $W' = p^{-1}(W)$. As $p \colon W' \to W$ is a $G$-torsor,
\lemref{lem:torsor-affine} implies that $W'$ is quasi-projective over $k$.
As $\iota$ is a closed immersion, it follows that
$U:= (\delta_X)^{-1}(W) = \iota^{-1}(W')$ is a quasi-projective $k$-scheme.
Meanwhile, Lemma~\ref{lem:Morita*} implies that $\delta^{-1}_X(W)$ is $G$-invariant
dense open in $X$. This finishes the proof.
\end{proof}

\begin{lem}\label{lem:torsor-affine}
  Let $H$ be a $k$-group and let $p \colon W' \to W$ be an $H$-torsor in
  $\Sch_k$. Then $p$ is an affine morphism. In particular, it is quasi-projective.
\end{lem}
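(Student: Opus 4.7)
The plan is to deduce the affineness of $p$ from fpqc descent, and then to derive the quasi-projectivity as an easy consequence. The key observation is that because $H$ is a linear algebraic group, it is affine over $k$, and the torsor condition furnishes a canonical isomorphism of $W'$-schemes
\[
H \times W' \xrightarrow{\cong} W' \times_W W', \qquad (h, w') \mapsto (hw', w').
\]
Under this identification, the second projection $W' \times_W W' \to W'$ corresponds to the projection $H \times W' \to W'$, which is affine as a base change of the affine morphism $H \to \Spec(k)$.

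Next I would verify that $p$ itself is an fpqc cover of $W$. It is faithfully flat because $p$ is a torsor (and $H$ being smooth makes $p$ smooth, hence flat and surjective); it is quasi-compact because both $W$ and $W'$ are Noetherian, being of finite type over $k$. Thus $p$ is an fpqc cover of $W$ by $W'$ whose base change along itself is affine, and fpqc descent for the property of being affine (e.g. EGA~IV, Prop.~2.7.1) yields that $p$ is affine.

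For the quasi-projectivity, the same descent argument applied to the property ``of finite type'' (using that $H \to \Spec(k)$ is of finite type) shows that $p$ is of finite type. An affine morphism of finite type is a quasi-projective morphism: locally on $W$, the morphism $p$ is the relative spectrum of a finite type quasi-coherent $\mathcal{O}_W$-algebra, producing a closed embedding into some $\A^n_W \subset \P^n_W$. In the concrete use of this lemma inside the proof of \lemref{lem:Open-quasi-proj}, the base $W$ is quasi-projective over $k$, so one globalizes these local embeddings using an ample line bundle on $W$ to obtain a closed embedding of $W'$ into a projective bundle over $W$, forcing $W'$ to be quasi-projective over $k$.

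The only non-formal ingredient in the argument is fpqc descent for affineness, which is a classical result; no genuine obstacle arises. The entire proof should fit in a few lines once the descent reference is invoked.
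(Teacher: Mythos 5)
Your proof of affineness is correct and is close in spirit to the paper's argument: both reduce to descent for the property of being affine along a faithfully flat cover over which the morphism becomes visibly affine. The difference is the choice of cover --- the paper works over an \'etale cover of $W$ trivializing the torsor, while you descend along $p$ itself via the canonical isomorphism $W' \times_W W' \cong H \times W'$. Both are valid; yours has the small advantage of not needing to invoke \'etale-local triviality of $H$-torsors explicitly.

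The quasi-projectivity step is where your argument has a gap. The paper simply cites EGA~II, Prop.~5.1.6, which gives that an affine morphism of finite type between Noetherian schemes is quasi-projective with no further hypothesis on the base. Your reasoning only produces \emph{local} closed embeddings into $\A^n_W \subset \P^n_W$, and a locally quasi-projective morphism need not be quasi-projective; rather than gluing these, you fall back to assuming $W$ is quasi-projective over $k$ and then argue that $W'$ is quasi-projective over $k$ --- a different (and in this generality weaker) conclusion than the lemma's, which is that the \emph{morphism} $p$ is quasi-projective. The missing globalization is standard: since $W$ is Noetherian, the finitely generated quasi-coherent $\mathcal{O}_W$-algebra $p_*\mathcal{O}_{W'}$ is generated by a single coherent submodule $\mathcal{F}$, giving a closed embedding $W' \inj \Spec_W \Sym(\mathcal{F})$, which in turn is open in the projective bundle $\P(\mathcal{F}\oplus\mathcal{O}_W)$; this exhibits $p$ as quasi-projective with no ampleness hypothesis on $W$ at all.
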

\begin{proof}
  If $p$ is affine, it is quasi-projective by
  \cite[Prop.~5.1.6]{EGAII}. To show that $p$ is affine, it suffices to show that
  $W'$ is affine assuming $W$ is. By faithfully flat descent, it suffices to show
  that $p$ is {\'e}tale locally affine. But this is clear since $H$ is affine and
  every $H$-torsor is {\'e}tale locally trivial.
\end{proof}

For $X \in \Spc^G_{\fr}/k$, one knows that the coarse moduli space map
$c_{[X/G]} \colon [X/G] \to X/G$ is an isomorphism (cf. \cite[Prop.~22]{EG-Inv}).
That is, $[X/G]$ is an algebraic space.
However, it may not be a scheme even if $X$ is. We shall use the
following result in geometric invariant theory 
to conclude that $[X/G] \cong X/G$ is actually a scheme in certain cases.

\begin{prop}$($\cite[Prop.~7.1]{GIT}$)$\label{prop:EG-quotient}
  Let $\phi \colon  X \to Y$ be a morphism in $\Sch^G_{\fr}/k$ such that $Y/G$ is a
  $k$-scheme and there is a $G$-equivariant $\phi$-ample line bundle $\sL$ on $X$.
  Then $X/G$ is also a $k$-scheme. Furthermore, there exists a unique
  $\phi'$-ample line bundle $\sL'$ on $X/G$ such that $\pi^*_X(\sL') \cong \sL$,
  where $\pi_X \colon X \to X/G$ is the quotient map and
  $\phi' \colon X/G \to Y/G$ is the map induced by $\phi$ on the quotients.
  \end{prop}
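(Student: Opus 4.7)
The plan is to first construct $X/G$ as a separated algebraic space, descend $\sL$ to it via faithfully flat descent, and then use the relative ampleness of the descended line bundle to deduce that $X/G$ is in fact a scheme.

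By Proposition 22 of the cited [EG-Inv], $X/G$ exists as a separated algebraic space and $\pi_X \colon X \to X/G$ is a $G$-torsor; the $G$-equivariance of $\phi$ induces a unique morphism $\phi' \colon X/G \to Y/G$ fitting into the obvious commutative square with $\pi_X$, $\pi_Y$, $\phi$, $\phi'$. The crucial observation is that this square is Cartesian: both $X$ and $(X/G) \times_{Y/G} Y$ are $G$-torsors over $X/G$ (the latter by base change of $\pi_Y$), and the natural $G$-equivariant morphism $X \to (X/G) \times_{Y/G} Y$ induced by the pair $(\pi_X, \phi)$ is then a morphism of $G$-torsors over $X/G$, hence an isomorphism.

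Since $G$ is affine, $\pi_X$ is an affine, faithfully flat, quasi-compact morphism (torsors for affine groups are affine by fpqc descent of affineness). A $G$-equivariant line bundle on $X$ is the same data as a line bundle on $X$ equipped with a descent datum along the fppf cover $\pi_X$, so by faithfully flat descent for quasi-coherent sheaves there is a unique line bundle $\sL'$ on $X/G$ with $\pi^*_X(\sL') \cong \sL$ as $G$-equivariant line bundles. The Cartesian square then exhibits $\sL$ as the pullback of $\sL'$ along the fpqc base change $\pi_X$ of $\pi_Y$, and since relative ampleness descends under faithfully flat quasi-compact morphisms (EGA IV, 2.7.2), $\sL'$ is $\phi'$-ample.

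Finally, a morphism of algebraic spaces equipped with a relatively ample line bundle is representable by schemes and quasi-projective, so $\phi'$ is quasi-projective; because $Y/G$ is a scheme, so is $X/G$, and the uniqueness of $\sL'$ is built into fppf descent. The main subtlety, rather than an obstacle, lies in the correct identification of the $G$-equivariant structure on $\sL$ with a descent datum along $\pi_X$ and in verifying that relative ampleness descent applies in the algebraic space setting; both steps are standard once the Cartesian square is established.
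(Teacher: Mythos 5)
Your proof is correct, but it takes a genuinely different route from the source the paper points to: the paper gives no proof of its own and simply cites Mumford's \cite[Prop.~7.1]{GIT}. Mumford's argument stays entirely in the category of schemes, using the $G$-linearized $\phi$-ample sheaf (after passing to a high tensor power) to produce a $G$-equivariant locally closed embedding of $X$ into a relative projective bundle over $Y$ and descending from there. Your argument instead accepts the quotient algebraic space $X/G$ and the torsor $\pi_X$ from \cite[Prop.~22]{EG-Inv}, identifies the square as Cartesian via the clean observation that a morphism of $G$-torsors over $X/G$ is an isomorphism, descends $\sL$ to $\sL'$ by fppf descent of quasi-coherent modules along the affine fppf cover $\pi_X$, and then invokes fpqc descent of relative ampleness together with the theorem that a quasi-compact separated morphism of algebraic spaces to a scheme carrying a relatively ample sheaf is representable by (quasi-projective) schemes. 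This is slicker but front-loads more foundational machinery, and the one subtlety you flag is indeed the crux: the descent of relative ampleness must be stated for algebraic spaces, not schemes, since $X/G$ is not yet known to be a scheme at the moment the descent statement is applied; the precise statement you need is in the Stacks Project (descent of $f$-ampleness along fppf covers in the algebraic space setting) rather than EGA~IV, 2.7.2 per se. You should also note, to make the last step airtight, that $\phi'$ is quasi-compact and separated — this is automatic here because $X/G$ and $Y/G$ lie in $\Spc_k$, which consists of separated finite type algebraic spaces — before concluding that the existence of $\sL'$ forces $\phi'$ to be representable and $X/G$ to be a scheme.
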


\begin{cor}\label{cor:EG-quotient-1}
  Suppose that $G$ is a closed
  subgroup of another $k$-group $G'$ and $X \in \Sch^G_k$ is $G$-quasi-projective
  (e.g., $X$ is normal and quasi-projective) over $k$. Then
  $X \stackrel{G}{\times} G' \in \Sch^{G'}_k$ is a quasi-projective $k$-scheme.
  In particular, it is $G'$-quasi-projective if $X$ is normal.
  \end{cor}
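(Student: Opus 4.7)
The plan is to apply \propref{prop:EG-quotient} to the $G$-equivariant projection $\phi \colon X \times G' \to G'$, where $G$ acts diagonally on $X \times G'$ (via the given action on $X$ and the right-translation action $g \cdot g' = g'g^{-1}$ on $G'$). Since $G$ acts freely on $G'$ by right translation and $G'/G$ exists as a (quasi-projective) $k$-scheme (the homogeneous space of the linear algebraic group $G'$), the hypothesis on $Y = G'$ in \propref{prop:EG-quotient} is satisfied.

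Next I would produce the required $G$-equivariant $\phi$-ample line bundle. By the $G$-quasi-projectivity hypothesis on $X$, there is a $G$-equivariant ample line bundle $\sL_0$ on $X$. Let $\sL = p_1^*(\sL_0)$ be its pullback along the projection $p_1 \colon X \times G' \to X$; this is $G$-equivariant for the diagonal action, and since the fiber of $\phi$ over a point $g' \in G'$ is $X \times \{g'\} \cong X$ on which $\sL$ restricts to $\sL_0$, the line bundle $\sL$ is $\phi$-ample. By \propref{prop:EG-quotient}, the quotient $X \stackrel{G}{\times} G' = (X \times G')/G$ is a $k$-scheme, and there is a unique line bundle $\sL'$ on it which is $\phi'$-ample, where $\phi' \colon X \stackrel{G}{\times} G' \to G'/G$ is the map induced by $\phi$.

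To conclude quasi-projectivity of $X \stackrel{G}{\times} G'$ over $k$, pick an ample line bundle $\sM$ on the quasi-projective scheme $G'/G$. Then for sufficiently large $n$, the line bundle $\sL' \otimes \phi'^*(\sM^{\otimes n})$ is ample on $X \stackrel{G}{\times} G'$ by the standard fact that tensoring a relatively ample bundle with the pullback of an ample bundle (from a quasi-projective base) yields an ample bundle. This proves the first assertion.

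For the final sentence, suppose $X$ is normal (and quasi-projective, so that the main part of the corollary applies). Then $X \times G'$ is normal (as $G'$ is smooth over $k$), and since the quotient map $X \times G' \to X \stackrel{G}{\times} G'$ is a $G$-torsor and hence smooth surjective, $X \stackrel{G}{\times} G'$ is a normal quasi-projective $k$-scheme with a $G'$-action. By Thomason's theorem \cite[\S~5.7]{Thomason-Duke-2}, every normal quasi-projective scheme with an action of a linear algebraic group admits a $G'$-linearizable ample line bundle, so $X \stackrel{G}{\times} G'$ is $G'$-quasi-projective. The only subtle point in the argument is choosing the relatively ample bundle on $X \times G'$ compatibly with the $G$-action, which is handled straightforwardly by pullback along the first projection; the rest is a formal consequence of \propref{prop:EG-quotient} and Thomason's equivariant linearization result.
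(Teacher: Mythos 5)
Your proposal is correct and follows the same strategy as the paper: apply \propref{prop:EG-quotient} to the second projection $\phi \colon X \times G' \to G'$ (with $G$ acting freely, diagonally), using a $G$-equivariant ample line bundle on $X$ pulled back along $p_1$ to furnish the required $\phi$-ample bundle, invoking Borel's theorem that $G'/G$ is quasi-projective, and then concluding with Thomason's \cite[\S~5.7]{Thomason-Duke-2} for the $G'$-quasi-projectivity in the normal case. Your write-up simply makes explicit the two steps the paper leaves implicit (that $p_1^*\sL_0$ is the $G$-equivariant $\phi$-ample bundle, and that $\phi'$-ampleness plus an ample bundle on $G'/G$ yield absolute ampleness).
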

\begin{proof}
  We let $Y = X \stackrel{G}{\times} G'$ and let $\phi \colon X \times G' \to G'$
  denote the projection and let $\phi' \colon Y \to {G'}/G$ denote the induced map
  on quotients. Since $X$ admits a $G$-equivariant ample line bundle, it follows that
$X \times G'$ admits a $G$-equivariant $\phi$-ample line bundle. Since
${G'}/G$ is a quasi-projective $k$-scheme by \cite[Thm.~6.8]{Borel}, we conclude
from \propref{prop:EG-quotient} that $Y \in \Sch^{G'}_k$ and it admits a
$\phi'$-ample line bundle. In particular, it is quasi-projective over $k$.
Moreover, smoothness of $G$ and $G'$ implies that $Y$ is normal if $X$ is
so and hence is $G'$-quasi-projective over $k$ by \cite[\S~5.7]{Thomason-Duke-2}.
\end{proof}

\begin{cor}$($\cite[Prop.~23]{EG-Inv}$)$\label{cor:EG-quotient-2}
Suppose that $U \in \Sch^G_{\fr}/k$ such that $U/G \in \Sch_k$.
Let $X \in \Sch^G_k$ and assume that one of the following conditions hold.
\begin{enumerate}
\item
  $G$ is connected and $X$ is normal.
\item
  $G$ is special.
\item
  $X$ is $G$-quasi-projective.
  \end{enumerate}
Then $X \stackrel{G}{\times} U \in \Sch_k$. It is normal (resp. smooth) if $X$ is.
It is quasi-projective if (3) holds and $U/G$ is quasi-projective. 
\end{cor}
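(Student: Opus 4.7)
The plan is to verify each of the three hypotheses (1), (2), (3) separately, treating case (3) as the main case to which the others reduce.

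For case (3), the natural approach is to apply Proposition~\ref{prop:EG-quotient} to the $G$-equivariant projection $\phi \colon X \times U \to U$. Since $X$ is $G$-quasi-projective, it admits a $G$-equivariant ample line bundle $\sL$; pulling it back along the projection $X \times U \to X$ produces a $G$-equivariant line bundle on $X \times U$ which is $\phi$-ample (since $\phi$-ampleness is fiberwise and $\sL$ is ample on $X$). As $U/G$ is a scheme by hypothesis, Proposition~\ref{prop:EG-quotient} applies and yields that $X \stackrel{G}{\times} U = (X \times U)/G$ is a scheme. The proposition further supplies a $\phi'$-ample line bundle on $X \stackrel{G}{\times} U$, where $\phi' \colon X \stackrel{G}{\times} U \to U/G$ is the map induced on quotients. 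When $U/G$ is quasi-projective over $k$, composing a projective embedding of $U/G$ with this relatively ample bundle gives the required quasi-projectivity of $X \stackrel{G}{\times} U$.

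For case (2), I would use that a special $k$-group has the property that every Nisnevich (equivalently Zariski) locally trivial $G$-torsor over a scheme is Zariski locally trivial. Thus we can cover $U/G$ by Zariski open subschemes $V_i$ such that $U \times_{U/G} V_i \cong G \times V_i$ as $G$-schemes. Over each $V_i$ we have $(X \times (G \times V_i))/G \cong X \times V_i$, which is a scheme, and these patches glue (via the cocycle condition for the covering of $U/G$) to exhibit $X \stackrel{G}{\times} U$ as a scheme. For case (1), Lemma~\ref{lem:Open-quasi-proj} lets me cover $X$ by $G$-invariant open subschemes $X_\alpha$ which are $G$-quasi-projective; case (3) then shows each $X_\alpha \stackrel{G}{\times} U$ is a scheme, and these are open in $X \stackrel{G}{\times} U$ with intersections again given by $G$-quasi-projective pieces (since $G$-quasi-projectivity is stable under $G$-invariant open immersion, cf.\ \cite[\S~5.7]{Thomason-Duke-2}). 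So $X \stackrel{G}{\times} U$ is a scheme by gluing.

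Finally, for the normality and smoothness assertions, I would observe that the quotient map $q \colon X \times U \to X \stackrel{G}{\times} U$ is a $G$-torsor and hence smooth surjective (since $G$ is smooth). Both normality and smoothness descend along smooth surjective maps, so it suffices that $X \times U$ be normal (resp.\ smooth); since $U$ is smooth (as the base $U/G$ is a scheme and $U \to U/G$ is a smooth $G$-torsor, any such $U$ arising in the Borel/bar construction is smooth), this reduces to the hypothesis on $X$. The main technical obstacle is case (1), where one must ensure that the local patches $X_\alpha \stackrel{G}{\times} U$ glue compatibly along double overlaps; the key point making this work is that Lemma~\ref{lem:Open-quasi-proj}'s $G$-invariant covering restricts to $G$-quasi-projective open subsets on intersections, so case (3) applies uniformly.
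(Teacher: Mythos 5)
Your proof takes a genuinely different and more self-contained route than the paper's. The paper cites \cite[Prop.~23]{EG-Inv} wholesale for the claim that $X \stackrel{G}{\times} U$ is a scheme in all three cases and for the normality/smoothness assertion (dismissed as ``obvious''), and only supplies an argument for the quasi-projectivity claim by repeating the proof of \corref{cor:EG-quotient-1} with $G'$ replaced by $U$. You instead prove all three cases directly: case (3) by invoking \propref{prop:EG-quotient} on the projection $X \times U \to U$ (this is essentially the computation the paper uses for the quasi-projective claim, and it is correct); case (2) by Zariski-local triviality of $G$-torsors for special $G$, which gives local models $X \times V_i$ and hence a Zariski open cover by schemes; case (1) by \lemref{lem:Open-quasi-proj} to reduce to (3) and glue. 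All of these arguments work, so in effect you have reproved the cited result rather than taking it as a black box, which is useful.

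There is, however, a real gap in your treatment of the normality/smoothness assertion. You deduce from the $G$-torsor $q \colon X \times U \to X \stackrel{G}{\times} U$ that it suffices for $X \times U$ to be normal (resp.\ smooth), which in turn requires $U$ to be normal (resp.\ smooth) over $k$. Your parenthetical justification --- that $U/G$ is a scheme and $U \to U/G$ is a smooth $G$-torsor --- only shows $U$ is smooth \emph{over} $U/G$, not over $k$; for that one needs $U/G \in \Sm_k$, which is not part of the stated hypothesis $U/G \in \Sch_k$. As literally stated the assertion can fail: take $X = \Spec(k)$ and $U$ a disjoint union of two copies of a singular affine curve with $G = \Z/2$ swapping them, so $X \stackrel{G}{\times} U \cong U/G$ is singular while $X$ is smooth. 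The conclusion does hold in every application in the paper (admissible gadgets satisfy $U_i/G \in \Sm_k$ by \defref{defn:Add-Gad}(4)), and the paper itself elides this by declaring the claim ``obvious,'' so the looseness is inherited from the statement rather than being original to your argument; but you should state explicitly that $U$ smooth (equivalently $U/G \in \Sm_k$) is being assumed for this part rather than derive it by a circular argument.
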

\begin{proof}
  The first claim is already proven in \cite[Prop.~23]{EG-Inv} and the second claim is
  obvious.
If $U/G$ is quasi-projective and condition (3) holds, then
we repeat the proof of \corref{cor:EG-quotient-1} with $G'$ replaced
by $U$ to get the desired claim.
\end{proof}

\subsection{$K$-theory of Borel construction}\label{sec:Borel-S}
Let $k$ be a field and $G \in \Grp_k$. Recall from \cite[\S~3]{Krishna-Crelle} that
a pair $(V,U)$ of smooth $k$-schemes is a {\sl good pair} for $G$ if $V$ is a
representation of $G$ and
$U \subsetneq V$ is a $G$-invariant open subset which lies in $\Sch^G_{\fr}/k$
such that $U/G \in \Sm_k$ and
$U(k) \neq \emptyset$ (this is automatic if $k$ is infinite).
It is known (cf. \cite[Rem.~1.4]{Totaro1}) that a good pair for $G$ always exists.
We also recall the following from \cite[Defn.~3.1]{Krishna-Crelle}.

\begin{defn}\label{defn:Add-Gad}
A sequence of pairs $\rho = {\left(V_i, U_i\right)}_{i \ge 1}$ of 
$k$-schemes is called an {\sl admissible gadget} for $G$, if there exists a
good pair $(V,U)$ for $G$ such that $V_i = V^{\oplus i}, U_1 = U$ and
$U_i \subsetneq V_i$ is $G$-invariant open and
the following are satisfied for each $i \ge 1$.
\begin{enumerate}
\item
$\left(U_i \oplus V\right) \cup \left(V \oplus U_i\right)
\subseteq U_{i+1}$ as $G$-invariant open subsets.
\item
$\codim_{U_{i+2}}\left(U_{i+2} \setminus 
\left(U_{i+1} \oplus V\right)\right) > 
\codim_{U_{i+1}}\left(U_{i+1} \setminus \left(U_{i} \oplus V\right)\right)$.
\item
$\codim_{V_{i+1}}\left(V_{i+1} \setminus U_{i+1}\right)
> \codim_{V_i}\left(V_i \setminus U_i\right)$.
\item
  $U_i \in \Sch^G_{\fr}/k$ and ${U_i}/G \in \Sm_k$. 
\end{enumerate}

We shall say that $\rho = {\left(V_i, U_i\right)}_{i \ge 1}$
is a {\sl good admissible gadget}
for $G$ if it also satisfies the following additional condition.
\\
\hspace*{.5cm} (5) Each ${U_i}/G$ is a smooth quasi-projective $k$-scheme.
\end{defn}

The following result guarantees the existence of admissible gadgets.

\begin{lem}\label{lem:Adm-gadget}
  Assume that $G$ is a special reductive group and let $H \subset G$ be a closed
  subgroup. Then the following hold.
  \begin{enumerate}
    \item
  If $\rho = {\left(V_i, U_i\right)}_{i \ge 1}$ is an admissible gadget for $G$, then
  it is also an admissible gadget for $H$ under the restriction of the
  action from $G$ to $H$.
\item
  If $H$ is reductive and $\rho$ is a good admissible gadget for $G$, then it is a
  good admissible gadget for $H$.
\item
  A good admissible gadget for $G$ exists.
  In particular, every $k$-group admits an admissible gadget.
  \end{enumerate}
\end{lem}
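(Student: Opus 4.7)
I treat items (1)--(3) in order.

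For (1), the conditions (1)--(3) of \defref{defn:Add-Gad} are purely geometric assertions about codimensions of subschemes of $V_i$ and $U_i$, making no reference to the acting group, so they transfer verbatim from $G$ to $H$. The $H$-action on each $U_i$ is automatically free since $H \subset G$ and $G$ acts freely, so the only substantive issue is to show $U_i/H \in \Sm_k$. For this I use the Morita-type identification
\[
U_i/H \;\cong\; (G/H) \stackrel{G}{\times} U_i,
\]
where $G$ acts on $G/H$ by left translation and diagonally on the product. Since $G$ is special and $U_i/G \in \Sm_k$ by hypothesis, \corref{cor:EG-quotient-2}(2) applied with $X = G/H$ and $U = U_i$ exhibits $(G/H) \stackrel{G}{\times} U_i$ as a $k$-scheme, and the smoothness assertion of the same corollary (together with smoothness of the homogeneous space $G/H$) verifies condition (4) of \defref{defn:Add-Gad} for $H$.

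For (2), under the extra assumption that $H$ is reductive, Matsushima's theorem gives that $G/H$ is affine, hence quasi-projective. Since a special $k$-group is connected (its base change to the algebraic closure being a product of $GL_n$'s, $SL_n$'s and $Sp_{2n}$'s by Grothendieck's classification) and $G/H$ is normal, \cite[\S~5.7]{Thomason-Duke-2} yields that $G/H$ is $G$-quasi-projective. Combined with the quasi-projectivity of $U_i/G$ supplied by goodness of $\rho$, \corref{cor:EG-quotient-2}(3) shows that $U_i/H$ is quasi-projective, proving the additional condition (5) of \defref{defn:Add-Gad}.

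For (3), I first exhibit a good admissible gadget for $GL_n$ via the classical Totaro-type construction. Take $V$ to be the $GL_n$-representation $\mathrm{Mat}_{n \times n}$ via left multiplication, with $U = GL_n \subset V$ the open subset of invertible matrices; then $(V,U)$ is a good pair for $GL_n$. With $V_i = V^{\oplus i}$ and $U_i \subset V_i$ the locus of full-rank $n \times (ni)$ matrices, $U_i/GL_n$ is the Grassmannian of $n$-planes in $k^{ni}$, which is smooth projective; the codimension conditions (2) and (3) of \defref{defn:Add-Gad} reduce to the standard fact that rank-deficient $n \times m$ matrices form a closed subvariety of codimension $m - n + 1$. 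For an arbitrary special reductive $k$-group $G$, I choose a closed embedding $G \inj GL_n$; since $G$ is reductive, (2) transfers this good admissible gadget from $GL_n$ down to $G$. For the final assertion, any $k$-group $H$ embeds into some $GL_n$ by the very definition of a linear algebraic group, and (1) transfers the admissible gadget of $GL_n$ to $H$ (no longer required to be good).

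The principal obstacle is the Morita-style identification $U_i/H \cong (G/H) \stackrel{G}{\times} U_i$, which is entirely analogous to \lemref{lem:Morita*} and reduces to checking that the natural map $(G/H) \times U_i \to U_i/H$ sending $(gH, u) \mapsto (g^{-1}u)H$ is $G$-invariant with fibers precisely the $G$-orbits. Once this is established, each item is a direct application of \corref{cor:EG-quotient-2} combined with standard facts about homogeneous spaces.
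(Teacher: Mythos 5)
Your argument is correct, but it takes a genuinely different route from the paper's in items (1) and (3). For (1), the paper exploits the Zariski-local triviality of the $G$-torsor $U_i \to U_i/G$ directly: since $G$ is special, this torsor is trivial over a Zariski affine cover $\{W_j\}$ of $U_i/G$, so $U_i/H$ is Zariski-locally $W_j \times (G/H)$, hence a scheme. You instead invoke the global Morita identification $U_i/H \cong (G/H) \stackrel{G}{\times} U_i$ and feed it into \corref{cor:EG-quotient-2}(2). Both are valid; your route requires checking the Morita isomorphism (which does hold, as a variant of \lemref{lem:Morita*} together with the observation that $U_i \stackrel{H}{\times} G \cong (G/H) \times U_i$ via $[(u,g)] \mapsto (gH, gu)$), whereas the paper's avoids this by using the local triviality directly. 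Your treatment of (2) is essentially the paper's, though the appeal to Sumihiro--Thomason for $G$-quasi-projectivity of $G/H$ is a detour — since $G/H$ is affine, $\sO_{G/H}$ is already a $G$-equivariant ample line bundle, which gives $G$-quasi-projectivity with no connectedness discussion.

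For (3) the two approaches are substantially different. The paper constructs a good admissible gadget directly for the given special reductive $G$: from a faithful $n$-dimensional representation $W$, take $V = W^{\oplus n}$, let $U$ be the maximal free locus, and set $U_{i+1} = (U_i \oplus V) \cup (V \oplus U_i)$ recursively; quasi-projectivity of $U_i/G$ then requires invoking Mumford's GIT via the stable locus $V_i^s$. You instead take $V = \mathrm{Mat}_{n\times n}$ for $GL_n$ and $U_i$ the full-rank locus, which makes $U_i/GL_n$ the Grassmannian — projectivity is free — and you then bootstrap to $G \subset GL_n$ via part (2). Your construction produces a strictly larger $U_i$ (the paper's recursive $U_i$ sits inside your full-rank locus), which is what lets you sidestep GIT; both satisfy the admissibility conditions, and your closing appeal to (1) with ambient $GL_n$ for the final assertion is also sound. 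Note, though, that your codimension computation for condition (2) of \defref{defn:Add-Gad} ("standard fact that rank-deficient $n\times m$ matrices have codimension $m-n+1$") is really a computation of $\codim$ of $\{A \text{ degenerate}\}$ in $V_i$, and you are implicitly using that its intersection with the dense open $U_{i+1}$ is dense in it — worth a sentence to nail down.
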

\begin{proof}
  To prove (1), we only need to show that if $\rho = {\left(V_i, U_i\right)}_{i \ge 1}$
  is an admissible gadget for $G$, then each ${U_i}/H$ is a smooth $k$-scheme.
  To that end, we fix $i \ge 1$ and look at the quotient maps
  $U_i \xrightarrow{p} {U_i}/H \xrightarrow{q} {U_i}/G$. We let $\pi = q \circ p$.
  Since $\pi$ is a $G$-torsor of schemes and $G$ is special, we get that there exists
  an affine Zariski open cover $\{W_j\}$ of ${U_i}/G$ such that
  $\pi^{-1}(W_j) \cong W_j \times G$ and $\pi$ is the projection to $W_j$. It follows
  that $q^{-1}(W_j) \cong W_j \times ({G/H})$. As $G/H$ is a quasi-projective
  $k$-scheme (cf. \cite[Thm.~6.8]{Borel}), it follows that ${U_i}/H$ has a Zariski
  open cover by $k$-schemes. We conclude that ${U_i}/H \in \Sch_k$. The smoothness of
  ${U_i}/H$ is directly deduced from that of $U_i$.

To prove (2), we need to show in the proof of (1) that ${U_i}/H$ is quasi-projective
  if $H$ is reductive. To that end, we first observe that $G/H$ is an affine
  $k$-scheme. This follows from \cite[Thm.~A]{Richardson} because we can
  always pass to an algebraic closure of $k$ to check affineness.
  We next note that ${U_i}/H$ is Zariski locally product of ${U_i}/G$ with
  $G/H$, as shown above.  It follows that $q$ is an affine map. In particular, it is
  quasi-projective. We are now done because ${U_i}/G$ is 
  quasi-projective over $k$ by the hypothesis in item (2).

To prove (3), we first define our $\rho$. We choose a faithful 
representation $W$ of $G$ of dimension $n$ and let
$V = W^{\oplus n} \simeq {\rm End}_k(W)$.
If we let $U \subset V$ be the largest $G$-invariant open on which $G$ acts freely,
then the faithfulness implies that $U(k) \neq \emptyset$. We let
  $Z = V \setminus U$. We take $V_i = V^{\oplus i}, U_1 = U$ and
  $U_{i+1} = \left(U_i \oplus V\right) \cup \left(V \oplus U_i\right)$ for $i \ge 1$.
  Setting $Z_1 = Z$ and $Z_{i+1} = U_{i+1} \setminus \left(U_i \oplus V\right)$ for 
$i \ge 1$, one checks that $V_i \setminus U_i = Z^i$ and $Z_{i+1} = Z^i \oplus U$.
In particular, $\codim_{V_i}\left(V_i \setminus U_i\right) = i (\codim_V(Z))$ and 
$\codim_{U_{i+1}}\left(Z_{i+1}\right) = (i+1)d - i(\dim(Z))- d =  i (\codim_V(Z))$,
where $d = \dim(V)$. To show that $\rho =  {\left(V_i, U_i\right)}_{i \ge 1}$ is a
good admissible gadget for $G$, it remains to prove that each ${U_i}/G$ is
a quasi-projective $k$-scheme.

To that end, we let $U'_i \subset V_i$ be the largest $G$-invariant open subscheme
  of $V_i$ where the $G$-action is free. Note that $U'_i \neq \emptyset$ because
  $V$ has a non-empty $G$-invariant open with free $G$-action.
  We let $V^s_i \subset V_i$ be the largest
  $G$-invariant open subscheme such that the $G$-orbit of every closed
  point $x \in V^s_i$
  is closed and the stabilizer group $G_x$ is finite. We then have $G$-invariant
  open immersions $U'_i \subset V^s_i \subset V_i$
  (cf. \cite[\S~4.2]{Morel-Voevodsky}, \cite[Chap.~1, App.~B]{GIT}). We now apply
  \cite[Thm.~1.10]{GIT} (with $\sL = \sO_X$) to conclude that the quotient
  ${V^s_i}/G$ exists as a quasi-projective $k$-scheme.
On the other hand, there is a $G$-invariant open inclusion $U_i \subset U'_i$
  by our choice of $U_i$. It follows that there are open immersions
  ${U_i}/G \subset {U'_i}/G \subset {V^s_i}/G$. In particular, ${U_i}/G$ and
  ${U'_i}/G$ are quasi-projective $k$-schemes.
\end{proof}

Given an {\sl admissible gadget} $\rho = {\left(V_i, U_i\right)}_{i \ge 1}$ for $G$
and $X \in \Spc^G_k$, we let $X^i_G(\rho)$ denote the quotient space
$X \stackrel{G} {\times} U_i$.
We shall often write $X^i_G(\rho)$ as $X^i(\rho)$ if the underlying
group $G$ is fixed throughout the context.
We can say the following about $X^i(\rho)$.

\begin{cor}\label{cor:Mixed-quotient}
  Under any of the three conditions in \corref{cor:EG-quotient-2},
  we have $X^i(\rho) \in \Sch_k$. It is smooth over $k$ if $X$ is so.
  If $\rho$ is a good admissible gadget and $X$ is
  $G$-quasi-projective over $k$, then $X^i(\rho)$ is quasi-projective over $k$.
  
\end{cor}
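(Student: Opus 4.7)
The plan is to apply Corollary \ref{cor:EG-quotient-2} directly with $U = U_i$ for each $i \ge 1$, since by construction $X^i(\rho) = X \stackrel{G}{\times} U_i$. I first need to verify that the standing hypotheses of that corollary hold. By parts (4) of Definition \ref{defn:Add-Gad}, we have $U_i \in \Sch^G_{\fr}/k$ and $U_i/G \in \Sm_k$; in particular $U_i/G \in \Sch_k$. Thus under any of the three conditions (1), (2), (3) imposed on $(G, X)$, Corollary \ref{cor:EG-quotient-2} immediately yields $X^i(\rho) \in \Sch_k$.

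For the smoothness assertion, I observe that $U_i$ is an open subscheme of the $k$-vector space $V_i$, hence is itself smooth over $k$. Therefore if $X \in \Sm_k$, then $X \times U_i \in \Sm_k$, and since the quotient map $X \times U_i \to X^i(\rho)$ is a $G$-torsor with $G$ smooth, faithfully flat descent transfers smoothness to $X^i(\rho)$. This is exactly the smoothness part of the conclusion of Corollary \ref{cor:EG-quotient-2}, which I invoke.

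For the last assertion, suppose $\rho$ is a good admissible gadget and $X$ is $G$-quasi-projective over $k$. Then condition (5) of Definition \ref{defn:Add-Gad} gives that $U_i/G$ is quasi-projective over $k$, while our hypothesis on $X$ is precisely condition (3) of Corollary \ref{cor:EG-quotient-2}. The final clause of that corollary then yields quasi-projectivity of $X^i(\rho)$ over $k$.

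There is no real obstacle in this proof: the statement is essentially a repackaging of Corollary \ref{cor:EG-quotient-2} in the language of admissible gadgets. The only thing that required any checking was that the smoothness of $U_i$ (needed to conclude smoothness of $X^i(\rho)$ from that of $X$) comes for free from the fact that $U_i$ is an open in a representation $V_i$.
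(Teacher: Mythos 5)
Your proposal is correct and matches the paper's approach, which simply records that the statement follows from Corollary~\ref{cor:EG-quotient-2} applied with $U = U_i$ (the hypotheses of that corollary being supplied by Definition~\ref{defn:Add-Gad}). You have merely spelled out the verification the paper leaves implicit, including the useful observation that $U_i$, being open in the representation $V_i$, is smooth, which is what makes the transfer of smoothness from $X$ to $X^i(\rho)$ go through.
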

\begin{proof}
 Follows from \corref{cor:EG-quotient-2}.
\end{proof}

Given an admissible gadget $\rho$ as above and $X \in \Spc^G_k$, 
the locally closed immersions $U_i = U_i \times \{0\} \inj U_{i+1}$ for $i \ge 1$
yield an ind-object $``{\varinjlim_i}" X^i(\rho)$ in $\Spc_k$.
We shall denote the limit of this ind-space (as an {\'e}tale sheaf of sets
on $\Sch_k$) by $X^\bullet_G(\rho)$  and call it
a `motivic Borel space'. If $k = \C$, then $X^\bullet_G(\rho)_{\an}$
is homotopy equivalent to the homotopy orbit space for the $G_{\an}$-action on
$X_{\an}$.

Since the inclusion $U_i \inj U_{i+1}$ has factorization $U_i \inj U_i \oplus
V \inj U_{i+1}$, where the first inclusion is a regular closed immersion and
the second is an open immersion, it follows that $U_i \inj U_{i+1}$ is a regular
immersion (in particular, an lci map). As $X$ is $k$-flat, it follows that
the canonical inclusion
$\alpha_i \colon X^i(\rho) \inj X^{i+1}(\rho)$ is also a regular immersion.
In particular, ${\bf{L}}\alpha^*_i$ sends cohomologically bounded
pseudo-coherent complexes on
$X^{i+1}(\rho)$ to cohomologically bounded
pseudo-coherent complexes on $X^{i}(\rho)$ (cf.
\cite[\S~2.5.1]{TT}). We thus have the pull-back maps
$\alpha^*_i \colon K'(X^{i+1}(\rho)) \to K'(X^{i}(\rho))$ which satisfy the
usual composition law. These maps together define a pro-spectrum
$\{K'(X^{i}(\rho))\}_{i \ge 1}$.
We similarly have a pro-spectrum $\{KH(X^{i}(\rho))\}_{i \ge 1}$.
In order to define the $K'$-theory of motivic Borel spaces using these pro-spectra,
we need to prove the following result.

\begin{prop}\label{prop:Rep-ind}
  Let  $\rho = {(V_i,U_i)}_{i \ge 1}$ and $\rho' = {(V'_i, U'_i)}_{i \ge 1}$ be two
  admissible gadgets for $G$ and let
  $X \in \Spc^G_k$. Then there is a canonical weak equivalence
  \[
  \kappa'_X \colon \holim_i K'(X^{i}(\rho)) \xrightarrow{\simeq}  
  \holim_i K'(X^{i}(\rho')).
  \]
  If $X$ is either
  \begin{enumerate}
  \item
    a toric variety on which $G$ acts as the dense torus or
  \item
   $\Char(k) =0$ and $X \in \Sch^G_k$,
  \end{enumerate}
  then there is a canonical weak equivalence
  \[
  \kappa_X \colon \holim_i KH(X^{i}(\rho)) \xrightarrow{\simeq}  
  \holim_i KH(X^{i}(\rho')).
  \]
The weak equivalences $\kappa_X$ and $\kappa'_X$ are functorial in $X \in \Spc^G_k$.
\end{prop}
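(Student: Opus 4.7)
The plan is to compare the two pro-systems through a common refinement built from the bi-indexed family $\{X^{i,j} := X \stackrel{G}{\times} (U_i \times U'_j)\}$ (with $G$ acting diagonally on $U_i \times U'_j$), and then to reduce each comparison to an application of homotopy invariance of $K'$-theory together with a localization argument driven by the growing codimensions built into Definition~\ref{defn:Add-Gad}. The projections $U_i \times U'_j \to U_i$ and $U_i \times U'_j \to U'_j$ induce two families of maps of pro-spectra, and by a cofinality argument (the diagonal $\{(i,i)\}$ is cofinal in the product ordering) it suffices to show that each of
\[
\{K'(X^i_G(\rho))\}_i \to \{K'(X^{i,i})\}_i \quad \text{and} \quad \{K'(X^j_G(\rho'))\}_j \to \{K'(X^{j,j})\}_j
\]
becomes a weak equivalence after taking homotopy limits.

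For the first of these, I would factor the projection $X^{i,j} \to X^i_G(\rho)$ through the intermediate space $Y_{i,j} := X \stackrel{G}{\times} (U_i \times V'_j)$. The map $Y_{i,j} \to X^i_G(\rho)$ is a $G$-equivariant vector bundle of rank $\dim V'_j$, hence $K'(X^i_G(\rho)) \xrightarrow{\simeq} K'(Y_{i,j})$ by homotopy invariance of $K'$-theory, naturally in both $i$ and $j$. The inclusion $X^{i,j} \hookrightarrow Y_{i,j}$ is open with closed complement $Z_{i,j} := X \stackrel{G}{\times} (U_i \times (V'_j \setminus U'_j))$ of codimension $\codim_{V'_j}(V'_j \setminus U'_j)$, which grows linearly with $j$ by Definition~\ref{defn:Add-Gad}(3). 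The Thomason--Trobaugh localization triangle $K'(Z_{i,j}) \to K'(Y_{i,j}) \to K'(X^{i,j})$ then reduces the assertion to the pro-contractibility of $\{K'(Z_{i,j})\}_j$ in each fixed homotopy degree; this in turn follows from the observation that the transition maps $Z_{i,j} \hookrightarrow Z_{i,j+1}$ push $K'$-classes into support of arbitrarily high codimension, via a coniveau-filtration argument modeled on the construction of the motivic classifying space in \cite[\S~4.2]{Morel-Voevodsky} and its adaptation in \cite[\S~3]{Krishna-Crelle}. The symmetric argument handles the other projection, producing $\kappa'_X$.

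For $KH$-theory under hypothesis $(1)$ or $(2)$ the same strategy applies, but one must replace the $K'$-localization input with its $KH$-analogue and ensure that the relevant quotients are honest $k$-schemes. Under $(1)$, $G$ is a torus (hence special) and \corref{cor:EG-quotient-2} guarantees that all the mixed quotients $X^{i,j}$, $Y_{i,j}$, $Z_{i,j}$ are schemes; the $\A^1$-invariance and cdh descent for equivariant $KH$ from \cite[Thm.~6.2]{Hoyois-Krishna}, combined with the $K'$-argument above, yield the result. Under $(2)$, in characteristic zero, a $G$-equivariant resolution of singularities reduces everything to the smooth case where $KH \simeq K'$, after which cdh descent propagates the equivalences through the resulting stratification. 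The functoriality of $\kappa'_X$ and $\kappa_X$ in $X \in \Spc^G_k$ is manifest from the naturality of every step in the construction.

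The main obstacle will be verifying the pro-nullity of $\{K'(Z_{i,j})\}_j$ in each fixed degree: the individual $K'$-groups do not vanish, so one must extract the desired pro-vanishing from the codimension growth through a coniveau-style argument, paralleling the Morel--Voevodsky proof that the motivic Borel space is well-defined up to $\A^1$-equivalence. Once this is in place, everything else is formal manipulation of localization triangles and cdh-descent.
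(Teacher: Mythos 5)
Your overall architecture matches the paper's: compare both pro-systems through the bi-indexed mixed quotients, use homotopy invariance for the projection to $Y_{i,j}=X\stackrel{G}{\times}(U_i\times V'_j)$, use the Thomason--Trobaugh localization triangle, and reduce $KH$ to $K'$ by cdh descent plus resolution of singularities. However, the central step -- the one you flag as ``the main obstacle'' -- is exactly where the argument as written fails, and the fix is not the coniveau argument you sketch.

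The problem is that the pro-system $\{K'(Z_{i,j})\}_j$ is not pro-trivial for reasons of codimension growth. What grows with $j$ is $\codim_{Y_{i,j}}(Z_{i,j})$, but the transition map $K'(Z_{i,j+1})\to K'(Z_{i,j})$ is a derived restriction along a regular closed immersion $Z_{i,j}\hookrightarrow Z_{i,j+1}$ of \emph{fixed} codimension $\dim V'$; it does not push classes into higher coniveau, and there is no reason for it to be eventually zero. The image of $K'_n(Z_{i,j'})\to K'_n(Y_{i,j'})\to K'_n(Y_{i,j})$ for $j'>j$ is supported on $Y_{i,j}\cap Z_{i,j'}=Z_{i,j}$, whose codimension in $Y_{i,j}$ is unchanged, so the transition maps in $\{K'(Y_{i,j})\}_j$ do not gain coniveau either. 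Moreover, for singular $X$ the mixed quotients $Y_{i,j}$ and $Z_{i,j}$ are singular, and coniveau/Gersten methods for $G$-theory of singular schemes are not available in the form you would need. In short, the coniveau ``observation'' you invoke is not a proof, and I do not see how to make it one in the stated generality.

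The paper gets around precisely this difficulty in two moves you omit. First, for $X$ smooth and quasi-projective (hence $G$-quasi-projective by Sumihiro plus Thomason), it appeals to a specific prior result (\cite[Thm.~4.3]{Krishna-Crelle}) that establishes the needed pro-isomorphism of $K'$-groups of mixed quotients; this is not formal and is the genuine technical input. Second, it performs a Noetherian devissage: reduce to reduced $X$ by nil-invariance; for $G$-quasi-projective $X$, embed into a smooth $G$-quasi-projective ambient and run the localization triangle; for $0$-dimensional spaces, pass to the scheme locus; and for a general reduced algebraic space, use \lemref{lem:Open-quasi-proj} to find a dense $G$-invariant open that is regular and $G$-quasi-projective, then induct. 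Your proposal attempts to treat all $X$ uniformly and thereby bypasses both the devissage and the citation of the base-case theorem; as a result, the base case is unproved and the argument for singular $X$ has no path forward. You should either reproduce the devissage and invoke the Krishna--Crelle pro-isomorphism (as the paper does), or, if you really want a direct coniveau argument, first reduce to $X$ smooth and then give a genuine proof of pro-vanishing, which is a nontrivial result in its own right.
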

\begin{proof}
For $i, j \ge 1$, we consider the algebraic spaces $\sV_{i,j} =
{\left(X \times U_i \times V'_j\right)}/G; \
\sW_{i,j} ={\left(X \times V_i \times U'_j\right)}/G$ and their common open 
subspace $\sU_{i,j} = {\left(X \times U_i \times U'_j\right)}/G$.
For a fixed $i \ge 1$, this yields a sequence ${\left(\sV_{i,j}, 
\sU_{i,j}, f_{i,j}\right)}_{j \ge 1}$, where $\sV_{i,j} \xrightarrow{\pi_{i,j}}
X^i(\rho)$ is a vector bundle, $\sU_{i,j} \subseteq \sV_{i,j}$ is an open
subspace of this vector bundle and $f_{i,j} : \left(\sV_{i,j}, \sU_{i,j}\right)
\to \left(\sV_{i,j+1}, \sU_{i,j+1}\right)$ is the natural map of pairs
of spaces over $X^i(\rho)$. For a fixed $j \ge 1$, we similarly get a 
sequence ${\left(\sW_{i,j}, \sU_{i,j}, g_{i,j}\right)}_{i \ge 1}$ of spaces over
$X^j(\rho')$.

Since $f_{i,j} \colon \sU_{i,j} \to \sU_{i,j+1}$ and
$g_{i,j} \colon \sU_{i,j} \to \sU_{i+1,j}$ are lci maps, we get a pro-spectrum
$\{K'(\sU_{i,j})\}_{i,j}$. Taking the limits, we get the maps
of spectra
\begin{equation}\label{eqn:Rep-ind-0}
  \holim_i K'(X^i(\rho)) \xrightarrow{f'} \holim_{i,j} K'(\sU_{i,j})
  \xleftarrow{g'} \holim_j K'(X^j(\rho')).
 \end{equation}
We similarly have the maps of spectra
\begin{equation}\label{eqn:Rep-ind-1}
  \holim_i KH(X^i(\rho)) \xrightarrow{f} \holim_{i,j} KH(\sU_{i,j})
  \xleftarrow{g} \holim_j KH(X^j(\rho')).
\end{equation}

It is clear that all these maps are natural in $X$. We shall show that
each of these maps is a weak equivalence under the hypotheses of the proposition.
Letting $\kappa'_X = g'^{-1} \circ f'$ and $\kappa_X = g^{-1} \circ f$ will then
finish the proof of the proposition, including the naturality claim.
Using the symmetry between $f$ and $g$ (resp. $f'$ and $g'$), the problem is reduced
to showing that $f$ and $f'$ are weak equivalences under the hypotheses of the
proposition. 

We now begin by showing that $f'$ is a weak equivalence. First of all, we can use the
nil-invariance property of $K'$-theory to assume that $X$ is reduced. We now divide
the proof into several steps.
In the first step, we consider the case when $X$ is a smooth and quasi-projective
$k$-scheme. Then $X$ is $G$-quasi-projective by
\cite[Thm.~2.5]{Sumihiro} and \cite[\S~5.7]{Thomason-Duke-2}.
Furthermore, $X^i(\rho) \in \Sm_k$ by \corref{cor:Mixed-quotient}.
Using the Milnor exact
sequence (cf. ~\eqref{eqn:Milnor-seq}), it suffices to show that the map of
pro-abelian groups $f ' \colon \{K'_n(X^i(\rho))\}_{i \ge 1} \to
\{K'_n(\sU_{i,j})\}_{i,j \ge 1}$ is an isomorphism for all $n \in \Z$.
  But this follows from \cite[Thm.~4.3]{Krishna-Crelle}.
  
  In the second step, we assume that $X$ is a $G$-quasi-projective $k$-scheme.
  In this case, we can find a $G$-equivariant closed embedding
  $\iota \colon X \inj X'$ such that $X' \in \Sch^G_k$ is smooth and
  quasi-projective over $k$. We let $U = X' \setminus X$ so that $U \in \Sch^G_k$
  is also smooth and quasi-projective over $k$.
   We let $\sU^{X'}_{i,j} = {\left(X' \times U_i \times U'_j\right)}/G$ and
  $\sU^U_{i,j} = {\left(U \times U_i \times U'_j\right)}/G$ so that
  $\sU_{i,j} = \sU^{X'}_{i,j} \setminus \sU^U_{i,j}$.  We now look at the commutative
  diagram of homotopy fiber sequences (cf. \cite[Thm.~2.7]{Thomason-Orange})

\begin{equation}\label{eqn:Rep-ind-2}
    \xymatrix@C1pc{
      \holim_i K'(X^i(\rho)) \ar[r] \ar[d]_-{f'_X} & \holim_i K'(X'^i(\rho)) \ar[r]
      \ar[d]^-{f'_{X'}} &  \holim_i K'(U^i(\rho)) \ar[d]^-{f'_U} \\
      \holim_{i,j} K'(\sU_{i,j}) \ar[r] & \holim_{i,j} K'(\sU^{X'}_{i,j})  \ar[r] &
      \holim_{i,j} K'(\sU^U_{i,j}),}
\end{equation}
where $f'_X = f'$ and $f'_{X'}$ (resp. $f'_U$) is the analogous map for $X'$
(resp. $U$).
The upper left horizontal arrow (which is the limit of push-forward maps)
exists and the left square commutes by
\cite[Lem.~5.3]{Krishna-Crelle}. The right square clearly commutes. 
We showed in the previous step that $f'_{X'}$ and $f'_U$ are
weak equivalences. It follows that $f'$ is a weak equivalence.

In the third step, we assume that $X$ is a 0-dimensional algebraic space over $k$.
In this case, $X$ must coincide with its scheme locus. In particular, it is
a 0-dimensional $k$-scheme. It is then necessarily regular and quasi-projective over
$k$. In particular, it is $G$-quasi-projective and we are back to the second step.

In the final step, we let $X \in \Spc^G_k$ be any reduced algebraic space of
positive dimension. We use \lemref{lem:Open-quasi-proj} to find a
  $G$-invariant dense open $U \subset X$ such that $U$ is regular and
$G$-quasi-projective over $k$. We let $Z = X \setminus U$ be the complement with
the reduced algebraic space structure. It is then easy to check that $Z$ is
$G$-invariant (cf. \cite[Lem.~2.5]{Thomason-Orange}).
  We let $\sU^Z_{i,j} = {\left(Z \times U_i \times U'_j\right)}/G$ and
  $\sU^U_{i,j} = {\left(U \times U_i \times U'_j\right)}/G$ so that
  $\sU^Z_{i,j} = \sU_{i,j} \setminus \sU^U_{i,j}$.  We now look at the commutative
  diagram of homotopy fiber sequences 
  \begin{equation}\label{eqn:Rep-ind-3}
    \xymatrix@C1pc{
      \holim_i K'(Z^i(\rho)) \ar[r] \ar[d]_-{f'_Z} & \holim_i K'(X^i(\rho)) \ar[r]
      \ar[d]^-{f'_X} &  \holim_i K'(U^i(\rho)) \ar[d]^-{f'_U} \\
      \holim_{i,j} K'(\sU^Z_{i,j}) \ar[r] & \holim_{i,j} K'(\sU_{i,j})  \ar[r] &
      \holim_{i,j} K'(\sU^U_{i,j}).}
  \end{equation}
  We have shown previously that the right vertical arrow is a weak equivalence.
  The left vertical arrow is a weak equivalence by induction on the dimension of
  $X$. It follows that the middle vertical arrow is a weak equivalence.

We now show that the map $f$ in ~\eqref{eqn:Rep-ind-1} is a weak equivalence if
  $X$ satisfies condition (1) or (2) of the proposition. We shall do this by
  induction on $\dim(X)$.  We let $X' = X_\red$ and
 $\sU^{X'}_{i,j} = {\left(X' \times U_i \times U'_j\right)}/G$.
We look at the commutative diagram
  \begin{equation}\label{eqn:Rep-ind-4}
    \xymatrix@C1pc{
      \holim_i KH(X^i(\rho)) \ar[d]_-{f_X} \ar[r] & \holim_i KH(X'^i(\rho)) \ar[r]
      \ar[d]^-{f_{X'}} & \holim_i K'(X'^i(\rho)) \ar[d]^-{f'_{X'}} \\
      \holim_{i,j} KH(\sU_{i,j}) \ar[r] & \holim_{i,j} KH(\sU^{X'}_{i,j}) \ar[r] & 
      \holim_{i,j} K'(\sU^{X'}_{i,j}),}
    \end{equation}
where $f_X = f$ and $f_{X'}$ is the analogous map for $X'$.
  The horizontal arrows in the left square of this diagram
  are the pull-back maps under the
  inclusion $X' \inj X$ and the ones in the right square are induced by the natural
  transformation from the $KH$-theory to the $K'$-theory presheaf in the category of
  $k$-spaces with flat maps. One consequence of the cdh-descent for algebraic spaces
  (cf. \cite[Thm.~6.2]{Hoyois-Krishna}) is that the horizontal
  arrows in the left square in ~\eqref{eqn:Rep-ind-4} are weak equivalences.

Suppose now that $\dim(X) = 0$. Then $X_\red$ is a regular $k$-scheme and
  this implies that the horizontal arrows in right square in
  ~\eqref{eqn:Rep-ind-4} are weak equivalences. Meanwhile,
  we showed previously in this case that the right-most vertical arrow is a weak
  equivalence. We deduce that all vertical arrows in ~\eqref{eqn:Rep-ind-4} are weak
  equivalences. We shall assume in the rest of the proof that $\dim(X) \ge 1$.

To prove the proposition in general under its condition (2),
  we choose a $G$-equivariant map $p \colon \wt{X} \to X'$ such that
  $\wt{X} \in \Sch^G_k$ and $p$ is a resolution of singularities of $X'$.
  We let $Y = X'_\sing$ with the reduced closed subscheme
  structure and let $Y' = p^{-1}(Y)$. We get a commutative diagram in $\Sch^G_k$:
\begin{equation}\label{eqn:Rep-ind-7}
    \xymatrix@C1pc{
      Y' \ar[r] \ar[d] & \wt{X} \ar[d] \\
      Y \ar[r] & X'.}
 \end{equation}

 For every $i$ and $j$, the diagram ~\eqref{eqn:Rep-ind-7} gives rise to
 the abstract blow-up squares 
  \begin{equation}\label{eqn:Rep-ind-5}
    \xymatrix@C1pc{
      Y'^i(\rho) \ar[r] \ar[d] & \wt{X}^i(\rho) \ar[d] & &
      \sU^{Y'}_{i,j} \ar[r] \ar[d] & \sU^{\wt{X}}_{i,j} \ar[d] \\
      Y^i(\rho) \ar[r] & X'^i(\rho)  & &
      \sU^Y_{i,j} \ar[r] & \sU^{X'}_{i,j}.}
    \end{equation}
of algebraic spaces over $k$. 

By \cite[Thm.~6.2]{Hoyois-Krishna}, the above squares give rise to a commutative
diagram of homotopy fiber sequences of spectra
 \begin{equation}\label{eqn:Rep-ind-6}
    \xymatrix@C.6pc{
      \ \holim_i KH(X'^i(\rho)) \ar[d]_-{f_{X'}} \ar[r] &
      \holim_i KH(\wt{X}^i(\rho))
      \amalg \holim_i KH(Y^i(\rho)) \ar[d]^-{f_{\wt{X}} \amalg f_Y} \ar[r] &
      \holim_i KH(Y'^i(\rho)) \ar[d]^-{f_{Y'}} \\
     \holim_{i,j} KH(\sU^{X'}_{i,j}) \ar[r] &
      \holim_{i,j} KH(\sU^{\wt{X}}_{i,j}) \amalg  \holim_{i,j} KH(\sU^{Y}_{i,j}) \ar[r] &
      \holim_{i,j} KH(\sU^{Y'}_{i,j}).}
\end{equation}

 The middle and the right vertical arrows are weak equivalences by induction
 because $\dim(Y) < \dim(X) > \dim(Y')$ and $\wt{X}$ is regular. It follows that the
 left vertical arrow is a weak equivalence. As the horizontal arrows in the
 left square of ~\eqref{eqn:Rep-ind-4} are weak equivalences, we deduce
 that $f$ is a weak equivalence.

 To prove the proposition under its condition (1), we first note that
 a toric variety over $k$ of minimal dimension is regular. In particular, the
 claim holds for such a variety using the right square in ~\eqref{eqn:Rep-ind-4}.
 We next note that for an arbitrary toric variety $X$ with dense torus $G$, 
 there exists a toric resolution of singularities $p \colon \wt{X} \to X$.
 In particular, there exists a commutative diagram identical to
 ~\eqref{eqn:Rep-ind-7} (with $X'$ replaced by $X$) in $\Sch^G_k$ in which all
 schemes are toric varieties (whose dense tori are
 quotients of $G$), all maps are $G$-equivariant and $\wt{X}$ is regular
 (e.g., see \cite[\S~2]{CHWW}).
 We now repeat the above proof in the characteristic zero case
 to conclude that $f$ is a weak equivalence. 
\end{proof}

We can now define the Borel equivariant $K$-theory as follows.

\begin{defn}\label{defn:K-thry-Borel}
  Let $X \in \Spc^G_k$ be as in \propref{prop:Rep-ind}. We define the
  Borel equivariant $K$-theories of $X$ for $G$-action by
  \[
  K'(X_G) := \holim_i K'(X^i(\rho)); \ \ \mbox{and} \ \
  KH(X_G) := \holim_i KH(X^i(\rho)),
  \]
  where $\rho = (V_i, U_i)_{i \ge 1}$ is any admissible gadget for $G$.
  It follows from \propref{prop:Rep-ind} that $K'(X_G)$ and $KH(X_G)$ are
  well defined objects of $\sS\sH$.
  \end{defn}

We shall use the following formal properties of the equivariant Borel $K$-theory.

\begin{prop}\label{prop:K-thry-Borel-0}
  For $X \in \Spc^G_k$ as in \propref{prop:Rep-ind}, the Borel equivariant
  $K$-theories satisfy the following properties.
  \begin{enumerate}
  \item
  The assignment $X \mapsto K'(X_G)$ is contravariant for lci maps and covariant  
  for proper maps in $\Spc^G_k$.
\item
  The assignment $X \mapsto KH(X_G)$ is contravariant for all maps and covariant for
  proper lci maps in $\Sch^G_k$.
  \end{enumerate}
\end{prop}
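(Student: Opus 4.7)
The plan is to derive both parts from the standard contravariant and covariant functorialities of $K'$-theory and $KH$-theory of algebraic spaces (or schemes), applied termwise to the admissible gadget construction, and then pass to the homotopy limit using Proposition~\ref{prop:Rep-ind}.

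First, let $f \colon X \to Y$ be a morphism in $\Spc^G_k$ (resp.\ $\Sch^G_k$) and fix an admissible gadget $\rho = (V_i, U_i)_{i \ge 1}$ for $G$. For each $i \ge 1$, the $G$-equivariant morphism $f \times \id_{U_i}$ descends along the $G$-torsors $X \times U_i \to X^i(\rho)$ and $Y \times U_i \to Y^i(\rho)$ to a morphism $f^i \colon X^i(\rho) \to Y^i(\rho)$ in $\Spc_k$, which in fact lies in $\Sch_k$ under the hypotheses of Corollary~\ref{cor:Mixed-quotient}. Since the quotient maps are $G$-torsors (hence smooth and faithfully flat), faithfully flat descent shows that $f^i$ inherits from $f$ the properties of being lci, proper, or proper lci, respectively. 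Moreover, using the projection formula for $G$-torsors together with the identity $Y \times U_i = (Y \times U_{i+1}) \times_{U_{i+1}} U_i$, the transition square
\[
\xymatrix{X^i(\rho) \ar[r]^-{\alpha_i} \ar[d]_-{f^i} & X^{i+1}(\rho) \ar[d]^-{f^{i+1}} \\ Y^i(\rho) \ar[r]^-{\alpha_i} & Y^{i+1}(\rho)}
\]
is Cartesian, with $\alpha_i$ a regular closed immersion on both rows as recorded just before Proposition~\ref{prop:Rep-ind}.

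Second, I invoke the standard functorialities of the $K$-theories involved. The $K'$-theory of algebraic spaces is contravariant for lci morphisms, since their derived pullback preserves the bi-Waldhausen category of cohomologically bounded pseudo-coherent complexes (cf.\ \cite[\S 3.14]{TT}), and it is covariant for proper morphisms via the usual pushforward on coherent complexes. The non-connective $KH$-theory of schemes is contravariant for arbitrary morphisms, since derived pullback always preserves perfect complexes and the simplicial construction in \eqref{eqn:KH-defn} is functorial, and it is covariant for proper lci morphisms (cf.\ \cite[\S~4C]{Hoyois-Krishna}). These functorialities yield termwise maps of spectra on $K'(X^i(\rho))$ or $KH(X^i(\rho))$ for every $i$.

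Third, I must verify compatibility with the transition maps $\alpha_i$ so that the termwise maps assemble into maps of pro-spectra. Compatibility for pullbacks is immediate from the pseudo-functoriality of pullback applied to the commutative square above. For pushforwards, the key input is the Thomason--Trobaugh base-change formula: because the transition square is Cartesian and $\alpha_i$ is a regular closed immersion (hence Tor-independent of $f^{i+1}$), one obtains a natural equivalence $\alpha_i^* \circ (f^{i+1})_* \simeq (f^i)_* \circ \alpha_i^*$ on $K'$-theory; the analogous statement for $KH$ follows by applying the same reasoning to $K$-theory of $-\times \Delta^n$ and taking $\hocolim$. Taking $\holim$ over $i$ then yields the desired maps on $K'(X_G)$ and $KH(X_G)$, canonically independent of $\rho$ by Proposition~\ref{prop:Rep-ind}. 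The main technical obstacle is the base-change formula for pushforward; once Cartesianness of the transition squares is established and the regularity of $\alpha_i$ is noted, this reduces to a standard application of Thomason--Trobaugh at each level of the admissible gadget, requiring no additional equivariant input.
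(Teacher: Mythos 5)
Your proof follows the same overall route as the paper. The paper handles the compatibility of pullbacks/pushforwards with the transition maps by citing \cite[Lem.~5.3]{Krishna-Crelle} (applied to the inclusions $X\times U_i\hookrightarrow X\times U_{i+1}$ and $X^i(\rho)\hookrightarrow X^{i+1}(\rho)$) together with \cite[Prop.~3.18]{TT}, and then passes to the limit; you reprove the content of that lemma from scratch. The structure — descend $f$ to the quotients, verify the transition squares are Cartesian and Tor-independent, apply termwise (co)functoriality, invoke base change, take homotopy limits — matches what the citation is doing.

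There is, however, one place where your justification is not quite right. You assert that Tor-independence of the transition square follows because "$\alpha_i$ is a regular closed immersion." That implication is false in general: a Cartesian square whose bottom arrow is a regular closed immersion need not be Tor-independent (consider the square with all four vertices $\{0\}\hookrightarrow\A^1$ and identity vertical maps; $\Tor_1^{k[t]}(k,k)\neq 0$). The correct reason in the present situation is more specific: the square upstairs is the external product of $f\colon X\to Y$ with the closed immersion $U_i\hookrightarrow U_{i+1}$, i.e.\ it is obtained by tensoring the Koszul-regular sequence defining $U_i$ inside $U_{i+1}$ over $k$ with $\sO_X$ (resp.\ $\sO_Y$). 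Since $X$ and $Y$ are flat over $k$, this sequence remains regular in $\sO_{X\times U_{i+1}}$, which yields the needed vanishing of higher Tor. (Tor-independence then descends along the $G$-torsor quotients.) The conclusion you draw is correct, but it rests on this external-product structure, not on regularity of $\alpha_i$ alone. The rest of your argument — faithfully flat descent of the lci/proper/proper-lci property to $f^i$, Cartesianness of the transition squares, pseudo-functoriality of pullback, and the passage to $\holim$ — is sound.
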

\begin{proof}
  This follows directly by applying \cite[Lem.~5.3]{Krishna-Crelle}
  (whose part (1) is applied to the inclusion
  $f\colon X \times U_i \inj X \times U_{i+1}$ and part (2) is applied to
  $f \colon  X^i(\rho) \inj  X^{i+1}(\rho)$), \cite[Prop.~3.18]{TT} and passing to
  the limits.
  \end{proof}

\section{Atiyah-Segal theorem for Borel construction}\label{sec:Compln*}
In this section, we shall prove the Atiyah-Segal completion theorem
which asserts that the Lurie completion of the equivariant $K$-theory coincides with
the Borel equivariant $K$-theory. We begin by constructing the motivic Atiyah-Segal
map. We fix a field $k$ and a $k$-group $G$.
Let $I_G \subset R(G)$ be the (integral) augmentation ideal.

\subsection{Motivic Atiyah-Segal map}\label{sec:MAS-map}
We let $\rho = (V_i, U_i)_{i \ge 1}$ be
an admissible gadget for $G$. If $X$ is a $G$-quasi-projective $k$-scheme,
\corref{cor:Mixed-quotient} and \cite[Lem.~8.1]{Krishna-Crelle}
(whose proof works for any $G$) together imply that
$K'_i(X^i(\rho)) \cong K'_i([{(X \times U_i)}/G])$ is $I_G$-nilpotent for every $i$.
As shown in the proof of \corref{cor:Fin-support-4}, this implies that
$K'(X^i(\rho))$ is $I_G$-complete. Since $I_G$ is the radical of a finitely generated
ideal by \lemref{lem:Noether-Rep}, we get that the completion map
$K'(X^i(\rho)) \to K'(X^i(\rho))^{\compl}_{I_G}$ is a weak equivalence.
Combined with nil-invariance and localization sequence,
\lemref{lem:Open-quasi-proj} implies that this weak equivalence holds for any
$X \in \Spc^G_k$.

We can repeat the proof of \propref{prop:Rep-ind} to
conclude that the completion map $KH(X^i(\rho)) \to KH(X^i(\rho))^{\compl}_{I_G}$
is a weak equivalence if $X$ satisfies one of the conditions of
the proposition. It follows that the canonical maps
$\Phi^i_{[X/G]} \colon K'_G(X) \to K'_G(X \times U_i) \simeq K'(X^i(\rho))$
and $\vartheta^i_{[X/G]} \colon KH_G(X) \to KH_G(X \times U_i) \simeq KH(X^i(\rho))$,
induced by the $G$-equivariant projection $\phi^i \colon X \times U_i \to X$,
factor through the completions $\Phi^i_{[X/G]} \colon K'_G(X)^{\compl}_{I_G} \to
K'(X^i(\rho))$ and $\vartheta^i_{[X/G]} \colon KH_G(X)^{\compl}_{I_G} \to KH(X^i(\rho))$.
Passing to the limit, we get the following.

\begin{prop}\label{prop:AS-map-0}
  Let $X \in \Spc^G_k$ be as in \propref{prop:Rep-ind}. Then the collection
  $(\phi^i)_{i \ge 1}$ induces $K(BG)$-linear maps
  \begin{equation}\label{eqn:AS-map-1}
  \Phi_{[X/G]} \colon K'_G(X)^{\compl}_{I_G} \to K'(X_G); \ \
  \vartheta_{[X/G]} \colon KH_G(X)^{\compl}_{I_G} \to KH(X_G).
  \end{equation}
  The map $\Phi_{[X/G]}$ commutes with lci pull-back and proper push-forward maps
  in $K'$-theory. The map $\vartheta_{[X/G]}$ commutes with arbitrary pull-back and
  proper lci push-forward maps in $KH$-theory.
\end{prop}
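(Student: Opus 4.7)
The plan is to produce each of $\Phi_{[X/G]}$ and $\vartheta_{[X/G]}$ as the unique factorization, through Lurie's $I_G$-completion, of a natural pullback map whose target is already $I_G$-complete. This strategy is what the author has set up in the paragraph immediately preceding the proposition. First I would record the key observation that each $K'(X^i(\rho))$ is itself $I_G$-complete: when $X$ is $G$-quasi-projective, Corollary~\ref{cor:Mixed-quotient} places $X^i(\rho)$ in $\Sch_k$, and then the nilpotence statement \cite[Lem.~8.1]{Krishna-Crelle} forces $I_G$ to act locally nilpotently on $K'_*(X^i(\rho))$; combined with \lemref{lem:Noether-Rep} (which guarantees $I_G$ is the radical of a finitely generated ideal) and the telescope-vanishing/Milnor-sequence argument of \corref{cor:Fin-support-4}, this gives completeness. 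For arbitrary $X \in \Spc^G_k$, I would reduce to the $G$-quasi-projective case by combining \lemref{lem:Open-quasi-proj} with the localization sequence in equivariant $K'$-theory and nil-invariance. The analogous completeness assertion for $KH(X^i(\rho))$ is obtained by rerunning the devissage in the proof of \propref{prop:Rep-ind}: equivariant resolution of singularities (or the toric case) reduces it to the smooth case, where $KH$ agrees with $K'$.

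Second, I would assemble the individual completeness results into completeness of the Borel targets. Since $K'(X_G) = \holim_i K'(X^i(\rho))$ is a homotopy limit of $I_G$-complete spectra, it is $I_G$-complete by \propref{prop:Compln_D}(7); the same applies to $KH(X_G)$. The $G$-equivariant projections $\phi^i\colon X \times U_i \to X$ are flat (in fact smooth) and fit into a tower compatible with the transition maps $\alpha_i$, so their pullbacks $\phi^{i*}\colon K'_G(X)\to K'_G(X\times U_i)\simeq K'(X^i(\rho))$ assemble into a map $K'_G(X) \to K'(X_G)$. Because the target is $I_G$-complete, the universal property of Lurie's completion factors this uniquely through $K'_G(X)^{\compl}_{I_G}$, producing $\Phi_{[X/G]}$. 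The construction of $\vartheta_{[X/G]}$ is identical, using $KH$ in place of $K'$.

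Finally, I would verify the functoriality statements. Each $\phi^{i*}$ is $K(BG)$-linear (it is induced by pullback along a $G$-map), and commutes with proper push-forward and lci pullback by the standard properties of equivariant $K'$- and $KH$-theory together with base change along the smooth projections $U_i \to \pt$ (cf.\ \cite[Lem.~5.3]{Krishna-Crelle} and \cite[Prop.~3.18]{TT}, applied equivariantly). The corresponding functoriality on the Borel side is \propref{prop:K-thry-Borel-0}. These compatibilities are preserved under passage to the homotopy limit over $i$, and they are inherited by the unique factorization through the completion. I do not expect a serious obstacle in this proposition: once the completeness of each $K'(X^i(\rho))$ (and $KH(X^i(\rho))$) is in hand, the rest is bookkeeping. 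The only mildly delicate point is the reduction from general $X \in \Spc^G_k$ to the $G$-quasi-projective case for the completeness assertion, but this is handled cleanly by \lemref{lem:Open-quasi-proj} and the localization sequence.
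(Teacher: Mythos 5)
Your proposal is correct and follows essentially the same route as the paper: you establish $I_G$-completeness of each $K'(X^i(\rho))$ (resp.\ $KH(X^i(\rho))$) via \cite[Lem.~8.1]{Krishna-Crelle}, \lemref{lem:Noether-Rep} and the argument of \corref{cor:Fin-support-4}, reduce to $G$-quasi-projective $X$ with \lemref{lem:Open-quasi-proj} plus localization and nil-invariance, propagate to the limit with \propref{prop:Compln_D}(7), factor through Lurie's completion, and check the functoriality at the level of the uncompleted tower using \cite[Lem.~5.3]{Krishna-Crelle}, \cite[Prop.~3.18]{TT} and \propref{prop:K-thry-Borel-0}. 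The only cosmetic difference is that you first assemble the map into $\holim_i K'(X^i(\rho))$ and then factor the whole limit through the completion, whereas the paper factors each $\Phi^i_{[X/G]}$ before passing to the homotopy limit; the two are interchangeable.
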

\begin{proof}
  We only need to explain the commutativity of
  $\Phi_{[X/G]}$ and $\vartheta_{[X/G]}$ with the 
  pull-back and push-forward maps. To prove this, we note that it suffices to prove
  the commutativity of $\Phi_{[X/G]}$ and $\vartheta_{[X/G]}$ before taking the
  $I_G$-adic completions of their left hand sides. Now, the problem 
  reduces to showing that the maps on equivariant $K'$-theory and $KH$-theory
  induced by the
  flat projections $\phi^i \colon X \times U_i \to X$ have the desired properties
  (cf. \propref{prop:K-thry-Borel-0}). But this is standard
  (cf. \cite[Prop.~3.18]{TT}, \cite[Lem.~5.3]{Krishna-Crelle}).  
\end{proof}

\subsection{Equivariant vs. Borel equivariant $K'$-theory}\label{sec:Compln-thm}
We shall now prove the completion theorem relating equivariant and Borel equivariant
$K'$-theory. This will be done in several steps.
We begin with the following special case.

We let $G = \Spec(k[t_1^{\pm 1}, \ldots , t^{\pm 1}_r])$ be a split
torus of rank $r$ so that $R(G) = \Z[t^{\pm 1}_1, \ldots , t^{\pm 1}_r]$ and
$I_G = (t_1 -1, \ldots , t_r -1)$. We let $G_j = \Spec(k[t_j^{\pm 1}])$ for
  $1 \le j \le r$. For $\chi \in \wh{G}$, we let
$L_{\chi}$ denote the one-dimensional 
representation of $G$ on which it acts via the character $\chi$. We fix
a $\Z$-basis $\{\chi_1, \cdots , \chi_r\}$ of $\wh{G}$, where
$\chi_i$ is considered as a basis element of $\wh{G_i}$. Given
$i \ge 1$, set $V_i = \stackrel{r}{\underset{j = 1} \prod} 
L^{\oplus i}_{\chi_j}$ and $U_i = \stackrel{r}{\underset{j = 1} \prod} 
(L^{\oplus i}_{\chi_j} \setminus \{0\})$.
Then $G$ acts on $V_i$ by $(t_1, \cdots , t_r)(x_1, \cdots , x_r)
= \left(\chi_1(t_1)(x_1), \cdots , \chi_r(t_r)(x_r)\right)$.
It is then easy to see that $\rho = \left(V_i, U_i\right)_{i \ge 1}$ is an 
admissible gadget for $G$ such that ${U_i}/G \simeq \left(\P^{i-1}_k\right)^r$.
The line bundle $L_{\chi_j} \stackrel{G_j}{\times} 
(L^{\oplus i}_{\chi_j}\setminus \{0\}) \to \P^{i-1}_k$ is the line bundle
$\sO(1)$ for each $1 \le j \le r$.
We shall use $\rho$ to prove the completion theorem in this case.
The proof is based on the following observation.

\begin{lem}\label{lem:SIF}
  Assume $G = G' \times \G_m$, where $G'$ is a split torus and $X \in \Spc^G_k$. Let
  $\iota \colon X \to X \times V$ be
  the 0-section embedding, where $V$ is an $n$-dimensional representation of $G$ on
  which the split torus $G'$ acts trivially and $\G_m  = \Spec(k[t^{\pm 1}])$
  acts via $t$. Then the composition
  $K'_G(X) \xrightarrow{\iota_*} K'_G(X \times V) \xrightarrow{\iota^*} K'_G(X)$ is
  homotopic to the map $(1-t^{-1})^n \colon K'_G(X) \to K'_G(X)$.
\end{lem}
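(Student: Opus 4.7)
The plan is to identify $\iota^*\iota_*$ with multiplication by the equivariant $K$-theoretic Euler class of the normal bundle of the zero section, and then to compute this Euler class explicitly using the hypothesis on the $G$-action on $V$. First, since $G'$ acts trivially on $V$ and $\G_m$ acts by $t$, there is a $G$-equivariant isomorphism $V \cong L_t^{\oplus n}$, where $L_t$ denotes the one-dimensional $G$-representation associated to the character $(g',a) \mapsto a$. Dually, $V^* \cong L_{t^{-1}}^{\oplus n}$, so that $\lambda_{-1}(V^*) = (1 - [L_{t^{-1}}])^n = (1-t^{-1})^n$ in $R(G) = R(G')[t^{\pm 1}]$.

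The next step is to represent $\iota_*$ at the spectrum level via the equivariant Koszul resolution. Since $\iota$ is a $G$-equivariant section of the flat projection $\pi \colon X \times V \to X$, it is a regular closed immersion and its conormal sheaf is canonically identified with $\pi^*V^*$. Thus $\iota_*\sO_X$ admits the $G$-equivariant Koszul resolution
\begin{equation*}
K^\bullet \colon \quad 0 \to \Lambda^n \pi^*V^* \to \Lambda^{n-1}\pi^*V^* \to \cdots \to \Lambda^1\pi^*V^* \to \sO_{X\times V} \to 0
\end{equation*}
with $\sO_{X\times V}$ in cohomological degree $0$. Combined with the projection formula, for a bounded complex $F^\bullet$ of $G$-equivariant coherent sheaves on $X$ representing $\alpha \in K'_G(X)$ there is a natural quasi-isomorphism $\iota_* F^\bullet \simeq \pi^*F^\bullet \otimes_{\sO_{X\times V}} K^\bullet$ in $D(\Coh_{[(X\times V)/G]})$. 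Applying the derived pullback $\iota^*$ term by term and using $\iota^*\pi^* F^\bullet \simeq F^\bullet$ yields
\begin{equation*}
\iota^*\iota_* F^\bullet \simeq F^\bullet \otimes_{\sO_X}^{\mathbb L} \iota^*K^\bullet.
\end{equation*}

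Finally, we compute $\iota^*K^\bullet$ by observing that its differential is contraction against the tautological section of $\pi^*V$ on $X\times V$, which vanishes along the zero section; hence $\iota^*K^\bullet$ is a complex with zero differentials and terms $\Lambda^i V^*$ placed in cohomological degree $-i$, each being a trivial $G$-equivariant bundle pulled back from $\Spec(k)$. Therefore its class in $K_0^G(X)$ is $\sum_i (-1)^i [\Lambda^i V^*] = \lambda_{-1}(V^*) = (1-t^{-1})^n$, and because each $\Lambda^i V^*$ is pulled back from $\pt$, the endofunctor $F^\bullet \mapsto F^\bullet \otimes_{\sO_X}\iota^*K^\bullet$ on $\Coh_{[X/G]}$ precisely realizes the self-map $(1-t^{-1})^n$ of $K'_G(X)$ coming from the $R(G)$-module structure. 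The main technical point will be to fix a concrete spectrum-level implementation of $\iota_*$ and $\iota^*$ on the dg model $\Coh_{[-/G]}$ of Section~\ref{sec:Recall} so that the chain of quasi-isomorphisms above lifts to an actual homotopy of maps of spectra rather than a mere equality of induced homomorphisms on homotopy groups.
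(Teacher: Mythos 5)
Your proposal is correct and reaches the same Euler-class identification as the paper, but by a genuinely different route.  The paper first computes $\iota_*(1)$ by flat base change along the Cartesian squares built from the projections $X\times V\to X$ and $X\times V\to V$: this reduces to the case $X=\pt$, where $\iota'_*(1)=(1-t^{-1})^n$ is quoted from Vezzosi--Vistoli, and then the spectrum-level projection formula (cited from Thomason--Trobaugh and Khan) turns this class computation into a homotopy $\iota^*\circ\iota_*\simeq(1-t^{-1})^n$.  You instead run the self-intersection computation directly on $X$: resolve $\iota_*\sO_X$ by the $G$-equivariant Koszul complex $K^\bullet$ on $\pi^*V^*$, apply the projection formula to write $\iota^*\iota_*F^\bullet\simeq F^\bullet\otimes\iota^*K^\bullet$, observe that the Koszul differential (contraction with the tautological section of $\pi^*V$) dies on the zero section so that $\iota^*K^\bullet$ splits as $\bigoplus_i\Lambda^iV^*[i]$ with zero differential, and read off the class $\lambda_{-1}(V^*)=(1-t^{-1})^n$.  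What the paper's route buys is that every step is already established at the level of maps of spectra, so no further lifting is needed; what your route buys is self-containedness --- you do not rely on the external reference to \cite[Thm.~2.1]{VV-Inv} --- at the cost of the issue you correctly flag in your last sentence.  That flagged point is real: one must check that the Koszul equivalence $\iota^*\iota_*F^\bullet\simeq F^\bullet\otimes\iota^*K^\bullet$ is natural in $F^\bullet$ as a zig-zag of morphisms of dg (or Waldhausen) functors on a suitable model of $\Coh_{[X/G]}$, so that it induces a homotopy of maps of $K'$-theory spectra and not merely an equality of maps on $\pi_*$.  This is standard (the Koszul resolution and the projection-formula isomorphism are functorial in $F^\bullet$ on the model $\Coh^\chi_{[X/G]}$ of \S\ref{sec:Twisting*}, or on a bounded-below flat resolution model), so the gap is fillable; but as written your argument only yields the result on homotopy groups, whereas the paper's route gives the homotopy of spectra with no extra work.
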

\begin{proof}
  We consider the commutative diagram 
  \begin{equation}\label{eqn:SIF-0}
    \xymatrix@C.8pc{
      X \ar[r]^-{\iota} \ar[d]_-{p} & X \times V \ar[d]^-{q_2} \ar[r]^-{q_1} & X
      \ar[d]^-{p}
      \\
    \pt \ar[r]^-{\iota'} & V \ar[r]^-{q} & \pt}
\end{equation}
  in $\Spc^G_k$ in which $p,q$ and $q_i$'s are projections and $\iota'$ is
  the 0-section inclusion. The two squares are
  Cartesian, vertical arrows are flat and
  the horizontal arrows are lci morphisms. It follows that
  $\iota_*(1) = \iota_* \circ p^*(1) = q^*_2 \circ \iota'_*(1)$. Meanwhile,
  one checks directly that $\iota'_*(1) = q^*((1-t^{-1})^n)$
  (cf. \cite[Thm.~2.1]{VV-Inv}). This yields
  $\iota_*(1) =  q^*_2 \circ q^*((1-t^{-1})^n) = q^*_1 \circ p^*((1-t^{-1})^n)$.
  It follows that $\iota_*(1) = (1-t^{-1})^n$. 

We now compute
  \[
  \begin{array}{lll}
    \iota^* \circ \iota_* & \simeq & \iota^* \circ \iota_* \circ \iota^* \circ
    q^*_1 \\
    & {\simeq}^{\dagger} & \iota^* \circ (\iota_*(1) \cdot  q^*_1) \\
    & {\simeq} &  \iota^*(\iota_*(1)) \cdot (\iota ^* \circ q^*_1) \\
    & \simeq & (1-t^{-1})^n \cdot (\iota ^* \circ q^*_1) \\
    & \simeq & (1-t^{-1})^n \cdot (q_1 \circ \iota)^* \\
    & \simeq & (1-t^{-1})^n, \\
  \end{array}
  \]
  where ${\simeq}^{\dagger}$ is a consequence of the projection formula
  (cf. \cite[Prop.~3.17]{TT} or \cite[Prop.~3.7]{Khan-JJM}).
\end{proof}

\begin{lem}\label{lem:Compln-thm-2}
  Let $X \in \Spc^G_k$. Then $\Phi_{[X/G]}$  is a weak equivalence when $G$ is a split
  torus.
\end{lem}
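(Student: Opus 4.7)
The strategy is to argue by induction on the rank $r$ of the split torus $G$. The plan is, in the base case, to use a single Thomason--Trobaugh localization triangle together with \lemref{lem:SIF} to identify $K'(X^i(\rho))$ as a derived quotient of $K'_G(X)$, and then take $\holim_i$ using formula \eqref{eqn:I-comp-0} to realize the limit as the $I_G$-adic completion. The inductive step will reduce the rank-$r$ statement to a rank-$(r-1)$ instance plus another application of the rank-one argument, with the iterated completions matching up via \propref{prop:Compln_D}(6).

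Concretely, for $r=1$ (so $G=\G_m$, $V_i=\A^i$, $U_i=\A^i\setminus\{0\}$), the localization triangle for the zero-section inclusion $\iota_i:X\hookrightarrow X\times V_i$, combined with homotopy invariance $K'_G(X\times V_i)\simeq K'_G(X)$, yields a cofiber sequence
\[
K'_G(X)\xrightarrow{\iota_{i,*}} K'_G(X)\longrightarrow K'(X^i(\rho)).
\]
\lemref{lem:SIF} with $G'=\{e\}$ and $n=i$ identifies $\iota_{i,*}$ with multiplication by $(1-t^{-1})^i$. Since $I_G=(1-t^{-1})$, \eqref{eqn:I-comp-0} gives $\holim_i K'(X^i(\rho))\simeq K'_G(X)^{\compl}_{I_G}$, and inspection of the construction of $\Phi_{[X/G]}$ in \S\ref{sec:MAS-map} shows the equivalence is $\Phi_{[X/G]}$.

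For the inductive step, write $G=G'\times G_r$ with $G_r=\G_m$ and, by the independence of admissible gadgets (\propref{prop:Rep-ind}), use the product gadget $\rho_{i,j}$ with $U_{i,j}=U'_j\times U_i^{(r)}$, where $(V'_j,U'_j)$ is admissible for $G'$. The crucial identification is the Morita-type rearrangement
\[
(X\times U_{i,j})/G\;\simeq\;(Y_i\times U'_j)/G',\qquad Y_i:=(X\times U_i^{(r)})/G_r\in\Spc^{G'}_k,
\]
which gives $K'(X^{i,j}(\rho_{i,j}))\simeq K'_{G'}(Y_i\times U'_j)$. Applying the inductive hypothesis to the $G'$-action on each $Y_i$ yields $\holim_j K'(X^{i,j}(\rho_{i,j}))\simeq K'_{G'}(Y_i)^{\compl}_{I_{G'}}\simeq K'_G(X\times U_i^{(r)})^{\compl}_{I_{G'}}$. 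Taking $\holim_i$, commuting it with $I_{G'}$-completion, and applying the rank-one case to the $G_r$-direction identifies the total $\holim$ with $\bigl(K'_G(X)^{\compl}_{(1-t_r^{-1})}\bigr)^{\compl}_{I_{G'}}\simeq K'_G(X)^{\compl}_{I_G}$, the last step by \propref{prop:Compln_D}(6) since $I_G=I_{G'}R(G)+(1-t_r^{-1})$; one then checks that the composite equivalence is $\Phi_{[X/G]}$.

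The main obstacle will be the commutation of $\holim_i$ with the $I_{G'}$-completion in the inductive step. \propref{prop:Compln_D}(7) shows that a homotopy limit of complete modules is complete, but one additionally needs that $I_{G'}$-completion of $\holim_i K'_G(X\times U_i^{(r)})$ coincides with $\holim_i$ of the $I_{G'}$-completions. This will follow because, by Noetherianness of $R(G')$ and \eqref{eqn:I-comp-0}, each completion is presented as the same shape of finite homotopy limit of cofibers, so two homotopy limits commute. A secondary subtlety is tracking the naturality (in $i$ and $j$) of the Morita identification at the level of spectra so that it is compatible with the $R(G)$-module structures and with $\Phi_{[X/G]}$; this is straightforward using the definition of the twisting maps in \S\ref{sec:Twist-K-0} but requires careful threading of $G$-equivariance through the quotient by $G_r$.
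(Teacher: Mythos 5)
Your proof is essentially correct, but it takes a genuinely different inductive route than the paper. The paper fixes the gadget $\rho$ with $U_{m,i} = \prod_{j=1}^{m}(L_{\chi_j}^{\oplus i}\setminus\{0\})$ and inducts on the number $m$ of torus coordinates handled, staying entirely inside $G$-equivariant $K'$-theory: it proves the level-wise identification $(\phi^i_m)^*\colon K'_G(X)/((t_1-1)^i,\ldots,(t_m-1)^i) \xrightarrow{\simeq} K'_G(X\times U_{m,i})$ for each fixed $i$, with the $m=1$ step via \lemref{lem:SIF} and the inductive step via re-applying the $m=1$ case to $X\times U_{m,i}\in\Spc^G_k$, and only at the very end takes $\holim_i$ and invokes \eqref{eqn:I-comp-0}. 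Your proof instead inducts on the rank $r$ of the group, quotienting by one $\G_m$-factor to land in $\Spc^{G'}_k$ and invoking the rank-$(r-1)$ statement for $Y_i$; this is a valid strategy and buys a somewhat more modular structure, but it pays two costs the paper's proof avoids: (i) you must commute $\holim_i$ past the $I_{G'}$-completion, and (ii) you must carry the Morita-type identification through the quotient in a way compatible with $\Phi$ and the $R(G)$-module structures.

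Two imprecisions worth flagging, neither fatal. First, the bi-indexed family $\rho_{i,j}$ is not literally an admissible gadget in the sense of Definition~\ref{defn:Add-Gad}; you should pass to the diagonal $i=j$ (which is cofinal in $\N\times\N$) and then invoke \propref{prop:Rep-ind}, or argue cofinality directly. Second, your justification for commuting $\holim_i$ with $(-)^{\compl}_{I_{G'}}$ is garbled: the completion is presented by \eqref{eqn:I-comp-0} as an \emph{infinite} homotopy limit over $\N^s$ (not a finite one), but the conclusion is correct for the simpler reason that homotopy limits commute with each other and with taking cofibers of fixed maps, so $\holim_i(M_i^{\compl}_{I_{G'}})\simeq\holim_{\N^s}\holim_i(M_i/(\ldots))\simeq(\holim_i M_i)^{\compl}_{I_{G'}}$. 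Also note that what you call the ``rank-one case'' applied to the $G_r$-direction is not an invocation of the lemma for a rank-one group; it is the $G$-equivariant localization-plus-\lemref{lem:SIF} computation (\lemref{lem:SIF} is stated precisely for $G=G'\times\G_m$), which is the same ingredient as the paper's $m=1$ step. Finally, since \propref{prop:Compln_D}(6) is stated with the single-element completion on the outside, deduce the symmetry of iterated completions you need from \eqref{eqn:I-comp-0} (the $\N^r$-limit description is manifestly symmetric in the generators) rather than citing (6) in the reversed order.
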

\begin{proof}
  Following the above notations, we let $V_{m,i} = \stackrel{m}{\underset{j = 1}
    \prod} L^{\oplus i}_{\chi_j}$ and $U_{m,i} = \stackrel{m}{\underset{j = 1} \prod} 
  \left(L^{\oplus i}_{\chi_j} \setminus \{0\}\right)$ for $i \ge 1$ and $1 \le m \le r$.
  We let $G$ act on $V_{m,i}$ via the product of its first $m$ factors.
  We claim for $1 \le m \le r$ and $i \ge 1$ that the projection
  $\phi^i_m \colon X \times U_{m,i} \to X$ induces a weak equivalence 
  \begin{equation}\label{eqn:Compln-thm-2-0}
    (\phi^i_m)^* \colon \frac{K'_G(X)}{((t_1-1)^i, \ldots , (t_m-1)^i)K'_G(X)}
    \xrightarrow{\simeq} K'_G(X \times U_{m,i}).
  \end{equation}
  Applying this claim for $m =r$ (and using the identification
  $K'_G(X \times U_{r,i}) \cong  K'(X^i(\rho))$, we
  can conclude the proof of the lemma by passing to the limit as
  $i \to \infty$ and using ~\eqref{eqn:I-comp-0}.
  It remains therefore to prove the claim.

To that end, we look at the homotopy fiber sequence
  \begin{equation}\label{eqn:Compln-thm-2-1}
    K'_G(X) \xrightarrow{\iota_*} K'_G(X \times V_{1,i}) \xrightarrow{(\alpha^i_1)^*}
    K'_G(X \times U_{1,i}),
  \end{equation}
  where $\alpha^i_m \colon X \times U_{m,i} \inj X \times V_{m,i}$ is the open
  inclusion.
  Using the homotopy invariance, \lemref{lem:SIF} and the equality of ideals
  $(t_m-1) = (1- t^{-1}_m) \subset R(G)$, we get ~\eqref{eqn:Compln-thm-2-0} when
  $m =1$. For any $1 \le m < r$, we let $F_m =
  \frac{K'_G(X)}{((t_1-1)^i, \ldots , (t_m-1)^i)K'_G(X)}$.

  We then get
  \[
  \begin{array}{lll}
  \frac{K'_G(X)}{((t_1-1)^i, \ldots , (t_{m}-1)^i, (t_{m+1} -1)^i)K'_G(X)}  
  & \xrightarrow{\simeq} & \frac{F_m}{((t_{m+1}-1)^i)F_m} \\
  & {\xrightarrow{\simeq}}^1 & \frac{K'_G(X \times U_{m,i})}
       {((t_{m+1}-1)^i)K'_G(X \times U_{m,i})} \\
       & {\xrightarrow{\simeq}}^2 & K'_G(X \times U_{m,i} \times
       (L^{\oplus i}_{\chi_{m+1}} \setminus \{0\})) \\
       & \xrightarrow{\simeq} & K'_G(X \times U_{m+1,i}), \\
       \end{array}
  \]
  where ${\simeq}^1$ follows by induction on $m$ and
  ${\simeq}^2$ follows by applying the $m =1$ case to
  $X \times U_{m,i} \in \Spc^G_k$. This proves ~\eqref{eqn:Compln-thm-2-0}
  in general.
\end{proof}

To prove that $\Phi_{[X/G]}$ is a weak equivalence in the general case, we need
a lemma. Suppose we are given an embedding $G \inj G'$ of $k$-groups. Let
$X \in \Spc^{G}_k$ and $X' \in \Spc^{G'}_k$. We let $X \stackrel{G}{\times} G'$ denote
the associated Morita
space with $G'$-action. This datum gives rise to two objects in $\Spc^{G'}_k$. The
first is the Morita space $(X \times X') \stackrel{G}{\times} G'$ associated to the
the diagonal $G$-action on $X \times X'$ (where $G$
acts on $X'$ via the inclusion $G \subset G'$). The second is
$(X  \stackrel{G}{\times} G')  \times X'$ with the diagonal $G'$-action.

\begin{lem}\label{lem:Mixed-iso}
  There is a $G'$-equivariant isomorphism
  \begin{equation}\label{eqn:Mixed-iso-1}
  \theta \colon (X \times X')  \stackrel{G}{\times} G' \xrightarrow{\cong}
  (X \stackrel{G}{\times} G') \times X'
  \end{equation}
  such that the diagram
  \begin{equation}\label{eqn:Mixed-iso-0}
    \xymatrix@C2pc{
    X \times X' \ \ \ar[r]^-{\delta_{(X \times X')}} \ar[d]_-{p} &   
      \ (X \times X')  \stackrel{G}{\times} G' \ar[r]^-{\theta} &
      (X \stackrel{G}{\times} G') \times X' \ar[d]^-{q} \\
    X \ar[rr]^-{\delta_X} & & X \stackrel{G}{\times} G'}
  \end{equation}
  commutes, where $p$ and $q$ are projections and $\delta_{(-)}$ is defined in
  ~\eqref{eqn:Open-quasi-proj-0}.
 \end{lem}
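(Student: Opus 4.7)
The plan is to produce $\theta$ explicitly as the descent of an obvious isomorphism on the pre-quotient spaces, and then to verify the commutativity of the displayed diagram by direct computation. Since $G$ acts freely on $G'$ (and hence diagonally freely on $X \times X' \times G'$), both sides of~\eqref{eqn:Mixed-iso-1} are honest quotients in $\Spc_k$ by \cite[Prop.~22]{EG-Inv}, so it suffices to work at the level of functors of points on the covers.

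First I would define a $k$-morphism
\[
\widetilde{\theta}\colon X \times X' \times G' \to X \times G' \times X', \qquad
(x, x', g') \mapsto \bigl(x, g', g'\cdot x'\bigr),
\]
using the given $G'$-action on $X'$. To see that $\widetilde\theta$ descends modulo the diagonal $G$-action (which on the source is $g\cdot(x,x',g')=(gx,gx',g'g^{-1})$ and on the target is $g\cdot(x,g',y)=(gx,g'g^{-1},y)$, trivial on $X'$), one computes
\[
\widetilde{\theta}(gx,gx',g'g^{-1}) = \bigl(gx,\,g'g^{-1},\,(g'g^{-1})(gx')\bigr) = \bigl(gx, g'g^{-1}, g' x'\bigr) = g\cdot\widetilde{\theta}(x,x',g'),
\]
using the associativity of the $G'$-action on $X'$ and the identity $g'g^{-1}\cdot g = g'$ in $G'$. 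Hence $\widetilde\theta$ descends to a morphism $\theta\colon (X\times X')\stackrel{G}{\times} G' \to (X\stackrel{G}{\times} G')\times X'$ of algebraic spaces.

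Next I would produce an inverse as the descent of $\widetilde{\theta}^{-1}\colon (x,g',y)\mapsto (x, g'^{-1}y, g')$, whose $G$-equivariance is verified by the symmetric computation $(gx, g'g^{-1}, y)\mapsto (gx, (g'g^{-1})^{-1}y, g'g^{-1}) = (gx, gg'^{-1}y, g'g^{-1}) = g\cdot(x, g'^{-1}y, g')$. Since $\widetilde\theta$ and $\widetilde\theta^{-1}$ are manifestly inverse to each other as morphisms of schemes, so are their descents $\theta$ and $\theta^{-1}$. The $G'$-equivariance of $\theta$, for the action by left translation on the $G'$-factor of the source and the corresponding diagonal action on the target, is an analogous direct check: $h \in G'$ acts as $h\cdot[x,x',g']=[x,x',hg']$, and $\theta([x,x',hg']) = ([x,hg'], hg'\cdot x') = h\cdot([x,g'], g'\cdot x')$.

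Finally, the commutativity of~\eqref{eqn:Mixed-iso-0} reduces to the evaluation
\[
q\circ\theta\circ\delta_{X\times X'}(x,x') = q\circ\theta\bigl([x,x',e]\bigr) = q\bigl([x,e],\,x'\bigr) = [x,e] = \delta_X(x) = \delta_X\circ p(x,x'),
\]
which is immediate from the formula for $\delta_{(-)}$ in~\eqref{eqn:Open-quasi-proj-0}. There is no real obstacle in this argument; the only thing to be careful about is bookkeeping of the three different $G$- and $G'$-actions in play and ensuring that the explicit formulas genuinely descend. Equivalently, one could phrase the entire argument as a faithfully flat descent statement along the smooth $G$-torsors $X\times X'\times G' \to (X\times X')\stackrel{G}{\times}G'$ and $X\times G'\to X\stackrel{G}{\times}G'$ times $\id_{X'}$.
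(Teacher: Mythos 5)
Your proposal is correct and follows essentially the same approach as the paper's proof: both define the isomorphism by the explicit formula $(x,x',g') \mapsto ([x,g'], g'x')$ (the paper writes this on equivalence classes using quotient maps $\tau_1,\tau_2$; you write it on the cover and check $G$-equivariance for descent, which amounts to the same well-definedness check), give the inverse by $(x,g',y) \mapsto (x, g'^{-1}y, g')$, and verify $G'$-equivariance and commutativity of the triangle by direct computation.
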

\begin{proof}
  We let $Y =  X \stackrel{G}{\times} G'$ and $Z =  X \times X'$. We define
  $\theta$ by letting $\theta (\tau_1((x,x'), g')) = (\tau_2(x,g'), g'x')$,
  where $\tau_1 \colon Z \times G' \to Z \stackrel{G}{\times} G'$ and
  $\tau_2 \colon X \times G' \to Y$ are the quotient maps.
  To show that $\theta$ is well-defined, we let $(x_1, x'_1, g'_1) =
  g(x, x', g') = (gx, gx', g'g^{-1})$, where $g \in G$. Then we get
  \[
  \begin{array}{lllll}
  \theta (\tau_1((x_1,x'_1), g'_1)) & = & (\tau_2(x_1, g'_1), g'_1x'_1) & = &
  (\tau_2(gx, g'g^{-1}), g'_1gx') \\
  & = & (\tau_2 (g(x, g')), g'_1gx') & = &  (\tau_2(x, g'), g'_1gx') \\
  & = &  (\tau_2(x, g'), g'x') & = & \theta (\tau_1((x,x'), g')). \\
  \end{array}
  \]
  This shows that $\theta$ is well-defined.

We next define $\theta' \colon Y \times X' \to Z \stackrel{G}{\times} G'$
  by $\theta'(\tau_2(x,g'), x') = \tau_1(x, {g'}^{-1}x', g')$. To show that
  $\theta'$ is well-defined, we let $(x_1, g'_1) =
  g(x, g') = (gx, g'g^{-1})$ where $g \in G$. Then we get
\[
  \begin{array}{lllll}
    \theta'(\tau_2(x_1,g'_1), x') & = & \theta'(\tau_2(gx, g'g^{-1}), x') & = &
    \tau_1((gx, (g'g^{-1})^{-1}x'), g'g^{-1}) \\
    & = & \tau_1(gx, g{g'}^{-1}x' g'g^{-1}) & = & \tau_1(g((x,g'^{-1}x'), g')) \\
    & = & \tau_1((x, g'^{-1}x'), g') & = & \theta'(\tau_2(x,g'), x').
  \end{array}
  \]
  One easily verifies that $\theta$ and $\theta'$ are $G'$-equivariant and are
  inverses of each other. The commutativity of ~\eqref{eqn:Mixed-iso-0} is also
  clear.
\end{proof}

We now prove our Atiyah-Segal completion theorem for Borel construction
(note: $\Phi_{[X/G]}$ in the theorem below is denoted by $\wt{p}^*$ in
\thmref{thm:Main-2}).

\begin{thm}\label{thm:AS-Main}
For any $X \in \Spc^G_k$, the map of spectra
  \[
  \Phi_{[X/G]} \colon K'_G(X)^{\compl}_{I_G} \to K'(X_G)
  \]
is a weak equivalence.
\end{thm}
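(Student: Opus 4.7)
The plan is to deduce the theorem from the torus case, Lemma~\lemref{lem:Compln-thm-2}, through a sequence of reductions. The argument proceeds in three stages, and I outline each in turn together with its role.

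\textbf{Stage 1: Reduction to $G$-quasi-projective $X$.} Both $K'_G(X)^{\compl}_{I_G}$ and $K'(X_G)$ are nil-invariant (the first by nil-invariance of $K'$-theory plus Proposition~\propref{prop:Compln_D}, the second term-by-term in the limit), so we may replace $X$ by $X_{\red}$. Using Lemma~\lemref{lem:Open-quasi-proj}, find a $G$-invariant regular $G$-quasi-projective dense open $U \subset X$ with closed complement $Z$ of strictly smaller dimension. Proposition~\propref{prop:AS-map-0} says $\Phi_{[-/G]}$ is compatible with closed embeddings (proper push-forward on the equivariant side and the corresponding Borel push-forwards $X^i_G(\rho)$) and with open restrictions, so it fits into a morphism of localization homotopy fiber sequences associated with the pair $(Z,U)$. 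Since $I_G$ is the radical of a finitely generated ideal by Lemma~\lemref{lem:Noether-Rep}, the completion $(-)^{\compl}_{I_G}$ is the composition of finitely many one-element completions (Proposition~\propref{prop:Compln_D}(6)); each of these preserves homotopy fiber sequences since it is a fiber of a tower of cofibers. Noetherian induction on $\dim X$ therefore reduces the theorem to the case where $X$ is $G$-quasi-projective over $k$.

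\textbf{Stage 2: Morita reduction to $G = GL_n$.} Fix a closed embedding $G \hookrightarrow GL_n$ and form the Morita space $X' := X \stackrel{G}{\times} GL_n$. By Corollary~\corref{cor:EG-quotient-1}, $X' \in \Sch^{GL_n}_k$ is $GL_n$-quasi-projective, and Lemma~\lemref{lem:Morita*} yields $[X/G] \cong [X'/GL_n]$ and hence $K'_G(X) \simeq K'_{GL_n}(X')$. Any admissible gadget $\rho'$ for $GL_n$ is also admissible for $G$ under restriction by Lemma~\lemref{lem:Adm-gadget}(1), and Lemma~\lemref{lem:Mixed-iso} applied with $X' := U'_i$ gives natural $GL_n$-equivariant isomorphisms of Borel spaces
\[
X \stackrel{G}{\times} U'_i \;\cong\; X' \stackrel{GL_n}{\times} U'_i,
\]
compatibly with the structure maps of the admissible tower. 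Taking $K'$ and passing to the homotopy limit identifies $K'(X_G) \simeq K'((X')_{GL_n})$ compatibly with $\Phi$. The pull-back $R(GL_n) \to R(G)$ sends $I_{GL_n}$ to an ideal with the same radical as $I_G$ (by the Edidin-Graham input used in Lemma~\lemref{lem:Noether-Rep}), so Proposition~\propref{prop:Compln_D}(5) identifies the $I_{GL_n}$- and $I_G$-completions of the same underlying spectrum. Hence we may assume $G = GL_n$.

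\textbf{Stage 3: Reduction from $GL_n$ to its split diagonal torus $T$.} Pick a good admissible gadget $\rho$ for $GL_n$, which by Lemma~\lemref{lem:Adm-gadget}(2) is also good admissible for $T$. For $X \in \Spc^{GL_n}_k$, restriction of the action along $T \hookrightarrow GL_n$ induces a commutative square comparing $\Phi_{[X/GL_n]}$ with $\Phi_{[X/T]}$; the Borel side carries a natural flag-bundle map $X \stackrel{T}{\times} U_i \to X \stackrel{GL_n}{\times} U_i$ with fibre $GL_n/T$, while the equivariant side carries the restriction $K'_{GL_n}(X) \to K'_T(X)$. Because $GL_n$ is special and $R(GL_n) = R(T)^W$ by \eqref{eqn:inv}, $K'_T(X)$ is a split faithfully flat extension of $K'_{GL_n}(X)$ as an $R(GL_n)$-module spectrum, and the same holds for the Borel sides by descent along the flag bundle. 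Since $I_{GL_n}R(T)$ and $I_T$ have the same radical (Lemma~\lemref{lem:Noether-Rep}), the completion functor commutes with the two descriptions by Proposition~\propref{prop:Compln_D}(5). The torus case Lemma~\lemref{lem:Compln-thm-2} shows $\Phi_{[X/T]}$ is a weak equivalence, and faithfully flat descent for the $R(GL_n)$-linear $I_{GL_n}$-completed spectra then forces $\Phi_{[X/GL_n]}$ to be one as well.

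The main obstacle is Stage 3: tracking the $R(GL_n)$-linear and $R(T)$-linear structures through Lurie's derived $I$-adic completion and verifying that the flag-bundle descent survives the passage to completions. The key technical facts to be invoked are Proposition~\propref{prop:Compln_D}(5), which identifies completions along finite ring extensions with matching augmentation radicals, and Proposition~\propref{prop:Compln_D}(7), which controls completions of homotopy limits. The Stage~1 localization step also requires care, as $I_G$-completion is not exact in general; however, the finite generation from Lemma~\lemref{lem:Noether-Rep} together with the one-variable description of $(-)^{\compl}_{(x)}$ as a fiber of a homotopy limit of cofibers makes the preservation of fiber sequences routine.
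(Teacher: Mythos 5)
Your Stages 1 and 2 are sound, and Stage 2 matches the paper's Morita reduction (the paper does the Morita step last rather than first, but that is immaterial; Stage 1 is an unnecessary detour, since the paper's argument works directly for arbitrary $X \in \Spc^G_k$). The gap is in Stage 3. You invoke ``faithfully flat descent along the flag bundle'' for $I_{GL_n}$-completed $K'$-theory spectra, but no such descent theorem is established in the paper or is standard, and this is not the mechanism that makes the reduction work. There is also a geometric problem: the bundle $X \stackrel{T}{\times} U_i \to X \stackrel{GL_n}{\times} U_i$ that you single out has fibre $GL_n/T$, which is not proper, so there is no push-forward along it and no hope of splitting the restriction map $K'_{GL_n}(X) \to K'_T(X)$ directly via this bundle.

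The correct mechanism is a retraction argument, and it requires factoring through a Borel subgroup $B$ with $T \subset B \subset GL_n$. The step from $B$ to $T$ is handled by the weak equivalence $K'_B(X) \to K'_T(X)$ (and its Borel-side analogue) from \cite[Thm.~1.13, Thm.~1.3]{Thomason-Duke-2}, together with \propref{prop:Compln_D}(5) to reconcile the $I_G$-, $I_B$-, and $I_T$-completions. The step from $GL_n$ to $B$ is the crucial one: the flag bundle $\phi \colon X \stackrel{B}{\times} GL_n \cong X \times (GL_n/B) \to X$ is proper, and the projection formula combined with $\phi_*(1) = 1$ gives $\phi_* \circ \phi^* \simeq \id$ on $K'_{GL_n}(X)$. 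This exhibits $K'_{GL_n}(X)^{\compl}_{I_{GL_n}}$ as a retract of $K'_B(X)^{\compl}_{I_{GL_n}}$, compatibly (via \propref{prop:AS-map-0} and \lemref{lem:Morita*}) with an analogous retraction of $K'(X_{GL_n})$ inside $K'(X_B)$, with the two retractions intertwined by $\Phi$. A retract of a weak equivalence is a weak equivalence (look at homotopy groups), so once $\Phi_{[X/B]}$ is an equivalence (via $\Phi_{[X/T]}$ and \lemref{lem:Compln-thm-2}), so is $\Phi_{[X/{GL_n}]}$. Replacing your descent claim with this retraction argument through $B$ would close the gap.
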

\begin{proof}
  We divide the proof into two steps.
 We first assume that $G$ is a split reductive group which is also special.
  We choose a Borel subgroup $B \subset G$ and a split maximal torus
  $T \subset G$ contained in $B$. Using \lemref{lem:Adm-gadget},
  we can choose a good admissible gadget $\rho = (V_i, U_i)_{i \ge 1}$ for $G$
  which is also an admissible gadget for $B$ and $T$. We consider the commutative
  diagram of spectra
 \begin{equation}\label{eqn:AS-Main-0}
\xymatrix@C1pc{
  K'_G(X)^{\compl}_{I_G}  \ar[d]_-{\Phi_{[X/G]}} \ar[r]^-{\alpha_X} &
  K'_B(X)^{\compl}_{I_B} \ar[d]^-{\Phi_{[X/B]}} \ar[r]^-{\beta_X} &
K'_T(X)^{\compl}_{I_T} \ar[d]^-{\Phi_{[X/T]}} \\
   K'(X_G) \ar[r]^-{\alpha'_X} & K'(X_B) \ar[r]^-{\beta'_X}  &  K'(X_T),}
\end{equation}
where the horizontal arrows are the canonical change of groups maps. 

The ring $R(G)$ is Noetherian as $G$ is special
(see the proof of \propref{prop:finiteR}). Moreover,
$I_B = \sqrt{I_GR(B)}$ and $I_T = \sqrt{I_GR(T)}$ by \cite[Cor.~6.1]{EG-RR}
(see also the proof of \lemref{lem:Noether-Rep}). 
We conclude from \cite[Rem.~7.3.1.2]{Lurie-SAG} and \propref{prop:Compln_D}(5)
that the canonical maps
$K'_B(X)^{\compl}_{I_G} \to K'_B(X)^{\compl}_{I_B}$ and $K'_T(X)^{\compl}_{I_G} \to
K'_T(X)^{\compl}_{I_T}$ are weak equivalences. On the other hand, the
map $K'_B(X) \to K'_T(X)$ is a weak equivalence by \cite[Thm.~1.13]{Thomason-Duke-2}
(see its proof). In particular, the map
$K'_B(X)^{\compl}_{I_G} \to K'_T(X)^{\compl}_{I_G}$
is a weak equivalence. It follows that $\beta_X$ is a weak equivalence.
Since $K'(X \stackrel{B}{\times} U_i)  \to K'(X \stackrel{T}{\times} U_i)$ is a
weak equivalence for each $i$ (cf. \cite[Thm.~1.3]{Thomason-Duke-2}),
we get after passing to the limit and using the Milnor exact sequence
(cf. ~\eqref{eqn:Milnor-seq})  that $\beta'_X$  is also a weak equivalence.

We now look at the commutative diagram
\begin{equation}\label{eqn:AS-Main-1}
\xymatrix@C1pc{
  K'_G(X) \ar[r]^-{\phi^*} \ar[d]_-{p^*_i} & K'_G(X \times G/B) \ar[d]^-{q^*_i}
  \ar[r]^-{\phi_*} & K'_G(X) \ar[d]^-{p^*_i} \\
  K'_G(X \times U_i) \ar[r]^-{\phi^*_i} & K'_G((X \times U_i) \times G/B)
  \ar[r]^-{\phi_{i*}} & K'_G(X \times U_i),}
\end{equation}
where $p_i \colon X \times U_i \to X$ is the projection and
$q_i = p_i \times \id$. The maps $\phi$ and $\phi_i$ are also projections.
The left square clearly commutes and the right square commutes by
\cite[Prop.~3.8]{Khan-JJM}.
Since $\phi \colon X \stackrel{B}{\times} G \cong X \times (G/B) \to X$
is a $G$-equivariant flag bundle, it follows from \cite[Thm.~1.13]{Thomason-Duke-2}
and the projection formula for $\phi$ (cf. \cite[Prop.~3.7]{Khan-JJM} or
\cite[Prop.~3.17]{TT}) that the composite horizontal arrows on the top in
~\eqref{eqn:AS-Main-1} is homotopic to identity. The same also holds for the
composite bottom horizontal arrow. The middle vertical arrow is same as the
map $p^*_i \colon K'_B(X) \to K'(X \stackrel{B}{\times} U_i)$ by
\lemref{lem:Morita*}.

Since ~\eqref{eqn:AS-Main-1} is compatible as $i$ varies through positive integers,
we get, after passing to the limit and completions, a commutative diagram
\begin{equation}\label{eqn:AS-Main-2}
\xymatrix@C1pc{
 K'_G(X)^{\compl}_{I_G}  \ar[d]_-{\Phi_{[X/G]}} \ar[r]^-{\phi^*} &
 K'_B(X)^{\compl}_{I_G} \ar[d]^-{\Phi_{[X/B]}} \ar[r]^-{\phi_*} &
K'_G(X)^{\compl}_{I_G} \ar[d]\ar[d]^-{\Phi_{[X/G]}} \\
   K'(X_G) \ar[r]^-{\phi^*} & K'(X_B) \ar[r]^-{\phi_*}  &  K'(X_G),}
\end{equation}
in which the composite horizontal arrows are homotopic to identity.
Since $\Phi_{[X/B]}$ is a weak equivalence, it follows by looking at the induced
maps on the homotopy groups that $\Phi_{[X/G]}$ is a weak equivalence.

To prove the general case, we choose an embedding $G \inj G'$, where $G'$ is a
general linear group over $k$ and choose a good admissible gadget
$\rho = (V_i, U_i)_{i \ge 1}$ for $G'$. Since $G'$ is special, $\rho$ exists
by \lemref{lem:Adm-gadget}. Furthermore, this is an admissible gadget for $G$ too.
We let $Y = X \stackrel{G}{\times} G'$ be the associated Morita space. We
let $X_i = X \times U_i$. 
We now look at the diagram

\begin{equation}\label{eqn:AS-Main-3}
\xymatrix@C1pc{
  K'_{G'}(Y) \ar[rr]^-{\delta^*_X} \ar[d]_-{q^*_i} & & K'_G(X)
  \ar[d]^-{p^*_i} \\
  K'_{G'}(Y \times U_i) \ar[r]^-{\theta^*} & K'_{G'}(X_i \stackrel{G}{\times} G')
  \ar[r]^-{\delta^*_{X_i}} & K'_G(X_i),}
\end{equation}
where the vertical arrows are induced by the projections.
This diagram is commutative and $\theta^*$ is a weak equivalence
by \lemref{lem:Mixed-iso}. The arrows $\delta^*_X$ and $\delta^*_{X_i}$ are weak
equivalences by \lemref{lem:Morita*}.

Using the compatibility of ~\eqref{eqn:AS-Main-3}
as $i$ varies through positive integers, we can pass to the limit and completions
to get a commutative diagram
\begin{equation}\label{eqn:AS-Main-4}
\xymatrix@C1pc{
  K'_{G'}(Y)^{\compl}_{I_{G'}} \ar[r]^-{\delta^*_X} \ar[d]_-{\Phi_{[Y/{G'}]}} &
  K'_G(X)^{\compl}_{I_{G'}} \ar[d]^-{\Phi_{[X/G]}} \\
  K'(Y_{G'}) \ar[r]^-{\delta^*_X \circ \theta^*} & K'(X_G),}
\end{equation}
in which the horizontal arrows are weak equivalences.
The left vertical arrow is a weak equivalence because $G'$ is special.
It follows that the right vertical arrow is also a weak equivalence.
Finally, we use the weak equivalence $K'_G(X)^{\compl}_{I_{G'}} \xrightarrow{\simeq}
K'_G(X)^{\compl}_{I_{G}}$ to conclude that $\Phi_{[X/G]}$ is a weak equivalence.
This finishes the proof of the theorem.
\end{proof}

\subsection{Equivariant vs. Borel Equivariant KH-theory}\label{sec:KH-Borel}
The following is the completion theorem for equivariant $KH$-theory.

\begin{thm}\label{thm:AS-KH}
  Let $X \in \Sch^G_k$. Assume that $X$ is either
  \begin{enumerate}
  \item
    a toric variety over $k$ on which $G$ acts as the dense torus or
  \item
    $\Char(k) = 0$ and $G$ acts with nice stabilizers.
  \end{enumerate}
  Then the map of spectra 
  \[
  \vartheta_{[X/G]} \colon KH_G(X)^{\compl}_{I_G} \to KH(X_G)
  \]
  is a weak equivalence. If $k$ contains all roots of unity and $m \in k^\times$, then
  \[
   \vartheta_{[X/G]} \colon
  (KH_G(X, {\Z}/m)[\beta^{-1}])^{\compl}_{I_G} \to KH(X_G, {\Z}/m)[\beta^{-1}]
  \]
  is a weak equivalence.
\end{thm}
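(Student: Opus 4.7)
The plan is to reduce to \thmref{thm:AS-Main} (the completion theorem for $K'$-theory) by using equivariant cdh-descent for $KH$-theory together with equivariant resolution of singularities. First, the regular case is handled directly: if $X$ is regular, then each $X^i(\rho) = X \stackrel{G}{\times} U_i$ is a smooth $k$-scheme (as a free $G$-quotient of the smooth scheme $X \times U_i$), so $KH \simeq K'$ on both sides of $\vartheta_{[X/G]}$. Under these weak equivalences, $\vartheta_{[X/G]}$ identifies with $\Phi_{[X/G]}$, which is a weak equivalence by \thmref{thm:AS-Main}. By nil-invariance of equivariant $KH$-theory (\cite[Thm.~6.2]{Hoyois-Krishna}) and \propref{prop:Rep-ind}, we may assume $X$ is reduced and proceed by induction on $\dim(X)$; the $0$-dimensional case reduces to the regular case.

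For the inductive step, choose a $G$-equivariant proper birational map $p \colon \wt X \to X$ with $\wt X$ smooth: under hypothesis (2) this is standard equivariant resolution of singularities in characteristic zero, and $\wt X$ still has nice stabilizers by \lemref{lem:Nice-sub} (the stabilizer of a point of $\wt X$ embeds in the stabilizer of its image in $X$); under hypothesis (1) toric resolution provides such a $p$, preserving the dense torus action. Set $Y = X_\sing$ and $Y' = p^{-1}(Y)$ as reduced closed $G$-invariant subschemes; this yields an equivariant abstract blow-up square. For each fixed $i$, applying $(-) \stackrel{G}{\times} U_i$ produces an abstract blow-up square of ordinary $k$-schemes, since the operation descends a flat base change by $U_i \to U_i/G$ and therefore preserves blow-up squares.

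Two descent observations now suffice. On the source side, equivariant cdh-descent for $KH$-theory (\cite[Thm.~6.2]{Hoyois-Krishna}) converts the blow-up square into a homotopy fiber sequence; as Lurie's $I_G$-completion is a homotopy limit of iterated cofibers (cf.~\eqref{eqn:I-comp-0} and \propref{prop:Compln_D}(7)), it preserves finite homotopy limits, so the completed square remains a fiber sequence. On the target side, ordinary cdh-descent for $KH$-theory on schemes gives, for each $i$, a fiber sequence of the $KH(X^i(\rho))$'s, which is preserved by $\holim_i$. The map $\vartheta$ is compatible with both fiber sequences, so the resulting long exact sequences of homotopy groups reduce the claim for $X$ to the claims for $\wt X$ (settled, since it is regular) and for $Y, Y'$ (handled by induction, as $\dim Y, \dim Y' < \dim X$).

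The Bott-inverted mod-$m$ statement follows by the same strategy: the regular case reduces to the corresponding $K'$-theory completion theorem with these coefficients (proved by a verbatim argument to \thmref{thm:AS-Main}, since the group-theoretic reduction via Borel subgroups and Morita equivalence is coefficient-independent), and the cdh-descent induction proceeds identically because smashing with $(\1/m)[\beta^{-1}]$ commutes with all the relevant homotopy limits and colimits. The main technical obstacle is the compatibility of $I_G$-completion with the equivariant cdh fiber sequence; this is precisely what is guaranteed by the explicit limit presentation \eqref{eqn:I-comp-0} of Lurie's completion together with \propref{prop:Compln_D}(7).
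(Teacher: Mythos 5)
Your proof is correct and follows essentially the same route as the paper: reduce to the reduced case by nil-invariance, run an induction on dimension via an equivariant (toric, in case (1)) resolution of singularities, convert the resulting abstract blow-up squares into homotopy fiber sequences of $KH$-spectra by \cite[Thm.~6.2]{Hoyois-Krishna} on the equivariant side and by cdh descent applied to the induced blow-up squares of the mixed quotients $X^i(\rho)$ on the Borel side, and conclude by a five-lemma argument anchored at the smooth case, where $KH \simeq K'$ and \thmref{thm:AS-Main} applies. Your added explanation of why the $I_G$-completion preserves the fiber sequence — exhibiting it as an iterated homotopy limit of cofibers via ~\eqref{eqn:I-comp-0} and \propref{prop:Compln_D}(7) — is a point the paper uses implicitly but does not spell out; likewise your remark that the Bott-inverted mod-$m$ case needs the corresponding variant of \thmref{thm:AS-Main}, which holds because each step of its proof (Morita reduction, Borel/torus change of groups, Weyl averaging, and the completion localization sequence) is stable under smashing with $\1/m$ and Bott inversion.
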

\begin{proof}  
  By the cdh descent theorem \cite[Thm.~6.2]{Hoyois-Krishna}, we can assume that $X$
  is reduced. In particular, $X$ is regular if $\dim(X) = 0$ and our assertion holds
  by \thmref{thm:AS-Main}. In general, we choose a $G$-equivariant resolution
  (toric resolution in case (1)) of singularities $p \colon \wt{X} \to X$
  and look at the resolution diagram ~\eqref{eqn:Rep-ind-7}.

  Since the stabilizer of any point of $\wt{X}$ is a closed subgroup of the
  stabilizer of its image in $X$, it follows from \lemref{lem:Nice-sub} that
  $G$ acts on $\wt{X}$ and $Y'$ (also on $Y = X_\sing$) with nice stabilizers.
We now apply the cdh-descent theorem and follow
the proof and notations of \propref{prop:Rep-ind} (with $X'$ substituted by $X$).
This yields us a commutative diagram
  \begin{equation}\label{eqn:AS-KH-0}
    \xymatrix@C1pc{
      KH_G(X)^{\compl}_{I_{G}} \ar[d]_-{\vartheta_{[X/G]}} \ar[r] &
      KH_G(\wt{X})^{\compl}_{I_{G}} \amalg KH_G(Y)^{\compl}_{I_{G}}
      \ar[d]^-{(\vartheta_{[{\wt{X}}/G]}
        \amalg \vartheta_{[Y/G]})} \ar[r] & KH_G(Y')^{\compl}_{I_{G}}
        \ar[d]^-{\vartheta_{[Y'/G]}} \\
          KH(X_G) \ar[r] & KH(\wt{X}_G) \amalg KH(Y_G) \ar[r] & KH(Y'_G),}
  \end{equation}
  where all horizontal arrows are the pull-back maps.
  The two rows are homotopy fiber
  sequences by ~\eqref{eqn:Rep-ind-5} and the cdh descent. The middle and the right
  vertical arrows are weak equivalences either by regularity or by induction on the
  dimension. We conclude that $\vartheta_{[X/G]}$ is a weak equivalence.
  The proof that this map is a  weak equivalence for the Bott-inverted $KH$-theory
  is identical.
\end{proof}

\section{Completion at other maximal ideals}
\label{sec:Max-com}
In this section, we shall prove a generalized version of Theorem~\ref{thm:AS-Main} 
(resp. ~\thmref{thm:AS-KH}) which describes the completions of
equivariant $K$-theory (resp. $KH$-theory) with $k$-coefficients
at all maximal ideals of $R_k(G)$ in
terms of Borel equivariant $K$-theory if $k$ is an algebraically closed field
of characteristic zero.
We shall first prove a derived version of the nonabelian completion theorem
of Edidin-Graham \cite{EG-Duke}.
We fix an algebraically closed field $k$ of characteristic zero throughout this
section.

\subsection{Some technical results}\label{sec:tech*}
We collect some technical results that we shall use in the proof of the generalized
completion theorem.

\begin{lem}\label{lem:Derived-CRT}
  Let $E$ be a connective $E_\infty$ ring which is an $H_k$-algebra and let
  $R = \pi_0(E)$. Let
  $I \subset R$ be a finitely generated ideal, $M$ a Noetherian $R$-module and $N$
  a $k$-vector space. Consider the $R$-module $P = M \otimes_k N$. As an $E$-module
  via the canonical map $E \to H_R$, one has then a natural weak equivalence
  $(H_P)^{\compl}_I \simeq H_{(P^{\compl}_I)}$, where $P^{\compl}_I$ is the $I$-adic
  completion
  of $P$ as an $R$-module.
\end{lem}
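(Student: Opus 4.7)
The plan is to compute $(H_P)^{\compl}_I$ directly from the iterated-cofiber formula in~\eqref{eqn:I-comp-0} and show that the resulting homotopy limit is discrete, with $\pi_0$ equal to the classical $I$-adic completion $P^{\compl}_I$. First, by \propref{prop:Compln_D}(5) applied to the $E_\infty$ map $E \to H_R$ (whose target has $\pi_0 = R$), the derived $I$-completion of $H_P$ as an $E$-module agrees with its derived $I$-completion as an $H_R$-module, so I may work in the $\infty$-category of $H_R$-modules. Picking generators $I = (x_1,\ldots,x_r)$, formula~\eqref{eqn:I-comp-0} gives
\[
(H_P)^{\compl}_I \simeq \holim_{\vec n \in \N^r}\, H_P/(x_1^{n_1},\ldots,x_r^{n_r}) H_P,
\]
and each term is computed by the Koszul chain complex $K^\bullet(x^{\vec n};P) \simeq P \otimes^{\mathbb L}_R K^\bullet(x^{\vec n};R)$.

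Next I exploit the factorization $P = M \otimes_k N$: because $k$ is a field, $-\otimes_k N$ is exact on $R$-modules, so there is a natural identification $K^\bullet(x^{\vec n};P) \simeq K^\bullet(x^{\vec n};M) \otimes_k N$, whence
\[
H_i\bigl(K^\bullet(x^{\vec n};P)\bigr) \cong H_i\bigl(K^\bullet(x^{\vec n};M)\bigr) \otimes_k N
\]
for all $i$. Since $M$ is Noetherian over $R$, the $R$-action factors through the Noetherian quotient $R/\mathrm{Ann}(M)$, over which $M$ is finitely generated; Koszul homology only depends on this action, so I may invoke the classical Artin-Rees lemma in its standard consequence: the pro-system $\{H_i(K^\bullet(x^n;M))\}_n$ on the cofinal diagonal sub-index $n_1 = \cdots = n_r = n$ is pro-zero for every $i>0$, while for $i=0$ it coincides with the pro-system $\{M/I^n M\}_n$ whose transition maps are surjective. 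Exactness of $-\otimes_k N$ transports both properties to the pro-system for $P$.

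Finally, I apply the Milnor short exact sequence~\eqref{eqn:Milnor-seq} to the tower of Koszul spectra. Pro-zeroness of higher Koszul homology makes both $\varprojlim$ and $\varprojlim^1$ vanish in every degree $i \neq 0$, so $\pi_i\bigl((H_P)^{\compl}_I\bigr) = 0$ for $i \neq 0$. For $i = 0$, Mittag-Leffler kills the $\varprojlim^1$-term and leaves
\[
\pi_0\bigl((H_P)^{\compl}_I\bigr) \;=\; \varprojlim_n \bigl(M/I^n M \otimes_k N\bigr) \;=\; (M \otimes_k N)^{\compl}_I \;=\; P^{\compl}_I,
\]
which identifies $(H_P)^{\compl}_I$ with $H_{(P^{\compl}_I)}$ as desired.

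I expect the main obstacle to be bookkeeping rather than substance: verifying the Artin-Rees pro-zeroness for Koszul homology is classical once one reduces to the Noetherian setting, and cofinality collapses the $\N^r$-indexed limit to the diagonal. The one place where a little care is needed is to ensure that taking the Koszul complex model of the iterated cofiber is legitimate in the $\infty$-category of $H_R$-modules (rather than only up to quasi-isomorphism of underlying chain complexes), but this is standard since $H_R$-module spectra correspond via Eilenberg-MacLane to the derived category of $R$-modules.
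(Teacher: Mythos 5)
Your proof is correct, but it takes a genuinely different route from the paper's. You compute $(H_P)^{\compl}_I$ all at once via Koszul complexes: you recognize each iterated cofiber $H_P/(x_1^{n_1},\dots,x_r^{n_r})H_P$ as $K^\bullet(x^{\vec n};P)$, use the flatness of $N$ over $k$ to pull out the factor $\otimes_k N$, reduce to a finitely generated module over the Noetherian quotient $R/\mathrm{Ann}(M)$, and then invoke the pro-vanishing of higher Koszul homology (the Artin--Rees type result, e.g.\ \cite[Tag~0921]{SP}) to kill everything except $\pi_0$. The paper instead proceeds by induction on the number of generators: it treats $I=(x)$ by hand via the two-term exact sequence for $\pi_i$ of a cofiber (where the pro-zeroness of $\{{}_{x^n}P\}$ is visible directly from the stabilization of ${}_{x^n}M$), and then writes $I = J + (x)$ and uses \propref{prop:Compln_D}(6), Fubini for homotopy limits, and cofinality of the diagonal to reduce to the principal case. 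Your argument is more uniform in $r$ and closer to the Greenlees--May picture of derived completion; the paper's is more elementary in that it only needs the $r=1$ computation and never appeals to the general statement about higher Koszul homology being pro-zero. Both accounts correctly exploit the Noetherian hypothesis through the observation that $R/\mathrm{Ann}(M)$ is Noetherian. Two small points: (i) for $i=0$, $H_0(K^\bullet(x^n;M)) = M/(x_1^n,\dots,x_r^n)M$ is only pro-isomorphic to $\{M/I^nM\}$, not literally equal, though this is harmless since both give $M^{\compl}_I$ in the limit and have surjective transitions; (ii) you identify the homotopy groups but do not spell out the natural map realizing the equivalence --- since you have shown $(H_P)^{\compl}_I$ is discrete, one should finish, as the paper does, by noting that the truncation $\alpha\colon F \to \tau_{\le 0}F \simeq H_{\pi_0(F)}$ followed by $\beta\colon H_{\pi_0(F)} \to H_{P^{\compl}_I}$ is the desired natural weak equivalence.
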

\begin{proof}
  We let $Q = P^{\compl}_I$, $F = (H_P)^{\compl}_I$ and $I = (x_1, \ldots , x_n)$.
  We first observe that the canonical map
  $\gamma \colon H_{(\pi_0(F))} \to \tau_{\le 0} F$ is a weak equivalence since $F$ is
  connective by \propref{prop:Compln_D}(1).
We next look at the maps
  \begin{equation}\label{eqn:Derived-CRT-0}
    F \xrightarrow{\alpha} \tau_{\le 0} F \xrightarrow{\gamma^{-1}}  H_{(\pi_0(F))}
    \xrightarrow{\beta} H_Q,
  \end{equation}
  where $\alpha$ is the truncation map and $\beta$ is induced by applying the
  Eilenberg-Mac Lane functor to the canonical maps
  $\pi_0(F) = \pi_0((H_P)^{\compl}_I) \to (\pi_0(H_P))^{\compl}_I \cong Q$.
  In particular, all maps in ~\eqref{eqn:Derived-CRT-0} are natural in $E, M$
  and $N$. We need to show that $\alpha$ and $\beta$ are weak equivalences.  
Assume first that $I = (x)$ is a principal ideal.

To prove $\alpha$ is a weak
equivalence, it is equivalent to proving that $\pi_i(F) = 0$ for every $i > 0$.
To prove the latter claim, we look at the Milnor exact sequences
\begin{equation}\label{eqn:Derived-CRT-1}
  0 \to {\varprojlim}^1_n \pi_{i+1}({H_P}/(x^n)) \to \pi_i(F) \to \varprojlim_n
  \pi_i({H_P}/(x^n)) \to 0.
  \end{equation}
Using the exact sequence
\begin{equation}\label{eqn:Derived-CRT-2}
  0 \to {\pi_i(L)}/{(x^n)} \to \pi_i({L}/{(x^n)}) \to ~_{x^n} (\pi_{i-1}(L)) \to 0
\end{equation}
for an $E$-module $L$, we get $\pi_0({H_P}/(x^n)) = {P}/{(x^n)}$,
$\pi_1({H_P}/{(x^n)}) =  ~_{x^n} P$ and $\pi_i({H_P}/{(x^n)}) = 0$ for $i \neq 0,1$.
We thus get $\pi_1(F) = \varprojlim_n ~_{x^n} P$, an exact sequence
\begin{equation}\label{eqn:Derived-CRT-3}
  0 \to  {\varprojlim}^1_n  ~_{x^n} P \to  \pi_{0}(F) \xrightarrow{\beta}
  P^{\compl}_I \to 0
  \end{equation}
and $\pi_i(F) = 0$ for $i \neq 0,1$. 
To show that $\alpha$ and $\beta$ are
weak equivalences, it suffices therefore to show that
the pro-$R$-module $\{_{x^n} P\}$ (whose all bonding maps are multiplication by $x$)
is zero. Since $_{x^n} P \cong (_{x^n} M)\otimes_k N$, it is enough to show that
$\{_{x^n} M\}$ is zero. But this follows at once from our assumption that $M$ is a
Noetherian $R$-module so that one can find $n \gg 0$ such that
$_{x^{n+i}} M = ~_{x^n} M$ for all $i \ge 0$. 

To prove the general case, we write $I = J + (x)$, where $J$ is generated by
at most $n-1$ elements and proceed by induction on the number of generators of $I$.
%We write $P' =  P^{\compl}_J$ and $F' = (H_P)^{\compl}_J$ so that
%$P^{\compl}_I \cong P'^{\compl}_{(x)}$ and $F = F'^{\compl}_{(x)}$.
By induction, we get $(H_P)^{\compl}_J \xrightarrow{\simeq} H_{(P^{\compl}_J)}$, and
the Milnor exact sequence tells us that the canonical map
$H_{(P^{\compl}_J)} \simeq H_{({\varprojlim}_i {P}/{J^i})} \to {\varprojlim}_i H_{({P}/{J^i})}$
  is a weak equivalence. It follows that $(H_P)^{\compl}_J \simeq
  {\varprojlim}_i H_{({P}/{J^i})}$.
We thus get
\begin{equation}\label{eqn:Derived-CRT-4}
(H_P)^{\compl}_I  \simeq \left[(H_P)^{\compl}_J\right]^{\compl}_{(x)} \simeq 
\left[{\varprojlim}_i H_{({P}/{J^i})}\right]^{\compl}_{(x)}
\simeq  {\varprojlim}_j \frac{{\varprojlim}_i H_{({P}/{J^i})}}{(x^j)}
\simeq {\varprojlim}_j {\varprojlim}_i \frac{H_{({P}/{J^i})}}{(x^j)},
\end{equation}
where the first weak equivalence is by \propref{prop:Compln_D}(6) and the last weak
equivalence is by the fact that the homotopy limit commutes with the
homotopy cofiber.

Combining ~\ref{eqn:Derived-CRT-4} with the Fubini theorem
(cf. \cite[Thm.~24.9 and \S~31.5]{Chacholski-Scherer}) and the cofinality theorem 
(cf. \cite[Thm.~6.12]{Dugger} and its proof which is valid for spectra) for
homotopy limits, we get
\begin{equation}\label{eqn:Derived-CRT-5}
(H_P)^{\compl}_I  \simeq
{\varprojlim}_j {\varprojlim}_i \frac{H_{({P}/{J^i})}}{(x^j)} \simeq
{\varprojlim}_{(i,j) \in \N \times \N} \frac{H_{({P}/{J^i})}}{(x^j)}
\simeq {\varprojlim}_{i} \frac{H_{({P}/{J^i})}}{(x^i)}.
\end{equation}
Another application of the Milnor sequence now yields an exact sequence
\begin{equation}\label{eqn:Derived-CRT-6}
  0 \to {\varprojlim}^1_i \pi_{n+1}\left(\frac{H_{({P}/{J^i})}}{(x^i)}\right) \to
  \pi_{n}(F) \to  {\varprojlim}_i
  \pi_{n}\left(\frac{H_{({P}/{J^i})}}{(x^i)}\right) \to 0.
  \end{equation}

On the other hand, applying ~\eqref{eqn:Derived-CRT-2} with $L = H_{({P}/{J^i})}$ gives
us 
\begin{equation}\label{eqn:Derived-CRT-7}
\pi_{i}\left(\frac{H_{({P}/{J^i})}}{(x^i)}\right) = \left\{ \begin{array}{ll}
     {P}/{(J^i, x^i)} & \mbox{if $i = 0$} \\
    ~_{x^i} ({P}/{J^i}) & \mbox{if $i = 1$} \\
    0 & \mbox{otherwise.}
  \end{array}
  \right.
  \end{equation}
Since the pro-$R$-module $\{_{x^i} ({P}/{J^j})\}_{i \ge 1}$ is zero for every $j \ge 1$
as we showed earlier, it follows at once that the pro-$R$-module
$\{_{x^i} ({P}/{J^i})\})\}_{i \ge 1}$ is also zero.
Using ~\eqref{eqn:Derived-CRT-6} and ~\eqref{eqn:Derived-CRT-7}, we get that
$\beta$ is a weak equivalence. We also get $\pi_i(F) = 0$ for $i \neq 0$.
Using the isomorphism $P^{\compl}_I \cong {\varprojlim}_{i}  {P}/{(J^i, x^i)}$, we
conclude from ~\eqref{eqn:Derived-CRT-6} and  ~\eqref{eqn:Derived-CRT-7}
that $(H_P)^{\compl}_I \simeq H_{(P^{\compl}_I)}$. This finishes the proof.
\end{proof}

\begin{lem}\label{lem:Torus-EG}
  Assume that $T$ is a split torus over $k$ and $I \subset R_k(T)$ is an ideal
  such that $\sqrt{I} = \fm_1 \cdots \fm_r$, where each $\fm_i$ is a maximal ideal
  of $R_k(T)$. For any $X \in \Spc^T_k$, the canonical map
  $\tau_X \colon K'_T(X,k)^{\compl}_I \to
  \stackrel{r}{\underset{i =1}\coprod} K'_T(X,k)^{\compl}_{\fm_i}$
  is then a weak equivalence.
\end{lem}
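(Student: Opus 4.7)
The plan is to deduce the lemma from the classical Chinese Remainder Theorem applied to the ring $R := R_k(T)$ and then lift the resulting idempotent decomposition to the spectrum $E := K'_T(X,k)$. First, I would observe that $R \cong k[\wh{T}]$ is a finitely generated $k$-algebra, hence Noetherian, by \propref{prop:finiteR}. Since the $\fm_i$ are distinct maximal ideals they are pairwise comaximal, so classical CRT yields, for every $n \ge 1$, an isomorphism of rings
\[
R/(\fm_1 \cdots \fm_r)^n \ \simeq \ \prod_{i=1}^{r} R/\fm_i^n.
\]
Using \propref{prop:Compln_D}(5) to replace $I$ by $\fm_1 \cdots \fm_r$ (both have the same radical) and passing to the inverse limit, I obtain a ring isomorphism $R^{\compl}_{I} \cong \prod_{i=1}^{r} R^{\compl}_{\fm_i}$, producing orthogonal idempotents $e_1, \ldots, e_r \in R^{\compl}_{I}$ summing to $1$, where $e_i$ is the unit in the $i$-th factor and zero in the others.

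Next, I would transport these idempotents to the spectrum level. Since $E$ is an $R$-module spectrum, the $I$-completion $E^{\compl}_I$ inherits the structure of a module over the $E_\infty$-ring $R^{\compl}_I$ (this is standard: the $R$-action extends to $R^{\compl}_I$ because $E^{\compl}_I$ is itself $I$-complete and $I$-completion is idempotent in the sense of \propref{prop:Compln_D}(6)). The orthogonal idempotents $e_1, \ldots, e_r$ then act on $E^{\compl}_I$ and split it as
\[
E^{\compl}_I \ \simeq \ \coprod_{i=1}^{r} e_i \cdot E^{\compl}_I,
\]
where $e_i \cdot E^{\compl}_I$ denotes the image of the idempotent endomorphism $e_i \colon E^{\compl}_I \to E^{\compl}_I$.

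Finally, I would identify each summand $e_i \cdot E^{\compl}_I$ with $E^{\compl}_{\fm_i}$. The summand $e_i \cdot E^{\compl}_I$ is naturally a module over the direct factor $R^{\compl}_{\fm_i}$, and in particular is $\fm_i$-complete. On the other hand, since $I \subset \fm_i$, \propref{prop:Compln_D}(6) (together with Noetherianness, which lets us write $\fm_i$ as $I$ plus finitely many extra generators) gives $(E^{\compl}_I)^{\compl}_{\fm_i} \simeq E^{\compl}_{\fm_i}$. But completion at $\fm_i$ kills the summands $e_j \cdot E^{\compl}_I$ for $j \ne i$, because the image of $e_j$ in $R^{\compl}_{\fm_i}$ is zero, so $(E^{\compl}_I)^{\compl}_{\fm_i}$ collapses to $(e_i \cdot E^{\compl}_I)^{\compl}_{\fm_i} \simeq e_i \cdot E^{\compl}_I$. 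Composing the two identifications gives the required weak equivalence $E^{\compl}_{\fm_i} \simeq e_i \cdot E^{\compl}_I$, and summing over $i$ yields the map $\tau_X$ as a coproduct of weak equivalences, hence itself a weak equivalence.

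The main technical obstacle lies in the second step: promoting the algebraic idempotents $e_i \in \pi_0(R^{\compl}_I)$ to actual idempotent endomorphisms of the spectrum $E^{\compl}_I$ compatible enough to yield a coproduct decomposition in $\sS\sH$. This is a question of whether $E^{\compl}_I$ admits the structure of an $R^{\compl}_I$-module spectrum and whether idempotents in $\pi_0$ of an $E_\infty$-ring split module spectra; both are standard facts in the setting of Lurie's $\infty$-categorical framework \cite{Lurie-HA, Lurie-SAG}, but care is needed to invoke them at the level of generality used here. Alternatively, one can avoid idempotents altogether by arguing inductively on $r$: for two comaximal ideals $J, J'$ the CRT identification $R/(JJ')^n \simeq R/J^n \times R/J'^n$ lifts to the homotopy cofibers defining derived completion, and taking $\holim_n$ gives $E^{\compl}_{JJ'} \simeq E^{\compl}_J \times E^{\compl}_{J'}$, from which the result follows by iteration.
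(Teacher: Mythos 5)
The proposal identifies the right algebraic phenomenon (Chinese Remainder Theorem for comaximal maximal ideals) but glosses over precisely the difficulty that the paper's proof is designed to overcome. The gap sits in the sentence where you say that promoting the idempotents $e_i \in R_k(T)^{\compl}_I$ to a splitting of the spectrum $E^{\compl}_I$ is a "standard fact." Idempotent splitting of module spectra is indeed standard \emph{once one has idempotents in $\pi_0$ of the relevant $E_\infty$-ring spectrum}. But the relevant ring spectrum here is something like $K(BT,k)^{\compl}_I$ (you would need $E^{\compl}_I$ to be a module over the completed ring spectrum, not merely over the discrete completed ring $R_k(T)^{\compl}_I$, and no map of ring spectra $H_{R_k(T)^{\compl}_I} \to K(BT,k)^{\compl}_I$ exists), and it is \emph{not automatic} that $\pi_0\bigl(K(BT,k)^{\compl}_I\bigr)$ is the classical adic completion $R_k(T)^{\compl}_I$: the Milnor sequence introduces a potential $\varprojlim^1$ term which could destroy the idempotents. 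Exactly this point---when does derived completion compute classical completion on $\pi_0$---is what Lemma~\ref{lem:Derived-CRT} is about, and it is proved there only for $R_k(T)$-modules of the special form $M \otimes_k N$ with $M$ Noetherian.

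The paper's proof is structured precisely to land in that special situation: Thomason's generic slice theorem plus Morita equivalence is used to reduce to an open $U = W \times T''$ so that $\pi_n(K'_T(U,k)) \cong K'_n(W,k) \otimes_k R_k(T')$ with $R_k(T')$ a Noetherian $R_k(T)$-module, then a Postnikov tower reduces to Eilenberg--MacLane spectra, and finally Lemma~\ref{lem:Derived-CRT} converts the spectrum-level question into the elementary algebraic CRT. Your proposal skips the generic slice / Noetherian induction and the Postnikov step entirely, which is exactly what makes it unable to verify the $\varprojlim^1$ vanishing needed for the idempotents. The alternative inductive argument you sketch at the end has the additional problem of conflating the Lurie derived completion $E^{\compl}_I$ (a homotopy limit of \emph{iterated cofibers} over a multi-index) with the naive $\holim_n E/I^nE$; these agree only after the same Noetherian-module control has been established. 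So while the idempotent-splitting strategy is a reasonable blueprint and could perhaps be carried out after first applying Lemma~\ref{lem:Derived-CRT} (with Postnikov) to the ring spectrum $K(BT,k)$ itself, as written the proof assumes the hardest step rather than proving it.
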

\begin{proof}
We can assume that $X$ is reduced.
We now apply Thomason's generic slice theorem to get a regular $T$-invariant affine
open subscheme $U \subset X$ and a diagonalizable subgroup $T' \subset T$ such
that $T'$ acts trivially and $T'' = T/{T'}$ acts freely on $U$ so that
$W := U/{T''}$ is a regular affine scheme and $U \cong W \times T''$ as an object
    of $\Sch^T_k$. It follows from \lemref{lem:Morita*} that
    $K'_T(U,k) \simeq K'_{T'}(W,k)$. Using Noetherian induction and the
    localization theorem, it suffices to show that $\tau_U$ is a weak equivalence.
    Since the canonical map $K'_T(U,k) \to \holim_n \tau_{\le n} K'_T(U,k)$ is
    a weak equivalence and the derived completion commutes with limits, it suffices
    to show that the map $\tau_{U,n} \colon (\tau_{\le n} K'_T(U,k))^{\compl}_I \to
    \stackrel{r}{\underset{i =1}\coprod} (\tau_{\le n} K'_T(U, k))^{\compl}_{\fm_i}$ is
    a weak equivalence for every $n \ge 0$.

Using the homotopy fiber sequences
    \[
    (H_{G_n})^{\compl}_J \to (\tau_{\le n} K'_T(U,k))^{\compl}_J \to
    (\tau_{\le n-1} K'_T(U,k))^{\compl}_J,
    \]
    where $G_n = \pi_n(K'_T(U,k))$ and $J \subset R_k(T)$ any ideal, we are finally
    reduced to showing that the canonical map
\begin{equation}\label{eqn:Torus-EG-0}
  \tau^n_U \colon  (H_{G_n})^{\compl}_I \to  \stackrel{r}{\underset{i =1}\coprod}
  (H_{G_n})^{\compl}_{\fm_i}
\end{equation}
is a weak equivalence for every $n \ge 0$.

To show that ~\eqref{eqn:Torus-EG-0} is a weak equivalence,
we note that $G_n \cong K'_n(W,k) \otimes_k R_k(T')$ by \corref{cor:Twist-3}
(see also \cite[Lem.~5.6]{Thomason-Inv}) and $R_k(T')$ is a Noetherian
$R_k(T)$-module.
\lemref{lem:Derived-CRT} (with $M = R_k(T')$ and $N = K'_n(W,k)$) therefore says 
that $\tau^n_U$ in ~\eqref{eqn:Torus-EG-0} is a weak equivalence if and only if
the map $H_{(G_n)^{\compl}_I} \to \stackrel{r}{\underset{i =1}\coprod}
H_{(G_n)^{\compl}_{\fm_i}}$ is a weak equivalence. Equivalently, $\tau^n_U$ is a weak
equivalence if and only if the map of $R_k(T)$-modules
$(G_n)^{\compl}_{I} \to \ \stackrel{r}{\underset{i =1}\bigoplus} (G_n)^{\compl}_{\fm_i}$
is an isomorphism. But the latter assertion is clear from the definition of the
completion of modules over a commutative ring.
\end{proof}

We now fix $G \in \Grp_k$ and $g \in G_s(k)$.
We let $Z_g$ denote the centralizer of $g$ in $G$. Let $\Psi \subset G(k)$
denote the conjugacy class of $g$. This hypothesis will remain in place
until we reach \S~\ref{sec:Com-max-K}

\begin{lem}\label{lem:Null-Compln}
  Assume that $g \in Z(G)$ and $X \in \Spc^G_k$ such that $X^g = \emptyset$. Then
  $K'_G(X,k)^{\compl}_{\fm_g}$ is weakly contractible.
\end{lem}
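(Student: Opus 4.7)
My plan is to use a generic slice theorem for the central diagonalizable subgroup $P = \overline{\langle g \rangle} \subset Z(G)$ (which is diagonalizable and central by Lemma~\ref{lem:Twist-1}) together with a vanishing computation in the representation ring $R_k(P_1)$ for a proper subgroup $P_1 \subsetneq P$.

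By nilinvariance of $K'$-theory I may assume $X$ is reduced. Noetherian induction on closed $G$-invariant subspaces, combined with the localization fiber sequence and the fact that Lurie completion preserves finite homotopy fibers (Proposition~\ref{prop:Compln_D}(7)), reduces the problem to finding a dense $G$-invariant open $U \subset X$ with $K'_G(U,k)^{\compl}_{\fm_g}$ weakly contractible. Applying Thomason's generic slice theorem to the $P$-action on $X$ yields a $P$-invariant dense open $V$ and a closed subgroup $P_1 \subsetneq P$ (the principal stabilizer) with a $P$-equivariant isomorphism $V \simeq V_0 \times P/P_1$, where $P_1$ acts trivially on $V_0$ and $P$ acts on $P/P_1$ by translation. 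The hypothesis $X^g = \emptyset$ forces $g \notin P_1$, so the image $\bar g \in P/P_1$ is nontrivial. Using centrality of $P$ in $G$ and intersecting $G$-translates, I may arrange $V$ to be itself $G$-invariant; call it $U$.

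Restricting equivariance from $G$ to $P$ and applying Lemma~\ref{lem:Morita*} (for the free $P/P_1$-action on the second factor) followed by Corollary~\ref{cor:Twist-3} (for the trivial $P_1$-action on $V_0$), I obtain on homotopy groups
\[
\pi_n(K'_P(U,k)) \simeq \pi_n(K'(V_0,k)) \otimes_k R_k(P_1) \quad \text{for every } n,
\]
where the $R_k(P)$-action factors through the surjective restriction $R_k(P) \twoheadrightarrow R_k(P_1)$ acting on the right tensor factor. Since $\bar g \in P/P_1$ is a nontrivial element of the diagonalizable group $P/P_1$, there is a character $\chi_0 \in \widehat{P/P_1} \subset \widehat{P}$ with $\chi_0(g) \neq 1$. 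The element $r := [L_{\chi_0}] - 1 \in R_k(P)$ lies in $\ker(R_k(P) \to R_k(P_1))$ (since $\chi_0|_{P_1}$ is trivial) but satisfies $\tr_g(r) = \chi_0(g) - 1 \neq 0$, so $r \notin \fm_g^P$, where $\fm_g^P$ is the unique maximal ideal of $R_k(P)$ over $\fm_g$ by Corollary~\ref{cor:maximal2}. Hence the image of $\fm_g^P$ in $R_k(P_1)$ is the unit ideal, which forces $\pi_n(K'_P(U,k))_{\fm_g^P} = 0$ for every $n$; by Proposition~\ref{prop:Compln_D}(3) the derived localization $K'_P(U,k)_{\fm_g^P}$ is weakly contractible, and then by Proposition~\ref{prop:Compln_D}(2) so is the derived completion $K'_P(U,k)^{\compl}_{\fm_g^P}$. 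Applying Proposition~\ref{prop:Compln_D}(5) to the finite ring map $R_k(G) \to R_k(P)$ of Proposition~\ref{prop:finiteR} (with unique maximal ideal $\fm_g^P$ above $\fm_g$) identifies this $R_k(P)$-completion with the $R_k(G)$-completion at $\fm_g$.

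The final and most delicate step is to descend this $P$-equivariant vanishing to the $G$-equivariant one: the restriction map $K'_G(U,k) \to K'_P(U,k)$ is merely $R_k(G)$-linear, not a retract in general. I expect to handle this by applying the decomposition theorem (Proposition~\ref{prop:Twist-2}(1)) to the trivial $P_1 \subset Z(G)$-action on $U$, which splits $K'_G(U,k) \simeq \bigoplus_{\chi \in \widehat{P_1}} K'^\chi_G(U,k)$ into $P_1$-isotypic summands; on each summand the $R_k(G)$-action factors, up to a character shift governed by Proposition~\ref{prop:Twist-BP-0}, through the restriction $R_k(G) \to R_k(P_1)$, so the same unit-ideal argument applies componentwise and forces each $K'^\chi_G(U,k)^{\compl}_{\fm_g}$ to vanish. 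The main obstacle is making precise the interaction between the $R_k(G)$-module structure on the individual isotypic pieces and the $R_k(P_1)$-character decomposition of $G$-representations, so as to reduce the $G$-equivariant completion computation to the already-solved $P$-level one.
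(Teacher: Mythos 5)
Your $P$-level computation is essentially correct, and you are also right that the genuine difficulty is the descent from $P$ back to $G$. But the sketch you give for closing that gap does not work, and the correct route is both shorter and different from what you propose.

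The specific error is the claim that on each isotypic summand $K'^{\chi}_G(U,k)$, ``the $R_k(G)$-action factors, up to a character shift, through the restriction $R_k(G)\to R_k(P_1)$.'' It does not. A virtual representation $W$ of $G$ acts by tensoring; writing $W=\bigoplus_{\psi\in\widehat{P_1}}W_\psi$, multiplication by $[W]$ sends $K'^{\chi}_G(U,k)$ into $\coprod_{\psi}K'^{\chi+\psi}_G(U,k)$, so it does not even preserve the summand, and the $\psi$-component acts by tensoring with $W_\psi$ as a $G$-module, which depends on the full $G$-structure of $W_\psi$ and not merely on its (scalar) $P_1$-character $\psi$. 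Two $G$-representations with identical $P_1$-restriction but different $G$-module structure act differently; for example with $G=SL_2$ and $P_1=\mu_2$, the standard $3$-dimensional representation and the trivial $3$-dimensional representation restrict identically to $\mu_2$ but act very differently on equivariant $K$-theory. So there is no homomorphism $R_k(G)\to R_k(P_1)$ through which the action factors, and the ``same unit-ideal argument'' cannot be applied componentwise. Your difficulty is real, but the proposed fix cannot be made precise.

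The intended argument avoids this entirely. Since $\fm_g\subset R_k(G)$ is a maximal ideal and $R_k(G)$ is Noetherian (Proposition~\ref{prop:finiteR}), Proposition~\ref{prop:Compln_D}(2) gives a weak equivalence $K'_G(X,k)^{\compl}_{\fm_g}\simeq \bigl(K'_G(X,k)_{\fm_g}\bigr)^{\compl}_{\fm_g}$, so it suffices to show the derived localization $K'_G(X,k)_{\fm_g}$ is contractible; by Proposition~\ref{prop:Compln_D}(3) this is equivalent to the vanishing of the ordinary localizations $\pi_n\bigl(K'_G(X,k)\bigr)_{\fm_g}$ for all $n$. That vanishing is exactly the localization theorem of Edidin--Graham (the central-element case of \cite[Thm.~5.1]{EG-Duke}), transported from $\C$ to $k$ using \S~\ref{sec:Rep-G}. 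This is what the paper means by ``a routine modification'': the passage from completion to localization is formal via Proposition~\ref{prop:Compln_D}, after which one quotes EG. The Edidin--Graham proof itself does use the slice theorem and Noetherian induction much as you set up, but it works at the $G$-level throughout (using a Morita embedding $G\hookrightarrow GL_n$ to produce suitable elements of $R_k(G)$ rather than of $R_k(P)$); your step of restricting equivariance to $P$ loses precisely the $R_k(G)$-module structure that the localization is taken over, and that information cannot be recovered from $K'_P(U,k)$ alone.
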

\begin{proof}
  This is easily proved by a routine modification of the proof of
  \cite[Thm.~5.1]{EG-Duke}. We omit the details.
  \end{proof}
  
\begin{cor}\label{cor:Localization-0}
  Assume that $g \in Z(G)$ and $X \in \Spc^G_k$. Then the push-forward map
  $\lambda_X \colon K'_G(X^g,k)^{\compl}_{\fm_g} \to K'_G(X,k)^{\compl}_{\fm_g}$ is a weak
  equivalence.
\end{cor}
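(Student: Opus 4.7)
The plan is to derive this corollary from Lemma~\ref{lem:Null-Compln} via the localization sequence in equivariant $K'$-theory.

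First, I observe that since $g \in Z(G)$, the fixed-point subspace $X^g$ is $G$-invariant: for any $h \in G$ and $x \in X^g$, we have $g(hx) = h(gx) = hx$, so $hx \in X^g$. Thus $X^g \hookrightarrow X$ is a $G$-equivariant closed embedding, and its open complement $U = X \setminus X^g$ inherits a $G$-action. Note also that $U^g = \emptyset$ by construction.

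Next, I invoke the localization theorem for equivariant $K'$-theory of algebraic spaces with group action (Thomason's localization sequence, which works at the level of stacks via $K'([\,\cdot\,/G])$). Writing $\iota \colon X^g \hookrightarrow X$ and $j \colon U \hookrightarrow X$, this gives a homotopy fiber sequence of spectra
\begin{equation*}
K'_G(X^g, k) \xrightarrow{\iota_*} K'_G(X, k) \xrightarrow{j^*} K'_G(U, k).
\end{equation*}
Each term is naturally a module over $R_k(G)$, and the maps are $R_k(G)$-linear.

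Now I apply Lurie's $\fm_g$-completion functor. Since $\fm_g$ is the radical of a finitely generated ideal (as $R_k(G)$ is Noetherian by Proposition~\ref{prop:finiteR}), the completion $(-)^{\compl}_{\fm_g}$ is a well-defined functor on $R_k(G)$-modules in $\sS\sH$. Crucially, derived completion preserves homotopy fiber sequences: one way to see this is via the explicit formula of ~\eqref{eqn:I-comp-0}, which expresses the completion as a homotopy limit of iterated homotopy cofibers, and both of these operations preserve fiber sequences in the stable $\infty$-category of spectra. Applying completion therefore yields a fiber sequence
\begin{equation*}
K'_G(X^g, k)^{\compl}_{\fm_g} \xrightarrow{\lambda_X} K'_G(X, k)^{\compl}_{\fm_g} \xrightarrow{j^*} K'_G(U, k)^{\compl}_{\fm_g}.
\end{equation*}

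To finish, I apply Lemma~\ref{lem:Null-Compln} to $U$: since $U^g = \emptyset$ and $g \in Z(G)$, the spectrum $K'_G(U, k)^{\compl}_{\fm_g}$ is weakly contractible. The long exact sequence of homotopy groups associated to the fiber sequence above then forces $\lambda_X$ to be a weak equivalence. There is essentially no obstacle here; the only point requiring care is the preservation of fiber sequences by derived completion, which is a general feature of Lurie's construction that has already been used implicitly in the paper.
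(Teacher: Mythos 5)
Your proposal is correct and follows precisely the same route as the paper's proof: use the localization fiber sequence for $K'_G$, note that derived completion preserves fiber sequences, and then apply Lemma~\ref{lem:Null-Compln} to $U = X \setminus X^g$ (which satisfies $U^g = \emptyset$). The paper states this more tersely, but the underlying argument is identical.
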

\begin{proof}
  Using the localization theorem, this is equivalent to saying that
  $K'_G(X \setminus X^g,k)^{\compl}_{\fm_g}$ is weakly contractible. But this follows
  from \lemref{lem:Null-Compln}.
\end{proof}

\subsection{The derived nonabelian completion theorem}\label{sec:EG-compln}
We first consider a special case. We assume that $G$
is a connected reductive $k$-group such that $Z_g$ is also connected reductive
having the same rank as that of $G$. We also assume that $G$ and $Z_g$ have simply
connected commutator subgroups. We choose a common maximal torus $T$ of $G$ and
$Z_g$ which contains $g$. We let $W$ (resp. $W'$) be the Weyl group of $G$
(resp. $Z_g$) with respect to $T$. We then have a canonical inclusion of groups
$W' = {N_{Z_g}(T)}/T \inj {N_G(T)}/T = W$. By \cite[Lem.~4.6]{EG-Duke},
we have $\Psi \cap T = \{t_0 = g, t_1, \ldots , t_{r-1}\}$ on which $W$ acts
transitively and the stabilizer subgroup of $g$ is $W'$ via the above inclusion. 
We let $X \in \Spc^G_k$.

It is easy to see that the first of the two canonical projection maps of stacks
$[X/T] \to [X/{N_G(T)}] \to [X/G]$ is a $W$-torsor. In particular, $[X/T]$ is
a stack with $W$-action. It follows that $K'_T(X) \simeq K'([X/T])$ is a spectrum
with $W$-action and the restriction map $\rho^G_T \colon K'_G(X) \to K'_T(X)$
is a $W$-equivariant morphism of spectra with respect to the trivial
$W$-action on $K'_G(X)$.
%(cf. \cite[\S~17.3.4]{Hill}).
We let ${\rm Nm}^G_T \colon K'_T(X) \to K'_T(X)^{hW}$
denote the norm map, where $A^{hH}$ denotes the homotopy fixed point
spectrum if $H$ is a finite group and  $A$ is a spectrum with $H$-action
(cf. \cite[\S~I.1]{Nikolaus-Scholze}).

For any field $K$, let $H_K$-mod denote the category of $H_K$-module spectra.
One knows that the homotopy category of $H_K$-mod is canonically equivalent to the
derived category $\sD_K$ of $K$-vector spaces
(cf. \cite[Thm.~5.16]{Schwede-Shipley}). Since the latter category is semisimple,
we get that the homotopy category of $H_K$-mod is semisimple.

\begin{lem}\label{lem:G-T-map}
The change of groups map induces a weak equivalence of $K(BG, k)$-module spectra
\begin{equation}\label{eqn:GLn-EG}
  \rho^G_T \colon  K'_G(X,k) \xrightarrow{\simeq} {K'_T(X,k)}^{hW}.
  \end{equation}
\end{lem}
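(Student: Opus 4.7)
The plan is to establish the weak equivalence $\rho^G_T$ by checking that it induces isomorphisms on all homotopy groups; the $K(BG,k)$-module compatibility is automatic, since $\rho^G_T$ is a pullback along a map of stacks over $BG$. Two ingredients feed into the homotopy-group calculation.

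First, I would identify the homotopy groups of the target as honest $W$-invariants. Because $\Char(k)=0$, the order $|W|$ is invertible in $k$, so $k[W]$ is semisimple and $H^{>0}(W;M)=0$ for every $k[W]$-module $M$. The homotopy fixed-point spectral sequence
\[
E_2^{p,q} = H^p\bigl(W;\pi_{-q}(K'_T(X,k))\bigr) \Longrightarrow \pi_{-p-q}\bigl(K'_T(X,k)^{hW}\bigr)
\]
therefore collapses on its edge column, yielding $\pi_n(K'_T(X,k)^{hW}) \cong \pi_n(K'_T(X,k))^W$ for every integer $n$.

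Second, I would show that the restriction map induces an isomorphism $\pi_n(K'_G(X,k)) \xrightarrow{\cong} \pi_n(K'_T(X,k))^W$. The flag-bundle and Morita arguments used in the proof of \thmref{thm:AS-Main}, together with \cite[Thm.~1.13]{Thomason-Duke-2} and \eqref{eqn:inv}, produce a natural $R_k(T)$-linear isomorphism
\[
\pi_n(K'_G(X,k)) \otimes_{R_k(T)^W} R_k(T) \xrightarrow{\ \cong\ } \pi_n(K'_T(X,k)).
\]
Applying the averaging idempotent $e = \tfrac{1}{|W|}\sum_{w\in W} w$ (well-defined since $|W|\in k^{\times}$) and using that $W$ acts trivially on $\pi_n(K'_G(X,k))$, taking $W$-invariants commutes with the tensor product $\pi_n(K'_G(X,k)) \otimes_{R_k(T)^W} (-)$. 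Since $R_k(T)^W$ is by definition the $W$-invariant subring of $R_k(T)$, this gives
\[
\pi_n(K'_T(X,k))^W \cong \pi_n(K'_G(X,k)) \otimes_{R_k(T)^W} R_k(T)^W = \pi_n(K'_G(X,k)),
\]
and one checks that the composite isomorphism coincides with $\pi_n(\rho^G_T)$.

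Combining both steps gives the desired weak equivalence. The main obstacle will be the compatibility check in the second step: namely, that the $W$-action on $\pi_n(K'_T(X,k))$ appearing in Thomason's tensor-product presentation agrees with the geometric $W$-action arising from the torsor $[X/T]\to[X/N_G(T)]$ used in the first step. I expect this to follow from functoriality of Thomason's comparison with respect to conjugation by representatives of $W$ inside $N_G(T)$, but this verification must be carried out carefully so that the two independent identifications of the target can be glued together into a single weak equivalence of $K(BG,k)$-module spectra.
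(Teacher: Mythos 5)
Your proposal follows essentially the same route as the paper: collapse the homotopy fixed point spectral sequence using $|W| \in k^{\times}$ (the paper cites \cite[Thm.~17.5.4]{Hill}), and then identify $\pi_n(K'_T(X,k))^W$ with $\pi_n(K'_G(X,k))$ via Thomason's tensor-product presentation $\pi_n(K'_G(X,k)) \otimes_{R_k(G)} R_k(T) \cong \pi_n(K'_T(X,k))$ combined with the identification $R_k(G) \cong R_k(T)^W$ and commutation of $W$-invariants with the tensor (the paper's averaging argument is packaged as \cite[Lem.~3.1]{Krishna-Adv}, and the tensor-product isomorphism is cited from \cite[Prop.~A6]{Krishna-Adv} rather than derived anew from flag-bundle and Morita arguments). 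The compatibility issue you flag at the end — that the $W$-action on $\pi_n(K'_T(X,k))$ from the torsor $[X/T]\to[X/N_G(T)]$ agrees with the one implicit in Thomason's tensor-product description — is real but is subsumed by the reference to \cite[Prop.~A6]{Krishna-Adv}, which asserts the isomorphism as one of $K_*(BG,k)$-modules with the geometric $W$-action built in.
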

\begin{proof}
  By \cite[Prop.~ A6]{Krishna-Adv}, $\rho^G_T$ induces an isomorphism
  $K'_i([X/G],k)) \otimes_{R_k(G)} R_k(T) \xrightarrow{\cong} K'_i([X/T],k)$ of
  $K_*(BG,k)$-modules for every $i \in \Z$. Using the isomorphism
  $R_k(G) \xrightarrow{\cong} R_k(T)^W$ (cf. ~\eqref{eqn:char1})
  and \cite[Lem.~3.1]{Krishna-Adv},
  we deduce that $\rho^G_T$ induces an isomorphism
  \begin{equation}\label{eqn:G-T-map-0}
    K'_i([X/G],k) \xrightarrow{\cong} (K'_i([X/T],k))^W
  \end{equation}
  for every $i \in \Z$, where $A^H$ denotes the fixed point subgroup of an abelian
  group $A$ equipped with the action of a finite group $H$.
  On the other hand, the homotopy fixed point spectral sequence
  (cf. \cite[Thm.~17.5.4]{Hill}) shows that the canonical map
  $\pi_*({K'([X/T],k)}^{hW}) \xrightarrow{\cong} (K'_*([X/T],k))^W$ is an
  isomorphism. We deduce
  that the canonical maps $K'_G(X,k) \leftarrow {K'_G(X,k)}^{hW} \to
  {K'_T(X,k)}^{hW}$ are weak equivalences. 
\end{proof}

We let $\fm^T_\Psi = \fm_\Psi R_k(T) \subset  R_k(T)$.
Since the homotopy fixed point functor is a homotopy limit
(cf. \cite[\S~I.1]{Nikolaus-Scholze}), it commutes with homotopy fibers
(hence with homotopy cofibers) and homotopy limits.
Using this fact and ~\eqref{eqn:I-comp-0}, we deduce from
\propref{prop:Compln_D}(5) that ~\eqref{eqn:GLn-EG} induces a
weak equivalence 
\begin{equation}\label{eqn:GLn-EG-0}
  \rho^G_T \colon  K'_G(X,k)^{\compl}_{\fm_\Psi} \xrightarrow{\simeq}
      {(K'_T(X,k)^{\compl}_{\fm^T_\Psi})}^{hW}.
  \end{equation}
A similar argument shows that the map
$\rho^{Z_g}_T \colon  K'_{Z_g}(X,k)^{\compl}_{\fm_g} \xrightarrow{\simeq}
{(K'_T(X,k)^{\compl}_{\fm^T_g})}^{hW'}$ is also a weak equivalence.

We now look at the commutative diagram of $H_k$-module spectra
\begin{equation}\label{eqn:GLn-EG-1}
  \xymatrix@C1pc{
    \pi_*\left(K'_G(X,k)^{\compl}_{\fm_\Psi}\right) \ar[r]^-{\rho^G_T}
    \ar[d]_-{\rho^G_{Z_g}} &
       {\left({\pi_*\left(K'_T(X,k)^{\compl}_{\fm^T_\Psi}\right)}\right)}^{W}
       \ar[r]^-{\simeq}
       &   
   {\left(\stackrel{r-1}{\underset{i= 0}\bigoplus}
     \pi_*\left(K'_T(X,k)^{\compl}_{\fm_{t_i}}\right)\right)}^{W}
   \ar[d]^-{p_0} \\
   \pi_*\left(K'_{Z_g}(X,k)^{\compl}_{\fm_g}\right) \ar[rr]^-{\rho^{Z_g}_T} & &
   \left(\pi_*\left(K'_T(X,k)^{\compl}_{\fm_{g}}\right)\right)^{W'}}
\end{equation}
where $p_0$ is induced by the projection to the component
corresponding to $i = 0$.

The right horizontal arrow on the top level is an isomorphism by
\lemref{lem:Torus-EG}.
The other horizontal arrows are isomorphisms because
$\pi_*((-)^{hW}) \cong (\pi_*(-))^W$ as we observed above.  Now we recall that
$W$ acts transitively on the set $\Psi \cap T$ by permuting its elements.
In particular, it acts on the sum $\stackrel{r-1}{\underset{i= 0}\bigoplus}
\pi_*(K'_T(X,k)^{\compl}_{\fm_{t_i}})$ by permuting its factors. Since the stabilizer of
$g$ is $W'$, it follows from \cite[Lem.~3.2]{Vistoli} that
$p_0$ is an isomorphism. We conclude that $\rho^G_{Z_g}$ is also
an isomorphism. Equivalently, the map $\rho^G_{Z_g} \colon K'_G(X,k)^{\compl}_{\fm_\Psi} \to
K'_{Z_g}(X,k)^{\compl}_{\fm_g}$ is a weak equivalence. We have therefore proven the
following.

\begin{lem}\label{lem:GLn-EG-2}
Assume that $G$ is a connected reductive
$k$-group such that $Z_g$ is also connected reductive whose rank is same as
that of $G$. Assume further that $G$ and $Z_g$ have simply connected commutator
subgroups. Then the change of groups map induces a weak equivalence
\[
\rho^G_{Z_g} \colon K'_G(X,k)^{\compl}_{\fm_{\Psi}} \xrightarrow{\simeq}
K'_{Z_g}(X,k)^{\compl}_{\fm_g}.
\]
\end{lem}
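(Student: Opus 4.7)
The plan is to compare both sides through a common maximal torus $T$ of $G$ and $Z_g$ and then exploit Weyl-group combinatorics on $\Psi \cap T$. The starting point is to upgrade the restriction maps $\rho^G_T$ and $\rho^{Z_g}_T$ to weak equivalences into homotopy fixed-point spectra. Using $R_k(G) \cong R_k(T)^W$ from \eqref{eqn:inv} (valid because the commutator subgroup of $G$ is simply connected), combined with the change-of-groups identity $K'_*([X/T],k) \cong K'_*([X/G],k) \otimes_{R_k(G)} R_k(T)$ (from \cite[Prop.~A.6]{Krishna-Adv}) and the degeneration of the homotopy fixed-point spectral sequence in characteristic zero, I would obtain weak equivalences
\[
\rho^G_T \colon K'_G(X,k) \xrightarrow{\simeq} \bigl(K'_T(X,k)\bigr)^{hW}, \qquad \rho^{Z_g}_T \colon K'_{Z_g}(X,k) \xrightarrow{\simeq} \bigl(K'_T(X,k)\bigr)^{hW'}.
\]

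Next, I would pass these through the appropriate completions. Because $(-)^{hW}$ is a finite homotopy limit, it commutes with the iterated homotopy cofiber presentation of $I$-adic completion in \eqref{eqn:I-comp-0}; combined with \propref{prop:Compln_D}(5) to change the completion ideal when needed, this produces
\[
\rho^G_T \colon K'_G(X,k)^{\compl}_{\fm_{\Psi}} \xrightarrow{\simeq} \bigl(K'_T(X,k)^{\compl}_{\fm^T_{\Psi}}\bigr)^{hW},
\]
and likewise $\rho^{Z_g}_T \colon K'_{Z_g}(X,k)^{\compl}_{\fm_g} \xrightarrow{\simeq} \bigl(K'_T(X,k)^{\compl}_{\fm^T_g}\bigr)^{hW'}$.

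Now I would invoke \lemref{lem:Torus-EG}. By \cite[Lem.~4.6]{EG-Duke}, $\Psi \cap T = \{t_0 = g, t_1, \ldots, t_{r-1}\}$ and $\sqrt{\fm^T_{\Psi}} = \fm_{t_0} \cdots \fm_{t_{r-1}}$, so the lemma yields a splitting
\[
K'_T(X,k)^{\compl}_{\fm^T_{\Psi}} \xrightarrow{\simeq} \coprod_{i=0}^{r-1} K'_T(X,k)^{\compl}_{\fm_{t_i}}.
\]
Applying $\pi_*$ and using $\pi_*((-)^{hH}) \cong \pi_*(-)^H$, I obtain a commutative square whose top row identifies $\pi_*(K'_G(X,k)^{\compl}_{\fm_{\Psi}})$ with the $W$-invariants of the direct sum and whose bottom row identifies $\pi_*(K'_{Z_g}(X,k)^{\compl}_{\fm_g})$ with the $W'$-invariants of the $i=0$ summand, connected vertically by the projection $p_0$ onto that summand.

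The last step is to show that $p_0$ is an isomorphism. Since $W$ permutes the summands via its transitive action on $\Psi \cap T$ with stabilizer $W'$ at $g = t_0$, this is the standard fact about invariants under a transitive permutation action, cited as \cite[Lem.~3.2]{Vistoli}. Chasing the resulting isomorphism back up the diagram then forces $\rho^G_{Z_g}$ to induce an isomorphism on every homotopy group, hence a weak equivalence. The main obstacle in this plan, which requires care, is ensuring that the derived $\fm_{\Psi}$-completion commutes with the homotopy $W$-fixed-point construction, since completion is an \emph{infinite} homotopy limit rather than a finite one; this is controlled using \propref{prop:Compln_D}(5) and (7) together with the Noetherianness of $R_k(G)$ established in \propref{prop:finiteR}.
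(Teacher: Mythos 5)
Your proposal is correct and follows essentially the same route as the paper's proof: pass through $K'_T(X,k)^{hW}$ via what the paper records as \lemref{lem:G-T-map}, commute $(-)^{hW}$ with Lurie's completion, split $K'_T(X,k)^{\compl}_{\fm^T_\Psi}$ via \lemref{lem:Torus-EG} using the decomposition $\Psi\cap T = \{t_0,\ldots,t_{r-1}\}$ from \cite[Lem.~4.6]{EG-Duke}, and finish with \cite[Lem.~3.2]{Vistoli} on the transitive $W$-permutation with stabilizer $W'$. One small imprecision: $(-)^{hW}$ is a homotopy limit over $BW$ (not literally finite), but the property you actually need — that it commutes with homotopy limits and, in the stable setting, with homotopy (co)fibers, hence with the iterated construction in \eqref{eqn:I-comp-0} — is the correct one and is what the paper invokes.
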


We now return to the general case.
Let $X \in \Spc^G_k$ and let $\Psi^g_{X}$ denote the composition of the
three maps
\begin{equation}\label{eqn:Completion-map-g}
  K'_G(X,k)^{\compl}_{\fm_{\Psi}} \xrightarrow{\alpha_X}  K'_{Z_g}(X,k)^{\compl}_{\fm_{\Psi}}
  \xrightarrow{\beta_X} K'_{Z_g}(X,k)^{\compl}_{\fm_g} \xrightarrow{(\lambda_X)^{-1}}
  K'_{Z_g}(X^g,k)^{\compl}_{\fm_g},
  \end{equation}
where $\alpha_X$ is the change of groups map, $\beta_X$ is the completion at the
maximal ideal $\fm_g \in R_k(Z_g)$ and $\lambda_X$ is the weak equivalence of
\corref{cor:Localization-0}.

We shall use the following property of $\Psi^g_{X}$.

\begin{lem}\label{lem:Completion-map-g-0}
Let $\iota \colon Z \to X$ (resp. $j \colon U \to X$) be
a  proper (resp. isovariant flat) map in $\Spc^G_k$. Then the diagram
  \begin{equation}\label{eqn:Completion-map-g-1}
    \xymatrix@C.8pc{
      K'_G(Z,k)^{\compl}_{\fm_{\Psi}} \ar[r]^-{\iota_*} \ar[d]_-{\Psi^g_{Z}} &
      K'_G(X,k)^{\compl}_{\fm_{\Psi}} \ar[r]^-{j^*} \ar[d]^-{\Psi^g_{X}} &
      K'_G(U,k)^{\compl}_{\fm_{\Psi}} \ar[d]^-{\Psi^g_{U}} \\
      K'_{Z_g}(Z^g,k)^{\compl}_{\fm_g} \ar[r]^-{\iota_*} & 
      K'_{Z_g}(X^g,k)^{\compl}_{\fm_g} \ar[r]^-{j^*} &  K'_{Z_g}(U^g,k)^{\compl}_{\fm_g}}
  \end{equation}
  is commutative.
\end{lem}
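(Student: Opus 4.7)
The plan is to verify commutativity of the two sub-rectangles of \eqref{eqn:Completion-map-g-1} by decomposing each vertical arrow as $\Psi^g_X = (\lambda_X)^{-1} \circ \beta_X \circ \alpha_X$ (cf.~\eqref{eqn:Completion-map-g}) and showing that each of the three factors commutes with the horizontal maps $\iota_*$ and $j^*$ (which on the bottom row are understood as $\iota^g_* := (\iota|_{Z^g})_*$ and $(j^g)^* := (j|_{U^g})^*$). Stacking the three commutative rectangles will yield the result.

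First, I would treat the restriction $\alpha_X \colon K'_G(X,k) \to K'_{Z_g}(X,k)$. This map is induced by smooth pullback along the representable map of quotient stacks $[X/Z_g] \to [X/G]$ coming from $Z_g \hookrightarrow G$. Compatibility with $j^*$ is then a tautology (pullbacks compose), and compatibility with $\iota_*$ is the flat base change formula of \cite[Prop.~3.18]{TT} applied to the Cartesian square $[Z/Z_g] \to [Z/G] \leftarrow [X/G] \to [X/Z_g]$. The $\fm_\Psi$-completion does not disrupt this since completion is a functor on $R_k(Z_g)$-modules.

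Second, for the further completion $\beta_X \colon (-)^{\compl}_{\fm_\Psi} \to (-)^{\compl}_{\fm_g}$: since both $\iota_*$ and $j^*$ are $K(BG,k)$-linear (in particular $R_k(Z_g)$-linear via $R_k(G) \to R_k(Z_g)$), they induce maps on derived completions at any ideal of $R_k(Z_g)$. Hence $\beta$ commutes with both $\iota_*$ and $j^*$ by functoriality of Lurie completion.

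Third, and this is the step where the hypotheses on $\iota$ and $j$ actually enter, I would show that $\lambda_X$ commutes with $\iota^g_*$ and $(j^g)^*$ (which implies the corresponding statement for $(\lambda_X)^{-1}$ since $\lambda_X$ is a weak equivalence by \corref{cor:Localization-0}). Recall $\lambda_X$ is the $\fm_g$-completion of the pushforward $(i_X)_*$ along the closed immersion $i_X \colon X^g \hookrightarrow X$. For the $\iota$-side, the identity $\iota \circ i_Z = i_X \circ \iota^g$ of maps $Z^g \to X$ combined with functoriality of proper pushforward yields $\iota_* \circ (i_Z)_* = (i_X)_* \circ \iota^g_*$. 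For the $j$-side, the key point, which is the main obstacle of the proof, is to verify that the square
\[
\xymatrix{
U^g \ar[r]^-{j^g} \ar[d]_-{i_U} & X^g \ar[d]^-{i_X} \\
U \ar[r]^-{j} & X
}
\]
is Cartesian scheme-theoretically. This is exactly what isovariance of $j$ guarantees: the identity $I_U \cong U \times_X I_X$ (which characterizes isovariance) pulls back along $\{g\} \hookrightarrow I_X$ to $U^g \cong U \times_X X^g$. Given this Cartesian square and the flatness of $j$, flat base change for $K'$-theory along closed immersions (\cite[Prop.~3.18]{TT}) yields $j^* \circ (i_X)_* = (i_U)_* \circ (j^g)^*$, and $\fm_g$-completion preserves this identity. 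Assembling the three pieces completes the proof.
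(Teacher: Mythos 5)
Your proof follows exactly the paper's own argument: decompose $\Psi^g_X = (\lambda_X)^{-1} \circ \beta_X \circ \alpha_X$ and show each factor commutes with $\iota_*$ and $j^*$, with the isovariance of $j$ entering precisely to make the square $U^g = U \times_X X^g$ Cartesian so that flat base change (\cite[Prop.~3.8]{Khan-JJM}, \cite[Prop.~3.18]{TT}) applies to the pushforward $\lambda_X$. The reasoning is correct and the steps are the same, with your writeup merely spelling out the base-change diagrams that the paper treats as immediate.
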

\begin{proof}
  Since $\alpha_X$ is the change of groups map, it clearly commutes with $\iota_*$
  and $j^*$. Since $\iota_*$ and $j^*$ are $K(BG,k)$-linear, they commute with
  $\beta_X$ because the latter is a completion map. Finally, $\lambda_X$ commutes
  with $\iota_*$ because both are push-forward maps. It commutes with $j^*$ by
  \cite[Prop.~3.8]{Khan-JJM} (see also \cite[Prop.~3.18]{TT}) using
  the isovariance of $j$ (so that $U^g \cong X^g \times_X U$ for all $g \in G$).
  It follows that $(\lambda_X)^{-1}$ commutes with $\iota_*$ and $j^*$. 
\end{proof}

We now prove the main result of \S~\ref{sec:EG-compln}.
An analogue of this for the homotopy groups of
$K'(G,k)$ is the main result of Edidin-Graham \cite{EG-Duke}.
Note however that one can not derive the theorem below from that of
op. cit. because the derived completion does not commute with
homotopy groups. This was in fact the main difficulty
Thomason \cite{Thomason-Duke-1}
encountered while extending the topological
Atiyah-Segal completion theorem to equivariant algebraic $K$-theory.

\begin{thm}\label{thm:Max-Main}
  For any $X \in \Spc^G_k$, there is a natural weak equivalence of spectra
  \begin{equation}\label{eqn:Max-Main-0}
    \Psi^g_{X} \colon K'_G(X,k)^{\compl}_{\fm_\Psi} \xrightarrow{\simeq}
    K'_{Z_g}(X^g, k)^{\compl}_{\fm_g}
  \end{equation}
  which commutes with proper push-forward and isovariant flat pull-back maps in
  $K'$-theory.
  \end{thm}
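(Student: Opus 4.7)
The plan is to reduce to the case $G' = GL_n$, where Lemma~\ref{lem:GLn-EG-2} applies, by a Morita argument, and then split both sides into components indexed by the semilocal decomposition $\tilde\Psi \cap G = \Psi_1 \amalg \cdots \amalg \Psi_r$ from Corollary~\ref{cor:maximal2} (with $\tilde\Psi$ the conjugacy class of $g$ in $G'$). By Corollary~\ref{cor:Localization-0}, it suffices to show that the composite $\beta_X \circ \alpha_X$ in \eqref{eqn:Completion-map-g} is a weak equivalence, after which the naturality assertions follow from Lemma~\ref{lem:Completion-map-g-0}.

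Fix a closed embedding $G \inj G' = GL_n$ and form the Morita space $Y = X \stackrel{G}{\times} G'$. Lemma~\ref{lem:Morita*} gives $K'_G(X,k) \simeq K'_{G'}(Y,k)$ as $R_k(G')$-modules (acting through restriction to $R_k(G)$). Since $GL_n$ is connected reductive with simply connected commutator $SL_n$, and the centralizer of any semisimple element is a product of smaller general linear groups of the same total rank with the same properties, Lemma~\ref{lem:GLn-EG-2} applies to $G'$ acting on $Y$ and, combined with Corollary~\ref{cor:Localization-0}, yields
\[
K'_{G'}(Y,k)^{\compl}_{\fm_{\tilde\Psi}} \xrightarrow{\simeq} K'_{Z_{G'}(g)}(Y^g,k)^{\compl}_{\fm_g}.
\]

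To extract the required components on $X$, choose representatives $g_i \in \Psi_i$ with $g_1 = g$ and elements $h_i \in G'$ with $g_i = h_i^{-1} g h_i$ (take $h_1 = e$). A direct fixed-point calculation using the definition of the Morita space gives a $Z_{G'}(g)$-equivariant isomorphism
\[
Y^g \cong \coprod_{i=1}^{r} X^{g_i} \stackrel{Z_G(g_i)}{\times} Z_{G'}(g),
\]
where $Z_G(g_i) \inj Z_{G'}(g)$ via $\gamma \mapsto h_i \gamma h_i^{-1}$. A second application of Lemma~\ref{lem:Morita*} to each summand, together with the observation that the restriction of $\fm_g \subset R_k(Z_{G'}(g))$ to $R_k(Z_G(g_i))$ under this embedding is precisely $\fm_{g_i}$ (the preimage of $g$ is $g_i$), yields
\[
K'_{Z_{G'}(g)}(Y^g,k)^{\compl}_{\fm_g} \simeq \bigoplus_{i=1}^{r} K'_{Z_G(g_i)}(X^{g_i},k)^{\compl}_{\fm_{g_i}}.
\]
On the other side, Corollary~\ref{cor:maximal2} together with Proposition~\ref{prop:Compln_D}(5) identifies $K'_{G'}(Y,k)^{\compl}_{\fm_{\tilde\Psi}}$ with the completion of $K'_G(X,k)$ at the intersection $\fm_{\Psi_1} \cap \cdots \cap \fm_{\Psi_r}$ in $R_k(G)$, and a derived Chinese Remainder decomposition then splits this as $\prod_{i=1}^{r} K'_G(X,k)^{\compl}_{\fm_{\Psi_i}}$. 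Projecting onto the $i=1$ component on both sides yields the desired equivalence $K'_G(X,k)^{\compl}_{\fm_\Psi} \xrightarrow{\simeq} K'_{Z_g}(X^g,k)^{\compl}_{\fm_g}$.

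The hard parts will be twofold. First, one must prove the derived Chinese Remainder decomposition $M^{\compl}_{\fm_1 \cap \cdots \cap \fm_r} \simeq \prod_i M^{\compl}_{\fm_i}$ for any module spectrum $M$ over a Noetherian ring with pairwise coprime maximal ideals; this reduces by induction on $r$ to the case of two coprime ideals $I + J = R$, where one exploits the ring splitting $R/I^n J^n \cong R/I^n \times R/J^n$ and passes to the homotopy limit either through the Greenlees-May description of the derived completion or through the Postnikov tower together with Lemma~\ref{lem:Derived-CRT}. Second, one must identify the weak equivalence produced by this diagram chase with the specific map $\Psi^g_X$ from \eqref{eqn:Completion-map-g}; this is accomplished by tracing the naturality of the Morita isomorphism of Lemma~\ref{lem:Morita*} under change-of-groups maps (for the pairs $G \inj G'$ and $Z_g \inj Z_{G'}(g)$), and by using Lemma~\ref{lem:Completion-map-g-0} applied to the closed immersion $X^g \inj X$ to rigidify the comparison between the inverse of the push-forward $\lambda_X$ and the push-forward along $Y^g \inj Y$ that underlies our construction.
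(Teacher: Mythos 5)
Your approach takes a genuinely different route from the paper's, and it contains a gap that is more serious than your ``hard parts'' discussion suggests. The ingredient you omit is that the paper does \emph{not} embed into a generic $GL_n$: it invokes \cite[Prop.~2.8]{EG-Adv} to produce an embedding $G \inj G'$ into a connected reductive group with simply connected commutator, of the same rank as $Z_{G'}(g)$, and --- this is the crucial point --- such that the conjugacy class $\Psi'$ of $g$ in $G'$ satisfies $\Psi' \cap G = \Psi$. As a consequence $\sqrt{\fm_{\Psi'} R_k(G)} = \fm_\Psi$ is a \emph{single} maximal ideal, so after \propref{prop:Compln_D}(5) one is only ever completing at one maximal ideal on either side of the Morita equivalence, and no Chinese Remainder decomposition of any kind is needed. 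By choosing $G' = GL_n$ you force yourself into the semilocal situation of \corref{cor:maximal2}, and your whole argument then hinges on a derived CRT for $R_k(G)$-module spectra and on the compatibility of the resulting block decompositions with the fixed-point and Morita comparisons.

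The derived CRT is the genuine gap. Your suggested route via ``the Postnikov tower together with \lemref{lem:Derived-CRT}'' would not go through: that lemma applies only to module spectra whose homotopy is of the form $M \otimes_k N$ with $M$ a Noetherian $R$-module, and the groups $K'_n([X/G],k)$ are in general neither Noetherian over $R_k(G)$ nor of this shape --- precisely the ``homotopy groups behave poorly under completion'' obstruction the introduction attributes to Thomason. Indeed the paper does need a CRT-type statement, but only for split tori (\lemref{lem:Torus-EG}), and there it is proved geometrically via Thomason's generic slice theorem, which is exactly what reduces to the modules \lemref{lem:Derived-CRT} covers; this strongly suggests no formal argument was available. (The Greenlees--May route you also mention is more plausible, but would require setting up derived local cohomology over $R_k(G)$, none of which is in the paper.) Even granting the CRT, ``projecting onto the $i=1$ component on both sides'' presupposes that the equivalence coming from \lemref{lem:GLn-EG-2} and \corref{cor:Localization-0} is block-diagonal with respect to the two decompositions, which you assert but do not argue; the paper's choice of $G'$ collapses this to a one-block statement and makes the identification with $\Psi^g_X$ a direct diagram chase via \eqref{eqn:Max-Main-4} and \eqref{eqn:Max-Main-6}. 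One point in your favor: because you work throughout with the push-forward $\lambda_X$ of \corref{cor:Localization-0} rather than the pull-back $\phi^*$ of \lemref{lem:Max-Main-7}, your route avoids the paper's preliminary Noetherian-induction reduction to smooth $G$-quasi-projective $X$ --- a real simplification, if the other gaps could be closed.
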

\begin{proof}
  By \corref{cor:Localization-0}, it suffices to show that $\beta_X \circ \alpha_X$
  is a weak equivalence to prove the first part of the theorem.

By the nil-invariance of $K'$-theory, we can assume that $X$ is a reduced space. 
We now use \lemref{lem:Open-quasi-proj} to choose a $G$-invariant open
subspace $U \subset X$ which is a smooth and $G$-quasi-projective $k$-scheme.
We let $Z = X \setminus U$ with the reduced structure. We let
  $\iota \colon Z \inj X$ and $j \colon U \inj X$ be the inclusions and consider
  the diagram of homotopy fiber sequences
  \begin{equation}\label{eqn:Max-Main-3}
    \xymatrix@C.8pc{
      K'_G(Z,k)^{\compl}_{\fm_\Psi} \ar[r]^-{\iota_*} \ar[d]_-{\Psi^g_{Z}} &
        K'_G(X,k)^{\compl}_{\fm_\Psi} \ar[r]^-{j^*} \ar[d]^-{\Psi^g_{X}} &
          K'_G(U,k)^{\compl}_{\fm_\Psi} \ar[d]^-{\Psi^g_{U}} \\
          K'_{Z_g}(Z^g, k)^{\compl}_{\fm_g} \ar[r]^-{\iota_*} &
          K'_{Z_g}(X^g, k)^{\compl}_{\fm_g} \ar[r]^-{j^*} &
            K'_{Z_g}(U^g, k)^{\compl}_{\fm_g}.}
  \end{equation}
  This diagram is commutative by \lemref{lem:Completion-map-g-0}.
If we can show that $\Psi^g_{U}$ is a weak equivalence, it will follow by a
  Noetherian induction that $\Psi^g_Z$ is a weak equivalence. In particular,
  $\Psi^g_X$ is weak equivalence. It remains therefore to prove the theorem
  when $X$ is smooth and $G$-quasi-projective.

If $G$ is either a torus or satisfies the conditions of \lemref{lem:GLn-EG-2},
  then $\Psi^g_X$ is a weak equivalence by \corref{cor:Localization-0} and
  \lemref{lem:GLn-EG-2}. To prove the general case, we first use
  \cite[Prop.~2.8]{EG-Adv} to find an embedding $G \inj G'$ such that
  $G'$ satisfies the conditions of \lemref{lem:GLn-EG-2} and
  $\Psi = \Psi' \cap G$, where $\Psi'$ is the conjugacy class of $g$ in $G'$.
  We let $Y = X \stackrel{G}{\times} G'$ denote the associated Morita scheme and let
    $\delta \colon Y^g \to Y$ be the inclusion of the fixed point locus for the
    $G'$-action on $Y$. If we let $Z'_g$ denote the centralizer of $g$ in $G'$, then
    the canonical map $\gamma \colon X^g \stackrel{Z_g}{\times} Z'_g \to Y^g$ is a
      $Z'_g$-equivariant isomorphism of smooth schemes with $Z'_g$-action by
      \cite[Lem.~5.6]{EG-Duke}.

We now look at the diagram of (smooth) quotient stacks
      \begin{equation}\label{eqn:Max-Main-4}
        \xymatrix@C.8pc{
          [{X^g}/{Z_g}] \ar[r]^-{\alpha_1} \ar[d] \ar[dr]^-{\cong}_-{\beta_1} &
          [X/{Z_g}] \ar[r]^-{\alpha_2} \ar[dr]^-{\beta_2} & [X/G] \ar[r]
          \ar[dr]^-{\cong}_-{\beta_3} & [{(X \stackrel{G}{\times} G')}/G] \ar[d] \\
             [{(X^g \stackrel{Z_g}{\times} Z'_g)}/{Z_g}] \ar[r] &
             [{(X^g \stackrel{Z_g}{\times} Z'_g)}/{Z'_g}] \ar[r]^-{\alpha'_1}
             \ar[d]_-{\gamma} &
                [{(X \stackrel{G}{\times} G')}/{Z'_g}] \ar[r]^-{\alpha'_2}
                \ar[d]^-{\cong} &
             [{(X \stackrel{G}{\times} G')}/{G'}] \ar[d]^-{\cong} \\
             & [{Y^g}/{Z'_g}] \ar[r] & [Y/{Z'_g}] \ar[r] & [Y/{G'}].}
        \end{equation}

      Each of the arrows in this diagram is either a regular closed immersion
or an isomorphism or a quotient map.
To analyze the induced pull-back maps between various $K'$-theory spectra
and their completions, we let  $\alpha^*_3 \colon K'_{Z_g}(X^g, k)^{\compl}_{\fm_{\Psi'}} \to
K'_{Z_g}(X^g, k)^{\compl}_{\fm_{g}}$ and
$\alpha'^*_3 \colon  K'_{Z'_g}(Y^g, k)^{\compl}_{\fm_{\Psi'}}
\to K'_{Z'_g}(Y^g, k)^{\compl}_{\fm_{g}}$
denote the maps induced by taking completions at $\fm_g$.
We let $\phi \colon X^g \inj X$ and $\phi' \colon Y^g \inj Y$ be the inclusions.

We first note that, thanks to the universal property of completion,
      the composition
    $\alpha^*_3 \circ \alpha^*_1$ on the $K'$-theory spectra is homotopic to the
    composition $K'_{Z_g}(X, k)^{\compl}_{\fm_{\Psi'}} \to
    K'_{Z_g}(X, k)^{\compl}_{\fm_{g}} \to K'_{Z_g}(X^g,k)^{\compl}_{\fm_g}$.
    Similarly, $\alpha'^*_3 \circ \alpha'^*_1$ is homotopic to
    the composition $K'_{Z'_g}(Y,k)^{\compl}_{\fm_{\Psi'}} \to
    K'_{Z'_g}(Y,k)^{\compl}_{\fm_{g}} \to K'_{Z'_g}(Y^g,k)^{\compl}_{\fm_{g}}$.
    We next note that the canonical maps $K'_G(X,k)^{\compl}_{\fm_{\Psi'}} \to
    K'_G(X,k)^{\compl}_{\fm_{\Psi}}$ and 
    $K'_{Z_g}(X, k)^{\compl}_{\fm_{\Psi'}} \to K'_{Z_g}(X, k)^{\compl}_{\fm_{\Psi}}$
are weak equivalences by our choice of the
embedding $G \inj G'$. Furthermore, the pull-back
$\alpha^*_2$ is the same as the change of groups map $\alpha_X$ and
$\alpha^*_3 \circ \alpha^*_1$ is the same as the composition of
$\phi^*$ with the $\fm_g$-completion map $\beta_X$.
Similarly, we have $\beta_Y = \alpha'^*_2$ and
$\phi'^* = \alpha'^*_3 \circ \alpha'^*_1$.

By considering the pull-back maps between the completions of $K'$-theory spectra
induced by ~\eqref{eqn:Max-Main-4}, we therefore obtain a commutative diagram
\begin{equation}\label{eqn:Max-Main-6}
        \xymatrix@C.8pc{
          K'_G(X,k)^{\compl}_{\fm_{\Psi}} \ar[r]^-{\alpha_X} &
          K'_{Z_g}(X, k)^{\compl}_{\fm_{\Psi}} \ar[r]^-{\beta_X} &
          K'_{Z_g}(X, k)^{\compl}_{\fm_{g}} \ar[r]^-{\phi^*} &
          K'_{Z_g}(X^g, k)^{\compl}_{\fm_{g}}  \\
          K'_{G'}(Y,k)^{\compl}_{\fm_{\Psi'}} \ar[r]^-{\alpha_Y}
          \ar[u]^-{\beta^*_3}_-{\simeq} &
          K'_{Z'_g}(Y, k)^{\compl}_{\fm_{\Psi'}} \ar[r]^-{\beta_Y} \ar[u]_-{\beta^*_2} &
          K'_{Z'_g}(Y, k)^{\compl}_{\fm_{g}} \ar[r]^-{\phi'^*}
          \ar[u]_-{\beta^*_2} &
 K'_{Z'_g}(Y^g, k)^{\compl}_{\fm_{g}}. \ar[u]_-{\beta^*_1}^-{\simeq}}
\end{equation}

It is easily checked that $\beta_X \circ \alpha_X$
    (resp. $\beta_Y \circ \alpha_Y$) coincides with the map
    $\rho^G_{Z_g}(X)$ (resp. $\rho^{G'}_{Z'_g}(Y)$) in ~\eqref{eqn:GLn-EG-1}.
    In particular, $\beta_Y \circ \alpha_Y$ is a weak equivalence by
    \lemref{lem:GLn-EG-2}. Since $\beta^*_1$ (via the isomorphism $\gamma$)
    and $\beta^*_3$ are weak equivalences
    by \lemref{lem:Morita*}, we conclude from \lemref{lem:Max-Main-7} that
    $\beta_X \circ \alpha_X$ is a weak equivalence. The commutativity of
    $\Psi^g_X$ with proper push-forward and isovariant flat pull-back is
    shown in \lemref{lem:Completion-map-g-0}.
\end{proof}

\begin{lem}\label{lem:Max-Main-7}
  If $X \in \Sm^G_k$, then the pull-back map
  $\phi^* \colon K'_{Z_g}(X, k)^{\compl}_{\fm_{g}} \to K'_{Z_g}(X^g, k)^{\compl}_{\fm_{g}}$ is
  a weak equivalence.
\end{lem}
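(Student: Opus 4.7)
The strategy is to combine the self-intersection formula for the regular closed immersion $\phi$ with the twisting operator of \S\ref{sec:Twist-op} and the localization theorem (\corref{cor:Localization-0}). Let $P$ denote the Zariski closure of $\langle g \rangle$ in $G$; by \lemref{lem:Twist-1} this is a diagonalizable $k$-subgroup of $Z(Z_g)$. Since $\Char(k)=0$, the fixed-point subscheme $X^g = X^P$ is smooth, and $\phi$ is a $Z_g$-equivariant regular closed immersion. Let $N$ denote its normal bundle; because $X^g$ is the entire $P$-fixed locus, the $P$-isotypic decomposition of \lemref{lem:definitionoftwist} has the form $N = \bigoplus_{\chi \in \wh{P},\ \chi \ne 0} N_\chi$ with no trivial summand.

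By \corref{cor:Localization-0}, the push-forward $\phi_*$ is a weak equivalence after $\fm_g$-completion, so by two-out-of-three it suffices to show that $\phi^* \circ \phi_*$ is a weak equivalence on $K'_{Z_g}(X^g,k)^{\compl}_{\fm_g}$. The self-intersection formula for the regular closed immersion $\phi$ identifies this composition with multiplication by the class $\lambda_{-1}(N^\vee) := \sum_i (-1)^i [\Lambda^i N^\vee] \in \pi_0 K_{Z_g}(X^g)$. I would then pass to the augmentation ideal via \propref{prop:Twist-local-compl}, which provides a weak equivalence $t_{g^{-1}} \colon K'_{Z_g}(X^g,k)^{\compl}_{\fm_g} \xrightarrow{\simeq} K'_{Z_g}(X^g,k)^{\compl}_{I_{Z_g}}$, and \propref{prop:Twist-BP-0} intertwines multiplication by $\lambda_{-1}(N^\vee)$ with multiplication by $t^*_{g^{-1}}(\lambda_{-1}(N^\vee))$. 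Since $t^*_{g^{-1}}$ acts on the $\chi$-eigenspace by the scalar $\chi(g)$ (proof of \propref{prop:Twist-BP}(4)), a direct computation on each weight summand gives that the rank-at-identity augmentation of $t^*_{g^{-1}}(\lambda_{-1}(N^\vee))$ equals $\prod_{\chi \ne 0}(1-\chi(g)^{-1})^{\rank N_\chi}$ on each connected component of $X^g$. This is a nonzero element of $k$ because $g$ is Zariski-dense in $P$, and hence $\chi(g) \ne 1$ for every nonzero $\chi \in \wh{P}$.

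The main obstacle will be lifting this augmentation-level unit to an actual invertibility of the multiplication map on the $I_{Z_g}$-complete spectrum, given that the acting class lies in $\pi_0 K_{Z_g}(X^g)$ rather than in $R_k(Z_g)$ itself. Exploiting the Noetherianity of $R_k(Z_g)$ (\propref{prop:finiteR}) together with the fact that $I_{Z_g}$ is the radical of a finitely generated ideal (\lemref{lem:Noether-Rep}), I would use a Postnikov truncation argument in the spirit of the proof of \lemref{lem:Torus-EG} to reduce the problem to Eilenberg--Mac Lane spectra of the classical homotopy groups, then apply \lemref{lem:Derived-CRT} to identify the derived $I_{Z_g}$-completion with the Eilenberg--Mac Lane spectrum of the classical completion, and finally run a Nakayama-style argument on the pro-system $\{K'_{Z_g,*}(X^g,k)/I^n_{Z_g}\}$ to conclude that $t^*_{g^{-1}}(\lambda_{-1}(N^\vee))$ acts invertibly. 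Combined with the above two-out-of-three, this gives that $\phi^*$ is a weak equivalence.
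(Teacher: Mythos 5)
Your strategy starts correctly and matches the paper through the first two steps: by \corref{cor:Localization-0} the push-forward $\phi_*$ is a weak equivalence, and by the self-intersection formula (\cite[Thm.~2.1]{VV-Inv}) the composite $\phi^*\circ\phi_*$ is multiplication by $\lambda_{-1}(N^\vee)\in K_0([X^g/Z_g],k)$. But at that point the paper simply cites \cite[Thm.~3.3]{EG-Adv}, which asserts that $\lambda_{-1}(N^\vee)$ is \emph{already a unit} in $K_0([X^g/Z_g],k)_{\fm_g}$; since $\phi^*\circ\phi_*$ is a morphism of $K([X^g/Z_g],k)_{\fm_g}$-module spectra, multiplication by a unit of $\pi_0$ of the ring is automatically a weak equivalence, and one concludes by two-out-of-three.

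Your proposal replaces this citation by an attempted reproof, and that is where the real gap lies. Two issues. First, \propref{prop:Twist-BP-0} is stated only for $x\in R_k(G)$, not for a general element of $K_0([X^g/Z_g],k)$, so its application to $\lambda_{-1}(N^\vee)$ as an intertwiner needs its own justification (the analogue does hold, via \propref{prop:Twist-2}(6), but it is not an on-the-nose citation). Second, and more seriously, the computation that the trace of $g$ on $\lambda_{-1}(N^\vee)$ equals $\prod_{\chi\ne 0}(1-\chi(g)^{-1})^{\rank N_\chi}\ne 0$ is correct, but this alone does not imply that $\lambda_{-1}(N^\vee)$ is a unit in $K_0([X^g/Z_g],k)_{\fm_g}$: that quotient by $\fm_g$ need not be a field, since $X^g$ can have several $Z_g$-connected components and $K_0$ of each is not just $k$. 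Consequently a Nakayama/Postnikov argument run on the homotopy groups of the completed spectrum does not ``lift'' a nonzero augmentation to a unit without first establishing that the whole rank-zero part of $K_0([X^g/Z_g],k)_{\fm_g}$ lies in the Jacobson radical. That is precisely the content of the Edidin--Graham theorem. So the passage from ``trace at $g$ is nonzero'' to ``$\phi^*\circ\phi_*$ is a weak equivalence'' cannot be closed by the sketch as written; you should instead invoke \cite[Thm.~3.3]{EG-Adv} directly as the paper does (which makes the whole argument a few lines), or else supply a genuine reproof of that theorem.
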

\begin{proof}
  By \propref{prop:Compln_D}(2), the composition 
  \[
  K'_{Z_g}(X^g, k)^{\compl}_{\fm_{g}} \xrightarrow{\phi_*}  K'_{Z_g}(X, k)^{\compl}_{\fm_{g}}
  \xrightarrow{\phi^*} K'_{Z_g}(X^g, k)^{\compl}_{\fm_{g}}
  \]
  is a morphism of $K([{X^g}/{Z_g}])_{\fm_g}$-module spectra.
  By \cite[Thm.~2.1]{VV-Inv}, this composition is multiplication by
  $\lambda_{-1}(N^\vee) \in K_0([{X^g}/{Z_g}])$, where $N$ is
  the equivariant normal bundle of the inclusion $X^g \inj X$. On the other hand,
  $\lambda_{-1}(N^\vee) \in (K_0([{X^g}/{Z_g}])_{\fm_g})^\times$ by
  \cite[Thm.~3.3]{EG-Adv}.
 It follows that $\phi^*$ will be
    a weak equivalence if and only if $\phi_*$ is. We now apply 
\corref{cor:Localization-0} to finish the proof.
\end{proof}

\subsection{The generalized completion theorem}\label{sec:Com-max-K}
We continue with the set-up of \S~\ref{sec:Max-com} (in particular, $k = \ov{k}$
and  $G \in \Grp_k$ is arbitrary).
Before we prove the generalizations of Theorems~\ref{thm:AS-Main} and
~\ref{thm:AS-KH} to all maximal ideals of $R_k(G)$,
we need to extend \thmref{thm:Max-Main} to $KH$-theory.

\begin{lem}\label{lem:KH-g-0}
  Assume that $G$ acts on a $k$-scheme $X$ with nice stabilizers and $g \in Z(G)$.
  Let $\iota \colon Y \inj X$ be the inclusion of a $G$-invariant closed subscheme
  such that $(X \setminus Y)^g = \emptyset$. Then the pull-back map
  $\iota^* \colon KH_G(X,k)^{\compl}_{\fm_g} \to KH_G(Y,k)^{\compl}_{\fm_g}$ is a weak
  equivalence.
\end{lem}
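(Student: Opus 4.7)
Following the strategy of Corollary~\ref{cor:Localization-0}, set $U = X \setminus Y$ with open immersion $j \colon U \hookrightarrow X$. The plan is to show that $KH_G(U,k)^{\compl}_{\fm_g}$ is weakly contractible, and then deduce the equivalence for $\iota^*$ from a localization fiber sequence. The requisite fiber sequence
\[
KH_G(Y,k) \to KH_G(X,k) \xrightarrow{j^*} KH_G(U,k)
\]
is available from cdh descent for equivariant $KH$-theory on algebraic spaces with nice-stabilizer group action (\cite[Thm.~6.2]{Hoyois-Krishna}); by Proposition~\ref{prop:Compln_D}(7), $\fm_g$-completion preserves such homotopy fibers, so vanishing of $KH_G(U,k)^{\compl}_{\fm_g}$ will force the first map (and, using the compact generation of $KH_G(-,k)$ in the nice-stabilizer setting via \cite[Prop.~3.5,~Cor.~3.17]{Hoyois-Krishna}, the pullback $\iota^*$) to be a weak equivalence after completion.

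For the vanishing, I would proceed by Noetherian induction on $\dim(U)$. In the base case $\dim(U) = 0$: by nil-invariance of $KH$-theory we may replace $U$ with $U_{\text{red}}$, which is regular, so $KH_G(U,k) \simeq K'_G(U,k)$ and the vanishing follows from Lemma~\ref{lem:Null-Compln}. For the inductive step, Lemma~\ref{lem:Open-quasi-proj} supplies a $G$-invariant dense regular open subspace $V \subset U$ whose complement $W = U \setminus V$ has $\dim(W) < \dim(U)$. Since $V^g, W^g \subseteq U^g = \emptyset$, applying the $KH$-localization fiber sequence to $(W \hookrightarrow U, V \hookrightarrow U)$ and $\fm_g$-completing reduces the vanishing for $U$ to the regular case for $V$ (handled by Lemma~\ref{lem:Null-Compln} via $KH_G(V,k) \simeq K'_G(V,k)$) and to the inductive hypothesis applied to $W$.

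The main obstacle will be to justify that the pullback map $\iota^*$ is identified with the first map of the completed localization fiber sequence. For $K'$-theory (Corollary~\ref{cor:Localization-0}) the argument proceeds via pushforward, which is automatic from dévissage applied to coherent sheaves supported on $Y$; in our setting $\iota$ need not be a regular embedding, so one must exploit the cdh-descent presentation of $KH_G(X,k)$ together with the compact generation results of \cite{Hoyois-Krishna} in order to verify that the natural map from the equivariant $KH$-theory with supports on $Y$ to $KH_G(Y,k)$ is a weak equivalence. Once this identification is in hand, the pullback $\iota^*$ on $\fm_g$-completions realizes the equivalence claimed in the statement.
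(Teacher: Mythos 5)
Your proposed approach has a fundamental gap. You want to deduce the equivalence from a localization fiber sequence
\[
KH_G(Y,k) \to KH_G(X,k) \xrightarrow{j^*} KH_G(U,k),
\]
but this sequence simply does not exist for $KH$-theory when $Y$ is allowed to be singular. The localization fiber sequence for an open immersion $j \colon U \hookrightarrow X$ in (equivariant) $KH$-theory has the form $KH_G(X \text{ on } Y) \to KH_G(X) \to KH_G(U)$, where the fiber is $KH$-theory with supports, and the identification $KH_G(X \text{ on } Y) \simeq KH_G(Y)$ requires d\'evissage, which is a property of $G$-theory / $K'$-theory, not of $K$- or $KH$-theory. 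You acknowledge this obstacle in your final paragraph but treat it as a technical point to be overcome via cdh descent and compact generation; in fact it is the central obstruction, and no general statement of that kind is available. Moreover, even if such an identification held, the map $KH_G(Y) \to KH_G(X)$ appearing in a localization sequence would be a \emph{push-forward} $\iota_*$, whereas the lemma asserts that the \emph{pull-back} $\iota^*$ is a weak equivalence after completion; these are different maps pointing in opposite directions, and the vanishing of $KH_G(U,k)^{\compl}_{\fm_g}$ alone would only control the push-forward.

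The paper's argument avoids both problems by working entirely with pull-back maps via cdh descent for \emph{abstract blow-up squares} (which, unlike devissage, is available for $KH$-theory of stacks with nice stabilizers). Concretely: after reducing to the reduced case with $Y^g = X^g$, one inducts on $\dim(X)$. When $X$ is smooth one uses \lemref{lem:Max-Main-7} (the excess-intersection / $\lambda_{-1}$ argument) to compare both $KH_G(X,k)^{\compl}_{\fm_g}$ and $KH_G(Y,k)^{\compl}_{\fm_g}$ with $KH_G(X^g,k)^{\compl}_{\fm_g} = KH_G(Y^g,k)^{\compl}_{\fm_g}$ via pull-backs. When $X$ is singular one chooses a $G$-equivariant resolution $X' \to X$, forms the induced abstract blow-up squares for $Y$ and $Z = X_{\sing}$, and applies the Hoyois--Krishna cdh descent theorem to get a commutative diagram of fiber sequences, all of whose arrows are pull-backs. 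The inductive hypothesis and the smooth case handle the outer terms, and the case $Y \subset Z$ is treated by a separate direct argument. This strategy replaces the missing localization/devissage input with cdh descent, which is precisely the correct tool when passing from $K'$-theory to $KH$-theory in the presence of singularities.
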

\begin{proof}
  We first observe that if $(X \setminus Y)^g = \emptyset$, then the same is also
  true for $(X_\red \setminus Y_\red)^g$. On the other hand, $KH$-theory is
  nil-invariant by \cite[Thm.~6.2]{Hoyois-Krishna}. We can thus assume that both $X$
  and $Y$ are reduced $k$-schemes such that $Y^g = X^g$.
  We shall now prove the lemma by induction on
  $\dim(X)$. If $\dim(X) = 0$, then $X$ and $Y$ must be smooth $k$-schemes
  and the claim follows from \lemref{lem:Max-Main-7}. We now assume
  $\dim(X) > 0$.

  We first consider the case when $X \in \Sm^G_k$. We can assume in this case that
  $X$ is $G$-connected, i.e., $X = GX'$, where $X'$ is a connected component of $X$.
  If $Y$ contains any connected component of $X$, then it must be the same as $X$
  in which case the lemma is trivial. Otherwise,
  $\dim(Y) < \dim(X)$. In particular, the pull-back map $KH_G(Y,k)^{\compl}_{\fm_g}
  \to KH_G(Y^g,k)^{\compl}_{\fm_g}$ is a weak equivalence by induction on $\dim(X)$.
  We now look at the commutative diagram of pull-back maps
  \begin{equation}\label{eqn:KH-g-1}
    \xymatrix@C1pc{
      KH_G(X,k)^{\compl}_{\fm_g} \ar[r]^-{\iota^*} \ar[d] & KH_G(Y,k)^{\compl}_{\fm_g}
      \ar[d] \\
      KH_G(X^g,k)^{\compl}_{\fm_g} \ar[r]^-{\simeq} & KH_G(Y^g,k)^{\compl}_{\fm_g}.}
    \end{equation}

We just showed that the right vertical arrow is a weak equivalence. The left
      vertical arrow is a weak equivalence by \lemref{lem:Max-Main-7} and the
      bottom horizontal arrow is a weak equivalence because $Y^g = X^g$.
      It follows that $\iota^*$ is a weak equivalence.

We next assume that $X$ is singular and let $Z \subset X$ be its singular locus
with the reduced closed subscheme structure.
We also assume that $Y \not\subset Z$.
We choose a $G$-equivariant resolution of singularities
$p \colon X' \to X$ and let $W = Z \times_X X'$. We then get the $G$-equivariant
      abstract blow-up squares
      \begin{equation}\label{eqn:KH-g-2}
        \xymatrix@C1pc{
          W \ar[r]^-{u'} \ar[d]_-{p'} & X' \ar[d]^-{p} & & W' \ar[r]^-{v'}
          \ar[d]_-{q'} & Y' \ar[d]^-{q} \\
          Z \ar[r]^-{u} & X & & Z' \ar[r]^-{v} & Y,}
      \end{equation}
      where $Z' = Z \times_X Y, \ W' = Z' \times_X X'$ and $Y' = Y \times_X X'$.

By  \cite[Thm.~6.2]{Hoyois-Krishna}, we get a commutative diagram of homotopy
      fiber sequences
\begin{equation}\label{eqn:KH-g-3}
        \xymatrix@C1pc{
          KH_G(X,k)^{\compl}_{\fm_g} \ar[r]^-{(p^*, u^*)}  \ar[d]_-{\iota^*} &
          KH_G(X',k)^{\compl}_{\fm_g} \coprod KH_G(Z,k)^{\compl}_{\fm_g}
          \ar[d]^-{(\iota'^*, h^*)}
          \ar[r]^-{u'^* - p'^*} &  KH_G(W,k)^{\compl}_{\fm_g} \ar[d]^-{h'^*} \\
          KH_G(Y,k)^{\compl}_{\fm_g} \ar[r]^-{(q^*, v^*)} &
          KH_G(Y',k)^{\compl}_{\fm_g} \coprod KH_G(Z',k)^{\compl}_{\fm_g} 
          \ar[r]^-{v'^* - q'^*} &  KH_G(W',k)^{\compl}_{\fm_g},}
\end{equation}
where $\iota' \colon Y' \inj X', \ h \colon Z' \inj Z$ and $h' \colon W' \inj W$ are
the inclusions.

It is clear from our assumption and ~\eqref{eqn:KH-g-2} that $(X' \setminus Y')^g =
(Z \setminus Z')^g = (W \setminus W')^g = \emptyset$.
      As $X' \in \Sm^G_k$, it follows that the pull-back map
      $KH_G(X',k)^{\compl}_{\fm_g} \xrightarrow{\iota'^*} KH_G(Y',k)^{\compl}_{\fm_g}$ is a
      weak equivalence. The maps $h^*$ and $h'^*$ are weak equivalences by
      induction on the dimension. It follows that $\iota^*$ is also a weak
      equivalence.

      If $Y \subset Z$, then we must have $(X \setminus Z)^g = \emptyset$. In
      particular, $(X' \setminus W)^g = \emptyset$. Since $X' \in \Sm^G_k$, it
      follows that the map $u'^*$ in the top row of ~\eqref{eqn:KH-g-3} is a weak
      equivalence. But this implies that the map  $u^*$ in
      \[
      KH_G(X,k)^{\compl}_{\fm_g} \xrightarrow{u^*} KH_G(Z,k)^{\compl}_{\fm_g}
      \xrightarrow{w^*} KH_G(Y,k)^{\compl}_{\fm_g}
      \]
      is a weak equivalence, where $w \colon Y \inj Z$ is the inclusion.
      Since $(Z \setminus Y)^g = \emptyset$ and $\dim(Z) < \dim(X)$, we see that
      $w^*$ is a weak equivalence. We conclude that $\iota^* = w^* \circ u^*$ is a
      weak equivalence. This finishes the proof.
\end{proof}

The following result extends \thmref{thm:Max-Main} to $KH$-theory.

\begin{cor}\label{cor:KH-g}
    If $X \in \Sch^G_k$ such that $G$ acts on $X$ with nice
  stabilizers, then there is a natural weak equivalence of spectra
  \begin{equation}\label{eqn:Max-Main-1}
    \Theta^g_{X} \colon KH_G(X,k)^{\compl}_{\fm_{\Psi}} \xrightarrow{\simeq}
    KH_{Z_g}(X^g, k)^{\compl}_{\fm_g}
  \end{equation}
  which commutes with arbitrary pull-back in $KH$-theory.
\end{cor}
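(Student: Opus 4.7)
My plan is to define $\Theta^g_X$ in direct analogy with $\Psi^g_X$ from \eqref{eqn:Completion-map-g}, then establish that it is a weak equivalence by a cdh-descent reduction to the smooth case, where the result follows from \thmref{thm:Max-Main} via the identification $KH \simeq K'$ on regular stacks. Concretely, I would define $\Theta^g_X$ as the composition
\[
KH_G(X,k)^{\compl}_{\fm_\Psi} \xrightarrow{\alpha_X} KH_{Z_g}(X,k)^{\compl}_{\fm_\Psi} \xrightarrow{\beta_X} KH_{Z_g}(X,k)^{\compl}_{\fm_g} \xrightarrow{(\lambda_X)^{-1}} KH_{Z_g}(X^g,k)^{\compl}_{\fm_g},
\]
where $\alpha_X$ is the change-of-groups restriction for $Z_g \inj G$, $\beta_X$ is the natural map between derived completions induced by the containment $\fm_\Psi R_k(Z_g) \subset \fm_g$, and $\lambda_X$ is the pull-back along the closed immersion $\phi \colon X^g \inj X$. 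Since $g \in Z(Z_g)$, $(X \setminus X^g)^g = \emptyset$, and the $Z_g$-action on $X$ has nice stabilizers by \lemref{lem:Nice-sub}, the hypotheses of \lemref{lem:KH-g-0} are satisfied and $\lambda_X$ is a weak equivalence. Naturality of $\Theta^g_X$ with respect to an arbitrary $G$-equivariant pull-back $f \colon Y \to X$ is automatic from the naturality of each of $\alpha$, $\beta$, and $\lambda$; for $\lambda$, the point is that $f$ restricts to a $Z_g$-equivariant map $f^g \colon Y^g \to X^g$ sitting in a commutative square with $\phi_X$ and $\phi_Y$.

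To show $\Theta^g_X$ is a weak equivalence, I would induct on $\dim(X)$ after reducing to reduced $X$ via the nil-invariance of equivariant $KH$-theory (\cite[Thm.~6.2]{Hoyois-Krishna}). Choose a $G$-equivariant resolution of singularities $p \colon \wt{X} \to X$ (available in $\Char(k) = 0$), set $Z := X_\sing$ with its reduced structure and $W := p^{-1}(Z)$, so that $Z \to X$, $W \to \wt{X}$ is a $G$-equivariant abstract blow-up square. Taking $g$-fixed loci produces a second abstract blow-up square $Z^g \to X^g$, $W^g \to \wt{X}^g$, since the isomorphism $\wt{X} \setminus W \xrightarrow{\sim} X \setminus Z$ restricts to an isomorphism of $g$-fixed loci, and all schemes involved carry equivariant actions with nice stabilizers by \lemref{lem:Nice-sub}. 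Applying cdh descent for equivariant $KH$-theory (\cite[Thm.~6.2]{Hoyois-Krishna}) to both squares, and then applying the derived completion functors $(-)^{\compl}_{\fm_\Psi}$ and $(-)^{\compl}_{\fm_g}$ (which preserve cofiber sequences and finite coproducts in spectra, being left adjoints of the respective inclusions), I obtain a commutative diagram of homotopy fiber sequences whose vertical arrows are $\Theta^g_X$, $\Theta^g_{\wt{X}} \amalg \Theta^g_Z$, and $\Theta^g_W$.

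To close the induction, it suffices to verify that $\Theta^g_{\wt{X}}$ is a weak equivalence, as $\Theta^g_Z$ and $\Theta^g_W$ are weak equivalences by the inductive hypothesis (since $\dim Z, \dim W < \dim X$). For smooth $\wt{X} \in \Sm^G_k$, the fixed-point scheme $\wt{X}^g$ is also smooth by Iversen's theorem, applied to the diagonalizable subgroup $P$ of \lemref{lem:Twist-1} containing $g$. Hence the canonical comparison maps $KH_G(\wt{X},k) \to K'_G(\wt{X},k)$ and $KH_{Z_g}(\wt{X}^g, k) \to K'_{Z_g}(\wt{X}^g, k)$ are weak equivalences, and under these identifications $\Theta^g_{\wt{X}}$ coincides with $\Psi^g_{\wt{X}}$ since both are built from the same primitive operations (change of groups, $\fm_g$-completion, and pull-back to the fixed locus). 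The smooth case therefore reduces to \thmref{thm:Max-Main}. The principal technical obstacle is verifying that the derived completion functor commutes with the cdh Mayer--Vietoris sequence in the requisite sense, which is handled by the left-adjoint description of completion and its preservation of finite cofiber sequences; a secondary subtlety is checking that the $g$-fixed point functor preserves the abstract blow-up structure, which follows from the elementary observation that fixed-point loci commute with open immersions.
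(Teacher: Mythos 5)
Your proposal is correct and takes essentially the same route as the paper: reduce to the composite $\beta_X \circ \alpha_X$ via \lemref{lem:KH-g-0} (which handles the fixed-point restriction once and for all), settle the smooth case by \thmref{thm:Max-Main} under the identification $KH \simeq K'$ on smooth stacks, and close the induction by $G$-equivariant resolution of singularities together with cdh descent for equivariant $KH$-theory and the preservation of fiber sequences under Lurie's completion. One notational slip: since you declare $\lambda_X$ to be the pullback $\phi^*$ (a weak equivalence by \lemref{lem:KH-g-0}), the final arrow in your composite should be $\lambda_X$ itself rather than $(\lambda_X)^{-1}$, and correspondingly $\Theta^g_{\wt{X}}$ is not literally equal to $\Psi^g_{\wt{X}}$ of ~\eqref{eqn:Completion-map-g} (which inverts the pushforward $\phi_*$), though the two differ only by multiplication by a unit (\lemref{lem:Max-Main-7}), so either one carries the smooth case.
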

  \begin{proof}
    Using \lemref{lem:KH-g-0}, we only need to show that the composite
    $KH_G(X,k)^{\compl}_{\fm_{\Psi}} \xrightarrow{\alpha_X} 
    KH_{Z_g}(X,k)^{\compl}_{\fm_{\Psi}} \xrightarrow{\beta_X}
    KH_{Z_g}(X,k)^{\compl}_{\fm_{g}}$
    is a weak equivalence.
 When $X \in \Sm^G_k$, this follows from \thmref{thm:Max-Main}.
 In general, we conclude the proof by choosing a $G$-equivariant resolution of
 singularities of $X$ as in the proof of \lemref{lem:KH-g-0}, using induction on
 $\dim(X)$ and the homotopy fiber sequence of the top row in ~\eqref{eqn:KH-g-3}.
\end{proof}

We can now prove the main result \S~\ref{sec:Max-com}.

\begin{thm}\label{thm:Gen-AS-max}
  Let $k$ be an algebraically closed field of characteristic zero.
  Let $G \in \Grp_k$ and $g \in G_s(k)$.
    Let $\Psi \subset G(k)$ be the conjugacy class of $g$.
    For any $X \in \Spc^G_k$, we then have a natural weak equivalence of spectra
    \begin{equation}\label{eqn:Gen-AS-max-0}
      \Phi^g_X \colon K'_G(X,k)^{\compl}_{\fm_\Psi} \xrightarrow{\simeq}
      K'((X^g)_{Z_g}, k),
    \end{equation}
    which commutes with proper push-forward and isovariant flat pull-back maps in
    $K'$-theory.
    If $G$ acts on $X$ with nice stabilizers, then there is a natural
    weak equivalence of spectra
    \begin{equation}\label{eqn:Gen-AS-max-00}
      \vartheta^g_X \colon KH_G(X,k)^{\compl}_{\fm_\Psi} \xrightarrow{\simeq}
      KH((X^g)_{Z_g}, k),
    \end{equation}
    which commutes with arbitrary pull-back.
  \end{thm}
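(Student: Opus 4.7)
The plan is to assemble the theorem from three ingredients already established in the paper, without any further geometric input. First, I would apply the derived nonabelian completion theorem (\thmref{thm:Max-Main}) to obtain a natural weak equivalence
\[
\Psi^g_X \colon K'_G(X,k)^{\compl}_{\fm_\Psi} \xrightarrow{\simeq}
K'_{Z_g}(X^g,k)^{\compl}_{\fm_g},
\]
which already reduces the problem to an equivariant $K'$-theory for the centralizer $Z_g$ acting on the fixed locus $X^g$ on which $g$ itself acts trivially.

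Next, the crucial observation is that $g$ lies in the center of $Z_g$ and acts trivially on $X^g$, so Proposition~\ref{prop:Twist-local-compl} (applied to $Z_g \curvearrowright X^g$) produces a weak equivalence
\[
t_{g^{-1}} \colon K'_{Z_g}(X^g,k)^{\compl}_{\fm_g} \xrightarrow{\simeq}
K'_{Z_g}(X^g,k)^{\compl}_{I_{Z_g}}.
\]
Then I would invoke \thmref{thm:AS-Main} (with $k$-coefficients, which follows by smashing with $H_k$ since completion commutes with smashing by a flat spectrum, or equivalently using \propref{prop:Compln_D}(5) applied to $R(Z_g) \to R_k(Z_g)$) to identify the right-hand side with the Borel construction:
\[
\Phi_{[X^g/Z_g]} \colon K'_{Z_g}(X^g,k)^{\compl}_{I_{Z_g}} \xrightarrow{\simeq} K'((X^g)_{Z_g},k).
\]
Composing these three weak equivalences defines $\Phi^g_X$ and proves the first statement. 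The compatibility with proper push-forward and isovariant flat pull-back follows from the corresponding compatibilities of each of the three ingredients: \thmref{thm:Max-Main} gives it for $\Psi^g_X$, \propref{prop:Twist-BP}(1)--(2) gives it for $t_{g^{-1}}$ (noting that pull-back and push-forward are $K(BZ_g)$-linear and hence commute with completion at any ideal of $R_k(Z_g)$), and \propref{prop:AS-map-0} gives it for $\Phi_{[X^g/Z_g]}$.

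For the $KH$-theory assertion~\eqref{eqn:Gen-AS-max-00}, the same argument goes through after substituting the corresponding ingredients: \corref{cor:KH-g} in place of \thmref{thm:Max-Main}, the $KH$-version of \propref{prop:Twist-local-compl} (whose proof is the same as for $K'$-theory, since the twisting operator was constructed uniformly for all $F \in \sS'(k)$), and \thmref{thm:AS-KH} in place of \thmref{thm:AS-Main}. The only small verification needed is that the $Z_g$-action on $X^g$ has nice stabilizers when the $G$-action on $X$ does; but a stabilizer for the $Z_g$-action on $X^g$ is a closed subgroup of a stabilizer for the $G$-action on $X$, so this is immediate from \lemref{lem:Nice-sub}. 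Naturality with respect to arbitrary pull-back maps in $KH$-theory follows in the same way as for $K'$-theory.

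No step appears to present a serious obstacle, since the main conceptual work has already been done in proving \thmref{thm:Max-Main}, \thmref{thm:AS-Main}, \propref{prop:Twist-local-compl} and \thmref{thm:AS-KH}; the present theorem is essentially an assembly of these. The only point requiring some care is checking that the compositions are natural under push-forward and pull-back, which reduces to observing that the twisting operator and the Atiyah-Segal equivalence are morphisms of $K(BZ_g)$-modules and hence interact correctly with all the standard functorial operations on equivariant $K$-theory.
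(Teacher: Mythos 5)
Your proposal is correct and follows exactly the same route as the paper: the paper defines $\Phi^g_X$ as the composition $\Phi_{[X^g/Z_g]} \circ t_{g^{-1}} \circ \Psi^g_X$ (and $\vartheta^g_X$ analogously via $\Theta^g_X$ from \corref{cor:KH-g}), then cites Propositions~\ref{prop:Twist-BP}, \ref{prop:Twist-local-compl}, \ref{prop:AS-map-0} and Theorems~\ref{thm:AS-Main}, \ref{thm:AS-KH}, \ref{thm:Max-Main}, \corref{cor:KH-g}, which is precisely your list of ingredients. Your remark that the nice-stabilizer hypothesis is inherited by the $Z_g$-action on $X^g$ via \lemref{lem:Nice-sub} is a small but useful verification that the paper leaves implicit.
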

\begin{proof}
    We define $\Phi^g_X$ to be the composition
    \begin{equation}\label{eqn:Gen-AS-max-1}
      K'_G(X,k)^{\compl}_{\fm_{\Psi}} \xrightarrow{\Psi^g_X}
      K'_{Z_g}(X^g, k)^{\compl}_{\fm_g} \xrightarrow{t_{g^{-1}}}
      K'_{Z_g}(X^g, k)^{\compl}_{I_{Z_g}} \xrightarrow{\Phi_{[{X^g}/{Z_g}]}}
        K'((X^g)_{Z_g},k);
    \end{equation}
    and $\vartheta^g_X$ to be the composition
     \begin{equation}\label{eqn:Gen-AS-max-11}
      KH_G(X,k)^{\compl}_{\fm_{\Psi}} \xrightarrow{\Theta^g_X}
      KH_{Z_g}(X^g, k)^{\compl}_{\fm_g} \xrightarrow{t_{g^{-1}}}
      KH_{Z_g}(X^g, k)^{\compl}_{I_{Z_g}} \xrightarrow{\vartheta_{[{X^g}/{Z_g}]}}
        KH((X^g)_{Z_g},k).
     \end{equation}
     We now apply
     Propositions~\ref{prop:Twist-BP}, ~\ref{prop:Twist-local-compl},
     ~\ref{prop:AS-map-0},
     Theorems~\ref{thm:AS-Main}, ~\ref{thm:AS-KH}, ~\ref{thm:Max-Main} and
     \corref{cor:KH-g} to finish the proof.
\end{proof}

\section{Borel $K$-theory via $\A^1$-homotopy theory}\label{sec:Borel-hom}
We fix a field $k$, a $k$-group $G$ and $X \in \Spc^G_k$. We begin by describing
Thomason's completion problem which we wish to study next.

\subsection{The simplicial space $X^\bullet_G$}\label{sec:Problem*}
The $G$-action on $X$ gives rise to a simplicial $k$-space 
\begin{equation}\label{eqn:Bar-BG}
X_G^{\bullet}  := \left(\cdots 
\stackrel{\rightarrow}{\underset{\rightarrow}\rightrightarrows}
G \times G \times X 
\stackrel{\rightarrow}{\underset{\rightarrow}\to} G \times X
\rightrightarrows X \right)
\end{equation}
whose face maps $p^i_n \colon G^n \times X \to G^{(n-1)} \times X$ (for $n \ge 1$)
are given by
\begin{equation}\label{eqn:Bar-EG0}
p^i_n\left(g_1, \cdots , g_n, x\right) = \left\{
\begin{array}{ll}
\left(g_2, \cdots , g_n, g_1 x\right) & \mbox{if $i = 0$} \\
\left(g_1, \cdots, g_{i-1}, g_{i+1} g_i, g_{i+2}, \cdots, g_n, x \right) &
\mbox{if $0 < i < n$} \\
\left(g_1, \cdots , g_{n-1}, x \right) & \mbox{if $i = n$}.
\end{array}
\right .
\end{equation}
The degeneracy maps $s^i_n \colon G^n \times X \to G^{(n+1)} \times X$ (for $n \ge 0$)
are given by $s^i_n(g_0, \cdots ,g_{n-1},x) =
(g_0, \ldots g_{i-1}, e, g_i, \ldots , g_{n-1}, x)$.
In the sequel, we shall call $X^\bullet_G$ {\sl{the bar construction}} associated to
the $G$-action on $X$. We let $B^{\bullet}_G =  {(\pt)}_G^{\bullet}$.

One notes that $X^\bullet_G$ is the \u{C}ech nerve of the quotient map
$\pi \colon X \to [X/G]$. In particular, the composition
$\pi \circ p^{i_1}_1 \circ \cdots \circ p^{i_n}_n \colon G^n \times X \to [X/G]$
does not depend on the choice of $(i_1, \ldots , i_n)$ for any given $n \ge 1$. We let
$\wt{\pi}_n \colon G^n \times X \to [X/G]$ denote this composition.
The collection $\{\wt{\pi}_n\}_{n \ge 0}$ defines a
projection map $\wt{\pi} \colon X^\bullet_G \to [X/G]$
from the simplicial space $X^\bullet_G$ to the stack $[X/G]$.
This is a morphism between simplicial algebraic stacks over $k$ if we consider
$[X/G]$ as a constant simplicial stack. It is clear that
each $\wt{\pi}_n$ is a smooth map.

Since the face maps of $X^\bullet_G$ are smooth and the degeneracy maps
are regular closed immersions, we get the pull-back maps of cosimplicial spectra
$\wt{\pi}^* \colon K'_G(X) \simeq K'([X/G]) \to \{[n] \mapsto K'(G^n \times X)\}$.
Passing to the homotopy limits, we get the pull-back maps
\begin{equation}\label{eqn:Simplicial-sheaf}
  \wt{\pi}^* \colon K'_G(X) \to K'(X^\bullet_G) := \holim_{n}
  K'(G^n {\times} X); \ \ \ \wt{\pi}^* \colon K'_G(X,k) \to K'(X^\bullet_G,k).
  \end{equation}
We similarly get the pull-back maps $\wt{\pi}^* \colon K_G(X) \to K(X^\bullet_G)$ and
$\wt{\pi}^* \colon KH_G(X) \to KH(X^\bullet_G)$.

The map $\wt{\pi}^*$ can also be viewed as follows. Given a quasi-coherent sheaf $\sF$
on $[X/G]$, the pull-back $\wt{\pi}^*(\sF)$
defines a quasi-coherent sheaf $\sF^\bullet$ on the simplicial space $X^\bullet_G$
(consult \cite[\S~1.3]{Thomason-Orange} for the definition of quasi-coherent sheaves
on simplicial spaces). In particular, $\sF_0 = \pi^*(\sF)$ is simply the sheaf $\sF$,
with the $G$-action ignored. If $\sF$ is coherent (resp. locally free), so is
$\sF^\bullet$. We thus get an exact functor
$\wt{\pi}^* \colon {\Qcoh}^G_X \to {\Qcoh}_{X^\bullet_G}$ which preserves coherent and
locally free sheaves. In particular, it preserves perfect complexes and
pseudo-coherent
complexes. The induced map between the homotopy limits of the
$K'$-theory spectra is the map $\wt{\pi}^*$ of ~\eqref{eqn:Simplicial-sheaf}.

\vskip .3cm

{\bf{Thomason's completion problem:}}
In \cite[\S~0]{Thomason-Duke-1}, Thomason predicted that $\wt{\pi}^*$ would induce a
weak equivalence of spectra
\begin{equation}\label{eqn:Bar-EG-main-0}
  \wt{\pi}^* \colon K'_G(X)^{\compl}_{I_G} \xrightarrow{\simeq} K'(X_G^{\bullet}).
\end{equation}

Thomason showed that if $k$ is separably closed and
$K'_G(X)$ is replaced by the Bott inverted equivariant $K'$-theory with finite
coefficients prime to $\Char(k)$, this problem has a positive solution.
The problem is otherwise currently unsolved.

\vskip .3cm

Our goal in the next few sections is to solve Thomason's completion problem in
as many cases as possible. In this section, we define motivic Borel spaces,
identify them with $X_G^{\bullet}$ 
and use them to represent Borel equivariant $K$-theory of smooth schemes with
group action in Voevodsky's stable $\A^1$-homotopy category of smooth schemes over
$k$.

\vskip .3cm

We shall use the following 
notations throughout our discussion of Thomason's completion problem.

\begin{enumerate}
\item
${\rm Nis}/k :$ \ The Grothendieck site of smooth schemes over $k$ with
the Nisnevich topology. 
\item
${\rm Shv}\left({\rm Nis}/k\right) :$ \ 
The category of sheaves of sets on ${\rm Nis}/k$.
\item
$\Msp(k) :$ \ 
  The category $\Delta^{\rm op}{\rm Shv}\left({\rm Nis}/k\right)$ of sheaves of
  simplicial sets on ${\rm Nis}/k$ (equivalently, the category of simplicial objects
  in ${\rm Shv}\left({\rm Nis}/k\right)$). 
\item
$\sH(k) :$ \ The unstable $\A^1$-homotopy category of simplicial sheaves
on ${\rm Nis}/k$, as defined in \cite{Morel-Voevodsky}.
\item
$\sH_{\bullet}(k) :$ \ The unstable $\A^1$-homotopy category of pointed 
simplicial sheaves on ${\rm Nis}/k$, as defined in \cite{Morel-Voevodsky}.
\item
$\sS\sH(k) :$ \ The stable $\A^1$-homotopy category of pointed 
simplicial sheaves on ${\rm Nis}/k$, as defined (for instance) in \cite{Voev-ICM}. 
\end{enumerate}
The objects of $\Msp(k)$ will be called motivic spaces.

\subsection{Universal torsor over $B^{\bullet}_G$}\label{sec:Bar*}
Given $X \in \Spc^G_k$, we let $E^{\bullet}_G(X)$ denote the simplicial algebraic
space
\begin{equation}\label{eqn:Bar-EG}
E_G^{\bullet}(X)  := \ \left(\cdots 
\stackrel{\rightarrow}{\underset{\rightarrow}\rightrightarrows}
G \times G \times G \times X 
\stackrel{\rightarrow}{\underset{\rightarrow}\to} G \times G \times X 
\rightrightarrows G \times X\right) 
\end{equation}
with the face maps $d^i_n: G^{n+1} \times X  \to  G^{n} \times X $ ($0 \le i \le n$)
given by the projections
\[
d^i_n(g_0, \cdots , g_n, x) =
(g_0, \cdots , g_{i-1}, \wh{g_i}, g_{i+1}, \cdots , g_n, x),
\]
where $\wh{g_i}$ means that this coordinate is omitted. The degeneracy maps
$s^i_n \colon G^{n+1}\times X  \to  G^{n+2} \times X$ ($0 \le i \le n$) are the various
diagonals on $G$ and the identity map on $X$. In particular, the face maps of
$E_G^{\bullet}(X)$ are smooth and the degeneracy maps are regular closed
immersions. 

$G$ acts on $E_G^{\bullet}(X)$ via the levelwise action
$g\cdot \left(g_0, \cdots , g_n, x) =
(g_0g^{-1}, \cdots ,g_ng^{-1}, gx\right)$. It is easy to check that all the
face and degeneracy maps are $G$-equivariant with respect to this action
so that $E^{\bullet}_G(X)$ is a simplicial object in $\Spc^G_k$. Furthermore, this
action is levelwise free.

It is also easy to verify that there is a natural morphism of simplicial spaces
\begin{equation}\label{eqn:Bar-EG1}
\pi_X \colon E_G^{\bullet}(X) \to X_G^{\bullet} ;
\end{equation}
\[
\pi^n_X \left(g_0, \cdots , g_n, x \right) = \left(g_1g^{-1}_0, g_2 g^{-1}_1,
\cdots , g_n g^{-1}_{n-1}, g_0 x\right)
\]
which makes $X_G^{\bullet}$ the quotient of $E_G^{\bullet}(X)$
by the free $G$-action. Hence, $\pi_X$ is a $G$-torsor of
simplicial spaces. Letting $E^{\bullet}_G =  E_G^{\bullet}(\pt)$, we see that
$E_G^{\bullet}(X) \cong   E_G^{\bullet} {\times} X$ with the diagonal action of
$G$ and the canonical map $E_G^{\bullet} \stackrel{G}{\times} X \to X_G^{\bullet}$
is an isomorphism of simplicial algebraic spaces.
Let $\pi_0 \colon E^{\bullet}_G \to B^{\bullet}_G$ denote the quotient map.
We will see that $\pi_0$ represents the universal $G$-torsor over the classifying
space of $G$ in $\sH(k)$.

\subsection{Bar construction vs. Borel construction}\label{sec:Bar}
We shall assume now that $G$ is a special $k$-group.
Recall from \cite[Chap.~4]{Morel-Voevodsky} that if $G$ is a sheaf of groups on
${\rm Nis}/k$, then a left (resp. right) action of $G$ on a motivic space
$Y$ is a morphism $\mu \colon G \times Y \to Y$ (resp.
$\mu \colon Y \times G \to Y$) such that the usual diagrams commute.
For a $G$-action on $Y$, the quotient $Y/G$ is the motivic space such that
\begin{equation}\label{eqn:P-out} 
\xymatrix@C2pc{
G \times Y \ar[r]^-{\mu} \ar[d]_{p_Y} & Y \ar[d] \\
Y \ar[r]^-{p} & Y/G}
\end{equation}
is a pushout diagram of motivic spaces. In particular, $Y/G$ coincides with
the coarse moduli space (if it exists) for the $G$-action on $Y$ if
$Y \in \Sch^G_k$ (cf. \cite[Defn.~0.1]{GIT}). A (left) action as above is called
(categorically) free if the morphism
$G \times Y \to Y \times Y$ of the form $(g, y) \mapsto (gy, y)$
is a monomorphism. We shall let $\Msp^G_{\rm fr}(k)$ denote the category of
motivic spaces with free $G$-actions.
In the case of free $G$-action, the quotient $Y/G$ has the following additional
properties.

\begin{lem}\label{lem:Free-qt}
  Assume that the $G$-action on $Y$ is free. We then have the following.
  \begin{enumerate}
  \item
  The quotient map
  $p \colon Y \to Y/G$ is a Nisnevich local fibration with fiber $G$, i.e., for every
  smooth Henselian local $k$-algebra $R$, the map $Y(R) \to (Y/G)(R)$ is a Kan
  fibration of simplicial sets with fiber $G(R)$.
\item
  If $Y \in \Sm^G_k$, then $Y/G$ is the ({\'e}tale) sheaf represented by the
  algebraic space $[Y/G]$.
  \end{enumerate}
\end{lem}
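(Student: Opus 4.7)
The plan is to deduce both statements from two inputs: (i) the formal fact from Morel--Voevodsky \cite{Morel-Voevodsky} that a categorically free $G$-action presents $p \colon Y \to Y/G$ as a $G$-torsor in the Nisnevich topology (so the square in ~\eqref{eqn:P-out} is also Cartesian, not merely a pushout), and (ii) the specialness hypothesis on $G$, which says that every Nisnevich-locally trivial $G$-torsor is Zariski-locally trivial, and hence trivial over any Henselian local base.

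For item (1), I would argue as follows. Let $R$ be a smooth Henselian local $k$-algebra. A map $\Spec(R) \to Y/G$ is classified, by (i), by a Nisnevich-local $G$-torsor $T \to \Spec(R)$ together with a $G$-equivariant map $T \to Y$. By specialness of $G$ combined with the Henselian hypothesis on $R$, every Nisnevich-local $G$-torsor over $\Spec(R)$ is trivial, so such a map lifts to a map $\Spec(R) \to Y$; this gives surjectivity on $\pi_0$. The set of lifts is a torsor under $G(R)$ because the action is free, identifying the set-theoretic fiber with $G(R)$. To upgrade this to the Kan fibration property, I would check that for each $n \ge 0$ and each horn $\Lambda^n_i \hookrightarrow \Delta^n$, the lifting problem for $Y(R) \to (Y/G)(R)$ reduces, simplex by simplex, to the lifting of a $G$-torsor along a degeneracy of $\Spec(R)$, which is solved by the same combination of specialness and Henselianity.

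For item (2), when $Y \in \Sm^G_k$ with categorically free $G$-action, the monomorphism condition $G \times Y \hookrightarrow Y \times Y$ is equivalent to scheme-theoretic freeness of the $G$-action on $Y$, so the stack quotient $[Y/G]$ is an algebraic space. The map $Y \to [Y/G]$ is then an \'etale, and by specialness even a Zariski-locally trivial, $G$-torsor. Both the motivic space $Y/G$ defined by the pushout ~\eqref{eqn:P-out} and the Nisnevich sheaf represented by $[Y/G]$ are computed as the Nisnevich sheafification of the presheaf $U \mapsto Y(U)/G(U)$: the former by the definition of a pushout in $\Msp(k)$, the latter because $Y \to [Y/G]$ is Zariski-locally trivial. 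Comparing these two descriptions gives the identification $Y/G \simeq [Y/G]$ as Nisnevich sheaves, and specialness of $G$ ensures that the same sheaf also represents $[Y/G]$ in the \'etale topology.

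The main obstacle will be the passage from the categorical freeness of the $G$-action in the category of motivic spaces to an honest principal $G$-bundle structure on $p \colon Y \to Y/G$; once that is in place, both items reduce to standard consequences of specialness. Some care is also needed in item (1) to verify the Kan fibration condition on all simplicial levels simultaneously, rather than just on $\pi_0$, but this should follow mechanically by applying the torsor-triviality argument to the finitely many non-degenerate simplices of $\Lambda^n_i$ and $\Delta^n$.
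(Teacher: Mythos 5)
Your proposal for item (1) has a genuine gap at the Kan fibration step. The paper's argument is purely combinatorial: since the monomorphism $G \times Y \hookrightarrow Y \times Y$ in sheaves is a sectionwise monomorphism, the simplicial group $G(U)$ acts freely on the simplicial set $Y(U)$ for every $U \in \Sm_k$, so by \cite[Exc.~8.2.6]{Weibel-HALG} the levelwise quotient $Y(U) \to Y(U)/G(U) = (Y/G)^{\rm pr}(U)$ is a Kan fibration with fiber $G(U)$; one then passes to Nisnevich stalks (filtered colimits of Kan fibrations are Kan fibrations, and stalks of a presheaf and its sheafification agree). Your torsor argument only establishes surjectivity of $Y(R)_0 \to (Y/G)(R)_0$, which is the $n=1$, $i=1$ horn condition. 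The proposed reduction ``to the lifting of a $G$-torsor along a degeneracy of $\Spec(R)$'' does not parse: $\Spec(R)$ is not a simplicial object and has no degeneracies; the simplicial structure lives on the motivic spaces $Y$, $Y/G$, $G$, and a horn $\Lambda^n_i \to Y(R)$ over a simplex $\Delta^n \to (Y/G)(R)$ is a purely combinatorial datum in those simplicial sets. Your approach gives no mechanism for producing the missing filler; the key input you are missing is precisely the fact that a free action of a simplicial group on a simplicial set yields a Kan fibration to the quotient.

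Two further remarks. First, specialness of $G$ is not needed for item (1) at all: since $Y/G$ is by construction the Nisnevich sheaf quotient, the map $Y \to Y/G$ is automatically an effective Nisnevich epimorphism, so its pullback to any Henselian local $\Spec(R)$ already has a section without invoking specialness. You invoke specialness in a spot where the Nisnevich topology does all the work. Second, for item (2) your route via Nisnevich sheafification of the presheaf quotient is correct in spirit (and there specialness does enter, to know that $Y \to [Y/G]$ is a Nisnevich cover), but the paper instead cites \cite[Prop.~22]{EG-Inv} directly, which identifies $[Y/G]$ with the categorical quotient under the given hypotheses; that is a cleaner one-step argument.
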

\begin{proof}
  By definition of $Y/G$ in ~\eqref{eqn:P-out} and \cite[Exc.~8.2.6]{Weibel-HALG},
  the presheaf quotient map $Y \to (Y/G)^{\rm pr}$ is a pointwise Kan fibration of
  presheaves of simplicial sets on $\Sm_k$ with fiber $G$. In particular,
  the sheaf quotient map $p \colon Y \to Y/G$ is a stalkwise Kan fibration
  (called local fibration in \cite[Chap.~2, Defn.~1.11]{Morel-Voevodsky}) in
  the Nisnevich topology of $\Sm_k$. Since every smooth Henselian local $k$-algebra
  $R$ is the Nisnevich stalk of a point on a smooth $k$-scheme, we conclude that
  the map $Y(R) \to (Y/G)(R)$ is a Kan fibration of
  simplicial sets with fiber $G(R)$. The second part of the lemma follows from
  \cite[Prop.~22]{EG-Inv} which says that the stack quotient $[Y/G]$ is an 
  algebraic space and it coincides with the categorical quotient
  (cf. \cite[Defn.~0.1]{GIT}) under the given hypotheses. 
\end{proof}

One can now generalize the construction of $E^\bullet_G(X)$ as follows.
We  let ${\rm Et}/k$ denote the big {\'e}tale site of $\Spec(k)$.
Given a sheaf of sets $F$ on ${\rm Et}/k$ with a free $G$-action, we consider the
simplicial sheaf of sets

\begin{equation}\label{eqn:Bar-EG-Gen}
E^{\bullet}_F = \left(\cdots 
\stackrel{\rightarrow}{\underset{\rightarrow}\rightrightarrows}
F \times F \times F 
\stackrel{\rightarrow}{\underset{\rightarrow}\to} F \times F 
\rightrightarrows F \right), 
\end{equation}
where the face and the degeneracy maps are defined exactly as they are defined
for $E^\bullet_G(X)$ in ~\eqref{eqn:Bar-EG}.
Then $G$ acts freely on $E^{\bullet}_F$ via its 
diagonal action on each $F^n$.
Letting $B^{\bullet}_F$ denote the quotient, we get a $G$-torsor
$\pi_F \colon E^{\bullet}_F \to B^{\bullet}_F$. Recall here that a $G$-torsor
(or a principal $G$-bundle) over a motivic
space $S$ is a morphism $Y \to S$ of motivic spaces
together with a free $G$-action on $Y$ over $S$
such that the  map $Y/G \to S$ is an isomorphism.

Let $B^{\bullet}_G \xrightarrow{\phi} \sB_G$ be a trivial 
cofibration of motivic spaces with $\sB_G$ fibrant.
Since $G$ is special, it follows from 
\cite[Chap.~4, Prop.~1.15, Lem.~1.18]{Morel-Voevodsky} that there is a 
universal $G$-torsor $\sE_G \to \sB_G$ such that for any sheaf of 
sets $F$ on ${\rm Nis}/k$ with free $G$-action, there are Cartesian squares 
of motivic spaces

\begin{equation}\label{eqn:motivic0}
\xymatrix@C2pc{
  E^{\bullet}_F \ar[r]^-{{\ov{\phi}}_F} \ar[d]_-{\pi_F} & \sE_G \ar[d] &
  E^{\bullet}_G(X) \ar[l]_{{\ov{\phi}}_X} \ar[d]^-{\pi_X} \\
B^{\bullet}_F \ar[r]_-{\phi_F} & \sB_G & X^\bullet_G, \ar[l]^-{\phi_X}}
\end{equation}
where $\phi_{F}$ is well-defined up to a simplicial homotopy. The map
$\phi_X$ is the composition of the projection $X^\bullet_G \to B^{\bullet}_G$ followed by
$\phi$.  The following result is a
consequence of \cite[Chap.~4, Prop.~1.20]{Morel-Voevodsky}.

\begin{lem}\label{lem:Bar-ind}
Let $\rho = \left(V_i, U_i\right)_{i \ge 1}$ be an admissible gadget for $G$
(cf. Definition~\ref{defn:Add-Gad})
and let $F = {\rm colim}_i \  U_i \in {\rm Nis}/k$. Then the maps $\phi_F$ and
$\ov{\phi}_F$ in ~\eqref{eqn:motivic0} are simplicial weak equivalences.
\end{lem}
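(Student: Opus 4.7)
The plan is to deduce the lemma from \cite[Chap.~4, Prop.~1.20]{Morel-Voevodsky}, which asserts that for an admissible gadget the associated geometric classifying space is simplicially weakly equivalent to $\sB_G$. As a preliminary step, I note that axiom (4) of Definition~\ref{defn:Add-Gad} implies $G$ acts freely on each $U_i$ and hence on the Nisnevich sheaf $F = \colim_i U_i$. By Lemma~\ref{lem:Free-qt}, the quotient map $\pi_F \colon E^\bullet_F \to B^\bullet_F$ is then a level-wise Nisnevich-local $G$-torsor of motivic spaces, and since $G$ is special this torsor is classified, up to simplicial homotopy, by a morphism $\phi_F \colon B^\bullet_F \to \sB_G$. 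The map $\ov\phi_F$ is the induced map on total spaces in the left Cartesian square of \eqref{eqn:motivic0}.

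The first reduction I would make is to identify $B^\bullet_F$ with $F/G$ in $\sH(k)$. Indeed, the $G$-torsor $F \to F/G$ satisfies the canonical isomorphism $F \times_{F/G} F \cong G \times F$, from which one sees that $E^\bullet_F$ is precisely the \v{C}ech nerve of $F \to F/G$ and $B^\bullet_F$ is its $G$-quotient. Since $F \to F/G$ is a Nisnevich-local effective epimorphism, the canonical augmentation $B^\bullet_F \to F/G$ is a simplicial weak equivalence, so showing $\phi_F$ is an equivalence reduces to showing $F/G \to \sB_G$ is one.

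The second step is to invoke Morel--Voevodsky directly. The codimension-growth axiom (3), together with the compatibilities (1) and (2), ensures that $F/G = \colim_i (U_i/G)$ is the geometric classifying space of $G$ in the sense of \cite[Chap.~4, \S 2]{Morel-Voevodsky}, and Prop.~1.20 of loc.\ cit.\ then yields the required weak equivalence to $\sB_G$. This establishes that $\phi_F$ is a simplicial weak equivalence. Finally, $\ov\phi_F$ is an equivalence as well: both $\pi_F$ and the universal torsor $\sE_G \to \sB_G$ are stalkwise Kan fibrations with fiber $G$ by Lemma~\ref{lem:Free-qt}(1), and $\ov\phi_F$ induces the identity on fibers, so the long exact sequence of simplicial homotopy groups at each Nisnevich stalk forces $\ov\phi_F$ to be a weak equivalence.

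The main obstacle I expect is the bookkeeping in the first step, namely carefully verifying that the \v{C}ech nerve of the $G$-torsor $F \to F/G$ really coincides with $E^\bullet_F$ as defined by \eqref{eqn:Bar-EG-Gen}, including compatibility of face and degeneracy maps. Once this identification is in place, everything else is either formal or handed to us by Morel--Voevodsky, whose admissible gadget axioms have been formulated precisely so that Prop.~1.20 applies.
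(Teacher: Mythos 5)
Your argument for $\ov{\phi}_F$ via stalkwise fibration sequences and the five lemma is essentially the paper's, and it is fine. The problem is upstream, in the argument for $\phi_F$, which contains a genuine error.

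The claim that $E^{\bullet}_F$ is the \v{C}ech nerve of the $G$-torsor $F \to F/G$ is false. By \eqref{eqn:Bar-EG-Gen}, $E^\bullet_F$ has $n$-simplices the \emph{absolute} product $F^{n+1}$, with face maps the coordinate projections and $G$ acting diagonally. The \v{C}ech nerve of $F \to F/G$ has $n$-simplices the fibre product $F \times_{F/G} \cdots \times_{F/G} F$, which by the torsor property is $G^n \times F$; this is the bar construction $F^\bullet_G$ of \eqref{eqn:Bar-BG} applied to $X = F$, a different simplicial sheaf. Correspondingly, $B^\bullet_F = E^\bullet_F/G$ is not the \v{C}ech nerve of an effective epimorphism, and in fact there is no simplicial augmentation $B^\bullet_F \to F/G$ at all: the two face maps $F^2/G \rightrightarrows F/G$ are the classes of the two coordinate projections, and $[(f_0,f_1)] \mapsto [f_0]$ and $[(f_0,f_1)] \mapsto [f_1]$ do not agree because $f_0$ and $f_1$ need not lie in the same $G$-orbit.

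Even setting this aside, the reduction would prove something false. Stalkwise at a smooth Henselian local $R$, one has $B^\bullet_F(R)_n = F(R)^{n+1}/G(R)$; since $F(R)$ is a nonempty free $G(R)$-set (a $k$-point of $U = U_1$ guarantees nonemptiness), this simplicial set is connected with $\pi_1 = G(R)$, so $B^\bullet_F$ cannot be simplicially weakly equivalent to the discrete sheaf $F/G$. The map $F/G \to \sB_G$ is an $\A^1$-weak equivalence (this is the statement you want from Morel--Voevodsky Prop.\ 1.20, and it is precisely \propref{prop:Bar-ind-Gen} for $X = \pt$), but it is not a \emph{simplicial} weak equivalence, whereas the lemma you must prove asserts a simplicial one. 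The paper's proof avoids your intermediate object entirely: it goes inside the proof of MV Prop.\ 1.20 and verifies the criterion that for every $G$-torsor $E \to S$ with $S$ smooth Henselian local, the map $E \stackrel{G}{\times} U_i \to S$ splits; since $G$ is special such a torsor is trivial, and a $G$-equivariant map $G \to U_i$ is supplied by the orbit of a $k$-rational point of $U_1$. That verification is what actually produces the simplicial weak equivalence for $\phi_F$, after which the fibration argument for $\ov{\phi}_F$ goes through as you wrote.
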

\begin{proof}
Assume that $\rho$ is given by a good pair $(V,U)$ and let $x \in U$ be
a $k$-rational point (which exists by definition of a good pair).
We first show that $\phi_F$ is a simplicial weak equivalence.
According to the proof of \cite[Chap.~4, Prop.~1.20]{Morel-Voevodsky}, we
need to show that if $E \to S$ is a $G$-torsor over a  smooth Henselian local
$k$-scheme $S$, then the morphism $E \stackrel{G}{\times} U_i \to E/G \cong S$
splits for some $i \gg 0$. Since $G$ is special, any such torsor must be trivial. In
particular, it is the projection $S \times G \to S$.
To find a splitting, it suffices to find a $G$-equivariant
morphism $G \to U_i$. But this is given by the $G$-orbit $Gx \inj U = U_1$.

To show that ${\ov{\phi}}_F$ is a simplicial weak equivalence, we need to show
using \cite[Chap.~3, Lem.~1.11]{Morel-Voevodsky} that for every smooth Henselian
local $k$-algebra $R$, the map $E^{\bullet}_F (R) \to \sE_G(R)$ is a
weak equivalence of simplicial sets.
To that end, we note that the vertical maps in ~\eqref{eqn:motivic0} are local
fibrations with fiber $G$ by \lemref{lem:Free-qt}.
These local fibrations yield for us a commutative diagram of simplicial sets
\begin{equation}\label{eqn:Bar-ind-0}
\xymatrix@C2pc{
G(R) \ar[r] \ar[d] & E^{\bullet}_F(R) \ar[r] \ar[d] & B^{\bullet}_F(R) \ar[d] 
\\
G(R) \ar[r] & \sE_G(R) \ar[r] & \sB_G(R),}
\end{equation}
where the rows are fibration sequences of simplicial sets.
It follows from \cite[Prop.~3.6.1]{Hovey-Book} that the rows remain
fibration sequences upon taking the geometric realizations. 

We have shown above that the right vertical arrow in ~\eqref{eqn:Bar-ind-0} is a
simplicial weak equivalence and hence a weak equivalence of geometric realizations.
The left vertical map is an isomorphism of simplicial sets. 
We conclude from the long exact sequence of homotopy groups of
fibrations that the middle vertical map is a weak equivalence of
geometric realizations. Hence, the map $E^{\bullet}_F (R) \to \sE_G(R)$ is a 
weak equivalence of simplicial sets.
\end{proof}

Let $\rho = \left(V_i, U_i\right)_{i \ge 1}$ be an admissible gadget for $G$
and let $X \in \Sm^G_k$. We let $X_G(\rho)$ denote the motivic space 
${\rm colim}_i  X^i_G(\rho)$ where $X^i_G(\rho) = X \stackrel{G}{\times} U_i$
is a smooth $k$-scheme by \corref{cor:Mixed-quotient} and
the colimit is taken in the category ${\rm Nis}/k$.
We call $X_G(\rho)$ a motivic Borel space.
The following result will be a key step in the solution of 
Thomason's completion problem.

\begin{prop}\label{prop:Bar-ind-Gen}
  There is a canonical isomorphism $X_G(\rho) \cong X^{\bullet}_G$ in $\sH(k)$.
In particular, $X_G(\rho)$ does not depend on the choice of the admissible
gadget $\rho$.
\end{prop}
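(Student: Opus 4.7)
The plan is to construct a zig-zag of weak equivalences between $X_G(\rho)$ and $X^{\bullet}_G$ via an intermediate motivic space that simultaneously carries both presentations, and then to descend along $G$-torsors on either side.

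First I would identify $X_G(\rho)$ as a quotient. Writing $F := \mathrm{colim}_i \, U_i$ as a Nisnevich sheaf, filtered colimits commute with finite products and with the $G$-quotient, giving $X_G(\rho) = (F \times X)/G$, where $G$ acts diagonally and freely (since its action on $F$ is already free, inherited from each $U_i$). I then introduce the intermediate motivic space
\[
Y := (E^{\bullet}_G \times F \times X)/G
\]
with free diagonal $G$-action. The two obvious $G$-equivariant projections
$p_1\colon E^{\bullet}_G \times F \times X \to F \times X$ and
$p_2\colon E^{\bullet}_G \times F \times X \to E^{\bullet}_G \times X$
descend to morphisms
\[
\bar{p}_1 \colon Y \longrightarrow X_G(\rho), \qquad \bar{p}_2 \colon Y \longrightarrow X^{\bullet}_G ,
\]
which together form the candidate zig-zag.

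Second, I would show $\bar{p}_1$ is a simplicial weak equivalence. The evident Cartesian square with $G$-torsors as the vertical maps reduces the claim to showing that $p_1$ is a simplicial weak equivalence. The latter holds because $E^{\bullet}_G$ is simplicially contractible via the extra degeneracy $(g_0,\dots,g_n)\mapsto (e,g_0,\dots,g_n)$ arising from the identity $e\in G(k)$; descent along $G$-torsors (via \lemref{lem:Free-qt} and the long exact sequence of simplicial homotopy sheaves) transfers this weak equivalence to $\bar{p}_1$.

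Third, I would show $\bar{p}_2$ is an $\A^1$-weak equivalence. By the analogous Cartesian-square argument, this reduces to the $\A^1$-contractibility of $F$. Set $V_{\infty}:=\mathrm{colim}_i \, V_i$; each $V_i$ is a representation of $G$, in particular an affine space, so $V_{\infty}$ is $\A^1$-contractible. The admissible-gadget condition (3) of \defref{defn:Add-Gad} forces $\mathrm{codim}_{V_i}(V_i\setminus U_i)\to\infty$, and by Morel–Voevodsky purity each inclusion $U_i\hookrightarrow V_i$ is arbitrarily highly connected in $\sH(k)$, so the colimit inclusion $F \hookrightarrow V_{\infty}$ is an $\A^1$-weak equivalence. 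Combining the two steps produces the required zig-zag $X_G(\rho) \xleftarrow{\bar{p}_1} Y \xrightarrow{\bar{p}_2} X^{\bullet}_G$ in $\sH(k)$; the independence of $X_G(\rho)$ from $\rho$ in $\sH(k)$ is then automatic since $X^{\bullet}_G$ does not involve $\rho$.

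The main obstacle will be to rigorously justify the descent along $G$-torsors in both steps, namely that a simplicial (respectively $\A^1$-) weak equivalence between sheaves with free $G$-action descends to one between the quotient sheaves. This is the parametrized (i.e., $X$-twisted) version of the universal identification $B^{\bullet}_G \simeq \sB_G$ used in \lemref{lem:Bar-ind}, and relies on the standard Morel–Voevodsky machinery for $G$-torsors in \cite[Chap.~4, \S 1]{Morel-Voevodsky} together with compatibility of $\A^1$-localization with filtered colimits of open immersions. The second delicate point is the $\A^1$-contractibility of $F$, which requires invoking purity carefully so as not to be circular with \lemref{lem:Bar-ind}; however, it follows essentially from the construction of the geometric classifying space as outlined in \cite[\S~4.2]{Morel-Voevodsky} and \cite[Rem.~1.4]{Totaro1}.
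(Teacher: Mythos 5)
Your strategy is close in spirit to the paper's, but the key step where you diverge has a genuine gap that you acknowledge but do not resolve. Both your proof and the paper's identify $X_G(\rho)$ with $X\stackrel{G}{\times}F$ and then build a zig-zag connecting it to $X^\bullet_G$ through an intermediate object, with one leg being a simplicial weak equivalence and the other an $\A^1$-weak equivalence. The paper's intermediates are $X\stackrel{G}{\times}\sE_G$ and $X\stackrel{G}{\times}E^\bullet_F$, and the $\A^1$-part of the argument is done \emph{at the level of quotients}: the proposition reduces to showing each $X\stackrel{G}{\times}F^{n+1}\to X\stackrel{G}{\times}F^n$ is an $\A^1$-weak equivalence, which is exactly what \cite[Chap.~4, Lem.~2.9]{Morel-Voevodsky} (about nice admissible gadgets) delivers. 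Your intermediate $Y=(E^\bullet_G\times F\times X)/G$ is different, and you try instead to establish the $\A^1$-weak equivalence \emph{upstairs} (via $\A^1$-contractibility of $F$) and then descend.

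Two places in step three do not go through as written. First, the ``analogous Cartesian-square argument'' does not apply to $\A^1$-weak equivalences. The local-fibration/long-exact-sequence argument used for $\bar p_1$ works because simplicial weak equivalences are detected stalkwise (via \lemref{lem:Free-qt}); $\A^1$-weak equivalences are not, and they are not created fiberwise from a local fibration sequence. A valid mechanism exists---the quotient of a free $G$-action is, by descent along the local epimorphism $A\to A/G$, the geometric realization of the \u{C}ech nerve $[n]\mapsto G^n\times A$, which is a homotopy colimit, and $\A^1$-localization preserves homotopy colimits while $G^n\times(-)$ preserves $\A^1$-weak equivalences---but this is precisely the argument you would need to spell out, and it is not the ``Cartesian square'' mechanism you invoke. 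Second, the $\A^1$-contractibility of $F$ is not a free input: it is essentially equivalent to the content that the paper extracts from \cite[Chap.~4, Lem.~2.9]{Morel-Voevodsky} (which gives the $\A^1$-equivalence $Z\stackrel{G}{\times}F\to Z/G$ for smooth $Z$ with free $G$-action; taking $Z=G$ recovers $\A^1$-contractibility of $F$). Your sketch via ``codimension $\to\infty$ plus purity'' glosses over the fact that the closed complements $V_i\setminus U_i$ in an admissible gadget are not assumed smooth, so Morel--Voevodsky purity does not directly apply; the careful version of this connectivity estimate is exactly what the MV lemma packages. In short, the two points you flagged as delicate are the heart of the matter, and neither is resolved; the paper's route through \cite[Chap.~4, Lem.~2.9]{Morel-Voevodsky} exists precisely to sidestep both.
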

\begin{proof}
  We let $F = {\rm colim}_i U_i$ as in \lemref{lem:Bar-ind}. Since colimits
  commute with quotient maps and finite products, the canonical map $X_G(\rho) \to
  X \stackrel{G}{\times} F$ is an isomorphism of motivic spaces. 
  We let $\phi^X_F \colon X \stackrel{G}{\times} E^\bullet_F  \to
  X \stackrel{G}{\times} \sE_G$  be the map induced on the quotients by the
  $G$-equivariant map $\ov{\phi}^X_F = (\id_X \times \ov{\phi}_F)
  \colon X {\times} E^\bullet_F  \to
  X {\times} \sE_G$. We first show that $\phi^X_F$ is a simplicial weak equivalence.

For this, we note that $\ov{\phi}_F \colon E^\bullet_F \to \sE_G$
  is a simplicial weak equivalence by \lemref{lem:Bar-ind}. This implies that
  the map $\ov{\phi}^X_F$ is also a simplicial weak equivalence.
We now let $R$ be a smooth Henselian local $k$-algebra
  and consider the commutative diagram of simplicial sets
  \begin{equation}\label{eqn:Bar-ind-Gen-0}
\xymatrix@C2pc{
G(R) \ar[r] \ar[d] &  (X {\times} E^\bullet_F)(R) \ar[r] \ar[d]^{\ov{\phi}^X_F(R)} & 
\left(X \stackrel{G}{\times} E^\bullet_F\right)(R) \ar[d]^-{\phi^X_F(R)} \\
G(R) \ar[r] & (X {\times} \sE_G)(R) \ar[r] &
\left(X \stackrel{G}{\times} \sE_G\right)(R),}
  \end{equation}
  whose rows are fibration sequences of simplicial sets, as shown in the
  proof of \lemref{lem:Free-qt}.
  As in the proof of \lemref{lem:Bar-ind}, we need to show that the right vertical
  arrow is a weak equivalence of simplicial sets. But this follows by
  repeating the argument of the proof of \lemref{lem:Bar-ind} and noting that the left
  and the middle vertical arrows are weak equivalences.

We next observe that since the map $\phi \colon B^\bullet_G \to \sB_G$ is a
  simplicial weak equivalence, a repetition of the above arguments
  shows that $\ov{\phi} \colon E^\bullet_G \to \sE_G$ and
  $\phi_X \colon X^\bullet_G \cong X \stackrel{G}{\times} E^\bullet_G \to
  X \stackrel{G}{\times} \sE_G$
  are simplicial weak equivalences. Combining $\phi^X_F$ and $\phi_X$, we get
  simplicial weak equivalences
  \begin{equation}\label{eqn:Bar-ind-Gen-1}
    \phi_X \colon X^\bullet_G \xrightarrow{\simeq} X \stackrel{G}{\times} \sE_G
    \xleftarrow{\simeq}  X \stackrel{G}{\times} E^\bullet_F \colon \phi^X_F.
    \end{equation}

We now think of $F$ as a constant simplicial sheaf and
consider the $G$-equivariant map of simplicial sheaves $u: F \to E^{\bullet}_F$
given in degree $n$ by $u_n(a) = (a, \cdots , a)$. This in turn gives a map 
$X \stackrel{G}{\times} F \xrightarrow{u_X} 
X \stackrel{G}{\times} E^{\bullet}_F$. To finish the proof of the proposition,
it suffices to show that this map is an $\A^1$-weak equivalence.
 By \cite[Chap.~2, Prop.~2.14]{Morel-Voevodsky}, we can further reduce to showing
that each map $u_{X, n}: X \stackrel{G}{\times} F \to
X \stackrel{G}{\times} F^{n+1}$ is an $\A^1$-weak equivalence.   
In order to do so, it suffices to show that the projection
$X \stackrel{G}{\times} F^{n+1} \xrightarrow{p_{X,n}} 
X \stackrel{G}{\times} F^{n}$ is an $\A^1$-weak equivalence for each $n > 0$. 

To show the last assertion, we observe that the assumption that the base
scheme is a field and $G$ is special guarantees that $\rho$ is a nice admissible
gadget for $G$ in the sense of \cite[Chap.~4, Defn.~2.4]{Morel-Voevodsky}.
We can therefore apply Lemma~2.9 (in Chapter 4) of op. cit. to conclude that
the map $\left(X \times (U_i)^n\right) \stackrel{G}{\times} F \to
X  \stackrel{G}{\times} (U_i)^n$
is an $\A^1$-weak equivalence for each $n, i \ge 1$.
Passing to the colimit as $i \to \infty$, we see from \cite[Prop.~19]{Deligne}
that $p_{X,n}$ is an $\A^1$-weak equivalence. This completes the proof of the
proposition.
\end{proof}

Given a motivic space $Y \in \Msp(k)$, we let $K(Y)$ be the mapping spectrum
\begin{equation}\label{eqn:KGL}
  K(Y) = \Maps_{\sS\sH(k)}(\Sigma^\infty_T(Y_+), KGL_k),
\end{equation}
where $KGL_k \in \sS\sH(k)$ is the
algebraic $K$-theory spectrum (cf. \cite[\S~6.2]{Voev-ICM},
\cite[\S~7.2]{Krishna-Park-AGT}) and $\Sigma^\infty_T(Y_+)$ is the image of $Y$
under the composite functor
\begin{equation}\label{eqn:Suspension}
  \sH(k) \xrightarrow{(-)_+} \sH_\bullet(k) \xrightarrow{\Sigma^\infty_T(-)}
  \sS\sH(k).
\end{equation}
For $Y \in \Sm_k$, this coincides with the Thomason-Trobaugh $K$-theory spectrum
of $Y$. In general, we can write $Y = \hocolim_{\alpha \in I} Y_\alpha$, where
$I$ is a locally small diagram category together with a functor
$F \colon I \to \Sm_k$ such that $Y_\alpha = F(\alpha)$. Since each motivic space is
cofibrant and the $T$-suspension functor $\Sigma^\infty_T(-)$ preserves
homotopy colimits, we get
\begin{equation}\label{eqn:K-colim}
  \begin{array}{lll}
    K(Y) & = & K(\hocolim_{\alpha \in I} Y_\alpha) \\
    & = & \Maps_{\sS\sH(k)}(\Sigma^\infty_T(\hocolim_{\alpha} (Y_\alpha)_+), KGL_k) \\
    & \xrightarrow{\simeq} &
    \Maps_{\sS\sH(k)}(\hocolim_\alpha \Sigma^\infty_T((Y_\alpha)_+), KGL_k) \\
  & \xrightarrow{\simeq} & \holim_\alpha
  \Maps_{\sS\sH(k)}(\Sigma^\infty_T((Y_\alpha)_+), KGL_k) \\
  & \simeq &  \holim_\alpha K(Y_\alpha),
  \end{array}
\end{equation}
where the arrow on the third line is a weak equivalence by \lemref{lem:K-simplex-6}.
If $Y \in \Msp(k)$ is a motivic space equipped with a free $G$-action and $Y/G$
is the quotient motivic space, we let $K_G(Y) := K(Y/G)$. If $Y$ and $Y/G$ are
both smooth schemes, then $K_G(Y)$ coincides with the classical equivariant $K$-theory
of $Y$ (cf. \S~\ref{sec:Recall}).

As a consequence of \propref{prop:Bar-ind-Gen}, we get the following.
This provides another proof of \propref{prop:Rep-ind} in a special case.

\begin{cor}\label{cor:Bar-ind-Gen-2}
If $G$ is special and $X \in \Sm^G_k$, then $K(X_G) \simeq K(X_G(\rho))$
for any admissible gadget $\rho$ for $G$. That is, the Borel equivariant
$K$-theory of $X$ is representable in $\sS\sH(k)$.
\end{cor}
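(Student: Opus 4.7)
The plan is to identify both sides of the asserted weak equivalence with the homotopy limit $\holim_i K(X^i_G(\rho))$ indexed by the admissible gadget $\rho = (V_i,U_i)_{i \ge 1}$. For the left-hand side, this is essentially the definition: since $X \in \Sm^G_k$, Corollary~\ref{cor:Mixed-quotient} gives that each $X^i_G(\rho) = X \stackrel{G}{\times} U_i$ is a smooth $k$-scheme, so $K'(X^i_G(\rho)) \simeq K(X^i_G(\rho))$, and Definition~\ref{defn:K-thry-Borel} combined with Proposition~\ref{prop:Rep-ind} yields $K(X_G) \simeq \holim_i K(X^i_G(\rho))$, independently of the choice of $\rho$. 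For the right-hand side, the computation already spelled out in \eqref{eqn:K-colim} will apply verbatim once $X_G(\rho)$ is recognised as the homotopy colimit of the $X^i_G(\rho)$ in $\sH(k)$.

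I would then verify this last identification as follows. The motivic space $X_G(\rho)$ is by construction the Nisnevich-sheafified filtered colimit along the closed immersions $X^i_G(\rho) \inj X^{i+1}_G(\rho)$, which are monomorphisms between cofibrant objects of $\Msp(k)$ in the injective model structure. A filtered colimit of cofibrations between cofibrant objects computes the homotopy colimit, so $X_G(\rho) \simeq \hocolim_i X^i_G(\rho)$ in $\sH(k)$. Applying $\Sigma^\infty_T(-_+)$ (a left Quillen functor, hence commuting with homotopy colimits) and then $\Maps_{\sS\sH(k)}(-,KGL_k)$ (which sends homotopy colimits in its first argument to homotopy limits) produces $K(X_G(\rho)) \simeq \holim_i K(X^i_G(\rho))$.

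The only potential subtlety — and thus the main obstacle — is the passage from the sheaf-theoretic colimit defining $X_G(\rho)$ to a genuine homotopy colimit; once this is recorded, the remainder is a formal reassembly of Definition~\ref{defn:K-thry-Borel}, Proposition~\ref{prop:Rep-ind} and \eqref{eqn:K-colim}. The independence on the admissible gadget (the second sentence of the corollary) is then automatic from Proposition~\ref{prop:Rep-ind}, so the proof essentially consists of exhibiting $\hocolim_i \Sigma^\infty_T(X^i_G(\rho)_+)$ as the common computational target of both $K(X_G)$ and $K(X_G(\rho))$.
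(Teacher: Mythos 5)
Your proof is correct and fills in the very details the paper leaves implicit in the phrase ``As a consequence of \propref{prop:Bar-ind-Gen}''. The main content on both sides is the identification $K(X_G(\rho))\simeq\holim_i K(X^i_G(\rho))$ via \eqref{eqn:K-colim}, and your observation that $X_G(\rho)\simeq\hocolim_i X^i_G(\rho)$ in $\sH(k)$ --- because the transition maps are monomorphisms, hence injective cofibrations between cofibrant objects --- is exactly the hypothesis under which \eqref{eqn:K-colim} can be run with the $\N$-indexed diagram. This step is genuinely needed and you are right to flag it as the only subtlety.

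The one divergence from the paper's intended route is which ``independence'' statement you invoke. You quote \propref{prop:Rep-ind} for the independence of the admissible gadget, but the paper deliberately cites \propref{prop:Bar-ind-Gen} and then remarks that the corollary ``provides another proof of \propref{prop:Rep-ind} in a special case''. If one uses \propref{prop:Rep-ind} inside the proof of the corollary, that remark becomes empty. The paper's route instead relies on the stronger assertion of \propref{prop:Bar-ind-Gen} that the \emph{motivic space} $X_G(\rho)\in\sH(k)$, and hence the suspension spectrum $\Sigma^\infty_T(X_G(\rho)_+)\in\sS\sH(k)$, is canonically independent of $\rho$. That spectrum-level canonicality is also what makes the second sentence (``representable in $\sS\sH(k)$'') meaningful: \propref{prop:Rep-ind} only gives $\rho$-independence of the $K$-theory values, not of the representing object. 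Your first sentence of the proof goes through regardless, since for a fixed $\rho$ both sides are literally $\holim_i K(X^i_G(\rho))$, so this is a logical nuance rather than a gap; still, for the representability conclusion one should appeal to \propref{prop:Bar-ind-Gen} rather than \propref{prop:Rep-ind}.
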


\section{Some results on motivic spaces and their $K$-theory}
\label{sec:Gen-construction} 
In this section, we collect some technical results related to motivic spaces
and their $K$-theory that we shall need for proving our main result on
Thomason's completion problem.
We let $k$ be a field, $G \in \Grp_k$ a special group and $X \in \Sm^G_k$.

\subsection{$K$-theory of quotient motivic spaces}\label{secK-quot}
For $\sF \in \Msp(k)$, we let $\int_\sF$ denote the category whose objects are maps
of sheaves $f \colon Y \to \sF$ with $Y \in \Sm_k$ and the hom set between
$f \colon Y \to \sF$ and $f' \colon Y' \to \sF$ consists of maps
$\phi \colon Y \to Y'$ in $\Sm_k$ such that $f'\circ \phi = f$.
Let $\Delta$ denote the standard simplicial indexing category.
We shall use the following known results.

\begin{lem}\label{lem:K-simplex-6}
Let $\sG$ be a simplicial sheaf on $\Nis/k$ which sends
$[n] \in \Delta$ to $\sG^n$ and let $\sF$ be a discrete sheaf of sets on
$\Nis/k$.  Then the canonical maps of simplicial sheaves
  \[
  \hocolim_{[n] \in \Delta} \sG^n \to \sG \ \mbox{and} \
  \hocolim_{Y \in \int_\sF} Y \to \colim_{Y \in \int_\sF} Y \to \sF
  \]
  are simplicial weak equivalences, where each $\sG^n$
is considered as a constant simplicial sheaf.
\end{lem}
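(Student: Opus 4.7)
The plan is to verify both claims at the stalks of the Nisnevich topology, since a morphism of simplicial sheaves on $\Nis/k$ is a simplicial weak equivalence if and only if its evaluation at each smooth Henselian local $k$-algebra $R$ is a weak equivalence of simplicial sets (cf.\ \cite[Chap.~3, Lem.~1.11]{Morel-Voevodsky}). Both homotopy and ordinary colimits of simplicial sheaves are computed sectionwise, so after evaluating at $R$ the statements become assertions about (bi)simplicial sets.

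For the first claim, view $\sG$ as the simplicial object $[n] \mapsto \mathrm{const}(\sG^n)$ in the category of simplicial sheaves, where $\mathrm{const}(\sG^n)$ denotes the sheaf $\sG^n$ regarded as a constant simplicial sheaf. The stalkwise statement then becomes that the canonical map $\hocolim_{[n]} \sG^n(R) \to \sG(R)$ is a weak equivalence of simplicial sets, where each $\sG^n(R)$ is a discrete set. This is the classical Bousfield--Kan identification of the homotopy colimit of a simplicial object with its realization: the diagonal of the bisimplicial set whose horizontal direction is constant at level $n$ equals the simplicial set $[n] \mapsto \sG^n(R)$, which is exactly $\sG(R)$.

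For the second claim, the identification $\colim_{Y \in \int_\sF} Y \cong \sF$ is the standard density theorem expressing a presheaf as the colimit of representables indexed by its category of elements; the equality persists in sheaves because $\sF$ is already a sheaf. For the hocolim-to-colim comparison, I would invoke the general fact that for any presheaf of sets $\sF$ on a small category, the canonical map $\hocolim_{(Y,y) \in \int_\sF} Y \to \sF$ is a weak equivalence of simplicial presheaves (a direct consequence of the bar construction description of $\hocolim$ combined with the Yoneda lemma), and then sheafify. Alternatively, checking stalkwise at $R$, the functor $\int_\sF \to \sF(R)$ (with $\sF(R)$ regarded as a discrete category) has, over each $s \in \sF(R)$, a fiber category of pointed Nisnevich neighborhoods $((Y,y), f \in Y(R))$ with $y\circ f = s$. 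This fiber is filtered, because any two such pointed neighborhoods can be refined by their fiber product in $\Sm_k$ restricted to a sufficiently small Nisnevich neighborhood of the closed point of $\Spec(R)$; Quillen's Theorem~A then delivers the contractibility needed to conclude via the Grothendieck construction.

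The main technical input, though standard, is the Bousfield--Kan identification of realization with hocolim in part~(1) and the bar-construction density theorem (or equivalently the filteredness of the fiber categories) in part~(2); both reductions become essentially formal once one is working in the correct framework of simplicial (pre)sheaves.
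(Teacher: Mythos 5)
Your proposal is essentially correct and, in its first alternative for the third map, is precisely the content of the paper's cited references. The paper's own proof is entirely by citation: the first claim is delegated to \cite[Rem.~2.1]{DHI}, and the $\colim \cong \sF$ identification together with $\hocolim \xrightarrow{\sim} \colim$ are delegated to \cite[(2.1.1)]{Dugger-Adv} and \cite[Lem.~2.7]{Dugger-Adv} respectively. Your Bousfield--Kan diagonal computation for the first claim and your ``bar construction plus Yoneda, then sheafify'' argument for the third map are exactly what those sources prove, and your emphasis that the key weak equivalence is already an \emph{objectwise} equivalence in presheaves (hence automatically a local one) is the right framing.

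Your second, stalkwise alternative has a gap, however. After forming the Grothendieck construction of $(Y,y) \mapsto Y(R)$ over $\int_\sF$, the fiber over $s \in \sF(R)$ is not filtered in the usual sense; it is \emph{cofiltered}. Given two triples $(Y_1, y_1, g_1)$ and $(Y_2, y_2, g_2)$ in the fiber, the common object one produces maps \emph{to} both of them, not from them. Moreover, producing that common source does not come from a fiber product in $\Sm_k$: the fiber product $Y_1 \times_{\sF} Y_2$ need not be a scheme since $\sF$ is only a sheaf, while $Y_1 \times_k Y_2$ carries no canonical map to $\sF$ and its projections run the wrong way. The correct argument instead uses the pro-structure of the Henselian local ring: write $\Spec R = \lim_\alpha V_\alpha$ over Nisnevich neighborhoods, factor $g_1, g_2$ through a common $V_\alpha$, and then use $\sF(R) = \colim_\beta \sF(V_\beta)$ to find $\beta \geq \alpha$ over which the two pulled-back sections of $\sF$ agree; the resulting $(V_\beta, z, \iota_\beta)$ is a common source. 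With ``filtered'' replaced by ``cofiltered'' and the refinement produced this way, the contractibility of the fibers and hence the stalkwise weak equivalence follows, but as written the argument does not go through.
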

\begin{proof}
  The first weak equivalence is well known (cf. \cite[Rem.~2.1]{DHI}).
  It is also well known that $\colim_{Y \in \int_\sF} Y \to \sF$ is a simplicial weak
  equivalence (cf. \cite[(2.1.1)]{Dugger-Adv}).
  The claim that $\hocolim_{Y \in \int_\sF} Y \to \sF$ is a simplicial weak
  equivalence is a direct consequence (in fact, equivalent to) of
  \cite[Lem.~2.7]{Dugger-Adv}, as explained in \cite[Rem.~3.2.7]{Dugger-Unpub}.
\end{proof}

We let $d \colon \sF_1 \to \sF_2$ be a morphism in $\Msp^G_{\rm fr}(k)$.
For $i = 1,2$, we let $\sI_i$ be a 
subcategory of $\int_{\sF_i}$ such that all objects
$f \colon Y \to \sF_i$ in $\sI_i$ are morphisms in $\Msp^G_{\rm fr}(k)$ and all morphisms
between two objects of $\sI_i$ are $G$-equivariant. 
We assume that for any $f \colon Y \to \sF_1$ in $\sI_1$, the composition
$d \circ f: Y \to \sF_2$ is an object of $\sI_2$ so that $d$ defines a functor
$\psi_d \colon \sI_1 \to \sI_2$. For $i = 1,2$ , the product of a map
$(f \colon Y \to \sF_i) \in \int_{\sF_i}$  with $\id_X$
and subsequent quotient by $G$-action yields 
$\ov{f} \colon  X \stackrel{G}{\times} Y \to X \stackrel{G}{\times} \sF_i$
which is compatible with the morphisms in $\int_{\sF_i}$. Taking the homotopy colimit,
we thus get a map $\ov{\phi}_i \colon
\hocolim_{Y \in \sI_i} X \stackrel{G}{\times} Y \to X \stackrel{G}{\times} \sF_i$.

For every $f \colon Y \to \sF_i$ in $\sI_i$, we let $\pr_Y \colon X \times Y \to X$
be the projection. Then $\pr_Y$ commutes with all maps
$X \times Y \xrightarrow{\id_X \times \alpha} X \times Y'$
with $Y \xrightarrow{\alpha} Y'$ in $\sI_1$ and $\sI_2$. Since
$\psi_d(f) = d \circ f \colon Y \to \sF_2$ is an
object of $\sI_2$ for every $f \colon Y \to \sF_1$ in $\sI_1$, we get a
commutative diagram of spectra
\begin{equation}\label{eqn:K-simplex-0}
  \xymatrix@C2pc{
    \holim_{Y' \in \sI_2} K_G(X \times Y') \ar[r]^-{(d\circ f)^*} & 
    K_G(X \times Y) \\
    & K_G(X) \ar[ul]^-{p^*_2} \ar[u]_-{\pr^*_Y},}
\end{equation}
where $p^*_2$ is obtained by taking the homotopy limit of the diagram
$\pr_{Y'}^* \colon K_G(X) \to K_G(X \times Y')$ with $f' \colon Y' \to \sF_2$ in
$\sI_2$. This diagram is clearly compatible with respect to maps in $\sI_1$.
Passing to the limit over $\sI_1$, we get the pull-back 
$d^* \colon \holim_{Y' \in \sI_2} K_G(X \times Y') \to
\holim_{Y \in \sI_1} K_G(X \times Y)$.

Letting $p^*_1$ denote the map  obtained by taking the homotopy limit of the diagram
$\pr_{Y}^* \colon K_G(X) \to K_G(X \times Y)$ with $f \colon Y \to \sF_1$ in
$\sI_1$, we get the following.

\begin{lem}\label{lem:K-simplex-1}
  The diagram
  \begin{equation}\label{eqn:K-simplex-2}
    \xymatrix@C1pc{
   \holim_{Y' \in \sI_2} K_G(X \times Y') \ar[r]^-{d^*} & 
    \holim_{Y \in \sI_1} K_G(X \times Y) \\
    & K_G(X) \ar[ul]^-{p^*_2} \ar[u]_-{p^*_1}}
\end{equation}
is commutative.
\end{lem}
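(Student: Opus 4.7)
The plan is to exploit the universal property of the homotopy limit to reduce the commutativity of \eqref{eqn:K-simplex-2} to the pointwise identities packaged by \eqref{eqn:K-simplex-0}. Both $p^*_1$ and $d^* \circ p^*_2$ are maps from $K_G(X)$ into $\holim_{Y\in\sI_1} K_G(X\times Y)$, so to compare them it is enough to check that their composites with the canonical structural projection $\mathrm{proj}_f \colon \holim_{Y\in\sI_1} K_G(X\times Y) \to K_G(X\times Y)$ agree for every object $f\colon Y \to \sF_1$ of $\sI_1$, compatibly with morphisms of $\sI_1$.

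First, I would unwind the definition of $d^*$. By its very construction from the diagrams \eqref{eqn:K-simplex-0}, for each $f\colon Y \to \sF_1$ in $\sI_1$ the composite
\[
\holim_{Y'\in\sI_2} K_G(X\times Y') \xrightarrow{d^*} \holim_{Y\in\sI_1} K_G(X\times Y) \xrightarrow{\mathrm{proj}_f} K_G(X\times Y)
\]
agrees with the structural projection of $\holim_{Y'\in\sI_2} K_G(X\times Y')$ at the object $\psi_d(f) = d\circ f$, which is the map $(d\circ f)^*$ appearing in \eqref{eqn:K-simplex-0}. This is a formal consequence of how $d^*$ is assembled, via the universal property, from the compatible family $\{(d\circ f)^*\}_{f\in\sI_1}$.

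Second, I would invoke the commutativity of \eqref{eqn:K-simplex-0}, which gives
\[
(d\circ f)^* \circ p^*_2 \;\simeq\; \pr_Y^*.
\]
On the other hand, by the very definition of $p^*_1$ as the homotopy limit of the maps $\pr_Y^*$, we also have $\mathrm{proj}_f \circ p^*_1 \simeq \pr_Y^*$. Combining these identifications shows that $d^* \circ p^*_2$ and $p^*_1$ induce the same map at every object of $\sI_1$.

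The remaining (and only slightly delicate) point is higher coherence: the identifications above must be natural with respect to morphisms $\phi\colon Y \to Y'$ in $\sI_1$, so that they assemble into a homotopy between the two maps into $\holim_{Y\in\sI_1} K_G(X\times Y)$ rather than merely a family of pointwise homotopies. This reduces to checking that for every morphism $\phi$ in $\sI_1$, the prism built out of two copies of \eqref{eqn:K-simplex-0} (one at $f$, one at $f'\circ\phi = f$) commutes, which in turn follows from the functoriality of $\psi_d\colon \sI_1 \to \sI_2$ together with the naturality of flat pullback in equivariant $K$-theory under the projection maps $\id_X \times \phi \colon X\times Y \to X\times Y'$. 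Since these projections fit into commutative squares with the projections $\pr_Y$ and $\pr_{Y'}$ to $X$, the required coherence is automatic, completing the proof.
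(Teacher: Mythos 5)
Your proposal is correct and matches what the paper (implicitly) intends: the lemma is stated without proof because it is a formal consequence of the construction of $d^*$, $p^*_1$, and $p^*_2$ as homotopy limits of the pointwise maps appearing in \eqref{eqn:K-simplex-0}, and your write-up simply spells out that formality, including the coherence in $\sI_1$ which makes the pointwise identifications assemble into a genuine homotopy.
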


We now assume that $Y/G \in \Sm_k$ for each $f \colon Y \to \sF_i$ in $\sI_i$.
In this case, the quotient $X \stackrel{G}{\times} Y$ is a smooth $k$-scheme by
\corref{cor:EG-quotient-2} and the pull-back
$K(X \stackrel{G}{\times} Y) \to  K_G(X \times Y)$ by the quotient map
$X \times Y \to X \stackrel{G}{\times} Y$ is a weak equivalence.
Letting $\phi_{i,X} \colon \hocolim_{Y \in \sI_i} X \stackrel{G}{\times} Y \to
X \stackrel{G}{\times} \sF_i$ denote the map induced by $\phi_i$ and $\id_X$, 
we have a homotopy commutative diagram
of motivic spaces
\begin{equation}\label{eqn:K-simplex-3}
    \xymatrix@C1pc{
      \hocolim_{Y \in \sI_1} X \stackrel{G}{\times} Y \ar[r]^-{{\phi}_{1,X}}
      \ar[d]_-{\ov{d}} & X \stackrel{G}{\times} \sF_1 \ar[d]^-{\ov{d}} \\
      \hocolim_{Y' \in \sI_2} X \stackrel{G}{\times} Y' \ar[r]^-{{\phi}_{2,X}} &
      X \stackrel{G}{\times} \sF_2.}
    \end{equation}

Considering the induced maps of $K$-theory spectra, we get the following.

\begin{lem}\label{lem:K-simplex-4}
  One has a homotopy commutative diagram
  \begin{equation}\label{eqn:K-simplex-5}
    \xymatrix@C1pc{
      K_G(X) \ar[r]^-{p^*_2} \ar[dr]_-{p^*_1} &  \holim_{Y' \in \sI_2} K_G(X \times Y')
      \ar[d]^-{d^*} & \holim_{Y' \in \sI_2} K(X  \stackrel{G}{\times} Y')
      \ar[l]_-{\simeq} \ar[d]^-{\ov{d}^*} & K(X  \stackrel{G}{\times} \sF_2)
      \ar[d]^-{\ov{d}^*} \ar[l]_-{{\phi}^*_{2,X}} \\
      & \holim_{Y \in \sI_1} K_G(X \times Y) & \holim_{Y \in \sI_1}
      \ar[l]_-{\simeq} K(X  \stackrel{G}{\times} Y) &
      K(X  \stackrel{G}{\times} \sF_1) \ar[l]_-{{\phi}^*_{1,X}}}
    \end{equation}
  of spectra in which the middle horizontal arrows are weak equivalences.
  \end{lem}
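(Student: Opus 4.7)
The plan is to break the diagram into three pieces and verify homotopy commutativity of each separately, leaving the leftmost triangle to \lemref{lem:K-simplex-1} and deducing the middle and right squares from naturality of the quotient pull-back and from representability of $K$-theory in $\sS\sH(k)$, respectively.

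First I would dispose of the left triangle: this is exactly the content of \lemref{lem:K-simplex-1}, so nothing new is required. For the middle square, the key observation is that the functor $\psi_d \colon \sI_1 \to \sI_2$, $(f \colon Y \to \sF_1) \mapsto (d \circ f \colon Y \to \sF_2)$, leaves the underlying smooth $k$-scheme $Y$ (and hence $X \stackrel{G}{\times} Y$ and $X \times Y$) unchanged. Consequently, the component of $\ov{d}^*$ at $(f \colon Y \to \sF_1) \in \sI_1$ is the identity of $K(X \stackrel{G}{\times} Y)$, and analogously the component of $d^*$ at $Y$ is the identity of $K_G(X \times Y)$. The middle square therefore reduces, termwise over $\sI_1$, to the commutativity of
\[
\xymatrix@C1.5pc{
K(X \stackrel{G}{\times} Y) \ar[r]^-{\simeq} \ar@{=}[d] & K_G(X\times Y) \ar@{=}[d] \\
K(X \stackrel{G}{\times} Y) \ar[r]^-{\simeq} & K_G(X\times Y),}
\]
which is trivial. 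The fact that these horizontal arrows are weak equivalences for each $Y$ with $Y/G \in \Sm_k$ follows from \corref{cor:EG-quotient-2} (so that the quotient is a smooth $k$-scheme) and the definition of equivariant $K$-theory via the quotient stack (cf.~\S\ref{sec:Recall}); passing to the homotopy limit over $\sI_i$ preserves these weak equivalences.

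For the right square, I would invoke the homotopy commutative square \eqref{eqn:K-simplex-3} of motivic spaces. Applying the functor $\Sigma^\infty_T((-)_+) \colon \Msp(k) \to \sS\sH(k)$, which preserves homotopy colimits, and then $\Maps_{\sS\sH(k)}(-, KGL_k)$, which sends homotopy colimits to homotopy limits, yields a homotopy commutative diagram in $\sS\sH$ identifying $K$ applied to each $\hocolim_{\sI_i} X \stackrel{G}{\times} Y$ with $\holim_{\sI_i} K(X \stackrel{G}{\times} Y)$, exactly as in the computation \eqref{eqn:K-colim}. This yields precisely the right square of \eqref{eqn:K-simplex-5}.

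The only genuinely delicate step is the right square, where one has to be careful that the colimit defining each $\hocolim_{Y \in \sI_i} X \stackrel{G}{\times} Y$ is indexed by the same category $\sI_i$ that indexes the homotopy limit on the $K$-theory side, and that the natural map from the hocolim to $X \stackrel{G}{\times} \sF_i$ is compatible (up to coherent homotopy) with the functor $\psi_d$ on the $\sI_i$'s. This compatibility is built into the definition of $\phi_{i,X}$ and of the induced map $\ov{d}$, but spelling it out rigorously requires checking that \eqref{eqn:K-simplex-3} commutes strictly on underlying presheaves (which it does, as both composites send $Y \in \sI_1$ to $X \stackrel{G}{\times} Y$ and then include into $X \stackrel{G}{\times} \sF_2$ via $d \circ f$); no non-trivial homotopy coherence is needed. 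Once this bookkeeping is done, gluing the three pieces produces the claimed homotopy commutative diagram \eqref{eqn:K-simplex-5}.
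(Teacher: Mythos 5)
Your decomposition into the left triangle, middle square, and right square, and the way you dispose of each, matches the paper's proof exactly: the paper handles the left triangle by citing \lemref{lem:K-simplex-1}, dismisses the middle square as clearly commuting (your termwise observation that $\psi_d$ preserves the underlying scheme $Y$ is the reason why), and refers the right square to ~\eqref{eqn:K-simplex-3} as you do. The one point where the paper is more careful than you is the assertion that the middle horizontal arrows are weak equivalences of homotopy limits: the paper explicitly invokes \cite[Thm.~11.4]{Hirsch-Notes} and \cite[\S~6]{Hovey-JPAA} together with the observation that $K_G(X\times Y)$ and $K(X\stackrel{G}{\times}Y)$ are $\Omega$-spectra, which is the fibrancy condition needed for the objectwise weak equivalence to descend to the homotopy limits — you assert this preservation without flagging the fibrancy requirement.
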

\begin{proof}
  The left triangle in ~\eqref{eqn:K-simplex-5} commutes by \lemref{lem:K-simplex-1},
  the middle square clearly commutes and the right square commutes by
  ~\eqref{eqn:K-simplex-3}.  The middle horizontal arrows are weak equivalences
  by \cite[Thm.~11.4]{Hirsch-Notes} and \cite[\S~6]{Hovey-JPAA} since
  $K_G(X \times Y)$ and $K(X  \stackrel{G}{\times} Y)$ as well
  as $K_G(X \times Y')$ and $K(X  \stackrel{G}{\times} Y')$ are $\Omega$-spectra
  for all $Y \in \sI_1$ and $Y' \in \sI_2$.
\end{proof}

\subsection{Approximation of $X \stackrel{G}{\times} \sE_G$}
\label{sec:EG-approx}
We now look at the universal $G$-torsor $\pi \colon \sE_G \to \sB_G$
(cf. ~\eqref{eqn:motivic0}). If $\sE_G$ is the simplicial sheaf which sends
$[n]$ to $\sE^n_G$, then $\pi_n \colon \sE^n_G \to \sB^n_G$ is
a $G$-torsor for every $n$. Furthermore, the canonical map 
$\hocolim_{[n] \in {\Delta}} \sE^n_G \to \sE_G$ is a simplicial weak equivalence
by \lemref{lem:K-simplex-6}. For any $n \ge 0$ and any map
    $f \colon Y \to \sB^n_G$ of discrete sheaves, we let $\sE^n_G(Y)$
    denote the pull-back of the $G$-torsor $\sE^n_G \to \sB^n_G$ via $f$ and
    let $\pi_n(f) \colon \sE^n_G(Y) \to \sE^n_G$ be the induced $G$-equivariant map.
    It follows that $\sE^n_G(Y)$ is a motivic space with a free $G$-action, which is
    a smooth $k$-scheme if $Y$ is.

    We let $\int'_{\sE^n_G}$ denote the category whose objects are the
    $G$-equivariant maps $\pi_n(f) \colon \sE^n_G(Y)$ \\
    $\to \sE^n_G$ where $(f \colon Y \to \sB^n_G) \in \int_{\sB^n_G}$. The morphisms
    between
    the objects of $\int'_{\sE^n_G}$ are the base change of morphisms in $\int_{\sB^n_G}$.
  Recall that if $F \colon \sC \to \sD$ is a functor between two categories
  and $A \in \sD$, then $A/F$ is the comma category whose objects are maps
  $g \colon A \to F(B)$ in $\sD$ with $B \in \sC$
  and the morphisms between $g$ and $g'$ are maps
  $f \colon B \to B'$ in $\sC$ such that $F(f) \circ g = g'$.

\begin{lem}\label{lem:Simplex-4}
  For every $n \ge 0$, the canonical maps
  \begin{enumerate}
  \item
    \hspace*{2cm}
    $\hocolim_{Y \in \int_{\sB^n_G}}  \sE^n_G(Y) \to \colim_{Y \in \int_{\sB^n_G}} \sE^n_G(Y) \to
    \sE^n_G$;
  \item
\hspace*{2cm} $\hocolim_{Y \in \int_{\sB^n_G}}
  X {\times} \sE^n_G(Y) \to \colim_{Y \in \int_{\sB^n_G}}
  X {\times} \sE^n_G(Y) \to X {\times} \sE^n_G;$
    \item
  \hspace*{2cm} $\hocolim_{Y \in \int_{\sB^n_G}}
  X \stackrel{G}{\times} \sE^n_G(Y) \to \colim_{Y \in \int_{\sB^n_G}}
  X \stackrel{G}{\times} \sE^n_G(Y) \to X \stackrel{G}{\times} \sE^n_G$
  \end{enumerate}
  are simplicial weak equivalences.
\end{lem}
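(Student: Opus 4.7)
The plan is to prove (1) first; (2) and (3) will follow by formal manipulations. For the $\colim$-statement in (1), I observe that $\sE^n_G(Y) = Y\times_{\sB^n_G} \sE^n_G$ by definition and $\colim_{Y\in \int_{\sB^n_G}} Y \cong \sB^n_G$ (this is the set-theoretic content of \lemref{lem:K-simplex-6}). Giraud's axiom (universality of colimits) for the Nisnevich sheaf topos then gives
\[
\colim_Y \sE^n_G(Y) \;=\; \colim_Y\bigl(Y\times_{\sB^n_G}\sE^n_G\bigr) \;\cong\; \bigl(\colim_Y Y\bigr)\times_{\sB^n_G}\sE^n_G \;\cong\; \sE^n_G.
\]

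For the $\hocolim$-statement in (1), I would consider the commutative square
\[
\xymatrix@C1.5pc{
\hocolim_Y \sE^n_G(Y) \ar[r] \ar[d] & \sE^n_G \ar[d]^-{\pi_n} \\
\hocolim_Y Y \ar[r]^-{\simeq} & \sB^n_G,
}
\]
whose bottom arrow is a simplicial weak equivalence by \lemref{lem:K-simplex-6}, and whose right vertical arrow is a Nisnevich-local fibration, by \lemref{lem:Free-qt}(1) applied to the $G$-torsor $\pi_n$. In the locally cartesian closed Nisnevich sheaf topos, the pullback functor $\pi_n^*$ is a two-sided adjoint; in particular it commutes with all colimits, and hence with $\hocolim$, giving $\pi_n^*(\hocolim_Y Y) \simeq \hocolim_Y \sE^n_G(Y)$ and $\pi_n^*(\sB^n_G) = \sE^n_G$. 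Right-properness of the Nisnevich-local model structure on $\Msp(k)$ then guarantees that $\pi_n^*$ carries the bottom weak equivalence to the top one, settling (1).

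For (2), I apply the functor $X\times(-)$ to the equivalences of (1). As a left adjoint in the cartesian closed topos $\Msp(k)$, it commutes with colimits, yielding the $\colim$-statement. Since $X$ is representable and hence cofibrant, the functor $X\times(-)$ also commutes with $\hocolim$ and preserves simplicial weak equivalences, giving the $\hocolim$-statement. For (3), I use the identification $X\stackrel{G}{\times}(-) = (X\times(-))/G$ for the diagonal $G$-action, which is free because its restriction to the second factor already is. Since the quotient by a free $G$-action is itself a colimit and colimits commute with colimits, the $\colim$-statement follows from (2); and since a levelwise free $G$-action on a diagram induces a free $G$-action on its $\hocolim$ whose quotient is weakly equivalent to the $\hocolim$ of the termwise quotients (again via \lemref{lem:Free-qt}(1) and right-properness), the $\hocolim$-statement of (3) follows from that of (2).

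The main obstacle is (1): transporting the simplicial weak equivalence $\hocolim_Y Y \xrightarrow{\simeq} \sB^n_G$ across the local fibration $\pi_n$ requires both right-properness of the Nisnevich-local model structure on $\Msp(k)$ and the topos-theoretic fact that $\pi_n^*$ preserves colimits. Once (1) is secured, parts (2) and (3) are essentially formal consequences obtained by applying $X\times(-)$ and then passing to the quotient by the free diagonal $G$-action.
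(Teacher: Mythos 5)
Your proof is correct in its conclusions but takes a genuinely different route from the paper's. The paper proves (2) first and deduces (1) as the special case $X = \pt$; more importantly, it proceeds by showing that the functor $\int_{\sB^n_G} \to \int_T$ (for $T = X \times \sE^n_G$ or $W = X \stackrel{G}{\times}\sE^n_G$) is right cofinal and homotopy right cofinal, by constructing an initial object of each comma category explicitly. This reduces all three statements to \lemref{lem:K-simplex-6} via Hirschhorn's cofinality theorem and SGA4's criterion, and is a purely categorical argument requiring no model-theoretic machinery. You instead prove (1) first using topos-theoretic base change (Giraud universality of colimits for the $\colim$-claim) combined with a properness argument for the $\hocolim$-claim, and transport the equivalence through the functors $X\times(-)$ and $(-)/G$. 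The trade-off: the paper's cofinality construction is longer but completely elementary; your transport argument is shorter to state but invokes several commutations — $\pi_n^*$ with $\hocolim$, $X\times(-)$ with $\hocolim$, $(-)/G$ with $\hocolim$ — each of which is true (via the Bousfield--Kan formula, since every object is cofibrant in the local injective model structure) but would need to be spelled out.

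One imprecision worth flagging: the vertical map $\pi_n$ is only known to be a \emph{local} fibration (\lemref{lem:Free-qt}(1)), i.e.\ a stalkwise Kan fibration, not a fibration in the Nisnevich-local model structure, so a bare appeal to ``right properness of the Nisnevich-local model structure'' does not literally apply. The statement you actually need is that pulling back a local weak equivalence along a local fibration yields a local weak equivalence — this follows by taking stalks (which are exact, hence preserve the pullback square) and invoking right properness of simplicial sets, the same reduction the paper itself uses in the proof of \lemref{lem:Bar-ind}. The argument is morally sound, but the citation should be to this local version rather than to right properness of $\Msp(k)$.
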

\begin{proof}
  We fix $n \ge 0$ and prove (2) first. 
  We let $T = X \times \sE^n_G$ and let $F \colon \int_{\sB^n_G} \to \int_{T}$ be the
  functor which sends $f \colon Y \to \sB^n_G$ to
  $(\id_X \times \pi_n(f)) \colon X \times \sE^n_G(Y) \to T$.
  Using \lemref{lem:K-simplex-6}, \cite[Thm.~10.8]{Hirsch-Notes} and
  \cite[Tag~09WN, Lem.~4.17.2]{SP}, it suffices to show that the functor
  $F$ is right cofinal as well as homotopy right cofinal. In other words, we need to
  show for every $f \colon Z \to T$ in $\int_{T}$ that the nerve of the comma
  category ${f}/F$ is connected as well as contractible
  (cf. \cite[Defn.~10.6]{Hirsch-Notes}). By \cite[Cor.~2, p.~84]{Quillen} and
  \cite[Tag~09WN, Defn.~4.17.1]{SP}, it suffices to show that ${f}/F$ has an
  initial object. 

  To that end, we let $\pr_X \colon T \to X$ and $\pr_{\sE^n_G} \colon T \to \sE^n_G$
  be the projections.
  We let $f_1 = \pr_X \circ f \colon Z \to X$ and
  $f_2 = \pr_{\sE^n_G} \circ f \colon Z \to \sE^n_G$ so that $f = (f_1, f_2)$.
  Letting $g = \pi_n \circ f_2 \colon Z \to \sB^n_G$, we see that there is a unique
  map $\alpha \colon Z \to \sE^n_G(Z)$ such that the diagram
  \begin{equation}\label{eqn:Simplex-4-0}
    \xymatrix@C2pc{
      Z \ar[r]^-{\alpha} \ar[dr]_-{\id}
      \ar@/^2pc/[rr]^-{f_2} & \sE^n_G(Z) \ar[d]^-{\beta}
      \ar[r]^-{\pi_n(g)} &
      \sE^n_G \ar[d]^-{\pi_n} \\
      & Z \ar[r]^-{g} & \sB^n_G}
  \end{equation}
  commutes and the right square is Cartesian.
  We claim that $\wt{\alpha} = (f_1, \alpha) \colon Z \to X \times \sE^n_G(Z)$ is an
  initial object of ${f}/F$.

To prove the claim, we let $F' \colon \int_{\sB^n_G} \to \int_{\sE^n_G}$ be the functor
  which sends $f \colon Y \to \sB^n_G$ to the pull-back map
  $\pi_n(f) \colon \sE^n_G(Y) \to \sE^n_G$. We first show that
  $\alpha \colon Z \to \sE^n_G(Z)$
  is an initial object of the comma category ${f_2}/{F'}$.
  To prove this, we let $(h \colon Y \to \sB^n_G) \in \int_{\sB^n_G}$ and let
  $\sE^n_G(Y)$ be the pull-back of $h$ via $\pi_n$.
  We let $\alpha' \colon Z \to \sE^n_G(Y)$ be any object of
${f_2}/{F'}$ and look at the commutative diagram
  \begin{equation}\label{eqn:Simplex-4-1}
    \xymatrix@C2pc{
      Z \ar[r]^-{\alpha'} \ar@/^2pc/[rr]^-{f_2} \ar[d]_-{\id} & \sE^n_G(Y)
      \ar[d]^-{\beta'} \ar[r]^-{\pi_n(h)} & \sE^n_G \ar[d]^-{\pi_n} \\
      Z  \ar[r]^-{\beta' \circ \alpha'} \ar@/_2pc/[rr]^-{g}  &
      Y \ar[r]^-{h} & \sB^n_G.}
      \end{equation}

Since the two squares in this diagram commute and the right square is Cartesian,
  it follows that there is a unique morphism $\psi \colon \sE^n_G(Z) \to \sE^n_G(Y)$
  such that the diagram
  \begin{equation}\label{eqn:Simplex-4-2}
    \xymatrix@C2pc{
      Z \ar[r]^-{\alpha} \ar@/^2pc/[rr]^-{\alpha'} \ar[dr]_-{\id} &
      \sE^n_G(Z) \ar[r]^-{\psi} \ar[d]^-{\beta} & \sE^n_G(Y) \ar[d]^-{\beta'}
      \ar[r]^-{\pi_n(h)} & \sE^n_G \ar[d]^-{\pi_n} \\
      & Z  \ar[r]^-{\beta' \circ \alpha'} & Y \ar[r]^-{h} & \sB^n_G}
      \end{equation}
  is commutative in which $h \circ \beta' \circ \alpha' = g$ and
  $\pi_n(h) \circ \psi = \pi_n(g)$ and the two squares are Cartesian.
  Since $\alpha'$ and $\beta'$ are
  fixed, it also follows that $\beta' \circ \alpha' \colon Z \to Y$ is the
  unique morphism in $\int_{\sB^n_G}$ such that $\alpha \circ F'(\beta' \circ \alpha')
  = \alpha \circ \psi = \alpha'$. We have thus shown that $\alpha \colon Z \to
  \sE^n_G(Z)$ is an initial object of ${f_2}/{F'}$.

To finish the proof of the claim, we let
  $\wt{\alpha'} = (f'_1, \alpha') \colon Z \to X \times \sE^n_G(Y)$ be any object of
${f}/F$ such that $\sE^n_G(Y)$ is the pull-back of $(h \colon Y \to \sB^n_G) \in
\int_{\sB^n_G}$ via $\pi_n$.
 Since $\alpha \colon Z \to \sE^n_G(Z)$ is an initial object of ${f_2}/{F'}$,
 we get a unique morphism $\gamma \colon (g \colon Z \to \sB^n_G) \to
 (h \colon Y \to \sB^n_G)$ in $\int_{\sB^n_G}$
 such that both ~\eqref{eqn:Simplex-4-1} and ~\eqref{eqn:Simplex-4-2}
 are commutative and $\gamma = \beta' \circ \alpha'$.
 Letting
 $\wt{\gamma} = (\id_X, \psi) \colon X \times \sE^n_G(Z) \to X \times \sE^n_G(Y)$,
 we see that $\wt{\gamma}$ is the unique morphism
 such that $\wt{\alpha'} = \wt{\gamma} \circ \wt{\alpha}$. 
This proves the claim and hence item (2) of the lemma.
Item (1) is a special case of item (2) where we take $X = \pt$.

To prove item (3), we let $W =  X \stackrel{G}{\times} \sE^n_G \cong T/G$
and let $\ov{F} \colon \int_{\sB^n_G} \to \int_W$ be the functor which sends
$(u \colon Y \to \sB^n_G)$ to the map $\ov{u} \colon
X \stackrel{G}{\times} \sE^n_G(Y) \to X \stackrel{G}{\times} \sE^n_G = W$ induced by
the product
$(\id_X \times \pi_n(\ov{u})) \colon X \times \sE^n_G(Y) \to X \times \sE^n_G$.
We let $f \colon Z \to W$ be an object of $\int_W$. As in the previous cases, it
suffices to show that the comma category ${f}/{\ov{F}}$ has an initial object.

To that end, we let $f_2 \colon Z \to W \to \sB^n_G$ be the composite map
where the second map is the canonical projection. We let $\sE^n_G(Z)$ be the
$G$-torsor over $Z$ obtained by the pull-back of $\pi_n$ by $f_2$. We then note that
the diagram
\begin{equation}\label{eqn:Simplex-4-3}
    \xymatrix@C2pc{
      \sE^n_G(Z) \ar[d]_-{\beta} \ar[r]^-{f'} &  X \times \sE^n_G \ar[r]^-{\pr_2}
      \ar[d]^-{\pi_{n,X}} & \sE^n_G \ar[d]^-{\pi_n} \\
      Z \ar[r]^-{f} & W \ar[r]^-{\pr'_2} & \sB^n_G}
\end{equation}
is commutative in which the squares are Cartesian, where $f_2 = \pr'_2 \circ f$.

Letting $f'_2 = \pi_n(f_2)$ and $f' = (f'_1, f'_2)$, we get a commutative diagram
\begin{equation}\label{eqn:Simplex-4-4}
  \xymatrix@C2pc{
    \sE^n_G(Z) \ar[d]_-{\beta} \ar[r]^-{g'_1} &
    X \times \sE^n_G(Z) \ar[r]^-{g'_2} \ar[d]^-{\beta_X} &
    X \times \sE^n_G  \ar[r]^-{\pr_2}
      \ar[d]^-{\pi_{n,X}} & \sE^n_G \ar[d]^-{\pi_n} \\
      Z \ar[r]^-{g_1} & X \stackrel{G}{\times} \sE^n_G(Z) \ar[r]^-{g_2} &
      W \ar[r]^-{\pr'_2} & \sB^n_G,}
\end{equation}
where $\beta_X$ is the quotient map, $g_2 \circ g_1 = f$ and $g'_2 \circ g'_1 = f'$.
Moreover,
$g'_1 = (f'_1, \id)$, $g'_2 = (\id, f'_2)$ and $g_1$ (resp. $g_2$) is the
map on quotients induced by $g'_1$ (resp. $g'_2$). It is clear that the left square
in ~\eqref{eqn:Simplex-4-4} is Cartesian. It remains to show that $g_1 \colon Z \to 
X \stackrel{G}{\times} \sE^n_G(Z)$ is an initial object of ${f}/{\ov{F}}$. 

We let $h_1 \colon Z \to X \stackrel{G}{\times} \sE^n_G(Y)$ be an
object of ${f}/{\ov{F}}$, where $(\alpha \colon Y \to \sB^n_G) \in \int_{\sB^n_G}$ and
$\sE^n(Y)$ is the pull-back of $\alpha$ via $\pi_n$.
Since $\beta \colon \sE^n_G(Z) \to Z$ is a $G$-torsor and 
$G$-torsors are preserved under pull-back, it follows that there is a commutative
diagram
\begin{equation}\label{eqn:Simplex-4-5}
  \xymatrix@C2pc{
\sE^n_G(Z) \ar[d]_-{\beta} \ar[r]^-{h'_1} &
    X \times \sE^n_G(Y) \ar[r]^-{h'_2} \ar[d]^-{\beta'_X} &
    X \times \sE^n_G  \ar[r]^-{\pr_2}
      \ar[d]^-{\pi_{n,X}} & \sE^n_G \ar[d]^-{\pi_n} \\
      Z \ar[r]^-{h_1} & X \stackrel{G}{\times} \sE^n_G(Y) \ar[r]^-{h_2} &
      W \ar[r]^-{\pr'_2} & \sB^n_G}
\end{equation}
whose squares are Cartesian.

Letting $\delta'$ denote the composite map $\sE^n_G(Z) \xrightarrow{h'_1}
X \times \sE^n_G(Y) \xrightarrow{\pr_2} \sE^n_G(Y)$, we see that
$\delta'$ is a $G$-equivariant map of smooth $k$-schemes with free $G$-actions.
Letting $\delta \colon Z \to Y$ be the induced map between the quotients, we get
a map $\delta \colon (f_2 \colon Z \to \sB^n_G) \to (\alpha \colon Y \to \sB^n_G)$
in ${f}/{\ov{F}}$ such that $h_1 = \ov{F}(\delta) \circ g_1$. It is clear that
$\delta$ is the unique map such that $\ov{F}(\delta) \circ g_1 = h_1$.
This proves (3) and concludes the proof of the lemma.  
\end{proof}

\section{Thomason's problem for special groups}\label{sec:Thom-Main}
In the next two sections, we shall prove our main results on
Thomason's completion problem. Our strategy is to factorize 
Thomason's pull-back map $\wt{\pi}^*$ (cf. ~\eqref{eqn:Simplicial-sheaf})
into a composition of two maps.
We shall then show separately that each of these two maps induces a weak equivalence
after we pass to the completion of equivariant $K$-theory.
Before we do this, we first show that $\wt{\pi}^*$ indeed factors through the
completion.

\subsection{Factorization through the completion}\label{sec:Factor-compln}
We let $k$ be any field, $G$ any $k$-group and $X \in \Spc^G_k$.
The $G$-equivariant projection map $\tau_X \colon E^\bullet_G(X) \to X$
(cf. ~\eqref{eqn:Bar-EG}) induces the pull-back map
$\tau^*_X \colon K'_G(X) \to K'_G(E^\bullet_G(X)) := \holim_{n} K'_G(G^{n+1} \times X)$.
On the other hand, the quotient map
$\pi_X \colon E^\bullet_G(X) \to X^\bullet_G$ yields the pull-back map
\begin{equation}\label{eqn:Simplicial-sheaf-0}
  K'(X^\bullet_G) \simeq \holim_{n} K'(G^n {\times} X)
 \xrightarrow{\pi^*_X}   \holim_{n} K'(G^{n+1} \stackrel{G}{\times} X)
\simeq K'_G(E^\bullet_G(X)).
  \end{equation}
The map $\pi^*_X$ is a levelwise weak equivalence, and hence a weak equivalence
between the homotopy limits (see the proof of \lemref{lem:K-simplex-4}).

\begin{lem}\label{lem:Simplicial-K-0}
  The diagram of spectra
  \begin{equation}\label{eqn:Simplicial-K-1}
    \xymatrix@C1pc{
      K'_G(X) \ar[dr]_-{\tau^*_X} \ar[r]^-{\wt{\pi}^*} & K'(X^\bullet_G)
      \ar[d]^-{\pi^*_X} \\
      & K'_G(E^\bullet_G(X))}
  \end{equation}
  is commutative.
  \end{lem}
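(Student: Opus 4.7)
The plan is to interpret the commutativity of~\eqref{eqn:Simplicial-K-1} at the level of morphisms of simplicial stacks into $[X/G]$ and reduce it to a level-wise check. Write $\pi \colon X \to [X/G]$ for the quotient and recall that $E^n_G(X) = G^{n+1} \times X$ carries the free $G$-action $g\cdot(g_0,\dots,g_n,x) = (g_0 g^{-1},\dots,g_n g^{-1}, g x)$. A direct check using the formula~\eqref{eqn:Bar-EG1} for $\pi^n_X$ shows that $\pi^n_X$ is $G$-invariant for this action and that the induced morphism $[\pi^n_X/G] \colon [(G^{n+1} \times X)/G] \xrightarrow{\sim} G^n \times X$ is an isomorphism; by construction, $\pi^*_X$ at level $n$ coincides with the pull-back along $[\pi^n_X/G]$. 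On the other hand, $\tau^n_X \colon (g_0,\dots,g_n,x) \mapsto x$ is easily seen to be $G$-equivariant, and hence induces a morphism $\wt{\tau}_{X,n} \colon [(G^{n+1} \times X)/G] \to [X/G]$ with $\tau^*_X$ at level $n$ equal to $\wt{\tau}^*_{X,n}$.

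The lemma therefore reduces to the identity $\wt{\tau}_{X,n} = \wt{\pi}_n \circ [\pi^n_X/G]$ of morphisms of stacks $[(G^{n+1} \times X)/G] \to [X/G]$ for each $n \geq 0$. The key step is to verify this identity, which I would do by choosing the section $s_n \colon G^n \times X \to G^{n+1} \times X$ of $\pi^n_X$ given by $s_n(g_1,\dots,g_n,x) = (e, g_1, g_2 g_1, \dots, g_n g_{n-1}\cdots g_1, x)$ (a short calculation confirms $\pi^n_X \circ s_n = \id$), together with the representative of $\wt{\pi}_n$ corresponding to the choice $(i_1,\dots,i_n) = (1,2,\dots,n)$, which by~\eqref{eqn:Bar-EG0} equals $\pi \circ \pr_X$ for $\pr_X \colon G^n \times X \to X$ the projection onto $X$. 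The equality $\tau^n_X \circ s_n = \pr_X$ then shows that both $\wt{\tau}_{X,n} \circ [\pi^n_X/G]^{-1}$ and $\wt{\pi}_n$ are represented by $\pi \circ \pr_X$, establishing the desired identity.

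Finally, I would conclude by functoriality of $K'$-theory under pull-back: two morphisms of simplicial stacks that coincide level-wise induce the same pull-back on the associated cosimplicial $K'$-theory spectra, and hence the same map on their homotopy limits. The main subtlety is purely the notational bookkeeping of the different $G$-actions on $E^\bullet_G(X)$ and on $X^\bullet_G$, together with compatibility between the chosen section of $\pi^n_X$ and the chosen face-map representative of $\wt{\pi}_n$; no genuine obstacle arises, as the whole argument is formal descent along the free $G$-action on $E^\bullet_G(X)$.
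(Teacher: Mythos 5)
Your argument is correct and follows essentially the same route as the paper: both proofs reduce the claim to the level-wise identity $\pi \circ \tau_{X,n} = \pi \circ p^1_1 \circ \cdots \circ p^n_n \circ \pi_{X,n}$ of morphisms into $[X/G]$, verified by the same pointwise computation with the same choice $(i_1,\dots,i_n)=(1,\dots,n)$. The only cosmetic difference is that you verify the identity on $G^n \times X$ using the explicit section $s_n$ of the $G$-torsor $\pi^n_X$, whereas the paper verifies it after composing back to $G^{n+1}\times X$; the two checks are trivially equivalent.
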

\begin{proof}
  We need to show that for every $n \ge 0$, the diagram
  \begin{equation}\label{eqn:Simplicial-K-2}
    \xymatrix@C1pc{
      K'_G(X) \ar[dr]_-{\tau^*_{X,n}} \ar[r]^-{\wt{\pi}^*_n} &
      K'(G^n \times X) \ar[d]^-{\pi^*_{X,n}} \\
      & K'_G(G^{n+1} \times X)}
  \end{equation}
  commutes up to homotopy and is compatible with the pull-back maps between
  the $K$-theory spectra induced by the bonding maps of $E^\bullet_G(X)$ and
  $X^\bullet_G$.

To prove the latter claim, it suffices to show that 
  \begin{equation}\label{eqn:Simplicial-K-2-0}
    \xymatrix@C1pc{
      G^{n+1} \times X \ar[r]^-{\pi_{X,n}} \ar[d]_-{\tau_{X,n}} & G^n \times X
      \ar[d]^-{\wt{\pi}_n} \\
    X \ar[r]^-{\pi} & [X/G]}
  \end{equation}
  is a commutative diagram of flat maps which commutes with the
  bonding maps of $E^\bullet_G(X)$ and $X^\bullet_G$. Since
  $\wt{\pi}_n = \pi \circ p^{i_1}_1 \circ \cdots \circ p^{i_n}_n$
  does not depend on the
  choice of $(i_1, \ldots , i_n)$, it suffices to show for the
  commutativity of ~\eqref{eqn:Simplicial-K-2-0} that
  $\pi \circ p^{1}_1 \circ \cdots \circ p^{n}_n \circ \pi_{X,n} = \pi \circ
  \tau_{X,n}$. But this clear since we have
  $\tau_{X,n}(g_0, \ldots , g_{n}, x) = x$ and
  $p^{1}_1 \circ \cdots \circ p^{n}_n \circ \pi_{X,n}(g_0, \ldots , g_{n}, x) =
  g_0 x$ for any $(g_0, \ldots , g_n, x) \in G^{n+1} \times X$.
  The commutativity of
   ~\eqref{eqn:Simplicial-K-2-0} with the
  bonding maps of $E^\bullet_G(X)$ and $X^\bullet_G$ is checked similarly.
\end{proof}

The map $\tau^*_X$ is clearly $K(BG)$-linear.
Since $\pi^*_X$ is canonically a weak equivalence, the  $K(BG)$-module structure
of $K'(E^\bullet_G(X))$ endows the same on $K'(X^\bullet_G)$ with respect to which
$\pi^*_X$ is $K(BG)$-linear. It follows from \lemref{lem:Simplicial-K-0} that
$\wt{\pi}^*$ is $K(BG)$-linear. Since the $K(BG)$-module structures of
$K'_G(E^\bullet_G(X))$ and $K'(X^\bullet_G)$ factor through the canonical map
$K(BG) \to K(\pt)$ (cf. \cite[Lem.~8.1]{Krishna-Crelle}),
it follows that each $K'_G(G^{n+1} \times X)$ (resp. 
$K'(G^{n} \times X)$) is $I_G$-adically complete. In particular, $K'_G(E^\bullet_G(X))$
and $K'(X^\bullet_G)$ are $I_G$-adically complete by \propref{prop:Compln_D}(7).
It follows that ${\tau^*_X}$ and $\wt{\pi}^*$ factor through
\begin{equation}\label{eqn:Simplicial-K-3}
  \wt{\pi}^* \colon K'_G(X)^{\compl}_{I_G} \to K'(X^\bullet_G); \ \
  \tau^*_X \colon K'_G(X)^{\compl}_{I_G} \to K'_G(E^\bullet_G(X)).
\end{equation}

\vskip .2cm

\subsection{Key step: factorization of $\wt{\pi}^*$}\label{sec:Factor-main}
We shall now show the factorization of $\wt{\pi}^*$ into a composition of two maps.
We assume in this subsection that $k$ is any field, $G \in \Grp_k$ is special and
$X \in \Sm^G_k$.

We let $\rho = (V_i, U_i)_{i \ge 1}$ be an admissible gadget for $G$ and let
$F = \colim_i U_i \in \Msp(k)$. We let $\Delta_F \colon F \to E^\bullet_F$
denote the diagonal map (cf. proof of \propref{prop:Bar-ind-Gen}). The commutative
diagram of $G$-equivariant maps between simplicial ind-schemes
(where $p_X$ and $\pr_2$ are the projections)
\begin{equation}\label{eqn:AST-0}
   \xymatrix@C1pc{
     X \times F \ar[r]^-{\Delta_{F,X}} \ar[dr]_-{p_X} &
     X \times E^\bullet_F \ar[d]^-{\pr_2} \\
    & X}
\end{equation}
(where $\Delta_{F,X} = \id_X \times \Delta_F$)
induces a commutative diagram of spectra
\begin{equation}\label{eqn:AST-1}
  \xymatrix@C1pc{
    K_G(X) \ar[d]_-{\pr^*_2} \ar[dr]^-{p^*_X} & \\
    \holim_{([n], i) \in \Delta \times \N} K_G(X \times (U_i)^{n+1}) \ar[r]^-{\Delta^*_{F,X}} &
    \holim_{i\in \N} K_G(X \times U_i).}
  \end{equation}

Since each $X \stackrel{G}{\times} (U_i)^n$ is a smooth $k$-scheme by
\corref{cor:Mixed-quotient}, we see that $K_G(X \times (U_i)^{n})$ coincides with
$K\left(X \stackrel{G}{\times} (U_i)^n\right)$.
By ~\eqref{eqn:K-colim}, the above diagram
therefore gives rise to the commutative diagram
\begin{equation}\label{eqn:AST-2}
  \xymatrix@C1pc{
    K_G(X) \ar[d]_-{\pr^*_2} \ar[dr]^-{p^*_X} & \\
    K\left(X \stackrel{G}{\times} E^\bullet_F\right) \ar[r]^-{\Delta^*_{F,X}} &
    K\left(X \stackrel{G}{\times} F\right)}
  \end{equation}
of $K$-theories of smooth simplicial $k$-schemes. 

Our next task is to define a pull-back map
$K_G(X) \to K(X \stackrel{G}{\times} \sE_G)$ and relate it to
~\eqref{eqn:AST-2}. 
We fix $n \ge 0$. For every $(f \colon Y \to \sB^n_G)$ in $\int_{\sB^n_G}$, we let
$\pr_Y \colon X \times \sE^n_G(Y) \to X$ be the projection.
As this is a $G$-equivariant map between smooth schemes with $G$-actions,
we get the pull-back map
$\pr^*_Y \colon K_G(X) \to K_G(X \times \sE^n_G(Y))$. Since $G$ acts freely on
$X {\times} \sE^n_G(Y)$ and the quotient
$X \stackrel{G}{\times} \sE^n_G(Y)$ lies in $\Sm_k$ by \corref{cor:EG-quotient-2},
the pull-back map $K(X \stackrel{G}{\times} \sE^n_G(Y)) \to K_G(X \times \sE^n_G(Y))$
is a weak equivalence.
Since the projection maps $\pr_Y$ commute with all maps of the kind $\id_X \times \phi$
with $\phi \in \int'_{\sE^n_G}$, we get canonical maps 
\begin{equation}\label{eqn:K-simplex-12}
\pr^*_{1,n} \colon  K_G(X) \to \holim_{Y \in \int_{\sB^n_G}}
K\left(X \stackrel{G}{\times} \sE^n_G(Y)\right) \xleftarrow{\simeq}
K\left(X \stackrel{G}{\times} \sE^n_G\right),
\end{equation}
where the second map is a weak equivalence by \lemref{lem:Simplex-4} and
~\eqref{eqn:K-colim}.

If we now vary $n$ and apply \lemref{lem:K-simplex-4} to all face and degeneracy maps
(which are all $G$-equivariant) of $\sE_G$, we deduce that the maps $\pr^*_{1,n}$
commute with the maps induced on the $K$-theory spectra by the face and degeneracy
maps of $\sE_G$ as $n$ varies. By passing to the limit, ~\eqref{eqn:K-simplex-12}
therefore yields the maps of spectra
\begin{equation}\label{eqn:K-simplex-7}
\pr^*_{1} \colon  K_G(X) \to \holim_{[n] \in \Delta} \holim_{Y \in \int_{\sB^n_G}}
K\left(X \stackrel{G}{\times} \sE^n_G(Y)\right) \xleftarrow{\simeq}
\holim_{[n] \in \Delta} K\left(X \stackrel{G}{\times} \sE^n_G\right) 
\xleftarrow{\simeq} K\left(X \stackrel{G}{\times} \sE_G\right),
\end{equation}
where the last map is a weak equivalence by \lemref{lem:K-simplex-6} and
~\eqref{eqn:K-colim}.

Recall that $\phi^X_{F} \colon X \stackrel{G}{\times} E^\bullet_F \to
X \stackrel{G}{\times} \sE_G$ is the map induced on the quotients by the
$G$-equivariant map
$(\id_X \times \ov{\phi}_F) \colon X {\times} E^\bullet_F \to X {\times} \sE_G$
(see the proof of \propref{prop:Bar-ind-Gen})).
For any $n \ge 0$, we let
$\sC_{E^n_F}$ denote the category whose objects are the inclusions
$(U_i)^{n+1} \inj E^n_F$ for $i \ge 1$ and morphisms are the canonical inclusions
$(U_i)^{n+1} \inj (U_{i+1})^{n+1}$.

\begin{lem}\label{lem:K-simplex-8}
  The diagram
  \begin{equation}\label{eqn:K-simplex-9}
    \xymatrix@C1pc{
      K_G(X) \ar[r]^-{\pr^*_1} \ar[dr]_-{\pr^*_2} &
      K\left(X \stackrel{G}{\times} \sE_G\right)
      \ar[d]^-{(\phi^{X}_{F})^*} \\
      & K\left(X \stackrel{G}{\times} E^\bullet_F\right)}
  \end{equation}
  is commutative.
\end{lem}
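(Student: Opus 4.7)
The plan is to establish commutativity at each simplicial degree $n \ge 0$ by applying \lemref{lem:K-simplex-4}, and then pass to the homotopy limit over $[n] \in \Delta$ to obtain the commutativity of \eqref{eqn:K-simplex-9}.

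First, for each $n \ge 0$, I would set up the data of \lemref{lem:K-simplex-4} as follows: take $\sF_1 = E^n_F = F^{n+1}$ with $\sI_1 = \sC_{E^n_F}$, whose objects are the inclusions $(U_i)^{n+1} \inj E^n_F$; take $\sF_2 = \sE^n_G$ with $\sI_2 = \int'_{\sE^n_G}$; and take $d = \ov{\phi}^n_F$. The key verification is that $d$ sends $\sI_1$ into $\sI_2$, i.e., that every composite $(U_i)^{n+1} \inj E^n_F \xrightarrow{\ov{\phi}^n_F} \sE^n_G$ arises as the pullback of $\sE^n_G \to \sB^n_G$ along a map from a smooth $k$-scheme. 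By \corref{cor:EG-quotient-2}, the quotient $(U_i)^{n+1}/G$ is a smooth $k$-scheme, and the left Cartesian square of \eqref{eqn:motivic0} identifies $(U_i)^{n+1}$ with the pullback of the universal torsor $\sE^n_G \to \sB^n_G$ along the composite $(U_i)^{n+1}/G \inj B^n_F \xrightarrow{\phi_F} \sB^n_G$. The hypotheses of \lemref{lem:K-simplex-4} are therefore satisfied.

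With this data, \lemref{lem:K-simplex-4} yields a homotopy commutative outer diagram relating $K_G(X)$, $K\bigl(X \stackrel{G}{\times} \sE^n_G\bigr)$, and $K\bigl(X \stackrel{G}{\times} E^n_F\bigr)$, once we use \lemref{lem:Simplex-4}(3) and \eqref{eqn:K-colim} to identify the homotopy limits over $\sI_2$ and $\sI_1$ with the $K$-theory of the motivic colimits. Under these identifications, the maps $p^*_2$ and $p^*_1$ of \lemref{lem:K-simplex-4} become respectively $\pr^*_{1,n}$ as in \eqref{eqn:K-simplex-12} and the level-$n$ restriction of $\pr^*_2$ in \eqref{eqn:AST-1}, while the rightmost $\ov{d}^*$ becomes $(\ov{\phi}^n_F)^*_X$. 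This gives the desired homotopy commutative triangle
\[
\xymatrix@C1pc{
K_G(X) \ar[r]^-{\pr^*_{1,n}} \ar[dr]_-{\pr^*_{2,n}} & K\bigl(X \stackrel{G}{\times} \sE^n_G\bigr) \ar[d]^-{(\ov{\phi}^n_F)^*_X} \\
& K\bigl(X \stackrel{G}{\times} E^n_F\bigr).
}
\]

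Second, I would verify naturality of these level-$n$ triangles in $[n] \in \Delta$. All face and degeneracy maps of $\sE_G$ and $E^\bullet_F$ are $G$-equivariant, and the constructions of $\pr^*_1$, $\pr^*_2$, and $(\phi^X_F)^*$ in \eqref{eqn:K-simplex-7} and \eqref{eqn:AST-1} are by design functorial in $[n]$. Since \lemref{lem:K-simplex-4} is itself natural in its input data, the level-$n$ triangles assemble into a cosimplicial diagram of homotopy commutative triangles, and passing to homotopy limits over $\Delta$ yields \eqref{eqn:K-simplex-9}. The hard part will be the bookkeeping in the first step: namely, tracking the identifications between the mapping-spectrum object $K(X \stackrel{G}{\times} \sE^n_G)$ and the approximating limit $\holim_{Y \in \int_{\sB^n_G}} K(X \stackrel{G}{\times} \sE^n_G(Y))$, and checking that they intertwine $p^*_2$ of \lemref{lem:K-simplex-4} with $\pr^*_{1,n}$ of \eqref{eqn:K-simplex-12}. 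Once these identifications are in place, the rest of the argument is formal.
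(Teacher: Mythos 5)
Your proposal is correct and follows the same route as the paper: reduce to commutativity at each simplicial level $n$, apply \lemref{lem:K-simplex-4} with $\sF_1 = E^n_F$, $\sI_1 = \sC_{E^n_F}$, $\sF_2 = \sE^n_G$, $\sI_2 = \int'_{\sE^n_G}$, $d = \ov{\phi}_{F,n}$, and then pass to the homotopy limit over $\Delta$ using compatibility with the cosimplicial structure. You additionally spell out why $\ov{\phi}_{F,n}$ carries $\sC_{E^n_F}$ into $\int'_{\sE^n_G}$ via the Cartesian square \eqref{eqn:motivic0} and \corref{cor:EG-quotient-2}, a verification the paper takes as understood.
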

\begin{proof}
  We let $n \ge 0$ and let $d_i \colon [n] \to [n+1]$ be any face map.
  We consider the diagram
  \begin{equation}\label{eqn:K-simplex-10}
    \xymatrix@C1pc{
      K\left(X \stackrel{G}{\times} \sE^n_G\right) \ar[rr]^{(\phi^{X}_{F,n})^*}
      \ar[ddd]_-{d^*_i} & &
      K\left(X \stackrel{G}{\times} E^n_F\right) \ar[ddd]^-{d^*_i} \\
      & K_G(X) \ar[ul]^-{\pr^*_{1,n}} \ar[ur]_-{\pr^*_{2,n}} \ar@{=}[d] & \\
      & K_G(X) \ar[dl]_-{\pr^*_{1,n+1}} \ar[dr]^-{\pr^*_{2,n+1}} & \\
   K\left(X \stackrel{G}{\times} \sE^{n+1}_G\right) \ar[rr]^{(\phi^{X}_{F,n+1})^*} & &
   K\left(X \stackrel{G}{\times} E^{n+1}_F\right).}
  \end{equation}

The top and the bottom triangles in the middle commute by \lemref{lem:K-simplex-4}
  and we showed above in the construction of $\pr^*_{1}$ and $\pr^*_2$ that the left
  and the right faces of ~\eqref{eqn:K-simplex-10} commute. The same holds for
  any degeneracy map $s_i \colon [n+1] \to [n]$. Since ~\eqref{eqn:K-simplex-9} is
  obtained by taking
  the cosimplicial limit of the triangles in the middle of ~\eqref{eqn:K-simplex-10},
  it suffices to show that the diagram
  \begin{equation}\label{eqn:K-simplex-11}
    \xymatrix@C1pc{
      K_G(X) \ar[r]^-{\pr^*_{1,n}} \ar[dr]_-{\pr^*_{2,n}} &
      K\left(X \stackrel{G}{\times} \sE^n_G\right)
      \ar[d]^-{(\phi^{X}_{F,n})^*} \\
      & K\left(X \stackrel{G}{\times} E^n_F\right)}
  \end{equation}
  is commutative for every $n \ge 0$.
  But this easily follows by applying \lemref{lem:K-simplex-4} with
  $\sF_1 = E^n_F, \ \sF_2 = \sE^n_G, \ \sI_1 = \sC_{E^n_F}$, $\sI_2 = \int'_{\sE^n_G}$ and
  $d = \ov{\phi}_{F,n}$.
 \end{proof}

Let $\ov{\phi} \colon E^\bullet_G \to \sE_G$ be the $G$-equivariant map
induced on the $G$-torsors by $\phi \colon B^\bullet_G \to \sB_G$ and let
$\ov{\phi}_X = (\id_X, \ov{\phi}) \colon X \times  E^\bullet_G \to  X \times \sE_G$.
We now note that $\tau^*_X \colon K_G(X) \to K(X \stackrel{G}{\times} E^\bullet_G)$
is defined similarly to $\pr^*_2 \colon  K_G(X) \to
K(X \stackrel{G}{\times} E^\bullet_F)$ where we replace $F$ by $G$
(cf, ~\eqref{eqn:Simplicial-K-1}).
By repeating the argument of \lemref{lem:K-simplex-8} with $F$ replaced
by $G$ (and $\sC_{E^n_F}$ replaced by the category having one object
$G^{n+1}$ and the identity map), we get the following.
 
\begin{lem}\label{lem:K-simplex-13}
  The diagram
  \begin{equation}\label{eqn:K-simplex-14}
    \xymatrix@C1pc{
      K_G(X) \ar[r]^-{\pr^*_1} \ar[dr]_-{\tau^*_X} &
      K\left(X \stackrel{G}{\times}\sE_G\right)
      \ar[d]^-{\ov{\phi}^*_X} \\
      & K\left(X \stackrel{G}{\times} E^\bullet_G\right)}
  \end{equation}
  is commutative.
\end{lem}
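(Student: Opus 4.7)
The plan is to follow the proof of Lemma~\ref{lem:K-simplex-8} essentially verbatim, with the ind-sheaf $F = \colim_i U_i$ replaced throughout by the $k$-group $G$ (viewed as a constant discrete sheaf), and with the indexing category $\sC_{E^n_F}$, which encoded the approximation of $E^n_F$ by its ind-stages $(U_i)^{n+1}$, replaced by the terminal category whose unique object is the identity map $\id \colon G^{n+1} \to E^n_G$. Both $\tau^*_X$ and $\pr^*_1$ are, by construction, cosimplicial limits over $[n] \in \Delta$ of maps defined level-wise, so the first reduction is the exact analogue of diagram~\eqref{eqn:K-simplex-10}: the cosimplicial compatibilities on the left face hold by the construction of $\pr^*_1$ (already checked inside the proof of Lemma~\ref{lem:K-simplex-8}), while those on the right face hold because $\tau^*_X$ is pull-back along the strictly simplicial projection $\tau_X \colon E^\bullet_G(X) \to X$.

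It therefore suffices to verify, for each $n \ge 0$, the commutativity of the level-wise triangle
\[
\xymatrix@C1pc{
K_G(X) \ar[r]^-{\pr^*_{1,n}} \ar[dr]_-{\tau^*_{X,n}} & K\!\left(X \stackrel{G}{\times} \sE^n_G\right) \ar[d]^-{\ov{\phi}^*_{X,n}} \\
& K\!\left(X \stackrel{G}{\times} E^n_G\right).}
\]
For this I would apply Lemma~\ref{lem:K-simplex-4} with $\sF_1 = E^n_G$, $\sF_2 = \sE^n_G$, with $\sI_1$ the trivial subcategory whose only object is $\id \colon G^{n+1} \to E^n_G$, with $\sI_2 = \int'_{\sE^n_G}$, and with $d = \ov{\phi}_n \colon E^n_G \to \sE^n_G$. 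In this setup the hypotheses of Lemma~\ref{lem:K-simplex-4} hold trivially on the $\sI_1$ side, the horizontal weak equivalences in~\eqref{eqn:K-simplex-5} identify $p^*_2$ with $\pr^*_{1,n}$ and $\ov{d}^*$ with $\ov{\phi}^*_{X,n}$, and the map $p^*_1$ collapses to pull-back along the single projection $X \times G^{n+1} \to X$, which is exactly $\tau^*_{X,n}$.

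The only real obstacle is bookkeeping: one must identify $\tau^*_{X,n}$ with the map produced by Lemma~\ref{lem:K-simplex-4} on the trivial indexing category, and check that the cosimplicial compatibilities in the analogue of~\eqref{eqn:K-simplex-10} really do collapse to the desired triangle in the limit. Both are immediate from the definitions; no new geometric input is required beyond Lemmas~\ref{lem:K-simplex-4} and~\ref{lem:K-simplex-6}, so the proof should be a couple of lines in its final form.
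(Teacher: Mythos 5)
Your proposal is exactly what the paper does: it explicitly states that Lemma~\ref{lem:K-simplex-13} follows by repeating the argument of Lemma~\ref{lem:K-simplex-8} with $F$ replaced by $G$ and $\sC_{E^n_F}$ replaced by the one-object category $\{G^{n+1} \xrightarrow{\id} E^n_G\}$, which is precisely the substitution you carry out. Your fleshed-out application of Lemma~\ref{lem:K-simplex-4} (with $\sF_1 = E^n_G$, $\sF_2 = \sE^n_G$, $\sI_1$ trivial, $\sI_2 = \int'_{\sE^n_G}$, $d = \ov{\phi}_n$) and your bookkeeping check that $p^*_1$ collapses to $\tau^*_{X,n}$ are both correct and faithfully reproduce the intended argument.
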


\subsection{Main result for special groups}\label{sec:Thom-integral}
We let $k$ be any field and $G$ any $k$-group.
We shall use the following lemma in the proofs of our main results.

\begin{lem}\label{lem:Simplicial-sheaf-0}
  Given a proper map $f \colon Z \to X$ and a flat
  map $g \colon U \to X$ in $\Spc^G_k$, the diagram of spectra
  \begin{equation}\label{eqn:Simplicial-sheaf-1}
    \xymatrix@C1pc{
      K'_G(Z) \ar[r]^-{f_*} \ar[d]_-{\wt{\pi}^*_Z} &
      K'_G(X) \ar[r]^-{g^*} \ar[d]^-{\wt{\pi}^*_X} &  K'_G(U) \ar[d]^-{\wt{\pi}^*_U} \\
      K'(Z^\bullet_G) \ar[r]^-{f_*} &  K'(X^\bullet_G) \ar[r]^-{g^*} &  K'(U^\bullet_G)}
  \end{equation}
  is commutative.
\end{lem}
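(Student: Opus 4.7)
The plan is to verify commutativity level by level in the cosimplicial diagram defining $K'(X^\bullet_G)$, and then to pass to the homotopy limit. Recall that $\wt{\pi}^*_X$ is assembled from the smooth maps $\wt{\pi}_{X,n} \colon G^n \times X \to [X/G]$, and that $f_* \colon K'(Z^\bullet_G) \to K'(X^\bullet_G)$ and $g^* \colon K'(X^\bullet_G) \to K'(U^\bullet_G)$ are defined as the homotopy limits of the levelwise maps $(\id \times f)_*$ and $(\id \times g)^*$, which exist since $\id \times f$ is proper and $\id \times g$ is flat.

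The key geometric input is that for every $n \ge 0$ one has a $2$-Cartesian square of stacks
\[
\xymatrix@C2pc{
G^n \times Z \ar[r]^-{\id \times f} \ar[d]_-{\wt{\pi}_{Z,n}} & G^n \times X \ar[d]^-{\wt{\pi}_{X,n}} \\
[Z/G] \ar[r]^-{f} & [X/G],}
\]
and its analogue with $(Z,f)$ replaced by $(U,g)$. Indeed, $\pi \colon X \to [X/G]$ pulls $f \colon [Z/G] \to [X/G]$ back to $f \colon Z \to X$, and then the flat projection $G^n \times X \to X$ pulls this back to $\id \times f$. Since $\wt{\pi}_{X,n}$ is smooth, the flat base change theorem for $K'$-theory (\cite[Prop.~3.18]{TT}) gives the commutativity of the levelwise squares
\[
\xymatrix@C2pc{
K'_G(Z) \ar[r]^-{f_*} \ar[d]_-{\wt{\pi}_{Z,n}^*} & K'_G(X) \ar[r]^-{g^*} \ar[d]^-{\wt{\pi}_{X,n}^*} & K'_G(U) \ar[d]^-{\wt{\pi}_{U,n}^*} \\
K'(G^n \times Z) \ar[r]^-{(\id \times f)_*} & K'(G^n \times X) \ar[r]^-{(\id \times g)^*} & K'(G^n \times U),}
\]
where the right square commutes by functoriality of flat pull-back applied to a Cartesian square of flat morphisms.

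Next one checks that these levelwise squares are compatible with the face and degeneracy maps of the bar construction. The face maps $p^i_n$ are smooth projections, and the degeneracy maps $s^i_n$ are regular closed immersions obtained by inserting the identity of $G$; in both cases the resulting rectangles remain Cartesian (tor-independent), so flat base change again produces the required homotopies between the two ways of traversing a prism. The main technical nuisance is bookkeeping these coherences for all faces and degeneracies simultaneously, but since all maps in sight are representable and of finite Tor-dimension over smooth simplicial objects, the standard presentation of $K'$ as a functor on the appropriate category of morphisms (\cite[\S~3]{TT}, \cite[Lem.~5.3]{Krishna-Crelle}) delivers them.

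Finally, taking the homotopy limit of the above levelwise squares over $[n] \in \Delta$ and invoking the definitions of $\wt{\pi}_X^*$, $f_*$, and $g^*$ on $K'(X^\bullet_G)$ as the corresponding homotopy limits produces the commutative diagram \eqref{eqn:Simplicial-sheaf-1}. The only real step is the Cartesian square above together with flat base change; everything else is a formal compatibility argument.
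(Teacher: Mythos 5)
Your proposal is correct and follows essentially the same route as the paper's proof: both reduce to the levelwise Cartesian square $G^n \times Z \to G^n \times X$ over $[Z/G] \to [X/G]$ (the paper states this once for the full simplicial object, while you verify it at each level), and both handle the $f_*$ square by flat/smooth base change in $K'$-theory and the $g^*$ square by functoriality of pull-back. The paper cites \cite[Prop.~3.8]{Khan-JJM} for the base change and \cite[Lem.~5.3]{Krishna-Crelle} for the definition of $f_*$ on the bar construction, while you cite \cite[Prop.~3.18]{TT}; this is a cosmetic difference. One minor overstatement: for the right square you invoke ``flat base change applied to a Cartesian square,'' but since all four arrows there are pull-backs, commutativity of the geometric square (not Cartesianness) is all that is needed — the paper's phrasing reflects this.
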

\begin{proof}
  We first note that $f_*$ on the bottom level
  is defined by \cite[Lem.~5.3]{Krishna-Crelle}.
  The commutativity of the right square is clear because all face and
  degeneracy maps of the bar
  constructions are all smooth or regular closed immersions.
  To see that the left square is also commutative,
  we only need to observe that the square
  \begin{equation}\label{eqn:Simplicial-sheaf-2}
    \xymatrix@C1pc{
      Z^\bullet_G \ar[r]^-{f} \ar[d]_-{\wt{\pi}_Z} & X^\bullet_G \ar[d]^-{\wt{\pi}_X} \\
      [Z/G] \ar[r]^-{f} & [X/G]}
  \end{equation} is levelwise Cartesian in which the vertical arrows are levelwise
  smooth and the horizontal arrows are levelwise proper.
  The desired commutativity therefore follows from \cite[Prop.~3.8]{Khan-JJM}.
\end{proof}

We are now ready to prove the main result of this section.

\begin{thm}\label{thm:K-simplex-15}
 Assume that $G$ is special. Then for any $X \in \Spc^G_k$, the pull-back map
  $\wt{\pi}^* \colon K'_G(X) \to K'(X^\bullet_G)$ induces a weak equivalence of spectra
  \begin{equation}\label{eqn:K-simplex-16}
    \wt{\pi}^* \colon K'_G(X)^{\compl}_{I_G} \xrightarrow{\simeq} K'(X^\bullet_G).
  \end{equation}
  \end{thm}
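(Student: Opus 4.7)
The strategy is to reduce to $X \in \Sm^G_k$ by Noetherian induction, and then for smooth $X$ to identify $\wt{\pi}^*$ (via the web of maps built in \S\S\ref{sec:Bar}--\ref{sec:Factor-main}) with the Atiyah--Segal map $\Phi_{[X/G]}$ of \thmref{thm:AS-Main}.

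For the reduction, the key input is that both sides of \eqref{eqn:K-simplex-16} admit equivariant localization fibre sequences compatible with $\wt{\pi}^*$. On the source, Thomason's localization theorem gives a cofibre sequence $K'_G(Z) \to K'_G(X) \to K'_G(U)$ for any $G$-invariant closed $Z \subset X$ with open complement $U$, and applying $(-)^{\compl}_{I_G}$ preserves this sequence because derived completion is a homotopy limit (\propref{prop:Compln_D}(7)) and a homotopy limit of cofibres is a cofibre. On the target, the ordinary localization sequences for $G^n \times Z \inj G^n \times X \hookleftarrow G^n \times U$ give a cosimplicial fibre sequence whose totalization is the required sequence; compatibility with $\wt{\pi}^*$ is \lemref{lem:Simplicial-sheaf-0}. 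Combined with nil-invariance of $K'$-theory and \lemref{lem:Open-quasi-proj} (which yields a $G$-invariant regular quasi-projective dense open in any reduced $X$), Noetherian induction reduces the theorem to $X \in \Sm^G_k$.

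For smooth $X$, I will use \lemref{lem:Simplicial-K-0} to rewrite $\wt{\pi}^* = (\pi^*_X)^{-1} \circ \tau^*_X$, since $\pi^*_X$ is a levelwise (hence global) weak equivalence. By \lemref{lem:K-simplex-13}, $\tau^*_X = \ov{\phi}^*_X \circ \pr^*_1$; the map $\ov{\phi}_X$ is a simplicial weak equivalence of motivic spaces (proof of \propref{prop:Bar-ind-Gen}), and so $\ov{\phi}^*_X$ is a weak equivalence because the $K$-theory functor on $\Msp(k)$, being represented by $KGL_k \in \sS\sH(k)$, inverts simplicial weak equivalences. It therefore suffices to prove that
\[
  \pr^*_1 \colon K'_G(X)^{\compl}_{I_G} \to K'(X \stackrel{G}{\times} \sE_G)
\]
is a weak equivalence. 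Fixing an admissible gadget $\rho = (V_i, U_i)_{i\ge 1}$ for $G$ (which exists by \lemref{lem:Adm-gadget}(3)) and putting $F = \colim_i U_i$, diagram \eqref{eqn:AST-2} together with \lemref{lem:K-simplex-8} yields a factorization
\[
  \Delta^*_{F,X} \circ (\phi^X_F)^* \circ \pr^*_1 \;\simeq\; p^*_X,
\]
where $p_X \colon X \times F \to X$ is the projection. Under the identifications $K'(X \stackrel{G}{\times} F) \simeq K'(X_G(\rho)) \simeq K'(X_G)$ of \corref{cor:Bar-ind-Gen-2} and \defref{defn:K-thry-Borel}, the map $p^*_X$ is precisely the Atiyah--Segal map $\Phi_{[X/G]}$ of \propref{prop:AS-map-0}, which induces a weak equivalence on $I_G$-completions by \thmref{thm:AS-Main}. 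Meanwhile $(\phi^X_F)^*$ is a weak equivalence because $\phi^X_F$ is a simplicial weak equivalence (proof of \propref{prop:Bar-ind-Gen}), and $\Delta^*_{F,X}$ is a weak equivalence because the map $u_X$ induced on quotients is an $\A^1$-weak equivalence (same proof) while motivic-space $K$-theory is $\A^1$-invariant. Combining, $\pr^*_1$ induces a weak equivalence on $I_G$-completions, finishing the argument.

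The main obstacle is keeping track of how each intermediate map, initially defined only as a morphism of simplicial schemes or motivic spaces, actually induces the expected pullback on $K$-theory spectra and agrees under all the identifications with the classical equivariant pullback; this is precisely the purpose of the auxiliary \lemref{lem:K-simplex-1}--\lemref{lem:K-simplex-13}. A secondary technical point is that $(-)^{\compl}_{I_G}$ respects the equivariant localization cofibre sequence, which is subsumed in \propref{prop:Compln_D}(7).
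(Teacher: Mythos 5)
Your proposal is correct and takes essentially the same route as the paper: a Noetherian induction using localization sequences (via \lemref{lem:Simplicial-sheaf-0} and \lemref{lem:Open-quasi-proj}) reduces to the smooth $G$-quasi-projective case, which is then settled by assembling the big commutative web of \eqref{eqn:AST-2}, \lemref{lem:Simplicial-K-0}, \lemref{lem:K-simplex-8}, \lemref{lem:K-simplex-13}, identifying $p^*_X$ with the Atiyah--Segal map $\Phi_{[X/G]}$ and invoking \thmref{thm:AS-Main}. The only (harmless) slips are cosmetic: the target of $\pr^*_1$ is a motivic space so one should write $K(X \stackrel{G}{\times} \sE_G)$ rather than $K'$, and the reason $(-)^{\compl}_{I_G}$ preserves fibre sequences is exactness of the completion functor (it is not literally a homotopy limit, though your appeal to \eqref{eqn:I-comp-0} gives a homotopy-limit description from which exactness follows); neither affects the argument.
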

\begin{proof}
  We first reduce the proof of the theorem to the case where $X$ is a smooth
  quasi-projective $k$-scheme using Noetherian induction.
  If $X$ is a $G$-quasi-projective $k$-scheme,
  we can find a $G$-equivariant closed embedding
  $\iota \colon X \inj X'$, where $X'$ is smooth and $G$-quasi-projective.
  We let $j \colon U \inj X'$ be the open complement
  and consider the diagram of spectra
\begin{equation}\label{eqn:K-simplex-16-0}
  \xymatrix@C1pc{
    K'_G(X)^{\compl}_{I_G} \ar[r]^-{\iota_*} \ar[d]_-{\wt{\pi}^*_X} &
    K'_G(X')^{\compl}_{I_G} \ar[r]^-{j^*} \ar[d]^-{\wt{\pi}^*_{X'}} &
    K'_G(U)^{\compl}_{I_G} \ar[d]^-{\wt{\pi}^*_U} \\
    K'(X^\bullet_G) \ar[r]^-{\iota_*} & K'(X'^\bullet_G) \ar[r]^-{j^*} &
    K'(U^\bullet_G)}
  \end{equation}
which is commutative by \lemref{lem:Simplicial-sheaf-0}.

The top row of the above diagram is a homotopy fiber
sequence because it is obtained by
applying the derived completion functor to the localization sequence of equivariant
$K'$-theory. The bottom row is a homotopy fiber sequence because it is obtained by
applying the homotopy limit functor to the localization sequences of the ordinary
$K'$-theory. The middle and the right vertical arrows are weak equivalences
if the theorem is true for smooth $G$-quasi-projective $k$-schemes. It follows that
${\wt{\pi}^*_X}$ is a weak equivalence.

We now suppose that $X$ is an arbitrary object of $\Spc^G_k$. We can assume $X$ to be
reduced. By
  \lemref{lem:Open-quasi-proj}, we can find a $G$-invariant dense open subspace
  $U \subset X$ which is a $G$-quasi-projective $k$-scheme.
  In particular, the assertion is true for $U$ as shown above.
  We now let $Z = X \setminus U$
  and repeat the above argument using the localization sequence and Noetherian
  induction to conclude that ${\wt{\pi}^*_X}$ is a weak equivalence. It remains now
  to show that the theorem is true when $X$ is a smooth and $G$-quasi-projective
  $k$-scheme. We assume this to be the case in the rest of the proof
  and proceed as follows.

We look at the diagram
    \begin{equation}\label{eqn:K-simplex-17}
      \xymatrix@C2pc{
        & & K_G(X) \ar[d]^-{\pr^*_1} \ar[dr]^-{\tau^*_X} \ar[dl]^-{\pr^*_2}
        \ar[dll]_-{p^*_X} \ar[r]^-{\wt{\pi}^*} &  K(X^\bullet_G) \ar[d]^-{\pi^*_X} \\
        K\left(X \stackrel{G}{\times} F\right) &
        K\left(X \stackrel{G}{\times} E^\bullet_F\right)
        \ar[l]^-{\Delta^*_{F,X}} & K\left(X  \stackrel{G}{\times} \sE_G\right)
        \ar[l]^-{(\phi^X_{F})^*}
        \ar[r]_-{\ov{\phi}^*_X} &  K_G\left(E^\bullet_G(X)\right).}
      \end{equation}
    Using  ~\eqref{eqn:AST-2} together with Lemmas~\ref{lem:Simplicial-K-0},
    ~\ref{lem:K-simplex-8} and ~\ref{lem:K-simplex-13}, it follows that this
    diagram is commutative.

 We next note that all arrows in the above diagram are induced by $G$-equivariant
    maps between schemes with
    $G$-actions where $G$ acts trivially on each of the spaces
    $X \stackrel{G}{\times} F,  \ X \stackrel{G}{\times} E^n_F, \
    X \stackrel{G}{\times} \sE^n_G$ and $X \stackrel{G}{\times} E^n_G$.
    In particular, all spectra on the bottom row are homotopy limits of
    $I_G$-complete $K(BG)$-module spectra. 
It follows from \lemref{lem:Noether-Rep} and \propref{prop:Compln_D}(7) that
    each of the spectra on the bottom row is $I_G$-complete. By
    ~\eqref{eqn:Simplicial-K-3}, the same holds for $K(X^\bullet_G)$. We thus get a
    commutative diagram of $I_G$-complete spectra 
\begin{equation}\label{eqn:K-simplex-18}
      \xymatrix@C2pc{
        & & K_G(X)^{\compl}_{I_G} \ar[d]^-{\pr^*_1} \ar[dr]^-{\tau^*_X} \ar[dl]^-{\pr^*_2}
        \ar[dll]_-{p^*_X} \ar[r]^-{\wt{\pi}^*} &  K(X^\bullet_G) \ar[d]^-{\pi^*_X} \\
        K\left(X \stackrel{G}{\times} F\right) &
        K\left(X \stackrel{G}{\times} E^\bullet_F\right)
        \ar[l]^-{\Delta^*_{F, X}} & K\left(X  \stackrel{G}{\times} \sE_G\right)
        \ar[l]^-{(\phi^X_{F})^*} \ar[r]_-{\ov{\phi}^*_X} &  K_G(E^\bullet_G(X)).}
      \end{equation}

The map $\pi^*_X$ is a weak equivalence by ~\eqref{eqn:Simplicial-sheaf-0}.
The maps $(\phi^X_{F})^*$ and $\ov{\phi}^*_X$ are weak equivalences by
  ~\eqref{eqn:Bar-ind-Gen-1}.
It follows from the proof of \propref{prop:Bar-ind-Gen} that $\Delta^*_{F,X}$ is a weak
equivalence. Finally, we note that $p^*_X$ is the same map as the Atiyah-Segal
completion map $\Phi_{[X/G]}$ of ~\eqref{eqn:AS-map-1}. In particular, it is a
weak equivalence by \thmref{thm:AS-Main}. We conclude that $\wt{\pi}^*$ is a weak
equivalence. This finishes the proof of the theorem.
\end{proof}

\begin{remk}\label{remk:Bott-Thom}
  Using the fact that the Bott-inverted ordinary $K$-theory of smooth $k$-schemes
  is representable in $\sS\sH(k)$ (e.g., see \cite{BEO}),
  one easily checks from its proof that \thmref{thm:K-simplex-15} also
  holds for the Bott-inverted equivariant $K'$-theory with finite coefficients
  (prime to $\Char(k)$) if $k$ contains all roots of unity.
  \end{remk}

\section{Thomason's problem for other groups}\label{sec:Thom-Gen**}
In this section, we shall extend \thmref{thm:K-simplex-15} to other $k$-groups.
For equivariant $K$-theory of spaces with group actions, this is usually
done by reducing to the case of special groups via Morita equivalence.
However, one can not expect a version of Morita equivalence for the integral
$K$-theory of the bar construction on account of its failure to
satisfy {\'e}tale descent. We shall show however that Morita equivalence holds for
the rationalized $K$-theory of the bar construction and use this to deduce 
\thmref{thm:K-simplex-15} for other groups. Note that  one can't expect
\thmref{thm:K-simplex-15} to hold for all $k$-groups with integral coefficients
due to the failure of Galois descent for $K$-theory of fields.

\subsection{Morita equivalence for $K$-theory of bar construction}
\label{sec:Bar-Morita}
Let $k$ be a field and $G$ a $k$-group. We let $p \colon G \times Y \to Y$ be the
  trivial $G$-torsor over $Y \in \Spc_k$ and let $X = G \times Y$. We let $X^\bullet_G$
  denote the bar construction for the $G$-action on $X$ where $G$ acts on itself
  by left multiplication and trivially on $Y$. The projection map $p$
  defines an augmentation $p \colon X^\bullet_G \to Y$ of the simplicial algebraic
  space $X^\bullet_G$. We refer the reader to \cite{BKR} for the homotopic terminology
  that we shall use about simplicial objects in any category.

\begin{lem}\label{lem:K-simplex-19}
    There is a simplicial morphism $q \colon Y \to X^\bullet_G$ such that $p \circ q =
    \id_Y$ and $q \circ p$ is homotopic to $\id_{X^\bullet_G}$.
  \end{lem}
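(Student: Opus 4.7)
The plan is to identify $X^\bullet_G$ with the \u{C}ech nerve of $p\colon X \to Y$ and then invoke the classical fact that the \u{C}ech nerve of a morphism admitting a section is a split augmented simplicial object, hence simplicially contractible over its target.

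First, I would define $q_n \colon Y \to X^n_G = G^n \times X$ at simplicial degree $n \ge 0$ by
\[
q_n(y) = (\underbrace{e, \ldots, e}_{n}, (e, y)),
\]
where $e$ is the identity of $G$ and we use $X = G \times Y$. A direct verification from the explicit formulas \eqref{eqn:Bar-EG0} for the face maps (and the analogous formulas for the degeneracies recalled in \S\ref{sec:Problem*}), together with the fact that $G$ acts trivially on the $Y$-factor of $X$, shows that $\{q_n\}_{n \ge 0}$ assembles into a simplicial morphism from the constant simplicial space $Y$ to $X^\bullet_G$. The identity $p \circ q = \id_Y$ is then immediate, since each $q_n$ is a section of the projection $p_n \colon G^{n+1} \times Y \to Y$.

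Next, I would record the canonical $G$-equivariant simplicial isomorphism
\[
\Phi_n \colon G^n \times X \xrightarrow{\cong} \underbrace{X \times_Y X \times_Y \cdots \times_Y X}_{(n+1)\text{ factors}}, \qquad (g_1, \ldots, g_n, x) \mapsto (x, g_1 x, g_2 g_1 x, \ldots, g_n \cdots g_1 x),
\]
identifying $X^\bullet_G$ with the \u{C}ech nerve of $p$ compatibly with the augmentations to $Y$. Under $\Phi$, the map $q$ corresponds to the diagonal embedding associated with the section $s \colon Y \to X$, $s(y)=(e,y)$, of $p$.

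Finally, to obtain the homotopy $q \circ p \simeq \id_{X^\bullet_G}$, I would invoke the standard fact that the augmented \u{C}ech nerve of any morphism $p \colon X \to Y$ admitting a section $s$ is a split augmented simplicial object, via the extra degeneracy
\[
s_{-1} \colon \underbrace{X \times_Y \cdots \times_Y X}_{(n+1)} \longrightarrow \underbrace{X \times_Y \cdots \times_Y X}_{(n+2)}, \qquad (x_0, \ldots, x_n) \mapsto (s(p(x_0)), x_0, \ldots, x_n).
\]
Such a splitting supplies a canonical simplicial homotopy between $\id_{X^\bullet_G}$ and $q \circ p$ (cf.~\cite[\S~8.4]{Weibel-HALG}). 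Only the simpliciality checks for $q$ and $\Phi$ require direct computation; everything else is purely formal, so I do not anticipate any serious obstacle.
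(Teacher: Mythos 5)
Your argument is correct and is essentially the paper's own proof in different clothing: after identifying $X^\bullet_G$ with the \u{C}ech nerve of $p$, your extra degeneracy $s_{-1}(x_0,\ldots,x_n) = (s(p(x_0)), x_0, \ldots, x_n)$ translates under $\Phi$ precisely to the paper's explicit $s_n(g_1,\ldots,g_n,g_0,y) = (g_0,g_1,\ldots,g_n,e,y)$, and both arguments conclude by the standard fact that a split augmented simplicial object is simplicially contractible (cited as Weibel by you, as \cite[Thm.~4.3]{BKR} in the paper).
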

\begin{proof}
    Letting $q_n(y) = (e,y)$ where $e$ is the identity element of $G^{n+1}$, it is
    clear that $q$ is a simplicial map such that $p \circ q = \id_Y$ and
    $q_n = (s^0)^n \circ q_0$ for $n \ge 0$, where $s^0$ is the zeroth degeneracy map
    of $X^\bullet_G$. To construct
    a simplicial homotopy between $q \circ p$ and $\id_{X^\bullet_G}$, we note that
  \[
    \begin{tikzcd}
    G \times X \ar[r,shift left=.75ex,"p^1_1"]
    \ar[r,shift right=.75ex,swap,"p^0_1"] & X \ar[r,"p_0"] & Y
    \end{tikzcd}
    \]
    is a coequalizer diagram. By \cite[Thm.~4.3]{BKR} therefore, it suffices to
    show that $X^\bullet_G$ admits an extra degeneracy. But we leave it for the reader
    to check that the map
    \begin{equation}\label{eqn:K-simplex-20}
      s_n \colon G^n \times X \cong G^n \times (G \times Y) \to G^{n+1} \times
      (G \times Y) \cong G^{n+1} \times X;
    \end{equation}
    \[
    \ \ s_n(g_1, \ldots , g_n, g_0,y) =
      (g_0, g_1, \ldots , g_n, e, y); \ n \ge 0
    \]
    is the required extra degeneracy. 
\end{proof}
    
\begin{lem}\label{lem:K-simplex-21}
   Let $p \colon X = G \times Y \to Y$ be the trivial $G$-torsor as above. Then the
   flat pull-back $p^* \colon K'(Y) \to \holim_{[n] \in \Delta}
   K'(G^n \times X)$ is a weak equivalence.
 \end{lem}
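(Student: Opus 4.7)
The plan is to apply the contravariant functor $K'$ to the augmented simplicial scheme $p \colon X^\bullet_G \to Y$ and exploit the extra degeneracy constructed in the proof of \lemref{lem:K-simplex-19}. I first check that $K'$ assembles into a genuine cosimplicial spectrum: every face map of $X^\bullet_G$ is smooth (a composition of a group multiplication with a projection), every degeneracy is a regular closed immersion (insertion of $e \in G$), the levelwise components of the section $q$ are regular closed immersions, and the explicit extra degeneracy
\[
s_n \colon G^n \times X \to G^{n+1} \times X, \quad (g_1,\dots,g_n, g_0, y) \mapsto (g_0, g_1, \dots, g_n, e, y)
\]
from the proof of \lemref{lem:K-simplex-19} is also a regular closed immersion. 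Each of these morphisms admits a well-defined pullback on pseudo-coherent complexes (cf.\ \cite[\S~2.5.1]{TT}), so $[n] \mapsto K'(G^n \times X)$ is a cosimplicial spectrum, $p^* \colon K'(Y) \to K'(X^\bullet_G)$ is an augmentation, $q^*$ is a levelwise retraction of $p^*$, and $s_n^*$ supplies an extra codegeneracy.

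The upshot is that the augmented cosimplicial spectrum $K'(Y) \to K'(X^\bullet_G)$ is \emph{split}. By the standard cosimplicial dualization of the elementary fact that an augmented simplicial object admitting an extra degeneracy is augmentation-contractible, the homotopy limit of a split augmented cosimplicial spectrum coincides with its augmentation. Consequently,
\[
p^* \colon K'(Y) \xrightarrow{\simeq} \holim_{[n] \in \Delta} K'(G^n \times X)
\]
is a weak equivalence, with inverse induced by $q^*$.

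Equivalently, one may argue directly: since $p \circ q = \id_Y$ we have $q^* \circ p^* = \id_{K'(Y)}$ on the nose, and the simplicial homotopy $q \circ p \simeq \id_{X^\bullet_G}$ of \lemref{lem:K-simplex-19}—whose constituent morphisms at each simplicial level are again built from smooth maps, standard degeneracies, and the extra degeneracy $s_n$, hence all admit $K'$-pullbacks—induces a cosimplicial homotopy between $\id$ and $p^* \circ q^*$ on $K'(X^\bullet_G)$. Passing to $\holim_\Delta$ promotes this into an honest homotopy $\id \simeq p^* \circ q^*$ on $\holim_{[n] \in \Delta} K'(G^n \times X)$, so $p^*$ and $q^*$ realize mutually inverse equivalences.

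The only step requiring care is the bookkeeping verification that every morphism entering the simplicial section/homotopy data for the extra degeneracy factors through smooth morphisms and regular closed immersions, which is needed both to make the cosimplicial spectrum well-defined and to pull back the homotopy. Given the explicit form of $s_n$ and the elementary shape of the bar-construction face and degeneracy maps, I do not anticipate any real obstacle here; the argument is essentially formal once this compatibility is recorded.
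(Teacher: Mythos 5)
Your proposal is correct and follows essentially the same route as the paper: use the extra degeneracy from Lemma~\ref{lem:K-simplex-19} to split the augmented simplicial object, apply $K'$ to obtain a split augmented cosimplicial spectrum, and conclude that $p^*$ and $q^*$ are mutually inverse equivalences on $\holim_\Delta$. The paper compresses the bookkeeping about which maps admit $K'$-pullbacks by citing the end of the proof of \cite[Lem.~2.9]{Thomason-Inv}, whereas you spell it out (face maps smooth, degeneracies and the extra degeneracy $s_n$ regular closed immersions, hence lci), which is the right thing to record.
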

 \begin{proof}
   By \lemref{lem:K-simplex-19}, there are morphisms of simplicial algebraic spaces
   $Y \xrightarrow{q} X^\bullet_G \xrightarrow{p} Y$ such that $p \circ q = \id_Y$ and
   $q \circ p$ is homotopic to $\id_{X^\bullet_G}$. Applying the $K'$-theory functor,
   we get morphisms of cosimplicial spectra $K'(Y) \xrightarrow{p^*} K'(X^\bullet_G)
   \xrightarrow{q^*} K'(Y)$ such that $q^* \circ p^*$ is identity and $p^* \circ q^*$
   is homotopic to identity
   (see the last part of the proof of \cite[Lem~2.9]{Thomason-Inv}).
   It follows that the induced map between the homotopy
   limits $p^* \colon K'(Y) \to \holim_{[n] \in \Delta}
   K'(G^n \times X)$ is a weak equivalence.
\end{proof}

\begin{lem}\label{lem:K-simplex-22}
  Let $p \colon X \to Y$ be any $G$-torsor over $Y \in \Spc_k$. Then the pull-back
  map $p^* \colon K'(Y, \Q) \to  \holim_{[n] \in \Delta} K'(G^n \times X, \Q)$ is a
   weak equivalence. If $\Char(k) \nmid m$ and $k$ contains all roots of unity,
   then the pull-back map
   \[
   p^* \colon K'(Y, {\Z}/m)[\beta^{-1}] \to \holim_{[n] \in \Delta}
   (K'(G^n \times X, {\Z}/m)[\beta^{-1}])
   \]
   is a weak equivalence.
 \end{lem}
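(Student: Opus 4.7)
The plan is to identify $X^\bullet_G$ with the \v{C}ech nerve of $p$ and then reduce Lemma~\ref{lem:K-simplex-22} to the trivial-torsor case of \lemref{lem:K-simplex-21} by invoking \'etale hyperdescent for $K'$-theory with the appropriate coefficients.

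First I would verify that for any $G$-torsor $p \colon X \to Y$, the canonical map
\[
G^n \times X \longrightarrow \underbrace{X \times_Y \cdots \times_Y X}_{n+1 \text{ factors}},\qquad (g_1,\ldots,g_n,x) \longmapsto (x,\ g_1 x,\ g_1 g_2 x,\ \ldots,\ g_1\cdots g_n x),
\]
is an isomorphism of $Y$-spaces, and that these isomorphisms carry the face and degeneracy maps of $X^\bullet_G$ to those of the \v{C}ech nerve $Z_\bullet$ of $p$. This identifies $\holim_{[n]\in\Delta} K'(G^n \times X, -)$ with $\holim_{[n]\in\Delta} K'(Z_n, -)$ for $- \in \{\Q,\, {\Z}/m[\beta^{-1}]\}$ and reduces the lemma to \'etale hyperdescent for the chosen coefficient system of $K'$-theory along $Z_\bullet \to Y$. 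Since $G$ is a smooth affine $k$-group, $p$ is an \'etale-locally trivial smooth surjection of algebraic spaces, so $Z_\bullet \to Y$ is an \'etale hypercover of $Y$.

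For the Bott-inverted case (with $k$ containing all roots of unity and $\Char(k) \nmid m$), the requisite \'etale hyperdescent for $K'(-,{\Z}/m)[\beta^{-1}]$ is Thomason's \'etale descent theorem, a form of which is already used in \cite{Thomason-Duke-1}. For the rational case, on smooth $k$-schemes the descent follows from the Adams-eigenspace decomposition of rationalized $K$-theory together with the rational agreement of Nisnevich and \'etale motivic cohomology on smooth schemes. To extend from smooth quasi-projective schemes to arbitrary finite-type algebraic spaces, I would induct on $\dim(Y)$: \lemref{lem:Open-quasi-proj} supplies a smooth quasi-projective dense open subscheme $U \subseteq Y$ on which the descent is known, and the localization sequence of $K'$-theory for the pair $(U,\, Y\setminus U)$---whose compatibility with pull-back along the \v{C}ech nerve of $p$ is the content of \lemref{lem:Simplicial-sheaf-0}---reduces the assertion on $Y$ to the same assertion on $Y\setminus U$, which holds by the inductive hypothesis.

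The main obstacle is verifying the rational \'etale hyperdescent for $K'$-theory of algebraic spaces; once this is secured, the proof is immediate. Indeed, the base change of $p$ along itself is the trivial $G$-torsor $X\times_Y X \cong G \times X \to X$, so \lemref{lem:K-simplex-21} already yields the desired weak equivalence after pull-back from $Y$ to $X$, and \'etale hyperdescent for $Z_\bullet \to Y$ then propagates this weak equivalence from $X$ back to $Y$, completing the proof.
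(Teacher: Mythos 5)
Your overall strategy is sound and closely parallels the paper's, and the observation that $X^\bullet_G$ is literally the \u{C}ech nerve $Z_\bullet$ of $p\colon X\to Y$ (which the paper does not make explicit) is a useful reorganization. But there is a genuine gap in the key step. You claim that ``$Z_\bullet\to Y$ is an \'etale hypercover of $Y$.'' This is false unless $\dim G=0$: the level-zero covering condition for an \'etale hypercover requires $Z_0=X\to Y$ to be \'etale surjective, whereas a $G$-torsor with $G$ of positive dimension is merely smooth surjective. So \'etale hyperdescent does \emph{not} directly say that rational or Bott-inverted $K'$-theory sends $Z_\bullet$ to a limit diagram; you need the additional statement that \'etale hyperdescent plus the existence of \'etale-local sections of $p$ forces descent along the smooth \u{C}ech nerve. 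This is precisely the content of the paper's bisimplicial argument: pick an \'etale cover $f\colon Y'\to Y$ trivializing $p$, form the double \u{C}ech object $C_m(Y')\times_Y Z_n$, and commute the two homotopy limits using (i) \'etale descent of $K'(-,\Q)$ (Thomason) in the $m$-direction and (ii) the trivial-torsor case, i.e.\ \lemref{lem:K-simplex-21}, in the $n$-direction over each $Y'^m$. Your final paragraph, which brings in \lemref{lem:K-simplex-21} over $X\times_Y X$ and speaks of ``propagating'' from $X$ to $Y$, gestures at this but does not carry it out, and as written the first part of your argument would make \lemref{lem:K-simplex-21} superfluous (if $Z_\bullet$ really were an \'etale hypercover, one would be done immediately without it). Either replace ``\'etale hypercover'' by ``local weak equivalence in the \'etale topology, since $p$ admits sections \'etale-locally and $K'$ with these coefficients is local,'' or spell out the bisimplicial Fubini argument as the paper does. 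Your Noetherian-induction device via \lemref{lem:Open-quasi-proj} and \lemref{lem:Simplicial-sheaf-0} for passing from schemes to algebraic spaces is a reasonable alternative to the paper's one-line appeal to the localization sequence, and the rest of the proposal is in order.
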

\begin{proof}
   We can find an {\'e}tale cover $f \colon Y' \to Y$ such that the pull-back
   $p' \colon X \times_Y Y' \to Y'$ is a trivial $G$-torsor. We let
   $X' = X \times_Y Y'$ and let $f' \colon X' \to X$ be the projection.
   It is straightforward to check that the map $G^n \times X' \to G^n \times X$
   is an {\'e}tale cover for every $n \ge 0$.
   We let $C_\bullet(Y')$ (resp. $C_\bullet(G^n \times X')$) denote the \u{C}ech
   nerve of the {\'e}tale cover $Y' \to Y$ (resp. $G^n \times X' \to G^n \times X$).
    We then get a commutative diagram  of bisimplicial algebraic spaces
\begin{equation}\label{eqn:K-simplex-23}
  \xymatrix@C1pc{
    C_\bullet(X'^\bullet_G) \ar[r]^-{f'} \ar[d]_-{p'} & X^\bullet_G \ar[d]^-{p} \\
    C_\bullet(Y') \ar[r]^-{f} & Y.}
\end{equation}

Applying the $K'$-theory functor, we get a commutative diagram of spectra
\begin{equation}\label{eqn:K-simplex-24}
  \xymatrix@C1pc{
    K'(Y, \Q) \ar[r]^-{p^*} \ar[d]_-{f^*} & \holim_{[n] \in \Delta}
    K'(G^n \times X, \Q)
    \ar[d]^-{f'^*} \\
    \holim_{[m] \in \Delta} K'(Y'^m, \Q) \ar[r]^-{p'^*} &
    \holim_{[n] \in \Delta} \holim_{[m] \in \Delta} K'(G^n \times X'^m, \Q),}
\end{equation}
where $G^n \times X'^m$ (resp. $Y'^m$) is the $m$-th level of
$C_\bullet(G^n \times X')$ (resp. $C_\bullet(Y')$). 
By \cite[Thm.~2.15, Cor.~2.16]{Thomason-ENS}, the rationalized $K'$-theory satisfies
{\'e}tale descent for schemes. Since an algebraic space has a dense
open subspace which is a scheme, it follows from the localization theorem for
$K'$-theory that the rationalized $K'$-theory satisfies {\'e}tale descent for
algebraic spaces (cf. \cite[Thm.~5.1]{Khan-JJM}). In particular, the
left vertical arrow in ~\eqref{eqn:K-simplex-24} is a weak equivalence.
By the same reason, the map $K'(G^n \times X, \Q) \to \holim_{[m] \in \Delta}
K'(G^n \times X'^m, \Q)$ is a weak equivalence for every $n \ge 0$.
It follows by \cite[Thm.~11.4]{Hirsch-Notes} and \cite[\S~6]{Hovey-JPAA} that
the right vertical arrow in ~\eqref{eqn:K-simplex-24} is also a weak equivalence.

We next note that on one hand, the spectrum on the bottom right corner of 
~\eqref{eqn:K-simplex-24} is canonically weakly equivalent to
$\holim_{[m] \in \Delta} \holim_{[n] \in \Delta}  K'(G^n \times X'^m, \Q)$ by
\cite[\S~31.5]{Chacholski-Scherer}. On the other hand, the map
$K'(Y'^m, \Q) \to \holim_{[n] \in \Delta}  K'(G^n \times X'^m, \Q)$ is a weak
equivalence for every $m \ge 0$
by \lemref{lem:K-simplex-21} since $X'^m \to Y'^m$ is a trivial
$G$-torsor. It follows again by \cite[Thm.~11.4]{Hirsch-Notes} and
\cite[\S~6]{Hovey-JPAA} that the bottom horizontal arrow in ~\eqref{eqn:K-simplex-24}
is a weak equivalence. We conclude that the top horizontal arrow is also a weak
equivalence. This finishes the proof of the first part of the lemma.

The proof of the second part is identical using the fact that the Bott-periodic
$K'$-theory with finite coefficients (prime to $\Char(k)$)
satisfies {\'e}tale descent by
\cite{Thomason-ENS} in a special case and \cite[Cor.~13.1]{Levine-Notes}
in the general case (see also \cite{BEO}).
\end{proof}

\begin{lem}\label{lem:Bar-Morita}
   Let $H \subset G$ be a closed subgroup and $X \in \Spc^H_k$. Let
   $Y = X \stackrel{H}{\times} G$ be the associated Morita space.
   Let $G'= G \times H$ act on $X'= X \times G$ by
   $(g,h)\star(x,g') = (hx, gg'h^{-1})$. Then the projection map
   $p_1 \colon X' \to X$ and the $H$-torsor map $p_2 \colon X' \to Y$ induce
   weak equivalences of spectra
   \begin{equation}\label{eqn:Bar-Morita-0}
     p^*_1 \colon K'(X^\bullet_H, \Q) \xrightarrow{\simeq} K'(X'^\bullet_{G'}, \Q) \ \
     \mbox{and} \ \ p^*_2 \colon K'(Y^\bullet_G, \Q) \xrightarrow{\simeq}
     K'(X'^\bullet_{G'}, \Q).
   \end{equation}
If $\Char(k) \nmid m$ and $k$ contains all roots of unity, then the same is also
   true for the Bott-periodic $K'$-theory $K'(-,{\Z}/m)[\beta^{-1}]$.
\end{lem}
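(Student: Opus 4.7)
\smallskip

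The plan is to interpolate between the three bar constructions $X^\bullet_H$, $Y^\bullet_G$ and $X'^\bullet_{G'}$ by introducing a bisimplicial algebraic space whose diagonal recovers $X'^\bullet_{G'}$ and whose rows and columns are bar constructions of torsors to which Lemmas~\ref{lem:K-simplex-21} and \ref{lem:K-simplex-22} apply. Specifically, I would set
\[
W_{n,m} \;:=\; G^n \times H^m \times X'
\]
and equip the $n$-direction with the face and degeneracy maps of the bar construction for the subgroup $G \times \{e\} \subset G'$ acting on $H^m \times X'$ (trivially on $H^m$ and by left multiplication on the $G$-factor of $X' = X \times G$), and symmetrically equip the $m$-direction with those of the bar construction for $\{e\} \times H \subset G'$ acting on $G^n \times X'$ (trivially on $G^n$ and by the restricted $H$-action on $X'$). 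Since the $G$- and $H$-subgroup actions on $X'$ commute, the two simplicial structures commute, and the canonical identification $(G \times H)^n \cong G^n \times H^n$ produces an isomorphism of simplicial algebraic spaces $\diag W_{\bullet,\bullet} \cong X'^\bullet_{G'}$ under which the diagonal face maps agree with those of the $G'$-bar construction.

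For each fixed $m$, the column $W_{\bullet,m}$ is the bar construction of the \emph{trivial} $G$-torsor $H^m \times X' \to H^m \times X$, so \lemref{lem:K-simplex-21} applies to give an integral weak equivalence
\[
K'(H^m \times X) \xrightarrow{\simeq} \holim_{[n] \in \Delta} K'(W_{n,m}),
\]
natural in $m$ with respect to the $H$-bar-construction structure. For each fixed $n$, the row $W_{n,\bullet}$ is the bar construction of the (typically non-trivial) $H$-torsor $G^n \times X' \to G^n \times Y$, so \lemref{lem:K-simplex-22} yields a natural weak equivalence
\[
K'(G^n \times Y, \Q) \xrightarrow{\simeq} \holim_{[m] \in \Delta} K'(W_{n,m}, \Q),
\]
and the analogous statement for Bott-inverted $K$-theory with $\Z/m$-coefficients when $k$ contains all roots of unity. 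It is precisely this second application that forces rationalization (or Bott-inversion with finite coefficients), matching the hypothesis of the lemma.

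Passing to the remaining homotopy limit and invoking the Fubini theorem for homotopy limits of bicosimplicial spectra (commuting $\holim_n$ and $\holim_m$, cf.~\cite[\S~31.5]{Chacholski-Scherer}), together with the homotopy initiality of the diagonal inclusion $\Delta \hookrightarrow \Delta \times \Delta$, I would assemble the chain of weak equivalences
\[
K'(X^\bullet_H, \Q) \xrightarrow{\simeq} \holim_{(n,m)} K'(W_{n,m}, \Q) \xleftarrow{\simeq} \holim_{[n]} K'(W_{n,n}, \Q) = K'(X'^\bullet_{G'}, \Q),
\]
and the analogous chain starting from $K'(Y^\bullet_G, \Q)$. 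The Bott-inverted case goes through verbatim since both input lemmas hold there. The final step is to check that the composite zigzag coincides up to homotopy with the maps $p_1^*$ and $p_2^*$ of the statement: this reduces to the observation that $p_1$ (resp.~$p_2$), together with the projection $G' \to H$ (resp.~$G' \to G$), induces precisely the coordinate-forgetting morphism of simplicial spaces $X'^\bullet_{G'} \to X^\bullet_H$ (resp.~$X'^\bullet_{G'} \to Y^\bullet_G$), which matches the diagonal restriction of the bisimplicial projections built into $W_{\bullet,\bullet}$. The principal technical obstacle is carrying out the Fubini/diagonal argument carefully at the spectrum level and ensuring naturality of all intermediate weak equivalences, so that the final identification is genuinely realized by $p_1$ and $p_2$ and not by some unintended automorphism of $K'(X'^\bullet_{G'},\Q)$.
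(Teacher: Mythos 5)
Your proposal is correct and takes essentially the same route as the paper: your bisimplicial space $W_{n,m} = G^n \times H^m \times X'$ is, after permuting factors, exactly the paper's $A_{n,m} = X \times G^{n+1} \times H^m$, with the same identification of columns as the bar construction of the trivial $G$-torsor $X' \to X$ and of rows as that of the $H$-torsor $X' \to Y$, followed by the same Fubini-plus-diagonal-cofinality argument to compare $\holim_{n,m}$ with the $K'$-theory of $X'^\bullet_{G'}$. The only small variation is that you invoke the integral trivial-torsor \lemref{lem:K-simplex-21} on the $G$-columns and reserve the rational \lemref{lem:K-simplex-22} for the $H$-torsor rows, whereas the paper applies \lemref{lem:K-simplex-22} to both directions at once -- a cosmetic refinement rather than a different argument.
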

\begin{proof}
   For $p, q \ge 0$, we let $A_{p,q} = X \times G^{p+1} \times H^q$.
   We let the face maps $d^I_i \colon A_{p,q} \to A_{p-1,q}$ and
   $d^{II}_i \colon A_{p,q} \to A_{p,q-1}$ be given by
   \begin{equation}\label{eqn:Bar-Morita-1}
     d^I_i(x, g_0, \ldots , g_p, h_1, \ldots , h_{q}) =
     \left\{\begin{array}{ll}
     (x, g_0, \ldots , g_{i-1}, g_{i+1}g_{i}, g_{i+2}, \ldots , g_p, h_1, \ldots , h_q)
     & \mbox{if $0 \le i < p$} \\
     (x, g_0, \ldots, g_{p-1}, h_1, \ldots, h_q)  & \mbox{if $i = p$.}
     \end{array}
     \right.
   \end{equation}
   \[
   d^{II}_i(x, g_0, \ldots , g_p, h_1, \ldots , h_q) =
   \left\{\begin{array}{ll}
   (h_1x, g_0h^{-1}_1, g_1, \ldots , g_p, h_2, \ldots , h_q)
   & \mbox{if $i = 0$} \\
    (x, g_0, \ldots , g_p, h_1, \ldots , h_{i-1}, h_{i+1}h_i, \ldots , h_q)
     & \mbox{if $0 < i < q$} \\
     (x, g_0, \ldots, g_{p}, h_1, \ldots, h_{q-1})  & \mbox{if $i = q$.}
     \end{array}
     \right.
     \]
     The degeneracy maps $s^I_i \colon A_{p,q} \to A_{p+1,q}$ and
     $s^{II}_i \colon A_{p,q} \to A_{p,q+1}$ are given by
     \begin{equation}\label{eqn:Bar-Morita-2}
       s^I_i(x, g_0, \ldots , g_p, h_1, \ldots , h_{q}) =
       \left\{\begin{array}{ll}
      (x, g_0, \ldots , g_{i}, e, g_{i+1}, \ldots , g_p, h_1, \ldots , h_{q})  &
       \mbox{if $0 \le i < p$} \\
       (x, g_0, \ldots , g_{p},e, h_1, \ldots , h_{q})  &
       \mbox{if $i = p$.}
       \end{array}
     \right.
\end{equation}
     \[
s^{II}_i(x, g_0, \ldots , g_p, h_1, \ldots , h_q) = 
\left\{\begin{array}{ll}
 (x, g_0, \ldots , g_{p}, e, h_1, \ldots , h_{q})  &
       \mbox{if $i = 0$} \\
      (x, g_0, \ldots , g_{p}, h_1, \ldots h_i, e, h_{i+1}, \ldots , h_{q})  &
       \mbox{if $0 < i < q$} \\
       (x, g_0, \ldots , g_{p}, h_1, \ldots , h_{q}, e)  &
       \mbox{if $i = q$.}
       \end{array}
     \right.
\]
     
%s^{II}_i(x, g_0, \ldots , g_p, h_1, \ldots , h_q) =
%(x, g_0, \ldots , g_p, h_1, \ldots , h_{i-1}, h_i, h_i, h_{i+1}, \ldots ,h_q).
%\]
It is a straightforward verification that $A_{\bullet, \bullet} = \{A_{p,q}\}_{p,q \ge 0}$
together with the above face and degeneracy maps is a bisimplicial algebraic space. 
Note that one only needs to check the commutativity between $d^I_0$ and $d^{II}_0$ as
the other identities are automatic by the above definition.

We next note that the maps $p_1$ and $p_2$ define augmentation maps
$\epsilon_1 \colon A_{\bullet, \bullet} \to X^\bullet_H$ and $\epsilon_2 \colon 
A_{\bullet, \bullet} \to Y^\bullet_G$ such that for each $q \ge 0$, the map
$\epsilon_1 \colon A_{\bullet,q} \to X \times H^q$ coincides with the augmentation
$(G \times (X\times H^q))^\bullet_G \to X\times H^q$ corresponding to the trivial
$G$-torsor $G \times (X\times H^q) \to X\times H^q$.
Furthermore, the map $\epsilon_1 \colon A_{0, \bullet} \to X^\bullet_H$ is the
map induced between the bar constructions by the $H$-equivariant map
$G \times X \to X$. 
On the other side,  the map $\epsilon_2 \colon A_{p,\bullet} \to G^p \times Y$
coincides with the augmentation
$(G^p \times (X\times G))^\bullet_H \to G^p \times Y$ corresponding to the $H$-torsor
$G^p \times (X\times G) \to G^p \times Y$. Furthermore, the map
$\epsilon_2 \colon A_{\bullet,0} \to Y^\bullet_G$ is the map induced between the bar
constructions by the $G$-equivariant map $G \times X \to Y$. 
We now apply \lemref{lem:K-simplex-22} to conclude that there are weak equivalences
of spectra
\begin{equation}\label{eqn:Bar-Morita-3}
  \epsilon^*_1 \colon K'(X^\bullet_H, \Q) \xrightarrow{\simeq}
  \holim_{p,q} K'(A_{p,q}, \Q) \xleftarrow{\simeq} K'(Y^\bullet_G, \Q) \colon \epsilon^*_2.
  \end{equation}

On the other hand, if we let $K_{p,q} = K'(A_{p,q}, \Q)$ with the coface and
codegeneracy
maps induced by those of $A_{\bullet, \bullet}$ and let $\Delta_K = \{K_{p,p}\}_{p \ge 0}$
denote the diagonal of the bicosimplicial spectrum $K_{\bullet, \bullet}$, then the
left homotopy cofinality of the diagonal
inclusion $\Delta_K \inj \Delta^2_K$ implies
that this map
%reference for cofinality?
induces a weak equivalence on the homotopy limits
(cf. \cite[Thm.~13.7]{Hirsch-Notes}, \cite[Prop.~7.11]{Hirsch-JHRS}).
We thus get weak equivalences
\begin{equation}\label{eqn:Bar-Morita-4}
  \epsilon^*_1 \colon K'(X^\bullet_H, \Q) \xrightarrow{\simeq}
  \holim_{p} K'(A_{p,p}, \Q) \xleftarrow{\simeq} K'(Y^\bullet_G, \Q) \colon \epsilon^*_2.
  \end{equation}
This implies ~\eqref{eqn:Bar-Morita-0} and concludes the proof of the first part of
the lemma because $\Delta_K$ is nothing but the cosimplicial spectrum
$[p] \mapsto K'(X' \times G'^p, \Q)$ obtained by applying the $K'$-theory functor on
the bar construction associated to the $G'$-action on $X'$.
The proof of the second part is identical. 
\end{proof}

\subsection{Main result for other groups}\label{sec:Thom-rational}
The following result extends \thmref{thm:K-simplex-15} to all groups if we use
rational coefficients. We fix a field $k$.

\begin{thm}\label{thm:Bar-K-rational}
  Let $G$ be a $k$-group and $X \in \Spc^G_k$. Then the
  map $\wt{\pi}^* \colon K'_G(X,\Q) \to K'(X^\bullet_G, \Q)$ induces a weak
  equivalence of spectra
  \begin{equation}\label{eqn:Bar-K-rational-0}
    \wt{\pi}^* \colon K'_G(X, \Q)^{\compl}_{I_G} \xrightarrow{\simeq} K'(X^\bullet_G, \Q).
  \end{equation}
If $k$ contains all roots of unity, then the same is also
true for the Bott-periodic $K'$-theory $K'(-;{\Z}/m)[\beta^{-1}]$ for every
integer $m$ invertible in $k$.
 \end{thm}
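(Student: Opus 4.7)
The plan is to reduce Theorem \ref{thm:Bar-K-rational} to the special-group case already settled in Theorem \ref{thm:K-simplex-15}, using the Morita machinery developed in \S\ref{sec:Bar-Morita}. Fix a closed embedding $G \hookrightarrow G'$ with $G'$ a special $k$-group (e.g.\ some $GL_n$), and form the Morita space $Y = X \stackrel{G}{\times} G' \in \Spc^{G'}_k$. By Lemma \ref{lem:Morita*} there is an isomorphism of stacks $[X/G] \cong [Y/G']$, giving a canonical weak equivalence $K'_G(X,\Q) \simeq K'_{G'}(Y,\Q)$.

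Next, introduce the auxiliary group $G'' = G' \times G$ acting on $X' = X \times G'$ as in Lemma \ref{lem:Bar-Morita} (with $H = G$ there). The projections $p_1 \colon X' \to X$ and $p_2 \colon X' \to Y$ are, respectively, a $G$-torsor and a $G'$-torsor, and each is equivariant for one of the two factors of $G''$; hence the three quotient stacks $[X/G]$, $[X'/G'']$ and $[Y/G']$ are all canonically isomorphic. This yields a natural zigzag of weak equivalences
\begin{equation*}
K'_G(X,\Q) \xleftarrow[\simeq]{p_1^*} K'_{G''}(X',\Q) \xrightarrow[\simeq]{p_2^*} K'_{G'}(Y,\Q),
\end{equation*}
while Lemma \ref{lem:Bar-Morita} provides the parallel zigzag of weak equivalences on the bar-construction side
\begin{equation*}
K'(X^\bullet_G,\Q) \xleftarrow[\simeq]{p_1^*} K'((X')^\bullet_{G''},\Q) \xrightarrow[\simeq]{p_2^*} K'(Y^\bullet_{G'},\Q).
\end{equation*}

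The key step is to check that these two zigzags intertwine $\wt{\pi}^*$ and are compatible with Lurie's completion. The first compatibility follows from the naturality of $\wt{\pi}^*$ for flat pull-back along $p_1$ and $p_2$, as recorded in Lemma \ref{lem:Simplicial-sheaf-0}. For the second, by \cite[Cor.~6.1]{EG-RR} (applied as in the proof of Theorem \ref{thm:AS-Main}) the restriction maps $R(G'') \to R(G)$ and $R(G'') \to R(G')$ carry $I_{G''}$ to ideals whose radicals are $I_G$ and $I_{G'}$ respectively, so by Proposition \ref{prop:Compln_D}(5) the Lurie completions at $I_G$, $I_{G''}$ and $I_{G'}$ agree along these zigzags. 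Granted this, applying Theorem \ref{thm:K-simplex-15} to the pair $(Y, G')$ (after smashing with $H\Q$) produces the weak equivalence
\begin{equation*}
\wt{\pi}^* \colon K'_{G'}(Y,\Q)^{\compl}_{I_{G'}} \xrightarrow{\simeq} K'(Y^\bullet_{G'},\Q),
\end{equation*}
and transporting it across the two zigzags yields \eqref{eqn:Bar-K-rational-0}. The Bott-inverted statement under the hypothesis that $k$ contains all roots of unity is proved by the identical argument, using the Bott-inverted half of Lemma \ref{lem:Bar-Morita} together with Remark \ref{remk:Bott-Thom}.

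The main obstacle is the diagrammatic bookkeeping needed to verify that the square formed by the two zigzags together with the various $\wt{\pi}^*$ maps is homotopy commutative. This requires unpacking the bisimplicial augmentations $\epsilon_1, \epsilon_2$ introduced in the proof of Lemma \ref{lem:Bar-Morita} and comparing them with the quotient maps $X^\bullet_G \to [X/G]$, $(X')^\bullet_{G''} \to [X'/G'']$ and $Y^\bullet_{G'} \to [Y/G']$; although technical, this is ultimately a straightforward naturality check since all the face and degeneracy maps involved are either smooth projections (in particular flat) or regular closed immersions, so that Lemma \ref{lem:Simplicial-sheaf-0} applies levelwise.
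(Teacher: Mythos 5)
Your proposal follows the same overall reduction strategy as the paper (Morita equivalence to the special group $G'$, combined with Theorem~\ref{thm:K-simplex-15}), and the completion step using \cite[Cor.~6.1]{EG-RR} and Proposition~\ref{prop:Compln_D}(5) is correctly identified. However, there is a genuine gap in the step you describe as "a straightforward naturality check." You invoke Lemma~\ref{lem:Simplicial-sheaf-0} to establish that the two zigzags intertwine $\wt{\pi}^*$, but that lemma concerns a flat or proper morphism between two objects of $\Spc^G_k$ for a \emph{fixed} group $G$. The Morita comparison maps $p_1^*, p_2^*$ connect spaces carrying actions of \emph{different} groups ($G$, $G''$, $G'$), and at the level of bar constructions they are not pull-backs along maps of simplicial schemes with a common group action; they are the weak equivalences produced in Lemma~\ref{lem:Bar-Morita} via the bisimplicial object $A_{\bullet,\bullet}$ and its diagonal $D_\bullet$. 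To promote the uncompleted zigzags to completed ones and run the two-out-of-three argument, one needs not only the commutativity of the relevant squares but also that the bar-construction spectra carry a $K(BG'')$-module structure making $\wt{\pi}^*$ and the zigzag maps $K(BG'')$-linear, so that Lurie's completion can be applied functorially. The paper secures exactly this through the auxiliary diagram~\eqref{eqn:Bar-K-rational-1}: the ordinary $K$-theory of $X^\bullet_G$ is re-expressed as the equivariant $K$-theory of $X \times E^\bullet_G$ (via Proposition~\ref{prop:Bar-ind-Gen}), and the Morita maps lift to $G'$-equivariant maps $\theta_i$ at the level of motivic spaces, which is what yields the $K(BG')$-linearity. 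This intermediate step, which is the technical heart of the proof, is absent from your proposal; invoking Lemma~\ref{lem:Simplicial-sheaf-0} "levelwise on the bisimplicial object" does not supply it.

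As a minor point, you state that $p_1 \colon X' \to X$ is a $G$-torsor and $p_2 \colon X' \to Y$ is a $G'$-torsor; the roles are reversed: $p_1$ is the quotient by the $G'$-factor of $G'' = G' \times G$, hence a $G'$-torsor, and $p_2$ (the $H$-torsor map of Lemma~\ref{lem:Bar-Morita} with $H = G$) is a $G$-torsor. This does not affect the logic but should be corrected.
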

\begin{proof}
  We choose a closed embedding $G \inj Q$ where $Q$ is a general linear $k$-group and
  let $G' = Q \times G$. We let $Y$ be the Morita space $X \stackrel{G}{\times}Q$ and
 $X' = X \times Q$ with the action of $G'$ as described in
  \lemref{lem:Bar-Morita}. Following the notations of that lemma, we let
  $D_\bullet \inj A_{\bullet, \bullet}$ denote the diagonal simplicial algebraic space
  and consider the commutative diagram
  \begin{equation}\label{eqn:Bar-K-rational-1}
    \xymatrix@C1pc{
      X {\times} E^\bullet_G \ar[r] &
      X \stackrel{G}{\times} E^\bullet_G \ar[rr]^-{\cong} & & X^\bullet_G \\
      X' {\times} E^\bullet_{G'}  \ar[r] \ar[d]_-{\theta_2} \ar[u]^-{\theta_1}
      & X' \stackrel{G'}{\times} E^\bullet_{G'} \ar[d]^-{\theta'_2}
      \ar[u]_-{\theta'_1} \ar[r]^-{\cong} &
      D_\bullet \ar[r]^-{\delta} \ar[ur]^-{\delta_1} 
      \ar[dr]_-{\delta_2} & A_{\bullet, \bullet}
      \ar[u]_-{\epsilon_1} \ar[d]^-{\epsilon_2} \\
     Y {\times} E^\bullet_Q \ar[r] &
      Y \stackrel{Q}{\times} E^\bullet_Q \ar[rr]^-{\cong} & & Y^\bullet_Q,}
    \end{equation}
  in which $\theta_i, \ \theta'_i$ and $\delta_i := \epsilon_i \circ \delta$ are
  morphisms of simplicial spaces and $\epsilon_i$ is a morphism of bisimplicial
  spaces for $i = 1,2$. One also checks that $\theta_i$ is $G'$-equivariant for
  $i = 1,2$, where $G'$ acts on $X {\times} E^\bullet_G$ via $G' \surj G$ and
  on $Y {\times} E^\bullet_Q$ via $G' \surj Q$.

 Applying the $K'$-theory functor, we get a commutative diagram of spectra
  \begin{equation}\label{eqn:Bar-K-rational-2}
    \xymatrix@C1pc{
  K'_G(X {\times} E^\bullet_G, \Q) \ar[d]_-{\theta^*_1} &
  K'\left(X \stackrel{G}{\times} E^\bullet_G, \Q\right)
  \ar[l]_-{\simeq} \ar[d]^-{\theta'^*_1} & &
  K'(X^\bullet_G, \Q) \ar[ll]_-{\simeq} \ar[d]^-{\epsilon^*_1} \ar[dl]_-{\delta^*_1} \\
      K'_{G'}(X' {\times} E^\bullet_{G'}, \Q)
      & K'\left(X' \stackrel{G'}{\times} E^\bullet_{G'}, \Q\right)
      \ar[l]_-{\simeq}  &
      K'(D_\bullet, \Q)  \ar[l]_-{\simeq} & K'(A_{\bullet, \bullet}, \Q) \ar[l]_-{\delta^*}
      \\
     K'_Q(Y {\times} E^\bullet_Q, \Q) \ar[u]^-{\theta^*_2} &
     K'\left(Y \stackrel{Q}{\times} E^\bullet_Q, \Q\right)
     \ar[u]_-{\theta'^*_2} \ar[l]_-{\simeq}
      & & K'(Y^\bullet_Q, \Q)
     \ar[ul]^-{\delta^*_2} \ar[ll]_-{\simeq} \ar[u]_-{\epsilon^*_2} }
    \end{equation}

 The arrows $\delta^*_i, \epsilon^*_i$ and $\delta^*$ in ~\eqref{eqn:Bar-K-rational-2}
 are weak equivalences by \lemref{lem:Bar-Morita}.
In particular, $\theta'^*_i$ and $\theta^*_i$ are weak equivalences.
Furthermore, each arrow
  $\theta^*_i$ (hence $\theta'^*_i$) is $K(BG')$-linear as $\theta_i$ is
  $G'$-equivariant. We thus get a commutative diagram of $K(BG')$-linear maps
  of $K(BG')$-module spectra
   \begin{equation}\label{eqn:Bar-K-rational-3}
    \xymatrix@C1pc{
      K'_G(X, \Q) \ar[r]^-{\theta'^*_1} \ar[d]_{\wt{\pi}^*_X} & K'_{G'}(X', \Q)
      \ar[d]^-{\wt{\pi}^*_{X'}}
      & K'_Q(Y, \Q) \ar[d]^-{\wt{\pi}^*_Y} \ar[l]_-{\theta'^*_2} \\
      K'(X^\bullet_G, \Q) \ar[r]^-{\theta'^*_1} & K'(X'^\bullet_{G'}, \Q) &
      K'(Y^\bullet_Q, \Q) \ar[l]_-{\theta'^*_2}}
    \end{equation}
   in which the horizontal arrows on the bottom row are weak equivalences.
   The horizontal arrows on the top row are weak equivalences 
because the maps of stacks $[{X'}/{G'}] \to [X/G]$ and $[{X'}/{G'}] \to [Y/Q]$
are isomorphisms (cf. \lemref{lem:Morita*}).

As all arrows in
~\eqref{eqn:Bar-K-rational-3} are $K(BG')$-linear and we showed in the
proof of \thmref{thm:K-simplex-15} that all terms on the bottom row are
$I_{G'}$-complete, we obtain a commutative diagram of $I_{G'}$-complete
spectra
\begin{equation}\label{eqn:Bar-K-rational-4}
    \xymatrix@C1pc{
      K'_G(X, \Q)^{\compl}_{I_{G'}} \ar[r]^-{\theta'^*_1} \ar[d]_{\wt{\pi}^*_X} &
      K'_{G'}(X', \Q)^{\compl}_{I_{G'}} \ar[d]^-{\wt{\pi}^*_{X'}}
      & K'_Q(Y, \Q)^{\compl}_{I_{G'}} \ar[d]^-{\wt{\pi}^*_Y} \ar[l]_-{\theta'^*_2} \\
      K'(X^\bullet_G, \Q) \ar[r]^-{\theta'^*_1} & K'(X'^\bullet_{G'}, \Q) &
      K'(Y^\bullet_Q, \Q) \ar[l]_-{\theta'^*_2}}
    \end{equation}
in which all horizontal arrows are weak equivalences.

It follows from \cite[Cor.~6.1]{EG-RR} that $I_{G} = \sqrt{I_{G'}R(G)}$ and
$I_{Q} = \sqrt{I_{G'}R(Q)}$. On the other hand, $I_{G'}$ is the radical of a
finitely generated ideal by \lemref{lem:Noether-Rep}. It follows from
\propref{prop:Compln_D}(5) that the canonical maps
$K'_G(X,\Q)^{\compl}_{I_{G'}} \to  K'_G(X,\Q)^{\compl}_{I_{G}}$
and $K'_Q(Y, \Q)^{\compl}_{I_{G'}} \to K'_Q(Y, \Q)^{\compl}_{I_{Q}}$ are weak equivalences.
In particular, $\wt{\pi}^*_Y$ is a weak equivalence by \thmref{thm:K-simplex-15}.
A diagram chase now shows that $\wt{\pi}^*_{X'}$ and $\wt{\pi}^*_X$ are
also weak equivalences. Equivalently, the map in ~\eqref{eqn:Bar-K-rational-0}
is a weak equivalence. This concludes the proof of the first part of the theorem
and the proof of the second part is identical using Remark~\ref{remk:Bott-Thom}. 
\end{proof}

\begin{cor}\label{cor:KH-Thom*}
  Under the hypotheses of \thmref{thm:AS-KH}, the following hold.
  \begin{enumerate}
  \item
    If $G$ is special, then the map
  \[
  \wt{\pi}^* \colon KH_G(X)^{\compl}_{I_{G}} \to KH(X^\bullet_G)
  \]
  is a weak equivalence.
\item
  If $G$ is arbitrary, then the map
   \[
  \wt{\pi}^* \colon KH_G(X, \Q)^{\compl}_{I_{G}} \to KH(X^\bullet_G, \Q)
  \]
  is a weak equivalence.
\item
  If $G$ is arbitrary, $k$ contains all roots of unity and $m \in k^\times$, then
the map
   \[
   \wt{\pi}^* \colon K_G(X, {\Z}/m)[\beta^{-1}]^{\compl}_{I_{G}} \to
   K(X^\bullet_G, {\Z}/m)[\beta^{-1}]
  \]
  is a weak equivalence.
  \end{enumerate}
\end{cor}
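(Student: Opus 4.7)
The plan is to reduce all three statements to the $K'$-theory versions already established in Theorems~\ref{thm:K-simplex-15} and \ref{thm:Bar-K-rational} (together with Remark~\ref{remk:Bott-Thom}), by mimicking the cdh-descent strategy used in Theorem~\ref{thm:AS-KH}. The key point is that both sides of $\wt{\pi}^*$ behave well under $G$-equivariant abstract blow-up squares. On the left, $KH_G$ satisfies $G$-equivariant cdh descent by \cite[Thm.~6.2]{Hoyois-Krishna}, and since the derived $I_G$-completion is a homotopy limit, it preserves the resulting homotopy fiber sequence. On the right, if we pull back a $G$-equivariant blow-up square by the smooth projection $G^n \times (-) \to (-)$, we obtain a cdh square of schemes at each simplicial level; ordinary $KH$ satisfies cdh descent for (algebraic) schemes, and homotopy limits over $[n] \in \Delta$ preserve homotopy fiber sequences, so $KH((-)^\bullet_G)$ sends a $G$-equivariant blow-up square to a homotopy fiber sequence. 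The same holds with $\Q$- and $({\Z}/m)[\beta^{-1}]$-coefficients.

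Granting this, I would prove part (1) by induction on $\dim(X)$ using nil-invariance of $KH$ and equivariant resolution. If $\dim(X)=0$ then $X_\red$ is regular, so $KH_G(X) \simeq K'_G(X)$ and $KH(X^\bullet_G) \simeq K'(X^\bullet_G)$, and the statement reduces to Theorem~\ref{thm:K-simplex-15}. For the inductive step, I choose either a toric resolution $p \colon \wt{X} \to X$ under hypothesis (A) or an equivariant resolution under hypothesis (B); in the latter case Lemma~\ref{lem:Nice-sub} ensures that $G$ acts with nice stabilizers on $\wt{X}$, on $Y = X_\sing$, and on $Y' = p^{-1}(Y)$. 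The resulting diagram of homotopy fiber sequences
\[
\xymatrix@C.6pc{
KH_G(X)^{\compl}_{I_G} \ar[r] \ar[d]_-{\wt{\pi}^*_X} &
\bigl(KH_G(\wt{X}) \amalg KH_G(Y)\bigr)^{\compl}_{I_G} \ar[r] \ar[d]^-{\wt{\pi}^*_{\wt{X}} \amalg \wt{\pi}^*_Y} &
KH_G(Y')^{\compl}_{I_G} \ar[d]^-{\wt{\pi}^*_{Y'}} \\
KH(X^\bullet_G) \ar[r] & KH(\wt{X}^\bullet_G) \amalg KH(Y^\bullet_G) \ar[r] & KH(Y'^\bullet_G)
}
\]
commutes by the reasoning of Lemma~\ref{lem:Simplicial-sheaf-0}. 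Its middle vertical arrow is a weak equivalence on the $\wt{X}$-factor because $\wt{X}$ is regular (so $KH_G(\wt{X}) \simeq K'_G(\wt{X})$ and Theorem~\ref{thm:K-simplex-15} applies), and on the $Y$-factor by induction on dimension; the right vertical arrow is a weak equivalence by induction. Hence $\wt{\pi}^*_X$ is a weak equivalence. Part (2) is proved by the same induction with $\Q$-coefficients, invoking Theorem~\ref{thm:Bar-K-rational} in place of Theorem~\ref{thm:K-simplex-15} for the regular base case. Part (3) follows similarly: since $m \in k^\times$, the Bott-inverted mod-$m$ $K$-theory is $\A^1$-invariant and so $K(-, {\Z}/m)[\beta^{-1}] \simeq KH(-, {\Z}/m)[\beta^{-1}]$, and the regular base case is handled by Remark~\ref{remk:Bott-Thom} when $G$ is special or by the Bott-periodic half of Theorem~\ref{thm:Bar-K-rational} in general.

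The main obstacle I expect is verifying that the derived $I_G$-completion is compatible with the homotopy fiber sequence appearing on the bottom row of the diagram above, i.e.\ that the target $KH(X^\bullet_G)$ is already $I_G$-complete so that no completion is needed on the bottom. This, however, follows exactly as in the proof of Theorem~\ref{thm:K-simplex-15}: each term $KH(G^n \times X)$ is a module over $K(BG)$ whose action factors through the augmentation $K(BG) \to K(\pt)$ (by \cite[Lem.~8.1]{Krishna-Crelle}), hence is $I_G$-nilpotent, and the homotopy limit over $[n] \in \Delta$ remains $I_G$-complete by Proposition~\ref{prop:Compln_D}(7). With this in hand the diagram chase above closes up, and the three completion theorems for equivariant $KH$-theory of the bar construction follow.
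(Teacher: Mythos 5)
Your proposal is correct and takes essentially the same route as the paper: the paper's proof of this corollary is a one-line appeal to ``repeat the arguments of \thmref{thm:AS-KH} verbatim,'' with the base cases for the regular schemes appearing in the resolution diagram supplied by Theorems~\ref{thm:K-simplex-15} and~\ref{thm:Bar-K-rational} (and Remark~\ref{remk:Bott-Thom} for the Bott-inverted case). You have faithfully reconstructed that argument, including the correct check that each $KH(G^n \times X)$ is already $I_G$-complete because its $K(BG)$-module structure factors through $K(\pt)$, so that Proposition~\ref{prop:Compln_D}(7) makes the right-hand homotopy limit $I_G$-complete.
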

  \begin{proof}
    Using Theorems~\ref{thm:K-simplex-15} and ~\ref{thm:Bar-K-rational},
    we can repeat the arguments of \thmref{thm:AS-KH} verbatim to deduce this
    corollary.
    \end{proof}

\begin{cor}\label{cor:Thom-maximal}
  Assume that $k$ is an algebraically closed field of characteristic zero.
  Let $G$ be a $k$-group
  and $g \in G_s(k)$. Let $\Psi \subset G(k)$ be the conjugacy class of $g$. For
  any $X \in \Spc^G_k$, we then have a weak equivalence of spectra
  \begin{equation}\label{eqn:Thom-maximal-0}
    \Theta^g_X \colon K'_G(X, k)^{\compl}_{\fm_\Psi} \xrightarrow{\simeq}
    K'((X^g)^\bullet_{Z_g}, k).
  \end{equation}
 \end{cor}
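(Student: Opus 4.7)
The plan is to define $\Theta^g_X$ as a three-step composition that parallels the construction of $\Phi^g_X$ in the proof of Theorem~\ref{thm:Gen-AS-max}, but with the Borel construction $K'((X^g)_{Z_g}, k)$ replaced by the bar construction $K'((X^g)^\bullet_{Z_g}, k)$. Concretely, I would set
\[
\Theta^g_X \;:=\; \bigl(\,\wt{\pi}^* \circ t_{g^{-1}} \circ \Psi^g_X\,\bigr),
\]
where the three factors are
\[
K'_G(X,k)^{\compl}_{\fm_\Psi} \xrightarrow{\Psi^g_X}
K'_{Z_g}(X^g,k)^{\compl}_{\fm_g} \xrightarrow{t_{g^{-1}}}
K'_{Z_g}(X^g,k)^{\compl}_{I_{Z_g}} \xrightarrow{\wt{\pi}^*}
K'((X^g)^\bullet_{Z_g},k).
\]

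First, $\Psi^g_X$ is a weak equivalence by the derived nonabelian completion theorem (Theorem~\ref{thm:Max-Main}), which applies since $k$ is algebraically closed of characteristic zero and $g$ is semisimple. Second, the twisting operator $t_{g^{-1}}$ identifies the completion at $\fm_g$ with the completion at the augmentation ideal $I_{Z_g}$, and this is precisely the statement of Proposition~\ref{prop:Twist-local-compl} applied to $(Z_g, X^g)$ with the central semisimple element $g \in Z(Z_g)(k)$. Third, the map $\wt{\pi}^*$ is a weak equivalence by Theorem~\ref{thm:Bar-K-rational}: although that theorem is stated with $\Q$-coefficients, the hypothesis $\Char(k) = 0$ means that $K'_{Z_g}(X^g,k) \simeq K'_{Z_g}(X^g,\Q) \wedge_{H_\Q} H_k$ and the rationalized statement propagates to arbitrary fields of characteristic zero because Lurie completion commutes with smashing along the flat map $H_\Q \to H_k$ (the completion is computed as an iterated homotopy limit of cofibers by a finite set of generators of $I_{Z_g}$, cf.~\eqref{eqn:I-comp-0}, and this commutes with $(-)\wedge_{H_\Q}H_k$).

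The only subtle point is to verify that the composition actually lands where claimed, i.e.\ that $\wt{\pi}^*$ really is applicable to $K'_{Z_g}(X^g,k)^{\compl}_{I_{Z_g}}$. For this I would invoke the factorization of $\wt{\pi}^*$ through the $I_{Z_g}$-completion established in \S\ref{sec:Factor-compln} (cf.~\eqref{eqn:Simplicial-K-3}); since each $K'(Z_g^n \times X^g, k)$ is a module over $K(BZ_g,k)$ via a map that factors through $K(\pt,k)$, the target $K'((X^g)^\bullet_{Z_g}, k)$ is already $I_{Z_g}$-complete by Proposition~\ref{prop:Compln_D}(7), so the factorization is automatic. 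The main (and really only) obstacle is checking that all three maps respect the chosen module and completion structures in a coherent way, but this is essentially formal once one notes that $\Psi^g_X$ and $t_{g^{-1}}$ are both $K(BG,k)$-linear on the source side and that $\wt{\pi}^*$ is $K(BZ_g,k)$-linear after the change-of-groups via $Z_g \inj G$.
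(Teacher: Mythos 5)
Your proposal takes the same three-step factorization as the paper's proof and invokes the same ingredients (\thmref{thm:Max-Main}, \propref{prop:Twist-local-compl}, \thmref{thm:Bar-K-rational}), so the overall strategy is correct, and your final paragraph correctly reproduces the factorization argument from \S\ref{sec:Factor-compln}. However, your justification for upgrading $\Q$-coefficients to $k$-coefficients in \thmref{thm:Bar-K-rational} has a gap: you claim that Lurie completion commutes with $(-)\wedge_{H_\Q}H_k$, citing \eqref{eqn:I-comp-0}, but that formula is an \emph{infinite} homotopy limit over $\N^r$, and since the corollary requires $k$ algebraically closed, $H_k$ is a nontrivial filtered colimit of finite free $H_\Q$-modules; filtered colimits do not commute with infinite homotopy limits. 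The same objection applies to the target $K'((X^g)^\bullet_{Z_g},k)$, which is a cosimplicial totalization. The conclusion you need is still true, but for a different reason: the proof of \thmref{thm:Bar-K-rational} runs verbatim with $K'(-,k)$ in place of $K'(-,\Q)$, because the only coefficient-sensitive step — \'etale descent in \lemref{lem:K-simplex-22} — also holds for $K'(-,k)$, since the descent spectral sequence still converges strongly (finite \'etale cohomological dimension of finite-type $k$-schemes) with $E_2$-page obtained from the $\Q$-coefficient page by the exact functor $-\otimes_\Q k$. This is presumably what the paper means by its terse remark that ``the theorem is valid if we replace $\Q$ by any field of characteristic zero.''
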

\begin{proof}
  We use the maps
  \[
   K'_G(X, k)^{\compl}_{\fm_\Psi} \xrightarrow{\Psi^g_X} 
   K'_{Z_g}(X^g, k)^{\compl}_{\fm_g} \xrightarrow{t_{g^{-1}}}
   K'_{Z_g}(X^g, k)^{\compl}_{I_{Z_g}} \xrightarrow{\wt{\pi}^*}
   K'((X^g)^\bullet_{Z_g}, k),
   \]
   where $\wt{\pi}^*$ is as in ~\eqref{eqn:Bar-EG-main-0} with $X$ (resp. $G$)
   substituted by $X^g$ (resp. $Z_g$).
   The first two maps were shown to be weak equivalences in \thmref{thm:Gen-AS-max}
   and the last map is a weak equivalence by applying \thmref{thm:Bar-K-rational}
   to the $Z_g$-action on $X^g$ and noting that the theorem is valid if we replace
   $\Q$ by any field of characteristic zero.
   \end{proof}

\begin{remk}\label{remk:KH-Thom*}
  One easily checks that \corref{cor:Thom-maximal} also holds for $KH$-theory
  of singular $k$-schemes under the additional hypotheses of \thmref{thm:AS-KH}.
  \end{remk}

{\bf {Proof of \corref{cor:Main-4}:}}
Under our assumptions, the canonical maps $K_G(X; {\Z}/n) \to KH_G(X; {\Z}/n)$
and $K(X_G; {\Z}/n) \to KH(X_G; {\Z}/n)$ are weak equivalences by
\cite[Cor.~3.17, Prop.~7.15]{Hoyois-Krishna}.
The result therefore follows from \thmref{thm:AS-KH} and \corref{cor:KH-Thom*}.
\qed

\vskip .2cm

{\bf{Proof of \corref{cor:Main-1}.}}
If $f \colon X \to Y$ is an $\A^1$-weak equivalence, then $(\id \times f) \colon
G^n \times X \to G^n \times Y$ are also $\A^1$-weak equivalences for all $n$ by
\cite[Lem.~3.2.13]{Morel-Voevodsky}. It follows that the induced maps
$K(G^n \times Y) \to K(G^n \times X)$ are weak equivalences for all $n$. As they are
clearly compatible with the face and degeneracy maps of $X^\bullet_G$ and $Y^\bullet_G$,
we can pass to the limits to conclude that the map
$f^* \colon K(Y^\bullet_G) \to K(X^\bullet_G)$ is a weak equivalence.
We now apply Theorems~\ref{thm:K-simplex-15} and ~\ref{thm:Bar-K-rational} to
finish the proof. 
 \qed

\section{Completion of Hochschild homology}\label{sec:Hoch-prop}
Our goal in the next few sections is to prove the analogue of the
completion theorem (cf. \corref{cor:Thom-maximal})
for the equivariant homology theories. As in the case of $K$-theory, the key
step is Thomason's theorem for completion of the homology theories
at the augmentation ideal. For Hochschild homology,
this can be achieved by means of the formal HKR theorem of Ben-Zvi and
Nadler \cite{Ben-Nadler}, as we shall show in this section.
However, the corresponding result for cyclic homology is very delicate
because it involves interchanging homotopy limit and colimit.
We shall study this question in the next section.

We fix once and for all an algebraically closed field $k$ of characteristic zero.
We shall follow the notations of \S~\ref{sec:Recall*} throughout our discussion of
homology theories. In particular, we shall consider all functors on sheaves to be
derived. We shall use homological notations for chain complexes. 

\subsection{$I_G$-completion of Hochschild homology}\label{sec:H-0*}
For $\sX \in \dst_k$, we let ${\mathbb{L}}_{\sX}$ denote the cotangent complex of $\sX$
and let
$\T_{\sX}[-1] = \Spec({\rm Sym}^\bullet_{\sX}({\mathbb{L}}_{\sX}[1]))$ denote the odd
tangent bundle of $\sX$ with the projection $t \colon \T_{\sX}[-1] \to \sX$
(cf. \cite[\S~1.2.1]{Ben-Nadler}). If $\sX = [X/G]$ where
$G$ is a $k$-group acting on a smooth $k$-scheme $X$, then ${\mathbb{L}}_{\sX}$ is
given by the complex $(\Omega^1_{X/k} \to \mathfrak{g}^* \otimes_k \sO_X)$ of
$G$-equivariant coherent sheaves on $X$ with Cartan differential,
where $\mathfrak{g}$ is the Lie algebra of $G$ (cf. \cite[Exm.~2.1.10]{Chen})
and $\Omega^1_{X/k}$ is placed in degree zero.
The augmentation of ${\rm Sym}^\bullet_{\sX}({\mathbb{L}}_{\sX}[1])$ defines the
zero section $e \colon \sX \to \T_{\sX}[-1]$ of the projection $t$.

For a derived stack $\sX$ and a closed substack $\sZ \inj \sX$, let
$\wh{\sX}_{\sZ}$ denote the formal completion of $\sX$ along $\sZ$ (cf.
\cite[\S~6]{Gaitsgory-Rosenblyum}). It is an ind-stack such that for
$S \in {\bf{Aff}}_k$, the space
$\wh{\sX}_{\sZ}(S)$ consists of maps $S \to \sX$ of presheaves on ${\bf{Aff}}_k$
such that its restriction to
$\pi_0(S)_\red$ factors through the inclusion $\sZ \inj \sX$.
Let $\wh{\T}_{\sX}[-1]$ (resp. $\wh{\sL}\sX$)
denote the formal completion of $\T_{\sX}[-1]$ (resp. ${\sL}\sX$) along its
zero-section (resp. identity section).
The derived Hochschild-Kostant-Rosenberg (HKR) theorem for stacks then
provides a natural morphism $\exp \colon \wh{\T}_{\sX}[-1] \to \wh{\sL}\sX$ which
restricts to an isomorphism between the zero-section
$e(\sX) \inj \wh{\T}_{\sX}[-1] $ and the identity section $s(\sX) \inj \wh{\sL}\sX$
(cf. \cite[\S~6]{Ben-Nadler}). 

From now on in \S~\ref{sec:Hoch-prop}, we shall assume that
$G$ is a reductive $k$-group acting on a smooth $k$-scheme $X$. We let
$\sX = [X/G]$
denote the stack quotient and let $\pi \colon X \to \sX$ be the quotient map.
Let $p \colon U \to \sX$ be a smooth surjective morphism
where $U \in \Sm_k$. Since $\sX$ has
affine diagonal, $p$ is representable by schemes. We let $U_\bullet$ denote the
\u{C}ech nerve of $p$ and let $p_\bullet \colon U_\bullet \to \sX$ be the resulting
map of simplicial stacks. If we take $p$ to be the quotient map $X \to \sX$, then
$U_\bullet$ coincides with $X^\bullet_G$ of ~\eqref{eqn:Bar-BG}.
Since $\sO(\sL\sX)$ is a module over the ring spectrum $HH(BG) \simeq \sO({\sL}(BG))$,
it follows from \thmref{thm:H-functor} that it is a module over the ring
spectrum $K(BG)$. In particular, it has an action of the ring $R_k(G) =
\pi_0(K(BG,k))$. The first step to prove the completion theorem for the Hochschild
homology of $\sX$ is the following.

\begin{lem}\label{lem:H-Compln-0}
  The canonical pull-back map $\sO(\sL\sX) \to \sO(\wh{\sL}\sX)$ induces a weak
  equivalence
  \[
  \sO(\sL\sX)^{\compl}_{I_{G}} \xrightarrow{\simeq} \sO(\wh{\sL}\sX).
  \]
\end{lem}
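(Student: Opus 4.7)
The plan is to recognize both sides as global sections of the same derived formal object, via base change along the natural projection $\phi \colon \sL\sX \to \sL BG$ induced by $\sX \to BG$, and then to use the formal HKR theorem together with reductivity of $G$. First I would set up the $R_k(G)$-module structure compatibly on both sides. By \propref{prop:Twist-BP}(5) together with \propref{prop:character1*}, $\sO(\sL BG) \simeq HH(BG)$ is discrete and naturally identified with $R_k(G) = k[G]^G$. Thus $\phi^{*}$ endows $\sO(\sL\sX) \simeq HH(\sX)$ with an $R_k(G)$-module structure, and one checks that it coincides with the $\pi_0 K(BG)$-action obtained by factoring the $K(BG)$-module structure of $HH(\sX)$ through the trace map of \thmref{thm:H-functor}. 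The canonical pull-back map appearing in the lemma is then, by construction, the base-change map into the formal neighbourhood of the identity of $\sL BG$.

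Next I would establish the derived Cartesian square
\[
\wh{\sL}\sX \;\simeq\; \sL\sX \times^{h}_{\sL BG} \wh{(\sL BG)}_{e}.
\]
The underlying set-theoretic preimage $\phi^{-1}(\{e\}) \subset \sL\sX$ is exactly the identity section $s(\sX)$: a point $(g,x)$ with $g = e$ automatically satisfies $gx = x$. Combined with the general fact that formation of formal completions commutes with derived pullbacks of maps of derived stacks, this gives the displayed equivalence. Applying $\Gamma$, and using reductivity of $G$ to ensure that the affinization $\sL BG = [G/G] \to \Spec R_k(G)$ is cohomologically affine (so that $G$-invariants commute with the relevant derived tensor products), one obtains
\[
\sO(\wh{\sL}\sX) \;\simeq\; \holim_{n}\; \sO(\sL\sX) \otimes^{L}_{R_k(G)} R_k(G)/I_G^{n}.
\]

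Finally, by \lemref{lem:Noether-Rep} the ideal $I_G$ is the radical of a finitely generated ideal $(x_1,\ldots,x_r) \subset R_k(G)$, so Lurie's formula \eqref{eqn:I-comp-0} gives
\[
\sO(\sL\sX)^{\compl}_{I_G} \;\simeq\; \holim_{(n_1,\ldots,n_r)} \frac{\sO(\sL\sX)}{(x_1^{n_1},\ldots,x_r^{n_r})\sO(\sL\sX)}.
\]
Standard cofinality between the Koszul-type pro-system $\{(x_1^{n_1},\ldots,x_r^{n_r})\}$ and the $I_G$-adic pro-system $\{I_G^{n}\}$ then identifies this with the previous homotopy limit, yielding the asserted weak equivalence. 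The hard part will be justifying the base-change step -- namely, that global sections commute with the derived pullback $\sL\sX \times^{h}_{\sL BG}(-)$ when one restricts to Artinian thickenings of $e \in \sL BG$. The key input is that $G$ is reductive, so that the functor of $G$-invariants is exact and $[G/G]$ is cohomologically affine; this gives the required base-change compatibility through a Koszul resolution of $R_k(G)/I_G^{n}$, and allows the formal HKR theorem of Ben-Zvi--Nadler to be applied stack-theoretically rather than only pointwise.
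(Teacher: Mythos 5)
Your approach is in essence the same as the paper's: both rest on the observation that the formal completion of $\sL\sX$ along its identity section $\sX$ coincides with the (derived) base change of the formal neighbourhood of the identity in $\Spec R_k(G)$ along the composite
\[
\sL\sX \longrightarrow \sL BG \simeq [G/G] \xrightarrow{\;v\;} \Spec C[G] \cong \Spec R_k(G),
\]
the last isomorphism being where reductivity of $G$ enters (via \propref{prop:character1*} and \propref{prop:Twist-BP}(5)). Your set-theoretic observation that $\phi^{-1}(\{e\}) = s(\sX)$ is exactly the paper's Cartesian trapezium in ~\eqref{eqn:H-Compln-1}. The difference is in how global sections are then computed. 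The paper works affine-by-affine: for each $u \colon S \to \sL\sX$ it forms the pulled-back formal completion $\wh{S}_W$, applies the $(q'^*,q'_*)$-adjunction to compute $q_*\sO_{\wh{\sL}\sX}(S) \simeq \sO(S)^{\compl}_{I_G}$, and then takes the limit over affines — no projection formula for the structure map of $\sL\sX$ is ever invoked.

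You instead want a global projection formula $\Gamma(\sL\sX, f^*M) \simeq \Gamma(\sL\sX,\sO)\otimes^{L}_{R_k(G)} M$, and this is where your justification is off target. Cohomological affineness of $[G/G] \to \Spec R_k(G)$ controls the pushforward from $\sL BG$ to its affinization, not the pushforward from $\sL\sX$, so it does not directly give you the compatibility you need. Moreover, as written you base-change against $R_k(G)/I_G^{n}$, which is generally not a perfect $R_k(G)$-module, so the projection formula is not automatic for it. The clean repair is to work from the start with the Koszul-type quotients $R_k(G)/(x_1^{n_1},\ldots,x_r^{n_r})$ that appear in Lurie's formula ~\eqref{eqn:I-comp-0}: these are finite iterated cofibers, hence perfect over $R_k(G)$, so the projection formula holds unconditionally (for any qcqs derived stack, since $\Gamma$ commutes with finite colimits — no reductivity or cohomological affineness is required at this step). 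One then never needs a cofinality argument between the Koszul and $I$-adic systems at the level of stacks, nor any exactness of $G$-invariants. Also note that the formal HKR theorem, which you mention in your opening plan, plays no role in this particular lemma (it enters later, in the proof of \propref{prop:H-Compln-8}). With the base-change step phrased in terms of Koszul quotients, your argument is correct and is a legitimate globalized variant of the paper's affine-by-affine computation.
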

\begin{proof}
  We let $A$ be a simplicial commutative $k$-algebra and let 
  $S = \Spec(A)$ be the unique object of ${\bf{Aff}}_k$ defined by $A$.
  Let $u \colon S \to \sL\sX$ be a morphism of derived stacks and consider the
  diagram
  \begin{equation}\label{eqn:H-Compln-1}
    \xymatrix@C1pc{
      S \times_{\sL\sX} \sX \ar[r]^-{u'} \ar[d]_-{s'} & \sX \ar[d]^-{s} \ar[rr] &
      & [{\Spec(k(e))}/G] \ar[r] \ar[d]^-{j} &
      \Spec(k(e)) \ar[d]^-{j} \ar[dr]^-{j} &
      \\
      S \ar[r]^-{u} & \sL\sX \ar[r] & {\sL}BG \ar[r]^-{\simeq} & [G/G] \ar[r]^-{v} &
      \Spec(C[G]) \ar[r]^-{\cong} & \Spec(R_k(G)),}
  \end{equation}
  where $e \in G$ is the identity element and $[G/G]$ is the stack quotient for the
  adjoint action of $G$ on itself.

  The arrow $v$ is the affinization morphism (cf. \cite[\S~3]{Ben-Nadler})
  and the bottom horizontal arrow on the right is an isomorphism by
  \propref{prop:Twist-BP}(5). All other horizontal arrows are canonical maps.
  The vertical arrows are canonical inclusions of closed substacks.
  The right square is Cartesian because
  $v$ is the coarse moduli space for the adjoint action, the middle square is
  Cartesian by definition of the map $s$ and the left square is Cartesian by
  construction. It follows that the outer trapezium is Cartesian.

 If we let $w \colon S \to \Spec(R_k(G))$ denote the composite bottom horizontal
  arrow in ~\eqref{eqn:H-Compln-1} and $W = S \times_{\Spec(R_k(G))} \Spec(k(e))$, it
  follows from above that
  $u^{-1}(|\sX|) = |W|$ as topological spaces. But this implies by definition
  of the formal completion (cf. \cite[\S~6]{Gaitsgory-Rosenblyum}) that the 
  square
  \begin{equation}\label{eqn:H-Compln-2}
    \xymatrix@C1pc{
      \wh{S}_{W} \ar[r]^-{u'} \ar[d]_-{q'} & \wh{\sL}\sX \ar[d]^-{q} \\
      S \ar[r]^-{u} & \sL\sX}
  \end{equation}
is homotopy Cartesian.

We now compute
\begin{equation}\label{eqn:H-Compln-3}
\begin{array}{lll}
  q_*(\sO_{\wh{\sL}\sX})(S) & \simeq & u^*q_*(\sO_{\wh{\sL}\sX})(S) \\
  & \simeq & q'_* u'^*(\sO_{\wh{\sL}\sX})(S) \\
  & \simeq & \Map_{\sO_S}(\sO_S, q'_* u'^*(\sO_{\wh{\sL}\sX})) \\
   & {\simeq}^{\dagger} & \Map_{\sO_{\wh{S}_{W}}}(\sO_{\wh{S}_{W}}, u'^*(\sO_{\wh{\sL}\sX})) \\
   & \simeq & \Map_{\wh{S}_{W}}(\sO_{\wh{S}_{W}}, \sO_{\wh{S}_{W}}) \\
  & \simeq & A^{\compl}_{I_{G}} = \sO(S)^{\compl}_{I_{G}},
\end{array}
\end{equation}
where $\Map_{\sO_S}(-,-)$ is the mapping space between sheaves of $\sO_S$-modules
and the weak equivalence ${\simeq}^{\dagger}$ follows by the adjointness.

We finally compute
\begin{equation}\label{eqn:H-Compln-4}
\begin{array}{lll}
\sO(\wh{\sL}\sX) = \Gamma(\wh{\sL}\sX,  \sO_{\wh{\sL}\sX}) & \simeq & 
\Gamma(\sL\sX,  q_*(\sO_{\wh{\sL}\sX})) \\
& = & {\underset{S \in {\bf{Aff}}_k, S \to \sL\sX}\holim}  q_*(\sO_{\wh{\sL}\sX})(S) \\
& {\simeq}^{1} & {\underset{S \in {\bf{Aff}}_k, S \to \sL\sX}\holim}
\sO(S)^{\compl}_{I_{G}} \\
& \simeq & \left({\underset{S \in {\bf{Aff}}_k, S \to \sL\sX}\holim}
\sO(S)\right)^{\compl}_{I_{G}} \\
& \simeq & \sO(\sL\sX)^{\compl}_{I_{G}},
\end{array}
\end{equation}
where ${\simeq}^{1}$ follows from ~\eqref{eqn:H-Compln-3}. This finishes the proof.
\end{proof}

We now look at the commutative diagram
\begin{equation}\label{eqn:H-Compln-5}
  \xymatrix@C1pc{
    & {\T}_{U_\bullet}[-1] \ar[dr]^-{{\rm exp}_1} & \\
    \wh{\T}_{U_\bullet}[-1] \ar[r]^-{{\rm exp}_2}  \ar[d] \ar[ur]^-{\alpha}&
    \wh{\sL}U_\bullet \ar[r]^-{q_\bullet} \ar[d] &  {\sL}U_\bullet \ar[d] \\
    \wh{\T}_{\sX}[-1] \ar[r]^-{{\rm exp}_3} & \wh{\sL}\sX \ar[r]^-{q} & {\sL}\sX,}
  \end{equation}
where the horizontal arrows in the lower square on the right as well as the map
$\alpha$ are induced by the completions and the
vertical arrows are induced by $p_\bullet \colon U_\bullet \to \sX$
associated to the smooth surjective map $p \colon U \to \sX$. The lower left square
is commutative by the naturality of the derived HKR theorem.

We now note that each  ${\T}_{U_n}[-1]$ is complete because 
${\rm Sym}^\bullet_{\sX}({\mathbb{L}}_{U_n}[1])$ is
a bounded complex. By passing to the limit, it follows that $\alpha$ is an
equivalence.
The map ${\rm exp}_1$ in ~\eqref{eqn:H-Compln-5}
is an equivalence by the well known HKR theorem for smooth
$k$-schemes while ${\rm exp}_2$ and ${\rm exp}_3$ are equivalences by the formal HKR
theorem of Ben-Zvi and Nadler (cf. \cite[Thm.~6.9]{Ben-Nadler}).
It follows that $q_\bullet$ is an equivalence.
In other words, ${\sL}U_\bullet$ is complete with
respect to its identity section $U_\bullet \inj {\sL}U_\bullet$.
In particular, the map
\begin{equation}\label{eqn:H-Compln-6}
  \sO({\sL}U_\bullet) \simeq  \holim_n \sO({\sL}U_n) \to \holim_n \sO(\wh{\sL}U_n)
 \simeq \sO(\wh{\sL}U_\bullet) 
  \end{equation}
  is an equivalence.

On the other hand, a computation similar to
  ~\eqref{eqn:H-Compln-4} (substitute $U_n$ for $\sX$) shows that $\sO(\wh{\sL}U_n)$
  is $I_G$-complete as an $R_k(G)$-module.
  We conclude that the pull-back map $p^*_\bullet \colon \sO({\sL\sX}) \to
  \holim_n \sO({\sL}U_n)$ factors through $\sO({\sL\sX})^{\compl}_{I_G}$.
  Using ~\eqref{eqn:HH-Loop}, it follows that the pull-back map
  $p^*_\bullet \colon HH(\sX) \to \holim_n HH(U_n)$ factors through the $I_G$-completion
  of $HH(\sX)$ to induce a canonical map
  \begin{equation}\label{eqn:H-Compln-7}
    p^*_\bullet \colon HH(\sX)^{\compl}_{I_G} \to \holim_n HH(U_n).
  \end{equation}
  This is the analogue for the Hochschild homology of Thomason's completion map
  ~\eqref{eqn:Bar-EG-main-0} in $K$-theory.

  We let
  \begin{equation}\label{eqn:H-limit*}
    F(U_\bullet) = \holim_n F(U_n) \ \ \mbox{for} \ \ F \in \{HH,HC, HC^{-}, HP\}.
  \end{equation}

\begin{lem}\label{lem:B-Inv-0}
    The $S^1$-action on $HH(\sX)$ induces an $S^1$-action on $HH(\sX)^{\compl}_{I_G}$
    such that the completion map $HH(\sX) \to HH(\sX)^{\compl}_{I_G}$ as well as
    $ p^*_\bullet \colon HH(\sX)^{\compl}_{I_G} \to HH(U_\bullet)$ is $S^1$-equivariant.
  \end{lem}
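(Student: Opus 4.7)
The plan is to use the fact, established in \propref{prop:Twist-BP}(5), that $HH(BG)$ is discrete with $\pi_0(HH(BG)) \cong R_k(G)$, to show that the $R_k(G)$-module structure on $HH(\sX)$ is realised by $S^1$-equivariant self-maps. The $S^1$-action then descends to the completion through the explicit pro-system description \eqref{eqn:I-comp-0}, and a similar argument handles $p^*_\bullet$.

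First I would observe that the cyclic bar complex construction producing $HH$ lifts $HH(\sC)$ to an $S^1$-equivariant spectrum for every $\sC \in {\rm DGCat}_k$ (this is the source of the $S^1$-action on $HH$ used in \eqref{eqn:Loop-space-1}), and dg functors induce $S^1$-equivariant morphisms between the associated Hochschild spectra. In particular, the ring-spectrum structure on $HH(BG)$ and the module-spectrum structure $HH(BG) \wedge HH(\sX) \to HH(\sX)$ coming from the monoidal dg functor $\Perf(BG) \to \Perf(\sX)$ are $S^1$-equivariant (with the diagonal $S^1$-action on the smash product). By \propref{prop:Twist-BP}(5), $HH(BG)$ is weakly equivalent to the discrete Eilenberg--Mac Lane spectrum $H_{R_k(G)}$, and any $S^1$-action on a discrete spectrum is trivial up to homotopy since $S^1$ is connected. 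Consequently, every $x \in R_k(G)$ lifts canonically to an $S^1$-equivariant map $\mathbb{S} \to HH(BG)$, so that $\lambda_X(x) \colon HH(\sX) \to HH(\sX)$ is $S^1$-equivariant for every $x \in R_k(G)$.

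By \lemref{lem:Noether-Rep} we can write $I_G = \sqrt{(x_1, \ldots, x_r)}$ for finitely many elements $x_1, \ldots, x_r \in R_k(G)$, and the explicit description \eqref{eqn:I-comp-0} gives
\[
HH(\sX)^{\compl}_{I_G} \simeq \holim_{(i_1,\ldots,i_r) \in \N^r}
\frac{HH(\sX)}{(x^{i_1}_1, \ldots, x^{i_r}_r)HH(\sX)}.
\]
Each term in the homotopy limit is an iterated homotopy cofiber of the $S^1$-equivariant maps $\lambda_X(x^{i_j}_j)$ and the transition maps of the pro-diagram are likewise $S^1$-equivariant, so the homotopy limit naturally carries an $S^1$-action, and the completion map $HH(\sX) \to HH(\sX)^{\compl}_{I_G}$ is $S^1$-equivariant by construction. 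For the map $p^*_\bullet$, the pullbacks $p^*_n \colon HH(\sX) \to HH(U_n)$ are induced by dg functors and hence $S^1$-equivariant, as are the bonding maps of $\{HH(U_n)\}_n$; so $HH(\sX) \to HH(U_\bullet)$ is $S^1$-equivariant. Applying the above completion construction functorially to each $HH(U_n)$ and using that $HH(U_\bullet)$ is already $I_G$-complete (by the discussion preceding \eqref{eqn:H-Compln-7} together with \propref{prop:Compln_D}(7)), the universal property of $I_G$-completion forces the resulting $S^1$-equivariant factorization $HH(\sX)^{\compl}_{I_G} \to HH(U_\bullet)$ to coincide with $p^*_\bullet$.

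The main obstacle is the initial step of showing that multiplication by every element of $R_k(G)$ on $HH(\sX)$ is realised $S^1$-equivariantly; this is precisely where the discreteness of $HH(BG)$ from \propref{prop:Twist-BP}(5) is essential. Once this is granted, the remaining assertions follow formally from the fact that homotopy limits and homotopy cofibers preserve $S^1$-equivariance.
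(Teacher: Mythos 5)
Your proof is correct and takes essentially the same approach as the paper. The paper phrases the key point in the language of $\Lambda$-algebras — the morphism $R_k(G) \simeq HH(BG) \to HH(\sX)$ of $\Lambda$-algebras makes multiplication by any $x \in R_k(G)$ $\Lambda$-linear because $B$ acts trivially on the discrete $HH(BG)$ — whereas you argue the same thing directly at the level of $S^1$-spectra by observing that the $S^1$-action on the discrete $H_k$-module $HH(BG) \simeq H_{R_k(G)}$ is necessarily trivial; both arguments hinge on \propref{prop:Twist-BP}(5) and then descend the $S^1$-action through the iterated-cofiber/homotopy-limit presentation of the completion.
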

  \begin{proof}
    To see that the $S^1$-action on $HH(\sX)$ carries over to $HH(\sX)^{\compl}_{I_G}$,
    note that the $S^1$-action on $HH(\sX)$ is equivalent to the structure of
    a $\Lambda$-algebra on it (cf. \cite[Cor.~3.14]{Ben-Nadler})
    and there are morphisms of
    $\Lambda$-algebras $R_k(G) \simeq HH(BG) \to HH(\sX)$. In particular,
    multiplication by any element of $R_k(G) \cong \pi_0(HH(BG))$ is
    $\Lambda$-linear (in fact, $ HH(\sX)$-linear).
    By using this observation repeatedly, we see that
    the $S^1$-action of $HH(\sX)$ descends to compatible action of $S^1$ on quotients
    ${HH(\sX)}/{(x^{i_1}_1, \ldots , x^{i_r}_r)HH(\sX)}$ 
    as $(i_1, \ldots , i_r)$ varies over the $r$-tuples of positive integers,
    where $I_G = (x_1, \ldots , x_r)$ (cf. \propref{prop:finiteR}). Passing to the
    limit over these $r$-tuples and using ~\eqref{eqn:I-comp-0}, we get the first
    part of the lemma. The $S^1$-equivariance of $p^*_\bullet$ now easily follows
    using that each $HH(U_n)$ is $I_G$-complete.
\end{proof}

The following result is the analogue
of \thmref{thm:Bar-K-rational} for Hochschild homology. One difference though is
that the result for Hochschild homology is more general in that the map
$U \to \sX$ is allowed
  to be any smooth cover by a scheme  (thanks to the smooth descent
  property of the odd tangent bundle) and not just the quotient map $X \to \sX$
  (in which case $U_\bullet = X^\bullet_G$).

\begin{prop}\label{prop:H-Compln-8}
   The map $p^*_\bullet \colon HH(\sX)^{\compl}_{I_G} \to HH(U_\bullet)$ is a weak
   equivalence.
 \end{prop}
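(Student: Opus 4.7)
The plan is to identify $p^*_\bullet$ with a geometric map between functions on formal odd tangent bundles, and then prove that this map is a weak equivalence by reducing, via Koszul duality in characteristic zero, to smooth descent for wedge powers of the cotangent complex.

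First, combining \eqref{eqn:HH-Loop}, Lemma~\ref{lem:H-Compln-0}, and the equivalences $q_\bullet$, $\exp_1$, $\exp_3$ from the diagram \eqref{eqn:H-Compln-5}, the map $p^*_\bullet$ is canonically identified with the pullback map
\[
\Phi \colon \sO(\wh{\T}_{\sX}[-1]) \longrightarrow \holim_n \sO(\wh{\T}_{U_n}[-1])
\]
induced by $p_\bullet \colon U_\bullet \to \sX$. Thus the proposition reduces to showing that $\Phi$ is a weak equivalence. For this, I would use that in characteristic zero the Koszul/PBW equivalence $\Sym^i(M[1]) \simeq (\wedge^i M)[i]$ for a perfect complex $M$, combined with the description of the formal completion at the augmentation ideal (which replaces the direct sum $\bigoplus_i \Sym^i$ by a direct product), yields a natural equivalence
\[
\sO(\wh{\T}_{\sY}[-1]) \simeq \prod_{i \ge 0} \Gamma(\sY, \wedge^i \mathbb{L}_\sY)[i]
\]
for $\sY \in \{\sX, U_n\}$. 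Since products commute with homotopy limits and shifts, $\Phi$ is the product of maps
\[
\Phi_i \colon \Gamma(\sX, \wedge^i \mathbb{L}_\sX)[i] \longrightarrow \holim_n \Gamma(U_n, \wedge^i \mathbb{L}_{U_n})[i],
\]
and it suffices to check that each $\Phi_i$ is a weak equivalence.

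Next, since $\sX$ has affine diagonal and $p$ is a smooth surjection, smooth descent for quasi-coherent sheaves gives
\[
\Gamma(\sX, \wedge^i \mathbb{L}_\sX) \xrightarrow{\simeq} \holim_n \Gamma\bigl(U_n, \wedge^i (p_n^* \mathbb{L}_\sX)\bigr).
\]
The remaining step is to compare $\wedge^i (p_n^* \mathbb{L}_\sX)$ with $\wedge^i \mathbb{L}_{U_n}$ after taking $\holim_n$. The cofiber of $p_n^* \mathbb{L}_\sX \to \mathbb{L}_{U_n}$ is the locally free sheaf $\Omega^1_{U_n/\sX}$ (since $p_n$ is smooth), which induces a filtration on $\wedge^i \mathbb{L}_{U_n}$ with graded pieces $\wedge^a(p_n^* \mathbb{L}_\sX) \otimes \wedge^{i-a}\Omega^1_{U_n/\sX}$. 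The problem thereby reduces to showing that for each $j \ge 1$ the cosimplicial complex
\[
\bigl\{(p_n)_* \wedge^j \Omega^1_{U_n/\sX}\bigr\}_n
\]
has contractible homotopy limit (after tensoring over $\sO_\sX$ with $\wedge^a \mathbb{L}_\sX$ and applying $\Gamma(\sX, -)$, this yields the vanishing of all correction terms).

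The main obstacle is this last vanishing. While it is the well-known statement that relative de Rham pieces along a smooth Čech cover are contractible, its rigorous verification requires unwinding the decomposition $\Omega^1_{U_n/\sX} \simeq \bigoplus_{i=0}^n \pi_i^* \Omega^1_{U/\sX}$, where $\pi_0, \ldots, \pi_n$ are the $n+1$ projections $U_n \to U$, and carefully tracking the face and degeneracy maps. Alternatively, one can bypass this combinatorial step by appealing to a general descent principle: the formal loop functor $\sY \mapsto \sO(\wh{\sL}\sY) \simeq \sO(\wh{\T}_\sY[-1])$ inherits smooth (hence faithfully flat) descent from the cotangent complex $\mathbb{L}$ and the $\Sym$ (together with augmentation ideal completion) constructions, each of which is smooth-local. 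Either way, this descent statement is the single nontrivial ingredient beyond the formal HKR equivalences and Lemma~\ref{lem:H-Compln-0} already established.
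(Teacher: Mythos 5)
Your reduction is correct and is exactly the one the paper uses: via Lemma~\ref{lem:H-Compln-0} and the formal HKR equivalences in the diagram~\eqref{eqn:H-Compln-5}, the proposition reduces to showing that the pullback
$\sO(\wh{\T}_{\sX}[-1]) \to \holim_n \sO(\wh{\T}_{U_n}[-1])$
is a weak equivalence, i.e.\ that the formally completed odd tangent bundle satisfies smooth descent. The paper simply cites Ben-Zvi--Nadler, Prop.~6.4, for this. Your explicit Koszul/filtration route toward proving it is the right strategy: decompose into the product of $\wedge^i$-pieces, use QCoh descent to handle $\wedge^i p_n^*\mathbb{L}_{\sX}$, and filter $\wedge^i \mathbb{L}_{U_n}$ by the cofiber sequence $p_n^*\mathbb{L}_{\sX} \to \mathbb{L}_{U_n} \to \Omega^1_{U_n/\sX}$, so that the residual content is the vanishing of $\holim_n (p_n)_*\wedge^j \Omega^1_{U_n/\sX}$ for $j\ge 1$ (tensoring with $\wedge^a\mathbb{L}_{\sX}$, which is perfect and hence dualizable, commutes with the limit). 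You correctly flag this vanishing as the nontrivial step.

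However, your proposed ``bypass'' --- that descent for $\sO(\wh{\T}_{(-)}[-1])$ is inherited formally ``from the cotangent complex and the $\Sym$ constructions, each of which is smooth-local'' --- is not a valid argument. The cotangent complex does \emph{not} satisfy smooth descent in the relevant sense: along a smooth surjection $p_n \colon U_n \to \sX$, the natural map $p_n^*\mathbb{L}_{\sX} \to \mathbb{L}_{U_n}$ is not an equivalence, and its cofiber $\Omega^1_{U_n/\sX}$ is precisely the correction term you identified two sentences earlier. So the descent statement for the completed odd tangent bundle is a genuine theorem, not a consequence of any naive smooth-locality of $\mathbb{L}$; one either cites \cite[Prop.~6.4]{Ben-Nadler} (as the paper does) or carries out the combinatorial argument that the cosimplicial complexes $n \mapsto (p_n)_*\wedge^j \Omega^1_{U_n/\sX}$ are contractible for $j\ge 1$ (using the splitting $\Omega^1_{U_n/\sX}\simeq \bigoplus_{i=0}^n \pi_i^*\Omega^1_{U/\sX}$ and an extra-degeneracy/Eilenberg--Zilber style argument). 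As written, the ``bypass'' replaces the one essential lemma by a claim that is false, so the proposal has a gap if the bypass is relied upon; if instead the descent result is cited as a theorem, the argument goes through and is essentially the paper's own.
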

 \begin{proof}
   Using ~\eqref{eqn:H-Compln-4} and ~\eqref{eqn:H-Compln-6}, the problem is
   equivalent to showing that the map $p^*_\bullet \colon \sO(\sL\sX) \to
   \sO({\sL}U_\bullet)$ induces a weak equivalence
   $p^*_\bullet \colon \sO(\wh{\sL}\sX) \to \sO(\wh{\sL}U_\bullet)$.
   To that end, we use  the left square of ~\eqref{eqn:H-Compln-5}
   to get a commutative
   diagram
  \begin{equation}\label{eqn:H-Compln-9} 
    \xymatrix@C1pc{
      \sO(\wh{\sL}\sX) \ar[r]^-{{\rm exp}^*_3} \ar[d]_-{p^*_\bullet}
      & \sO(\wh{\T}_{\sX}[-1]) \ar[d]^-{p^*_\bullet} \\
      \sO(\wh{\sL}U_\bullet) \ar[r]^-{{\rm exp}^*_2} & \sO(\wh{\T}_{U_\bullet}[-1]).}
    \end{equation}

  The horizontal arrows are weak equivalences by the formal HKR theorem
  (cf. \cite[Thm.~6.9]{Ben-Nadler}) while the right vertical arrow is a weak
  equivalence
  by the smooth descent for the completed odd tangent bundle of smooth stacks
  (cf. [op. cit., Prop.~6.4]). This forces the left vertical arrow to be a weak
  equivalence.
\end{proof}

\subsection{$I_G$-completion of negative cyclic homology}\label{sec:Neg-cyc-0}
We shall now use \propref{prop:H-Compln-8} to prove a similar result for
the negative cyclic homology of $\sX$.

\begin{cor}\label{cor:NH-functor}
   The pull-back map $p^*_\bullet \colon HC^{-}(\sX) \to HC^{-}(U_\bullet)$ induces
   a weak equivalence
   \[
   p^*_\bullet \colon HC^{-}(\sX)^{\compl}_{I_G} \xrightarrow{\simeq} HC^{-}(U_\bullet).
   \]
 \end{cor}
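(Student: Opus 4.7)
The plan is to deduce the corollary from Proposition~\ref{prop:H-Compln-8} by applying the $S^1$-homotopy fixed point functor to the weak equivalence furnished by that proposition, and then commuting $(-)^{hS^1}$ past the $I_G$-completion on the source side. The starting identity we will use is the one recorded in ~\eqref{eqn:Loop-space-1}, namely $HC^{-}(\sY) \simeq HH(\sY)^{hS^1}$ for any $\sY$ in our category of quotient stacks.

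First I would check that $HC^{-}(U_\bullet)$ is already $I_G$-complete, so that the map $p^*_\bullet \colon HC^{-}(\sX) \to HC^{-}(U_\bullet)$ factors through the $I_G$-completion of its source. Each $HH(U_n)$ is $I_G$-complete (this was verified, for each individual term, inside the proof of Proposition~\ref{prop:H-Compln-8}, by computing $\sO(\wh{\sL}U_n)$ and observing that it is an $I_G$-adic completion of an $R_k(G)$-module). Since $(-)^{hS^1}$ is a homotopy limit (over $BS^1$) and the homotopy limit of $I_G$-complete spectra is again $I_G$-complete by \propref{prop:Compln_D}(7), each $HC^{-}(U_n) = HH(U_n)^{hS^1}$ is $I_G$-complete; applying \propref{prop:Compln_D}(7) once more, so is $HC^{-}(U_\bullet) = \holim_n HC^{-}(U_n)$. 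Thus we obtain a canonical factorization $p^*_\bullet \colon HC^{-}(\sX)^{\compl}_{I_G} \to HC^{-}(U_\bullet)$.

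Next I would apply the $S^1$-homotopy fixed point functor to the $S^1$-equivariant weak equivalence $p^*_\bullet \colon HH(\sX)^{\compl}_{I_G} \xrightarrow{\simeq} HH(U_\bullet)$ of \propref{prop:H-Compln-8}. The $S^1$-equivariance of this map is exactly the content of \lemref{lem:B-Inv-0}. This yields a weak equivalence
\[
\bigl(HH(\sX)^{\compl}_{I_G}\bigr)^{hS^1} \xrightarrow{\simeq} HH(U_\bullet)^{hS^1} \simeq \holim_n HH(U_n)^{hS^1} \simeq HC^{-}(U_\bullet),
\]
where the middle identification is commutation of two homotopy limits.

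It remains to identify the left-hand side with $HC^{-}(\sX)^{\compl}_{I_G}$, and this is the only real point that requires checking. By \lemref{lem:Noether-Rep} we may write $I_G = (x_1,\ldots,x_r)$, and by \propref{prop:Compln_D} and ~\eqref{eqn:I-comp-0} the $I_G$-completion of an $R_k(G)$-module spectrum is computed as the homotopy limit over $(i_1,\ldots,i_r) \in \N^r$ of the Koszul-type homotopy cofibers $M/(x_1^{i_1},\ldots,x_r^{i_r})M$. By \lemref{lem:B-Inv-0}, multiplication by each $x_j$ on $HH(\sX)$ is $S^1$-equivariant (being $HH(\sX)$-linear), so these homotopy cofibers carry compatible $S^1$-actions. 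Applying $(-)^{hS^1}$ commutes with the homotopy cofiber by $x_j^{i_j}$ (shift of a homotopy fiber) and with the homotopy limit over $\N^r$, giving
\[
\bigl(HH(\sX)^{\compl}_{I_G}\bigr)^{hS^1} \simeq \bigl(HH(\sX)^{hS^1}\bigr)^{\compl}_{I_G} = HC^{-}(\sX)^{\compl}_{I_G}.
\]
Combining this with the previous display proves the corollary. The main potential obstacle is precisely this last bookkeeping step of commuting $(-)^{hS^1}$ with the $I_G$-completion, but it reduces to a double homotopy limit interchange once finite generation of $I_G$ is used, so no substantial new difficulty arises. The analogous issues for $HC$ and $HP$ (which involve a genuine homotopy colimit or Tate construction) are the delicate ones flagged by the authors and are treated separately.
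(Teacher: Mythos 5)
Your proof is correct and follows essentially the same route as the paper: both obtain the result by commuting the homotopy limit defining $(-)^{hS^1}$ past the homotopy limit defining the $I_G$-completion, using the $S^1$-equivariance supplied by \lemref{lem:B-Inv-0} together with \propref{prop:H-Compln-8} and the $I_G$-completeness of each $HC^{-}(U_n)$. The paper packages this as a natural transformation of functors on $BS^1 \times \sC \times \Delta$ and reads off the two orders of homotopy limit, whereas you argue directly with the Koszul-type cofibers of ~\eqref{eqn:I-comp-0}, but the mathematical content is identical.
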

\begin{proof}
   Recall that a $\Lambda$-module structure on a complex $\sF_\bullet \in \sD_k$ is
   the same thing as an $S^1$-action on the Eilenberg-MacLane spectrum of
   $\sF_\bullet$, which in turn is the same thing as a functor
   $\sF \colon BS^1 \to \sS\sH$, where $BS^1$ is the classifying category of the
   topological group $S^1$
   (cf. \cite[\S~I.1]{Nikolaus-Scholze}). In this case, $|\sF|^{hS^1}$ coincides with
   the homotopy limit $\holim_{S^1} \sF$ of the functor $\sF$. 
   We now let $\sC$ be the category of $r$-tuples of positive integers
   $(i_1, \ldots , i_r)$ with the unique map
   $(i_1, \ldots , i_r) \to (j_1, \ldots , j_r)$ if $i_m \ge j_m$ for all $m$.
   Then we have a natural transformation
   \begin{equation}\label{eqn:NH-functor-0}
     p^*_\bullet \colon \frac{HH(\sX)}{(x^{i_1}_1, \ldots , x^{i_r}_r)HH(\sX)} \to
     \frac{HH(U_n)}{(x^{i_1}_1, \ldots , x^{i_r}_r)HH(U_n)}
     \end{equation}
   between functors from $BS^1 \times \sC \times \Delta$ to $\sS\sH$ where the
   left-hand side factors through the projection
   $BS^1 \times \sC \times \Delta \to BS^1 \times \sC$.
In particular, for every $\alpha \in ({i_1}, \ldots , {i_r}) \in \sC$ and
$[n] \in \Delta$, this map is $S^1$-equivariant.

If we pass to the limit over $\sC \times \Delta$, ~\eqref{eqn:NH-functor-0}
recovers the $S^1$-equivariant map 
\begin{equation}\label{eqn:NH-functor-1}
p^*_\bullet \colon HH(\sX)^{\compl}_{I_G} \to HH(U_\bullet)
\end{equation}   
of \propref{prop:H-Compln-8}. On the other hand, passing to the limit over $BS^1$, it
produces
\begin{equation}\label{eqn:NH-functor-2}
p^*_\bullet \colon \frac{HC^{-}(\sX)}{(x^{i_1}_1, \ldots , x^{i_r}_r)HC^{-}(\sX)} \to
\frac{HC^{-}(U_n)}{(x^{i_1}_1, \ldots , x^{i_r}_r)HC^{-}(U_n)}.
\end{equation}

Hence, the limit over $BS^1$ of ~\eqref{eqn:NH-functor-1} is a weak equivalence if
and only if the limit over $\sC \times \Delta$ of ~\eqref{eqn:NH-functor-2}
is a weak equivalence (cf. \cite[\S~31.5]{Chacholski-Scherer}).
Using \propref{prop:H-Compln-8}, we therefore get a weak equivalence
\[
p^*_\bullet \colon HC^{-}(\sX)^{\compl}_{I_G} \xrightarrow{\simeq}
\holim_{\Delta} \holim_{\sC}
\frac{HC^{-}(U_n)}{(x^{i_1}_1, \ldots , x^{i_r}_r)HC^{-}(U_n)}.
\]
Since each $HH(U_n)$ is $I_G$-complete as we showed above, it follows that
$HC^{-}(U_n) = \holim_{S^1} HH(U_n)$ is also $I_G$-complete
(cf. \propref{prop:Compln_D}(7)). In particular,
  the right-hand side of the last equivalence coincides with
  $\holim_{n} HC^{-}(U_n)$. This finishes the proof.
\end{proof}

 \subsection{Some results on cyclic homology}\label{sec:Cyc-misc}
 We end this section with the following results related to cyclic homology
 which will be used in the next section.

\begin{lem}\label{lem:Chen-7}
   $HH(\sX)$ and $HC(\sX)$ are homologically bounded below complexes in $\sD_k$.
   In particular, $\holim_n HH(U_n)$ is homologically bounded below.
 \end{lem}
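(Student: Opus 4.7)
The plan is to establish lower boundedness of $HH(\sX)$ directly, then deduce the corresponding statements for $HC(\sX)$ and for $\holim_n HH(U_n)$. For $HH(\sX)$ I would invoke the global Hochschild--Kostant--Rosenberg decomposition for smooth Artin stacks in characteristic zero. Combined with the identification $HH(\sX) \simeq \sO(\sL\sX)$ of \eqref{eqn:HH-Loop}, this yields an equivalence
\[
HH(\sX) \simeq \bigoplus_{i \ge 0} \text{R}\Gamma\bigl(\sX,\wedge^i \mathbb{L}_{\sX}[i]\bigr).
\]
Since $\sX = [X/G]$ is smooth, $\mathbb{L}_{\sX}$ is perfect with cohomological amplitude $[-1,0]$, so $\wedge^i \mathbb{L}_{\sX}[i]$ has amplitude $[-2i,-i]$. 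Because $G$ is reductive in characteristic zero, the functor of $G$-invariants is exact on $\Qcoh$, and consequently $\text{R}\Gamma(\sX,-)$ factors through $G$-invariants of $\text{R}\Gamma(X,-)$ and has cohomological dimension at most $\dim X$. A degree count then shows that $H^k(\sX,\wedge^i \mathbb{L}_{\sX}[i])$ vanishes for $k > \dim X - i$, and in particular for $k > \dim X$ uniformly in $i \ge 0$. Translating to homological degree, $\pi_n(HH(\sX)) = 0$ for $n < -\dim X$.

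For $HC(\sX)$: by \eqref{eqn:Loop-space-1} we have $HC(\sX) = HH(\sX)_{hS^1}$, and the homotopy $S^1$-orbit spectral sequence $E^2_{p,q} = H_p(BS^1,\pi_q(HH(\sX))) \Rightarrow \pi_{p+q}(HC(\sX))$ is concentrated in the half-plane $p \ge 0$. Lower boundedness of $HH(\sX)$ therefore propagates directly to $HC(\sX)$, with the same bound $-\dim X$.

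For the ``in particular'' clause, \propref{prop:H-Compln-8} identifies $\holim_n HH(U_n)$ with $HH(\sX)^{\compl}_{I_G}$. By \lemref{lem:Noether-Rep}, $I_G$ is the radical of a finitely generated ideal, say with $r$ generators. The explicit Koszul presentation \eqref{eqn:I-comp-0} of derived completion then expresses $HH(\sX)^{\compl}_{I_G}$ as the homotopy limit over $(i_1,\dots,i_r) \in \N^r$ of the iterated cofibers $HH(\sX)/(x_1^{i_1},\dots,x_r^{i_r})$, each of which is bounded below by $-\dim X - r$ uniformly in $(i_1,\dots,i_r)$. Passing to the homotopy limit can decrease this bound by at most one further unit via $\lim^1$ contributions, so $\holim_n HH(U_n)$ is homologically bounded below.

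The main obstacle is justifying the global HKR decomposition for smooth Artin quotient stacks $[X/G]$ in characteristic zero. While classical for smooth schemes and well-documented for smooth DM stacks, for general smooth Artin stacks the argument is most naturally phrased through the contracting $\mathbb{G}_m$-action on the derived loop space $\sL\sX$ by loop rescaling, whose fixed locus is $\sX$ with normal complex $\mathbb{L}_{\sX}[1]$. An alternative that sidesteps global HKR is to combine the formal HKR of Ben-Zvi--Nadler already used in \propref{prop:H-Compln-8} with smooth-local gluing along an atlas; however, this route essentially circles back to the homotopy-limit computation in the second half of the lemma and is less clean than invoking the HKR decomposition directly.
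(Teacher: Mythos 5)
Your conclusion is correct and the second and third paragraphs track the paper closely, but for the main assertion about $HH(\sX)$ you take a genuinely different and less economical route than the paper does, and you leave a real gap in it. The paper never invokes a global HKR decomposition for $[X/G]$. It goes straight from $HH(\sX)\simeq\sO(\sL\sX)=p_*\sO_{\sL\sX}$ (with $p\colon\sL\sX\to\pt$) to the assertion that $p_*\sO_{\sL\sX}$ is bounded below provided $p$ has finite cohomological dimension (citing \cite[Lem.~A.1.6]{HLPR}); then it reduces the cohomological dimension of $p$ to that of the structure map of the \emph{classical} truncation $\pi_0(\sL\sX)=[I_X/G]$ via \cite[Rem.~1.22]{Khan-JJM}; and finally it invokes \cite[Prop.~A.1.8]{HLPR} for the inertia stack. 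This is both shorter and strictly more general: as \remref{remk:Chen-0-0-3} points out, the paper's proof does not use the reductivity of $G$ or the smoothness of $X$, whereas your argument uses both essentially (exactness of $G$-invariants for the cohomological-dimension bound, and smoothness for $\mathbb{L}_\sX$ to be perfect of amplitude $[-1,0]$ so that the HKR graded pieces are uniformly bounded).

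The gap you yourself flag is real: "global HKR for smooth Artin quotient stacks in characteristic zero" is not something the paper establishes or cites, and proving it via the $\G_m$-contraction on $\sL\sX$ or by smooth-local gluing from the formal HKR of Ben-Zvi--Nadler is a nontrivial detour. Since the statement you ultimately need from it is only a lower bound on the amplitude of $R\Gamma(\sL\sX,\sO)$, the paper's direct route through finite cohomological dimension gets that much more cheaply. Your treatment of $HC$ via the $S^1$-homotopy-orbit spectral sequence is fine and is a homotopical reformulation of the paper's sum-totalization argument. In the final step, you should not say the terms $HH(\sX)/(x_1^{i_1},\dots,x_r^{i_r})$ are "bounded below by $-\dim X-r$": each iterated cofiber of a self-map is still bounded below by the same $-\dim X$, by the long exact sequence. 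The $\lim^1$ bookkeeping you then do is correct but needlessly lossy; the paper simply cites \propref{prop:Compln_D}(1) (Lurie's derived completion of a connective module is connective, hence a shift of a bounded-below module stays bounded below by the same integer), together with \propref{prop:H-Compln-8} and \propref{prop:finiteR}.
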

\begin{proof}
  Using ~\eqref{eqn:HH-Loop}, claiming that $HH(\sX)$ is homologically bounded below
  is the same thing as claiming that $\sO(\sL\sX)$ is homologically bounded
  below. Since the latter is the derived push-forward of the structure sheaf of
  $\sL\sX$ under the structure map $p \colon \sL\sX \to \pt$,
  it suffices to show more generally that $p$ is of finite cohomological dimension
  (cf. \cite[Lem.~A.1.6]{HLPR}). By \cite[Rem.~1.22]{Khan-JJM},
  this is equivalent to showing that
  the structure map of $\pi_0(\sL\sX) = \sI_{\sX} = [{I_X}/G]$ is
  of finite cohomological dimension. But this follows from \cite[Prop.~A.1.8]{HLPR}
  under our assumptions.

Since the cyclic homology complex is obtained by taking the sum totalization of
   the cyclic bicomplex arising from the mixed complex structure on $HH(\sX)$, it
   follows that $HC(\sX)$ is also homologically bounded below.
The second part of the lemma now follows from Propositions~\ref{prop:H-Compln-8},
~\ref{prop:Compln_D}(1) and ~\ref{prop:finiteR}.
\end{proof}

We now consider a general situation in which $H$ is a $k$-group which acts on a
$k$-scheme
 $Y$ and let $\sY = [Y/H]$. Assume that $I_H = (y_1, \ldots , y_r) \subset R_k(H)$ and
 $I_s = (y^s_1, \ldots , y^s_r)$ for $s \in \N$. We then have an $S^1$-equivariant
 natural transformation of functors $HH(\sY)^{\compl}_{I_H} \to
 \{{HH(\sY)}/{I_s}\}_{s \in \N}$ on $\N$, where we let ${HH(\sY)}/{I_s} :=
  {HH(\sY)}/{I_sHH(\sY)}$.   In particular, we get a
 natural transformation of functors $({HH(\sY)^{\compl}_{I_H}})_{hS^1} \to
 {\{({HH(\sY)}/{I_s})_{hS^1}\}}_{s \in \N} \simeq {\{{HH(\sY)_{hS^1}}/{I_s}\}}_{s \in \N}$.
 Passing to the homotopy limits, we get a natural map of $R_k(H)$-modules
 \begin{equation}\label{eqn:Chen-0*}
  \alpha_\sX \colon (HH(\sY)^{\compl}_{I_H})_{hS^1} \to HC(\sY)^{\compl}_{I_H}.
   \end{equation}
 
\begin{lem}\label{lem:Chen-0}
 Assume that $H$ is reductive and $Y$ is a smooth
  $k$-scheme. Then the canonical map
   \[
   \alpha_{\sX} \colon (HH(\sY)^{\compl}_{I_H})_{hS^1} \to HC(\sY)^{\compl}_{I_H}
   \]
   is a weak equivalence.
 \end{lem}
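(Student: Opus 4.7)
The issue behind $\alpha_{\sX}$ being non-trivial is that $(-)^{\compl}_{I_H}$ is a homotopy limit while $(-)_{hS^1}$ is a homotopy colimit, so the two functors do not commute in general. My plan is to reduce the comparison to the graded pieces of an $S^1$-equivariant HKR filtration on $HH(\sY)$, on whose graded pieces the $S^1$-action is trivial, and then handle each graded piece by a direct argument.

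First, apply the formal HKR theorem of Ben-Zvi and Nadler \cite{Ben-Nadler} to $\sY = [Y/H]$ (which is perfect by \propref{prop:Perf-stack}) to identify $HH(\sY)^{\compl}_{I_H}$ with $\sO(\wh{\T}_{\sY}[-1]) \simeq \Sym^\bullet_{\sO_{\sY}}({\mathbb{L}}_{\sY}[1])^{\compl}_{I_H}$. This object carries a natural $S^1$-equivariant descending HKR filtration $\Fil^{\geq p}$ with graded pieces $\gr^p \simeq \Omega^p_{\sY}[p]$. Following the conventions of Bhatt--Morrow--Scholze \cite{BMS}, the Connes operator $B$ strictly raises the filtration index, so the induced $S^1$-action on each graded piece is trivial. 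Since $H$ is reductive in characteristic zero and $Y$ is smooth, the cotangent complex ${\mathbb{L}}_{\sY}$ has Tor-amplitude in $[-1,0]$, so each $\Omega^p_{\sY}$ is a bounded complex of coherent $\sO_{\sY}$-modules whose derived global sections are bounded complexes of finitely generated $R_k(H)$-modules (using reductivity to control cohomology along $BH$).

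On a graded piece $M = \Omega^p_{\sY}[p]$ with trivial $S^1$-action, we have $M_{hS^1} \simeq M \otimes_k C_*(BS^1; k)$, which as a chain complex is a bounded-below direct sum of shifts $\bigoplus_{j \geq 0} M[2j]$. The equivalence for graded pieces then amounts to the statement that $I_H$-adic completion commutes with such bounded-below sums, which holds because each $\pi_i(M)$ is a finitely generated $R_k(H)$-module (so its $I_H$-completion agrees with the classical one and commutes with bounded-below direct sums of shifts by Artin--Rees). The equivalence on graded pieces then propagates to each $\Fil^{\geq p} HH(\sY)$ by induction, using that the cofiber sequence
\[
\Fil^{\geq p+1} HH(\sY) \to \Fil^{\geq p} HH(\sY) \to \gr^p HH(\sY)
\]
is preserved by both $(-)^{\compl}_{I_H}$ and $(-)_{hS^1}$, and a two-out-of-three argument on cofiber sequences.

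The main obstacle is the convergence of the HKR filtration after $I_H$-completion, since $p$ ranges over all non-negative integers. This is precisely where the boundedness of the derived odd tangent bundle $\T_{\sY}[-1]$ is used: smoothness of $Y$ and reductivity of $H$ bound the contributions of $\gr^p$ in each cohomological degree uniformly in $p$, so the spectral sequences from the filtration converge strongly on both sides of $\alpha_{\sX}$. Tracking this convergence through completion and $(-)_{hS^1}$ simultaneously is the technical heart of the proof, and is what makes the inductive step close up in the limit to give the desired equivalence.
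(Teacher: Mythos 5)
Your proposal takes a genuinely different route from the paper, and it has a gap at exactly the step you flag as ``the technical heart.''

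The paper's proof of this lemma is elementary and avoids the HKR filtration entirely. It presents $HH(\sY)^{\compl}_{I_H}$ as $\holim_s M_s$ where $M_s = HH(\sY)/I_sHH(\sY)$, observes (using \lemref{lem:Chen-7} and the long exact sequences for fiber sequences $M \xrightarrow{x} M \to M/xM$) that the inverse system $\{M_s\}$ is \emph{uniformly} homologically bounded below, then writes the homotopy limit as the fiber of a self-map on the product $\prod_s M_s$. Since $(-)_{hS^1}$ on a mixed complex has the explicit degree-wise formula $((N)_{hS^1})_n = N_n \oplus N_{n-2} \oplus \cdots$ and each $M_s$ vanishes below a fixed $N$, the natural map $\left(\prod_s M_s\right)_{hS^1} \to \prod_s (M_s)_{hS^1}$ is an isomorphism degree-by-degree, and the result follows. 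The HKR filtration argument you propose is essentially what the paper does later, in \lemref{lem:Chen-5}, but there it is applied to the \u{C}ech nerve $\{U_n\}$ (which are smooth \emph{schemes}, so the filtration is bounded), not to $HH(\sY)^{\compl}_{I_H}$ for the stack $\sY$.

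There are two concrete problems with your argument. First, the finite-generation claim is not correct: for non-proper $Y$ (which the lemma allows) the cohomology groups $H^j(\sY, \Omega^p_{\sY})$ are typically infinite-dimensional over $k$, and there is no reason for them to be finitely generated over $R_k(H)$. (For the graded-piece step this is fortunately a red herring: a bounded-below direct sum $\bigoplus_{j\ge 0} M[2j]$ agrees with $\prod_{j\ge 0} M[2j]$ degree-wise, and derived $I_H$-completion, being a homotopy limit, commutes with products; combined with \propref{prop:Compln_D}(1), that is all you need on a single graded piece.) Second, and more seriously, the convergence of the filtration is not uniform: since ${\mathbb{L}}_{\sY}$ for a stack $[Y/H]$ has a nontrivial contribution from $\mathfrak{g}^*$, the pieces $\gr^p \simeq R\Gamma(\sY, \Omega^p_{\sY})[p]$ spread into lower and lower degrees as $p$ grows, so the graded pieces are not uniformly bounded below. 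The two-out-of-three induction on $\Fil^{\geq p+1} \to \Fil^{\geq p} \to \gr^p$ therefore has no finite starting point and does not close up into the unfiltered statement without an additional convergence argument, which you do not supply. The paper sidesteps this entirely: the uniform bound on $\{M_s\}_s$ comes for free because $I_H$ is finitely generated and $HH(\sY)$ is bounded below, with no filtration by ``tangent weight'' involved.
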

\begin{proof}
  Letting $M_s = {HH(\sY)}/{I_s}$, it easily follows by using \lemref{lem:Chen-7}
  and by iterating the long exact homotopy groups sequences associated to
 fiber sequences $M \xrightarrow{x} M \to M/{xM}$
  (where $M$ is any complex of $R_k(H)$-modules)
  that the inverse system of complexes \{$M_s\}_{s \in \N}$ is uniformly homologically
  bounded below. That is, for some $N \ll 0$, the canonical map
  $\tau_{\ge N} M_s \to M_s$ is a weak equivalence for every $s \in \N$.
  By replacing each $M_s$ by $\tau_{\ge N} M_s$, we can therefore assume that
  each $M_s$ is bounded below by $N$. 

  We now look at the commutative diagram
  \begin{equation}\label{eqn:Chen-0-0-1}
    \xymatrix@C1pc{
     (\varprojlim_s M_s)_{hS^1} \ar[r] \ar[d] & (\prod_s M_s)_{hS^1} \ar[r]^-{u}
      \ar[d] & (\prod_s M_s)_{hS^1} \ar[d] \\
       \varprojlim_s (M_s)_{hS^1} \ar[r]  & \prod_s (M_s)_{hS^1} \ar[r]^-{u}
       & \prod_s (M_s)_{hS^1}}
  \end{equation}
  in $\sD_k$, where $u$ is induced by the map on the products which is
  given levelwise by $k_n \mapsto k_n - f_{n+1}(k_{n+1})$, where $f_n \colon
  M_{n} \to M_{n-1}$ is the bonding map.
  Since the transition maps of $\{M_s\}$ are $\Lambda$-linear, it follows that
  $u$ is $\Lambda$-linear. In particular, all maps in ~\eqref{eqn:Chen-0-0-1} are
  defined and the rows are homotopy fiber sequences.
  It suffices therefore to show that the canonical map
  $\alpha \colon (\prod_s M_s)_{hS^1} \to \prod_s (M_s)_{hS^1}$ is a weak equivalence.

  Now, we now recall from \cite[Def.~2.2.13]{Chen}
  that $((M_s)_{hS^1})_n = \left((M_s)_n \oplus (M_s)_{n-2} \oplus \cdots \right)$
  for any $n \in \Z$.
  In particular, we have
  \begin{equation}\label{eqn:Chen-0-0-2}
  \left(\prod_s (M_s)_{hS^1}\right)_n = \prod_s \left((M_s)_{hS^1}\right)_n =
  \prod_s \left((M_s)_n \oplus (M_s)_{n-2} \oplus \cdots \right).
  \end{equation}
  On the other hand, we have
  \begin{equation}\label{eqn:Chen-0-0-3}
  \left(\left(\prod_s M_s\right)_{hS^1}\right) = \left( \prod_s (M_s)_n \oplus \prod_s 
  (M_s)_{n-2} \oplus  \cdots \right).
  \end{equation}
 Since each $M_s$ is zero below $N$, one checks that 
 the right hand sides of ~\eqref{eqn:Chen-0-0-2} and ~\eqref{eqn:Chen-0-0-3}
 coincide. This implies that $\alpha$ is a weak equivalence and concludes the proof
 of the lemma. 
  \end{proof}

\begin{remk}\label{remk:Chen-0-0-3}
  The reader can check easily that our assumptions about the reductivity of
  $G$ and $H$ and the smoothness of $[X/G]$ and $[Y/H]$ are not used in the proofs of
  Lemmas~\ref{lem:Chen-7} and ~\ref{lem:Chen-0} even though these
  suffice for us.
  \end{remk}

\section{Thomason's problem for cyclic homology}\label{sec:Cyc**}
In this section, we shall extend \propref{prop:H-Compln-8} to cyclic and periodic
cyclic homology and use these results to prove
Thomason's completion theorem (cf. \corref{cor:Thom-maximal}) for all
homology theories.
We fix an algebraically closed field $k$ of characteristic zero.

\subsection{Cyclic homology of a \u{C}ech nerve}\label{sec:Cyc-cech}
We let $G$ be a reductive $k$-group and $X$ a smooth
$k$-scheme with $G$-action.
We let $\sX = [X/G]$ and let $p \colon U \to \sX$ be a smooth
surjective map with $U \in \Sm_k$. In this subsection, we shall prove the key
lemma which says that 
$HC(U_\bullet)$ is the homotopy orbit space for the $S^1$-action on $HH(U_\bullet)$.
As the reader may note, this is a delicate statement because it involves
interchanging homotopy limit and colimit.

We let $p_n \colon U_n \to \pt$ denote the structure map and let
${\mathbb{L}}_n = \Sym^\bullet_{U_n}({\mathbb{L}}_{U_n}[1]) \in D(U_n)$ so that
$HH(U_n) = (p_n)_*({\mathbb{L}}_n)$. We let $d_n = \dim(U_n)$.
Since $U_n \in \Sm_k$, there is a natural weak equivalence
${\mathbb{L}}_n \xrightarrow{\simeq} \Sym^\bullet(\Omega^1_{{U_n}/k}[1])$. Each
${\mathbb{L}}_n$ comes equipped with
a Connes operator $B$ which is identified with the de Rham differential
$d \colon \Omega^i_{{U_n}/k} \to \Omega^{i+1}_{{U_n}/k}$.
It follows that ${\mathbb{L}}_n$ is endowed with the canonical 
HKR filtration $\Fil^m_{\hkr}{\mathbb{L}}_n
= \stackrel{d_n}{\underset{i = m}\bigoplus} \Omega^i_{{U_n}/k}[i]$. Moreover,
$\{\Fil^m_{\hkr}{\mathbb{L}}_n\}_{m \ge 0}$ is an
   $S^1$-equivariant
filtration in which the $S^1$-action is given by the Connes operator $B$.
Letting $\Fil^m_{\hkr}HH(U_n) = (p_n)_*(\Fil^m_{\hkr}{\mathbb{L}}_n)
   \simeq$ \
   $\stackrel{d_n}{\underset{i = m}\bigoplus}  (p_n)_*(\Omega^i_{{U_n}/k})[i]$ and using
   that the derived push-forward commutes with homotopy limits, we get
   the following.

\begin{lem}\label{lem:Chen-4}
     There is an  $S^1$-equivariant descending filtration
     $\{\Fil^m_{\hkr}HH(U_n)\}_{m \ge 0}$ of $HH(U_n)$
  whose graded quotients are given by
  ${\rm gr}^m_{\hkr}HH(U_n) = (p_n)_*(\Omega^m_{{U_n}/k})[m]$. Furthermore, the
  canonical map
  \[
  q_n \colon HH(U_n) \to \holim_m \frac{HH(U_n)}{\Fil^m_{\hkr}HH(U_n)}
  \]
  is a weak equivalence.
\end{lem}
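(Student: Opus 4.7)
The plan is to obtain the filtration on $HH(U_n)$ by applying the derived pushforward $(p_n)_*$ to the canonical HKR filtration on $\mathbb{L}_n$, and then to exploit the finite-dimensionality of $U_n$ to deduce convergence essentially for free.

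First, since $U_n$ is a smooth $k$-scheme of dimension $d_n$, the classical HKR theorem gives an equivalence
\[
\mathbb{L}_n = \Sym^{\bullet}_{U_n}(\mathbb{L}_{U_n}[1]) \simeq \Sym^{\bullet}_{U_n}(\Omega^1_{U_n/k}[1]) \simeq \bigoplus_{i=0}^{d_n}\Omega^i_{U_n/k}[i],
\]
and the stupid filtration $\Fil^m_{\hkr}\mathbb{L}_n = \bigoplus_{i \ge m}^{d_n}\Omega^i_{U_n/k}[i]$ is a finite descending filtration of $\mathbb{L}_n$ by subcomplexes of $\sO_{U_n}$-modules, with associated graded pieces $\Omega^m_{U_n/k}[m]$.

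Next, I would check that this filtration is $S^1$-equivariant. Recall from the discussion preceding the lemma (and from \cite[Rem.~4.2]{Ben-Nadler-DAG}) that under HKR the Connes operator $B$ on $\mathbb{L}_n$ is identified with the de~Rham differential $d\colon \Omega^i_{U_n/k}\to\Omega^{i+1}_{U_n/k}$, which sends $\Omega^i[i]$ into $\Omega^{i+1}[i+1]$ and in particular preserves $\Fil^m_{\hkr}\mathbb{L}_n$ for every $m$. Thus each subcomplex $\Fil^m_{\hkr}\mathbb{L}_n$ is a $\Lambda$-submodule of $\mathbb{L}_n$, equivalently an $S^1$-invariant subspectrum, and the filtration descends to the graded level as stated.

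Applying the derived pushforward $(p_n)_*$, which is an exact functor of stable $\infty$-categories that preserves finite (co)limits, produces the claimed $S^1$-equivariant descending filtration $\{\Fil^m_{\hkr}HH(U_n)\}_{m \ge 0}$ with $\gr^m_{\hkr}HH(U_n) \simeq (p_n)_*(\Omega^m_{U_n/k})[m]$.

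Finally, for the convergence assertion, the crucial observation is that the filtration is finite: $\Fil^m_{\hkr}\mathbb{L}_n = 0$ for $m > d_n$, hence $\Fil^m_{\hkr}HH(U_n) = 0$ for $m > d_n$ as well. Consequently the tower $\{HH(U_n)/\Fil^m_{\hkr}HH(U_n)\}_{m \ge 0}$ is eventually constant equal to $HH(U_n)$, so its homotopy limit is $HH(U_n)$ and $q_n$ is a weak equivalence. There is no real obstacle here; the finiteness of $d_n$ together with smoothness of $U_n$ makes both the existence of the filtration and the convergence formal. The one point that needs care is ensuring that the HKR identification is compatible with the Connes operator so that the filtration is genuinely $S^1$-equivariant, which is exactly the input provided by the comparison between $B$ and $d$.
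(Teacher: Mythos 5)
Your argument is correct and essentially coincides with what the paper does (the paper states the lemma without a separate proof, the content being the preceding paragraph: HKR identification, stupid filtration with $B=d$, pushforward along $(p_n)_*$, and convergence via finiteness of the filtration). Your observation that the tower is eventually constant because $\Fil^m_{\hkr}HH(U_n)=0$ for $m>d_n$ gives the convergence directly and is the same finiteness input the paper invokes when it says the pushforward commutes with the relevant homotopy limit.
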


\begin{lem}\label{lem:Chen-5}
  For every $m \ge 0$, the canonical map
  \[
  \left(\holim_n \frac{HH(U_n)}{\Fil^m_{\hkr}HH(U_n)}\right)_{hS^1} \to
  \holim_n \left(\left(\frac{HH(U_n)}{\Fil^m_{\hkr}HH(U_n)}\right)_{hS^1}\right)
  \]
  is a weak equivalence.
\end{lem}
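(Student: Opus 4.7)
First I would handle the induction on $m$: the base case $m=0$ is trivial. For the inductive step, the $S^1$-equivariant cofiber sequence
\[
{\rm gr}^{m-1}_{\hkr} HH(U_n) \longrightarrow \frac{HH(U_n)}{\Fil^m_{\hkr} HH(U_n)} \longrightarrow \frac{HH(U_n)}{\Fil^{m-1}_{\hkr} HH(U_n)}
\]
from \lemref{lem:Chen-4} is natural in $n$ and is preserved by both $\holim_n$ and $(-)_{hS^1}$ (the former as a limit, the latter as a homotopy colimit in a stable $\infty$-category). The canonical comparison map of the lemma therefore fits into a map of cofiber sequences, and the five-lemma combined with the inductive hypothesis reduces the problem to proving the corresponding statement for a single cosimplicial graded piece $M_\bullet := {\rm gr}^{m-1}_{\hkr} HH(U_\bullet) = (p_\bullet)_*(\Omega^{m-1}_{U_\bullet/k})[m-1]$.

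On the graded pieces the Connes operator strictly lowers the HKR filtration and hence vanishes, so the induced $S^1$-action on $M_\bullet$ is trivial. Working in characteristic zero, formality of $C_*(\C P^\infty;k)$ then yields a natural equivalence
\[
(M_n)_{hS^1} \simeq M_n \otimes_k^{\mathbb{L}} k_{hS^1} \simeq \bigoplus_{r \ge 0} M_n[2r]
\]
in $\sD_k$. Since $M_n$ is concentrated in homological degrees $[m-1-\dim U_n,\, m-1]$, only finitely many summands contribute in any given degree, so this direct sum agrees with the direct product $\prod_{r \ge 0} M_n[2r]$ as a chain complex, naturally in $n$. As $\holim_n$ commutes with products, I then obtain
\[
\holim_n (M_n)_{hS^1} \simeq \holim_n \prod_{r \ge 0} M_n[2r] \simeq \prod_{r \ge 0} (\holim_n M_n)[2r].
\]

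To conclude, I need to show that $\holim_n M_n$ is itself bounded: then the product again collapses to a direct sum, and the trivial-action formula identifies the right-hand side with $(\holim_n M_n)_{hS^1}$, as desired. The homological upper bound by $m-1$ is immediate from the uniform upper bound on each $M_n$. The lower bound is the hard part: I plan to obtain it by identifying $\holim_n M_n$ with the $(m-1)$-th graded piece of the induced HKR filtration on $HH(\sX)^{\compl}_{I_G} \simeq \holim_n HH(U_n)$ (cf.\ \propref{prop:H-Compln-8}); via the formal HKR theorem this piece is $(p_\sX)_*\Sym^{m-1}(\mathbb{L}_{\sX/k}[1])$, which is bounded because $\mathbb{L}_{\sX/k}$ is a perfect two-term complex on the smooth quotient stack $\sX$ and coherent cohomology on $\sX = [X/G]$ has finite cohomological dimension under the reductivity assumption on $G$ in characteristic zero.
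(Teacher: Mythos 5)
Your proof shares the essential skeleton with the paper's argument: inducting on $m$ and using the five lemma to reduce to a single graded piece; observing that the Connes operator kills the graded piece so the $S^1$-action is trivial and $(M_n)_{hS^1} \simeq \bigoplus_{r} M_n[2r]$; using the strict boundedness of each $M_n$ to replace $\bigoplus$ by $\prod$; commuting $\holim_n$ with $\prod$; and finally reconverting $\prod$ back to $\bigoplus$ by proving $\holim_n M_n$ is homologically bounded. Up through that last step the two arguments coincide, and you correctly identify the lower bound as the only nontrivial point.

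It is precisely at that last step that you depart from the paper and where your plan has a gap. You propose to identify $\holim_n M_n$ with $(p_{\sX})_*\Sym^{m-1}(\mathbb{L}_{\sX/k}[1])$ via the formal HKR theorem. This would require the Ben-Zvi--Nadler smooth descent for the completed odd tangent bundle to be compatible with the HKR filtration \emph{graded piece by graded piece}, and that is not a formal consequence of ordinary quasi-coherent descent: the charts $U_n \to \sX$ are smooth but not \'etale, so $\Omega^{m-1}_{U_n/k}$ is not $\wt{p}_n^*\Sym^{m-1}(\mathbb{L}_{\sX/k}[1])$ (there is the extra relative piece $\Omega^\bullet_{U_n/\sX}$), and the descent equivalence is only filtered, not visibly graded. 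So you would be importing a refinement of \cite[Prop.~6.4]{Ben-Nadler} that the paper never proves.

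The paper sidesteps all of this with a much more elementary observation, and you have in fact already set yourself up for it: the HKR filtration on each $HH(U_n)$ is split by form degree ($HH(U_n) = \Fil^{m+1}_{\hkr}HH(U_n) \oplus \mathrm{gr}^m_{\hkr}HH(U_n) \oplus HH(U_n)/\Fil^m_{\hkr}HH(U_n)$ as complexes), and the face and degeneracy maps of $U_\bullet$ preserve form degree, so the splitting is compatible with the cosimplicial structure. Passing to $\holim_n$ then exhibits $\holim_n M_n$ as a direct summand of $\holim_n HH(U_n)$, which is homologically bounded below by \lemref{lem:Chen-7} (via \propref{prop:H-Compln-8} and \propref{prop:Compln_D}(1)). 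That gives the lower bound in one line, with no appeal to filtered compatibility of the exponential map. You should replace your formal-HKR plan with this summand argument.
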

\begin{proof}
  By induction on $m$ and the commutative diagram of the homotopy fiber
  sequences
\begin{equation}\label{eqn:Chen-5-0}
  \xymatrix@C.4pc{
    \left(\holim_n {\rm gr}^m_{\hkr}HH(U_n)\right)_{hS^1} \ar[r] \ar[d] &
    \left(\holim_n \frac{HH(U_n)}{\Fil^{m+1}_{\hkr}HH(U_n)}\right)_{hS^1} \ar[r] \ar[d] &
    \left(\holim_n \frac{HH(U_n)}{\Fil^{m}_{\hkr}HH(U_n)}\right)_{hS^1} \ar[d] \\
 \holim_n \left(\left({\rm gr}^m_{\hkr}HH(U_n)\right)_{hS^1}\right) \ar[r] &  
 \holim_n \left(\left(\frac{HH(U_n)}{\Fil^{m+1}_{\hkr}HH(U_n)}\right)_{hS^1}\right)
 \ar[r] &
 \holim_n \left(\left(\frac{HH(U_n)}{\Fil^{m}_{\hkr}HH(U_n)}\right)_{hS^1}\right),}
\end{equation}
it suffices to show that the canonical map
\begin{equation}\label{eqn:Chen-5-1}
  \left(\holim_n {\rm gr}^m_{\hkr}HH(U_n)\right)_{hS^1} \to
  \holim_n \left(\left({\rm gr}^m_{\hkr}HH(U_n)\right)_{hS^1}\right)
\end{equation}
is a weak equivalence for every $m \ge 0$.

We now note that each ${\rm gr}^m_{\hkr}HH(U_n)$ has trivial $S^1$-action
since ${\rm gr}^m_{\hkr}{\mathbb{L}_n}$ is a single term complex
(cf. proof of part (1) of \cite[Cor.~3.4]{BMS}). Since
$\holim_n {\rm gr}^m_{\hkr}HH(U_n)$ is the product totalization of
${\rm gr}^m_{\hkr}HH(U_n)$ over $n$, it follows that the former also has trivial
$S^1$-action. Equivalently, it is a mixed complex with trivial Connes operator.
This implies that there is a canonical weak equivalence
(cf. \cite[\S~5.1.7]{Loday})
\begin{equation}\label{eqn:Chen-5-2} 
\left(\holim_n {\rm gr}^m_{\hkr}HH(U_n)\right)_{hS^1}
\xrightarrow{\simeq} {\underset{i \ge 0}\bigoplus} \left(\holim_n
            {\rm gr}^m_{\hkr}HH(U_n)\right)[2i].
            \end{equation}

On the other hand, the commutativity of homotopy limits with
finite direct sums implies using the definition of the HKR-filtration
that we have a direct sum decomposition
            \begin{equation}\label{eqn:Chen-5-4}
            \begin{array}{lll}
            \holim_n  HH(U_n) & \xrightarrow{\simeq} &
            \left(\holim_n \Fil^{m+1}_{\hkr}HH(U_n)\right) \bigoplus
            \left(\holim_n \frac{HH(U_n)}{\Fil^{m}_{\hkr}HH(U_n)}\right) \\
            & & \bigoplus
            \left(\holim_n {\rm gr}^m_{\hkr}HH(U_n)\right).
            \end{array}
            \end{equation}
            In conjunction with \lemref{lem:Chen-7}, this implies that
            $\holim_n {\rm gr}^m_{\hkr}HH(U_n)$ is homologically bounded below.

We now let $M_\bullet = \holim_n {\rm gr}^m_{\hkr}HH(U_n)$ and
            consider the commutative diagram of complexes
    \begin{equation}\label{eqn:Chen-5-3}        
      \xymatrix@C.8pc{
        \tau_{\ge t}M_\bullet \ar[r] \ar[d] &  {\underset{i \ge 0}\bigoplus}
        (\tau_{\ge t}M_\bullet)[2i] \ar[r] \ar[d] & {\underset{i \ge 0}\prod}
        (\tau_{\ge t}M_\bullet)[2i] \ar[d] \\
        M_\bullet \ar[r] &  {\underset{i \ge 0}\bigoplus}
        M_\bullet[2i] \ar[r] & {\underset{i \ge 0}\prod} M_\bullet[2i],}
    \end{equation}
    where the vertical arrows are the canonical inclusions and the left horizontal
    arrows are the obvious inclusions.

We just showed above that the left vertical arrow in ~\eqref{eqn:Chen-5-3} is
a weak equivalence for some $t \ll 0$. In particular, the other two vertical
arrows
are also weak equivalences (as one easily checks) for $t \ll 0$.
On the other hand, the map
    ${\underset{i \ge 0}\bigoplus} (\tau_{\ge t}M_\bullet)[2i] \to
    {\underset{i \ge 0}\prod} (\tau_{\ge t}M_\bullet)[2i]$ is a weak equivalence since
    $\tau_{\ge t}M_\bullet$ is strictly bounded below.
    We conclude from this and ~\eqref{eqn:Chen-5-2} that
    there is a canonical weak equivalence
\begin{equation}\label{eqn:Chen-5-5} 
\left(\holim_n {\rm gr}^m_{\hkr}HH(U_n)\right)_{hS^1}
\xrightarrow{\simeq} {\underset{i \ge 0}\prod} \left(\holim_n
            {\rm gr}^m_{\hkr}HH(U_n)\right)[2i].
            \end{equation}
Composing this with the natural weak equivalences
\[
\begin{array}{lll}
{\underset{i \ge 0}\prod} \left(\holim_n {\rm gr}^m_{\hkr}HH(U_n)\right)[2i]
& \xrightarrow{\simeq} &
\holim_n \left({\underset{i \ge 0}\prod}{\rm gr}^m_{\hkr}HH(U_n)[2i] \right)
\\
& \xrightarrow{\simeq} & \holim_n \left({\underset{i \ge 0}\bigoplus}
  {\rm gr}^m_{\hkr}HH(U_n)[2i]\right)
\\
& \xrightarrow{\simeq} &
\holim_n \left(\left({\rm gr}^m_{\hkr}HH(U_n)\right)_{hS^1}\right), \\
\end{array}
\]
we conclude that ~\eqref{eqn:Chen-5-1} is a weak equivalence. This
finishes the proof.
\end{proof}

\begin{lem}\label{lem:Chen-6}
  For every $n \ge 0$, the canonical map
  \begin{equation}\label{eqn:Chen-6-0} 
  \left(\holim_m \frac{HH(U_n)}{\Fil^m_{\hkr}HH(U_n)}\right)_{hS^1} \to
 \holim_m \left(\left(\frac{HH(U_n)}{\Fil^m_{\hkr}HH(U_n)}\right)_{hS^1}\right)
 \end{equation}
 is a weak equivalence. The same is also true for the canonical map
  \begin{equation}\label{eqn:Chen-6-1}
  \left(\holim_m \holim_n \frac{HH(U_n)}{\Fil^m_{\hkr}HH(U_n)}\right)_{hS^1} \to
 \holim_m \left(\left(\holim_n \frac{HH(U_n)}{\Fil^m_{\hkr}HH(U_n)}\right)_{hS^1}\right).
 \end{equation}
 \end{lem}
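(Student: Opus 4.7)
For part \eqref{eqn:Chen-6-0}, the key observation is that for each fixed $n$ the HKR filtration on $HH(U_n)$ has finite length: since $U_n$ is smooth of finite dimension $d_n$, one has $\Fil^m_{\hkr} HH(U_n) = 0$ for all $m > d_n$, so the pro-system $\{HH(U_n)/\Fil^m_{\hkr}HH(U_n)\}_{m \ge 0}$ stabilizes at $HH(U_n)$, with transition maps equal to the identity in the stable range. Applying $(-)_{hS^1}$ levelwise preserves this stabilization, so both homotopy limits in \eqref{eqn:Chen-6-0} are canonically weakly equivalent to $(HH(U_n))_{hS^1} = HC(U_n)$, and the canonical map is a weak equivalence.

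The proof of part \eqref{eqn:Chen-6-1} will proceed in two steps. First, by \lemref{lem:Chen-5} the map $(\holim_n HH(U_n)/\Fil^m_{\hkr})_{hS^1} \to \holim_n((HH(U_n)/\Fil^m_{\hkr})_{hS^1})$ is a weak equivalence for each $m$, and these equivalences are natural in $m$. Passing to $\holim_m$ identifies the right-hand side of \eqref{eqn:Chen-6-1} with $\holim_m (W_m)_{hS^1}$, where $W_m := \holim_n HH(U_n)/\Fil^m_{\hkr}HH(U_n)$. Hence it suffices to prove that the canonical comparison map $(\holim_m W_m)_{hS^1} \to \holim_m (W_m)_{hS^1}$ is a weak equivalence.

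Setting $M := \holim_m W_m$ and $\Fil^m M := \holim_n \Fil^m_{\hkr}HH(U_n)$ produces a fiber sequence $\Fil^m M \to M \to W_m$, and applying $(-)_{hS^1}$ yields a fiber sequence $(\Fil^m M)_{hS^1} \to M_{hS^1} \to (W_m)_{hS^1}$. It therefore suffices to show $\holim_m (\Fil^m M)_{hS^1} \simeq 0$, and the plan is to establish a uniform connectivity bound. By \propref{prop:H-Compln-8} we have $M \simeq HH(\sX)^{\compl}_{I_G}$, and under the formal HKR identification of \eqref{eqn:H-Compln-5} combined with \propref{prop:Perf-stack} and the finite cohomological dimension of the smooth stack $\sX$ used in the proof of \lemref{lem:Chen-7}, the graded pieces $\Fil^m M / \Fil^{m+1} M$ compute (up to $I_G$-completion) to $R\Gamma(\sX, \Omega^m_{\sX})[m]$ and are $(m-C)$-connective, where $C$ depends only on $\sX$ and on the number of generators of the finitely generated ideal $I_G$ (cf.\ \lemref{lem:Noether-Rep}). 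Iterating over the filtration, $\Fil^m M$ is itself $(m - C')$-connective for some constant $C'$. Since the homotopy orbit functor preserves connectivity of spectra (being a homotopy colimit indexed by the connective space $BS^1$), $(\Fil^m M)_{hS^1}$ is $(m - C'')$-connective, and the Milnor exact sequence for $\holim_m$ forces $\holim_m (\Fil^m M)_{hS^1} \simeq 0$.

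The main obstacle will be the connectivity estimate on $\Fil^m M$: because the HKR filtration naturally lives on $HH(\sX)$ while $M$ is its $I_G$-completion, transporting the bound across \propref{prop:H-Compln-8} requires careful bookkeeping of how derived completion along the finitely generated ideal $I_G$ affects connectivity, as well as the precise identification of the graded pieces of the completed filtration with $I_G$-completions of $R\Gamma(\sX, \Omega^m_{\sX})[m]$. Once this uniform connectivity is in place, the remainder of the argument runs along the same lines as the proof of \lemref{lem:Chen-5}, with its product-versus-sum identification replaced by the simpler fact that a tower of spectra whose connectivity tends to $\infty$ has weakly contractible homotopy limit.
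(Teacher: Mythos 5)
Your argument for \eqref{eqn:Chen-6-0} is correct and in fact more elementary than the paper's: for a fixed $n$ the scheme $U_n$ has finite dimension $d_n$, so $\Fil^m_{\hkr}HH(U_n) = 0$ for $m > d_n$, the tower stabilizes, and both homotopy limits are canonically $HC(U_n)$. The paper instead just says ``the same argument applies,'' meaning the uniform-boundedness argument used for \eqref{eqn:Chen-6-1}; your observation is a valid and simpler route for this particular part.

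For \eqref{eqn:Chen-6-1}, however, there is a genuine gap, and it is exactly where you flag the ``main obstacle.'' Your plan hinges on showing that $\Fil^m M := \holim_n \Fil^m_{\hkr}HH(U_n)$ has connectivity tending to infinity in $m$. The proposed route -- identifying $\Fil^m M/\Fil^{m+1}M$ with the $I_G$-completion of ${\bf{R}}\Gamma(\sX, \Omega^m_{\sX})[m]$ -- does not follow from the cited ingredients. The graded piece ${\rm gr}^m_{\hkr}HH(U_n) = (p_n)_*(\Omega^m_{U_n/k})[m]$ involves $\Omega^m_{U_n/k} \simeq \Sym^m(\mathbb{L}_{U_n}[1])[-m]$, and $\mathbb{L}_{U_n}$ is \emph{not} the pullback of $\mathbb{L}_\sX$ (they differ by the relative cotangent complex of the smooth cover $p_n$). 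The Ben-Zvi--Nadler descent invoked in \eqref{eqn:H-Compln-5} applies to the \emph{total} completed odd tangent bundle $\wh{\T}[-1]$ and is not stated for the individual graded pieces of the HKR filtration, so the ``graded-piece descent'' you need would have to be established separately. Without it, the connectivity estimate on $\Fil^m M$ (and hence $\holim_m(\Fil^m M)_{hS^1} \simeq 0$) remains unjustified. Note also that even if true, this estimate is strictly stronger than what is needed.

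The paper's proof uses a much weaker input and avoids these issues entirely. By \eqref{eqn:Chen-5-4}, each $W_m = \holim_n HH(U_n)/\Fil^m_{\hkr}$ is a direct summand of $M = \holim_n HH(U_n)$ (the HKR filtration for smooth schemes splits as a finite direct sum, and this splitting passes through $(p_n)_*$ and $\holim_n$). Since $M$ is bounded below by \lemref{lem:Chen-7}, the family $\{W_m\}$ is \emph{uniformly} bounded below, and then the product-versus-sum argument already given in the proof of \lemref{lem:Chen-0} shows directly that $(-)_{hS^1}$ commutes with the sequential $\holim_m$. No connectivity growth, no graded-piece descent, and no Milnor-sequence vanishing argument is required.
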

\begin{proof}
 If we let $F_m = \holim_n \frac{HH(U_n)}{\Fil^m_{\hkr}HH(U_n)}$, then 
  \lemref{lem:Chen-7} and ~\eqref{eqn:Chen-5-4} imply that 
  $\{F_m\}_{m \ge 0}$ is a uniformly (in $m$) homologically bounded below sequential
  diagram of mixed complexes in $\sD_k$. We showed in the proof of
  \lemref{lem:Chen-0} that in this case, ~\eqref{eqn:Chen-6-1} is a weak equivalence.
The same argument also applies to ~\eqref{eqn:Chen-6-0}.
\end{proof}

\begin{lem}\label{lem:Chen-2}
  The natural map $\alpha_{U_\bullet} \colon  {HH(U_\bullet)}_{hS^1} \to  HC(U_\bullet)$
  is a weak equivalence.
 \end{lem}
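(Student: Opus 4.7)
The plan is to reduce the claim to the sequence of interchange lemmas (\lemref{lem:Chen-4}, \lemref{lem:Chen-5}, \lemref{lem:Chen-6}) that have just been established, using the fact that for a smooth $k$-scheme $U_n$, the cyclic homology $HC(U_n)$ is by definition $HH(U_n)_{hS^1}$. Thus the target $HC(U_\bullet) = \holim_n HC(U_n) \simeq \holim_n HH(U_n)_{hS^1}$, and the whole problem is to compare this with $(\holim_n HH(U_n))_{hS^1}$.

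First, I would use \lemref{lem:Chen-4} levelwise and Fubini for homotopy limits to write
\[
HH(U_\bullet) \;=\; \holim_n HH(U_n) \;\simeq\; \holim_n \holim_m \frac{HH(U_n)}{\Fil^m_{\hkr} HH(U_n)} \;\simeq\; \holim_m \holim_n \frac{HH(U_n)}{\Fil^m_{\hkr} HH(U_n)},
\]
where every weak equivalence is $S^1$-equivariant because the HKR filtration is so. Applying the homotopy orbits functor $(-)_{hS^1}$ and invoking the second part of \lemref{lem:Chen-6} lets me commute $(-)_{hS^1}$ past the outer $\holim_m$, yielding
\[
HH(U_\bullet)_{hS^1} \;\simeq\; \holim_m \Bigl(\holim_n \tfrac{HH(U_n)}{\Fil^m_{\hkr} HH(U_n)}\Bigr)_{hS^1}.
\]

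Next, I would apply \lemref{lem:Chen-5} for each fixed $m$ to push $(-)_{hS^1}$ inside the $\holim_n$, obtaining
\[
HH(U_\bullet)_{hS^1} \;\simeq\; \holim_m \holim_n \Bigl(\tfrac{HH(U_n)}{\Fil^m_{\hkr} HH(U_n)}\Bigr)_{hS^1}.
\]
Swapping the two homotopy limits (Fubini) and then applying the first part of \lemref{lem:Chen-6} in reverse for each fixed $n$ gives
\[
\holim_m \Bigl(\tfrac{HH(U_n)}{\Fil^m_{\hkr} HH(U_n)}\Bigr)_{hS^1} \;\simeq\; \Bigl(\holim_m \tfrac{HH(U_n)}{\Fil^m_{\hkr} HH(U_n)}\Bigr)_{hS^1} \;\simeq\; HH(U_n)_{hS^1} \;\simeq\; HC(U_n),
\]
where the middle equivalence uses \lemref{lem:Chen-4} again. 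Taking $\holim_n$ on both sides delivers the desired weak equivalence $HH(U_\bullet)_{hS^1} \xrightarrow{\simeq} \holim_n HC(U_n) = HC(U_\bullet)$, and a diagram chase confirms that the composite is the canonical map $\alpha_{U_\bullet}$.

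The only subtle point — and the one I would check most carefully — is that each rewriting is genuinely $S^1$-equivariant and compatible with the canonical maps, so that the resulting zig-zag really computes $\alpha_{U_\bullet}$. The homological boundedness input coming from \lemref{lem:Chen-7} is what makes Lemmas~\ref{lem:Chen-5} and ~\ref{lem:Chen-6} applicable here; beyond that, the proof is a formal assembly of the already-proved interchange results, with no new geometric or representation-theoretic content.
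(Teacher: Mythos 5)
Your proof is correct and follows essentially the same strategy as the paper: completing $HH(U_n)$ along the HKR filtration (Lemma~\ref{lem:Chen-4}), using Fubini for homotopy limits, then invoking Lemmas~\ref{lem:Chen-5} and ~\ref{lem:Chen-6} to push $(-)_{hS^1}$ through the limits one at a time, and finally reassembling. The chain of weak equivalences you write out is precisely the right-hand column of the commutative diagram the paper constructs (eqn.~\eqref{eqn:Chen-2-0}), and your final sentence — that ``a diagram chase confirms that the composite is the canonical map $\alpha_{U_\bullet}$'' — is exactly what the paper proves in detail via the auxiliary diagram \eqref{eqn:Chen-2-1}; this is the nontrivial step you would need to supply, since the composite you build a priori lands in $\holim_n \holim_m (HH(U_n)/\Fil^m_{\hkr}HH(U_n))_{hS^1}$ along a different route than $\tau \circ q'' \circ \alpha_{U_\bullet}$ does, and the compatibility is verified by a projection-by-projection check. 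Modulo writing that check out, your argument and the paper's are the same.
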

\begin{proof}
  We let $q = \holim_n q_n$ (cf. \lemref{lem:Chen-4}) and look at the diagram
   \begin{equation}\label{eqn:Chen-2-0}
     \xymatrix@C1pc{
       HH(U_\bullet)_{hS^1} \ar@{=}[d]  & & \\
       \left(\holim_n HH(U_n)\right)_{hS^1} \ar[rr]^-{q'} \ar[dd]_-{\alpha_{U_\bullet}} & &
       \left(\holim_n \holim_m \frac{HH(U_n)}{\Fil^m_{\hkr}HH(U_n)}\right)_{hS^1}
       \ar[d]^-{\alpha} \\
       & &
       \left(\holim_m \holim_n \frac{HH(U_n)}{\Fil^m_{\hkr}HH(U_n)}\right)_{hS^1}
       \ar[d]^-{\beta} \\
       \holim_n HH(U_n)_{hS^1} \ar[d]_-{q''} & & 
       \holim_m \left(\left(\holim_n \frac{HH(U_n)}{\Fil^m_{\hkr}HH(U_n)}\right)_{hS^1}\right)
       \ar[d]^-{\gamma} \\
       \holim_n \left(\holim_m \frac{HH(U_n)}{\Fil^m_{\hkr}HH(U_n)}\right)_{hS^1} 
       \ar[d]_-{\tau} & &
       \holim_m \holim_n \left(\left(\frac{HH(U_n)}{\Fil^m_{\hkr}HH(U_n)}\right)_{hS^1}\right)
       \ar[d]^-{\delta} \\
        \holim_n \holim_m  \left(\left(\frac{HH(U_n)}{\Fil^m_{\hkr}HH(U_n)}\right)_{hS^1}\right) \ar@{=}[rr] & &
   \holim_n \holim_m  \left(\left(\frac{HH(U_n)}{\Fil^m_{\hkr}HH(U_n)}\right)_{hS^1}\right).}
    \end{equation}

In this diagram, the arrow $q'$ is induced from $q$ by applying the  functor
   $(-)_{hS^1}$ and $q''$ is induced by first applying the functor $(-)_{hS^1}$ to each
   $q_n$ and then taking limit over $n$. The arrows $\alpha$ and $\delta$ are the
   canonical maps which commute the two homotopy limits, $\beta$ is the canonical map
   to the homotopy limit over $m$, $\tau$ is the canonical map and
   $\gamma$ is obtained by first taking the canonical
   map to the homotopy limit over $n$ (for each $m$) and then
   passing to the limit over $m$. 

To see that ~\eqref{eqn:Chen-2-0} is commutative, we let
   $\pr_n \colon \left(\holim_n HH(U_n)\right)_{hS^1} \to HH(U_n)_{hS^1}$
   and $\pr'_n \colon\holim_n \holim_m
   \left(\left(\frac{HH(U_n)}{\Fil^m_{\hkr}HH(U_n)}\right)_{hS^1}\right) \to
   \holim_m
   \left(\left(\frac{HH(U_n)}{\Fil^m_{\hkr}HH(U_n)}\right)_{hS^1}\right)$ 
   denote the projection maps for $n \ge 0$. We let $\vartheta = \delta \circ \gamma
   \circ \beta \circ \alpha \circ q'$ and $\vartheta' = \tau \circ q'' \circ
   \alpha_{U_\bullet}$. It suffices then to show that $\pr'_n \circ \vartheta =
   \pr'_n \circ \vartheta'$ up to homotopy for each $n \ge 0$. To that end, one first
   easily checks that
   $\pr'_n \circ \vartheta' = \pr'_n \circ \tau \circ q'' \circ \alpha_{U_\bullet} =
   \tau_n \circ q'_n \circ \pr_n$, where
   $q'_n$ is obtained from $q_n$ by applying the functor $(-)_{hS^1}$ and
   $\tau_n \colon \left(\holim_m \frac{HH(U_n)}{\Fil^m_{\hkr}HH(U_n)}\right)_{hS^1}  \to
   \holim_m \left(\frac{HH(U_n)}{\Fil^m_{\hkr}HH(U_n)}\right)_{hS^1}$ is the canonical
   map to the homotopy limit. It remains therefore to show that
   $\pr'_n \circ \vartheta = \tau_n \circ q'_n \circ \pr_n$ for every $n \ge 0$.

To show the last claim, we look at the diagram
   \begin{equation}\label{eqn:Chen-2-1}
     \xymatrix@C1pc{
       \left(\holim_n HH(U_n)\right)_{hS^1} \ar[d]_-{q'} 
       \ar[rr]^-{\pr_n} & & HH(U_n)_{hS^1} \ar[d]^-{q'_n} \\
       \left(\holim_n \holim_m \frac{HH(U_n)}{\Fil^m_{\hkr}HH(U_n)}\right)_{hS^1}
       \ar[d]_-{\alpha} \ar[rr] & & 
\left(\holim_m \frac{HH(U_n)}{\Fil^m_{\hkr}HH(U_n)}\right)_{hS^1}
\ar[d]^-{\id} \\
\left(\holim_m \holim_n \frac{HH(U_n)}{\Fil^m_{\hkr}HH(U_n)}\right)_{hS^1}
       \ar[d]_-{\beta}  \ar[rr] & & 
\left(\holim_m \frac{HH(U_n)}{\Fil^m_{\hkr}HH(U_n)}\right)_{hS^1}
\ar[d]^-{\gamma_n} \\
\holim_m \left(\left(\holim_n \frac{HH(U_n)}{\Fil^m_{\hkr}HH(U_n)}\right)_{hS^1}\right)
\ar[d]_-{\gamma} \ar[rr] & &
\holim_m \left(\frac{HH(U_n)}{\Fil^m_{\hkr}HH(U_n)}\right)_{hS^1} \ar[d]^-{\id} \\
\holim_m \holim_n \left(\left(\frac{HH(U_n)}{\Fil^m_{\hkr}HH(U_n)}\right)_{hS^1}\right)
       \ar[d]_-{\delta} \ar[rr] & & 
       \holim_m \left(\frac{HH(U_n)}{\Fil^m_{\hkr}HH(U_n)}\right)_{hS^1} \ar[d]^-{\id} \\
       \holim_n \holim_m
       \left(\left(\frac{HH(U_n)}{\Fil^m_{\hkr}HH(U_n)}\right)_{hS^1}\right)
       \ar[rr]^-{\pr'_n} & &
       \holim_m \left(\frac{HH(U_n)}{\Fil^m_{\hkr}HH(U_n)}\right)_{hS^1},}
       \end{equation}
   where all horizontal arrows in the middle are the canonical projections and
   the composition of all vertical arrows on the left is $\vartheta$.
   It is clear that all squares in this diagram commute and $\gamma_n  =
   \tau_n$. This proves the claim, and hence the commutativity of
   ~\eqref{eqn:Chen-2-0}.

We now note that the arrows $q'$ and $q''$ in ~\eqref{eqn:Chen-2-0} are weak
   equivalences. The arrows $\alpha$ and $\delta$ are weak equivalences by the
   commutativity property of two homotopy limits. The arrows $\tau$ and
   $\beta$ are weak equivalences by \lemref{lem:Chen-6}. Finally, $\gamma$ is a
   weak equivalence by \lemref{lem:Chen-5}. It follows that $\alpha_{U_\bullet}$ is a
   weak equivalence. This concludes the proof.
\end{proof}

\subsection{$I_G$-completion of cyclic homology}\label{sec:H-cyc*}
We now let $G$ be a reductive $k$-group and $\sX = [X/G]$, where
$X \in \Sm^G_k$. We do not assume that $X$ is quasi-projective. Let
$p \colon U \to \sX$ be a smooth
surjective map with $U \in \Sm_k$. We shall now prove the analogue of
\propref{prop:H-Compln-8} for cyclic and periodic cyclic homologies.
We begin with the following.

\begin{lem}\label{lem:Chen-1}
   For each $n \ge 0$, the spectrum $HC(U_n)$ is $I_G$-complete.
 \end{lem}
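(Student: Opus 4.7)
The plan is to show that every element of $I_G$ acts nullhomotopically on $HC(U_n)$, which immediately yields $I_G$-completeness. Indeed, if $x \in I_G$ acts as a null map on a spectrum $M$, then the tower $\cdots \xrightarrow{x} M \xrightarrow{x} M$ has null bonding maps, so its homotopy limit is contractible, and $M$ is $x$-complete. Applying this to each of the finitely many generators $x_1, \ldots, x_r$ of $I_G$ (Noetherian by \propref{prop:finiteR}) and invoking \propref{prop:Compln_D}(4) will then give $I_G$-completeness of $HC(U_n)$.

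The crux is to establish that the $R_k(G)$-action on $HH(U_n)$ factors at the spectrum level through the augmentation $R_k(G) \to k$. This action arises from the composition $U_n \to \sX \to BG$ via the pullback ring spectrum map $HH(BG) \to HH(U_n)$, where by \propref{prop:Twist-BP}(5) the source $HH(BG) \simeq R_k(G)$ is an Eilenberg--MacLane spectrum concentrated in degree zero. Hence this map is determined by its effect on $\pi_0$, namely the ring homomorphism $R_k(G) \to \pi_0(HH(U_n)) = \Gamma(U_n, \sO_{U_n})$. To analyze it, apply $\pi_0(\sL(-))$ to $U_n \to BG$: since $U_n$ is a scheme with monomorphic diagonal, its ordinary self-intersection is $U_n$ itself, while $\pi_0(\sL BG) = [G/G]$ (conjugation quotient), and functoriality forces the induced morphism $U_n \to [G/G]$ to send every point to the identity conjugacy class, thereby factoring through the constant-loops inclusion $BG \hookrightarrow [G/G]$. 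Composing with the affinization $[G/G] \to \Spec(\sO([G/G])) \cong \Spec(R_k(G))$ (the isomorphism of \propref{prop:character1*}), the inclusion $BG \hookrightarrow [G/G]$ dualizes to $C[G] \to \sO(BG) = k$, which is evaluation at the identity; this is precisely the augmentation since $\operatorname{tr}(e, V) = \dim V$, using that $R\Gamma(BG, \sO) = k$ for reductive $G$ in characteristic zero. Hence $R_k(G) \to \Gamma(U_n, \sO_{U_n})$ factors as $R_k(G) \xrightarrow{\epsilon} k \to \sO(U_n)$.

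Consequently, for any $x \in I_G = \ker \epsilon$, the composite $\1 \xrightarrow{x} HH(BG) \to HH(U_n)$ is nullhomotopic, and therefore multiplication by $x$ on $HH(U_n)$ is null (via the factorization of the module action $HH(BG) \wedge HH(U_n) \to HH(U_n)$ through the ring spectrum multiplication of $HH(U_n)$). Since the $R_k(G)$-action commutes with the $S^1$-action on $HH(U_n)$ (elements of $R_k(G)$ are $\Lambda$-linear, as used in \lemref{lem:B-Inv-0}), applying the functor $(-)_{hS^1}$ produces a nullhomotopic action of $x$ on $HC(U_n) = HH(U_n)_{hS^1}$, which completes the proof by the first paragraph. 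The main delicate point is justifying that the factorization $U_n \to BG \hookrightarrow [G/G]$ takes place at the level of ordinary (not derived) $\pi_0$ and that this suffices to control $R_k(G) \to \sO(U_n)$ at the spectrum level; everything else is formal manipulation with null-homotopies and the Eilenberg--MacLane nature of $HH(BG)$.
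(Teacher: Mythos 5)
There is a genuine gap in your argument, located at the step where you assert ``$\pi_0(HH(U_n)) = \Gamma(U_n, \sO_{U_n})$.'' This identification fails whenever $U_n$ is not affine, which is the generic situation in the lemma (the cover $p \colon U \to \sX$ need not be affine, and the \u{C}ech levels $U_n$ inherit this). For a smooth $k$-scheme $Y$ of dimension $d$, the HKR theorem gives $HH(Y) \simeq \bigoplus_{i=0}^{d} R\Gamma(Y, \Omega^i_{Y/k})[i]$, so $\pi_0(HH(Y)) = \bigoplus_{i=0}^{d} H^i(Y, \Omega^i_{Y/k})$ — the degree-zero Hodge cohomology, which strictly contains $\Gamma(Y, \sO_Y)$ (compare \corref{cor:Hom-etale} with $I_\sX$ a scheme). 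Your classical-truncation argument correctly shows that the composite $R_k(G) \to \pi_0(HH(U_n)) \to H^0(U_n,\sO_{U_n})$ is the augmentation, but it does \emph{not} show that $R_k(G) \to \pi_0(HH(U_n))$ itself factors through $k$. In fact it generally does not: the ring map $R_k(G) \to \pi_0(HH(U_n))$ is the Chern character of the $G$-torsor classified by $U_n \to BG$, and for $G = \G_m$, $U_n = \P^1$ with the torsor $\sO(1)$, the element $t-1 \in I_{\G_m}$ maps to $c_1(\sO(1)) \neq 0$ in $H^1(\P^1, \Omega^1) \subset HH_0(\P^1)$. Hence the composite $\1 \xrightarrow{x} HH(BG) \to HH(U_n)$ is not nullhomotopic for general $x \in I_G$, and the conclusion that multiplication by $x$ is a null self-map of $HH(U_n)$ (and of $HC(U_n)$) is false. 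Put differently: the derived map $\sL U_n \to \sL BG$ factors through the formal neighborhood $\wh{\sL}BG$ of the constant loops, not through $BG$ itself; only the classical truncation factors through $BG$.

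The correct version of your idea is weaker but still usable: $I_G$ maps into the nilpotent ideal $\bigoplus_{i \ge 1} H^i(U_n, \Omega^i) \subset \pi_0(HH(U_n))$, so $I_G^{d_n+1}$ annihilates $\pi_*(HH(U_n))$ and hence (as a $\pi_0(HH(U_n))$-module) $\pi_*(HC(U_n))$, which does give $I_G$-completeness via the Milnor sequence. This nilpotence route is a genuinely different argument from the paper's, which instead uses the earlier-established fact that $HH(U_n)$ is $I_G$-complete (from the formal HKR argument around ~\eqref{eqn:H-Compln-6}), together with \lemref{lem:Chen-0} — the homological boundedness of $HH(U_n)$ being used to show that $(-)_{hS^1}$ commutes with $I_G$-completion. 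If you want to keep your approach, you must replace ``acts nullhomotopically'' with ``acts nilpotently on homotopy groups'' and supply the Hodge-filtration argument above.
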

\begin{proof}
  Using \propref{prop:finiteR}, we can write
  $I_G = (x_1, \ldots , x_r) \subset R_k(G)$.
  For $s \ge 1$, we let $I_s = (x^s_1, \ldots , x^s_r)$. 
  We now note that $HH(U_n)$ is homologically
  bounded since $U \in \Sm_k$. We showed in the proof
  of \lemref{lem:Chen-7} that this implies that the sequential diagram
  $\{{HH(U_n)}/{I_sHH(U_n)}\}_{s \ge 1}$ is uniformly homologically bounded in $s$.
  It follows therefore from the proof of \lemref{lem:Chen-0} that the canonical map
  \begin{equation}\label{eqn:Chen-1-0}
    \left(HH(U_n)^{\compl}_{I_G}\right)_{hS^1} \to HC(U_n)^{\compl}_{I_G}
  \end{equation}
  is a weak equivalence. The lemma now follows because we showed earlier that
  $HH(U_n)$ is $I_G$-complete (cf. ~\eqref{eqn:H-Compln-6}). 
  \end{proof}
   
By \lemref{lem:Chen-1}, we get a diagram
 \begin{equation}\label{eqn:Chen-3-0}
     \xymatrix@C1pc{
       HC(\sX) \ar[r]^-{p^*_\bullet} \ar[d] & HC(U_\bullet) \ar[d] \\
HC(\sX)^{\compl}_{I_G} \ar@{.>}[ur] \ar[r]^-{p^*_n} & HC(U_n)}
     \end{equation}
 which is compatible with the face and degeneracy maps of
 $\{HC(U_n)\}_{[n] \in \Delta}$ and whose outer square is commutative.
 Passing to the homotopy limit of $p^*_n$, we get a unique map
 $ p^*_\bullet \colon HC(\sX)^{\compl}_{I_G} \to HC(U_\bullet)$ such that the lower triangle
 also commutes for each $n$. But this forces the upper triangle to commute too.

 We can now prove our main results about the $I_G$-completions of cyclic and
 periodic cyclic homologies.

\begin{thm}\label{thm:Chen-3}
   The map
   \[
   p^*_\bullet \colon HC(\sX)^{\compl}_{I_G} \to HC(U_\bullet)
   \]
   is a weak equivalence.
 \end{thm}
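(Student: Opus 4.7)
The plan is to deduce this theorem by applying the homotopy orbit space functor $(-)_{hS^1}$ to the Hochschild version (Proposition \ref{prop:H-Compln-8}) and then identifying the two sides using Lemmas \ref{lem:Chen-0} and \ref{lem:Chen-2}.

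First, I would observe that the map $p^*_\bullet \colon HH(\sX)^{\compl}_{I_G} \to HH(U_\bullet)$ of Proposition \ref{prop:H-Compln-8} is $S^1$-equivariant (this is the content of Lemma \ref{lem:B-Inv-0} together with the obvious $S^1$-equivariance of each $p^*_n$). Applying the functor $(-)_{hS^1}$ to this weak equivalence then yields a weak equivalence
\[
(p^*_\bullet)_{hS^1} \colon \bigl(HH(\sX)^{\compl}_{I_G}\bigr)_{hS^1} \xrightarrow{\simeq} HH(U_\bullet)_{hS^1}.
\]

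Second, I would invoke Lemma \ref{lem:Chen-0}, applied with $H = G$ and $Y = X$ (the hypotheses that $G$ is reductive and $X$ is smooth are exactly those of the present theorem), to get the natural weak equivalence $\alpha_\sX \colon (HH(\sX)^{\compl}_{I_G})_{hS^1} \xrightarrow{\simeq} HC(\sX)^{\compl}_{I_G}$. Together with Lemma \ref{lem:Chen-2}, which provides $\alpha_{U_\bullet} \colon HH(U_\bullet)_{hS^1} \xrightarrow{\simeq} HC(U_\bullet)$, this fits into a diagram
\[
\xymatrix@C1.5pc{
HC(\sX)^{\compl}_{I_G} \ar[r]^-{p^*_\bullet} & HC(U_\bullet) \\
\bigl(HH(\sX)^{\compl}_{I_G}\bigr)_{hS^1} \ar[u]^-{\alpha_\sX}_-{\simeq} \ar[r]^-{(p^*_\bullet)_{hS^1}}_-{\simeq} & HH(U_\bullet)_{hS^1} \ar[u]_-{\alpha_{U_\bullet}}^-{\simeq}
}
\]
so that once commutativity is established, $p^*_\bullet$ is forced to be a weak equivalence by the two-out-of-three property.

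The remaining task, which I expect to be the only genuinely delicate point, is to verify that the above square commutes. This amounts to checking that the natural transformation $\alpha_{(-)} \colon (HH(-))_{hS^1} \to HC(-)$ is compatible with the map $p^*_\bullet$ on the two sides. On the $U_\bullet$ side this is already built into the construction of $\alpha_{U_\bullet}$ in Lemma \ref{lem:Chen-2}, which is defined as a homotopy limit of the natural transformations $\alpha_{U_n}$. On the $\sX$ side, the construction of $\alpha_\sX$ in \eqref{eqn:Chen-0*} factors through the inverse system $\{HH(\sX)/I_s\}_{s \in \N}$, and since each $HH(U_n)$ is $I_G$-complete (so that the factorization of $p^*_\bullet$ through the completion produced in \eqref{eqn:Chen-3-0} is unique), the naturality of the homotopy orbit functor and of the Connes action under the $K(BG)$-module map $HH(\sX) \to HH(U_n)$ gives the required commutativity. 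Combining the three weak equivalences then completes the proof.
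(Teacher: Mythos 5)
Your proposal is correct and takes essentially the same route as the paper: form the square comparing $(HH(-)^{\compl}_{I_G})_{hS^1}$ and $HC(-)^{\compl}_{I_G}$, show the top (by Proposition \ref{prop:H-Compln-8} and $S^1$-equivariance) and both verticals (Lemmas \ref{lem:Chen-0} and \ref{lem:Chen-2}) are weak equivalences, and conclude by two-out-of-three. The paper's treatment of commutativity is terser (``homotopy commutative by the naturality of the vertical arrows''), but your extra discussion is consistent with it.
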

\begin{proof}
  We look at the diagram
  \begin{equation}\label{eqn:Chen-3-1}
     \xymatrix@C1pc{
       {(HH(\sX)^{\compl}_{I_G})}_{hS^1} \ar[r]^-{p^*_\bullet} \ar[d]_-{\alpha_\sX} &
       {HH(U_\bullet)}_{hS^1} \ar[d]^-{\alpha_{U_\bullet}} \\
   HC(\sX)^{\compl}_{I_G} \ar[r]^-{p^*_\bullet} & HC(U_\bullet),}
     \end{equation}
which is homotopy commutative by the naturality of the vertical arrows.
The left vertical arrow is a weak equivalence by \lemref{lem:Chen-0} and the
top horizontal arrow is a weak equivalence by \propref{prop:H-Compln-8}. It suffices
therefore to show that the right vertical arrow is a weak equivalence. But this
follows from \lemref{lem:Chen-2}.
\end{proof}

\begin{cor}\label{cor:Chen-8}
  The pull-back map
 $HP(\sX) \to HP(U_\bullet)$ factors through a weak equivalence
\[
   p^*_\bullet \colon HP(\sX)^{\compl}_{I_G} \to HP(U_\bullet).
   \]
  \end{cor}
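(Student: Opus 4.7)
The strategy is to deduce the completion theorem for $HP$ from the ones already established for $HH^{-}$ (Corollary \ref{cor:NH-functor}) and $HC$ (Theorem \ref{thm:Chen-3}), by exploiting the norm cofiber sequence
\[
HC(\sA)[1] \xrightarrow{N} HC^{-}(\sA) \xrightarrow{T} HP(\sA)
\]
from \S\ref{sec:Mixed*} that defines $HP$. This sequence is natural in mixed complexes, so applying it to $\sX$ and levelwise to $U_\bullet$ (and using that homotopy limits preserve fiber/cofiber sequences in the stable setting) produces a commutative ladder of cofiber sequences connected by the pullback maps $p^*_\bullet$.

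The first step is to show that $HP(U_\bullet)$ is $I_G$-complete, so that the canonical map $HP(\sX) \to HP(U_\bullet)$ factors uniquely through $HP(\sX)^{\compl}_{I_G}$ by the universal property of derived completion. Indeed, $HH(U_n)$ is $I_G$-complete (as observed in \eqref{eqn:H-Compln-6}), hence so is $HC^{-}(U_n) = HH(U_n)^{hS^1}$ by Proposition \ref{prop:Compln_D}(7), and $HC(U_n)$ is $I_G$-complete by Lemma \ref{lem:Chen-1}. Since the full subcategory of $I_G$-complete modules is closed under homotopy cofibers in the stable setting (being simultaneously closed under limits and extensions), $HP(U_n)$ is $I_G$-complete, and then $HP(U_\bullet) = \holim_n HP(U_n)$ is $I_G$-complete by Proposition \ref{prop:Compln_D}(7) once more.

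The second step is to apply derived $I_G$-completion to the norm sequence for $\sX$. Since this completion functor is a left adjoint (as recalled in \S\ref{sec:DLC}), it preserves homotopy cofibers, yielding a cofiber sequence
\[
HC(\sX)^{\compl}_{I_G}[1] \to HC^{-}(\sX)^{\compl}_{I_G} \to HP(\sX)^{\compl}_{I_G}.
\]
Combined with the naturality of $N$ and $T$, we obtain a morphism of cofiber sequences from the $\sX^{\compl}_{I_G}$-row to the $U_\bullet$-row whose first two vertical arrows are weak equivalences by Theorem \ref{thm:Chen-3} and Corollary \ref{cor:NH-functor}. The long exact sequence of homotopy groups and the five-lemma then force the third vertical arrow, namely $p^*_\bullet \colon HP(\sX)^{\compl}_{I_G} \to HP(U_\bullet)$, to be a weak equivalence as well.

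No real obstacle is anticipated; the only mild subtlety is ensuring that the factorization of $HP(\sX) \to HP(U_\bullet)$ through the completion exists (which is handled in the first step) and that the norm sequence assembles into a genuine morphism of cofiber sequences after completion (handled in the second step via the left-adjoint property of $(-)^{\compl}_{I_G}$).
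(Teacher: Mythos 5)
Your proposal is correct and follows essentially the same route as the paper's proof: you establish $I_G$-completeness of $HP(U_n)$ (and hence of $HP(U_\bullet)$) from the completeness of $HC(U_n)$ and $HC^{-}(U_n)$ via the norm cofiber sequence, and then conclude by applying the five lemma to the morphism of completed cofiber sequences, using Theorem~\ref{thm:Chen-3} and Corollary~\ref{cor:NH-functor} for the first two vertical maps. The paper phrases the first step via the explicit diagram~\eqref{eqn:Chen-8-0} rather than invoking closure of $I_G$-complete modules under cofibers, but these are the same argument.
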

\begin{proof}
  Using the commutative diagram of the homotopy fiber sequences
  (cf. \thmref{thm:H-functor})
 \begin{equation}\label{eqn:Chen-8-0}
     \xymatrix@C1pc{
 HC(U_n)[1] \ar[r]^-{N} \ar[d] & HC^{-}(U_n) \ar[r]^-{T} \ar[d] &  HP(U_n) \ar[d] \\
     HC(U_n)^{\compl}_{I_G}[1] \ar[r]^-{N} & HC^{-}(U_n)^{\compl}_{I_G} \ar[r]^-{T} &
     HP(U_n)^{\compl}_{I_G}}
       \end{equation}
 and the $I_G$-completeness of $HC(U_n)$ and $HC^{-}(U_n)$, we get that
 each $HP(U_n)$ is
 $I_G$-complete. By passing to the limit and repeating the previous argument for the
 cyclic homology, we get the factorization of the pull-back
 $HP(\sX) \xrightarrow{p^*_\bullet} HP(U_\bullet)$ through the $I_G$-completion.

We now look at the commutative diagram of homotopy fiber sequences
   \begin{equation}\label{eqn:Chen-8-1}
     \xymatrix@C1pc{
       HC(\sX)^{\compl}_{I_G}[1] \ar[d]_-{p^*_\bullet}
       \ar[r]^-{N} & HC^{-}(\sX)^{\compl}_{I_G}
       \ar[r]^-{T} \ar[d]^-{p^*_\bullet} &
       HP(\sX) ^{\compl}_{I_G} \ar[d]^-{p^*_\bullet} \\
       HC(U_\bullet)[1] \ar[r]^-{N} & HC^{-}(U_\bullet) \ar[r]^-{T} &  HP(U_\bullet).}
     \end{equation}
   The left and the middle vertical arrows are weak equivalences by
   \corref{cor:NH-functor} and \thmref{thm:Chen-3}. It follows that the
   right vertical arrow is also a weak equivalence.
\end{proof}

\vskip .2cm

The following result extends \corref{cor:Thom-maximal} to equivariant homology
theories.

\begin{thm}\label{thm:Homology-main}
Let $G$ be a reductive $k$-group acting on a smooth $k$-scheme $X$ and let $\sX
  = [X/G]$. Let $F$ belong to the set $\{HH, HC^{-}, HC, HP\}$ of functors
  on the category of stacks over $k$. Let $g \in G_s(k)$ and let
  $\Psi \subset G(k)$ be the conjugacy class of $g$.
  Then for any smooth surjective map $p \colon U \to [{X^g}/{Z_g}]$ with
  $U \in \Sch_k$, one has a natural weak equivalence of spectra
  \begin{equation}\label{eqn:Homology-main-0}
    \Theta^g_{\sX} \colon F(\sX)^{\compl}_{\fm_\Psi} \xrightarrow{\simeq}
    \holim_{n} F(U_n).
    \end{equation}
\end{thm}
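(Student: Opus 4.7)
The plan is to define $\Theta^g_\sX$ as the composition
\begin{equation*}
F(\sX)^{\compl}_{\fm_\Psi} \xrightarrow{\Psi^g_\sX} F([X^g/Z_g])^{\compl}_{\fm_g} \xrightarrow{t_{g^{-1}}} F([X^g/Z_g])^{\compl}_{I_{Z_g}} \xrightarrow{p^*_\bullet} \holim_n F(U_n),
\end{equation*}
exactly mirroring the composite used in \corref{cor:Thom-maximal} for $K'$-theory. The middle arrow is a weak equivalence by \propref{prop:Twist-local-compl} applied to the homology twisting operator $t_{g^{-1}}$ constructed in \secref{sec:Twist-op}. The rightmost arrow is a weak equivalence by \propref{prop:H-Compln-8} (for $HH$), \corref{cor:NH-functor} (for $HC^-$), \thmref{thm:Chen-3} (for $HC$), and \corref{cor:Chen-8} (for $HP$); these apply because $g$ semisimple together with reductivity of $G$ and smoothness of $X$ force $X^g$ to be smooth (Iversen's theorem in characteristic zero) and $Z_g$ to be reductive. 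The substantive content therefore lies in constructing the leftmost arrow $\Psi^g_\sX$ and proving it is a weak equivalence: this is the nonabelian completion theorem for equivariant homology, a homological analogue of \thmref{thm:Max-Main}.

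For the Hochschild case, I would construct $\Psi^g_\sX$ by generalizing \lemref{lem:H-Compln-0} via the loop-space identification $HH \simeq \sO \circ \sL$ of~\eqref{eqn:HH-Loop}. The affinization map $\sL\sX \to \sL BG \to \Spec R_k(G)$ sends a loop $(x,h) \in \sI_\sX$ to the class of $h$, and under the bijection of \corref{cor:maximal1}, $\fm_\Psi$ corresponds to $\Psi$. The preimage of $g$ in $\sI_\sX$ is canonically the component $[X^g/Z_g] \hookrightarrow \sI_\sX$ embedded via $x \mapsto (x,g)$, using that $G$ acts transitively on $\Psi$ with stabilizer $Z_g$. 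A diagram chase identical to~\eqref{eqn:H-Compln-3}--\eqref{eqn:H-Compln-4}, with $e$ replaced by $g$ throughout, then identifies $\sO(\sL\sX)^{\compl}_{\fm_\Psi}$ with the derived sections of the formal completion $\widehat{\sL\sX}_{[X^g/Z_g]}$. The formal HKR theorem of Ben-Zvi--Nadler identifies the latter with the sections of the completed odd tangent bundle of $[X^g/Z_g]$, which by the same argument applied to the identity element of $Z_g$ is $HH([X^g/Z_g])^{\compl}_{\fm_g}$. Since every ingredient is $S^1$-equivariant, applying $(-)^{hS^1}$ yields the statement for $HC^-$.

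The principal obstacle is upgrading the weak equivalence to cyclic and periodic cyclic homology, where $(-)_{hS^1}$ must be commuted with the homotopy limits and completions at $\fm_\Psi$. This is done by adapting the arguments of \secref{sec:Cyc**}, specifically \lemref{lem:Chen-0}, \lemref{lem:Chen-5}, and \lemref{lem:Chen-2}, which rely only on: Noetherianness of the representation ring (\propref{prop:finiteR}), used to write $\fm_\Psi = (x_1,\dots,x_r)$ as a finitely generated ideal; uniform homological boundedness of the tower $\{HH([X^g/Z_g])/\fm_\Psi^s\}_s$, which follows from smoothness of $X^g$ and reductivity of $Z_g$ via the bounded dimension of $[X^g/Z_g]$; and the existence of an HKR filtration on $\Perf_{dg}[X^g/Z_g]$. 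None of these inputs is specific to the augmentation ideal $I_G$, so the proofs go through verbatim. The $HP$ case then follows from a five-lemma argument on the norm cofiber sequence $HC[1] \to HC^- \to HP$, exactly as in \corref{cor:Chen-8}.
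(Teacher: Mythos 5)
Your composite is the same as the paper's: the paper defines $\Theta^g_\sX$ in~\eqref{eqn:Homology-main-1} as a composite of a map $\iota^*_g\colon F(\sX)^{\compl}_{\fm_\Psi}\to F([X^g/Z_g])^{\compl}_{\fm_g}$ built from the closed immersion $X^g\hookrightarrow X$, the twisting operator $t_{g^{-1}}$, and the descent map $p^*_\bullet$; and your references for the latter two being weak equivalences (\propref{prop:Twist-local-compl}, \propref{prop:H-Compln-8}, \corref{cor:NH-functor}, \thmref{thm:Chen-3}, \corref{cor:Chen-8}) are exactly the paper's. The difference lies in the first arrow. The paper does not re-prove that it is a weak equivalence; it cites Theorems~3.2.3 and~3.2.10 of \cite{Chen}, which are Chen's nonabelian completion theorems for Hochschild and negative cyclic homology --- precisely the homological analogue of \thmref{thm:Max-Main} that you identify as the crux. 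You attempt to prove this from first principles, and this is where a gap appears.

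The gap: you claim the formal HKR theorem of Ben-Zvi--Nadler identifies $\sO(\widehat{\sL\sX}_{[X^g/Z_g]})$ with $\sO(\widehat{\T}_{[X^g/Z_g]}[-1])$. But Ben-Zvi--Nadler's exponential $\exp\colon\widehat{\T}_{\sY}[-1]\to\widehat{\sL}\sY$ is an equivalence onto the completion of $\sL\sY$ along its \emph{identity} section; it does not apply off the shelf to the completion of $\sL\sX$ along the $g$-section $x\mapsto(x,g)$ of a \emph{substack} $[X^g/Z_g]\hookrightarrow\sI_\sX$ when $g\neq e$. Making this precise requires a \emph{twisted} formal HKR theorem: one must compare $\widehat{\sL\sX}_{[X^g/Z_g]}$ with the $g$-translated completion of $\sL[X^g/Z_g]$ and relate the latter to the odd tangent bundle of $[X^g/Z_g]$ with a $g$-twist. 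That is precisely the content of Chen's theorems, which the paper cites rather than re-derives. Your next clause --- that applying ``the same argument at the identity of $Z_g$'' produces $HH([X^g/Z_g])^{\compl}_{\fm_g}$ --- is a symptom of the same gap: running \lemref{lem:H-Compln-0} for the identity of $Z_g$ produces the $I_{Z_g}$-completion, not the $\fm_g$-completion, and the discrepancy between the two is exactly the $g$-twist that your chain of identifications silently dropped and that $t_{g^{-1}}$ is present in the composite to absorb. As written, your identifications either land on the wrong completion or double-count the twist.

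The remaining ingredients --- reductivity of $Z_g$ (from \cite[Thm.~13.19]{Borel}), smoothness of $X^g$, passage to $HC^{-}$ via $(-)^{hS^1}$, the adaptation of the limit--colimit exchange of \S~\ref{sec:Cyc**} to $\fm_\Psi$ using Noetherianness of $R_k(Z_g)$ and boundedness of the HKR tower for $[X^g/Z_g]$, and the norm-cofiber sequence for $HP$ --- are all correct and agree with the paper. Replacing the naive invocation of Ben-Zvi--Nadler by the twisted formal HKR theorem of Chen (or by a genuine proof thereof) would complete the argument.
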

\begin{proof}
  We consider the maps
\begin{equation}\label{eqn:Homology-main-1}
  F(\sX)^{\compl}_{\fm_\Psi} \xrightarrow{\iota^*_g}  F([{X^g}/{Z_g}])^{\compl}_{\fm_g}
  \xrightarrow{t_{g^{-1}}} F([{X^g}/{Z_g}])^{\compl}_{I_{Z_g}}
  \xrightarrow{p^*_\bullet} \holim_{n} F(U_n),
\end{equation}
where $\iota^*_g$ is the pull-back map induced by the closed immersion of
smooth schemes $\iota_g \colon X^g \inj X$.
Since the group of connected components of $Z_g$ is reductive as $\Char(k) = 0$,
it follows from \cite[Thm.~13.19]{Borel} that $Z_g$ is a reductive $k$-group.
We deduce by combining \cite[Thm.~3.2.3, 3.2.10]{Chen} with \propref{prop:H-Compln-8},
  \corref{cor:NH-functor}, \thmref{thm:Chen-3}, \corref{cor:Chen-8} and
  \propref{prop:Twist-local-compl} that all arrows in ~\eqref{eqn:Homology-main-1}
  are weak equivalences. This proves the theorem.
\end{proof}

\section{Results for Deligne-Mumford stacks}\label{sec:DM-Homology}
We let $k$ be an algebraically closed field of characteristic zero and let
$G$ be a $k$-group acting on a $k$-scheme $X$ with finite stabilizers
(cf. ~\S~\ref{sec:Proper-action}). Equivalently, $[X/G]$ is a Deligne-Mumford
stack. In this section,
we shall show that the equivariant $KH$-theory (without passing to any completion)
 of $X$ coincides with the Borel equivariant $KH$-theory of  the inertia scheme $I_X$.
 We shall prove the same statement for the homology theories if $X$ is smooth.
 The corresponding result for $K'$-theory was proven in \cite{Krishna-Sreedhar}
 when $k = \C$.
These results provide simple descriptions of the equivariant $KH$-theory and
homology theories of quotient Deligne-Mumford stacks.

\subsection{$KH$-theory of DM stacks}\label{sec:KH-DM}
We recall for the reader that $KH((I_X)_G, k)$ is the Borel equivariant
homotopy $K$-theory of $I_X$ while
$KH((I_X)^\bullet_G, k) = \holim_n KH(G^n \times I_X)$
is the homotopy $K$-theory of the bar construction for $G$-action on $I_X$.
Our main results for $KH$-theory are the following.

\begin{thm}\label{thm:AS-KH-DM}
 There is a weak equivalence of spectra
    \begin{equation}\label{eqn:AS-KH-DM-0}
      \Upsilon^G_X \colon KH_G(X,k) \xrightarrow{\simeq} KH((I_X)_G, k)
    \end{equation}
    which is natural in $X$.
  \end{thm}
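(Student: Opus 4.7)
The plan is to reduce the theorem to the generalized Atiyah--Segal theorem for $KH$-theory (\thmref{thm:Gen-AS-max}) and combine it with the finite support of equivariant $KH$-theory provided by \propref{prop:Fin-support}. Since $k$ has characteristic zero, every finite stabilizer is linearly reductive, so the $G$-action on $X$ automatically has nice stabilizers in the sense of \S\ref{sec:Recall-0}, and the $KH$-version of \thmref{thm:Gen-AS-max} applies throughout.

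First, by \propref{prop:Fin-support}, there exists a finite set $\Sigma^G_X = \{\Psi_1,\ldots,\Psi_n\}$ of semisimple conjugacy classes in $G(k)$ and an ideal $J\subset R_k(G)$ with $J\cdot KH_*(X,k)=0$ such that $R_k(G)/J$ has finite spectrum containing $\Sigma^G_X$. The ring $R_k(G)/J$ is thus Artinian semilocal; combined with the vanishing $KH_G(X,k)^{\compl}_{\fm_\Psi}\simeq 0$ for $\Psi\notin\Sigma^G_X$ (an immediate consequence of \thmref{thm:Gen-AS-max} since $(X^g)_{Z_g}=\emptyset$ in that case), the support of $KH_*(X,k)$ is contained in $\{\fm_{\Psi_1},\ldots,\fm_{\Psi_n}\}$. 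The classical module-theoretic decomposition of a module supported at finitely many maximal ideals, combined with \corref{cor:Fin-support-4}(2) (identifying localization with completion at each $\fm_{\Psi_i}$) and the identification of finite products with finite coproducts of spectra, then lifts to a spectrum-level weak equivalence
\[
KH_G(X,k)\xrightarrow{\simeq} \bigoplus_{i=1}^{n} KH_G(X,k)^{\compl}_{\fm_{\Psi_i}}.
\]

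Next, choose a representative $g_i\in\Psi_i$ for each $i$. The $KH$-part of \thmref{thm:Gen-AS-max} supplies natural weak equivalences
\[
\vartheta^{g_i}_X\colon KH_G(X,k)^{\compl}_{\fm_{\Psi_i}}\xrightarrow{\simeq} KH\bigl((X^{g_i})_{Z_{g_i}},k\bigr).
\]
To reassemble these into a map into the Borel construction on the inertia scheme, I would then establish the standard scheme-theoretic decomposition of the inertia: the first projection $\pi\colon I_X\to G$ takes values in the finite union $\bigcup_{i}\Psi_i$ (since every element $g$ appearing in $I_X$ has finite order, hence is semisimple, and only the classes in $\Sigma^G_X$ can occur), and the orbit map $G\times X^{g_i}\to I_X$, $(h,x)\mapsto (hg_ih^{-1},hx)$, descends to a $G$-equivariant isomorphism $G\stackrel{Z_{g_i}}{\times}X^{g_i}\xrightarrow{\cong} I_X^{(i)}:=\pi^{-1}(\Psi_i)$, so that $I_X=\coprod_i I_X^{(i)}$ as schemes. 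Combined with \lemref{lem:Morita*}, this gives $[I_X/G]\cong\coprod_{i}[X^{g_i}/Z_{g_i}]$, whence
\[
KH\bigl((I_X)_G,k\bigr)\simeq\bigoplus_{i=1}^{n} KH\bigl((X^{g_i})_{Z_{g_i}},k\bigr),
\]
since both the Borel construction and $KH$-theory convert finite disjoint unions into finite coproducts of spectra.

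Defining $\Upsilon^G_X$ as the composition of the three weak equivalences above, naturality in $X$ is inherited from each step: the completion decomposition is functorial for $G$-equivariant morphisms (components indexed by $\Psi_i\notin\Sigma^G_{X'}$ simply become zero), $\vartheta^{g_i}_X$ is natural by \thmref{thm:Gen-AS-max}, and the inertia and Morita decompositions are functorial in $X$. The two main obstacles are the spectrum-level upgrade in the first step, which is handled by combining \propref{prop:Fin-support} with \corref{cor:Fin-support-4}(2), and the scheme-theoretic decomposition of $I_X$, which is handled by exhibiting each $I_X^{(i)}$ as the Morita space of $G\times X^{g_i}$ over $Z_{g_i}$ and using the finiteness of $\Sigma^G_X$ to force these locally closed pieces to be open-and-closed in $I_X$.
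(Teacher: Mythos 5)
Your proposal is correct and follows essentially the same line of argument as the paper: decompose over the finitely many maximal ideals supporting $KH_*(X,k)$ using \propref{prop:Fin-support} and \corref{cor:Fin-support-4}, identify each completed summand via the derived nonabelian completion theorem, and reassemble along the inertia decomposition. The one genuine difference is in the ordering of the last two steps. The paper first glues the pieces $KH_{Z_{g_i}}(X^{g_i},k)^{\compl}_{I_G}$ back into $KH_G(I_X,k)^{\compl}_{I_G}$ using the stack decomposition $[I_X/G]\cong\coprod_i[X^{g_i}/Z_{g_i}]$ and then applies \thmref{thm:AS-KH} \emph{once} to the pair $(I_X,G)$ via $\vartheta_{[I_X/G]}$; this avoids any Morita comparison at the Borel level. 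You instead apply the $KH$-part of \thmref{thm:Gen-AS-max} (which internally already contains the Atiyah--Segal step) \emph{piecewise} to each $(X^{g_i},Z_{g_i})$ and then reassemble the Borel constructions. That last reassembly requires the Borel-level Morita equivalence $KH\bigl((G\stackrel{Z_{g_i}}{\times}X^{g_i})_G,k\bigr)\simeq KH\bigl((X^{g_i})_{Z_{g_i}},k\bigr)$, and \lemref{lem:Morita*} (which is about the quotient stacks) is not quite sufficient for it: you need \lemref{lem:Mixed-iso} together with the admissible-gadget independence of Borel $KH$-theory (\propref{prop:Rep-ind}) to conclude $I_X^{(i)}\stackrel{G}{\times}U_j\cong X^{g_i}\stackrel{Z_{g_i}}{\times}U_j$. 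This is a true fact and your route is valid, but it carries a small extra burden that the paper's ordering avoids. Finally, your re-proof of the inertia decomposition (the paper cites \cite[Lem.~2.6]{Krishna-Sreedhar}) is correct; the observation that all stabilizer elements are finite order, hence semisimple in characteristic zero, and hence fall into the finite set $\Sigma^G_X$ by \propref{prop:Fin-support}(1), is exactly what is needed.
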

\begin{proof}
  We shall write $KH_G(X,k)$ as $KH([X/G],k)$ throughout the proof.
    By \propref{prop:Fin-support}, there exists an ideal $J \subset R_k(G)$ with
    finite support $\{\fm_1, \ldots , \fm_r\}$ such that
    $J(KH_i(\stX, k)) = 0$ for all $i \in \Z$. If let $\Sigma^G_X =
    \{\Psi'_1, \ldots, \Psi'_n\}$ be as in \propref{prop:Fin-support}(1), then
    we can further assume that $\{\fm_1, \ldots , \fm_r\}$ contains the maximal ideals
    corresponding to $\Sigma^G_X$. 
In particular, $KH_i(\stX, k)$ is an ${R_k(G)}/J$-module.
    We thus get
    \begin{equation}\label{eqn:AS-KH-DM-1}
      \begin{array}{lll}
        KH_i(\stX,k) & \xrightarrow{\cong} & KH_i(\stX,k) \otimes_{R_k(G)} {R_k(G)}/J \\
        & \xrightarrow{\cong} & \stackrel{r}{\underset{i =1}\bigoplus}
          KH_i(\stX,k) \otimes_{R_k(G)} ({R_k(G)}/J)_{\fm_i} \\
        & \xrightarrow{\cong} & \stackrel{r}{\underset{i =1}\bigoplus}    
            KH_i(\stX,k) \otimes_{R_k(G)}  {R_k(G)}/J   \otimes_{R_k(G)}
            {R_k(G)}_{\fm_i} \\
         & \xrightarrow{\cong} & \stackrel{r}{\underset{i =1}\bigoplus}    
              KH_i(\stX,k)  \otimes_{R_k(G)} {R_k(G)}_{\fm_i} \\
          & \xrightarrow{\cong} & \stackrel{r}{\underset{i =1}\bigoplus}    
                KH_i(\stX,k)_{\fm_i} \\
      \end{array}
      \end{equation}

 Since the homotopy groups of derived localizations are the usual localizations
    of homotopy groups, we get that the canonical map
    of spectra
    \begin{equation}\label{eqn:AS-KH-DM-2}
      KH_G(X,k) \to \stackrel{r}{\underset{i =1}\coprod} KH_G(X,k)_{\fm_i}
    \end{equation}
    is a weak equivalence, where the terms on the right hand side are 
    derived localizations (cf. \S~\ref{sec:DLC}).
    Combining this with \corref{cor:Fin-support-4}, we get that 
    the canonical map
    \begin{equation}\label{eqn:AS-KH-DM-3}
      KH_G(X,k) \to \stackrel{r}{\underset{i =1}\coprod} KH_G(X,k)^{\compl}_{\fm_i}
    \end{equation}
    is a weak equivalence.

By \corref{cor:maximal1}, every $\fm_i$ is of the form $\fm_{\Psi_i}$ for a
 unique semisimple conjugacy class $\Psi_i \subset G_s(k)$. If we let
 $g_i \in G_s(k)$ be a representative of $\Psi_i$, then ~\eqref{eqn:AS-KH-DM-3} and
    \thmref{thm:Gen-AS-max} together imply that there is a canonical weak equivalence
    \begin{equation}\label{eqn:AS-KH-DM-4}
      KH_G(X,k) \xrightarrow{\simeq} \stackrel{r}{\underset{i =1}\coprod}
      KH_{Z_{g_i}}(X^{g_i},k)^{\compl}_{I_{Z_{g_i}}}
    \end{equation}
    The canonical map $KH_{Z_{g_i}}(X^{g_i},k)^{\compl}_{I_G} \to
    KH_{Z_{g_i}}(X^{g_i},k)^{\compl}_{I_{Z_{g_i}}}$
    is a weak equivalence 
    by \corref{cor:maximal2}, \propref{prop:finiteR} and
    \propref{prop:Compln_D}(5).
    If $g \in G_s(k) \setminus (\stackrel{r}{\underset{i =1}\cup} \Psi_i)$,
    then $X^{g} = \emptyset$ by
    \propref{prop:Fin-support} since $\Sigma^G_X \subset \{\fm_1, \ldots , \fm_r\}$.
    We can thus write ~\eqref{eqn:AS-KH-DM-4} as
    \begin{equation}\label{eqn:AS-KH-DM-5}
      KH_G(X,k) \xrightarrow{\simeq} {\underset{g \in G_s(k)}\coprod}
      KH_{Z_g}(X^{g}, k)^{\compl}_{I_G},
    \end{equation}
    where the right hand side is a finite coproduct.

Using the decomposition
    $[{I_X}/G] =  {\underset{g \in G_s(k)}\coprod} [{X^g}/{Z_g}]$ of the inertia stack
    (cf. \cite[Lem.~2.6]{Krishna-Sreedhar}), we get
    \begin{equation}\label{eqn:AS-KH-DM-6}
      \begin{array}{lll}
      KH_G(X,k) & \xrightarrow{\simeq} & {\underset{g \in G_s(k)}\coprod}
      KH_{Z_g}(X^{g}, k)^{\compl}_{I_G} \\
      & \xrightarrow{\simeq} &
      \left[{\underset{g \in G_s(k)}\coprod} KH_{Z_g}(X^{g}, k)\right]^{\compl}_{I_G} \\
& \xrightarrow{\simeq} &
      \left[{\underset{g \in G_s(k)}\coprod}
        KH([{X^{g}}/{Z_g}], k)\right]^{\compl}_{I_G} \\
& \xrightarrow{\simeq} & KH([{I_X}/G], k)^{\compl}_{I_G} \\
        & \xrightarrow{\simeq} & KH_G(I_X, k)^{\compl}_{I_G}.
          \end{array}
    \end{equation}

Letting $\Upsilon^G_X$ be the composition of this weak equivalence with the map
    $\vartheta_{{I_X}/G}$ of ~\eqref{eqn:AS-map-1} and applying
    \thmref{thm:AS-KH},
    we conclude that $\Upsilon^G_X$ is a weak equivalence. Furthermore,
    this is natural in
    $X$ since all maps which are used in its definition have been shown
    previously to be natural in $X$.
\end{proof}

\begin{thm}\label{thm:DM-Thom}
  If $X \in \Sch^G_k$and
    the hypotheses of \thmref{thm:AS-KH} are satisfied, then 
we have a weak equivalence of spectra
    \begin{equation}\label{eqn:DM-Thom-1}
      \wt{\Upsilon}^G_X \colon KH_G(X,k) \xrightarrow{\simeq} KH((I_X)^\bullet_G, k)
    \end{equation}
    which is natural in $X$.
\end{thm}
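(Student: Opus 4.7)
The plan is to deduce this by combining Theorem~\ref{thm:AS-KH-DM} with Theorem~\ref{thm:AS-KH} and the $KH$-theory version of Corollary~\ref{cor:Thom-maximal} recorded in Remark~\ref{remk:KH-Thom*}, both applied to the $G$-action on the inertia scheme $I_X$ rather than on $X$.

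First I would verify that the $G$-action on $I_X$ itself satisfies the hypotheses of Theorem~\ref{thm:AS-KH}(2). The space $I_X$ is a $k$-scheme, being a closed subscheme of $G \times X$ via the embedding $t$ of~\eqref{eqn:inertia}. The stabilizer in $G$ of a $k$-point $(h,x) \in I_X$ is $Z_h \cap G_x$, a closed subgroup of the finite stabilizer $G_x$; it is therefore finite, and hence nice since $\Char(k) = 0$. Consequently Theorem~\ref{thm:AS-KH} applied to the $G$-action on $I_X$ gives a weak equivalence
\[
\vartheta_{[I_X/G]} \colon KH_G(I_X, k)^{\compl}_{I_G} \xrightarrow{\simeq} KH((I_X)_G, k).
\]

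Next, since the same hypotheses hold, Remark~\ref{remk:KH-Thom*} applies: it provides the $KH$-extension of Corollary~\ref{cor:Thom-maximal} for singular $k$-schemes. Specializing that statement to $g = e$, the identity element of $G$ (so that $Z_e = G$, $(I_X)^e = I_X$, and $\fm_{\{e\}} = I_G$), the map $\Theta^e_{I_X}$ of~\eqref{eqn:Gen-AS-max-1} collapses to the bar-construction pull-back because both $\Psi^e_{I_X}$ and the twisting operator $t_{e^{-1}}$ act as identity. This yields a natural weak equivalence
\[
\wt{\pi}^*_{I_X} \colon KH_G(I_X, k)^{\compl}_{I_G} \xrightarrow{\simeq} KH((I_X)^\bullet_G, k).
\]

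Finally I would define $\wt{\Upsilon}^G_X$ as the composite
\[
KH_G(X, k) \xrightarrow[\simeq]{\Upsilon^G_X} KH((I_X)_G, k)
\xleftarrow[\simeq]{\vartheta_{[I_X/G]}} KH_G(I_X, k)^{\compl}_{I_G}
\xrightarrow[\simeq]{\wt{\pi}^*_{I_X}} KH((I_X)^\bullet_G, k),
\]
where the first arrow is the weak equivalence of Theorem~\ref{thm:AS-KH-DM}. Naturality of $\wt{\Upsilon}^G_X$ in $X$ follows from the naturality of each of its three constituent equivalences. There is no substantive obstacle in this argument since all the tools are in place; the only minor point of caution is to check that Remark~\ref{remk:KH-Thom*} does supply the equivalence at $g = e$ (equivalently, the $KH$-version of Corollary~\ref{cor:KH-Thom*} with $k$-coefficients) when the acting scheme is the singular $I_X$, and this is exactly the content of the remark under the hypotheses just verified.
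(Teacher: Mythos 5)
Your proof is correct and follows essentially the same approach as the paper: the paper's proof simply replaces the map $\vartheta_{[I_X/G]}$ at the final step of the proof of Theorem~\ref{thm:AS-KH-DM} with the bar-construction map $\wt{\pi}^*$ from Corollary~\ref{cor:KH-Thom*}, which is literally what you obtain after composing $\Upsilon^G_X$ with $(\vartheta_{[I_X/G]})^{-1}$ and $\wt{\pi}^*_{I_X}$. Your verification that the $G$-action on $I_X$ inherits nice (finite) stabilizers is a good point to make explicit, and your invocation of Remark~\ref{remk:KH-Thom*} at $g = e$ is a valid alternative to directly citing the $k$-coefficient version of Corollary~\ref{cor:KH-Thom*}(1)(2), which the paper uses.
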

\begin{proof}
  The proof is identical to that of \thmref{thm:AS-KH-DM} except that we take
  $\wt{\Upsilon}^G_X$ to be the composition of the weak equivalence in
  ~\eqref{eqn:AS-KH-DM-6} with the map $\wt{\pi}^*$ of \corref{cor:KH-Thom*}
  instead of the map $\vartheta_{[{I_X}/G]}$ of ~\eqref{eqn:AS-map-1}.
 \end{proof}

\subsection{Homology theories of DM stacks}\label{sec:Hom_DM}
We assume now that $G$ is reductive and $X \in \Sm^G_k$.
We let $\sX = [X/G]$, $I_{\sX} = [{I_X}/G]$ and let $\tau \colon I_\sX \to \sX$ be the
projection. Let $\Sigma^s_G$ be the set of all semisimple conjugacy classes in $G(k)$
and let $\fm_{\Psi} \subset R_k(G)$ denote the maximal ideal corresponding to
$\Psi \in \Sigma^s_G$. In this subsection, we shall write $K(\sX,k)$ simply as
$K(\sX)$. The key to the main results of this subsection is the
following.

\begin{prop}\label{prop:Homology-decom}
  For $F \in \{HH, HC^{-}, HC, HP\}$, the canonical map
  \begin{equation}\label{eqn:Homology-decom-0}
    F(\sX) \to {\underset{\Psi \in \Sigma^s_G}\coprod} F(\sX)^{\compl}_{\fm_{\Psi}}
  \end{equation}
  is a weak equivalence. Moreover, the right hand side is a finite coproduct.
\end{prop}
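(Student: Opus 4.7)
My plan is to match a decomposition of $F(\sX)$ coming from the geometry of the inertia stack with the sum of derived completions on the right. Throughout, $\pi_*(F(\sX))$ carries an $R_k(G)$-module structure via the unit map $K(BG) \to K(\sX) \to F(\sX)$ of \thmref{thm:H-functor} together with the identification $\pi_0(K(BG,k)) = R_k(G)$.

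Since $\Char(k) = 0$ and $G$ acts on $X$ with finite stabilizers, every $g \in G$ for which $X^g$ is non-empty is torsion and hence semisimple. By \propref{prop:Fin-support}(1) and \corref{cor:SSC}, the image of the projection $I_X \to G$ is the disjoint union of finitely many closed semisimple conjugacy classes $\Psi_1 \sqcup \cdots \sqcup \Psi_n = \Sigma^G_X$. This gives a $G$-equivariant clopen decomposition $I_X = \coprod_{\Psi \in \Sigma^G_X} I_X^{\Psi}$, which descends to $I_\sX = \coprod_\Psi I_\sX^\Psi$ and lifts uniquely to the derived loop space: $\sL\sX = \coprod_\Psi \sL\sX^\Psi$ (any clopen decomposition of the truncation $\pi_0$ of a derived stack lifts uniquely). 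Applying global sections and using \eqref{eqn:HH-Loop} produces
\[
HH(\sX) \simeq \bigoplus_{\Psi \in \Sigma^G_X} HH(\sX)^{\Psi}, \qquad HH(\sX)^\Psi := \sO(\sL\sX^\Psi).
\]
Each summand is supported at $\fm_\Psi$: the composite $\sL\sX^\Psi \to \sL(BG) \simeq [G/G] \to \Spec(R_k(G))$ (the second map is the affinization via $k[G]^G \cong R_k(G)$ of \propref{prop:Twist-BP}(5)) carries the underlying classical stack $I_\sX^\Psi$ set-theoretically into $\{\fm_\Psi\}$, so every $y \in R_k(G) \setminus \fm_\Psi$ becomes a unit in $\pi_0(HH(\sX)^\Psi)$ and hence acts invertibly on all $\pi_i(HH(\sX)^\Psi)$.

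Next I would identify $HH(\sX)^{\wedge}_{\fm_\Psi}$ with the summand $HH(\sX)^\Psi$. For $\Psi' \neq \Psi$, any $y \in \fm_\Psi \setminus \fm_{\Psi'}$ acts invertibly on $HH(\sX)^{\Psi'}$, so its $(y)$-completion vanishes, and by \propref{prop:Compln_D}(4,6) so does $(HH(\sX)^{\Psi'})^{\wedge}_{\fm_\Psi}$. Thus $HH(\sX)^{\wedge}_{\fm_\Psi}$ reduces to $(HH(\sX)^\Psi)^{\wedge}_{\fm_\Psi}$; applying \thmref{thm:Homology-main} to the quotient map $X^{g_\Psi} \to [X^{g_\Psi}/Z_{g_\Psi}]$ identifies this with $HH((X^{g_\Psi})^\bullet_{Z_{g_\Psi}})$ and simultaneously with $HH(\sX)^\Psi$, concluding the case $F = HH$. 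The same theorem applied to any $\Psi \notin \Sigma^G_X$ (for which $X^{g_\Psi} = \emptyset$) yields $HH(\sX)^{\wedge}_{\fm_\Psi} = 0$, so the coproduct over $\Sigma^s_G$ is in fact finite.

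For $F \in \{HC^{-}, HC, HP\}$, the decomposition of $\sL\sX$ is $S^1$-equivariant (loop rotation preserves each component), so the decomposition of $HH(\sX)$ is $S^1$-equivariant as well. The functors $(-)^{hS^1}, (-)_{hS^1}, (-)^{h\T}$ commute with finite direct sums, and --- combining \propref{prop:Compln_D}(7) with the $I_G$-completeness of cyclic-type theories shown in \corref{cor:NH-functor}, \thmref{thm:Chen-3} and \corref{cor:Chen-8} --- with the relevant $\fm_\Psi$-completions; this yields the decomposition for the remaining functors. The principal obstacle lies in rigorously lifting the clopen decomposition of $I_\sX$ to the derived stack $\sL\sX$ and then reconciling the geometric summand $HH(\sX)^\Psi$ with the derived $\fm_\Psi$-completion in Lurie's sense, since derived completion does not a priori commute with an infinite direct sum decomposition --- this is what forces us to route the final identification through \thmref{thm:Homology-main}.
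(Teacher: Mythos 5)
Your approach is a genuine alternative, but it has a gap at a critical step, and the paper's own argument sidesteps all of this.

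Where your plan differs from the paper: you decompose the derived loop space $\sL\sX = \coprod_\Psi \sL\sX^\Psi$ using the clopen decomposition of $I_\sX$ (which is indeed available for action with finite stabilizers), pass to global sections, and then try to match each geometric summand $HH(\sX)^\Psi = \sO(\sL\sX^\Psi)$ with the derived completion $HH(\sX)^{\compl}_{\fm_\Psi}$. The paper instead avoids the derived geometry entirely: it starts from the already-established weak equivalence $K(\sX) \to \coprod_\Psi K(\sX)_{\fm_\Psi}$ (from ~\eqref{eqn:AS-KH-DM-2}, a consequence of \propref{prop:Fin-support}), observes that each $F \in \{HH,HC^-,HC,HP\}$ is a $K(\sX)$-algebra by \thmref{thm:H-functor}, smashes over $K(\sX)$, and uses that localization commutes with smash products to get $F(\sX) \simeq \coprod_\Psi F(\sX)_{\fm_\Psi}$. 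The localization is then identified with the completion because the $K(\sX)$-module structure transports the nilpotence of $\fm_\Psi$ on $K_0(\sX)_{\fm_\Psi}$ (cf. \corref{cor:Fin-support-4}) to the homotopy groups of $F(\sX)_{\fm_\Psi}$, so the completion map is a weak equivalence by \cite[Cor.~7.3.2.2]{Lurie-SAG}. This is considerably shorter.

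The gap in your proposal: you correctly reduce $HH(\sX)^{\compl}_{\fm_\Psi}$ to $(HH(\sX)^\Psi)^{\compl}_{\fm_\Psi}$, and \thmref{thm:Homology-main} then computes this completion to be $HH((X^{g_\Psi})^\bullet_{Z_{g_\Psi}})$. But the next claim --- that this coincides ``simultaneously'' with $HH(\sX)^\Psi$ --- is exactly what needs proof and does not follow from \thmref{thm:Homology-main}. What you actually need is that $HH(\sX)^\Psi$ is $\fm_\Psi$-complete, i.e.\ that elements of $\fm_\Psi$ act locally nilpotently on $\pi_*(HH(\sX)^\Psi)$ (your loop-space picture only shows invertibility of elements outside $\fm_\Psi$). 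Equivalently, one would have to prove directly that the clopen summand $\sL\sX^\Psi$ of $\sL\sX$ agrees with the formal completion $\wh{\sL}[X^{g_\Psi}/Z_{g_\Psi}]$; these have the same classical truncation but the agreement of derived structures is precisely the content of Chen's nonabelian localization for loop spaces, not a formality. The missing nilpotence is most economically supplied exactly as in the paper: $HH(\sX)^\Psi$ is a $K(\sX)$-module (via the trace ring map), $\fm_\Psi$ acts nilpotently on $K_0(\sX)_{\fm_\Psi}$, hence on all $\pi_i(HH(\sX)^\Psi)$. Once you notice this, the loop-space decomposition is doing no essential work, and the proof collapses to the paper's argument.
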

\begin{proof}
  %We first assume $F \in \{HH, HC^{-}, HP\}$.
  We have shown in ~\eqref{eqn:AS-KH-DM-2} that the localization map
  \begin{equation}\label{eqn:Homology-decom-1}
    K(\sX) \to {\underset{\Psi \in \Sigma^s_G}\coprod} K(\sX)_{\fm_{\Psi}}
  \end{equation}
  is a weak equivalence. Moreover, the right hand side is a finite coproduct of ring
  spectra. Since the trace maps 
  \begin{equation}\label{eqn:Homology-decom-2}
K(\sX) \to HC^{-}(\sX) \to HH(\sX)
  \end{equation}
  as well as the maps $T \colon HC^{-}(\sX) \to HP(\sX)$  and  $HH(\sX) \to HC(\sX)$
  are maps of ring spectra (cf. \thmref{thm:H-functor}),
  it follows that $F(\sX)$ is a $K(\sX)$-algebra for
  $F \in \{HH, HC^{-}, HC, HP\}$.
  By taking the smash product of ~\eqref{eqn:Homology-decom-1} with $F(\sX)$ over
  $K(\sX)$, we get a weak equivalence
  \[
  F(\sX) \xrightarrow{\simeq} {\underset{\psi \in \Sigma^s_G}\coprod}
  F(\sX) \wedge_{K(\sX)} K(\sX)_{\fm_{\Psi}}.
  \]

Since $K(\sX)_{\fm_{\Psi}}$ is the homotopy colimit of sequences
  $(K(\sX) \xrightarrow{x} K(\sX) \xrightarrow{x} \cdots)$ with
  $x \in R_k(G) \setminus \fm_\Psi$
  and the smash product commutes with homotopy colimits
  (cf. \cite[\S~2.4, p.~38]{Lurie-DAG}), the canonical map
  $F(\sX) \wedge_{K(\sX)} K(\sX)_{\fm_{\Psi}} \to  F(\sX)_{\fm_{\Psi}}$ 
  is a weak equivalence. On the other hand, $(\fm_\Psi)^n K_0(\sX)_{\fm_\Psi} = 0$
  for some $n \gg 0$, as shown in the proof of \corref{cor:Fin-support-4}.
  As $F(\sX)$ is a $K(\sX)$-module, it follows that
  every element of $\fm_\Psi$ acts nilpotently on
  each $\pi_i\left(F(\sX)_{\fm_{\Psi}}\right)$.
  But this implies that the completion map
  \begin{equation}\label{eqn:Homology-decom-7}
    F(\sX)_{\fm_{\Psi}} \to F(\sX)^{\compl}_{\fm_{\Psi}}
  \end{equation}
  is a weak equivalence (cf. \cite[Cor.~7.3.2.2]{Lurie-SAG}). 
  This finishes the proof.
\end{proof}

For every $\Psi \in \Sigma^s_G$, we pick an element $g_\Psi \in \Psi$ and
write $X_\Psi = X^{g_\Psi}$ and $Z_\Psi = Z_{g_\Psi}$. We let $\sX_\Psi = [{X_\Psi}/{Z_\Psi}]$.
Let $p \colon U \to I_\sX$ be a smooth surjective map with $U \in \Sch_k$.
We now look at the sequence of maps
\begin{equation}\label{eqn:Homology-decom-4}
  \begin{array}{lll}
   F(\sX) & \to & {\underset{\Psi \in \Sigma^s_G}\coprod} F(\sX)^{\compl}_{\fm_{\Psi}} \\
   & \xrightarrow{\amalg \iota_\Psi^*} &  {\underset{\Psi \in \Sigma^s_G}\coprod}
   F(\sX_\Psi)^{\compl}_{\fm_{g_\Psi}}   \\
   & \xrightarrow{\amalg t_{g^{-1}_\Psi}} & {\underset{\Psi \in \Sigma^s_G}\coprod}
   F(\sX_\Psi)^{\compl}_{I_{Z_\Psi}}  \\
   & {\simeq} & {\underset{\Psi \in \Sigma^s_G}\coprod}
   F(\sX_\Psi)^{\compl}_{I_G} \\
    & \xrightarrow{\simeq} &
    \left({\underset{\Psi \in \Sigma^s_G}\coprod}  F(\sX_\Psi)\right)^{\compl}_{I_G} \\
    & \xrightarrow{\simeq} & F(I_\sX)^{\compl}_{I_G} \\
    & \xrightarrow{p^*_\bullet} & \holim_{n}
      F(U_n).
    \\
  \end{array}
  \end{equation}

In the above sequence of arrows, the first arrow is the completion map of
\propref{prop:Homology-decom}, the second arrow is the sum of pull-back maps
via the inclusions $X_\Psi \inj X$, the third arrow is the twisting map of
\propref{prop:Twist-local-compl}, the fourth weak equivalence follows from
\corref{cor:maximal2}, \propref{prop:finiteR} and \propref{prop:Compln_D}(5),
the fifth weak equivalence is clear, the sixth weak equivalence follows
from the decomposition $I_\sX = {\underset{\Psi \in \Sigma^s_G}\coprod}
\sX_\Psi$, and the last weak equivalence is from \propref{prop:H-Compln-8}
and its analogue for other homology theories.
Combining these weak equivalences, we get the following analogue of
\thmref{thm:DM-Thom} for homology theories.

\begin{thm}\label{thm:Homology-decom-5}
  Let $G$ be a reductive $k$-group acting on a smooth $k$-scheme $X$ with finite
  stabilizers and let $\sX = [X/G]$. 
  Given any smooth surjective map $p \colon U \to I_\sX$ with $U \in \Sch_k$ and
$F \in \{HH, HC^{-}, HC, HP\}$, there is a weak equivalence of spectra
  \begin{equation}\label{eqn:Homology-decom-6}
 \wt{\Upsilon}_\sX \colon F(\sX) \xrightarrow{\simeq} \holim_{n} F(U_n)
  \end{equation}
  which is natural in $\sX$.
  \end{thm}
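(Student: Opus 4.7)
The plan is to construct $\wt{\Upsilon}_\sX$ as the composite displayed in \eqref{eqn:Homology-decom-4} and to verify that each of its seven arrows is a weak equivalence. For the first arrow, \propref{prop:Homology-decom} decomposes $F(\sX) \xrightarrow{\simeq} \coprod_{\Psi \in \Sigma^s_G} F(\sX)^{\compl}_{\fm_\Psi}$, and since $F(\sX)$ is a $K(\sX)$-module via the trace maps of \thmref{thm:H-functor}, \propref{prop:Fin-support}(2) guarantees that only finitely many summands — indexed by $\Psi$ with $X^{g_\Psi} \neq \emptyset$ — are nonzero.

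The main obstacle is the second arrow: showing
\[
F(\sX)^{\compl}_{\fm_\Psi} \xrightarrow{\simeq} F(\sX_\Psi)^{\compl}_{\fm_{g_\Psi}}
\]
for each $\Psi \in \Sigma^s_G$ with representative $g_\Psi$. This is the homology analogue of \thmref{thm:Max-Main}, and I would prove it by the same template: write it as change of groups $G \to Z_{g_\Psi}$, then completion at $\fm_{g_\Psi}$, then inverse of a fixed-point push-forward $\lambda_X$. The change-of-groups step reduces to the split reductive case via \lemref{lem:GLn-EG-2} applied to homology, which is possible because \propref{prop:Twist-BP}(5) provides $HH(BG) \simeq R_k(G)$ so that the Weyl-averaging argument runs through at each level of the cyclic bar complex. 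For the fixed-point step, smoothness of $X$ makes $X^{g_\Psi} \inj X$ a regular closed immersion, so the self-intersection formula gives $\iota^*\iota_* = \lambda_{-1}(N^\vee)\cdot\id$; invertibility of $\lambda_{-1}(N^\vee)$ in $K_0(\sX_\Psi)_{\fm_{g_\Psi}}$ (\lemref{lem:Max-Main-7}) then transfers to $F$ via the $K$-module structure on $F(\sX_\Psi)$, yielding the required localization. The smoothness hypothesis thus replaces the equivariant-resolution and Noetherian-induction machinery of the $K'$-theory proof.

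The remaining arrows are essentially formal. The twisting operator $t_{g^{-1}_\Psi}$ of \propref{prop:Twist-local-compl}, which applies uniformly to each $F \in \sS(k)$, converts the $\fm_{g_\Psi}$-completion into an $I_{Z_{g_\Psi}}$-completion; \corref{cor:maximal2}, \propref{prop:finiteR}, and \propref{prop:Compln_D}(5) identify this with the $I_G$-completion; the stack decomposition $I_\sX \cong \coprod_\Psi \sX_\Psi$ of \cite[Lem.~2.6]{Krishna-Sreedhar}, together with the commutation of finite coproducts and completion, produces $F(I_\sX)^{\compl}_{I_G}$. Finally, since $p$ is smooth surjective and $I_\sX$ is a smooth stack (each $\sX_\Psi = [X^{g_\Psi}/Z_{g_\Psi}]$ has smooth atlas and reductive structure group), one has $U \in \Sm_k$, so applying \propref{prop:H-Compln-8} for $HH$, \corref{cor:NH-functor} for $HC^-$, \thmref{thm:Chen-3} for $HC$, or \corref{cor:Chen-8} for $HP$ delivers the last equivalence $F(I_\sX)^{\compl}_{I_G} \xrightarrow{\simeq} \holim_n F(U_n)$. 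Naturality in $\sX$ follows because every constituent arrow is already known to be natural in $\sX$.
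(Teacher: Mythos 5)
Your overall plan correctly reproduces the paper's decomposition \eqref{eqn:Homology-decom-4} and handles all but one of its arrows with the right citations, so the strategy matches the paper's. However, there is a genuine gap in your treatment of the second arrow, $F(\sX)^{\compl}_{\fm_\Psi} \to F(\sX_\Psi)^{\compl}_{\fm_{g_\Psi}}$. The paper does not establish this equivalence by re-running the argument of \thmref{thm:Max-Main} for homology; instead it invokes \thmref{thm:Homology-main}, whose proof relies on the external nonabelian localization results \cite[Thm.~3.2.3, 3.2.10]{Chen} for the map $\iota^*_g$. Your proposed substitute proof does not hold up.

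The two steps you sketch do not transfer from $K'$-theory the way you claim. First, the change-of-groups reduction via \lemref{lem:GLn-EG-2} ultimately rests on \cite[Prop.~A6]{Krishna-Adv}, which gives $K'_i([X/G],k) \otimes_{R_k(G)} R_k(T) \cong K'_i([X/T],k)$; the analogous base-change statement $HH_G(X)\otimes_{R_k(G)} R_k(T) \simeq HH_T(X)$ is a nontrivial assertion that \propref{prop:Twist-BP}(5) alone (a statement only about $HH(BG)$) does not supply, and ``the Weyl-averaging argument runs through at each level of the cyclic bar complex'' is not a proof. Second, in \lemref{lem:Max-Main-7} the self-intersection computation only shows $\phi^*$ is a weak equivalence once one knows $\phi_*$ is, and that step is \corref{cor:Localization-0}, which is a localization-sequence argument: $K'$-theory of the open complement $X\setminus X^g$ vanishes after completion. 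Hochschild, cyclic, and periodic cyclic homology of stacks do not have such a localization/devissage sequence, so there is nothing to localize against; knowing $\lambda_{-1}(N^\vee)$ acts invertibly on $F(\sX_\Psi)$ via the $K$-module structure only gives that $\iota^*$ is split surjective (granting a push-forward $\iota_*$ and projection formula for $F$, which you would also have to construct), not that it is an equivalence. You need either to invoke the Chen reference as the paper does, or to carry out a genuinely new loop-space / formal HKR argument for the fixed-point step; your sketch does neither.
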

\begin{proof}
  The weak equivalence is shown above and the proof of
  its naturality is the same as that
  for $K$-theory in \thmref{thm:DM-Thom}.
  \end{proof}

\subsection{Individual homology groups of DM stacks}\label{sec:DM-pi}
We shall now give a complete description of all homology groups of the quotient stack
$\sX$ of \thmref{thm:Homology-decom-5}.
We begin by recalling some facts about the cotangent complex of
smooth Deligne-Mumford stacks.

Let $\sY$ be a Deligne-Mumford stack over $k$. Recall that there is a natural
augmentation map ${\mathbb{L}}_\sY \to \Omega^1_{{\sY}/k}[0]$. If
$\sY$ is smooth over $k$ (which will be the case below), then this map is a
weak equivalence (cf. \cite[\S~3]{Satriano}).
In particular, $\T_{\sY}[-1] \simeq \Spec({\rm Sym}^\bullet_{\sY}(\Omega^1_{{\sY}/k}[1]))$.
Since each $\Omega^i_{{\sY}/k}$ is locally free which is zero for $i > \dim(\sY)$,
it follows that ${\rm Sym}^\bullet_{\sY}(\Omega^1_{{\sY}/k}[1])$ is a finite complex.
In particular, its augmentation ideal is nilpotent which implies that the
formal completion $\wh{\T}_{\sY}[-1] \to \T_{\sY}[-1]$ along the zero section is
an equivalence. We let ${\Et}_{\sY}$ denote the {\'e}tale site of $\sY$ whose
objects are {\'e}tale maps $f \colon U \to \sY$ with $U \in \Sm_k$ and morphisms are
{\'e}tale maps between schemes.

We write
\begin{equation}\label{eqn:C-complex-1}
  \sH\sH_{\sY} =
{\rm Sym}^\bullet_{\sY}(\Omega^1_{{\sY}/k}[1]) = (\Omega^{d_\sY}_{{\sY}/k}
\xrightarrow{0} \cdots \xrightarrow{0} \Omega^1_{{\sY}/k} \xrightarrow{0}
\sO_{\sY}),
\end{equation}
where we write $d_{\sY} = \dim(\sY)$ and the complex is written homologically
with $\sO_{\sY}$ placed at degree zero.
We let $\Omega^\bullet_{\sY}$ be the (cohomological) de Rham complex of $\sY$ over $k$
(with de Rham differentials) and let
$_{s_{\ge m}}\Omega^\bullet_{\sY}$ and $_{s_{\le m}}\Omega^\bullet_{\sY}$ denote its
stupid truncations. We let $H^n_{dR}(\sY) = \H^n_\et(\sY, \Omega^\bullet_{\sY})$.

We let
\begin{equation}\label{eqn:C-complex-2}
  \sH\sC^{-}_{\sY} = (\sH\sH_{\sY})^{hS^{1}} =
        {\underset{i \in \Z}\bigoplus} \ \Omega^{\ge i}_{\sY}[2i];
        \ \ \sH\sC_{\sY} = (\sH\sH_{\sY})_{hS^{1}} =
                  {\underset{i \ge 0}\bigoplus} \ \Omega^{\le i}_{\sY}[2i];
                  \end{equation}
                  \[
                  \ \ \sH\sP_{\sY} = (\sH\sH_{\sY})^{tS^{1}} =
                            {\underset{i \in \Z}\bigoplus} \ \Omega^{\bullet}_{\sY}[2i].
                            \]
For $F \in \{HH, HC^{-}, HC, HP\}$, we let $\wt{F}_{\sY}$ be the fibrant replacement
of $F$ on the category of presheaves of complexes of $k$-vector spaces on ${\Et}_\sY$.
Then the canonical map $\sH\sH_{\sY}(U) \to \wt{HH}_{\sY}(U) = HH(U)$
is a weak equivalence if
$U \in {\Et}_{\sY}$ is affine as one checks using the hypercohomology spectral
sequence. It follows that the canonical map
$\H_\et(\sY,  \sH\sH_{\sY}) \to \H_\et(\sY,  \wt{HH}_{\sY})$ is a weak equivalence of
hypercohomology spectra (cf. \cite{Thomason-ENS}).
The same also holds for other homology functors.
In the following, $\Gamma(-,-)$ is the global section functor on the category of
chain complexes of {\'e}tale sheaves.

We now prove the following.

\begin{thm}\label{thm:Homology-ET}
  Let $G$ be a reductive $k$-group acting on a smooth $k$-scheme $X$ with finite
  stabilizers and let $\sX = [X/G]$. Then there are canonical weak
  equivalences
  \[
  HH(\sX) \xrightarrow{\simeq} {\bf{R}}\Gamma(I_{\sX}, \sH\sH_{I_{\sX}}); \ \
  HC^{-}(\sX) \xrightarrow{\simeq} {\bf{R}}\Gamma(I_{\sX}, \sH\sC^{-}_{I_{\sX}}).
  \]
  If $\sX$ is separated, then there are canonical weak
  equivalences
  \[
  HC(\sY) \xrightarrow{\simeq} {\bf{R}}\Gamma(I_{\sX}, \sH\sC_{I_{\sX}}); \  \
  HP(\sY) \xrightarrow{\simeq} {\bf{R}}\Gamma(I_{\sX}, \sH\sP_{I_{\sX}}).
  \]
\end{thm}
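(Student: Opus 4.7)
The strategy is to combine Theorem~\ref{thm:Homology-decom-5} with \'etale descent for the relevant presheaves on smooth schemes and the classical HKR theorem. Because $G$ is reductive and $\Char(k)=0$, each fixed locus $X^g$ is a smooth closed subscheme of $X$, so every component $[X^g/Z_g]$ of $I_{\sX}$ is a smooth Deligne--Mumford stack; in particular $I_{\sX}$ is smooth and admits an \'etale surjection $p\colon U\to I_{\sX}$ with $U\in\Sm_k$ a disjoint union of affines. The \v{C}ech nerve $U_{\bullet}$ of $p$ then consists entirely of smooth affine $k$-schemes.

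Applying Theorem~\ref{thm:Homology-decom-5} to this cover, I get $F(\sX)\simeq\holim_n F(U_n)$ for every $F\in\{HH,HC^{-},HC,HP\}$. The next step is to identify the right-hand side with hypercohomology. Since $\wt{F}_{I_{\sX}}$ is by definition the \'etale sheafification of $F$, and since each of our homology functors satisfies \'etale descent on smooth schemes (Weibel--Geller for $HH$ and its $S^1$-derived variants in characteristic zero, or the more general formulation via the BMS-style HKR filtration already employed in \S\ref{sec:Cyc-cech}), the canonical maps $F(U_n)\xleftarrow{\simeq}\wt{F}_{I_{\sX}}(U_n)$ are weak equivalences, and standard \v{C}ech-to-hypercohomology yields
\[
\holim_n F(U_n)\;\simeq\;\holim_n \wt{F}_{I_{\sX}}(U_n)\;\simeq\;\mathbf{R}\Gamma(I_{\sX},\wt{F}_{I_{\sX}}).
\]

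It remains to identify $\wt{F}_{I_{\sX}}$ with $\sH F_{I_{\sX}}$. For a smooth affine $V\in\Et_{I_{\sX}}$, the HKR map $\sH\sH_{V}\xrightarrow{\simeq}HH(V)$ is an $S^1$-equivariant quasi-isomorphism when $HH(V)$ carries the Connes operator and $\sH\sH_V$ carries the de~Rham differential (cf.~the discussion preceding \eqref{eqn:C-complex-1} and the filtration arguments of \S\ref{sec:Cyc-cech} using \cite{BMS}). Applying the homotopy fixed-point functor $(-)^{hS^1}$ termwise gives the identification for $F=HC^{-}$, which, combined with the previous step, proves the first two weak equivalences of the theorem.

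For $F=HC$ and $F=HP$, the extra ingredient is the interchange of $\mathbf{R}\Gamma(I_{\sX},-)$ with the $S^1$-homotopy orbit (respectively Tate) construction, since these are homotopy colimits and $\mathbf{R}\Gamma$ does not preserve colimits in general. This is the main obstacle, and it is the reason for the separatedness hypothesis: for a separated Deligne--Mumford stack, $\mathbf{R}\Gamma$ on quasi-coherent sheaves has finite cohomological dimension, which allows one to commute it with the relevant infinite direct sums and with the passage from $HH$ to its $S^1$-(co)invariants. Concretely, I would run the HKR-filtration argument of Lemmas~\ref{lem:Chen-4}--\ref{lem:Chen-6} with the tower $\{HH/\Fil^m_{\hkr}HH\}_m$ on $I_{\sX}$ in place of the tower there, using separatedness to bound the cohomological dimension uniformly in $m$ and thus to exchange $\mathbf{R}\Gamma(I_\sX,-)$ with $(-)_{hS^1}$ (respectively $(-)^{tS^1}$) on each graded piece ${\rm gr}^m_{\hkr}\sH\sH_{I_\sX}=\Omega^m_{I_\sX/k}[m]$, where the $S^1$-action is trivial and the identification is immediate. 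Passing back up the filtration yields the remaining two weak equivalences.
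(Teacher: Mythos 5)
Your proposal takes a genuinely different route from the paper's. Both proofs start from Theorem~\ref{thm:Homology-decom-5}, which gives $F(\sX)\simeq\holim_n F(U_n)$. After that point the paper does \emph{not} argue via \'etale descent and the classical HKR theorem as you do. Instead it proceeds through the formal loop-space picture: Lemma~\ref{lem:H-Compln-0} plus the Ben-Zvi--Nadler formal HKR theorem give $HH(I_\sX)^{\compl}_{I_G}\simeq\sO(\wh{\T}_{I_\sX}[-1])$, the map $\wh{\T}_{I_\sX}[-1]\to\T_{I_\sX}[-1]$ is an equivalence because $I_\sX$ is a smooth DM stack (so $\Sym^\bullet\Omega^1_{I_\sX}[1]$ is bounded), and $\sO(\T_{I_\sX}[-1])=\mathbf{R}\Gamma(I_\sX,\sH\sH_{I_\sX})$. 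For $HC^-$ the paper commutes $(-)^{hS^1}$ through $\mathbf{R}\Gamma$ (both are homotopy limits). For $HP$ it writes $HH^{tS^1}$ as a filtered colimit $\hocolim_i HC^{-}[2i]$, observes $F(I_\sX)_{I_G}$ is also a filtered colimit, and invokes \cite[Lem.~2.15]{HLP} (separatedness) to commute $\mathbf{R}\Gamma$ with that colimit. Crucially, the paper then gets $HC$ purely formally from the fiber sequence $HC[1]\to HC^-\to HP$, so it never has to interchange $\mathbf{R}\Gamma$ with $(-)_{hS^1}$ at all.

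Your approach instead picks an \'etale affine atlas, invokes \'etale hyperdescent for each of $HH,HC^-,HC,HP$ on smooth $k$-schemes, identifies the fibrant replacement $\wt{F}_{I_\sX}$ with $\sH F_{I_\sX}$ via the sheafified HKR theorem, and for $HC,HP$ proposes to rerun the HKR-filtration interchange argument directly. This should work, and it avoids the derived-loop-space and formal-completion apparatus, which is an appealing simplification. The cost is that the burden shifted entirely to the \'etale descent statements: for $HC$ and $HP$ the presheaves involve an infinite direct sum over HKR weights, and so their fibrancy (and the validity of the \v{C}ech-to-derived comparison with an unbounded complex) is itself a limit-colimit interchange that you wave at rather than carry out. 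The paper's route sidesteps the $HC$ interchange entirely via the fiber-sequence trick; yours requires it, and the passage from the sequential filtration argument of Lemmas~\ref{lem:Chen-4}--\ref{lem:Chen-6} (which concerns a \emph{smooth} \v{C}ech nerve of a stack and uses the Milnor tower structure) to the present situation (\'etale covers of a smooth DM stack, unbounded target complex) needs to be checked, not merely asserted by analogy. You do correctly locate where separatedness enters (finite cohomological dimension of $\mathbf{R}\Gamma(I_\sX,-)$, equivalently its commutation with filtered colimits), which matches the paper's use of \cite[Lem.~2.15]{HLP}. In short: a valid and arguably more elementary outline, but with the heavy lifting for $HC$ and $HP$ deferred, where the paper instead has a cleaner formal reduction.
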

\begin{proof}
  We have shown in the proofs of \propref{prop:Homology-decom} and
  \thmref{thm:Homology-decom-5} that for
  $F \in \{HH, HC^{-}, HC, HP\}$, there are canonical weak equivalences
  \begin{equation}\label{eqn:Homology-ET-0}
    \theta_{\sX} \colon F(\sX) \xrightarrow{\simeq} F(I_{\sX})_{I_G}
    \xrightarrow{\simeq} F(I_{\sX})^{\compl}_{I_G}.
  \end{equation}
  On the other hand, it follows from \lemref{lem:H-Compln-0} and formal HKR
  theorem (see also ~\eqref{eqn:HH-Loop}) that 
$HH(I_{\sX})^{\compl}_{I_G} \xrightarrow{\simeq} \sO(\wh{\T}_{I_{\sX}}[-1])$. Since
$\wh{\T}_{I_{\sX}}[-1] \to \T_{I_{\sX}}[-1]$ is a weak equivalence, we get that there is a
weak equivalence
\begin{equation}\label{eqn:Homology-ET-1}
\theta'_{I_{\sX}} \colon HH(I_{\sX})^{\compl}_{I_G} \xrightarrow{\simeq}
\sO(\T_{I_{\sX}}[-1]) = {\bf{R}}\Gamma(I_{\sX}, \sH\sH_{I_{\sX}}).
\end{equation}
Letting $\Psi_{\sX} = \theta'_{I_{\sX}} \circ  \theta_{\sX}$, we get a natural weak
equivalence
 \begin{equation}\label{eqn:Homology-ET-2}
   \Psi_{\sX} \colon  HH(\sX) \xrightarrow{\simeq}
       {\bf{R}}\Gamma(I_{\sX}, \sH\sH_{I_{\sX}}).
 \end{equation}

Since the functor $(-)^{hS^{1}}$ commutes with ${\bf{R}}\Gamma(I_{\sX}, (-))$,
 we get
 \begin{equation}\label{eqn:Homology-ET-3}
   \begin{array}{lllll}
   HC^{-}(\sX) & \xrightarrow{\simeq}& HC^{-}(I_{\sX})^{\compl}_{I_G}
   & \simeq & \left(HH(I_{\sX})^{hS^{1}}\right)^{\compl}_{I_G} \\
& \simeq & \left(HH(I_{\sX})^{\compl}_{I_G}\right)^{hS^{1}} & \simeq &
({\bf{R}}\Gamma(I_{\sX}, \sH\sH_{I_{\sX}}))^{hS^{1}} \\
   & \simeq & {\bf{R}}\Gamma(I_{\sX}, (\sH\sH_{I_{\sX}})^{hS^{1}}) 
   & \simeq & {\bf{R}}\Gamma(I_{\sX}, \sH\sC^{-}_{I_{\sX}}), \\
   \end{array}
 \end{equation}
 where the first weak equivalence is from ~\eqref{eqn:Homology-decom-4},
 the third weak equivalence was shown in the proof of
 \corref{cor:NH-functor} and the fourth weak equivalence is from
 ~\eqref{eqn:Homology-ET-1}.

To prove the result for $HP$ when $\sX$ is separated, we note that 
\begin{equation}\label{eqn:Homology-ET-4}
    HP(\sX) \xrightarrow{\simeq} HP(I_{\sX})_{I_G} \simeq
    \left(HH(I_{\sX})^{tS^{1}}\right)_{I_G},
  \end{equation}
where the first weak equivalence is from ~\eqref{eqn:Homology-decom-7} and
~\eqref{eqn:Homology-decom-4}.
 On the other hand, we can write $HH(I_{\sX})^{tS^{1}}$ as the filtered homotopy colimit
  \[
  HH(I_{\sX})^{tS^{1}} \simeq \hocolim_i \left(HH(I_{\sX})^{hS^{1}} \to
  HH(I_{\sX})^{hS^{1}}[2] \to \cdots \to HH(I_{\sX})^{hS^{1}}[2i] \to \cdots \right).
  \]
  \[
  \hspace*{2cm} \simeq \hocolim_i \left(HC^{-}(I_{\sX}) \to
  HC^{-}(I_{\sX})[2] \to \cdots \to HC^{-}(I_{\sX})[2i] \to \cdots \right).
  \]
  Since localization at the maximal ideal $I_G$ is also a homotopy colimit, we
  get
\[
\left(HH(I_{\sX})^{tS^{1}}\right)_{I_G} \simeq
\hocolim_i \left(HC^{-}(I_{\sX})_{I_G}  \to
  HC^{-}(I_{\sX})_{I_G}[2] \to \cdots \to HC^{-}(I_{\sX})_{I_G}[2i] \to \cdots \right)
  \]
 \[
 \hspace*{2cm} \simeq \hocolim_i \left(E_1 \to  E_2 \to \cdots \to E_{2i} \to \cdots
 \right),
  \] 
  where the second weak equivalence follows from ~\eqref{eqn:Homology-ET-3}
  if we let $E_i =  {\bf{R}}\Gamma(I_{\sX}, \sH\sC^{-}_{I_{\sX}})[2i]$.
  As $\sX$ is separated, the projection map $\sI_{\sX} \to \sX$ is
  finite. In particular, $\sI_{\sX}$ is separated and it follows from
  \cite[Lem.~2.15]{HLP} that the functor ${\bf{R}}\Gamma(I_{\sX}, (-))$
  commutes with filtered homotopy colimits. We thus get
\[
  \begin{array}{lllll}
    HP(\sX) & \simeq &
    {\bf{R}}\Gamma(I_{\sX}, \hocolim_i (\sH\sC^{-}_{I_{\sX}})[2i]) 
  & \simeq &  {\bf{R}}\Gamma(I_{\sX}, (\sH\sH_{I_{\sX}})^{tS^{1}}) \\
  & \simeq &
  {\bf{R}}\Gamma(I_{\sX}, \sH\sP_{I_{\sX}}).
 \end{array}
  \]

Finally, we look at the diagram of $H_k$-module spectra
  \begin{equation}\label{eqn:Homology-ET-5}
    \xymatrix@C1pc{
    HC(I_{\sX})_{I_G}[1] \ar[r]^-{N} \ar@{.>}[d] &     
 HC^{-}(I_{\sX})_{I_G} \ar[r]^-{T} \ar[d] & 
 HP(I_{\sX})_{I_G} \ar[d] \\
 {\bf{R}}\Gamma(I_{\sX}, \sH\sC_{I_{\sX}})[1] \ar[r]^-{N} &
 {\bf{R}}\Gamma(I_{\sX}, \sH\sC^{-}_{I_{\sX}}) \ar[r]^-{T}
 & {\bf{R}}\Gamma(I_{\sX}, \sH\sP_{I_{\sX}}).}
  \end{equation}
  The two rows are homotopy fiber sequences and the right square is 
  commutative in which the vertical arrows are weak equivalences. It follows that
  there is a canonical weak equivalence
  $HC(I_{\sX})_{I_G} \xrightarrow{\simeq} {\bf{R}}\Gamma(I_{\sX}, \sH\sC_{I_{\sX}})$.
  Combining this with ~\eqref{eqn:Homology-ET-0}, we get our claim for $HC$, and
  this concludes the proof of the theorem.
\end{proof}

\begin{remk}\label{remk:Homology-ET-6}
  When $G$ is finite, \thmref{thm:Homology-ET}  for Hochschild homology was
  shown in \cite[Prop.~3.1]{Baranovsky} by a different method, and can also be
  derived from \cite[Thm.~4.9]{ACH}.
  For smooth and separated Deligne-Mumford stacks, \thmref{thm:Homology-ET} 
  for Hochschild homology was shown  in \cite[Prop.~2.13]{HLP}
  (and used at several places in the same paper) by a completely
  different method. However, the proof therein appears to have a gap in that
  the authors claim that the composition of the canonical pull-back maps
  $HH(\sX) \to  HH(I_{\sX}) \to {\bf{R}}\Gamma(I_{\sX}, \sH\sH_{I_{\sX}})$ is a weak
  equivalence, which is not the case. The reader can see using
  ~\eqref{eqn:Homology-decom-4} that the map  $\Psi_{\sX}$ in
  ~\eqref{eqn:Homology-ET-2} is not the usual pull-back map.
\end{remk}

The following is a direct consequence of \thmref{thm:Homology-ET}. It provides 
a complete description of all homology theories of the quotient stack $\sX$.  
We let $d = \dim(\sX)$.

\begin{cor}\label{cor:Hom-etale}
For every $n \in \Z$, there are canonical isomorphisms of $k$-vector spaces
  \[
  HH_n(\sX) \xrightarrow{\cong} {\underset{0 \le i \le d}\bigoplus} H^{i-n}_\et(I_\sX,
  \Omega^i_{{I_\sX}/k});
  \]
  \[
  HC^{-}_n(\sX) \xrightarrow{\cong} {\underset{0 \le i \le d-n}\bigoplus}
  H^{n+2i}_\et(I_\sX, ~_{s_{\ge i+n}}\Omega^\bullet_{I_\sX}).
  \]
  If $\sX$ is separated, there are canonical isomorphisms of $k$-vector spaces
  \[
  HC_n(\sX)  \xrightarrow{\cong} {\underset{-d \le i \le n}\bigoplus}
  H^{n-2i}_\et(I_\sX, ~_{s_{\le n-i}}\Omega^\bullet_{I_\sX});
  \]
  \[
  HP_n(\sX) \xrightarrow{\cong} {\underset{i \in \Z}\prod} H^{n+2i}_{dR}(I_{\sX}).
  \]
 \end{cor}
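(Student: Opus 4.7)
The plan is to extract these four isomorphisms directly from \thmref{thm:Homology-ET} by computing the homotopy groups of the hypercohomology spectra appearing on the right-hand side there. All of the sheaf complexes $\sH\sH_{I_\sX}$, $\sH\sC^{-}_{I_\sX}$, $\sH\sC_{I_\sX}$, $\sH\sP_{I_\sX}$ are given in \eqref{eqn:C-complex-1}--\eqref{eqn:C-complex-2} as explicit (possibly infinite) direct sums of shifts of stupid truncations of the de Rham complex of $I_\sX$, and the de Rham complex of the smooth Deligne--Mumford stack $I_\sX$ of dimension $d$ is concentrated in cohomological degrees $[0,d]$. So the whole computation is a bookkeeping exercise with the hypercohomology spectral sequence.

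For $HH$, the complex $\sH\sH_{I_\sX}$ has zero differentials, so it splits in $D(I_\sX)$ as $\bigoplus_{0 \le i \le d} \Omega^i_{I_\sX/k}[i]$ with $\Omega^i_{I_\sX/k}$ placed in homological degree $i$. Additivity of $\mathbf{R}\Gamma_\et$ and the identification $\pi_n\mathbf{R}\Gamma_\et(I_\sX,\sF[i]) = H^{i-n}_\et(I_\sX,\sF)$ give the stated $HH$-formula immediately. For $HC^{-}$, each summand $\Omega^{\ge i}_{I_\sX}[2i]$ contributes $H^{2i-n}_\et(I_\sX,\Omega^{\ge i}_{I_\sX})$ in degree $n$; this vanishes unless $i \le d$ (else $\Omega^{\ge i}_{I_\sX}=0$) and unless $i \ge n$ (the stupid truncation at $\ge i$ has vanishing hypercohomology in degrees below $i$), and reindexing $j = i - n$ produces the stated sum over $0 \le j \le d-n$. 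The same analysis, with stupid truncation from above, produces the $HC$ formula once $\sX$ is separated, using the reindexing $j = n - i$.

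The remaining case is $HP$, where $\sH\sP_{I_\sX} = \bigoplus_{i\in\Z}\Omega^{\bullet}_{I_\sX}[2i]$ is doubly infinite; one cannot commute $\mathbf{R}\Gamma$ through an infinite direct sum in general. However, for each fixed $n$ only finitely many summands contribute, because $H^{2i-n}_{dR}(I_\sX)$ vanishes outside the finite interval $0 \le 2i - n \le 2d$. Hence the direct sum commutes with $\mathbf{R}\Gamma$ degree by degree, and the resulting finite sum coincides with the infinite product $\prod_{i\in\Z} H^{n+2i}_{dR}(I_\sX)$ in the statement. Under the separatedness hypothesis, $I_\sX$ is separated, and the auxiliary commutation of $\mathbf{R}\Gamma_\et$ with filtered colimits (needed to handle the direct sum in $\sH\sC_{I_\sX}$ and, if desired, in the colimit presentation $HP \simeq \hocolim\ HC^{-}[2i]$ used in \thmref{thm:Homology-ET}) is supplied by \cite[Lem.~2.15]{HLP}.

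The only genuinely nontrivial step is the $HP$ bookkeeping just described: once the boundedness of $H^*_{dR}(I_\sX)$ is invoked to replace the direct sum by a direct product in each fixed homological degree, the rest is a direct application of \thmref{thm:Homology-ET} together with the degeneration of the hypercohomology spectral sequences for the relevant stupid truncations.
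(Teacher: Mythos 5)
Your approach is correct and is essentially the paper's intended argument: the paper presents this corollary as a ``direct consequence of Theorem~\ref{thm:Homology-ET}'', so the content of a proof is exactly the degree bookkeeping you carry out. Your identifications $\pi_n\mathbf{R}\Gamma(I_\sX,\sF[i])=H^{i-n}_\et(I_\sX,\sF)$, the reindexings $j=i-n$ (for $HC^-$) and $j=n-i$ (for $HC$), and the observation that for fixed $n$ only finitely many Hodge summands contribute, are all right.

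One small correction: the vanishing ranges for $HC^-$ and $HC$ are \emph{not} ``the same analysis.'' For $HC^-$ the bound $n\le i\le d$ is purely a truncation phenomenon: ${}_{s_{\ge i}}\Omega^\bullet=0$ for $i>d$, and ${}_{s_{\ge i}}\Omega^\bullet$ is concentrated in cohomological degrees $\ge\max(0,i)$ so $\H^{2i-n}_\et$ vanishes for $i<n$. For $HC$, by contrast, the upper bound $j\le n+d$ (equivalently $i\ge-d$) genuinely uses the bound $\le d$ on the cohomological dimension of $I_\sX$ in addition to the truncation degree: for $n+d<j\le d$ one has $\Omega^{\le j}$ concentrated in degrees $[0,j]$ and $\H^{2j-n}$ vanishes because $2j-n>j+d$, and for $j>d$ one is reduced to the $H^*_{dR}$ bound you already state. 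This is exactly where the separatedness hypothesis enters, since finiteness of the cohomological dimension for quasi-coherent cohomology of $I_\sX$ is what the Keel--Mori coarse space argument supplies in the separated case. So your invocation of \cite[Lem.~2.15]{HLP} is the right sort of ingredient, but the precise way it is used differs between the two cyclic cases; worth spelling out if this were to go in the paper.
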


We end this section with generalizations of Theorems~\ref{thm:Homology-main} and
~\ref{thm:Homology-decom-5} for the equivariant periodic cyclic homology for
certain group actions on singular $k$-schemes. The result is the following.

\begin{thm}\label{thm:HP-sing}
  Let $k$ be an algebraically closed field of characteristic zero. Let
  $G$ be a reductive $k$-group acting on a $k$-scheme $X$ and let $\sX = [X/G]$.
  Let $g \in G_s(k)$ and let $\fm_\Psi \subset R_k(G)$ be the maximal ideal
  corresponding to the conjugacy class $\Psi$ of $g$. We then have the following.
  \begin{enumerate}
    \item
If $G$ acts on $X$ with nice stabilizers, then one has a weak equivalence of spectra
  \[
  \vartheta^g_{\sX} \colon HP(\sX)^{\compl}_{\fm_\Psi} \xrightarrow{\simeq}
  \holim_{n} HP((Z_g)^n \times X^g)
  \]
  which is natural in $X$.
\item
  If $G$ acts on $X$ with finite stabilizers, then there is a weak equivalence
  of spectra
  \[
  \Phi_\sX \colon HP(\sX) \xrightarrow{\simeq} \holim_{n}
  HP(G^n \times I_X)
  \]
  which is natural in $X$.
  \end{enumerate}
\end{thm}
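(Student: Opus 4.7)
The plan is to reduce both assertions to the smooth case, where they follow from Theorems~\ref{thm:Homology-main} and \ref{thm:Homology-decom-5}, by means of $G$-equivariant cdh descent for $HP$ in characteristic zero and $G$-equivariant resolution of singularities. For part~(1), Theorem~\ref{thm:Homology-main} applied with the quotient map $X^g \to [X^g/Z_g]$ as $p$ (so that $U_n = (Z_g)^n \times X^g$) gives the desired right-hand side, once we know $X^g$ is smooth when $X$ is; likewise for part~(2) via Theorem~\ref{thm:Homology-decom-5} with $U = I_X$.

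First, I would establish cdh descent for each of the four functors appearing in the theorem. For the left-hand side, cdh descent of $HP(\sX)$ reduces, via the exactness of the $G$-invariants functor for reductive $G$ in characteristic zero, to classical cdh descent of $HP$ for schemes, which combines Goodwillie's nil-invariance with Feigin--Tsygan excision and is due to Cort\'inas and Haesemeyer--Weibel. The completed version $HP(\sX)^{\compl}_{\fm_\Psi}$ inherits cdh descent since $\fm_\Psi$-completion preserves homotopy limits by \propref{prop:Compln_D}(7). For the right-hand side, cdh descent reduces to that of ordinary $HP$ applied levelwise to $(Z_g)^n \times X^g$ (resp. $G^n \times I_X$), together with the commutation of $\holim_n$ with homotopy Cartesian squares.

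With descent available, I would argue by Noetherian induction on $\dim(X)$. The base case, after reducing to $X$ reduced via Goodwillie's nil-invariance, is $X$ smooth and follows from the two smooth theorems. For the inductive step, pick a $G$-equivariant resolution $p\colon \wt{X}\to X$, let $Y=X_\sing$ and $Y'=p^{-1}(Y)$, and consider the $G$-equivariant abstract blow-up square~\eqref{eqn:Fin-support-0}. Cdh descent on both sides of the proposed equivalence produces a commutative diagram of homotopy fiber sequences whose terms over $\wt{X}$, $Y$, and $Y'$ are weak equivalences by the inductive hypothesis (since $\wt{X} \in \Sm^G_k$ and $\dim Y,\dim Y' < \dim X$); hence the desired equivalence holds for $X$.

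The main obstacle is verifying cdh descent on the right-hand side, since the functors $X\mapsto X^g$ and $X\mapsto I_X$ need not carry equivariant abstract blow-up squares to abstract blow-up squares. Exploiting that $g$ is semisimple in characteristic zero, $\wt{X}^g$ is smooth and $\wt{X}^g\to X^g$ is proper birational; by successively replacing $\wt{X}$ with a further equivariant blow-up supported over $Y$, one arranges that the restriction of the blow-up square to the fixed loci---and, using the decomposition $I_X = \coprod_{g} X^g$ of inertia over geometric points, to $I_X$---is again an abstract blow-up square at each simplicial level, which suffices for cdh descent of $HP$ to apply.
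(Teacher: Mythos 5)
Your overall strategy---Noetherian induction on $\dim X$, using cdh descent and $G$-equivariant resolution of singularities to reduce to the smooth case where Theorems~\ref{thm:Homology-main} and~\ref{thm:Homology-decom-5} apply---is the same in spirit as the paper's, which repeats the $KH$-argument of Theorem~\ref{thm:Gen-AS-max} step by step (using Corollary~\ref{cor:Chen-8} in place of Theorem~\ref{thm:AS-KH}). However, several of your justifications are wrong in a way that suggests the mechanism is not quite understood.

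First, the reduction of cdh descent for $HP(\sX)$ to the scheme case ``via the exactness of the $G$-invariants functor'' is not correct: $HP([X/G])$ is the periodic cyclic homology of the dg category of perfect complexes on the stack, and this is not the $G$-invariants of $HP(X)$ (already $HP(BG) \simeq C[G]$, not $k$). The paper instead cites \cite[Exm.~5.2.7]{Elmanto-Sosnilo} directly for nil-invariance and cdh descent of $HP$ on stacks; there is no need (and no way) to deduce it from the scheme case as you propose.

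Second, the claim that $\wt{X}^g\to X^g$ is ``proper birational'' is false in general: if $X^g$ is contained in the singular locus, $\wt{X}^g$ can be empty or have different dimension (take $X=\{xy=0\}$ with $\Z/2$ swapping coordinates; then $X^g$ is the origin but $\wt{X}^g=\emptyset$). Consequently your proposed fix of taking further blow-ups is unfounded. It is also unnecessary: for a semisimple $g$ in characteristic zero the $g$-fixed-locus functor is an equalizer, so it preserves fiber products and hence takes $G$-equivariant abstract blow-up squares to abstract blow-up squares \emph{automatically} (the new square is Cartesian, $\wt{X}^g\to X^g$ is proper, $Y^g\hookrightarrow X^g$ is a closed immersion, and the complementary-open isomorphism restricts to fixed loci). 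Noetherian induction is on $\dim X$, not on $\dim X^g$, so degenerate restrictions cause no harm. This is the observation the paper encodes in diagram~\eqref{eqn:HP-sing-0}.

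Finally, you do not address how the map $\vartheta^g_\sX$ (resp.\ $\Phi_\sX$) is \emph{defined} for singular $X$. The paper constructs it as the composite $\iota^*_g$, $t_{g^{-1}}$, $p^*_\bullet$ appearing in~\eqref{eqn:Homology-main-1} and~\eqref{eqn:Homology-decom-4}, all of which make sense without smoothness; once the map is in hand the cdh-descent induction compares its source and target. Without that construction your ``homotopy fiber sequence'' diagram has no vertical arrows to begin with.
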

\begin{proof}
  One knows by \cite[Exm.~5.2.7]{Elmanto-Sosnilo} that $HP$ is nil-invariant and
  satisfies cdh descent for stacks. We also note that associated to the 
abstract blow-up square ~\eqref{eqn:KH-g-2} (on the left),
one has an abstract blow-up square
  \begin{equation}\label{eqn:HP-sing-0}
    \xymatrix@C1pc{
      G^n \times W \ar[r] \ar[d] & G^n \times X' \ar[d] \\
      G^n \times Z \ar[r] & G^n \times X}
  \end{equation}
  for each $n \ge 0$ which is compatible with the face and degeneracy maps of
  simplicial schemes $\{G^n \times Y\}_{n \ge 0}$, where $Y \in \{X, Z, X', W\}$.
  Using these observations, the first part of the theorem is proven by repeating
  the arguments for the $KH$-theory in \thmref{thm:Gen-AS-max} 
  verbatim (and using \corref{cor:Chen-8}) while the
  second part follows in a similar way. We omit the
  details.
\end{proof}

\vskip .4cm

\noindent\emph{Acknowledgments.}
The second author would like to thank the department of Mathematics at IISc, Bangalore
for hosting him for a part of the work on this paper.

\vskip .4cm

\end{document}